\def\newcor{\global\advance\Cor by 1
\par\bigskip\noindent (\romannumeral\Cor) - }
\newtheorem{theorem}{\textsc{Th\'eor\`eme}}[subsection]
\newtheorem{proposition}[theorem]{\textsc{Proposition}}
\newtheorem{lemma}[theorem]{\textsc{Lemme}}
\newtheorem{corollary}[theorem]{\textsc{Corollaire}}
\newtheorem{remark}[theorem]{\textsc{Remarque}}
\newtheorem{convention}[theorem]{\textsc{Convention}}
\newtheorem{definition}[theorem]{\textsc{D\'efinition}}
\newtheorem{hypothese}[theorem]{\textsc{Hypoth\`ese}}
\newtheorem{exemple}[theorem]{\textsc{Exemple}}
\newtheorem{notation}[theorem]{\textsc{Notation}}
\def\cad{c'est-\`a-dire\ }
\def\bydef{\buildrel \mathrm{d\acute{e}f}\over{=}}
\def\ES#1{\EuScript{#1}}
\def\wt#1{\widetilde{#1}}
\def\bs#1{\boldsymbol{#1}}
\def\wh{\widehat}
\def\mbb#1{\mathbb{#1}}
\def\bsbbc{\mathbbm{c}}
\def\NN{\mathfrak N}
\def\UU{\mathfrak U}
\def\Siegel{\bs{\mathfrak{S}}}
\def\ag{\mathfrak{a}}
\def\nn{\mathfrak n}
\def\bsX{\bs{X}}
\def\bsY{\bs{Y}}
\def\tG{{\widetilde G}}
\def\tM{{\widetilde M}}
\def\tP{{\widetilde P}}
\def\tQ{{\widetilde Q}}
\def\ESA{\ES A}
\def\ESC{\ES C}
\def\ESB{\ES B}
\def\ESF{\ES F}
\def\ESN{\ES N}
\def\ESO{\ES O}
\def\ESP{\ES P}
\def\ESR{\ES R}
\def\brT_#1{[T]_{#1}}
\def\brTo_#1{[T_0]_{#1}}
\def\brTX_#1{[T-X]_{#1}}
\def\ptf{\,.}
\def\vg{\,,}
\def\vgq{\,,\quad}
\def\mathpvg{\!\!;}
\def\dd{\,{\mathrm d}}
\def\Jres{\mathfrak J}
\def\u{{\rm{unip}}}
\def\f{{\ES{F}}}
\def\TopF{{$\mathrm{Top}_F$}}
\def\TopFv{{$\mathrm{Top}_{F_v}$}}
\def\TopFS{{$\mathrm{Top}_{F_S}$}}
\def\TopwhF{{$\mathrm{Top}_{\smash{\wh{F}}}$}}
\def\cF{\operatorname{c-\textit{F}}}
\def\cFG{\operatorname{c-\textit{F}-\textit{G}}}
\def\scrX{\mathscr{X}}
\def\scrY{\mathscr{Y}}
\def\FXv{{_F\scrX_v}}
\def\FYv{{_F\scrY_v}}
\def\bsfrX{\bs{\mathfrak{X}}}
\def\bsfrY{\bs{\mathfrak{Y}}}
\def\FN{{_F\ES{N}}}
\def\NF{\ES{N}_F}
\def\FU{{_F\UU}}
\def\UF{\UU_F}
\def\scrX{\mathscr{X}}
\def\scrY{\mathscr{Y}}
\def\scrV{\mathscr{V}}
\def\scrG{\mathscr{G}}
\def\scrU{\mathscr{U}}
\def\scrM{\mathscr{M}}
\def\scrP{\mathscr{P}}
\def\scrW{\mathscr{W}}
\def\scrA{\mathscr{A}}
\def\scrS{\mathscr{S}}
\def\guill#1{{\guillemotleft\,#1\,\guillemotright}}
\title[Contribution unipotente \`a la formule des traces]
{DŽveloppement fin de la contribution unipotente ˆ la formule des traces sur un corps global de caractŽristique $p>0$, I}
\author{Bertrand Lemaire}
\email{Bertrand.Lemaire@univ-amu.fr}
\address{Institut de Math\'ematique de Marseille (I2M), Aix-Marseille Universit\'e (AMU), CNRS (UMR 7373), France}
\thanks{Je remercie vivement Jean-Pierre Labesse. C'est lui qui m'a patiemment 
expliquŽ la formule des traces et remis dans la voie quand je m'Žgarais. Sans son aide, ce texte n'aurait probablement jamais vu le jour. }
\begin{document}

\setcounter{tocdepth}{3}

\begin{abstract} 
Pour un corps commutatif quelconque $F$ et un groupe rŽductif connexe $G$ dŽfini sur $F$, on dŽveloppe 
une thŽorie de Kempf-Rousseau-Hesselink des $F$-strates unipotentes dans $G(F)$ qui devrait permettre d'attaquer des problmes ouverts 
en caractŽristique non nulle. 
En guise d'application, on utilise cette thŽorie pour Žtablir le dŽveloppement fin de la contribution unipotente ˆ la formule des traces 
sur un corps global $F$ de caractŽristique $p>0$, sans restriction sur $p$ (\cad que l'on traite aussi les mauvais $p$). 
Les $F$-strates unipotentes dans $G(F)$ jouent le r™le des orbites gŽomŽtriques unipotentes 
dans le travail d'Arthur sur un corps de nombres. 
La dŽcomposition en termes de produits de distributions locales n'est pas abordŽe ici; elle fera l'objet d'un prochain article.
\end{abstract}

\begin{altabstract}
For any field $F$ and any connected reductive group $G$ defined over $F$, we develop a theory of Kempf-Rousseau-Hesselink unipotent $F$-strata in 
$G(F)$ that should allow to attack open problems in nonzero characteristic. As an application, we use this theory to establish the fine expansion of 
the unipotent contribution to the trace formula over a global field of characteristic $p>0$, whitout any restriction on $p$ (that is we also treat the bad $p$). 
The unipotent $F$-strata play the role of the 
unipotent geometric orbits in Arthur's work over a number field. The expansion in terms of products of local distributions is not discussed here;
it will be the subject of further work. 
\end{altabstract}

\subjclass{20G15, 14L24, 20G35, 11F72}

\keywords{instabilitŽ, co-caractre optimal, strate unipotente, formule des traces, contribution unipotente}

\maketitle

\tableofcontents

\section{Introduction}
\subsection{Les Žtapes du dŽveloppement}\label{les Žtapes du dŽveloppement}
Soit $G$ un groupe rŽductif connexe dŽfini sur un corps global $F$ de caractŽristique $p>0$. On a Žcrit 
en \cite[9]{LL} le dŽveloppement gŽomŽtrique grossier de la formule des traces tordue pour $G(\mbb{A})$ o $\mbb{A}$ est l'anneau 
des adles de $F$; \cad la formule des traces pour $(\wt{G}(\mbb{A}),\omega)$ o 
$\wt{G}$ est un $G$-espace tordu dŽfini sur $F$ (avec $\wt{G}(F)\neq \emptyset$) 
et $\omega$ est un caractre automorphe unitaire de $G(\mbb{A})$. 
Ce dŽveloppement s'exprime en termes des classes de $G(F)$-conjugaison des paires primitives $(\wt{M}, \delta)$ 
dans $\wt{G}(F)$, lesquelles jouent le r™le des 
classes de ss-conjugaison du cas des corps des nombres. Une telle classe $\mathfrak{o}=[\wt{M},\delta]$ Žtant fixŽe, 
pour une fonction $f\in C^\infty_{\rm c}(\wt{G}(\mbb{A}))$, le dŽveloppement fin de la contribution 
$$\mathfrak{J}^T_\mathfrak{o}(f)=\int_{\bsY_{G}}k^T_{\mathfrak{o}}(f\mathpvg x)\dd x$$ associŽe ˆ $\mathfrak{o}$ devrait comme dans le cas des corps de nombres 
comporter deux Žtapes principales: 
\begin{enumerate}
\item[(I)] une rŽduction par descente centrale au centralisateur connexe $G_\delta(\mbb{A})$ --- ou 
au centralisateur stable, notion qu'il faudra dŽfinir si $\delta$ est insŽparable --- de $\delta$ dans $G(\mbb{A})$; 
\item[(II)] une description de la contribution unipotente ˆ formule des traces (non tordue) pour $G_\delta(\mbb{A})$.
\end{enumerate} On s'intŽresse ici ˆ l'Žtape (II). Plus prŽcisŽment, on s'intŽresse au dŽveloppement fin de la contribution 
$$\mathfrak{J}^T_\mathrm{unip}(f)= \mathfrak{J}^T_{\mathfrak{o}}(f)\quad\hbox{avec}\quad\wt{G}=G,\;\omega=1,\;\mathfrak{o}= [M_0,1]\ptf$$
Il s'agit d'adapter le travail d'Arthur \cite{A2} ˆ la caractŽristique positive. Rappelons les deux principales 
Žtapes de loc.~cit. (pour les corps de nombres): 
\begin{enumerate}
\item[(II.1)] une dŽcomposition suivant les orbites gŽomŽtriques unipotentes $\ESO$ qui possdent un point $F$-rationnel; 
\item[(II.2)] pour tout ensemble fini $S$ de places de $F$, une dŽcomposition de la contribution associŽe ˆ $(\ESO,S)$ 
en termes des intŽgrales orbitales pondŽrŽes dŽfinies par les orbites gŽomŽtriques unipotentes $\ES{O}^M$ des $F$-facteurs de Levi $M$ de $G$ 
qui s'induisent ˆ $\ES{O}$ (pour l'application d'induction parabolique des orbites gŽomŽtriques unipotentes de Lusztig-Spaltenstein \cite{LS}). 
\end{enumerate} 
La dŽcomposition (II.2) exprime le dŽveloppement fin de la contribution associŽe ˆ $(\ESO,S)$ comme une combinaison linŽaire (finie) d'intŽgrales orbitales pondŽrŽes locales. 
Les coefficients de cette combinaison linŽaire sont dŽfinis par rŽcurrence gr‰ce ˆ la finitude du nombre de classes de $G(F_S)$-conjugaison dans $\ES{O}(F_S)$.
Ce sont des objets de nature locale-globale du type fonction Zta de Riemann partielle hors de $S$, en gŽnŽral trs difficiles 
ˆ calculer. 

Dans les annŽes 2015, Hoffmann \cite{Ho} a conjecturŽ une variante de (II.1) basŽe sur la forme finale de la dŽcomposition (II.2). 
Cette variante de Hoffmann a ŽtŽ dŽmontrŽe par Finis et Lapid \cite{FL} pour des fonctions test bien plus gŽnŽrales 
que les fonctions lisses ˆ support compact. Au mme moment, Chaudouard \cite{C}
dŽmontrait la version \guill{algbre de Lie} de cette variante pour les fonctions de Schwartz-Bruhat 
sur $\mathfrak{g}(\mbb{A})$; o $\mathfrak{g}=\mathrm{Lie}(G)$. La version \guill{algbre de Lie} de la variante de Hoffmann pour les corps de fonctions avait ŽtŽ Žtablie prŽcŽdemment par 
Chaudouard et Laumon \cite{CL} pour le groupe $G= \mathrm{GL}_n$ et pour une fonction test trs simple\footnote{La dŽmonstration de Chaudouard \cite{C} est 
essentiellement calquŽe sur celle de Chaudouard-Laumon \cite{CL} et donc naturellement gŽnŽralisable aux corps de fonctions, 
ce que nous faisons ici pour la version \guill{groupe}. Il est probablement possible d'adapter les techniques de Finis-Lapid \cite{FL} 
au cas des corps de fonctions mais ce n'est pas l'approche que nous avons choisie.}. 
Observons que la formule de Hoffmann permet dans certains cas particuliers (e.g \cite{HW,CL,C})  
de calculer explicitement les coefficients de l'Žtape (II.2); d'o son intŽrt. 

Nous traitons ici l'analogue de (II.1), prŽcisŽment la variante de Hoffmann, pour les corps de fonctions. 
L'analogue de (II.2) sera traitŽ ultŽrieurement. 

Pour $(\wt{M},\delta)$ comme dans l'Žtape (I), 
le centralisateur schŽmatique de $\delta$ peut ne pas tre rŽduit, 
auquel cas on sort de la thŽorie des groupes algŽbriques linŽaires de Borel \cite{B}. 
Il faudra donc ou bien raffiner la descente centrale (I) de manire ˆ se ramener ˆ la contribution unipotente dans un \guill{vrai} groupe algŽbrique linŽaire 
(quitte ˆ changer le corps de base); ou bien modifier le prŽsent travail ainsi que l'analogue de (II.2) 
de manire ˆ traiter une classe plus large de schŽmas en groupes sur un corps (e.g. les groupes pseudo-rŽductifs de Conrad-Gabber-Prasad). L'Žtape 
(II.1) traitŽe ici est de toutes faons un passage obligŽ. 

\subsection{\guill{Vrais} unipotents}\label{vrais unipotents}
Soit $\overline{F}$ une cl™ture algŽbrique de $F$. On note $F^\mathrm{s\acute{e}p}$, resp. $F^\mathrm{rad}$, la cl™ture sŽparable, 
resp. radicielle, de $F$ dans $\overline{F}$. 
Les ŽlŽments unipotents de $G(F)$ que l'on considre ici sont tous \textit{vrais}\footnote{Tits les appelle \guill{bons} \cite{Ti}; pour Žviter la confusion avec les \guill{bons $p$}, 
nous prŽfŽrons ici les appeler \guill{vrais}.}, \cad \textit{$F$-unipotents}: un vrai unipotent est un ŽlŽment contenu dans 
le radical unipotent d'un $F$-sous-groupe parabolique de $G$. Observons que si $G$ est $F$-anisotrope, tout ŽlŽment de $G(F)$ 
est \textit{($F$-)primitif} au sens de \cite{LL}\footnote{Les ŽlŽments primitifs de $G(F)$ sont ceux qui ne sont contenus dans aucun $F$-sous-groupe para\-bolique propre de $G$. 
Tits les appelle \guill{anisotropes} \cite{Ti}.} et l'ŽlŽment neutre est le seul (vrai) unipotent de $G(F)$. 
Insistons sur cette notion, au c\oe ur de la diffŽrence entre la caractŽristique $p>0$ et la caractŽristique nulle: 

\begin{exemple}\label{exemple insŽparabilitŽ PGL(p)}
\textup{Soit $\gamma$ un ŽlŽment primitif de $\mathrm{GL}_p(F)$ d'image $\overline{\gamma}$ dans $\mathrm{PGL}_p(F)$. 
Le polyn™me caractŽristique $\phi_\gamma$ de $\gamma$ est irrŽductible sur $F$ et on distingue deux cas: ou bien $\phi_\gamma$ est sŽparable, 
auquel cas il a $p$ racines distinctes (dans $F^\mathrm{s\acute{e}p}$) et $\overline{\gamma}$ est (absolument) semi-simple; 
ou bien il est insŽparable, auquel cas il a une racine de multiplicitŽ $p$ (dans $F^\mathrm{rad}$) et $\overline{\gamma}$ 
est un (vrai) unipotent de $\mathrm{PGL}_p(F^\mathrm{rad})$ qui est aussi primitif dans $\mathrm{PGL}_p(F^\mathrm{s\acute{e}p})$. Dans les deux cas, $\overline{\gamma}$ est primitif dans $\mathrm{PGL}_p(F)$ 
et il est traitŽ dans la formule des traces pour $\mathrm{PGL}_p(\mbb{A})$ comme 
un \guill{vrai} ŽlŽment semi-simple rŽgulier elliptique. }
\end{exemple}

On voit appara"tre dans l'exemple \ref{exemple insŽparabilitŽ PGL(p)} une autre notion clŽ en caractŽris\-tique $p>0$, gŽomŽtrique celle-ci, la 
sŽparabilitŽ: un ŽlŽment $x$ de $G$ est dit \textit{sŽparable} si le morphisme $G\rightarrow \mathrm{Int}_G(x),\, g \mapsto gxg^{-1}$ 
est sŽparable, ou ce qui revient au mme, 
si l'algbre de Lie du centralisateur rŽduit\footnote{C'est-ˆ-dire le centralisateur au sens de Borel \cite{B}.} 
$\{g\in G\,\vert \, gxg^{-1}=x\}$ de $x$ dans $G$ co\"{\i}ncide avec le centrali\-sateur 
$\ker (\mathrm{Ad}_x-\mathrm{Id})$ de $x$ dans $\mathfrak{g}$. 
On sait que si $p \gg 1$\footnote{PrŽcisŽment si $p> 1$ est \textit{trs bon} pour $G$. Pour les notions de $p$ \guill{bon} et $p$ \guill{trs bon} pour $G$, on renvoie 
ˆ \ref{p bon et p trs bon}.}, tout ŽlŽment de $G$ est sŽparable. Observons que l'ŽlŽment $\gamma$ de $\mathrm{GL}_p(F)$ de l'exemple \ref{exemple insŽparabilitŽ PGL(p)} 
est sŽparable si et seulement si son polyn™me caractŽristique est sŽparable. 

Soit $\UU$ l'ensemble des ŽlŽments unipotents de $G=G(\overline{F})$. 
C'est une sous-variŽtŽ algŽbrique fermŽe de $G$, dŽfinie sur $F$ et $G$-invariante pour la conjugaison. L'exemple \ref{exemple insŽparabilitŽ PGL(p)} montre 
que l'ensemble $\UU(F)$ des points $F$-rationnels de $\UU$ est en gŽnŽral plus gros que l'ensemble des vrais ŽlŽments unipotents de $G(F)$. 
D'autre part on sait d'aprs Lusztig \cite{L1} que $\UU$ une rŽunion \textit{finie} d'orbites gŽomŽtriques, i.e. de $G$-orbites. 
Si $\ES{O}$ est une $G$-orbite dans $\UU$ qui possde un point $F$-rationnel 
et si $S$ est un ensemble fini de places de $F$, l'ensemble $\ES{O}(F_S)$ de ses points $F_S$-rationnels 
est une rŽunion de $G(F_S)$-orbites; o $F_S = \prod_{v\in S}F_v$. En caractŽristique nulle ces $G(F_S)$-orbites sont toutes formŽes de vrais unipotents de $G(F_S)$  
et leur rŽunion est toujours finie: $\UU(F_S)$ est rŽunion finie de $G(F_S)$-orbites $F_S$-unipotentes. 
En revanche ici, $\ES{O}(F_S)$ peut contenir des $G(F_S)$-orbites qui ne sont pas $F_S$-unipotentes (cf. \ref{exemple insŽparabilitŽ PGL(p)}) ou un 
nombre infini de $G(F_S)$-orbites $F_S$-unipotentes (e.g. si $G=\mathrm{SL}_2$, $p=2$ et $S= \{v\}$). Observons que l'infinitŽ du nombre d'orbites rationnelles est Žtroitement reliŽ ˆ 
l'insŽparabilitŽ des orbites gŽomŽtriques.

Soit $\UF$ l'ensemble des (vrais) ŽlŽments unipotents de $G(F)$. Le dŽcoupage de $\mathfrak{U}$ en orbites gŽomŽtriques utilisŽ par Arthur \cite{A2} est remplacŽ ici 
par le dŽcoupage de $\UF$ en $F$-strates (voir \ref{la thŽorie de K-R-HG}, en particulier \ref{F-strates et orbites gŽomŽtriques}). 
Une $F$-strate est un ensemble $G(F)$-invariant, donc rŽunion (Žventuellement infinie) de $G(F)$-orbites. 
Il n'y a qu'un nombre \textit{fini} de 
$F$-strates et deux ŽlŽments ˆ l'intŽrieur d'une mme $F$-strate partagent les mmes invariants (classe de $G(F)$-conjugaison 
d'un co-caractre $F$-optimal $\lambda \in \Lambda_{F,u}^\mathrm{opt}$ 
et niveau $m_u(\lambda)$); d'ailleurs ce sont ces invariants qui dŽfinissent les $F$-strates. 
Par produit, on dŽcoupe aussi l'ensemble $\UU_{F_S}= \prod_{v\in S} \UU_{F_v}$ en $F_S$-strates. Comme sur $F$, il n'y qu'un nombre \textit{fini} de $F_S$-strates mais 
une $F_S$-strate peut contenir un nombre infini de $G(F_S)$-orbites.

\subsection{La thŽorie de Kempf-Rousseau-Hesselink}\label{la thŽorie de K-R-HG} 
Les rŽsultats contenus dans cette sous-section sont valables pour n'importe quel corps commutatif $F$. Soit $p\geq 1$ l'exposant caractŽristique de $F$.
\`A tout ŽlŽment unipotent $u$ de $ G= G(\overline{F})$, la thŽorie de Kempf-Rousseau \cite{K1,R}  
associe un co-caractre indivisible $\lambda$ de $G$ dit \textit{$u$-optimal}: la limite $\lim_{t\rightarrow} \mathrm{Int}_{t^\lambda} (u)$ 
existe\footnote{Au sens o le morphisme de variŽtŽs algŽbriques 
$\mbb{G}_\mathrm{m} \rightarrow V,\, t \mapsto \mathrm{Int}_{t^\lambda}(u)$ 
se prolonge (de manire unique) en un morphisme de variŽtŽs algŽbriques $\mbb{G}_\mathrm{a}\rightarrow V$; 
la limite est par dŽfinition la valeur en $0$ de ce prolongement.}  
et vaut $1=e_G$ (l'ŽlŽment neutre de $G$); et $\mathrm{Int}_{t^\lambda}(u)$ tend vers $1$ \guill{plus vite} 
que pour tout autre co-caractre $\lambda'\in \check{X}(G)$. On renvoie ˆ \ref{le critre d'instabilitŽ de HM} pour une dŽfinition prŽcise de 
la notion d'optimalitŽ. Ce co-caractre $\lambda$ n'est pas unique mais l'ensemble $\Lambda_u^\mathrm{opt}$ des co-caractres $u$-optimaux 
forme une seule orbite sous l'action du sous-groupe parabolique $P_\lambda$ de $G$ associŽ ˆ $\lambda$; 
en particulier $P_\lambda$ ne dŽpend que de $u$. \`A tout $\lambda \in \check{X}(G)$ tel que $\lim_{t\rightarrow 0} \mathrm{Int}_{t^\lambda}(u)=1$ est associŽ un 
\textit{niveau} $m_u(\lambda)$ qui est un entier $>0$. Il vŽrifie $$m_{pup^{-1}}( \lambda)= m_u(\lambda) \quad \hbox{pour tout}\quad p\in P_\lambda\,;$$ 
en particulier $m_u(\lambda)$ ne dŽpend pas de $\lambda\in \Lambda_u^{\mathrm{opt}}$. 

Pour $u\in \UF$, 
on a une version rationnelle de cette thŽorie, qui consiste ˆ ne tester que les co-caractres $\lambda$ qui sont $F$-rationnels. 
On dŽfinit de la mme manire la notion de \textit{$(F,u)$-optimalitŽ} et le sous-ensemble 
(non vide) $\Lambda_{F,u}^\mathrm{opt}$ de $\check{X}_F(G)$. Comme dans le cas gŽomŽtrique, l'ensemble $\Lambda_{F,u}^\mathrm{opt}$ 
forme une seule orbite sous l'action de $P_\lambda(F)$ pour un (i.e. pour tout) $\lambda \in \Lambda_{F,u}^\mathrm{opt}$; 
en particulier $P_\lambda$ est un $F$-sous-groupe parabolique de $G$ qui ne dŽpend que de $u$. De mme le niveau $m_u(\lambda)$ ne dŽpend pas de $\lambda \in \Lambda_{F,u}^\mathrm{opt}$. 
On note $\bs{\Lambda}_{F,u}$ l'ensemble des co-caractres virtuels $(F,u)$-optimaux \textit{normalisŽs}: 
$$\bs{\Lambda}_{F,u}=\left\{m_u(\lambda)^{-1}\lambda\,\vert\, \lambda \in \Lambda_{F,u}^\mathrm{opt}\right\}\ptf$$ 
\`A la suite de Hesselink \cite{H2}, on pose 
$$\scrY_{F,u} = \{u' \in \UF\,\vert \, \bs{\Lambda}_{F,u'} = 
\bs{\Lambda}_{F,u}\}\quad \hbox{et} \quad \bsfrY_{F,u} = \mathrm{Int}_{G(F)}(\scrY_{F,u})\ptf\leqno{(1)}$$ 
Les ensembles $\scrY_{F,u}$, resp. $\bsfrY_{F,u}$, sont appelŽs $F$-lames, resp. $F$-strates (de $\UU_F$). Les $F$-strates 
sont donc les classes de $G(F)$-conjugaison de $F$-lames. Observons que mme pour $F=\overline{F}$, une $\overline{F}$-strate (de $\UU= \UU_{\smash{\overline{F}}}$) peut contenir plusieurs 
orbites gŽomŽtriques unipotentes (cf. l'exemple \cite[8.5]{H1}). Le point clŽ est que $\UF$ est rŽunion \textit{finie} de $F$-strates. Pour $u\in \UF$ et $\lambda\in \Lambda_{F,u}^{\mathrm{opt}}$, la $F$-lame $\scrY_{F,u}$ est $P_\lambda(F)$-invariante 
(c'est une consŽquence de la 
$P_\lambda(F)$-invariance de $\bs{\Lambda}_{F,u}$). De plus l'Žtude des classes de $G(F)$-conjugaison dans $\bsfrY_{F,u}$ se ramne ˆ celle des classes 
de $P_\lambda(F)$-conjugaison dans $\scrY_{F,u}$: l'application naturelle $G(F)\times^{P_\lambda(F)}\scrY_{F,u}\rightarrow \bsfrY_{F,u}$ est bijective.

La thŽorie de Kempf-Rousseau-Hesselink se comporte trs bien par extension sŽparable  
(algŽbrique ou non) du corps de base. Soit $E/F$ une telle extension, avec $(E^{\mathrm{s\acute{e}p}})^{\mathrm{Aut}_F(E^{\mathrm{s\acute{e}p}})}=F$. 
Pour $u\in \UF$, on a\footnote{On renvoie ˆ \ref{la variante de Hesselink} (\P\hskip1mm\textit{Co-caractres \guill{virtuels} et optimalitŽ}) 
pour la dŽfinition de $\check{X}_F(G)_{\mbb{Q}}$.} 
$$\bs{\Lambda}_{F,u} = \check{X}_F(G)_{\mbb{Q}} \cap \bs{\Lambda}_{E,u}\ptf$$ On en dŽduit les ŽgalitŽs
$$\scrY_{F,u}= G(F) \cap \scrY_{E,u} \quad \hbox{et} \quad \bsfrY_{F,u}= G(F) \cap \bsfrY_{E,u}\ptf\leqno{(2)}$$ 
Observons que si $F$ est un corps global, le complŽtŽ $F_v$ de $F$ en une place $v$ est une extension 
sŽparable de $F$ (de degrŽ de transcendance infini). On peut dans les ŽgalitŽs (2) remplacer $E$ par $F_v$ ou par $F_S$ 
pour un sous-ensemble fini $S$ de places de $F$. 

\begin{remark}\label{F-strates et orbites gŽomŽtriques}
\textup{\begin{enumerate}
\item[(i)]Si $p=1$ (i.e. si $F$ est de caractŽristique nulle) ou $p> 1$ est \textit{bon} pour $G$, les $\overline{F}$-strates de $\UU=\UU_{\smash{\overline{F}}}$ sont exactement les orbites gŽomŽtriques unipotentes. 
En particulier si $p=1$, une $F$-strate de $\UF$ n'est autre que l'ensemble $\ESO(F)$ des points $F$-rationnels d'une orbite gŽomŽtrique $\ESO$ dŽfinie sur $F$ et telle que $\ESO(F)\neq \emptyset$. 
Cette propriŽtŽ est encore vraie si $p>1$ est \textit{trs bon} pour $G$. 
\item[(ii)]Si $p>1$ est \guill{assez petit} (par rapport au rang de $G$), les ŽgalitŽs (2) sont en gŽnŽral fausses si l'on remplace $E$ par $\overline{F}$.  
En effet l'ŽlŽment $\overline{\gamma}$ de l'exemple \ref{exemple insŽparabilitŽ PGL(p)} est primitif dans $\mathrm{PGL}_p(F)$ mais il est conjuguŽ 
dans $\mathrm{PGL}_p(F^{\rm rad})$ ˆ un ŽlŽment unipotent rŽgulier de $\mathrm{PGL}_p(F)$; dans ce cas l'orbite gŽomŽtrique unipotente rŽgulire de 
$\mathrm{PGL}_p$ est une $\overline{F}$-strate de $\UU$ qui contient la $F$-strate rŽguliere de $\UF$ ainsi que des ŽlŽments primitifs de $\mathrm{PGL}_p(F)$. 
\end{enumerate}
}\end{remark}

\subsection{Sur les propriŽtŽs algŽbriques des $F$-lames}\label{propriŽtŽs algŽbriques des F-lames}La thŽorie des $F$-strates de $\UF$ s'insre dans le cadre plus gŽnŽral d'une $G$-variŽtŽ (algŽbrique) affine pointŽe 
$(V,e_V)$ dŽfinie sur $F$, avec $e_V\in V(F)$; \cad que $V$ est une $G$-variŽtŽ affine dŽfinie sur $F$ et $e_V$ est un point ($F$-rationnel) $G$-invariant de $V$. 
Supposons de plus que $e_V$ soit rŽgulier dans $V$ (hypothse \ref{hyp reg}); c'est toujours le cas si $V$ est un groupe algŽbrique linŽaire (e.g. $V=G$ ou $V=\mathfrak{g}$). L'ensemble 
$\UF$ correspond au sous-ensemble $\NF\subset V(F)$ formŽ des ŽlŽments \textit{$F$-instables}; \cad les $v\in V(F)$ tels que $\lim_{t\rightarrow 0}t^\lambda\cdot v=e_V$ pour un $\lambda\in \check{X}_F(G)$. 
Pour $v\in \NF$, on dŽfinit comme plus haut le sous-ensemble $\bs{\Lambda}_{F,v}\subset \check{X}_F(G)_{\mbb{Q}}$ et la $F$-lame $\scrY_{F,v}$, resp. la $F$-strate $G(F)\cdot \scrY_{F,v}$, de $V(F)$. Pour $F=\overline{F}$, 
les propriŽtŽs des $\overline{F}$-lames et des $\overline{F}$-strates de $V=V(\overline{F})$ ont ŽtŽ dŽcrites par Hesselink \cite{H2}; en particulier ce sont des sous-variŽtŽs localement fermŽes dans $V$. 
Pour allŽger l'Žcriture, on supprime l'exposant $\overline{F}$ en indice: pour $v\in \ES{N}= \ES{N}_{\smash{\overline{F}}}$, on pose $\bs{\Lambda}_v= \bs{\Lambda}_{\smash{\overline{F}},v}$, $\scrY_v= \scrY_{\,\smash{\overline{F}},v}$, etc.

Revenons ˆ $F$ quelconque. Pour $v\in \NF$ tel que 
l'intersection $\check{X}_F(G)_{\mbb{Q}}\cap \bs{\Lambda}_{v}$ soit non vide, on a $$\scrY_{F,v}= \scrY_{v}(F)\quad \hbox{et} \quad \bsfrY_{F,v}= \NF \cap \bsfrY_{v}\ptf$$ 
En revanche si $\check{X}_F(G)_{\mbb{Q}}\cap \bs{\Lambda}_{v}= \emptyset$, la $F$-lame $\scrY_{F,v}$ de $\NF$ n'a 
\textit{a priori} aucune structure algŽbrique raisonnable. 
Cela nous amne ˆ introduire l'hypothse suivante:
\begin{enumerate}[leftmargin=1.15cm]
\item[(\ref{hyp bonnes F-strates})]
pour tout $v\in \NF$, on a $\check{X}_{F}(G)_{\mbb{Q}}\cap \bs{\Lambda}_v \neq \emptyset$. 
\end{enumerate} 
Si $F$ est parfait (e.g. si $p=1$), cette hypothse est toujours vŽrifiŽe par descente sŽparable.

Dans le cas o $V$ est un $G$-module dŽfini sur un corps global $F$ (avec $e_V=0$), on donne des conditions suffisantes pour que l'hypothse \ref{hyp bonnes F-strates} soit vŽrifiŽe 
(cf. \ref{cor 3 bonnes F-strates}). Par passage au complŽtŽ 
$E_w$ d'une extension sŽparable finie $E$ de $F$ en une place finie $w$ de $E$ (qui est une extension sŽparable de $F$), on se ramne au cas o $F$ est un corps local non archimŽdien et $G$ est 
$F$-dŽployŽ. Alors $G\simeq_F \scrG\times_{\mbb{Z}}F$ pour un $\mbb{Z}$-schŽma en groupes rŽductif de Chevalley-Demazure $\scrG$. L'une des conditions est que l'action de $G$ sur $V$ 
provienne par le changement de base $\mbb{Z} \rightarrow F$ d'une action $\mbb{Z}$-linŽaire de $\scrG$ sur le $\mbb{Z}$-espace affine $\mbb{A}_{\mbb{Z}}^n$. On prouve alors l'implication 
$$\ES{N}_F \neq V(F) \Rightarrow \ES{N} \neq V\ptf$$ Autrement dit s'il existe un ŽlŽment de $V(F)$ qui soit $F$-semi-stable (\cad qui ne soit pas $F$-instable), alors 
il existe un ŽlŽment de $V=V(\overline{F})$ qui soit $\overline{F}$-semi-stable. C'est Žvidemment toujours vrai si $F$ est une extension finie du corps $p$-adique $\mbb{Q}_p$ (par descente sŽparable). 
On en dŽduit le rŽsultat pour $F \simeq \mbb{F}_q((\varpi))$ par la mŽthode des corps proches (qui permet de passer de $F$ ˆ une extension finie de $\mbb{Q}_p$), 
gr‰ce au thŽorme de Seshadri \cite{Se} (qui permet de passer de $\overline{\mbb{Q}}_p$ ˆ $\overline{F}$). 
Si maintenant $v\in \ES{N}_F \smallsetminus \{0\}$, $\lambda\in \Lambda_{F,v}^{\mathrm{opt}}$ 
et $k=m_v(\lambda)$, le critre de Kirwan-Ness rationnel (\ref{thm de KN rationnel}) assure que l'image de $v$ dans le $M_\lambda$-module $V_\lambda(k)$ est $(F,M_\lambda^\perp)$-semi-stable; 
on renvoie ˆ \ref{le critre de KN rat} pour les dŽfinitions. En supposant que l'action de $M_\lambda$ sur $V_\lambda(k)$ provienne elle aussi par le changement de base $\mbb{Z}\rightarrow F$ d'une 
action $\mbb{Z}$-linŽaire d'un $\mbb{Z}$-schŽma en groupes rŽductif $\scrM_\lambda$ sur le $\mbb{Z}$-espace affine $\mbb{A}_{\mbb{Z}}^{n'}$, on en dŽduit comme ci-dessus que le co-caractre virtuel 
$\frac{1}{k}\lambda$ appartient 
ˆ $\check{X}_F(G)_{\mbb{Q}} \cap \bs{\Lambda}_{v}$. 

On applique ensuite ce rŽsultat au cas de l'action de $G$ sur lui-mme par conjugaison (par passage ˆ l'algbre de Lie $\mathfrak{g}$ qui est un $G$-module). 
En conclusion, pour un groupe rŽductif connexe $G$ dŽfini sur un corps global $F$, l'hypothse \ref{hyp bonnes F-strates} est vŽrifiŽe: 
$$\check{X}_F(G)_{\mbb{Q}} \cap \bs{\Lambda}_{u}\neq \emptyset\quad \hbox{pour tout} \quad u\in \UF\ptf$$ 

\begin{remark}
\textup{
Insistons sur l'importance des $F$-lames dans cette approche. Prenons le cas de $G=\mathrm{PGL}_2$ en caractŽristique $2$. Notons 
$u$ l'image de $\left(\begin{array}{cc}1 & 1\\ 0 & 1\end{array}\right)$ dans $G(F)$. 
L'ensemble $\bsfrY_{F,u}=\UF \smallsetminus \{1\}$ est l'unique $F$-strate non triviale de $\UF$. Elle est formŽe des images dans $G(F)$ des ŽlŽments 
$\gamma \in \mathrm{GL}_2(F)$ tels que $\mathrm{Tr}(\gamma)=0$ et $\det(\gamma)\in (F^\times)^2$; elle ne peut donc pas tre l'ensemble des points 
$F$-rationnels d'une variŽtŽ algŽbrique dŽfinie sur $F$. En revanche $\scrY_{F,u}= \scrY_u(F)$ est l'ensemble des images dans $G(F)$ des matrices $\left(\begin{array}{cc}1 & x\\ 0 & 1\end{array}\right)$ 
avec $x\in F^\times$.
 }
\end{remark}

\subsection{Induction parabolique} 
Sur un corps commutatif algŽbriquement clos $F=\overline{F}$, Lusztig et Spaltenstein \cite{LS} ont dŽfini une notion d'induction 
parabolique pour les orbites gŽomŽtriques unipotentes d'un facteur Levi $M$ de $G$. Pour un corps global $F$ et une composante de Levi $M$ 
d'un sous-groupe parabolique $P$ de $G$, avec $M$ et $P$ dŽfinis sur $F$, 
on dŽfinit comme suit une notion d'induction parabolique pour les $F$-strates de $\UF^M$ (cf. \ref{induction parabolique des F-strates}): 
pour $w\in \UF^M$, il existe une \textit{unique} $F$-strate de $\UF$ qui intersecte $\scrY_{F,w}^{\!M}U_P$ de manire (Zariski-)dense; on la note $I_{F,P}^G(w)$. Elle ne dŽpend 
que de la $F$-strate $\bsfrY_{F,w}^M$ de $\UU_F^M$: pour tout $w'\in \bsfrY_{F,w}^M$, on a $I_{F,P}^G(w')=I_{F,P}^G(w)$. Pour cela, on commence par dŽfinir 
$\bsfrY_{\mathrm{g\acute{e}o}}(w,P)$ comme Žtant l'unique $\overline{F}$-strate de $\UU=\UU_{\smash{\overline{F}}}$ 
qui intersecte $\scrY_{w}^{\!M}U_P$ de manire (ouverte) dense; cela a une sens car les $\overline{F}$-strates de $\UU$ sont localement fermŽes dans $G$ 
et la variŽtŽ $\scrY_{w}^{\!M}U_P$ est irrŽductible. Ensuite la propriŽtŽ \ref{hyp bonnes F-strates} pour $M$ assure que l'intersection
$\bsfrY_{\mathrm{g\acute{e}o}}(w,P)\cap \scrY_{F,w}^{\!M}U_P(F)$ est dense dans $\scrY_wU_P$; en particulier elle est non vide. Alors 
$$I_{F,P}^G(w)= \bsfrY_{F,u}\quad \hbox{pour un (i.e. pour tout)}\quad u\in \bsfrY_{\mathrm{g\acute{e}o}}(w,P) \cap \scrY_{F,w}^{\!M}U_P(F)\ptf$$ 
L'induction parabolique des $F$-strates unipotentes vŽrifie 
des propriŽtŽs analogues ˆ l'induction parabolique des orbites gŽomŽtriques unipotentes de Lusztig-Spaltenstein (transitivi\-tŽ, etc.); en particulier --- mme si nous n'utiliserons pas cette propriŽtŽ ici ---, 
elle ne dŽpend pas vraiment de $P$: pour tout $F$-sous-groupe parabolique $P'$ de $G$ de composante de Levi $M$ et tout 
$w\in \UF^M$, on a $I_{F,P'}^G(w)=I_{F,P}^G(w)$.

\begin{remark}
\textup{
\begin{enumerate}
\item[(i)]Pour $F= \overline{F}$, si $p=1$ ou $p>1$ est \textit{bon} pour $M$ et pour $G$, l'application 
$I_{F,P}^G$ co\"{\i}ncide avec celle de Lusztig-Spaltenstein \cite{LS} (cf. \ref{F-strates et orbites gŽomŽtriques}\,(i)). 
\item[(ii)]Pour $F$ quelconque (non algŽbriquement clos), si $p=1$ ou 
$p>1$ est \textit{trs bon} pour $M$ et pour $G$, alors pour tout $w\in \UF^M$, on a $I_{F,P}^G(w)= \ES{O}(F)$ o $\ES{O}$ est l'induite parabolique de Lusztig-Spaltenstein 
de la $M$-orbite $\ES{O}_w^M= {\rm Int}_M(w)$ de $w$ (rappelons que $\bsfrY_{F,w}^M=\ES{O}_w^M(F)$, cf. \ref{F-strates et orbites gŽomŽtriques}\,(i)).
\end{enumerate}}
\end{remark}
\subsection{Application ˆ la formule des traces}\label{application ˆ la FdT} On reprend les notations de \cite{LL}. 
En particulier $F$ est un corps global de caractŽristique $p>1$. 

Pour un paramtre $T\in \ag_0$ assez rŽgulier, soit $$\mathfrak{J}^T_{\u}(f)= \mathfrak{J}^T_{\mathfrak{o}}(f)\quad \hbox{avec} \quad \mathfrak{o} =[M_0,1]$$ 
la contribution unipotente ˆ la formule des traces (non tordue). 
Elle est est donnŽe par 
la formule intŽgrale 
$$\mathfrak{J}^T_\u(f)= \int_{\overline{\bs{X}}_G}k^T_{\u}(x)\dd x \vgq \overline{\bs{X}}_G = A_G(\mbb{A})G(F)\backslash G(\mbb{A})\vg$$ 
o $k^T_\u(x)= k^T_\u(f\mathpvg x)$ est le noyau unipotent modifiŽ dŽfini par 
$$k^T_\u(x)= \sum_{P\in \ESP_\mathrm{st}}(-1)^{a_P-a_G}\sum_{\xi \in P(F)\backslash G(F)} \wh{\tau}_P({\bf H}_0(\xi x)-T)K_{P,\u}(\xi x , \xi x)$$ avec 
$$K_{P,\u}(x,y)= \sum_{\eta\in \UU^{M_P}(F)}\int_{U_P(\mbb{A})} f(x^{-1}\eta u y) \dd u\ptf$$ De plus, la fonction $T\mapsto \mathfrak{J}^T_{\u}(f)$ dŽfinit un 
ŽlŽment de PolExp. Comme le fait Arthur dans \cite{A2}, 
on commence par rŽcrire $\mathfrak{J}^T_{\u}(f)$ 
pour $T\in \ag_0$ suffisamment rŽgulier: il existe une constante $c(f)>0$ (ne dŽpendant que du support de $f$) 
telle que pour $\bs{d}_0(T)\ge c(f)$, on ait\footnote{Observons qu'ici la formule est exacte. 
L'analogue sur un corps de nombre est une formule intŽgrale asymptotique (en $T$). } 
$$  \mathfrak{J}^T_\mathrm{unip}(f) =  \int_{\overline{\bsX}_G} F_{P_0}^G(x,T)K_\u(x,x) \dd x \quad \hbox{avec} 
\quad K_\u(x,y) = \sum_{\eta\in \UF}f(x^{-1}\eta y);$$ 
o $F^G_{P_0}(\cdot,T)$ est la fonction caractŽristique d'un sous-ensemble compact de $\overline{\bs{X}}_G$.  
Cela prouve que l'intŽgrale est absolument convergente.

Notons $[\UF]$ l'ensemble des $F$-strates de $\UF$. Pour chaque $\bsfrY\in [\UF]$, on souhaite approximer l'intŽgrale (absolument convergente) 
$$\int_{\overline{\bsX}_{G}} F^G_{P_0}(x,T)K_{\bsfrY}(x,x) \dd x\quad \hbox{avec} \quad 
K_{\bsfrY}(x,y)= \sum_{\eta \in \bsfrY} f(x^{-1}\eta y)\ptf$$ Pour cela, on commence par dŽfinir pour chaque $P\in \ES{P}_\mathrm{st}$ un noyau 
$$K_{P,\bsfrY}(x,y)= \sum_{\substack{\bsfrY'\in [\UF^{M_P}]\\ I_{F,P}^G(\bsfrY')=\bsfrY}}\sum_{\eta'\in \bsfrY'} \int_{U_P(\mbb{A})} f(x^{-1}\eta' u y) \dd u \ptf$$ 
Observons que si la $F$-strate $\bsfrY$ de $\UF$ n'est l'induite parabolique d'aucune $F$-strate 
de $\UF^{M_P}$, alors $K_{P,\bsfrY}=0$. Pour $T\in \ag_0$, on pose 
$$k_{\bsfrY}^T(x)=\sum_{P\in \ESP_\mathrm{st}}(-1)^{a_P-a_G} \sum_{\xi \in P(F)\backslash G(F)} 
\wh{\tau}_P({\bf H}_0(\xi x)-T)K_{P,\bsfrY}(\xi x,\xi x)\ptf$$ On a la dŽcomposition $$k^T_\u(x) = \sum_{\bsfrY \in [\UF]} k^T_{\bsfrY}(x)\ptf$$ 

Le thŽorme suivant dŽcrit le dŽveloppement unipotent fin de la formule des traces (cf. \ref{les rŽsultats} pour des ŽnoncŽ prŽcis): 
\begin{theorem}\label{thŽorme principal}
\begin{enumerate}
\item[(i)] Si le paramtre $T\in \ag_0$ est assez rŽgulier, on a 
$$\sum_{\bsfrY \in [\UF]} \int_{\overline{\bs{X}}_{\!G}}\vert k^T_{\bsfrY}(x) \vert \dd x < + \infty\ptf$$
\item[(ii)] Pour chaque $\bsfrY\in[\UF]$, la fonction $$T\mapsto \mathfrak{J}^T_{\bsfrY}(f)=
 \int_{\overline{\bs{X}}_{\!G}} k^T_{\bsfrY}(x) \dd x$$ dŽfinit un ŽlŽment de \textup{PolExp}.
\item[(iii)] Pour chaque $\bsfrY\in [\UF]$, l'expression $\mathfrak{J}^T_{\bsfrY}(f)$ est asymptotique ˆ l'intŽgrale 
$$\int_{\overline{\bs{X}}_{\!G}} F_{P_0}^G(x,T) K_{\bsfrY}(x,x) \dd x \ptf$$
\end{enumerate}
\end{theorem}

On a donc l'ŽgalitŽ dans \textrm{PolExp} :
$$\mathfrak{J}_\mathrm{unip}^T(f) = \sum_{\bsfrY \in [\UF]}\mathfrak{J}^T_{\bsfrY}(f)\ptf$$ 
Si la $F$-strate $\bsfrY$ n'est l'induite parabolique d'aucune $F$-strate de $\UF^{M_P}$ avec $P \neq G$, 
alors la fonction $T\mapsto \mathfrak{J}^T_{\bsfrY}(f)$ 
ne dŽpend pas de $T$ et on peut noter $J_{\bsfrY}(f)$ sa valeur constante. Dans ce cas on a
$$J_{\bsfrY}(f) = \int_{\overline{\bs{X}}_{\!G}}\left(\sum_{\eta \in \bsfrY} f(x^{-1}\eta x) \right)\!\dd x ;$$  
l'intŽgrale est absolument convergente. En particulier pour $\bsfrY=\{1\}$, on a $$J_{\{1\}}(f)= \mathrm{vol}(\overline{\bs{X}}_{\!G})f(1)\ptf$$

\subsection{Sur l'Žtape suivante (II.2)}\label{sur l'Žtape (II.2)} Pour un ensemble fini $S$ de places de $F$, les $F_S$-strates serviront ˆ dŽcomposer les distributions 
$\mathfrak{J}^T_{\bsfrY}(f)$ sur $G(\mbb{A})$ en produits de distributions locales. On l'a dŽjˆ dit, l'une des difficultŽs nouvelles par rapport au cas des corps de nombres 
est que pour $u\in \UF$, le nombre de $G(F_S)$-orbites dans la $F_S$-strates $\bsfrY_{F_S,u}$ peut tre infini. 
Dans l'Žtape (II.2), il ne suffit donc pas de produire une distribution $G(F_S)$-invariante ˆ support dans $\bsfrY_{F_S,u}$ pour obtenir 
une combinaison linŽaire de $G(F_S)$-intŽgrales orbitales. D'ailleurs ces dernires ne sont pas dŽfinies en caractŽristique $p>1$\footnote{On peut cependant 
dŽfinir une mesure de Radon $G(F_S)$-invariante sur $\bsfrY_{F_S,u}$ analogue ˆ celle dŽfinie par Ranga Rao et Deligne sur la $G(F_S)$-orbite de $u$ (cf. \cite{Le}).}. 
DŽjˆ pour le groupe $G=\mathrm{SL}_2$ en caractŽristique $2$, la mŽthode d'Arthur ne fonctionne pas et le rŽsultat final s'exprime diffŽremment: 
il faut aprs la rŽgularisation classique appliquer une formule sommatoire de Poisson permettant de rŽcupŽrer la finitude 
par un argument global.
Nous expliquerons cela dans un prochain travail consacrŽ ˆ l'Žtape (II.2). 

\subsection{Organisation des rŽsultats}\label{organisation} L'article est divisŽ en trois parties. La partie I (sections \ref{la theorie de KRH rat} et \ref{le cas de la variŽtŽ U}) 
contient les principaux rŽsultats de la thŽorie de Kempf-Rousseau-Hesselink 
sur un corps commutatif quelconque. Dans la partie II (sections \ref{la distribution J} et \ref{dŽcomposition suivant les strates}), on dŽcrit le dŽveloppement fin de la contribution 
unipotente ˆ la formule des traces sur un corps global de caractŽristique $p>1$. La partie III contient des annexes qu'il nous a semblŽ prŽfŽrable de sŽparer du reste du texte.

Dans les sections \ref{la theorie de KRH rat} et \ref{le cas de la variŽtŽ U}, l'objectif est de dŽfinir les objets et d'Žtablir les rŽsultats qui nous serviront dans les sections  
\ref{la distribution J} et \ref{dŽcomposition suivant les strates} mais aussi de dŽvelopper la thŽorie en vue des applications futures (e.g. \cite{Le}). 
Dans la section \ref{la theorie de KRH rat}, on traite d'abord le cas gŽnŽral d'une $G$-variŽtŽ affine $V$ munie d'une sous-variŽtŽ fermŽe $G$-invariante non vide $\ES{Q}$, puis on se restreint rapidement au cas d'une $G$-variŽtŽ pointŽe (i.e. $\ES{Q}=\{e_V\}$), 
le cas d'un $G$-module $V$ (avec $e_V=0$) Žtant l'archŽtype de la thŽorie. La thŽorie gŽomŽtrique (i.e. $F=\overline{F}$) est due ˆ Kempf-Rousseau \cite{K1,R} et Hesselink \cite{H2}. 
La thŽorie de l'optimalitŽ dans le cadre rationnel (pour $F$ quelconque) a d'abord ŽtŽ ŽtudiŽe par Hesselink \cite{H1}, puis par de nombreux auteurs (cf. \cite{BHMR,BMRT}). En ce qui concerne 
les $F$-lames et les $F$-strates de $\ES{N}_F$ pour une $G$-variŽtŽ affine pointŽe $(V,e_V)$, ˆ notre connaissance rien n'avait ŽtŽ Žcrit jusqu'ˆ prŽsent dans le cadre rationnel. 
Notre Žtude est bien sžr largement inspirŽe du cas gŽomŽtrique. 
Dans la section \ref{le cas de la variŽtŽ U}, on traite plus profondŽment le cas de la variŽtŽ pointŽe $(V=G, e_V=1)$ donnŽe par l'action par conjugaison de $G$ sur lui-mme. 
L'induction parabolique des $F$-strates unipotentes est traitŽe en \ref{IP des ensembles FS} et \ref{induction parabolique des F-strates}. 
En \ref{exemples en basse dimension}, on dŽcrit les $F$-strates unipotentes des groupes $\mathrm{SL}_2$, $\mathrm{SU}(2,1)$ et 
$\mathrm{Sp}_4$. 

Dans la section \ref{la distribution J}, on reprend en la raffinant la dŽmonstration de la convergence de la contribution unipotente. 
Le dŽveloppement fin est Žtabli dans la section \ref{dŽcomposition suivant les strates}. 
On commence par rappeler la dŽcomposition d'Arthur puis la variante de Hoffmann (sur un corps de nombres). 
Nos principaux rŽsultats sont ŽnoncŽs en \ref{les rŽsultats} et prouvŽs dans les sous-sections qui suivent.  

L'annexe \ref{annexe A} contient des rappels sur les $F$-algbres sŽparables non algŽbriques et la descente des variŽtŽs (algŽbriques) 
relativement ˆ une extension de corps sŽparable non algŽbrique. Dans l'annexe \ref{annexe B}, on introduit la c-$F$-topologie de \cite{BHMR} et on prouve que les ensembles 
$\ES{O}_{\mathfrak{o}}$ de \cite{LL} ont les propriŽtŽs voulues pour la c-$F$-topologie. 
Dans l'annexe \ref{annexe C}, pour un corps global $F$ d'exposant caractŽristique $p\geq 1$ et 
un ensemble fini $S$ de places de $F$, on dŽfinit les $F_S$-lames et les $F_S$-strates de $\UU_{F_S}$; puis on dŽcrit le lien entre 
une $F$-lame $\scrY$, resp. $F$-strate $\bsfrY$, de $\UF$ et la $F_S$-lame $\scrY_{F_S}$, 
resp. $F_S$-strate $\bsfrY_{F_S}$, de $\UU_{F_S}$ qui lui est naturellement associŽe 
pour le plongement diagonal de $\UF$ dans $\UU_{F_S}$\footnote{Observons que si $F$ est un corps de fonctions, l'inclusion ${\rm Int}_{G(F_S)}(\bsfrY) \subset \bsfrY_{F_S}$ peut tre stricte. En revanche c'est toujours une 
ŽgalitŽ si $F$ est un corps de nombres, ce qui n'est autre que le lemme~7.1 de \cite{A2}. 
C'est en essayant de prouver ce lemme d'Arthur dans le cas des corps de fonctions 
que nous avons ŽtŽ amenŽs ˆ remplacer l'argument utilisant les $\mathfrak{sl}_2$-triplet de Jacobson-Morosov 
par un argument analogue utilisant les co-caractres 
optimaux de la thŽorie de Kempf-Rousseau. Cela nous a conduits ˆ abandonner le point de vue des orbites gŽomŽtriques unipotentes 
et ˆ les remplacer par les strates unipotentes rationnelles.}.  

L'index figurant aprs les rŽfŽrences bibliographiques renvoie principalement aux notations introduites dans les sections \ref{la theorie de KRH rat} et \ref{le cas de la variŽtŽ U}. Pour celles utilisŽes dans 
les sections \ref{la distribution J} et \ref{dŽcomposition suivant les strates}, on renvoie ˆ (l'index de) \cite{LL}.

\part*{Partie I: strates unipotentes rationnelles}

Dans toute cette partie, $F$ est un 
corps commutatif quelconque --- sauf mention expresse du contraire (e.g. en \ref{le cas d'un corps top}) --- et $G$ est un groupe algŽbrique linŽaire rŽductif connexe dŽfini sur $F$. 
On note $p$ l'exposant caractŽristique de $F$: $p=1$ si $F$ est de caractŽristique nulle et $p>1$ est la caractŽristique de 
$F$ sinon. On note $e_G$, ou simplement $1$, l'ŽlŽment neutre de $G$. 

\section{La thŽorie de Kempf-Rousseau-Hesselink rationnelle}\label{la theorie de KRH rat}

\subsection{Notations et rappels}\label{notations et rappels}
Soient $\overline{F}$ une cl™ture algŽbrique de $F$ et 
$F^\mathrm{s\acute{e}p}$, resp. $F^\mathrm{rad}$, la cl™ture sŽparable, resp. radicielle, de $F$ dans $\overline{F}$.\index{Fbar,Fsep,Frad@$\overline{F}$, $F^{\mathrm{s\acute{e}p}}$, $F^{\mathrm{rad}}$} On note 
$\Gamma_F$\index{GammaF@$\Gamma_F$} le groupe de Galois $\mathrm{Aut}_F(\overline{F})$. Le corps des points fixes de $\Gamma_F$ est $F^\mathrm{rad}$ 
et le morphisme de restriction $\gamma \mapsto \gamma\vert_{F^\mathrm{s\acute{e}p}}$ est un 
isomorphisme de $\Gamma_F$ sur $\mathrm{Aut}_F(F^\mathrm{s\acute{e}p})$. 

On appelle \textit{variŽtŽ algŽbrique}, ou plus simplement \textit{variŽtŽ}, une $\overline{F}$-variŽtŽ algŽbrique, 
\cad un $\overline{F}$-schŽma de type fini, rŽduit et sŽparŽ. On ne demande pas qu'elle soit irrŽductible. Comme il est d'usage, on identifie une variŽtŽ 
ˆ l'ensemble de ses points $\overline{F}$-rationnels\footnote{Ainsi on Žcrira \guill{ŽlŽment $v\in V$}, resp. \guill{sous-ensemble $Z\subset V$}, pour \guill{ŽlŽment $v\in V(\overline{F})$},  
resp. \guill{sous-ensemble $Z\subset V(\overline{F})$}.}. On appelle \textit{$F$-variŽtŽ} une \textit{variŽtŽ dŽfinie sur $F$} au sens de Borel \cite{B}, \cad 
un $F$-schŽma de type fini, gŽomŽtriquement rŽduit et sŽparŽ. Un \textit{$F$-morphisme} ou \textit{morphisme dŽfini sur $F$} (entre $F$-variŽtŽs) est simplement un morphisme de $F$-schŽmas. 
Si $X$ est une sous-variŽtŽ fermŽe 
--- pas forcŽment dŽfinie sur $F$, ni mme sur $F^\mathrm{rad}$ --- 
d'une $F$-variŽtŽ $V$, on pose $$X(F)= X \cap V(F)\ptf$$ 
Observons que l'on peut avoir $X(F)=\emptyset$ mme si $F=F^\mathrm{s\acute{e}p}$. 
En revanche si la variŽtŽ $X$ est dŽfinie sur $F$, puisque $X(F^\mathrm{s\acute{e}p})$ est dense dans $X=X(\overline{F})$ 
\cite[ch.~AG, 13.3]{B}, on a toujours $X(F^\mathrm{s\acute{e}p})\neq \emptyset$. 

Si $V$ est une $F$-variŽtŽ, une sous-variŽtŽ \textit{$F$-fermŽe} de $V$ au sens de Borel \cite[ch.~AG, 12.2]{B}     
\guill{est} un sous-$F$-schŽma fermŽ rŽduit (mais pas forcŽment gŽomŽtriquement rŽduit) du $F$-schŽma 
$V$. Si $V$ est une $F$-variŽtŽ affine, une sous-variŽtŽ $F$-fermŽe, resp. fermŽe et dŽfinie sur $F$, de 
$V$ correspond ˆ un idŽal $I$ de l'algbre affine $F[V]$ de $V$ tel que le quotient $F[V]/I$ soit \textit{rŽduit} (i.e. sans ŽlŽments nilpotents $\neq 0$), 
resp. tel que $\overline{F}\otimes_F F[V]/I$ soit rŽduit (cf. \cite[ch.~AG, 12.1]{B})\footnote{
Les notions de $F$-variŽtŽ et de sous-variŽtŽ $F$-fermŽe ont ŽtŽ supplantŽes depuis les annŽes 60 par celle, plus souple, de $F$-schŽma. Nul doute que pour 
traiter les questions fines liŽes ˆ l'insŽparabilitŽ, le langage des schŽmas et la topologie plate sont indispensables. 
Le langage \guill{datŽ} du livre de Borel \cite{B} permet nŽanmoins de dŽvelopper une thŽorie rapide et efficace des groupes alg\'ebriques linŽaires sur un corps commutatif, 
largement suffisante pour cet article. C'est pourquoi, concernant cette thŽorie, 
nous avons adoptŽ \cite{B} comme r\'efŽrence principale.}. 

\`A un groupe algŽbrique linŽaire $H$ dŽfini sur $F$ correspond 
un $F$-schŽma en groupes affine lisse dont l'algbre affine est l'algbre affine de $H$, notŽe $F[H]$. Le produit, resp. l'inverse, dans le groupe 
algŽbrique $H$ ou dans le $F$-schŽma en groupes $H$ est donnŽ par le mme homomorphisme de $F[H]$ dans $F[H]\otimes_F F[H]$, resp. $F[H]$.  
Un $F$-sous-groupe fermŽ de $H$, i.e. un sous-groupe fermŽ de $H$ dŽfini sur $F$, correspond ˆ un sous-$F$-schŽma en groupes fermŽ lisse.

\vskip2mm
\P\hskip1mm\textit{Descente sŽparable (algŽbrique). ---} 
Si $V$ est une $F$-variŽtŽ, le groupe de Galois $\Gamma_F$ opre sur l'ensemble $V(F^\mathrm{s\acute{e}p})$ de ses points $F^\mathrm{s\acute{e}p}$-rationnels. 
Rappelons le \guill{critre galoisien} (cf. \cite[ch.~AG, 14.4)]{B}): 

\begin{proposition}\label{critre galoisien}
Soit $V$ une $F$-variŽtŽ et soit $X$ une sous-variŽtŽ fermŽe de $V$. Les conditions suivantes sont Žquivalentes:
\begin{enumerate}
\item[(i)] $X$ est dŽfinie sur $F$;
\item[(ii)] $X$ est dŽfinie sur $F^\mathrm{s\acute{e}p}$ et $X(F^\mathrm{s\acute{e}p})$ est $\Gamma_F$-stable;
\item[(iii)] il existe un sous-ensemble 
$\Gamma_F$-stable de $X(F^\mathrm{s\acute{e}p})= X\cap V(F^\mathrm{s\acute{e}p})$ qui soit Zariski-dense dans $X= X(\overline{F})$. 
\end{enumerate}
\end{proposition}

En particulier pour qu'une sous-variŽtŽ fermŽe $X$ d'une $F$-variŽtŽ $V$ soit dŽfinie sur $F^{\mathrm{rad}}$ --- i.e. \guill{soit} une sous-variŽtŽ $F$-fermŽe de $V$ (cf. \cite[ch.~AG, 12.2]{B}) ---, il faut et il suffit 
qu'elle soit $\Gamma_F$-stable (au sens o l'ensemble $X=X(\overline{F})$ est $\Gamma_F$-stable).

D'autre part si $V$ et $W$ sont deux $F$-variŽtŽs, pour qu'un 
morphisme de variŽtŽs $f: V \rightarrow W$ soit dŽfini sur $F$, il faut et il suffit qu'il soit dŽfini sur $F^\mathrm{s\acute{e}p}$ 
et qu'il induise une application $\Gamma_F$-Žquivariante $V(F^\mathrm{s\acute{e}p})\rightarrow W(F^\mathrm{s\acute{e}p})$ 
(cf. \cite[ch.~AG, 14.3]{B}). En d'autres termes, une $F$-variŽtŽ $V$ est dŽterminŽe (ˆ $F$-isomorphisme unique prs) par la $F^\mathrm{s\acute{e}p}$-variŽtŽ 
$V_{F^\mathrm{s\acute{e}p}} =V \times_F F^\mathrm{s\acute{e}p}$ et l'action de $\Gamma_F$ sur $V(F^\mathrm{s\acute{e}p})$: 
le foncteur $$V \mapsto (\hbox{$V_{F^\mathrm{s\acute{e}p}}$, $\Gamma_F$-ensemble $V(F^\mathrm{s\acute{e}p})$})\leqno{(1)}$$ est pleinement fidle\footnote{La description de l'image du foncteur (1) est plus difficile. Il s'agit de dŽterminer si une 
$F^\mathrm{s\acute{e}p}$-variŽtŽ $\wt{V}$ est dŽfinie sur $F$, autrement dit s'il 
existe une $F$-variŽtŽ $V$ et un $F^\mathrm{s\acute{e}p}$-isomorphisme $\varphi: \wt{V} \rightarrow V_{F^\mathrm{s\acute{e}p}}$; on dit alors que le couple $(V,\varphi)$ 
est un \textit{$F$-modle} de $\wt{V}$. Une condition nŽcessaire pour l'existence d'un tel couple $(V,\varphi)$ 
est l'existence d'une donnŽe de descente ˆ la Weil pour $\wt{V}$. Cette condition est 
souvent suffisante (e.g. si $\wt{V}$ est quasi-projective).}. 

\vskip2mm
\P\hskip1mm\textit{Orbites sŽparables. ---} Soit $H$ un groupe algŽbrique affine et soit $V$ une variŽtŽ non vide, \textit{a priori} ni affine, ni lisse, munie d'une action algŽbrique 
de $H$ (ˆ gauche) $$H\times V \rightarrow V\vgq (h,v)\mapsto h\cdot v\ptf$$ On suppose que cette action est dŽfinie sur $F$, \cad que $H$, $V$ et le morphisme ci-dessus sont dŽfinis sur $F$. 
Pour $v\in V$, d'aprs \cite[ch.~I, 1.8]{B} l'orbite 
$$H\cdot v = \{h\cdot v\,\vert \, h\in H\}\subset V$$ est une variŽtŽ lisse, localement fermŽe dans $V$; on la notera parfois $\ES{O}_v^H$ ou 
simplement $\ES{O}_v$ si aucune confusion n'est possible. 
Le stabilisateur schŽmatique de $v$ dans le $\overline{F}$-schŽma en groupes $H_{\smash{\overline{F}}}= H\times_F \overline{F}$ est notŽ 
$H^v_{\overline{F}}$; c'est un sous-$\overline{F}$-schŽma en groupes fermŽ de $H_{\smash{\overline{F}}}$ (qui peut ne pas tre lisse, i.e. rŽduit). 
Le groupe $H^v_{\overline{F}}(\overline{F})$ de ses points $\overline{F}$-rationnels 
est le stabilisateur de $v$ dans $H=H(\overline{F})$ au sens de Borel \cite[ch.~I, 1.7]{B}, \cad le sous-groupe fermŽ 
$${\rm Stab}_H(v) \bydef \{h\in H\,\vert \, h\cdot v =v \}\subset H\ptf$$ 
En d'autres termes, ${\rm Stab}_H(v)$ est le stabilisateur schŽmatique \textit{rŽduit} $(H^v_{\overline{F}})^{\mathrm{r\acute{e}d}}$  de $v$ dans $H_{\smash{\overline{F}}}$ 
(correspondant au quotient de l'algbre affine $\overline{F}[H^v_{\overline{F}}]$ par son nilradical). 

\begin{remark}\label{stabilisateur schŽmatique rŽduit}
\textup{
Pour $v\in V(F)$, la sous-variŽtŽ fermŽe $\ES{O}_v$ de $V$ est dŽfinie sur $F$ et le sous-groupe 
fermŽ ${\rm Stab}_H(v)$ de $H$ est $F$-fermŽ. Le stabilisateur schŽmatique $H^v_{\overline{F}}$ provient par le changement de base $F \rightarrow \overline{F}$ 
d'un sous-$F$-schŽma en groupes fermŽ $H^v$ de $H$, ˆ savoir le stabilisateur schŽmatique de $v$ dans (le $F$-schŽma en groupes) $H$: on a $H^v_{\overline{F}}=H^v\times_F \overline{F}$. 
Le sous-groupe $F$-fermŽ $\mathrm{Stab}_H(v)$ de $H$ co\"{\i}ncide --- en tant que sous-$F$-schŽma en groupe fermŽ 
rŽduit de $H$ --- avec le stabilisateur schŽmatique rŽduit $(H^v)^{\mathrm{r\acute{e}d}}$ de $v$ dans $H$ (correspondant au quotient de l'algbre affine 
$F[H^v]$ par son nilradical).}
\end{remark}

Pour $v\in V$, le morphisme de variŽtŽs 
$$\pi_v: H \rightarrow \ES{O}_v,\, h \mapsto h\cdot v$$ se factorise en un morphisme bijectif de variŽtŽs $$\overline{\pi}_v: 
H/\mathrm{Stab}_H(v) \rightarrow \ES{O}_v$$ 
qui n'est en gŽnŽral pas un isomorphisme. Notons $T_v(\ES{O}_v)$ l'espace tangent de 
$\ES{O}_v$ au point $v$ et $\dd (\pi_v)_1: \mathrm{Lie}(H) \rightarrow T_v(\ES{O}_v)$ 
la diffŽrentielle de $\pi_v$ au point $1$. Alors on a les inclusions 
$$\mathrm{Lie}(\mathrm{Stab}_H(v)) \subset \ker (\dd(\pi_v)_1) \quad \hbox{et}\quad \dd(\pi_v)_1(\mathrm{Lie}(H))\subset T_v(\ES{O}_v)\ptf$$ 
D'aprs \cite[ch.~AG, 10.1]{B}, on a toujours l'ŽgalitŽ $$\dim(H)= \dim(\mathrm{Stab}_H(v))+ \dim(\ES{O}_v)\ptf$$ On en dŽduit le lemme suivant \cite[ch.~II, 6.7]{B}: 

\begin{lemma}\label{orbites sŽparables}
Pour $v\in V$, les conditions suivantes sont Žquivalentes:
\begin{enumerate}
\item[(i)] $\overline{\pi}_v$ est un isomorphisme de variŽtŽs, i.e. la $H$-orbite $\ES{O}_v$ est \guill{le} quotient gŽomŽtrique de $H$ par ${\rm Stab}_H(v)$;
\item[(ii)] $\pi_v$ est un morphisme sŽparable; 
\item[(iii)] $\mathrm{Lie}({\rm Stab}_H(v))= \ker (\dd(\pi_v)_1)$;
\item[(iv)] $\mathrm{Im}(\dd(\pi_v)_1) = T_v(\ES{O}_v)$;
\item[(v)] le \textit{stabilisateur schŽmatique} $H^v_{\overline{F}}$ est lisse (i.e. rŽduit), autrement dit il co\"{\i}ncide avec $\mathrm{Stab}_H(v)$. 
\end{enumerate}
\end{lemma}

Pour $v\in V$, on dira que la $H$-orbite $\ES{O}_v$ est \textit{sŽparable} si les conditions Žquivalentes du lemme \ref{orbites sŽparables} 
sont vŽrifiŽes (ces conditions ne dŽpendent que 
de l'orbite $\ES{O}_v$ et pas du point-base $v$ dans cette orbite). 
Si $p=1$, toutes les orbites sont sŽparables. 

Par abus de langage, lorsque $V$ est le groupe 
$H$ lui-mme muni de l'action par conjugaison, on dira aussi qu'un ŽlŽment $x\in H$ est 
sŽparable\footnote{\`A ne pas confondre avec un ŽlŽment de $H(F^\mathrm{s\acute{e}p})$!} si sa $H$-orbite 
$\ES{O}_x$ est sŽparable. Par exemple si $x\in H$ est semi-simple, alors il est sŽparable; et si de plus $x\in H(F)$, alors ${\rm Stab}_H(x)$ est dŽfini 
sur $F$ (cf. \cite[ch.~III, 9.1]{B}), autrement dit le centralisateur schŽmatique $H^x$ de $x$ dans $H$ est lisse (i.e. gŽomŽtriquement rŽduit). 

\vskip2mm
\P\hskip1mm\textit{Conventions topologiques. ---} Tout sous-ensemble $X$ d'une variŽtŽ $V$ est muni de la topologie de Zariski induite par celle de $V$. Sauf mention expresse 
du contraire, les notions d'ouvert, de fermŽ, de densitŽ (etc.) se rŽfrent ˆ la topologie de Zariski. Pour un sous-ensemble $X$ de $V$, la fermeture (de Zariski) de $X$ dans $V$ est notŽe $\overline{X}$.\index{Xbar@$\overline{X}$} 
Si $X$ est une partie d'une $F$-variŽtŽ $V$ (i.e. $X\subset V(\overline{F})$), la \textit{$F$-fermeture} de $X$ 
dans $V$ au sens \cite[ch.~AG, \S12]{B} est notŽe $\smash{\overline{X}}^{(F)}$; c'est le plus petit sous-$F$-schŽma fermŽ rŽduit de $V$ contenant $X$\footnote{Rappelons que nous identifions une sous-variŽtŽ $F$-fermŽe de $V$ au sens de 
loc.~cit. ˆ un sous-$F$-schŽma fermŽ rŽduit de $V$ 
(cf. \ref{notations et rappels}).}.\index{XbarF@$\smash{\overline{X}}^{(F)}$}

Si $V$ est une $G$-variŽtŽ affine dŽfinie sur $F$ (cf. \ref{le critre d'instabilitŽ de HM}), on peut la munir de la \textit{$\cFG$-topologie}, ou simplement \textit{$\cF$-topologie} (sous-entendu pour la $F$-action de $G$ sur $V$), 
dŽfinie par l'action de l'ensemble $\check{X}_F(G)$ des co-caractres algŽbriques de $G$ qui sont 
dŽfinis sur $F$ (cf. l'annexe \ref{annexe B}). La $\cF$-fermeture d'une partie $X$ de $V$ est notŽe $\smash{\overline{X}}^{(\cF)}$.\index{XbarcF@$\smash{\overline{X}}^{(\cF)}$} La $\cF$-topologie sur $V$ ne nŽcessite 
aucune topologie sur $F$ (mais bien sžr 
elle dŽpend de la $F$-action de $G$ sur $V$): elle a un sens pour tout corps commutatif $F$.

Si $F$ est un corps commutatif topologique (sŽparŽ, non discret) et $V$ est une $F$-variŽtŽ, on peut munir l'ensemble $V(F)$ des points $F$-rationnels de $V$ 
de la topologie (forte) dŽfinie par $F$, cf. \ref{le cas d'un corps top}. Nous la noterons ${\rm Top}_F$. Par exemple si $F$ est un corps global et $v$ 
est une place de $F$, le complŽtŽ $\wh{F}=F_v$ de $F$ en $v$ est un corps topologique localement compact (non archimŽdien si $v$ est finie); 
pour toute $F$-variŽtŽ $V$, l'ensemble $V(F)$ est ${\rm Top}_{\wh{F}}$-dense dans $V(\wh{F})$.  

Ces topologies --- Zariski et $\cF$-topologie pour un corps (commutatif) quelconque; ${\rm Top}_{F}$ pour un corps topologique  --- sont les seules que nous utiliserons dans cet article. 

\subsection{Le critre d'instabilitŽ de Hilbert-Mumford}\label{le critre d'instabilitŽ de HM} 
Dans cette sous-section, on introduit le critre d'instabilitŽ de Hilbert-Mumford puis on rappelle le thŽorme de Kempf-Rousseau gŽomŽtrique (\ref{Kempf1}), 
\cad sur $F=\overline{F}$\footnote{Nous avons choisi de rappeler brivement la thŽorie gŽomŽtrique plut™t que dŽmarrer d'emblŽe 
par la thŽorie rationnelle, ce qui est discutable puisque la seconde ne se dŽduit en gŽnŽral pas de la premire. D'un autre c™tŽ la thŽorie 
rationnelle peut tre vue comme une variante de 
la thŽorie gŽomŽtrique, qui consiste ˆ ne ne tester que les co-caractres qui sont dŽfinis sur $F$ ou, ce qui revient au mme, sur $F^{\rm s\acute{e}p}$. 
Le lecteur familier avec la thŽorie gŽomŽtrique peut 
directement commencer la lecture par la sous-section \ref{le critre d'instabilitŽ de HM rationnel}.}, ainsi qu'une version rationnelle de ce dernier dans le cas o $F$ est parfait (\ref{Kempf2}). 

Soit $V$ une $G$-variŽtŽ affine, \cad une variŽtŽ algŽbrique affine munie d'une action algŽbrique ˆ gauche 
$G\times V \rightarrow V ,\, (g,v)\mapsto g\cdot v$. 
Rappelons qu'un \textit{co-caractre} (algŽbrique) de $G$ est un morphisme de groupes algŽbriques $\mbb{G}_\mathrm{m} \rightarrow G$. On note 
$\check{X}(G)$ l'ensemble des co-caractres de $G$.\index{XcheckG@$\check{X}(G)$}

\begin{definition}
\textup{Un ŽlŽment $\lambda \in \check{X}(G)$ est dit \textit{primitif} ou \textit{indivisible} s'il n'existe aucun 
$\lambda'\in \check{X}(G)$ tel que $\lambda = k\lambda'$ avec $k\in \mbb{Z} _{\geq 2}$. }
\end{definition}

Observons que $\check{X}(G)$ est un groupe si $G$ est commutatif et que c'est un $\mbb{Z} $-module libre de rang fini si $G$ est un tore. 
L'ensemble $\check{X}(G)$ est muni d'une action de $G$ donnŽe par $$g\bullet \lambda= \mathrm{Int}_g\circ \lambda\quad \hbox{pour tout}\quad (g,\lambda)\in G\times \check{X}(G)\ptf$$
Pour $(v,\lambda)\in V\times \check{X}(G)$, on note $\phi_{v,\lambda}: \mbb{G}_\mathrm{m}\rightarrow V$ le morphisme de variŽtŽs 
$t\mapsto t^\lambda \cdot v$\footnote{L'ensemble $\check{X}(G)$ n'est en gŽnŽral pas un groupe 
mais on le note nŽanmoins additivement: pour $\lambda,\, \lambda'\in \check{X}(G)$, l'application 
$t\mapsto t^\lambda t^{\lambda'}$ n'est \textit{a priori} pas un ŽlŽment de $\check{X}(G)$;  
c'en est un si les co-caractres $\lambda$ et $\lambda'$ commutent entre eux, 
auquel cas on le note $\lambda + \lambda'$. Ainsi pour $n\in \mbb{Z}$ et $\lambda \in \check{X}(G)$, 
on note $n\lambda$ l'ŽlŽment $t\mapsto (t^\lambda)^n$ de $\check{X}(G)$. Enfin on note 
\guill{$0$} le co-caractre trivial $t \mapsto 1\;(=e_G)$.}.\index{Phivl@$\phi_{v,\lambda}$} 
On note $\Lambda_v$\index{Lambdav@$\Lambda_v$} l'ensemble des $\lambda\in \check{X}(G)$ tels que la limite $\lim_{t\rightarrow 0}t^\lambda\cdot v$ existe, au sens o  
$\phi_{v,\lambda}$ se prolonge en un morphisme de variŽtŽs $\phi_{v,\lambda}^+: \mbb{G}_\mathrm{a} \rightarrow V$.\index{Phivl+@$\phi_{v,\lambda}^+$} Ce 
prolongement est alors unique et l'on a
$$\lim_{t\rightarrow 0}t^\lambda\cdot v = \phi_{v,\lambda}^+(0)\ptf$$ 
Observons que $$\Lambda_{g\cdot v}=g\bullet \Lambda_v \quad \hbox{pour tout} \quad g\in G\ptf $$

Pour $v\in V$, la fermeture de Zariski $\overline{\ES{O}_v}$ de 
la $G$-orbite $\ES{O}_v=G\cdot v$\index{Orondv@$\ES{O}_v$} contient une unique $G$-orbite fermŽe que l'on note $\ES{F}_v$\index{Frondv@$\ES{F}_v$}. 
Le critre d'instabilitŽ de Hilbert-Mumford dit qu'il existe un co-caractre $\lambda\in \Lambda_v$ tel que la limite $\phi_{\lambda,v}^+(0)$ appartienne 
ˆ $\ES{F}_v$. Puisque pour tout 
$\lambda \in \Lambda_v$, la limite $\phi_{v,\lambda}^+(0)$ appartient ˆ $\overline{\ES{O}_v}$, on a en particulier:  
l'orbite $\ES{O}_v$ est fermŽe (dans $V$) si et seulement si 
$\phi_{v,\lambda}^+(0)\in \ES{O}_v$ pour tout $\lambda \in \Lambda_v$. 

\vskip2mm
\P\hskip1mm\textit{DŽfinition \guill{dynamique} des sous-groupes paraboliques. ---} 
\`A tout co-caractre $\lambda \in \check{X}(G)$ est 
associŽ comme suit un sous-groupe parabolique $P_\lambda$ de $G$ muni d'une dŽcomposition de Levi 
$P_\lambda =M_\lambda \ltimes U_\lambda$:\index{Plambda@$P_\lambda =M_\lambda \ltimes U_\lambda$}
$$P_\lambda=\{g\in G\,\vert\, \hbox{$\lim_{t\rightarrow 0} \mathrm{Int}_{t^\lambda}(g)$ existe}\}\vg$$ 
$$U_\lambda = \{g\in G\,\vert\, \lim_{t\rightarrow 0} \mathrm{Int}_{t^\lambda}(g)=1\}\vg$$ 
$$M_\lambda= \{g\in G\,\vert \, \mathrm{Int}_g\circ \lambda = \lambda \}\ptf$$
On a un morphisme surjectif naturel $P_\lambda \rightarrow M_\lambda$ donnŽ par $g \mapsto \phi_{v,\lambda}^+(0)$. Enfin on sait 
que pour tout sous-groupe parabolique $P$ de $G$ et toute composante de Levi $M$ de $P$, il existe un $\lambda \in \check{X}(G)$ tel que $P=P_\lambda$ et 
$M=M_\lambda$ (voir \ref{existence Plambda} pour la preuve d'une variante plus gŽnŽrale de ce rŽsultat). 

\vskip2mm
\P\hskip1mm\textit{Le thŽorme de Kempf-Rousseau}. Le thŽorme de Kempf-Rousseau (voir \ref{Kempf1}) est une version renforcŽe du critre de Hilbert-Mumford. 

Soit $\ES{Q}$ une sous-variŽtŽ fermŽe $G$-invariante\footnote{Au sens o $G\cdot \ES{Q}=\ES{Q}$. 
Plus gŽnŽralement, sauf mention expresse 
du contraire, nous utiliserons le terme \guill{invariant} pour \guill{globalement invariant}.} non vide de $V$. 
Pour $v\in V$ et $\lambda\in \Lambda_v$, on dŽfinit comme suit un invariant 
$m_{\ES{Q},v}(\lambda)\in \mbb{N} \cup \{+\infty\}$. Si $v\in \ES{Q}$, \cad si la $G$-orbite $\ES{O}_v$ est contenue dans $\ES{Q}$, on pose $m_{\ES{Q},v}(\lambda)=+\infty$. Si $v\notin \ES{Q}$, 
la fibre schŽmatique $(\phi_{v,\lambda}^+)^{-1}(\ES{Q})$ est un sous-$\mbb{G}_{\rm m}$-schŽma fermŽ de $\mbb{G}_{\rm a}$, o $\mbb{G}_{\rm m}$ opre sur $\mbb{G}_{\rm a}$ par multiplication. 
C'est donc un diviseur (effectif) de support $t=0$ et on note $m_{\ES{Q},v}(\lambda)\in \mbb{N}$ son degrŽ.\index{mQvl@$m_{\ES{Q},v}(\lambda)$}  
En rŽsumŽ on a:
\begin{itemize}
\item $m_{\ES{Q},v}(\lambda)=0$ si et seulement si $\phi_{v,\lambda}^+(0) \notin \ES{Q}$;
\item $m_{\ES{Q},v}(\lambda) \in \mbb{N}^*$ si et seulement si $v\notin \ES{Q}$ et $\phi_{v,\lambda}^+(0) \in \ES{Q}$;
\item $m_{\ES{Q},v}(\lambda) = +\infty $ si et seulement si $ v\in \ES{Q}$.
\end{itemize}
Observons que $$m_{\ES{Q},g\cdot v}(g\bullet \lambda)= m_{\ES{Q},v}(\lambda)\quad \hbox{pour tout} \quad g\in G\ptf$$

\begin{lemma}[\cite{K1}]\label{invariance des m sous P}
Pour $v\in V$, $\lambda\in \Lambda_v$ et $p\in P_\lambda$, on a $$\lambda\in \Lambda_{p\cdot v} \quad \hbox{et}\quad m_{\ES{Q},p\cdot v}(\lambda)= m_{\ES{Q},v}(\lambda)\ptf$$
\end{lemma}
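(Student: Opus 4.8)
The plan is to reduce both assertions to the single statement that the schematic fibres $(\phi_{v,\lambda}^{+})^{-1}(\ES{Q})$ and $(\phi_{p\cdot v,\lambda}^{+})^{-1}(\ES{Q})$ are \emph{equal} as closed subschemes of $\mbb{G}_\mathrm{a}$. Indeed, $m_{\ES{Q},\bullet}(\lambda)$ is, by its very definition, determined by this subscheme together with the membership test ``$\bullet\in\ES{Q}$'' (which is insensitive to replacing $v$ by $p\cdot v$, since $\ES{Q}$ is $G$-invariant); so the equality of subschemes will give $\lambda\in\Lambda_{p\cdot v}$ and $m_{\ES{Q},p\cdot v}(\lambda)=m_{\ES{Q},v}(\lambda)$ at once, in all three cases of the definition.

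First I would form the conjugation morphism $\psi\colon\mbb{G}_\mathrm{m}\to G$, $t\mapsto\mathrm{Int}_{t^{\lambda}}(p)=t^{\lambda}pt^{-\lambda}$. Since $p\in P_\lambda$, the dynamical description of $P_\lambda$ says precisely that $\psi$ extends (uniquely) to a morphism $\psi^{+}\colon\mbb{G}_\mathrm{a}\to G$. As $P_\lambda$ is a group, $p^{-1}\in P_\lambda$ as well, so $t\mapsto t^{\lambda}p^{-1}t^{-\lambda}$ also extends to $\mbb{G}_\mathrm{a}$; by uniqueness of extensions and because inversion is a morphism of $G$, that extension must be $t\mapsto\psi^{+}(t)^{-1}$, and $\psi^{+}(t)\psi^{+}(t)^{-1}=e_G$ holds identically (two morphisms $\mbb{G}_\mathrm{a}\to G$ agreeing on the dense open $\mbb{G}_\mathrm{m}$, with $\mbb{G}_\mathrm{a}$ reduced and $G$ separated). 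Consequently $\Psi\colon\mbb{G}_\mathrm{a}\times V\to\mbb{G}_\mathrm{a}\times V$, $(t,w)\mapsto(t,\psi^{+}(t)\cdot w)$, is an automorphism of the variety $\mbb{G}_\mathrm{a}\times V$, with inverse $(t,w)\mapsto(t,\psi^{+}(t)^{-1}\cdot w)$.

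Next, for $t\in\mbb{G}_\mathrm{m}$ one has $t^{\lambda}\cdot(p\cdot v)=(t^{\lambda}pt^{-\lambda})\cdot(t^{\lambda}\cdot v)=\psi(t)\cdot\phi_{v,\lambda}(t)$. Since $\lambda\in\Lambda_v$, the right-hand side is the restriction to $\mbb{G}_\mathrm{m}$ of the morphism $\mbb{G}_\mathrm{a}\to V$, $t\mapsto\psi^{+}(t)\cdot\phi_{v,\lambda}^{+}(t)$; hence $\phi_{p\cdot v,\lambda}$ extends to $\mbb{G}_\mathrm{a}$, i.e.\ $\lambda\in\Lambda_{p\cdot v}$. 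Writing $j_w=(\mathrm{id}_{\mbb{G}_\mathrm{a}},\phi_{w,\lambda}^{+})\colon\mbb{G}_\mathrm{a}\to\mbb{G}_\mathrm{a}\times V$ for the graph of $\phi_{w,\lambda}^{+}$, one then checks $j_{p\cdot v}=\Psi\circ j_v$ (both morphisms agree on the dense open $\mbb{G}_\mathrm{m}$, with $\mbb{G}_\mathrm{a}$ reduced and $\mbb{G}_\mathrm{a}\times V$ separated). Finally, using $\phi_{v,\lambda}^{+}=\mathrm{pr}_2\circ j_v$ and $(\mathrm{pr}_2)^{-1}(\ES{Q})=\mbb{G}_\mathrm{a}\times\ES{Q}$, one gets $(\phi_{v,\lambda}^{+})^{-1}(\ES{Q})=j_v^{-1}(\mbb{G}_\mathrm{a}\times\ES{Q})$ and likewise $(\phi_{p\cdot v,\lambda}^{+})^{-1}(\ES{Q})=j_{p\cdot v}^{-1}(\mbb{G}_\mathrm{a}\times\ES{Q})=j_v^{-1}\bigl(\Psi^{-1}(\mbb{G}_\mathrm{a}\times\ES{Q})\bigr)$, so it suffices to prove $\Psi^{-1}(\mbb{G}_\mathrm{a}\times\ES{Q})=\mbb{G}_\mathrm{a}\times\ES{Q}$ as closed subschemes of $\mbb{G}_\mathrm{a}\times V$.

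This last point is the only mildly delicate step. Set-theoretically the identity $\Psi^{-1}(\mbb{G}_\mathrm{a}\times\ES{Q})=\mbb{G}_\mathrm{a}\times\ES{Q}$ is simply the $G$-invariance of $\ES{Q}$ ($w\in\ES{Q}\iff\psi^{+}(t)\cdot w\in\ES{Q}$). To upgrade it to a scheme-theoretic equality, one notes that since $\Psi$ is an isomorphism the closed subscheme $\Psi^{-1}(\mbb{G}_\mathrm{a}\times\ES{Q})$ is isomorphic to $\mbb{G}_\mathrm{a}\times\ES{Q}$, hence reduced (a product of reduced varieties over the perfect field $\overline{F}$); being a reduced closed subscheme supported on the set $\mbb{G}_\mathrm{a}\times\ES{Q}$, it must coincide with $\mbb{G}_\mathrm{a}\times\ES{Q}$. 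Therefore $(\phi_{p\cdot v,\lambda}^{+})^{-1}(\ES{Q})=(\phi_{v,\lambda}^{+})^{-1}(\ES{Q})$, which, together with $v\in\ES{Q}\iff p\cdot v\in\ES{Q}$, yields $m_{\ES{Q},p\cdot v}(\lambda)=m_{\ES{Q},v}(\lambda)$.
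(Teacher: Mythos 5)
Your proof is correct: the key identity $t^{\lambda}\cdot(p\cdot v)=(t^{\lambda}pt^{-\lambda})\cdot(t^{\lambda}\cdot v)$ together with the fact that $t\mapsto t^{\lambda}pt^{-\lambda}$ extends over $t=0$ (the dynamical definition of $P_\lambda$) is exactly the standard argument, and your scheme-theoretic upgrade (the automorphism $\Psi$ of $\mbb{G}_\mathrm{a}\times V$ preserving $\mbb{G}_\mathrm{a}\times\ES{Q}$, using that this subscheme is reduced over $\overline{F}$) correctly yields equality of the schematic fibres, hence of the degrees. The paper itself gives no proof — it cites Kempf — and your argument is essentially the one found there, so there is nothing further to reconcile.
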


Fixons une \textit{norme $G$-invariante sur $\check{X}(G)$}, i.e. une application $\| \, \|:\check{X}(G)\rightarrow \mbb{R} _+$ telle que:\index{Norme@$\parallel\,\parallel$}
\begin{itemize}
\item $\| g\cdot \lambda  \| =\| \lambda \|$ pour tout $g\in G$ et tout $\lambda \in \check{X}(G)$;
\item pour tout tore maximal $T$ de $G$, il existe une forme bilinŽaire symŽtrique dŽfinie positive 
$(\cdot , \cdot) : \check{X}(T)\times \check{X}(T) \rightarrow \mbb{Z} $ telle que $(\lambda,\lambda)^{\frac{1}{2}} 
=\| \lambda \|$ pour tout $\lambda\in  \check{X}(T)$.
\end{itemize}
L'existence d'une telle norme est une consŽquence de la propriŽtŽ de conjugaison dans $G=G(\overline{F})$ des tores maximaux de 
$G$. Il suffit en effet de fixer un tore maxiaml $T_0$ de $G$ et une forme bilinŽaire symŽtrique dŽfinie positive $(\cdot , \cdot) : 
\check{X}(T_0)\times \check{X}(T_0) \rightarrow \mbb{Z} $ qui soit 
invariante sous l'action du groupe de Weyl $W^G(T_0)$. La norme $W^G(T_0)$-invariante sur $\check{X}(T_0)$ dŽfinie par 
$\| \lambda \|=(\lambda,\lambda)^{\frac{1}{2}} $ 
se prolonge (de manire unique) en une norme $G$-invariante sur $\check{X}(G)$. 

\begin{remark}
\textup{ Soit $X(T_0)$ le groupe des caractres algŽbriques de $T_0$, naturellement identifiŽ ˆ  $\mathrm{Hom}(\check{X}(T_0),\mbb{Z})$. 
Observons que la forme bilinŽaire symŽtrique dŽfinie positive 
$W^G(T_0)$-invariante sur $\check{X}(T_0)$ ne permet en gŽnŽral pas d'identifier $X(T_0)$ ˆ $\check{X}(T_0)$. Elle permet en revanche toujours 
d'identifier $\check{X}(T_0)_{\mbb{Q}}= \check{X}(T_0)\otimes_{\mbb{Z}}\mbb{Q}$ ˆ $X(T_0)_{\mbb{Q}}= \mathrm{Hom}(\check{X}(T_0),\mbb{Q})$. }
\end{remark}

Pour $v\in V$ et $\lambda\in \Lambda_v$, on pose\index{rhoQvl@$\rho_{\ES{Q},v}(\lambda)$}
$$\rho_{\ES{Q},v}(\lambda)= \frac{m_{\ES{Q},v}(\lambda)}{ \| \lambda \|}\in \mbb{R} _+ \cup \{+\infty\}\ptf$$
Observons que pour $k\in \mbb{N}^*$, on a $\rho_{\ES{Q},v}(k\lambda)= \rho_{\ES{Q},v}(\lambda)$. 
On dŽfinit comme suit un sous-ensemble de co-caractres \textit{$(\ES{Q},v)$-optimaux} (relativement ˆ l'action de $G$ sur $V$) $$\Lambda_{\ES{Q},v}^\mathrm{opt} \subset \Lambda_{v}\ptf$$ 
Si $v\in \ES{Q}$, on pose $\Lambda_{\ES{Q},v}^\mathrm{opt}=\{0\}$; 
sinon, on note $\Lambda_{\ES{Q},v}^\mathrm{opt}$\index{LQvopt@$\Lambda_{\ES{Q},v}^\mathrm{opt}$} l'ensemble des $\lambda \in \Lambda_v \smallsetminus \{0\}$ qui sont primitifs et tels que $\phi_{\lambda,v}^+(0)\in \ES{Q}$ avec 
$\rho_{\ES{Q},v}(\lambda) \geq \rho_{ \ES{Q},v}(\lambda')$ pour tout $\lambda'\in \Lambda_{v}$. Observons que si $\lambda\in \Lambda_{\ES{Q},v}^{\rm opt}$ alors $\rho_{\ES{Q},v}(\lambda)>0$. 
On a  $$\Lambda_{\ES{Q},g\cdot v}^\mathrm{opt}= g\bullet \Lambda_{\ES{Q},v}^\mathrm{opt}\quad \hbox{pour tout}\quad g\in G\ptf$$ 

\begin{remark}
\textup{\textit{A priori} la notion d'optimalitŽ dŽpend du choix de la norme $G$-invariante sur $\check{X}(G)$. 
}\end{remark}
\begin{theorem}[\cite{K1}, voir aussi \cite{R,K2}]\label{Kempf1}Soit $v\in V$ tel que $\overline{\ES{O}_v}\cap \ES{Q} \neq \emptyset$.
\begin{enumerate}
\item[(i)]L'ensemble $\Lambda_{ \ES{Q},v}^\mathrm{opt}$ est non vide.
\item[(ii)]Le sous-groupe parabolique $P_\lambda$ ne dŽpend pas de $\lambda \in \Lambda_{ \ES{Q},v}^{\mathrm{opt}}$; on le note $P_{ \ES{Q},v}$.\index{PQv@$P_{ \ES{Q},v}$}
\item[(iii)]Le radical unipotent $U_{ \ES{Q},v}$\index{UQv@$U_{ \ES{Q},v}$} de $P_{ \ES{Q},v}$ opre simplement transitivement sur $\Lambda_{ \ES{Q},v}^\mathrm{opt}$. En d'autres termes, pour toute 
composante de Levi $M$ de $P_{ \ES{Q},v}$, il existe un unique $\lambda\in \Lambda_{ \ES{Q},v}^\mathrm{opt}$ tel que $M_\lambda =M$. 
\item[(iv)]L'invariant $m_{ \ES{Q},v}(\lambda)$ ne dŽpend pas de $\lambda\in \Lambda_{ \ES{Q},v}^{\rm opt}$; on le note $m_{ \ES{Q},v}$.\index{mQv@$m_{ \ES{Q},v}$}
\end{enumerate}
\end{theorem}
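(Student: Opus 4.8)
Le plan est de suivre la m\'ethode de Kempf et Rousseau, dont le ressort est une estim\'ee de concavit\'e (in\'egalit\'e du ``m\'ediant'') pour l'invariant $m_{\ES{Q},v}$, coupl\'ee \`a la stricte convexit\'e de la norme $\|\cdot\|$. \textbf{\'Etape 1 (analyse dans un tore maximal fix\'e).} On fixe un tore maximal $T$ de $G$ et l'on travaille dans $\check{X}(T)_{\mbb{Q}}$, resp.\ $\check{X}(T)_{\mbb{R}}$. En d\'ecomposant en espaces de poids sous $T$ une famille finie $G$-stable de g\'en\'erateurs de l'id\'eal de $\ES{Q}$ dans $\overline{F}[V]$, on v\'erifie que pour $\lambda\in\check{X}(T)\cap\Lambda_v$ l'entier $m_{\ES{Q},v}(\lambda)$ est le minimum d'une famille finie de formes $\mbb{Z}$-lin\'eaires \'evalu\'ees en $\lambda$; ainsi $m_{\ES{Q},v}$ est la restriction \`a $\check{X}(T)\cap\Lambda_v$ d'une fonction concave, lin\'eaire par morceaux et positivement homog\`ene $\widehat{m}$ sur le c\^one (rationnel, poly\'edral) $\Lambda_v\cap\check{X}(T)_{\mbb{R}}$, tandis que $\|\cdot\|$ est une forme quadratique d\'efinie positive rationnelle. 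Sur ce c\^one, non vide dans le sous-c\^one o\`u $\widehat{m}>0$ d\`es que $\overline{\ES{O}_v}\cap\ES{Q}\neq\emptyset$ (crit\`ere de Hilbert--Mumford), la fonction $\rho_{\ES{Q},v}$ atteint son maximum --- sur la sph\`ere unit\'e, compacte --- le long d'un unique rayon: la super-additivit\'e de $\widehat{m}$ (cons\'equence de la concavit\'e et de l'homog\'en\'eit\'e) jointe \`a la stricte convexit\'e de la boule unit\'e exclut deux rayons distincts (leur somme a une norme $<2$ mais une valeur $\geq 2B_T$), et la maximisation de la forme lin\'eaire active \`a l'optimum sur un ellipso\"ide rationnel montre que ce rayon est rationnel. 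On note $\lambda_T\in\check{X}(T)$ son unique g\'en\'erateur primitif.

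\textbf{\'Etape 2 (globalisation et (i)).} Tout $\lambda\in\check{X}(G)$ est contenu dans un tore maximal, et les tores maximaux de $G$ sont conjugu\'es; le crit\`ere de Hilbert--Mumford donne donc $B_{\ES{Q},v}:=\sup\{\rho_{\ES{Q},v}(\lambda):\lambda\in\Lambda_v\}>0$, \'egal \`a $\sup_T\rho_{\ES{Q},v}(\lambda_T)$. Le point crucial est que ce supremum est \emph{atteint}. On invoque ici la g\'eom\'etrie de l'immeuble sph\'erique de $G$: les co-caract\`eres modulo homoth\'etie, munis de la distance issue de $\|\cdot\|$, r\'ealisent un c\^one complet $\mathrm{CAT}(0)$ (les appartements \'etant les $\check{X}(T)_{\mbb{R}}$ recoll\'es le long de leurs faces), les fonctions concaves de l'\'Etape 1 se recollent en une fonction globalement concave, $1$-lipschitzienne et positivement homog\`ene, et une fonction concave born\'ee sup\'erieurement sur un espace $\mathrm{CAT}(0)$ complet atteint son sup sur la sph\`ere unit\'e en un convexe g\'eod\'esique non vide; l'\'Etape 1 force alors ce convexe \`a \^etre r\'eduit \`a un point. (On peut \'eviter l'immeuble par l'argument direct de Kempf: si $\lambda,\mu$ sont quasi-optimaux dans des tores $T,T'$, on conjugue $\mu$ par un \'el\'ement du radical unipotent de $P_\lambda$ dans un tore contenant $\mathrm{im}(\lambda)$, l'in\'egalit\'e du m\'ediant fabrique dans cet appartement commun un co-caract\`ere faisant au moins aussi bien, puis on conclut par la discr\'etude des valeurs enti\`eres de $m_{\ES{Q},v}$.) D'o\`u $\Lambda_{\ES{Q},v}^{\mathrm{opt}}\neq\emptyset$; on pose $P:=P_\lambda$ pour un $\lambda$ optimal.

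\textbf{\'Etape 3 ((ii), (iii) et (iv)).} Soient $\lambda,\mu\in\Lambda_{\ES{Q},v}^{\mathrm{opt}}$. Montrons d'abord $\mu\in P_\lambda$: les tores maximaux de $P_\lambda$ \'etant conjugu\'es sous son radical unipotent $U_\lambda$, on choisit $u\in U_\lambda$ tel que $u\bullet\mu$ soit dans un tore maximal $T$ contenant $\mathrm{im}(\lambda)$; par le Lemme~\ref{invariance des m sous P} et la $G$-\'equivariance de $m_{\ES{Q},v}$, le co-caract\`ere $u\bullet\mu$ reste optimal, donc dans l'appartement de $T$ \`a la fois $\lambda$ et $u\bullet\mu$ r\'ealisent $B_{\ES{Q},v}$, d'o\`u $u\bullet\mu=\lambda$ par l'unicit\'e de l'\'Etape 1; si $\mu\notin P_\lambda$, on utiliserait au contraire, apr\`es conjugaison dans un tore commun, l'in\'egalit\'e du m\'ediant pour d\'epasser $B_{\ES{Q},v}$, contradiction. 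Donc tout $\mu$ optimal est dans $P_\lambda$, et par sym\'etrie $P_\mu=P_\lambda=:P_{\ES{Q},v}$, ce qui est (ii). Le calcul pr\'ec\'edent montre de plus que pour un tore maximal $T$ d'une composante de Levi $M$ fix\'ee de $P_{\ES{Q},v}$ il y a un unique co-caract\`ere optimal $\lambda$ avec $\mathrm{im}(\lambda)\subset M$ (\`a savoir $\lambda_T$), i.e.\ un unique avec $M_\lambda=M$; comme le radical unipotent $U_{\ES{Q},v}$ de $P_{\ES{Q},v}$ op\`ere simplement transitivement sur les composantes de Levi de $P_{\ES{Q},v}$, il op\`ere simplement transitivement sur $\Lambda_{\ES{Q},v}^{\mathrm{opt}}$, d'o\`u (iii). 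Enfin, pour $\lambda,\mu$ optimaux on \'ecrit $\mu=u\bullet\lambda$ avec $u\in U_{\ES{Q},v}\subset P_\lambda$ par (iii); la $G$-\'equivariance appliqu\'ee \`a $g=u^{-1}$ donne $m_{\ES{Q},v}(\mu)=m_{\ES{Q},u^{-1}\cdot v}(\lambda)$, et comme $u^{-1}\in P_\lambda$ le Lemme~\ref{invariance des m sous P} donne $m_{\ES{Q},u^{-1}\cdot v}(\lambda)=m_{\ES{Q},v}(\lambda)$; d'o\`u (iv).

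\textbf{Principal obstacle.} L'essentiel est dans les \'Etapes 2 et 3: le fait que le supremum d\'efinissant l'optimalit\'e soit atteint, et que tous les co-caract\`eres optimaux partagent un m\^eme sous-groupe parabolique. Les deux reposent sur le m\^eme ingr\'edient --- soit la g\'eom\'etrie $\mathrm{CAT}(0)$ de l'immeuble de $G$ (voie de Rousseau), soit l'estim\'ee ``conjuguer et prendre la diagonale'' de Kempf combin\'ee \`a la discr\'etude de l'invariant entier $m_{\ES{Q},v}$ --- et c'est l\`a que r\'eside la quasi-totalit\'e de la difficult\'e; l'\'Etape 1 et le calcul final de (iv) sont de routine une fois cette machinerie en place.
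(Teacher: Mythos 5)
Le texte ne d\'emontre pas ce th\'eor\`eme~: il est import\'e tel quel de Kempf et Rousseau (r\'ef\'erences [K1], [R], [K2] de la bibliographie), de sorte que votre proposition est \`a juger comme une reconstruction de la preuve classique. Elle en suit bien le sch\'ema~: r\'eduction \`a un appartement o\`u $m_{\ES{Q},v}$ est un minimum de formes lin\'eaires (ce qui, soit dit en passant, se justifie proprement en se ramenant \`a un $G$-module via un morphisme $G$-\'equivariant $\pi: V\rightarrow W$ avec $\ES{Q}=\pi^{-1}(0)$ comme fibre sch\'ematique, ou en utilisant la $T$-stabilit\'e de l'id\'eal de $\ES{Q}$), unicit\'e du rayon optimal par concavit\'e et stricte convexit\'e de la norme, puis recollement \`a l'aide du lemme \ref{invariance des m sous P}; la d\'eduction de (iv), et celle de (iii) \`a partir de (ii), sont correctes telles quelles.

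En revanche, les deux points que vous d\'esignez vous-m\^eme comme le c\oe ur de l'affaire sont pr\'ecis\'ement ceux o\`u votre r\'edaction ne tient pas. D'abord l'atteinte du supremum dans (i)~: l'\'enonc\'e invoqu\'e (\guill{une fonction concave major\'ee sur un espace CAT(0) complet atteint son sup sur la sph\`ere unit\'e}) est faux en g\'en\'eral, et la sph\`ere unit\'e de l'immeuble vectoriel sur $\overline{F}$ n'est pas compacte; la variante \guill{\`a la Kempf} que vous esquissez ne suffit pas davantage, car la discr\'etude des valeurs enti\`eres de $m_{\ES{Q},v}$ n'emp\^eche nullement un supremum de quotients $m(\lambda)/\|\lambda\|$ d'\^etre approch\'e sans \^etre atteint. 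L'argument r\'eel est la finitude des \guill{\'etats}~: apr\`es lin\'earisation, les ensembles de poids susceptibles d'intervenir sont des parties d'un ensemble fini fix\'e, donc la borne $B_T$ ne prend qu'un nombre fini de valeurs quand $T$ varie et le sup est un max. Ensuite les conjugaisons autoris\'ees~: le lemme \ref{invariance des m sous P} permet de conjuguer $\mu$ par $P_\mu$ (resp. $\lambda$ par $P_\lambda$) sans changer la valeur de $m_{\ES{Q},v}$, mais pas de conjuguer $\mu$ par $U_\lambda$. Or votre \'Etape 3 commence par conjuguer $\mu$ dans un tore maximal de $P_\lambda$ par un \'el\'ement de $U_\lambda$, ce qui d'une part suppose d\'ej\`a $\mathrm{im}(\mu)\subset P_\lambda$ --- la conclusion vis\'ee ---, d'autre part ne pr\'eserve l'optimalit\'e que si l'on sait d\'ej\`a $U_\lambda\subset P_\mu$; la m\^eme confusion appara\^it dans la parenth\`ese de l'\'Etape 2. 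Le sch\'ema correct est~: choisir un tore maximal $T\subset P_\lambda\cap P_\mu$, remplacer $\lambda$ par $p\bullet\lambda\in\check{X}(T)$ avec $p\in P_\lambda$ et $\mu$ par $q\bullet\mu\in\check{X}(T)$ avec $q\in P_\mu$ (toujours optimaux par le lemme \ref{invariance des m sous P}), puis conclure $p\bullet\lambda=q\bullet\mu$ par l'unicit\'e dans l'appartement, d'o\`u $P_\lambda=P_{p\bullet\lambda}=P_{q\bullet\mu}=P_\mu$; vos arguments pour (iii) et (iv) deviennent alors valides sans changement.
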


\begin{corollary}\label{cor Kempf1}
\begin{enumerate}
\item[(i)]Pour $g\in G$, on a $P_{ \ES{Q},g\cdot v}= \mathrm{Int}_g(P_{ \ES{Q},v})$.
\item[(ii)] Pour $g\in G$, on a $m_{ \ES{Q},g \cdot v}= m_{ \ES{Q},v}$.
\item[(iii)] Le stabilisateur $\mathrm{Stab}_G(v)= \{g\in G\,\vert \, g\cdot v = v\}$ est contenu dans $P_{ \ES{Q},v}$. 
\end{enumerate}
\end{corollary}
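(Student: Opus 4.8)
The idea is that all three assertions are immediate formal consequences of Theorem \ref{Kempf1} together with the $G$-equivariance identities established just before it, namely $\Lambda_{g\cdot v} = g\bullet\Lambda_v$, $m_{\ES{Q},g\cdot v}(g\bullet\lambda) = m_{\ES{Q},v}(\lambda)$, $\rho_{\ES{Q},g\cdot v}(g\bullet\lambda) = \rho_{\ES{Q},v}(\lambda)$ and, crucially, $\Lambda_{\ES{Q},g\cdot v}^{\mathrm{opt}} = g\bullet\Lambda_{\ES{Q},v}^{\mathrm{opt}}$. Note first that if $\overline{\ES{O}_v}\cap\ES{Q}\neq\emptyset$ then $\overline{\ES{O}_{g\cdot v}}\cap\ES{Q} = \overline{g\cdot\ES{O}_v}\cap\ES{Q} = g\cdot(\overline{\ES{O}_v}\cap\ES{Q})\neq\emptyset$ since $\ES{Q}$ is $G$-invariant and $g$ acts by a homeomorphism, so Theorem \ref{Kempf1} applies to $g\cdot v$ as well and the objects $P_{\ES{Q},g\cdot v}$, $m_{\ES{Q},g\cdot v}$ are defined.

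For (i): pick any $\lambda\in\Lambda_{\ES{Q},v}^{\mathrm{opt}}$, so that $P_{\ES{Q},v} = P_\lambda$ by Theorem \ref{Kempf1}(ii). Then $g\bullet\lambda = \mathrm{Int}_g\circ\lambda\in\Lambda_{\ES{Q},g\cdot v}^{\mathrm{opt}}$ by the equivariance of the optimal set, so $P_{\ES{Q},g\cdot v} = P_{g\bullet\lambda}$. It remains to observe the elementary identity $P_{g\bullet\lambda} = \mathrm{Int}_g(P_\lambda)$: indeed $\lim_{t\to 0}\mathrm{Int}_{t^{g\bullet\lambda}}(h) = \lim_{t\to 0}\mathrm{Int}_g\mathrm{Int}_{t^\lambda}\mathrm{Int}_{g^{-1}}(h)$ exists iff $\lim_{t\to 0}\mathrm{Int}_{t^\lambda}(g^{-1}hg)$ exists, i.e. iff $g^{-1}hg\in P_\lambda$. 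Hence $P_{\ES{Q},g\cdot v} = \mathrm{Int}_g(P_{\ES{Q},v})$. For (ii): with the same $\lambda$, Theorem \ref{Kempf1}(iv) gives $m_{\ES{Q},v} = m_{\ES{Q},v}(\lambda)$ and $m_{\ES{Q},g\cdot v} = m_{\ES{Q},g\cdot v}(g\bullet\lambda)$; the displayed equivariance $m_{\ES{Q},g\cdot v}(g\bullet\lambda) = m_{\ES{Q},v}(\lambda)$ then yields $m_{\ES{Q},g\cdot v} = m_{\ES{Q},v}$.

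For (iii): let $g\in\mathrm{Stab}_G(v)$, so $g\cdot v = v$. By part (i), $\mathrm{Int}_g(P_{\ES{Q},v}) = P_{\ES{Q},g\cdot v} = P_{\ES{Q},v}$; equivalently $g$ normalizes the parabolic subgroup $P_{\ES{Q},v}$. Since a parabolic subgroup of a connected reductive group is its own normalizer, $g\in N_G(P_{\ES{Q},v}) = P_{\ES{Q},v}$. (Alternatively, and more in the spirit of the dynamic definitions used here: $\Lambda_{\ES{Q},v}^{\mathrm{opt}} = \Lambda_{\ES{Q},g\cdot v}^{\mathrm{opt}} = g\bullet\Lambda_{\ES{Q},v}^{\mathrm{opt}}$, so $g$ permutes the optimal set; since by Theorem \ref{Kempf1}(iii) the optimal set is a single orbit under $U_{\ES{Q},v}\subset P_{\ES{Q},v}$ and $g\bullet\lambda$ and $\lambda$ have the same associated parabolic, one deduces $g\in P_{\ES{Q},v}$ from Lemma \ref{invariance des m sous P} combined with the uniqueness in Theorem \ref{Kempf1}(iii).) There is no real obstacle here; the only point that requires a word of justification beyond bare formalism is the $G$-invariance of the hypothesis $\overline{\ES{O}_v}\cap\ES{Q}\neq\emptyset$ under translation by $g$, and — for the self-normalizing property of parabolics invoked in (iii) — a reference to the standard structure theory (e.g. \cite[ch.~IV]{B}).
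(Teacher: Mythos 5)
Your proof is correct and is exactly the argument the paper has in mind: the corollary is stated without proof precisely because it follows formally from Theorem \ref{Kempf1} together with the equivariance identities $\Lambda_{\ES{Q},g\cdot v}^{\mathrm{opt}}=g\bullet\Lambda_{\ES{Q},v}^{\mathrm{opt}}$ and $m_{\ES{Q},g\cdot v}(g\bullet\lambda)=m_{\ES{Q},v}(\lambda)$, plus the identity $P_{g\bullet\lambda}=\mathrm{Int}_g(P_\lambda)$ and the self-normalizing property of parabolic subgroups for (iii). Nothing to add.
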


\begin{definition}
\textup{Les ŽlŽments $v\in V$ tels que $\overline{\ES{O}_v}\cap  \ES{Q} \neq \emptyset$ sont dits \textit{$(G, \ES{Q})$-instables} ou simplement 
\textit{$ \ES{Q}$-instables} (sous-entendu pour l'action de $G$ sur $V$).}
\end{definition}

On note\index{NGVQ@$\ES{N}^G(V,\ES{Q})$} $$\ES{N}^G(V, \ES{Q})\subset V$$ le sous-ensemble formŽ des ŽlŽments $ \ES{Q}$-instables. 
Tout ŽlŽment $v\in V$ est par dŽfinition $\ES{F}_v$-instable et l'on a 
$$\ES{N}^G(V,\ES{F}_v)= \{v'\in V\, \vert \, \ES{F}_{v'}= \ES{F}_v\}= \{v'\in V\,\vert \, \ES{F}_v \subset \overline{\ES{O}_{v'}}\}\ptf$$

\begin{remark}\label{rŽduction au cas d'un G-module}
\textup{
\begin{enumerate}
\item[(i)]D'aprs \cite[lemma~1.1.b]{K1}, il existe un $G$-module $W$, \cad un $\overline{F}$-espace vectoriel (de dimension finie) muni 
d'une action algŽbrique linŽaire de $G$ (i.e. un morphisme de variŽtŽs $G\rightarrow \mathrm{GL}(W)$), et un morphisme de variŽtŽs $G$-Žquivariant $\pi: V \rightarrow W$ tels que $ \ES{Q}$ soit 
la fibre schŽmatique $\pi^{-1}(0)$ au-dessus de la sous-variŽtŽ fermŽe $\{0\}$ de $W$. Ce morphisme permet de ramener 
l'Žtude du sous-ensemble $\ES{N}^G(V, \ES{Q})\subset V$ ˆ celle de $\ES{N}^G(W,0)\subset W$. 
\item[(ii)]Supposons que $V$ soit un $G$-module et considrons la sous-variŽtŽ $G$-invariante fermŽe $\ES{Q}=\{0\}$ de $V$. Un ŽlŽment 
$v\in V$ est dit \textit{instable} s'il appartient ˆ $\ES{N}^G(V,0)$ et \textit{semi-stable} sinon. Notons $\overline{F}[V]$ l'algbre des fonctions polynomiales sur $V$ et $\overline{F}[V]^{G}\subset \overline{F}[V]$ la sous-$F$-algbre 
formŽe des ŽlŽments $G$-invariants. On sait que $\overline{F}[V]^G$ est une $\overline{F}$-algbre unifre de type fini (Nagata). 
La \guill{conjecture de Mumford} dit qu'un ŽlŽment $v\in V\smallsetminus \{0\}$ est semi-stable si et seulement s'il existe une fonction polynomiale 
$f\in \overline{F}[V]^G$ homogne de degrŽ $\geq 1$ telle que $f(v)\neq 0$. Si $p=1$, c'est une consŽquence du thŽorme de Weyl sur la complte 
rŽductibilitŽ des reprŽsentations de $G$; d'ailleurs on peut prendre $f$ de degrŽ $1$. Si $p>1$, la conjecture de Mumford a ŽtŽ prouvŽe par Haboush \cite{Ha}; et l'on peut prendre $f$ de degrŽ une puissance de $p$. 
\end{enumerate}}
\end{remark}

\P\hskip1mm\textit{Une version rationnelle du thŽorme de Kempf-Rousseau. ---} 
Supposons de plus que la $G$-variŽtŽ (affine) $V$ soit dŽfinie sur $F$, \cad que le groupe $G$, la variŽtŽ $V$ et l'action de $G$ sur $V$ soient dŽfinis sur $F$. On peut supposer que la norme 
$G$-invariante $\| \, \|$ sur $\check{X}(G)$ est une \textit{$F$-norme}, 
\cad qu'elle vŽrifie $\| {^\gamma\lambda} \|= \| \lambda \|$ pour tout $\lambda \in \check{X}(G)$ et tout $\gamma \in \Gamma_F$. 
Il suffit pour cela de fixer un tore maximal $T_0$ de $G$ qui soit dŽfini sur $F$. 
Ce tore se dŽploie sur une sous-extension finie $F'/F$ de $F^\mathrm{sep}/F$, par consŽquent l'action de $\Gamma_F$ sur $\check{X}(T_0)$ 
se factorise par un quotient fini et l'on peut s'arranger pour que la forme bilinŽaire symŽtrique dŽfinie positive $W^G(T_0)$-invariante 
$(\cdot,\cdot):\check{X}(T_0)\times \check{X}(T_0) \rightarrow \mbb{Z} $ soit aussi $\Gamma_F$-invariante. 

On suppose aussi que la sous-variŽtŽ fermŽe $G$-invariante $\ES{Q}$ de $V$ est dŽfinie sur $F$. Sous ces hypothses, on peut toujours choisir un morphisme 
$G$-Žquivariant $\pi:V\rightarrow W$ comme dans la remarque \ref{rŽduction au cas d'un G-module}\,(i) qui soit dŽfini sur $F$. 
De plus Kempf \cite{K1} donne une version rationnelle de son rŽsultat, valable seulement si $F$ est \textit{parfait} (i.e. $F=F^\mathrm{rad}$): 

\begin{proposition}[\cite{K1}]\label{Kempf2}On suppose que $F$ est parfait. 
Soit $v\in V(F)$ tel que $\overline{\ES{O}_v} \cap \ES{Q}\neq \emptyset$. 
\begin{enumerate}
\item[(i)] L'ensemble $\Lambda_{\ES{Q},v}^\mathrm{opt}$ est $\Gamma_F$-invariant.
\item[(ii)] Le sous-groupe parabolique $P_{\ES{Q},v}$ de $G$ est dŽfini sur $F$.
\item[(iii)] Il existe un co-caractre $\lambda\in \Lambda_{\ES{Q},v}^\mathrm{opt}$ qui soit dŽfini sur $F$.
\end{enumerate}
\end{proposition}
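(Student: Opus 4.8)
The plan is to deduce all three assertions from the geometric theorem~\ref{Kempf1} by a Galois-descent argument, exploiting that the norm $\|\,\|$ has been chosen to be an $F$-norm and that $G$, the $G$-action, $\ES{Q}$ and the point $v$ are all defined over $F$. If $v\in\ES{Q}$ then $\Lambda_{\ES{Q},v}^\mathrm{opt}=\{0\}$, $P_{\ES{Q},v}=G$, and there is nothing to prove, so assume $v\notin\ES{Q}$. Fix $\gamma\in\Gamma_F$. Since ${}^\gamma v=v$ and the action morphism is defined over $F$, one checks on $\overline{F}$-points that $\phi_{v,{}^\gamma\lambda}={}^\gamma\phi_{v,\lambda}$ for every $\lambda\in\check{X}(G)$; hence $\lambda\in\Lambda_v$ if and only if ${}^\gamma\lambda\in\Lambda_v$, and then $\phi_{v,{}^\gamma\lambda}^+(0)={}^\gamma\bigl(\phi_{v,\lambda}^+(0)\bigr)$. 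As $\ES{Q}$ is defined over $F$, hence $\Gamma_F$-stable, the condition ``$\phi_{v,\lambda}^+(0)\in\ES{Q}$'' is $\Gamma_F$-invariant, and applying $\gamma$ identifies the schematic fibre $(\phi_{v,\lambda}^+)^{-1}(\ES{Q})$ with $(\phi_{v,{}^\gamma\lambda}^+)^{-1}(\ES{Q})$; since $\gamma$ preserves the degree of a divisor on $\mbb{G}_\mathrm{a}$, we get $m_{\ES{Q},v}({}^\gamma\lambda)=m_{\ES{Q},v}(\lambda)$. Because $\|\,\|$ is an $F$-norm, $\|{}^\gamma\lambda\|=\|\lambda\|$, so $\rho_{\ES{Q},v}({}^\gamma\lambda)=\rho_{\ES{Q},v}(\lambda)$; and $\gamma$ permutes the primitive co-characters. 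Thus $\gamma$ preserves both the set of $\lambda\in\Lambda_v\smallsetminus\{0\}$ that are primitive with $\phi_{v,\lambda}^+(0)\in\ES{Q}$ and the function $\rho_{\ES{Q},v}$ on it, hence it preserves the locus where $\rho_{\ES{Q},v}$ is maximal, i.e. $\Lambda_{\ES{Q},v}^\mathrm{opt}$; this proves (i).

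For (ii): by \ref{Kempf1}\,(ii) we have $P_{\ES{Q},v}=P_\lambda$ for every $\lambda\in\Lambda_{\ES{Q},v}^\mathrm{opt}$, and since $G$ is defined over $F$ the dynamic constructions are $\Gamma_F$-equivariant, ${}^\gamma P_\lambda=P_{{}^\gamma\lambda}$ and ${}^\gamma M_\lambda=M_{{}^\gamma\lambda}$. By (i), ${}^\gamma\lambda\in\Lambda_{\ES{Q},v}^\mathrm{opt}$, so $P_{{}^\gamma\lambda}=P_{\ES{Q},v}$; therefore ${}^\gamma P_{\ES{Q},v}=P_{\ES{Q},v}$ for all $\gamma\in\Gamma_F$, i.e. $P_{\ES{Q},v}$ is $\Gamma_F$-stable. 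A $\Gamma_F$-stable closed subvariety of an $F$-variety is defined over $F^\mathrm{rad}$ (see~\ref{notations et rappels}), and $F^\mathrm{rad}=F$ as $F$ is perfect; hence $P_{\ES{Q},v}$ is defined over $F$.

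For (iii): being a parabolic subgroup defined over $F$, $P_{\ES{Q},v}$ admits a Levi component $M$ defined over $F$. By \ref{Kempf1}\,(iii) there is a unique $\lambda\in\Lambda_{\ES{Q},v}^\mathrm{opt}$ with $M_\lambda=M$. For $\gamma\in\Gamma_F$ we have ${}^\gamma\lambda\in\Lambda_{\ES{Q},v}^\mathrm{opt}$ by (i) and $M_{{}^\gamma\lambda}={}^\gamma M_\lambda={}^\gamma M=M$, so the uniqueness forces ${}^\gamma\lambda=\lambda$. Thus $\lambda$ is fixed by $\Gamma_F$; equivalently the morphism $\lambda\colon\mbb{G}_\mathrm{m}\to G$ induces a $\Gamma_F$-equivariant map on $\overline{F}$-points, and since $F$ is perfect (so that $\lambda$ is trivially defined over $F^\mathrm{s\acute{e}p}=\overline{F}$), the morphism form of the Galois criterion recalled in~\ref{notations et rappels} shows that $\lambda$ is defined over $F$.

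No serious obstacle is expected: once~\ref{Kempf1} is available the argument is a formal Galois descent, and the only essential use of perfectness is to upgrade ``$\Gamma_F$-stable'' to ``defined over $F$'' in (ii) and ``$\Gamma_F$-equivariant morphism'' to ``defined over $F$'' in (iii). The point demanding the most care is the $\Gamma_F$-invariance of the integer $m_{\ES{Q},v}(\lambda)$, which rests on its scheme-theoretic definition as the degree of a fibre together with the $F$-rationality of $\ES{Q}$ and of $v$.
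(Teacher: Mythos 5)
Votre démonstration est correcte: l'invariance de $\Lambda_{\ES{Q},v}^\mathrm{opt}$ sous $\Gamma_F$ (via la $F$-rationalité de $v$, de $\ES{Q}$, de l'action et le choix d'une $F$-norme), la $\Gamma_F$-stabilité de $P_{\ES{Q},v}$ jointe à la perfection de $F$, puis l'unicité du co-caractère optimal associé à une composante de Levi définie sur $F$, constituent exactement l'argument attendu, et les deux seuls usages de la perfection sont correctement identifiés. Le texte ne donne pas de démonstration propre — il renvoie à Kempf \cite{K1} — et votre descente galoisienne est essentiellement celle de l'article original, donc il n'y a pas de divergence d'approche à signaler.
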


\begin{remark}
\textup{
Pour $v\in V(F)$, l'ensemble $\ES{O}_v(F)$ est rŽunion de $G(F)$-orbites. Si $F$ est parfait, la structure de ces $G(F)$-orbites est contr™lŽe 
par le groupe de cohomologie galoisienne $\mathrm{H}^1(\Gamma_F,G^v(F))$. La structure des $G(F)$-orbites contenues dans $\overline{\ES{O}_v}(F)$ 
est nettement plus compliquŽe, encore plus si $F$ n'est pas parfait.
}
\end{remark}

\begin{exemple}\label{exemple ŽlŽment primitif sur un corps non parfait}
\textup{
Supposons $F$ non parfait, de caractŽristique $p$. ConsidŽrons l'action par conjugaison de $G=\textrm{GL}_p$ sur lui-mme. 
Soit $\gamma\in G(F)$ un ŽlŽment primitif tel que l'extension $F[\gamma]/F$ 
soit radicielle (de degrŽ $p$). Le polyn™me caractŽristique de $\gamma$ est de la forme $(T-\delta)^p$ pour un ŽlŽment $\delta\in F^\mathrm{rad}$ 
tel que $\delta^p = \det(\gamma)$. La $G$-orbite fermŽe $\ES{F}_\gamma$ dans $\overline{\ES{O}_\gamma}$ est donnŽe par $\ES{F}_\gamma=\{\gamma_1\}$ avec $\gamma_1=\mathrm{diag}(\delta,\ldots,\delta)\in G(F^{\textrm{rad}})$. 
Il existe un co-caractre $\lambda_1\in \check{X}(G)$ dŽfini sur $F[\delta]$ tel que $\lim_{t\rightarrow 0} \mathrm{Int}_{t^{\lambda_1}}(\gamma)=\gamma_1$. 
En revanche il n'existe aucun co-caractre $\lambda\in \check{X}(G)$ qui soit dŽfini sur $F$ et tel que 
$\lim_{t\rightarrow 0} \mathrm{Int}_{t^\lambda}(\gamma)= \gamma_1$. En effet si un tel $\lambda$ existait, on aurait $\gamma \in P_\lambda(F)$ 
ce qui contredit l'hypothse de primitivitŽ sur $\gamma$ (puisque $\gamma\notin \ES{O}_{\gamma_1}= \{ \gamma_1\}$, on a $P_\lambda \subsetneq G$).  
}
\end{exemple}

\subsection{Le critre d'instabilitŽ de Hilbert-Mumford rationnel}\label{le critre d'instabilitŽ de HM rationnel} On continue avec les hypothses du dernier paragraphe de \ref{le critre d'instabilitŽ de HM}: 
$G$, $V$ et l'action de $G$ sur $V$ sont dŽfinis sur $F$; $\|\,\|$ est une $F$-norme $G$-invariante sur $\check{X}(G)$; $\ES{Q}$ est une sous-$F$-variŽtŽ 
fermŽe $G$-invariante de $V$. 

La suppression de l'hypothse \guill{$F$ parfait} dans la version rationnelle du rŽsultat de Kempf (\ref{Kempf2}) est due ˆ Hesselink \cite{H1}, dans le cas o 
$\ES{Q}= \{e_V\}$ pour un point $F$-rationnel $G$-invariant $e_V$ de $V$. Le cas d'une sous-$F$-variŽtŽ fermŽe $G$-invariante $\ES{Q}$ quelconque est dž 
ˆ Bart-Herpel-Martin-R\"ohrle-Tange  \cite{BMRT,BHMR}. Dans les deux cas, les auteurs prouvent une version \guill{uniforme} du critre de Hilbert-Mumford rationnel. 
C'est cette version uniforme que nous reprenons ici. 

Soit\index{XcheckFG@$\check{X}_F(G)$} $$\check{X}_F(G)\subset \check{X}(G)$$ le sous-ensemble formŽ des co-caractres de $G$ qui sont dŽfinis sur $F$. 
Observons que pour $v\in V(F)$ et $\lambda\in \check{X}_F(G)$, le morphisme 
$\phi_{v,\lambda}$ est dŽfini sur $F$; si de plus $\lambda\in \Lambda_v$ alors le morphisme 
$\phi_{v,\lambda}^+$ est lui aussi dŽfini sur $F$ et $\phi_{v,\lambda}^+(0)$ appartient ˆ $V(F)$.

Pour tout sous-ensemble non vide $Z\subset V$, on pose\index{LambdaZ@$\Lambda_Z$}\index{LambdaFZ@$\Lambda_{F,Z}$} $$\Lambda_Z= \bigcap_{v\in Z}\Lambda_v\quad \hbox{et}\quad \Lambda_{F,Z}= \check{X}_F(G)\cap \Lambda_Z\ptf$$ 
On a $$\Lambda_{F,g\cdot Z}= g\bullet \Lambda_{F,Z}\quad \hbox{pour tout} \quad g\in G(F)\ptf$$ 

\begin{definition}\label{ŽlŽments (F,G,Q)-instables}
\textup{Un sous-ensemble non vide $Z\subset V$ est dit \textit{uniformŽment $(F,G,\ES{Q})$-instable}, ou simplement \textit{uniformŽment $(F,\ES{Q})$-instable} (sous-entendu pour l'action de $G$ sur 
$V$), s'il existe un co-caractre $\lambda\in \Lambda_{F,Z}$ tel que $\phi_{\lambda,v}^+(0)\in  \ES{Q}$ pour tout $v\in Z$.}
\end{definition}

Pour $\lambda\in \Lambda_Z$, on pose\index{mQZl@$m_{\ES{Q},Z}(\lambda)$} 
$$m_{ \ES{Q},Z}(\lambda) = \inf \{m_{ \ES{Q},v}(\lambda)\,\vert \, v\in Z\} \in \mbb{N}\cup \{+\infty\}\ptf$$ On a donc:
\begin{itemize}
\item $m_{ \ES{Q},Z}(\lambda)=0$ si et seulement s'il existe un $v\in Z$ tel que $\phi_{\lambda,v}^+(0)\notin  \ES{Q}$;
\item $m_{ \ES{Q},Z}(\lambda)\in \mbb{N}^*$ si et seulement si $Z\not\subset  \ES{Q}$ et $\phi_{\lambda,v}^+(0)\in  \ES{Q}$ pour tout $v\in Z$;
\item $m_{ \ES{Q},Z}(\lambda)= + \infty $ si et seulement si $Z\subset  \ES{Q}$. 
\end{itemize}
On pose aussi\index{rhoQZl@$\rho_{ \ES{Q},Z}(\lambda)$} $$\rho_{ \ES{Q},Z}(\lambda)= \frac{m_{ \ES{Q},Z}(\lambda)}{\|\lambda\|}\in \mbb{R}_+\cup \{+\infty\}\ptf$$ 
On dŽfinit comme plus haut un sous-ensemble de co-caractres \textit{$(F,\ES{Q},Z)$-optimaux} (relativement ˆ l'action de $G$ sur $V$)\index{LambdaFQZopt@$\Lambda_{F,\ES{Q},Z}^{\mathrm{opt}}$}
$$\Lambda_{F,\ES{Q},Z}^{\mathrm{opt}}\subset \Lambda_{F,Z}\ptf$$ 
Si $Z\subset  \ES{Q}$, on pose $\Lambda_{F,\ES{Q},Z}^\mathrm{opt}=\{0\}$; 
sinon, on note $\Lambda_{F,\ES{Q},Z}^\mathrm{opt}$ l'ensemble des $\lambda \in \Lambda_{F,Z} \smallsetminus \{0\}$ qui sont primitifs et tels que:
\begin{itemize}
\item $\phi_{\lambda,v}^+(0)\in  \ES{Q}$ pour tout $v\in Z$; 
\item $\rho_{\ES{Q},Z}(\lambda) \geq \rho_{ \ES{Q},Z}(\lambda')$ pour tout $\lambda'\in \Lambda_{F,Z}$.
\end{itemize} 
Observons que si $\lambda\in \Lambda_{ \ES{Q},Z}^{\rm opt}$ alors $\rho_{ \ES{Q},Z}(\lambda)>0$. 
On a  $$\Lambda_{F,\ES{Q},g\cdot Z}^\mathrm{opt}= g\bullet \Lambda_{F,\ES{Q},Z}^\mathrm{opt}\quad \hbox{pour tout}\quad g\in G(F)\ptf$$ 

\begin{remark}
\textup{Comme dans le cas gŽomŽtrique ($\overline{F}=F$), la notion de $F$-optimalitŽ dŽpend \textit{a priori} du choix de la $F$-norme $G$-invariante 
$\| \, \|$ sur $\check{X}(G)$. Plus prŽcisŽment, elle dŽpend de la norme $G(F)$-invariante sur $\check{X}_F(G)$ induite par $\|\,\|$.}
\end{remark} 

\begin{convention}
\textup{Lorsque $Z=\{v\}$ pour un ŽlŽment $v\in V$, on supprimera l'adjectif \guill{uniformŽment} et on remplacera l'indice $\{v\}$ par un indice $v$ dans les notations. 
Lorsque $F=\overline{F}$, on supprimera l'indice $F$ dans les notations. Ainsi pour $Z=\{v\}$ et $F=\overline{F}$, on retrouve les notations 
de \ref{le critre d'instabilitŽ de HM}.}
\end{convention}

\begin{remark}\label{Kempf3}
\textup{
Soit $v\in V(F^{\rm rad})$ tel que $\overline{\ES{O}_v}\cap \ES{Q} \neq \emptyset$. 
D'aprs \ref{Kempf2}, l'ensemble $\Lambda_{F^\mathrm{rad},\ES{Q},v}^\mathrm{opt}$ est non vide et il est Žgal ˆ $\check{X}_{F^\mathrm{rad}}(G)\cap \Lambda_{\ES{Q},v}^\mathrm{opt}$.
}
\end{remark}

On note $$\FN^G(V,\ES{Q}) \subset \ES{N}^G(V,\ES{Q})$$ le sous-ensemble formŽ des ŽlŽments qui sont $(F,\ES{Q})$-instables et on pose\index{NFGVQ@$\FN^G(V,\ES{Q})$, $\NF^G(V,\ES{Q})$} 
$$\NF^G(V,\ES{Q})= V(F) \cap \FN^G(V,\ES{Q})\ptf$$

\begin{remark}
\textup{Soit $v\in V$. 
\begin{enumerate}
\item[(i)] Si $v\in V(F)$, l'orbite $\ES{O}_v$ est dŽfinie sur $F$, par consŽquent la $G$-orbite fermŽe $\ES{F}_v\subset \overline{\ES{O}_v}$ l'est aussi. 
Mais cela n'implique pas que l'ensemble $\ES{F}_v(F)$ soit non vide. Si de plus il existe un $\lambda \in \Lambda_{F,v}$ tel que $\phi_{v,\lambda}^+ (0)\in \ES{F}_v$, 
alors $\ES{F}_v(F)\neq \emptyset$ (car $\phi_{v,\lambda}^+ (0)\in \ES{F}_v(F)$).
\item[(ii)] Si $v\in V(F)$ et si la $G$-orbite fermŽe $\ES{F}_v$ n'est pas dŽfinie sur $F$, alors il n'existe aucun co-caractre qui soit $(F,\ES{F}_v,v)$-optimal (cf. l'exemple \ref{exemple ŽlŽment primitif sur un corps non parfait}). 
\item[(iii)] Si $\lambda \in \Lambda_{F,\ES{Q},v}^{\rm opt}$, alors $\overline{\ES{O}_v}\cap \ES{Q} \neq \emptyset$ et pour 
tout $\mu\in \Lambda_{\ES{Q},v}^{\rm opt}$, on a $\rho_{\ES{Q},v}(\lambda) \geq \rho_{\ES{Q},v}(\mu)$. Si $F$ n'est pas parfait, cette inŽgalitŽ est en gŽnŽral 
stricte: un co-caractre $\lambda\in \Lambda_{F,v}$ peut tre $(F,\ES{Q},v)$-optimal sans tre $(\ES{Q},v)$-optimal (voir plus loin l'exemple \ref{l'exemple de [5.6]{H1}}\,(ii)). 
\end{enumerate}}
\end{remark}

Le rŽsultat suivant \cite[theorem~4.5]{BMRT}, valable pour $F$ quelconque, est la version uniforme du critre de Hilbert-Mumford rationnel. 

\begin{theorem}\label{HMrat1}
Soit $Z\subset V$ un sous-ensemble non vide uniformŽment $(F,\ES{Q})$-instable. 
\begin{enumerate}
\item[(i)] L'ensemble $\Lambda_{F,\ES{Q},Z}^\mathrm{opt}$ est non vide.
\item[(ii)] Le $F$-sous-groupe parabolique $P_\lambda$ de $G$ ne dŽpend pas de $\lambda \in \Lambda_{F,\ES{Q},Z}^{\rm opt}$; 
on le note ${_FP_{\ES{Q},Z}}$ et on pose $P_{F,\ES{Q},Z}={_FP_{\ES{Q},Z}}(F)$.\index{PFQZ@${_FP_{\ES{Q},Z}}$, $P_{F,\ES{Q},Z}$}
\item[(iii)] On note ${_FU_{\ES{Q},Z}}$ le radical unipotent de ${_FP_{\ES{Q},Z}}$ et on pose $U_{F,\ES{Q},Z}= {_FU_{\ES{Q},Z}}(F)$.\index{UFQZ@${_FU_{\ES{Q},Z}}$, $U_{F,\ES{Q},Z}$} Le groupe $U_{F,\ES{Q},Z}$ 
opre simplement transitivement sur $\Lambda_{F,\ES{Q},Z}^\mathrm{opt}$. En d'autres termes, pour toute 
$F$-composante de Levi $M$ de ${_FP_{\ES{Q},Z}}$, il existe un unique co-caractre $\lambda\in \Lambda_{F,\ES{Q},Z}^\mathrm{opt}$ tel que $M_\lambda =M$. 
\item[(iv)] L'invariant $m_{\ES{Q},Z}(\lambda)$ ne dŽpend pas de $\lambda\in \Lambda_{F,\ES{Q},Z}^{\rm opt}$; on le note $m_{F,\ES{Q},Z}$.\index{mFQZ@$m_{F,\ES{Q},Z}$} 
\end{enumerate}
\end{theorem}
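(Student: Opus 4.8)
\medskip\noindent\textit{Sketch of the proposed proof.} The plan is to establish the statement as in \cite{BMRT}, by running the Kempf--Rousseau optimisation directly over the rational cocharacter lattice and falling back, where possible, on the results already recalled (\ref{Kempf1}, \ref{Kempf2}, \ref{Kempf3}). Two features are genuinely new compared with those: $F$ may be imperfect, so \ref{Kempf2} is unavailable over $F$ itself, and one optimises \emph{uniformly} over a whole subset $Z$. First I would perform the standard reduction: by the device of Mumford--Kempf recalled earlier there is a $G$-module $W$ and a $G$-equivariant morphism $\pi\colon V\to W$, which may be chosen defined over $F$, with $\ES{Q}=\pi^{-1}(0)$; this identifies all the invariants $m_{\ES{Q},v}(\lambda)$ and the notion of uniform $(F,\ES{Q})$-instability with the corresponding data for $(W,\{0\})$. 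So assume $V=W$ is a $G$-module over $F$ and $\ES{Q}=\{0\}$. Fix a maximal $F$-split torus $S$ of $G$ inside a maximal torus defined over $F$, and take the $F$-norm to be induced by a $\mbb{Z}$-valued form on the cocharacter lattice of that maximal torus, so that it is $\mbb{Q}$-rational on $\check{X}(S)$. Decomposing $W$ along its \emph{finitely many} $S$-weights and writing $\Omega(v)$ for the $S$-support of $v$, a direct computation gives $m_{\{0\},v}(\lambda)=\min_{\chi\in\Omega(v)}\langle\chi,\lambda\rangle$ whenever $\lambda\in\check{X}(S)$ makes all these pairings positive. Since $\Omega(v)$ takes only finitely many values as $v$ runs over $Z$, the function $\lambda\mapsto m_{\{0\},Z}(\lambda)=\inf_{v\in Z}m_{\{0\},v}(\lambda)$, restricted to the rational cone of $\check{X}(S)_{\mbb{R}}$ where it is positive, is a minimum of finitely many linear functionals, hence concave, piecewise linear with integral pieces, and positively homogeneous of degree one; uniform $(F,\ES{Q})$-instability of $Z$ says precisely that, after a $G(F)$-translation, this cone meets $\Lambda_{F,Z}$.

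For (i) I would maximise $\rho_{\ES{Q},Z}(\lambda)=m_{\{0\},Z}(\lambda)/\|\lambda\|$ over that cone. By positive homogeneity it is scale invariant, so one restricts to the unit sphere, where $\rho_{\ES{Q},Z}$ is continuous and extends by $0$ to the compact boundary; the supremum $B>0$ is therefore attained, and at a \emph{rational} ray, because on the polyhedral piece carrying the maximum $m_{\{0\},Z}$ agrees with a single $\mbb{Q}$-linear functional, whose representing vector (via the rational form) spans that ray by Cauchy--Schwarz, one recursing onto the rational faces otherwise. The indivisible generator of such a ray lies in $\Lambda_{F,\ES{Q},Z}^{\mathrm{opt}}$, provided one also checks that $B$ is the supremum over all of $\check{X}_F(G)$ and not merely over $\check{X}(S)$; since conjugating $\lambda$ and $Z$ together leaves $m_{\ES{Q},Z}$ unchanged and every cocharacter is $G(F)$-conjugate into $S$, this amounts to showing that the resulting outer supremum over $G(F)$ is attained, which is Kempf's compactness argument.

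For (ii)--(iv) I would imitate \ref{Kempf1}. Given $\lambda,\mu\in\Lambda_{F,\ES{Q},Z}^{\mathrm{opt}}$ one produces $u\in{_FU_{\ES{Q},Z}}(F)$ with $u\bullet\mu=\lambda$; this simultaneously yields $P_\lambda=P_\mu$ (whence ${_FP_{\ES{Q},Z}}$ is well defined) and the simple transitivity of $U_{F,\ES{Q},Z}$ on $\Lambda_{F,\ES{Q},Z}^{\mathrm{opt}}$. The bijection with the $F$-Levi factors follows since an optimal cocharacter is central in its Levi $M_\lambda$ and is the unique optimal one there, while $U_{F,\ES{Q},Z}$ acts simply transitively on the $F$-Levi factors of ${_FP_{\ES{Q},Z}}$. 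Finally (iv) is immediate: all optimal cocharacters being conjugate, they share the same ($G$-invariant) norm and the same $\rho$, hence the same $m$; alternatively one invokes \ref{invariance des m sous P}.

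The main obstacle, and the reason this is not a three-line consequence of \ref{Kempf1}--\ref{Kempf3}, is the combination of imperfection and uniformity hidden in the two compactness/uniqueness steps above. I would deal with the uniformity first: a Noetherian reduction lets one replace $Z$ by a finite subset without changing any $m_{\ES{Q},Z}(\lambda)$, and a finite uniform instability problem $Z=\{v_1,\dots,v_r\}\subset W$ is turned into a pointwise one by passing to $W\otimes_{\overline{F}}\overline{F}^{\,r}$ with $G$ acting on the first factor and considering the single vector $\sum_i v_i\otimes e_i$, whose set of optimal cocharacters is exactly $\Lambda_{F,\ES{Q},Z}^{\mathrm{opt}}$. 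This reduces everything to the \emph{pointwise} rational statement over an arbitrary, possibly imperfect, $F$; that statement is in turn Hesselink's \cite{H1}, obtained by descent from the perfect field $F^{\mathrm{rad}}$: there \ref{Kempf2} and \ref{Kempf3} apply, the optimal parabolic is defined over $F^{\mathrm{rad}}$ and hence over $F$, and one descends an optimal cocharacter from $F^{\mathrm{rad}}$ to $F$ inside that $F$-parabolic and its Levi decomposition. Checking that this descent is compatible with the norm and with the structure in \ref{Kempf1}\,(iii), together with Kempf's compactness argument in the rational lattice, is where the real work lies.
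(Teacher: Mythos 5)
Your statement is not proved in the paper at all: the text simply quotes it as \cite[theorem 4.5]{BMRT} (the pointed case being Hesselink \cite{H1}), so there is no internal proof to compare with and your proposal has to stand on its own. Its first half does: the reduction to a $G$-module, the computation of $m$ via the finitely many weights of a maximal $F$-split torus $S$, the min-norm point of the rational Newton polytope, and the attainment of the outer supremum (which, as you set things up, follows from the finiteness of the possible supports $\ES{R}'_S(g\cdot Z)$ together with the $G(F)$-conjugacy of maximal $F$-split tori, rather than from any genuine compactness of Kempf type) is exactly the Hesselink--BMRT mechanism, and the direct-sum trick replacing a finite $Z$ by the single vector $\sum_i v_i\otimes e_i$ is correct. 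The asserted \guill{Noetherian reduction} to a finite subset with the same $m_{\ES{Q},Z}(\lambda)$ for every $\lambda$ is not justified as written; it can be repaired in the module case (each $V_{\lambda,k}$, $k\geq 0$, is a linear subspace, so any $Z'\subset Z$ spanning the same subspace as $Z$ has $m_{Z'}=m_Z$), but it is also unnecessary, since $m_Z$ restricted to $\check{X}(S)$ already depends only on the finite set $\ES{R}'_S(Z)$ and Kempf's arguments go through verbatim with $m_Z$ in place of $m_v$, using \ref{invariance des m sous P} term by term.

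The genuine gap is your last step, where the pointwise rational statement over an imperfect $F$ is to be obtained \guill{by descent from $F^{\mathrm{rad}}$}. This cannot work. First, a parabolic subgroup (or a cocharacter) defined over $F^{\mathrm{rad}}$ is in general not defined over $F$: only separable descent is available, which is precisely the content of \ref{BMRT(4.7)} and \ref{variante de BMRT(4.7)}; purely inseparable descent of subvarieties fails (a Borel subgroup of $\mathrm{SL}_2$ attached to a point $[\alpha\,{:}\,1]$ with $\alpha\in F^{\mathrm{rad}}\smallsetminus F$ is a counterexample). Second, and more fundamentally, what \ref{Kempf2} applied over $F^{\mathrm{rad}}$ controls is the $F^{\mathrm{rad}}$-optimal class, which is a different optimisation problem from the one the theorem is about: by the paper's example \ref{l'exemple de [5.6]{H1}} there exist $v\in\ES{N}_F$ with $\check{X}_F(G)_{\mbb{Q}}\cap\bs{\Lambda}_v=\emptyset$, so no optimal cocharacter over $F^{\mathrm{rad}}$ (equivalently over $\overline{F}$) is $F$-rational, the set $\Lambda_{F,v}^{\mathrm{opt}}$ consists of strictly less efficient cocharacters, and ${_FP_v}\neq P_v$; the object you propose to descend is therefore not the one appearing in \ref{HMrat1}. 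Since you defer the whole conjugacy content of (ii)--(iii) to this step, that content is missing. The remedy is to never leave $\check{X}_F(G)$: prove uniqueness of the optimal virtual cocharacter inside a fixed maximal $F$-split torus by strict convexity of the norm, then deduce (ii) and (iii) from the facts that two $F$-parabolic subgroups contain a common maximal $F$-split torus and that $U_P(F)$ acts simply transitively on the $F$-Levi components of an $F$-parabolic $P$ -- that is, carry out over $F$ the Kempf-style argument you only gesture at in your middle paragraph; this is what Hesselink and BMRT actually do (Hesselink's method is not radicial descent). Point (iv) then follows from \ref{invariance des m sous P} as you say.
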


\begin{corollary}\label{cor de HMrat1}
\begin{enumerate}
\item[(i)]Pour $g\in G(F)$, on a ${_FP_{\ES{Q},g\cdot Z}}= \mathrm{Int}_g({_FP_{\ES{Q},Z}})$.
\item[(ii)]Pour $g\in G(F)$, on a $m_{F,\ES{Q},g\cdot Z}= m_{F,\ES{Q},Z}$.
\item[(iii)]Si $Z=\{v\}$, le stabilisateur $G^v(F)$ de $v$ dans $G(F)$ est contenu dans $P_{F,\ES{Q},v}$. 
\end{enumerate}
\end{corollary}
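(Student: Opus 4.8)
\textit{Esquisse de d\'emonstration.} Le plan est de reproduire l'argument ayant servi au corollaire \ref{cor Kempf1}, en rempla\c cant le th\'eor\`eme de Kempf-Rousseau g\'eom\'etrique \ref{Kempf1} par sa version uniforme rationnelle \ref{HMrat1}, et en utilisant syst\'ematiquement les relations de $G(F)$-\'equivariance d\'ej\`a \'etablies: $\Lambda_{F,\ES{Q},g\cdot Z}^\mathrm{opt}= g\bullet \Lambda_{F,\ES{Q},Z}^\mathrm{opt}$ pour $g\in G(F)$, ainsi que $m_{\ES{Q},g\cdot v}(g\bullet \lambda)= m_{\ES{Q},v}(\lambda)$ pour $g\in G$.

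Pour le point (i), je fixerais un co-caract\`ere $\lambda\in \Lambda_{F,\ES{Q},Z}^\mathrm{opt}$; puisque $g\in G(F)$, l'automorphisme int\'erieur $\mathrm{Int}_g$ est d\'efini sur $F$, donc $g\bullet \lambda= \mathrm{Int}_g\circ \lambda$ est encore un co-caract\`ere d\'efini sur $F$, et il appartient \`a $\Lambda_{F,\ES{Q},g\cdot Z}^\mathrm{opt}$. D'apr\`es \ref{HMrat1}\,(ii), on a donc ${_FP_{\ES{Q},g\cdot Z}}= P_{g\bullet \lambda}$; or la d\'efinition dynamique des sous-groupes paraboliques donne imm\'ediatement $P_{\mathrm{Int}_g\circ \lambda}= \mathrm{Int}_g(P_\lambda)$, d'o\`u ${_FP_{\ES{Q},g\cdot Z}}= \mathrm{Int}_g({_FP_{\ES{Q},Z}})$ (ce conjugu\'e \'etant bien d\'efini sur $F$). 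Pour le point (ii), avec le m\^eme $\lambda$ --- qui calcule $m_{F,\ES{Q},g\cdot Z}$ d'apr\`es \ref{HMrat1}\,(iv) ---, on \'ecrirait
$$m_{F,\ES{Q},g\cdot Z}= m_{\ES{Q},g\cdot Z}(g\bullet \lambda)= \inf_{v\in Z}m_{\ES{Q},g\cdot v}(g\bullet \lambda)= \inf_{v\in Z}m_{\ES{Q},v}(\lambda)= m_{\ES{Q},Z}(\lambda)= m_{F,\ES{Q},Z}\ptf$$

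Pour le point (iii), je prendrais $g\in G^v(F)$, \cad $g\in G(F)$ avec $g\cdot v= v$. Alors $g\bullet \Lambda_{F,\ES{Q},v}^\mathrm{opt}= \Lambda_{F,\ES{Q},g\cdot v}^\mathrm{opt}= \Lambda_{F,\ES{Q},v}^\mathrm{opt}$, autrement dit $g$ normalise l'ensemble $\Lambda_{F,\ES{Q},v}^\mathrm{opt}$; en appliquant le point (i) \`a $Z=\{v\}$, on en d\'eduit $\mathrm{Int}_g({_FP_{\ES{Q},v}})= {_FP_{\ES{Q},v}}$. Comme un sous-groupe parabolique est son propre normalisateur dans $G=G(\overline{F})$, il vient $g\in {_FP_{\ES{Q},v}}$, puis $g\in {_FP_{\ES{Q},v}}(F)= P_{F,\ES{Q},v}$, ce qui conclut.

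Je ne vois pas d'obstacle s\'erieux ici: tout le travail technique est d\'ej\`a contenu dans le th\'eor\`eme \ref{HMrat1} et dans les relations d'\'equivariance qui le pr\'ec\`edent. Le seul point de vigilance est que l'automorphisme int\'erieur $\mathrm{Int}_g$ ne pr\'eserve le corps de d\'efinition $F$ que lorsque $g\in G(F)$, ce qui explique que l'\'enonc\'e soit restreint aux \'el\'ements $F$-rationnels, contrairement \`a son analogue g\'eom\'etrique \ref{cor Kempf1}.
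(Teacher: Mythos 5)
Votre argument est correct et c'est exactement celui que le texte sous-entend (le corollaire y est laissé sans démonstration, comme son analogue géométrique \ref{cor Kempf1}) : équivariance $\Lambda_{F,\ES{Q},g\cdot Z}^{\mathrm{opt}}= g\bullet \Lambda_{F,\ES{Q},Z}^{\mathrm{opt}}$ et $m_{\ES{Q},g\cdot v}(g\bullet\lambda)=m_{\ES{Q},v}(\lambda)$, indépendances (ii) et (iv) de \ref{HMrat1}, identité $P_{\mathrm{Int}_g\circ\lambda}=\mathrm{Int}_g(P_\lambda)$, puis auto-normalisation du parabolique pour le point (iii). Rien à redire, y compris votre remarque finale sur la nécessité de $g\in G(F)$ pour préserver la $F$-rationalité.
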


\begin{definition}
\textup{Si $Z\subset V$ est un sous-ensemble non vide uniformŽment $(F,\ES{Q})$-instable, un tore $F$-dŽployŽ maximal $S$ de $G$ est dit \textit{$(F,\ES{Q},Z)$-optimal} (relativement ˆ l'action de $G$ sur $V$) 
s'il est contenu dans ${_FP_{\ES{Q},Z}}$. Pour un tel $S$, on a (d'aprs \ref{HMrat1}\,(iii)) $$\check{X}(S)\cap \Lambda_{F,\ES{Q},Z}^\mathrm{opt} = \{\lambda\}\ptf$$}
\end{definition}

\P\hskip1mm\textit{Descente sŽparable. ---} Pour tout sous-ensemble non vide $Z\subset V$, on note $\smash{\overline{Z}}^{(F)}$ la \textit{$F$-fermeture} de $Z$ dans 
$V$ au sens de Borel \cite[ch.~AG, 11.3]{B}, \cad le plus petit sous-$F$-schŽma fermŽ rŽduit de $V$ contenant $Z$. C'est une sous-variŽtŽ fermŽe de $V$ dŽfinie sur $F^{\rm rad}$ ou, ce qui revient au mme, $\Gamma_F$-invariante pour l'action de 
$\Gamma_F = \mathrm{Gal}(\overline{F}/F^{\mathrm{rad}})$ sur $V$ (cf. \cite[ch.~AG, 14.3]{B}). On a toujours l'inclusion 
$$\overline{Z}\subset \smash{\overline{Z}}^{(F)} $$ o $\overline{Z}$ est la fermeture de Zariski de $Z$ dans 
$V$. Observons que pour tout 
sous-ensemble $\Gamma_F$-stable non vide $Z\subset V(F^{\mathrm{s\acute{e}p}})$, en particulier pour tout sous-ensemble non vide 
$Z\subset V(F)$, la fermeture de Zariski $\overline{Z}$ de 
$Z$ dans $V$ est dŽfinie sur $F$ \cite[ch.~AG, 14.4]{B}; en particulier elle co\"{\i}ncide avec $\smash{\overline{Z}}^{(F)}$.

\begin{remark}
\textup{Si $v\in V(F^{\mathrm{s\acute{e}p}})$, alors $v\in V(E)$ pour une sous-extension galoisienne finie $E/F$ de $F^{\mathrm{s\acute{e}p}}/F$ et la $F$-fermeture $\smash{\overline{Z}}^{(F)}$ de $Z=\{v\}$ est la $\Gamma_F$-orbite
\index{GammaFv@$\Gamma_F(v)$} 
$$\Gamma_F(v) = \{\gamma(v)\,\vert \, \gamma \in \Gamma_F\}= \{\gamma(v)\,\vert \, \gamma \in \textrm{Gal}(E/F)\}\ptf$$  }
\end{remark}

On a la propriŽtŽ de descente sŽparable \cite[theorem~4.7]{BMRT}:

\begin{theorem}\label{BMRT(4.7)}
Soit $Z\subset V$ un sous-ensemble non vide.
\begin{enumerate}
\item[(i)] $Z$ est uniformŽment $(F,\ES{Q})$-instable si et seulement si $\smash{\overline{Z}}^{(F)}$ est uniformŽment $(F^{\rm s\acute{e}p},\ES{Q})$-instable.
\item[(ii)] Si $Z$ est uniformŽment $(F,\ES{Q})$-instable, alors $\Lambda_{F,\ES{Q},Z}^\mathrm{opt} = 
\check{X}_F(G)\cap  \Lambda_{F^\mathrm{s\acute{e}p},\ES{Q},\smash{\overline{Z}}^{(F)}}^\mathrm{opt}$ et $m_{F,Z,\ES{Q}}= m_{\smash{F^{\mathrm{s\acute{e}p}}},\smash{\overline{Z}}^{(F)},\ES{Q}}$. 
\end{enumerate}
\end{theorem}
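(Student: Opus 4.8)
The plan is to reduce the statement to the geometric case over $F^{\mathrm{s\acute{e}p}}$ together with the Galois-descent formalism of \ref{crit�re galoisien}, exploiting the fact that optimality is characterized by a norm condition that is stable under $\Gamma_F$. The key observation is that $\check X_F(G) = \check X_{F^{\mathrm{s\acute{e}p}}}(G)^{\Gamma_F}$ (a co-character of $G$ defined over $F^{\mathrm{s\acute{e}p}}$ is defined over $F$ iff it is $\Gamma_F$-fixed, by \ref{crit�re galoisien} applied to the morphism $\mbb{G}_\mathrm{m}\to G$), and that the $F$-norm $\|\,\|$ was chosen $\Gamma_F$-invariant. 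Write $E = F^{\mathrm{s\acute{e}p}}$ and $Z' = \smash{\overline{Z}}^{(F)}$ throughout; note $Z' = \Gamma_F(Z)$ is the $\Gamma_F$-orbit closure, which is a finite union of $\Gamma_F$-translates of (the Zariski closure of) $Z$ when $Z$ is a $\Gamma_F$-stable subset of $V(E)$, but in general we only know $Z'\supset Z$ is $\Gamma_F$-stable and defined over $F^{\mathrm{rad}}$.

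First I would prove (i). For the forward direction: if $\lambda\in\Lambda_{F,Z}$ witnesses uniform $(F,\ES Q)$-instability of $Z$, then since $\ES Q$ is defined over $F$ (hence $\Gamma_F$-stable) and $\lambda$ is $\Gamma_F$-fixed, the condition $\phi^+_{\lambda,v}(0)\in\ES Q$ is preserved under applying $\gamma\in\Gamma_F$ to $v$; thus $\phi^+_{\lambda,w}(0)\in\ES Q$ for all $w$ in the $\Gamma_F$-orbit of $Z$, and by continuity/closedness of $\ES Q$ for all $w$ in the Zariski closure $Z'$. Hence $\lambda$ also witnesses uniform $(E,\ES Q)$-instability of $Z'$ (indeed $\lambda\in\check X_F(G)\subset\check X_E(G)$ and $\Lambda_{Z'}\subset\Lambda_Z$, so $\lambda\in\Lambda_{E,Z'}$). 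Conversely, if $Z'$ is uniformly $(E,\ES Q)$-instable, apply Theorem \ref{HMrat1} over $E$ to get $\Lambda^{\mathrm{opt}}_{E,\ES Q,Z'}\neq\emptyset$; I claim this set is $\Gamma_F$-stable. This is the crux: the invariants $m_{\ES Q,Z'}(\lambda)$ and $\|\lambda\|$ satisfy $m_{\ES Q,{}^\gamma Z'}({}^\gamma\lambda)=m_{\ES Q,Z'}(\lambda)$ and $\|{}^\gamma\lambda\|=\|\lambda\|$ (the first because $\gamma$ is an automorphism carrying the $F$-defined data $\ES Q$ to itself, and $Z'$ being $\Gamma_F$-stable gives ${}^\gamma Z'=Z'$; the second by the $F$-norm property), so $\rho_{\ES Q,Z'}$ is $\Gamma_F$-invariant and the optimal set is $\Gamma_F$-stable. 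By Theorem \ref{HMrat1}(iii), $U_{E,\ES Q,Z'}={}_E U_{\ES Q,Z'}(E)$ acts simply transitively on $\Lambda^{\mathrm{opt}}_{E,\ES Q,Z'}$; since the parabolic ${}_E P_{\ES Q,Z'}$ is $\Gamma_F$-stable, hence defined over $F^{\mathrm{rad}}$, and in fact defined over $F$ (one checks the $\Gamma_F$-action on the simply transitive $U$-torsor is by the $F$-structure, or invokes \ref{crit�re galoisien} on $P$ directly), a standard $\mathrm{H}^1$-triviality argument for the unipotent group $U$ — or more elementarily, the uniqueness in \ref{HMrat1}(iii) picking out the $\lambda$ with $M_\lambda$ equal to a fixed $F$-defined Levi — produces a $\Gamma_F$-fixed element $\lambda\in\Lambda^{\mathrm{opt}}_{E,\ES Q,Z'}$, i.e. $\lambda\in\check X_F(G)$, which then witnesses uniform $(F,\ES Q)$-instability of $Z$.

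Then (ii) follows by comparing optimal sets. The inclusion $\Lambda^{\mathrm{opt}}_{F,\ES Q,Z}\subset\check X_F(G)\cap\Lambda^{\mathrm{opt}}_{E,\ES Q,Z'}$ comes from: any $\lambda\in\Lambda^{\mathrm{opt}}_{F,\ES Q,Z}$ lies in $\check X_F(G)$, satisfies $\phi^+_{\lambda,w}(0)\in\ES Q$ for all $w\in Z'$ (as in (i)), and is norm-optimal among $\Lambda_{E,Z'}$ because $m_{\ES Q,Z'}(\lambda)=m_{\ES Q,Z}(\lambda)$ (the values $\phi^+_{\lambda,v}(0)$ over $v\in Z'$ are $\Gamma_F$-permuted copies of those over $v\in Z$, so the infimum of the orders $m_{\ES Q,v}(\lambda)$ is unchanged), while any competitor $\lambda'\in\Lambda_{E,Z'}$ has, after averaging its $\Gamma_F$-orbit to land in $\check X_F(G)$ or by the conjugacy statement in \ref{HMrat1}, $\rho_{\ES Q,Z'}(\lambda')\leq\rho_{\ES Q,Z}(\lambda)$. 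For the reverse inclusion, a $\Gamma_F$-fixed $\lambda\in\Lambda^{\mathrm{opt}}_{E,\ES Q,Z'}$ lies in $\check X_F(G)\cap\Lambda_{E,Z'}\subset\Lambda_{F,Z}$, works uniformly on $Z\subset Z'$, and its $\rho$-value over $Z$ equals that over $Z'$ hence dominates all of $\Lambda_{F,Z}\subset\Lambda_{E,Z'}$; so it is $(F,\ES Q,Z)$-optimal. The equality $m_{F,Z,\ES Q}=m_{E,Z',\ES Q}$ is then immediate from $m_{\ES Q,v}(\lambda)$ being the same for $v$ ranging over $Z$ or $Z'$ at any fixed optimal $\lambda$. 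The main obstacle is the Galois-descent step producing a \emph{rational} optimal co-character from the geometric one — i.e. showing the $\Gamma_F$-stable optimal set $\Lambda^{\mathrm{opt}}_{E,\ES Q,Z'}$ has a $\Gamma_F$-fixed point — which is where one must use the simple transitivity of the unipotent radical action in \ref{HMrat1}(iii) together with the $F$-rationality of a Levi factor of ${}_E P_{\ES Q,Z'}$; everything else is bookkeeping with the $\Gamma_F$-equivariance of $m$, $\rho$ and $\|\,\|$.
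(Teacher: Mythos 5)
The paper does not actually prove this statement: Theorem \ref{BMRT(4.7)} is quoted without proof from \cite[theorem~4.7]{BMRT}, so there is no internal argument to match yours against. The closest argument the paper does write out is the proof of Lemma \ref{variante de BMRT(4.7)}, and your descent step is exactly that mechanism: the optimal set over $F^{\mathrm{s\acute{e}p}}$ is $\Gamma_F$-stable (because $\ES{Q}$, $\smash{\overline{Z}}^{(F)}$, the $F$-norm and the invariants $m$ are Galois-equivariant), hence the parabolic ${_{F^{\mathrm{s\acute{e}p}}}P_{\ES{Q},\smash{\overline{Z}}^{(F)}}}$ is defined over $F^{\mathrm{s\acute{e}p}}$ and $\Gamma_F$-stable, hence defined over $F$ by Galois descent (the crit\`ere galoisien recalled in \ref{notations et rappels}), and the unique optimal cocharacter attached to an $F$-rational Levi or maximal $F$-split torus via \ref{HMrat1}\,(iii) is $\Gamma_F$-fixed, hence lies in $\check{X}_F(G)$. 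So your overall route is sound and consistent with how the paper treats the non-algebraic variant.

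Two points in your bookkeeping for (ii) need repair. First, $\smash{\overline{Z}}^{(F)}$ is the Zariski closure of $\Gamma_F\cdot Z$, not just the union of Galois translates, so the claim that the limits over $\smash{\overline{Z}}^{(F)}$ are \guill{$\Gamma_F$-permuted copies} of those over $Z$ is not literally true and does not give $m_{\ES{Q},\smash{\overline{Z}}^{(F)}}(\lambda)=m_{\ES{Q},Z}(\lambda)$ by itself. The correct argument, which you only invoke for the condition $m\geq 1$ in (i), is that for $\lambda\in\check{X}_F(G)$ and any integer $k$ the locus $\{v\,\vert\,\lambda\in\Lambda_v,\, m_{\ES{Q},v}(\lambda)\geq k\}$ is an $F$-closed subset of $V$ (expand $f(t^\lambda\cdot v)$ in powers of $t$ for $F$-generators $f$ of the ideal of $\ES{Q}$; the condition is the vanishing of finitely many coefficients lying in $F[V]$; compare \ref{V'=V} and the surrounding discussion in the pointed case), hence contains $\smash{\overline{Z}}^{(F)}$ as soon as it contains $Z$. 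This is also what guarantees $\Lambda_{F,Z}\subset\Lambda_{F^{\mathrm{s\acute{e}p}},\smash{\overline{Z}}^{(F)}}$ and $\rho_{\ES{Q},Z}=\rho_{\ES{Q},\smash{\overline{Z}}^{(F)}}$ on $\check{X}_F(G)$, which your comparison of optimal sets implicitly uses. Second, \guill{averaging the $\Gamma_F$-orbit} of a competitor cocharacter is not an available operation (cocharacters only add when they commute); drop it and run the comparison through the $\Gamma_F$-fixed optimal element $\mu$ produced in (i): for $\lambda\in\Lambda^{\mathrm{opt}}_{F,\ES{Q},Z}$ one gets $\rho_{\ES{Q},\smash{\overline{Z}}^{(F)}}(\lambda)=\rho_{\ES{Q},Z}(\lambda)\geq\rho_{\ES{Q},Z}(\mu)=\rho_{\ES{Q},\smash{\overline{Z}}^{(F)}}(\mu)$, and the last quantity is the maximum over $\Lambda_{F^{\mathrm{s\acute{e}p}},\smash{\overline{Z}}^{(F)}}$, so all are equal; both inclusions of (ii) and the equality of the two $m$-invariants then follow. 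With these two fixes your proof is complete.
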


\begin{lemma}\label{variante de BMRT(4.7)}
Le thŽorme \ref{BMRT(4.7)} reste vrai si l'on remplace $F^\mathrm{s\acute{e}p}$ par n'importe quelle extension sŽparable (algŽbrique ou non)
$E$ de $F$ telle que $(E^\mathrm{s\acute{e}p})^{\mathrm{Aut}_F(E^\mathrm{s\acute{e}p})}= F$. 
(Rappelons que cette ŽgalitŽ est toujours vŽrifiŽe si l'extension $E/F$ est algŽbrique ou de degrŽ de transcendance infini.)  
\end{lemma}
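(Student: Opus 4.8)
Write $Y=\smash{\overline Z}^{(F)}$. The plan is to reduce the statement to Theorem~\ref{BMRT(4.7)} applied over the two base fields $F$ and $E$, joined by a descent argument over $E^{\mathrm{s\acute{e}p}}$ in which the hypothesis $(E^{\mathrm{s\acute{e}p}})^{\mathrm{Aut}_F(E^{\mathrm{s\acute{e}p}})}=F$ is essential. Two preliminary observations first. Since $E/F$ is separable, the base changes $Y\times_FE$ and $Y\times_FE^{\mathrm{s\acute{e}p}}$ remain reduced; in particular $Y\times_FE$ is an $E$-closed subvariety of $V$ equal to its own $E$-closure, so Theorem~\ref{BMRT(4.7)} is applicable over $E$ to $Y$. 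Moreover, choosing the fixed maximal $F$-torus $T_0$ so that it splits over a finite separable extension $F'/F$, the restriction embedding $\mathrm{Gal}(EF'/E)\hookrightarrow\mathrm{Gal}(F'/F)$ shows that the given $F$-norm $\|\,\|$ is simultaneously an $E$-norm and an $E^{\mathrm{s\acute{e}p}}$-norm, so all the optimality notions below are defined. One half of the lemma is then immediate: if $Z$ is uniformly $(F,\ES{Q})$-instable, witnessed by $\lambda\in\check{X}_F(G)$, then $\lambda\in\check{X}_E(G)$, and the locus $W_\lambda=\{v\in V:\lambda\in\Lambda_v,\ \phi^+_{v,\lambda}(0)\in\ES{Q}\}$ --- which is closed ($\{v:\lambda\in\Lambda_v\}$ is closed after embedding $V$ in a $G$-module, and on it $v\mapsto\phi^+_{v,\lambda}(0)$ is a morphism while $\ES{Q}$ is closed) and defined over $F$ --- contains $Y$; base-changing to $E$ shows that $\lambda$ witnesses the uniform $(E,\ES{Q})$-instability of $Y$.

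For the converse and the equalities in part (ii), I would argue through $E^{\mathrm{s\acute{e}p}}$. Assume $Y$ is uniformly $(E,\ES{Q})$-instable. By Theorem~\ref{BMRT(4.7)} over $E$, the $E^{\mathrm{s\acute{e}p}}$-variety $Y_{E^{\mathrm{s\acute{e}p}}}:=Y\times_FE^{\mathrm{s\acute{e}p}}$ is uniformly $(E^{\mathrm{s\acute{e}p}},\ES{Q})$-instable, with $\Lambda^{\mathrm{opt}}_{E,\ES{Q},Y}=\check{X}_E(G)\cap\Lambda^{\mathrm{opt}}_{E^{\mathrm{s\acute{e}p}},\ES{Q},Y_{E^{\mathrm{s\acute{e}p}}}}$ and the invariant $m$ over $E$ equal to that over $E^{\mathrm{s\acute{e}p}}$. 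Apply Theorem~\ref{HMrat1} over the separably closed field $E^{\mathrm{s\acute{e}p}}$ to $Y_{E^{\mathrm{s\acute{e}p}}}$: it furnishes a canonical $(E^{\mathrm{s\acute{e}p}},\ES{Q})$-optimal parabolic $\mathbf{P}\subseteq G_{E^{\mathrm{s\acute{e}p}}}$, whose unipotent radical $\mathbf{U}$ acts simply transitively on $\Lambda^{\mathrm{opt}}_{E^{\mathrm{s\acute{e}p}},\ES{Q},Y_{E^{\mathrm{s\acute{e}p}}}}$, together with the invariant $m_{E^{\mathrm{s\acute{e}p}},\ES{Q},Y_{E^{\mathrm{s\acute{e}p}}}}$. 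All the ingredients of this construction --- $G$, $V$, $\ES{Q}$, $\|\,\|$, and $Y_{E^{\mathrm{s\acute{e}p}}}$, the latter being the base change of the closed $F$-subscheme $\smash{\overline Z}^{(F)}$ and hence stable under $\mathrm{Aut}_F(E^{\mathrm{s\acute{e}p}})$ --- are $\mathrm{Aut}_F(E^{\mathrm{s\acute{e}p}})$-invariant, so $\mathbf{P}$, $\mathbf{U}$, $\Lambda^{\mathrm{opt}}_{E^{\mathrm{s\acute{e}p}},\ES{Q},Y_{E^{\mathrm{s\acute{e}p}}}}$ and the invariant $m$ are $\mathrm{Aut}_F(E^{\mathrm{s\acute{e}p}})$-stable. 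Using $(E^{\mathrm{s\acute{e}p}})^{\mathrm{Aut}_F(E^{\mathrm{s\acute{e}p}})}=F$, the parabolic $\mathbf{P}$ and its radical $\mathbf{U}$ descend to $F$-subgroups $P_0,U_0$ of $G$, and the $\mathrm{Aut}_F(E^{\mathrm{s\acute{e}p}})$-equivariant torsor $\Lambda^{\mathrm{opt}}_{E^{\mathrm{s\acute{e}p}},\ES{Q},Y_{E^{\mathrm{s\acute{e}p}}}}$ under $U_0(E^{\mathrm{s\acute{e}p}})$ admits an $\mathrm{Aut}_F(E^{\mathrm{s\acute{e}p}})$-fixed point --- this uses the triviality of torsors under a smooth connected unipotent $F$-group relative to $\mathrm{Aut}_F(E^{\mathrm{s\acute{e}p}})$, which again holds because $(E^{\mathrm{s\acute{e}p}})^{\mathrm{Aut}_F(E^{\mathrm{s\acute{e}p}})}=F$, by the same arguments of \cite{BMRT} that underlie Theorem~\ref{BMRT(4.7)}. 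That fixed point is a co-character $\lambda_0$ defined over $F$ with $\phi^+_{v,\lambda_0}(0)\in\ES{Q}$ for every $v\in Y$; it therefore witnesses the uniform $(F,\ES{Q})$-instability of $Y$, a fortiori of $Z$, and Theorem~\ref{BMRT(4.7)} over $F$ gives the converse.

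It remains to match the optimal sets and invariants over $F^{\mathrm{s\acute{e}p}}$ and over $E^{\mathrm{s\acute{e}p}}$. Since $\check{X}_{F^{\mathrm{s\acute{e}p}}}(G)\subseteq\check{X}_{E^{\mathrm{s\acute{e}p}}}(G)$, the maximum of $\rho_{\ES{Q},Y}$ over $F^{\mathrm{s\acute{e}p}}$-rational co-characters is at most its maximum over $E^{\mathrm{s\acute{e}p}}$-rational ones; but $\lambda_0\in\check{X}_F(G)\subseteq\check{X}_{F^{\mathrm{s\acute{e}p}}}(G)$ attains the latter, so the two maxima coincide, whence $\Lambda^{\mathrm{opt}}_{F^{\mathrm{s\acute{e}p}},\ES{Q},Y}=\check{X}_{F^{\mathrm{s\acute{e}p}}}(G)\cap\Lambda^{\mathrm{opt}}_{E^{\mathrm{s\acute{e}p}},\ES{Q},Y_{E^{\mathrm{s\acute{e}p}}}}$ and the two invariants $m$ agree. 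Intersecting with $\check{X}_F(G)$ (using $\check{X}_F(G)\subseteq\check{X}_E(G)$) and chaining this with Theorem~\ref{BMRT(4.7)} over $F$ --- which gives $\Lambda^{\mathrm{opt}}_{F,\ES{Q},Z}=\check{X}_F(G)\cap\Lambda^{\mathrm{opt}}_{F^{\mathrm{s\acute{e}p}},\ES{Q},Y}$ and the corresponding equality of invariants --- and with the equalities over $E$ above, one obtains $\Lambda^{\mathrm{opt}}_{F,\ES{Q},Z}=\check{X}_F(G)\cap\Lambda^{\mathrm{opt}}_{E,\ES{Q},\smash{\overline Z}^{(F)}}$ and $m_{F,Z,\ES{Q}}=m_{E,\smash{\overline Z}^{(F)},\ES{Q}}$, which is exactly Theorem~\ref{BMRT(4.7)} with $F^{\mathrm{s\acute{e}p}}$ replaced by $E$. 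The delicate point of the argument is the descent across the possibly transcendental extension $E^{\mathrm{s\acute{e}p}}/F$: ordinary Galois descent is unavailable, and one must rely on the functoriality of the Kempf--Rousseau data over the separably closed base $E^{\mathrm{s\acute{e}p}}$ together with the cohomological vanishing that the hypothesis $(E^{\mathrm{s\acute{e}p}})^{\mathrm{Aut}_F(E^{\mathrm{s\acute{e}p}})}=F$ guarantees.
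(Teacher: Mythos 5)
Your proof follows the same skeleton as the paper's: reduce to $Z=\smash{\overline{Z}}^{(F)}$, dispose of the easy direction with an $F$-rational witness, pass to $E^{\mathrm{s\acute{e}p}}$ via Theorem \ref{BMRT(4.7)} applied over $E$, observe that the optimal destabilizing parabolic furnished by \ref{HMrat1} over $E^{\mathrm{s\acute{e}p}}$ is $\mathrm{Aut}_F(E^{\mathrm{s\acute{e}p}})$-stable and hence defined over $F$ by the non-algebraic descent criterion of the appendix, then produce an $F$-rational optimal co-character and chain the equalities of optimal sets and of the invariants $m$ through $F^{\mathrm{s\acute{e}p}}$, $E^{\mathrm{s\acute{e}p}}$ and $E$. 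The one step that does not hold up as written is precisely the production of that co-character. You assert that the $\mathrm{Aut}_F(E^{\mathrm{s\acute{e}p}})$-equivariant principal homogeneous space $\Lambda^{\mathrm{opt}}_{E^{\mathrm{s\acute{e}p}},\ES{Q},Z}$ under $U(E^{\mathrm{s\acute{e}p}})$ has a fixed point by \guill{triviality of torsors under a smooth connected unipotent $F$-group relative to $\mathrm{Aut}_F(E^{\mathrm{s\acute{e}p}})$}, claiming this follows from the same arguments as in \cite{BMRT}. When $E/F$ is transcendental, $\mathrm{Aut}_F(E^{\mathrm{s\acute{e}p}})$ is not the Galois group of an algebraic extension, and the usual vanishing of $\mathrm{H}^1$ for a split unipotent group (filtration by vector groups plus additive Hilbert 90) is a statement about continuous cocycles of a profinite group; for abstract cocycles of $\mathrm{Aut}_F(E^{\mathrm{s\acute{e}p}})$ with values in $E^{\mathrm{s\acute{e}p}}$ no such vanishing is available off the shelf, and it is not what \cite{BMRT} prove (their theorem 4.7 concerns $F^{\mathrm{s\acute{e}p}}/F$). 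So the crucial fixed point is left unjustified, and it is exactly the point you yourself flag as delicate.

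The gap is repairable without any cohomology, and this is how the paper proceeds. Once the optimal parabolic $P_\lambda={_{E^{\mathrm{s\acute{e}p}}}P_{\ES{Q},Z}}$ is known to be defined over $F$, choose a maximal torus $T$ of it defined over $F$; since $T$ splits over $F^{\mathrm{s\acute{e}p}}\subset E^{\mathrm{s\acute{e}p}}$, it is a maximal $E^{\mathrm{s\acute{e}p}}$-split torus of $P_\lambda$. By \ref{HMrat1}\,(iii) applied over $E^{\mathrm{s\acute{e}p}}$ there is a unique element of $\check{X}(T)\cap\Lambda^{\mathrm{opt}}_{E^{\mathrm{s\acute{e}p}},\ES{Q},Z}$, and since $T$, $P_\lambda$ and the optimal set are all $\mathrm{Aut}_F(E^{\mathrm{s\acute{e}p}})$-stable, this element is fixed by $\mathrm{Aut}_F(E^{\mathrm{s\acute{e}p}})$, hence defined over $F$ by the same descent criterion you already invoked for $P_\lambda$. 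With that substitution (and keeping the algebraic case separate, where one may take $E^{\mathrm{s\acute{e}p}}=F^{\mathrm{s\acute{e}p}}$ and simply quote \ref{BMRT(4.7)} for $F^{\mathrm{s\acute{e}p}}/F$ and $E^{\mathrm{s\acute{e}p}}/E$), your comparison of the maxima of $\rho$ over $\check{X}_{F^{\mathrm{s\acute{e}p}}}(G)$ and $\check{X}_{E^{\mathrm{s\acute{e}p}}}(G)$ and the ensuing chain of equalities are correct and yield the statement of the lemma.
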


\begin{proof} 
D'aprs la preuve de \cite[theorem~4.7]{BMRT}, $Z$ est uniformŽment $(F,\ES{Q})$-instable si et seulement si $\smash{\overline{Z}}^{(F)}$ est uniformŽment $(F,\ES{Q})$-instable, 
auquel cas $$\Lambda_{F,Z,\ES{Q}}^{\mathrm{opt}}= \Lambda_{F,\smash{\overline{Z}}^{(F)},\ES{Q}}^{\mathrm{opt}}\quad\hbox{et} \quad m_{F,Z,\ES{Q}}=
m_{F,\smash{\overline{Z}}^{(F)},\ES{Q}}\ptf$$ 
On peut donc supposer que $Z=\smash{\overline{Z}}^{(F)}$, \cad que $Z$ est une sous-variŽtŽ $F$-fermŽe de $V$. 
Alors $Z_E=Z\times_FE$ est une sous-variŽtŽ $E$-fermŽe de $V_E$. 
Si $E/F$ est algŽbrique, on peut prendre 
$E^\mathrm{s\acute{e}p}=F^\mathrm{s\acute{e}p}$ et le rŽsultat est directement impliquŽ par 
\ref{BMRT(4.7)} (appliquŽ ˆ $F^\mathrm{s\acute{e}p}/F$ et ˆ $E^\mathrm{s\acute{e}p}/E$). 

Supposons que $E/F$ ne soit pas algŽbrique. Si $Z\;(=\smash{\overline{Z}}^{(F)})$ est uniformŽment $(F,\ES{Q})$-instable, alors $Z$ est \textit{a fortiori} uniformŽment $(E,\ES{Q})$-instable. 
RŽciproquement, supposons que $Z$ soit uniformŽment $(E,\ES{Q})$-instable. Alors (d'aprs \ref{HMrat1}\,(i)) on peut choisir un 
$\lambda \in \Lambda_{E,\ES{Q},Z}^{\rm opt}$. Puisque $Z$ est $\Gamma_F$-invariant, 
le sous-groupe parabolique $P_{E^{\rm s\acute{e}p},\ES{Q},Z}=P_\lambda(E^{\rm s\acute{e}p})$ de $G(E^{\rm s\acute{e}p})$ vŽrifie 
$$\sigma(P_{E^\mathrm{s\acute{e}p},\ES{Q},Z})= P_{E^\mathrm{s\acute{e}p},\ES{Q},Z}\quad \hbox{pour tout} \quad \sigma \in \mathrm{Aut}_F(E^\mathrm{s\acute{e}p})\ptf$$ 
On en dŽduit d'aprs \ref{critre galoisien bis} que le sous-groupe parabolique ${_{E^{\rm s\acute{e}p}}P_{\ES{Q},Z}}=P_\lambda$ de $G$ est dŽfini sur $F$. 
On peut donc choisir un tore $E^\mathrm{s\acute{e}p}$-dŽployŽ maximal $T$ de ${_{E^\mathrm{s\acute{e}p}}P_{\ES{Q},Z}}$ qui soit 
dŽfini sur $F$. On peut aussi supposer que $\lambda$ appartient ˆ $\check{X}(T)$, \cad que $\lambda$ soit l'unique ŽlŽment de $\check{X}(T) \cap \Lambda_{E^\mathrm{s\acute{e}p},\ES{Q},Z}^\mathrm{opt}$.  
Alors $\sigma(\lambda) = \lambda$ pour tout $\sigma\in \mathrm{Aut}_F(E^\mathrm{s\acute{e}p})$, ce qui entra"ne (ˆ nouveau d'aprs \ref{critre galoisien bis}) que $\lambda$ est dŽfini sur $F$.  Cela prouve que 
$Z$ est uniformŽment $(F,\ES{Q})$-instable si et seulement si $Z$ est uniformŽment $(E^{\rm s\acute{e}p},\ES{Q})$-instable, et que dans ce cas on a l'ŽgalitŽ 
$$\Lambda_{F,\ES{Q},Z}^{\rm opt} = \check{X}_F(G) \cap \Lambda_{E^{\rm s\acute{e}p},\ES{Q},Z}^{\rm opt}\quad \hbox{et}\quad 
m_{F,Z,\ES{Q}}= m_{E^{\smash{\mathrm{s\acute{e}p}}},Z,\ES{Q}}\ptf$$ 
D'o le lemme, d'aprs \ref{BMRT(4.7)} appliquŽ ˆ l'extension $E^\mathrm{s\acute{e}p}/E$. 
\end{proof}

\subsection{La variante de Hesselink (cas d'une $G$-vari\'et\'e pointŽe)}\label{la variante de Hesselink} 
On suppose dans cette sous-section que $V$ est une $G$-vari\'et\'e affine \textit{point\'ee} au sens des $G$-variŽtŽs, \cad qu'elle est munie d'un point 
(fermŽ) $G$-invariant $e_V$. On note\index{N@$\ES{N}$} $$\ES{N}=\ES{N}^G(V,e_V)$$ l'ensemble des $v\in V$ tels que 
$e_V$ appartienne ˆ $\overline{\ES{O}_v}$; ou, ce qui revient au mme, tels que $\ES{F}_v = \{e_V\}$. C'est une sous-variŽtŽ fermŽe 
$G$-invariante de $V$: d'aprs le critre de Hilbert-Mumford gŽomŽtrique, 
$\ES{N}$ est l'ensemble des $v\in V$ tels que $f(v)=0$ pour toute fonction polynomiale $G$-invariante $f$ sur $V$ telle que 
$f(e_V)=0$. 

On suppose de plus comme en \ref{le critre d'instabilitŽ de HM rationnel} que $G$, $V$ et l'action de $G$ sur $V$ sont dŽfinis sur $F$ et que 
$\check{X}(G)$ est muni d'une $F$-norme $G$-invariante $\|\,\|$. On suppose aussi que $e_V$ appartient ˆ $V(F)$. On conserva ces hypothses jusqu'ˆ la fin de la section \ref{la theorie de KRH rat}. 
Si de plus $V$ est un groupe, sauf mention expresse du contraire, on supposera toujours que $e_V$ est l'ŽlŽment neutre (e.g. $e_V=0$ si $V$ est un $G$-module). 
D'aprs le critre galoisien, la variŽtŽ $\ES{N}$ est dŽfinie sur $F^{\mathrm{rad}}$.

Le point $e_V$ va jouer le r™le de la sous-$F$-variŽtŽ fermŽe $G$-invariante $\ES{Q}$ de \ref{le critre d'instabilitŽ de HM rationnel}. Pour allŽger l'Žcriture et quand aucun risque 
de confusion ne sera possible, on supprimera l'indice $e_V$ dans les notations: on Žcrira $\Lambda_{F,Z}= \Lambda_{F,e_V,Z}$ et 
pour $\lambda\in \Lambda_{F,Z}$, on Žcrira $m_Z(\lambda)= m_{e_V,Z}(\lambda)$. Au lieu de $(F,e_V)$-instable, resp. uniformŽment $(F,e_V)$-instable, on dira simplement \textit{$F$-instable}, resp. 
\textit{uniformŽment $F$-instable}. Si $Z\subset V$ est un sous-ensemble (non vide) uniformŽment $F$-instable, on Žcrira\index{LambdaFZopt@$\Lambda_{F,Z}^{\mathrm{opt}}$}\index{mFZ@$m_{F,Z}$}
\index{PFZ@${_FP_Z}$} 
$$\Lambda_{F,Z}^{\mathrm{opt}}= \Lambda_{F,e_V,Z}^{\mathrm{opt}}\vgq m_{F,Z}= m_{F,e_V,Z}\vgq {_FP_Z}= {_FP_{e_V,Z}}\vgq \hbox{etc.}$$ Lorsqu'on voudra 
spŽcifier le groupe $G$ relativement auquel sont dŽfinis ces objets, on les affublera d'un exposant $G$. 

On pose\index{NF@$\FN$, $\NF$} $$\FN = \FN^G(V,e_V) \quad \hbox{et} \quad \NF = \NF^G(V,e_V) \;(= V(F) \cap \FN)\ptf$$ 
Ainsi $\FN$, resp. $\NF$, est l'ensemble des ŽlŽments $F$-instables de $V$, resp. $V(F)$. Tout sous-ensemble uniformŽment $F$-instable de $V$ est contenu dans $\FN$. 
Les ŽlŽments de $V \smallsetminus \FN$ sont dits 
\textit{$F$-semi-stables}\footnote{La terminologie, empruntŽe ˆ la thŽorie gŽomŽtrique ($F=\overline{F}$) est un peu perturbante mais standard: 
le contraire de \guill{$F$-instable} n'est pas \guill{$F$-stable}.}. Plus gŽnŽralement, on introduit la 

\begin{definition}\label{F-stable, F-semi-stable}
\textup{
Soit $H$ est un sous-groupe fermŽ de $G$ dŽfini sur $F$ (par forcŽment rŽductif ni mme connexe). Un ŽlŽment $v\in V$ est dit \textit{$(F,H)$-instable} 
s'il existe un co-caractre\index{LambdaFvH@$\Lambda_{F,v}^H$} $\lambda \in \Lambda_{F,v}^H = \check{X}_F(H) \cap \Lambda_{F,v}$ tel que $\phi_{v,\lambda}^+(0) = e_V$ 
et il est dit \textit{$(F,H)$-semi-stable} sinon. On note\index{NFH@$\FN^H$, $\NF^H$} $\FN^H=\FN^H(V,e_V)$ l'ensemble des ŽlŽments $(F,H)$-instables de $V$ et $\NF^H=\NF^H(V,e_V)$ 
l'ensemble $V(F) \cap \FN^H$. 
}
\end{definition}

\begin{remark}
\textup{\begin{enumerate}
\item[(i)]Un ŽlŽment de $V$ qui est $F$-instable est \textit{a fortiori} $\overline{F}$-instable. Mais en gŽnŽral il existe des ŽlŽments dans 
$\ES{N}\;(= {_{\smash{\overline{F}}}\ES{N}}=\ES{N}_{\smash{\overline{F}}})$, et mme dans $\ES{N}\cap V(F)$, qui ne sont pas $F$-instables (cf. l'exemple \ref{exemple insŽparabilitŽ PGL(p)}). 
\item[(ii)]Si $H\subset G$ est un tore $F$-dŽployŽ, puisque $\check{X}_F(H)=\check{X}(H)$, un ŽlŽment $v\in V$ est $(F,H)$-instable si et seulement s'il est $(\overline{F},H)$-instable. 
\end{enumerate}}
\end{remark}

\P\hskip1mm\textit{PropriŽtŽs des ensembles $\FN$ et $\NF$. ---} Les ensembles $\FN$ et $\NF$ sont $G(F)$-invariants. 
On a les inclusions $$\FN\subset \ES{N}\quad \hbox{et}\quad 
\NF \subset  V(F) \cap \ES{N}\ptf$$ 
Insistons sur le fait que si $F \neq \overline{F}$, les deux inclusions ci-dessus sont en gŽnŽral strictes. D'aprs \ref{Kempf2}, on a cependant le

\begin{lemma}
Si $F$ est parfait, alors $\ES{N}_F = V(F)\cap  \ES{N}$.
\end{lemma}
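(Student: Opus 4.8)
Le plan est d'appliquer directement la proposition~\ref{Kempf2} à la sous-variété $\ES{Q}=\{e_V\}$. On commencera par observer que l'inclusion $\NF\subset V(F)\cap\ES{N}$ a déjà été notée plus haut ; il s'agira donc uniquement d'établir l'inclusion réciproque sous l'hypothèse que $F$ est parfait.

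Soit $v\in V(F)\cap\ES{N}$. Par définition de $\ES{N}=\ES{N}^G(V,e_V)$, le point $e_V$ appartient à $\overline{\ES{O}_v}$, autrement dit $\overline{\ES{O}_v}\cap\{e_V\}\neq\emptyset$. Comme $e_V$ est $F$-rationnel, fermé et $G$-invariant, le singleton $\ES{Q}=\{e_V\}$ est une sous-$F$-variété fermée $G$-invariante de $V$, et l'on peut lui appliquer la proposition~\ref{Kempf2} (licite puisque $F$ est parfait). Son point~(iii) fournit un co-caractère $\lambda\in\Lambda_{\ES{Q},v}^{\mathrm{opt}}$ défini sur $F$, \cad $\lambda\in\check{X}_F(G)\cap\Lambda_v=\Lambda_{F,v}$ ; et par définition de $\Lambda_{\ES{Q},v}^{\mathrm{opt}}$, on a $\phi_{v,\lambda}^+(0)\in\ES{Q}=\{e_V\}$, donc $\phi_{v,\lambda}^+(0)=e_V$ (dans le cas dégénéré $v=e_V$, il suffit de prendre $\lambda=0$). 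L'élément $v$ est donc $(F,e_V)$-instable, \cad $v\in\FN$ ; et comme $v\in V(F)$, on conclut $v\in\NF=\ES{N}_F$, ce qui donne l'égalité cherchée.

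Il n'y a essentiellement pas d'obstacle : la seule chose à vérifier est que $\ES{Q}=\{e_V\}$ est un choix admissible dans la proposition~\ref{Kempf2} (ce qui résulte de $e_V\in V(F)$) et que, pour un co-caractère optimal défini sur $F$, les assertions $\lambda\in\Lambda_{F,v}$ et $\phi_{v,\lambda}^+(0)=e_V$ sont de simples traductions des définitions. On prendra garde que l'hypothèse \guill{$F$ parfait} intervient de façon cruciale, via le point~(iii) de la proposition~\ref{Kempf2} : sur un corps non parfait cette conclusion peut être mise en défaut, et l'inclusion $\NF\subset V(F)\cap\ES{N}$ est alors en général stricte.
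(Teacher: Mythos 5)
Votre démonstration est correcte et suit exactement la voie du texte : le lemme y est énoncé comme conséquence directe de la proposition~\ref{Kempf2} appliquée à $\ES{Q}=\{e_V\}$, et vous ne faites qu'expliciter proprement cette déduction (y compris le cas dégénéré $v=e_V$ et l'inclusion facile déjà signalée). Rien à redire.
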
 

Mme si $\FN\subsetneq \ES{N} $, on peut toujours munir $\FN$, resp. $\NF$, de la topologie de Zariski induite par celle de $\ES{N}$; 
mais il faut faire attention car $\FN$, resp. $\NF$, n'est en gŽnŽral pas fermŽ 
dans $\ES{N}$, resp. $V(F)\cap \ES{N}$, comme le montre l'exemple suivant. 

\begin{exemple}\label{le cas de PGL(2)}
\textup{
ConsidŽrons le cas o $V=G={\rm PGL}_2$ est muni de l'action par conjugaison. 
Notons $\pi:\wt{G}={\rm GL}_2 \rightarrow G$ l'application naturelle. Supposons $p=2$. Alors: 
\begin{itemize}
\item $\ES{N}$ est l'image par $\pi$ de l'ensemble des ŽlŽments de $\wt{G}$ 
de trace nulle; 
\item $\FN$ est l'image par $\pi$ de l'ensemble des ŽlŽments de $\wt{G}$ de trace nulle et de dŽterminant un carrŽ dans $F^\times$;
\item $\NF = G(F)\cap \FN$ est l'image par $\pi$ de l'ensemble des ŽlŽments de $\wt{G}(F)$ de trace nulle et de dŽterminant un carrŽ dans $F^\times$;
\item $G(F)\cap \ES{N}$ est l'image par $\pi$ de l'ensemble des $\gamma\in \wt{G}(F)$ tels que $\gamma^2\in F^\times$ (o l'on identifie $F^\times$ au centre de $\wt{G}(F)$). 
\end{itemize} 
Si $F$ est infini, $\ES{N}_F$ est Zariski-dense dans $\ES{N}$; et si de plus $F$ n'est pas parfait, 
il est distinct de $G(F)\cap \ES{N}$. Si $F$ n'est pas parfait, la propriŽtŽ \guill{tre un carrŽ dans $F^\times$} dŽfinit un sous-ensemble de $F^\times$ qui n'est 
pas un \textit{ensemble algŽbrique}.}
\end{exemple}

\begin{remark}\label{V-FN dense dans V}
\textup{Si $V$ est distinct de $\ES{N}$, \cad s'il existe un ŽlŽment de $V$ qui soit $\overline{F}$-semi-stable, alors 
$V\smallsetminus \FN$ contient l'ouvert non vide $V\smallsetminus \ES{N}$ de $V$. Si de plus $V$ est irrŽductible, alors 
$V\smallsetminus \FN$ est dense dans $V$.}
\end{remark}

Compte-tenu de la dŽfinition de $\FN$, il est naturel de considŽrer 
la c-$F$-topologie donnŽe par l'action de $\check{X}_F(G)$ sur $V$ (voir l'annexe \ref{annexe B}):

\begin{lemma}
$\FN$ est c-$F$-fermŽ (dans $V$).
\end{lemma}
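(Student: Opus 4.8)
The plan is to use the description of the c-$F$-topologie from Annexe~\ref{annexe B}, according to which a subset $C\subset V$ est c-$F$-ferm\'ee si et seulement si $\phi_{v,\mu}^+(0)\in C$ pour tout $v\in C$ et tout $\mu\in\check{X}_F(G)\cap\Lambda_v$. It therefore suffices to prove: pour $v\in\FN$ et $\mu\in\check{X}_F(G)\cap\Lambda_v$, le point $w:=\phi_{v,\mu}^+(0)$ est $F$-instable. First I would set $M=M_\mu$, $P=P_\mu$, $U=U_\mu$, and let $\pi\colon P\to M$ be the Levi morphism, which is a $F$-morphism de $F$-groupes. Le point $w$ \'etant fix\'e par $\mu(\mbb{G}_\mathrm{m})$ appartient \`a la sous-vari\'et\'e ferm\'ee $V^0$ des points fixes de $\mu(\mbb{G}_\mathrm{m})$, laquelle est d\'efinie sur $F$, $M$-stable et contient $e_V$; comme $\check{X}_F(M)\subset\check{X}_F(G)$, il suffit de montrer que $w\in\FN^{M}(V^0,e_V)$.

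The key step is a reduction to $P$: it is enough to prove that $v$ est $(F,P)$-instable au sens de~\ref{F-stable, F-semi-stable}. Indeed, soit $\lambda\in\check{X}_F(P)\cap\Lambda_v$ avec $\phi_{v,\lambda}^+(0)=e_V$, et posons $\bar\lambda=\pi\circ\lambda\in\check{X}_F(M)$. D\'ecomposons $V=\bigoplus_{c\in\mbb{Z}}V_c$ en sous-espaces de poids pour $\mu(\mbb{G}_\mathrm{m})$ et notons $V_{\geq c}=\bigoplus_{c'\geq c}V_{c'}$; alors $v\in V_{\geq 0}$ puisque $\mu\in\Lambda_v$, et $w$ est la composante de $v$ dans $V_0=V^0$. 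Comme $P$ normalise la filtration $(V_{\geq c})_c$ et induit sur $V_{\geq 0}/V_{\geq 1}\simeq V_0$ l'action de $M$ via $\pi$, la composante dans $V_0$ de $t^\lambda\cdot v$ \'egale $t^{\bar\lambda}\cdot w$ pour tout $t$. En faisant tendre $t$ vers $0$ dans l'\'egalit\'e $\lim_{t\to 0}t^\lambda\cdot v=e_V\in V_0$, il vient $\lim_{t\to 0}t^{\bar\lambda}\cdot w=e_V$, d'o\`u $w\in\FN^{M}(V^0,e_V)$, comme voulu.

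It remains to show that $v\in\FN^G(V,e_V)$, with $\mu\in\check{X}_F(G)\cap\Lambda_v$, entra\^ine que $v$ est $(F,P)$-instable. I would first settle this over $\overline F$: par~\ref{Kempf2} appliqu\'e au corps parfait $\overline F$ on a $\FN^G(V,e_V)=\ES{N}$ et $\FN^{M}(V^0,e_V)=\ES{N}^{M}(V^0,e_V)$; puisque $f(w)=\lim_{s\to 0}f(s^\mu\cdot v)=f(v)$ pour toute fonction polynomiale $G$-invariante $f$ sur $V$, le point $w$ a les m\^emes invariants que $v$, donc $v\in\ES{N}$ entra\^ine $w\in\ES{N}$; comme il est classique (Hesselink~\cite{H2}) que $\ES{N}^{M}(V^0,e_V)=V^0\cap\ES{N}$, on en d\'eduit que $w$ est $M$-instable sur $\overline F$; choisissant $\nu\in\check{X}(M)$ avec $\lim_{t\to 0}t^\nu\cdot w=e_V$, le cocaract\`ere $N\mu+\nu\in\check{X}(M)\subset\check{X}(P)$ destabilise $v$ d\`es que $N$ est assez grand, par le m\^eme calcul de poids, donc $v$ est $P$-instable sur $\overline F$. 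For the passage from $\overline F$ to $F$ I would run the argument of the proof of~\ref{variante de BMRT(4.7)}: par~\ref{BMRT(4.7)} la $F$-fermeture $Z=\smash{\overline{\{v\}}}^{(F)}$ est uniform\'ement $(F^{\mathrm{s\acute{e}p}},G)$-instable, elle est contenue dans $V_{\geq 0}$ et dans $\ES{N}$ (car $\Gamma_F$-stable, $\mu$ \'etant d\'efini sur $F$), et le cas $\overline F$ appliqu\'e \`a $F^{\mathrm{s\acute{e}p}}$ montre qu'elle est destabilis\'ee point par point dans $M\subset P$; comme $P$ et $M$ sont d\'efinis sur $F$, donc $\Gamma_F$-stables, le crit\`ere galoisien~\ref{crit�re galoisien} joint \`a la transitivit\'e simple de $U$ sur les cocaract\`eres optimaux (\ref{HMrat1}(iii)) permet d'extraire un destabilisateur $F^{\mathrm{s\acute{e}p}}$-rationnel $\lambda^*$ de $Z$ commutant \`a $\mu$, puis que $\pi\circ\lambda^*$ est d\'efini sur $F$; alors $N\mu+\pi\circ\lambda^*\in\check{X}_F(M)\subset\check{X}_F(P)$ ($N\gg 0$) destabilise $v$ sur $F$, ce qui donne la $(F,P)$-instabilit\'e de $v$.

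The step I expect to be the main obstacle is the last one: making the separable descent compatible with the parabolic $P_\mu$, \cad produire un destabilisateur $F^{\mathrm{s\acute{e}p}}$-rationnel de $\smash{\overline{\{v\}}}^{(F)}$ commutant \`a $\mu$ (ce qui revient \`a le prendre dans $\check{X}(M_\mu)$). The essential inputs there are the classical identity $\ES{N}^{M}(V^0,e_V)=V^0\cap\ES{N}$ and the uniqueness statement~\ref{HMrat1}(iii) of the rational Kempf-Rousseau theorem; once a commuting uniform destabiliser is in hand, the remaining manipulations of cocharacters are formal.
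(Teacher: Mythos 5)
Your reduction steps are fine (modulo the fact that you decompose $V$ into $\mu$-weight spaces, which presupposes that $V$ is a $G$-module; this is harmless, since one can first invoke \ref{r�duction au cas d'un G-module rat} and note that a closed $G$-equivariant $F$-immersion $\iota$ with $\iota(e_V)=0$ identifies $\FN^G(V,e_V)$ with $\iota^{-1}(\FN^G(W,0))$ compatibly with the maps $\phi^+_{v,\lambda}$). The genuine gap is exactly where you anticipate it, and it is not a technical detail but the whole content of the statement: the step \guill{le cas $\overline F$ appliqu\'e \`a $F^{\mathrm{s\acute{e}p}}$ montre que $Z$ est d\'estabilis\'ee point par point dans $M\subset P$} is unjustified and in fact circular. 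Your $\overline F$-argument rests on Kempf's theorem over a perfect field (the optimal parabolic of the limit point contains its stabilizer, hence $\mathrm{Im}(\mu)$, so an optimal destabilizer can be chosen in $M_\mu$). Over $F^{\mathrm{s\acute{e}p}}$, which is not perfect when $F$ is not, \ref{Kempf2} is unavailable; what you do know at that stage is only that the points in question are geometrically instables, and geometric instability does not produce an $F^{\mathrm{s\acute{e}p}}$-rational destabilizer (see \ref{l'exemple de [5.6]{H1}}\,(i), which works verbatim with $F$ separably closed and imperfect). Knowing that $v$, or $Z$, is ($F^{\mathrm{s\acute{e}p}}$-)uniform\'ement instable does not help either, because \ref{HMrat1} locates the optimal cocharacters in ${_{F^{\mathrm{s\acute{e}p}}}P_Z}$, which has no relation to $P_\mu$: since $\mu$ does not fix $v$, $\mathrm{Im}(\mu)$ need not lie in ${_{F}P_v}$, so the rational optimality theory cannot be \guill{localized} inside $P_\mu$ the way Kempf's geometric theorem allows. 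In other words, the assertion you need there (\guill{the limit of an $F^{\mathrm{s\acute{e}p}}$-instable point along an $F^{\mathrm{s\acute{e}p}}$-cocharacter is again $F^{\mathrm{s\acute{e}p}}$-instable}) is precisely the lemma over $F^{\mathrm{s\acute{e}p}}$. The subsequent extraction of $\lambda^*$ \guill{commutant \`a $\mu$} via the Galois criterion and \ref{HMrat1}\,(iii) inherits the same problem: the simple transitivity concerns optimal cocharacters relative to ${_FP_Z}$, and gives no canonical destabilizer adapted to the pair $(Z,\mu)$, so no $\Gamma_F$-equivariance argument applies as stated.

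Two ways to close the gap, both different from your route. The paper's own proof is a two-line application of \ref{HMrat2} (BHMR, th\'eor\`eme 4.3): for $v\in\FN$ the unique c-$F$-closed $G(F)$-orbit $\ES{F}_{F,v}$ in the c-$F$-closure of $\ES{O}_{F,v}$ is $\{e_V\}$, every point $v'$ of that c-$F$-closure has $\ES{F}_{F,v'}=\ES{F}_{F,v}=\{e_V\}$, and \ref{HMrat2}\,(iii) then furnishes an $F$-rational cocharacter destabilizing $v'$; all the rationality difficulties you are fighting are packaged into that theorem. Alternatively, the statement you are really after (the limit along $\mu\in\Lambda_{F,v}$ stays $F$-instable, indeed stays in the relevant saturated set) is proved by a genuinely rational argument in \ref{prop 2.9 de [H2] rat}\,(ii): choose a maximal $F$-split torus $S$ of $M_\mu$ and a minimal $F$-parabolic $P_0$ with $S\subset P_0\subset P_\mu$, conjugate so that $P_0\subset{_FP_v}$, and use the relative Bruhat decomposition over $F$ to control the limit; no passage through $\overline F$ and no descent from $F^{\mathrm{s\acute{e}p}}$ with a constraint relative to $P_\mu$ is needed. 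As written, your proposal is incomplete at its essential step.
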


\begin{proof}
D'aprs \ref{HMrat2}, $\ES{N}_F$ est l'ensemble des ŽlŽments $v\in V(F)$ tels que $\ES{F}_{F,v}= \{e_V\}$ o (rappel) $\ES{F}_{F,v}$ est l'unique $G(F)$-orbite c-$F$-fermŽe 
contenue dans la c-$F$-fermeture $\smash{\overline{X}}^{\cF}$ de la $G(F)$-orbite $X=\ES{O}_{F,v}$ de $v$. D'o le lemme puisque (d'aprs loc.~cit.) $\smash{\overline{X}}^{\cF}\subset \FN$.
\end{proof}

\P\hskip1mm\textit{Co-caractres \guill{virtuels} et optimalitŽ. ---} La variante de Hesselink consiste ˆ normaliser les co-caractres optimaux de telle sorte  
qu'au lieu de maximaliser la valeur $\rho_{Z}(\lambda)= \frac{m_Z(\lambda)}{\|\lambda\|}$, on se ramne ˆ minimiser la norme d'un co-caractre virtuel.
 
Soit\index{XcheckGQ@$\check{X}(G)_\mbb{Q}$} $\check{X}(G)_\mbb{Q}$ le quotient de $\mbb{N} \times \check{X}(G)$ 
par la relation d'Žquivalence 
$$(n,\lambda)\sim (m,\lambda') \quad \hbox{si et seulement si} \quad m\lambda = n\lambda'\ptf$$
L'action de $G$ sur $\check{X}(G)$ se prolonge naturellement en une action sur $\check{X}(G)_\mbb{Q} $. Si $T$ est 
un tore, alors $\check{X}(T)_\mbb{Q}  = \check{X}(T)\otimes_\mbb{Z}  \mbb{Q} $ est 
un $\mbb{Q} $-espace vectoriel (de dimension finie). Pour $\mu\in \check{X}(G)_{\mbb{Q}}$, on note $P_\mu$ le sous-groupe parabolique de $G$ dŽfini 
par $P_\mu = P_\lambda$ pour un (i.e. pour tout) $\lambda \in  \check{X}(G)\cap \mbb{N}^*\mu$; on dŽfinit de la mme manire $U_\mu$ et $M_\mu$. 

Pour un sous-ensemble non vide $Z\subset V$, on note\index{LambdatildeZ@$\wt{\Lambda}_{Z}$} $\wt{\Lambda}_{Z}$ l'ensemble des $\mu \in \check{X}(G)_\mbb{Q} $ tels que 
$n \mu \in \Lambda_Z\;(=\Lambda_{\smash{\overline{F}},e_V,Z})$ pour un $n\in \mbb{N}^*$. On Žtend la mesure de l'instabilitŽ $m_Z(\mu)=m_{e_V,Z}(\mu)$ ˆ tout $\mu\in \wt{\Lambda}_Z$ de la manire suivante: 
on choisit un $n\in \mbb{N}^*$ tel que $n\mu \in \Lambda_Z$ et on pose\index{mZmu@$m_Z(\mu)$} $$m_{Z}(\mu)= n^{-1} m_{Z}(n\mu)\in \mbb{Q} _+ \cup \{+\infty\}\ptf$$ 
On a donc $m_{Z}(\mu)>0$ si et seulement s'il existe un $n\in \mbb{N}^*$ tel que $n\mu \in \check{X}(G)$ et $\lim_{t\rightarrow 0} t^{n\mu} 
\cdot v = e_V$ pour tout $v\in Z$; auquel cas $Z$ est uniformŽment instable. 
Observons que pour 
$Z=\{e_V\}$, on a $$m_{Z}(\mu)= +\infty\quad\hbox{pour tout}\quad \mu \in \wt{\Lambda}_{Z}= \check{X}(G)_{\mbb{Q}}\ptf$$

On dŽfinit de la mme manire l'ensemble $\check{X}_F(G)_{\mbb{Q}}$; c'est un sous-ensemble de $\check{X}(G)_{\mbb{Q}}$. 
L'action de $G(F)$ sur $\check{X}_F(G)$ se prolonge naturellement en une action sur $\check{X}_F(G)_{\mbb{Q}}$. Pour $Z\subset V$, $Z\neq\emptyset$, on pose\index{LambdatildeFZ@$\wt{\Lambda}_{F,Z}$} 
$$\wt{\Lambda}_{F,Z}= \check{X}_F(G)_{\mbb{Q}} \cap \wt{\Lambda}_Z\ptf$$

La $F$-norme $G$-invariante $\| \, \|$ sur $\check{X}(G)$ se prolonge naturellement ˆ $\check{X}(G)_\mbb{Q} $. On rappelle qu'on l'a dŽfinie ˆ partir d'une norme 
$\Gamma_F$-invariante et $W^G(T_0)$-invariante sur $\check{X}(T_0)$ pour un tore maximal $T_0$ de $G$ dŽfini sur $F$. On commence 
par Žtendre linŽairement cette dernire au $\mbb{Q} $-espace vectoriel $\check{X}(T_0)_\mbb{Q}  
=\check{X}(T_0)\otimes_\mbb{Z}  \mbb{Q} $, puis on 
pose $\| g\bullet \mu  \| =\| \mu \|$ pour tout $g\in G$ et tout $\mu \in \check{X}(T_0)_\mbb{Q} $. Pour un sous-ensemble (non vide) $Z\subset \FN$ uniformŽment $F$-instable, $Z\neq \{e_V\}$, on pose\index{qFZ@$\bs{q}_F(Z)$}
$$\bs{q}_F(Z) = \inf \{ \| \mu \|\,\vert \, \mu \in \wt{\Lambda}_{F,Z} ,\,m_Z(\mu)\geq 1\}>0$$ et\index{LambdaboldFZ@$\bs{\Lambda}_{F,Z}$} 
$$\bs{\Lambda}_{F,Z} = \{\mu \in \wt{\Lambda}_{F,Z} \,\vert \, m_Z(\mu)\geq 1,\,\| \mu \| = \bs{q}_F(v)\}\ptf$$ 
On a clairement 
\begin{align*}\bs{q}_F(v) &=  \inf \{ \| \mu \|\,\vert \, \mu \in \wt{\Lambda}_{F,Z},\,m_Z(\mu)= 1\}\\
&= \left(\sup \{\rho_Z(\mu)\,\vert \, \mu \in \Lambda_{F,Z}\smallsetminus \{0\}\}\right)^{-1}
\end{align*} et 
$$\mu \in \bs{\Lambda}_{F,Z} \Rightarrow m_Z(\mu)=1\ptf$$ 
On pose aussi $$\bs{q}_F(e_V)=0 \quad \hbox{et}\quad \bs{\Lambda}_{F,e_V} = \{0\}\ptf$$ 
Pour $Z\subset \FN$ uniformŽment $F$-instable et $\lambda \in \Lambda_{F,Z}$ tels que $m_Z(\lambda)>0$, on pose 
$$\wt{\lambda}_Z= \frac{1}{m_Z(\lambda)}\lambda \in \wt{\Lambda}_{F,Z}\ptf$$ 
Puisque $$\| \wt{\lambda}_Z \| = \rho_Z(\lambda)^{-1}\quad \hbox{et}\quad m_Z(\wt{\lambda}_Z)= 1\vg$$
on en dŽduit le

\begin{lemma}\label{variante Hesselink rat}
Pour $Z\subset \FN$ uniformŽment $F$-instable, l'application $\lambda \mapsto \wt{\lambda}_Z$ induit une bijection de 
$\Lambda_{F,Z}^\mathrm{opt}$ sur $ \bs{\Lambda}_{F,Z}$. 
\end{lemma}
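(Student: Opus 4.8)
Le plan est d'\'etablir s\'epar\'ement que l'application $\lambda\mapsto\wt{\lambda}_Z$ est bien d\'efinie, injective, puis surjective. On peut supposer $Z\neq\{e_V\}$ (sinon $\Lambda_{F,Z}^{\mathrm{opt}}=\{0\}=\bs{\Lambda}_{F,e_V}$ et l'assertion est triviale), donc $Z\not\subset\{e_V\}$. La preuve reposera sur trois ingr\'edients d\'ej\`a disponibles : les deux \'egalit\'es $\|\wt{\lambda}_Z\|=\rho_Z(\lambda)^{-1}$ et $m_Z(\wt{\lambda}_Z)=1$ (valables pour $\lambda\in\Lambda_{F,Z}$ avec $m_Z(\lambda)>0$) ; la reformulation $\bs{q}_F(Z)=\left(\sup\{\rho_Z(\nu)\,\vert\,\nu\in\Lambda_{F,Z}\smallsetminus\{0\}\}\right)^{-1}$, o\`u ce sup est fini, strictement positif et atteint puisque $\Lambda_{F,Z}^{\mathrm{opt}}\neq\emptyset$ (\ref{HMrat1}) ; et le fait \'el\'ementaire suivant, que j'utiliserai deux fois : sur un rayon $\mbb{Q}_{>0}\mu$ de $\check{X}(G)_{\mbb{Q}}$ (avec $\mu\neq 0$) il y a au plus un co-caract\`ere indivisible de $G$, et de m\^eme au plus un co-caract\`ere indivisible d\'efini sur $F$. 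En effet, deux co-caract\`eres indivisibles proportionnels par un rationnel $>0$ ont m\^eme image $S$, un sous-tore de dimension $1$ ; l'indivisibilit\'e dans $\check{X}(G)$ force chacun \`a engendrer $\check{X}(S)\simeq\mbb{Z}$, et le rapport \'etant $>0$ ils ont m\^eme orientation, donc co\"{\i}ncident.

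\emph{Bonne d\'efinition et injectivit\'e.} Soit $\lambda\in\Lambda_{F,Z}^{\mathrm{opt}}$. Comme $Z\not\subset\{e_V\}$, par d\'efinition $\lambda$ est indivisible, non nul, dans $\Lambda_{F,Z}$, v\'erifie $\phi_{v,\lambda}^+(0)=e_V$ pour tout $v\in Z$ --- donc $m_Z(\lambda)\in\mbb{N}^*$, en particulier $m_Z(\lambda)>0$ --- et $\rho_Z(\lambda)=\sup\{\rho_Z(\nu)\,\vert\,\nu\in\Lambda_{F,Z}\smallsetminus\{0\}\}$. Les deux \'egalit\'es ci-dessus donnent alors $\wt{\lambda}_Z\in\wt{\Lambda}_{F,Z}$, $m_Z(\wt{\lambda}_Z)=1$ et $\|\wt{\lambda}_Z\|=\rho_Z(\lambda)^{-1}=\bs{q}_F(Z)$, c'est-\`a-dire $\wt{\lambda}_Z\in\bs{\Lambda}_{F,Z}$. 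Pour l'injectivit\'e : si $\wt{\lambda}_Z=\wt{\lambda'}_Z$ avec $\lambda,\lambda'\in\Lambda_{F,Z}^{\mathrm{opt}}$, alors $\lambda=m_Z(\lambda)\,\wt{\lambda}_Z$ et $\lambda'=m_Z(\lambda')\,\wt{\lambda'}_Z$ sont deux co-caract\`eres indivisibles port\'es par le m\^eme rayon $\mbb{Q}_{>0}\wt{\lambda}_Z$, donc $\lambda=\lambda'$ par le fait \'el\'ementaire ci-dessus.

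\emph{Surjectivit\'e.} C'est le c\oe ur de la preuve. Soit $\mu\in\bs{\Lambda}_{F,Z}$ ; on sait que $m_Z(\mu)=1$ et que $\mu\in\wt{\Lambda}_{F,Z}=\check{X}_F(G)_{\mbb{Q}}\cap\wt{\Lambda}_Z$. Je choisirais $n\in\mbb{N}^*$ avec $n\mu\in\check{X}_F(G)$ (d\'efinition de $\check{X}_F(G)_{\mbb{Q}}$), puis, quitte \`a multiplier $n$, avec de plus $n\mu\in\Lambda_Z$ (d\'efinition de $\wt{\Lambda}_Z$ et fait que $\kappa\in\Lambda_Z$ entra\^{\i}ne $k\kappa\in\Lambda_Z$) ; ainsi $n\mu\in\Lambda_{F,Z}$ et $m_Z(n\mu)=n\,m_Z(\mu)=n$. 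J'\'ecrirais ensuite $n\mu=c\lambda$ avec $c\in\mbb{N}^*$ et $\lambda\in\check{X}(G)$ indivisible, en factorisant par le tore image $S$ de $n\mu$ (de dimension $1$). Trois points restent \`a v\'erifier. \emph{(a)} $\lambda$ est d\'efini sur $F$ : il l'est sur $F^{\mathrm{s\acute{e}p}}$ ($S$, image d'un $F$-morphisme, est d\'efini sur $F$ donc d\'eploy\'e sur $F^{\mathrm{s\acute{e}p}}$), et $\Gamma_F$ le fixe (car $\Gamma_F$ fixe $n\mu$, donc $S$, donc agit trivialement sur $\check{X}(S)\simeq\mbb{Z}$) ; on conclut par le crit\`ere galoisien (cf. \ref{notations et rappels}). \emph{(b)} $\lambda\in\Lambda_Z$ avec $\phi_{v,\lambda}^+(0)=\phi_{v,c\lambda}^+(0)=\phi_{v,n\mu}^+(0)=e_V$ pour tout $v\in Z$ : on a $\phi_{v,c\lambda}=\phi_{v,\lambda}\circ(t\mapsto t^c)$, et sur la vari\'et\'e affine $V$ la r\'egularit\'e en $0$ des fonctions de la forme $f\circ\phi_{v,\lambda}$ ($f\in\overline{F}[V]$) se lit apr\`es l'image r\'eciproque finie $t\mapsto t^c$ ; l'\'egalit\'e des limites vaut $e_V$ puisque $m_Z(n\mu)=n\geq 1>0$. \emph{(c)} $\wt{\lambda}_Z=\mu$ : comme $\lambda=(n/c)\mu$ dans $\check{X}(G)_{\mbb{Q}}$ et que $m_Z$ est homog\`ene de degr\'e $1$ sur $\wt{\Lambda}_Z$, on a $m_Z(\lambda)=n/c$, d'o\`u $\wt{\lambda}_Z=(c/n)\lambda=\mu$. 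Il en r\'esulte $\rho_Z(\lambda)=\|\wt{\lambda}_Z\|^{-1}=\|\mu\|^{-1}=\bs{q}_F(Z)^{-1}=\sup\{\rho_Z(\nu)\,\vert\,\nu\in\Lambda_{F,Z}\smallsetminus\{0\}\}$, donc $\lambda\in\Lambda_{F,Z}^{\mathrm{opt}}$ et $\wt{\lambda}_Z=\mu$.

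Le seul point qui ne soit pas purement formel est celui de la surjectivit\'e : v\'erifier que le co-caract\`ere indivisible $\lambda$ port\'e par le rayon de $\mu$ reste d\'efini sur $F$ et reste dans $\Lambda_{F,Z}$ avec la bonne limite. Cela combine le petit lemme d'extension de $\phi_{v,\lambda}$ le long de $t\mapsto t^c$ (pour $\lambda$, sachant $c\lambda\in\Lambda_Z$) et une descente galoisienne appuy\'ee sur l'unicit\'e du co-caract\`ere indivisible sur un rayon. La traduction de la minimalit\'e de la norme (qui d\'efinit $\bs{\Lambda}_{F,Z}$) en la maximalit\'e de $\rho_Z$ (qui d\'efinit $\Lambda_{F,Z}^{\mathrm{opt}}$) n'est, elle, que le contenu des deux \'egalit\'es $\|\wt{\lambda}_Z\|=\rho_Z(\lambda)^{-1}$ et $m_Z(\wt{\lambda}_Z)=1$ rappel\'ees avant le lemme ; tout le reste se r\'eduit \`a de pures v\'erifications formelles li\'ees \`a la relation d'\'equivalence d\'efinissant $\check{X}(G)_{\mbb{Q}}$.
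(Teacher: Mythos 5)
Votre preuve est correcte et suit essentiellement la m\^eme voie que le texte, qui d\'eduit le lemme directement des deux \'egalit\'es $\|\wt{\lambda}_Z\|=\rho_Z(\lambda)^{-1}$ et $m_Z(\wt{\lambda}_Z)=1$ jointes \`a l'identit\'e $\bs{q}_F(Z)=(\sup\rho_Z)^{-1}$, sans expliciter les v\'erifications. Vous ne faites que r\'ediger ces v\'erifications laiss\'ees implicites (unicit\'e du co-caract\`ere indivisible sur un rayon pour l'injectivit\'e; pour la surjectivit\'e, rationalit\'e sur $F$ du g\'en\'erateur primitif par le crit\`ere galoisien et existence de la limite via le rel\`evement le long de $t\mapsto t^c$), et ces v\'erifications sont exactes.
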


\begin{remark}\label{indep norme}
\textup{Si la $G$-variŽtŽ affine pointŽe $V$ est de \textit{type adjoint} au sens de \cite[7.1]{H1}, alors d'aprs \cite[7.2]{H1} 
la notion de $F$-optimalitŽ pour les sous-ensembles uniformŽment $F$-instables 
$Z\subset V$ ne dŽpend pas du choix de la $F$-norme $G$-invariante $ \| \, \|$ sur $\check{X}(G)$: l'ensemble $\Lambda_{F,Z}^\mathrm{opt}$, l'invariant $m_{F,Z}$ et le $F$-sous-groupe parabolique ${_FP_Z}$ ne dŽpendent 
pas de $ \| \, \|$. Observons que le groupe $V=G$ lui-mme muni de l'action par conjugaison, est de type adjoint \cite[7.1\,(c)]{H1}.
}\end{remark}

\begin{remark}\label{comparaison m et q}
\textup{
Soit $Z\subset \FN$ uniformŽment $F$-instable. Pour tout $v\in Z$, on a $\bs{q}_F(v)\leq \bs{q}_F(Z)$. On en dŽduit que pour $v'\in \FN$ et $\mu \in \bs{\Lambda}_{F,Z} \cap \wt{\Lambda}_{v'}$, on a 
$$m_{v'}(\mu)\geq 1 \Rightarrow \bs{q}_F(v') \leq \bs{q}_F(Z) = \| \mu \|  \ptf$$ 
}
\end{remark}

\P\hskip1mm\textit{Des filtrations. ---}
Pour $\mu \in \check{X}(G)_\mbb{Q} $ et $r\in \mbb{Q} _+\cup \{+\infty\}$, on pose\index{Vmur@$V_{\mu,r}$} 
$$V_{\mu,r}=\{v\in V\,\vert\, \mu\in \wt{\Lambda}_v,\, m_{v}(\mu)\geq r\}\ptf$$
C'est une sous-variŽtŽ fermŽe $P_\mu$-invariante de $V$. On a: 
\begin{itemize}
\item $V_{\mu , r} \subset V_{\mu,s}$ si $r\geq s$;
\item $V_{\mu,0}= \{v\in V\,\vert \, \mu \in \wt{\Lambda}_v \}$;
\item $\bigcap_{r\in \mbb{Q} _{\geq 0}} V_{\mu,r} = V_{\mu , +\infty} = \{e_V\}$; 
\item $m_{v}(\mu)= \sup \{r \in \mbb{Q} _+\cup\{+\infty\}\,\vert \, v \in V_{\mu ,r}\}$ pour tout $v\in \ES{N} $;
\item $V_{g\bullet \mu,r}= g\cdot V_{\mu,r}$ pour tout $g\in G$.
\end{itemize} 
La famille $(V_{\mu ,r})_{r\in \mbb{Q} _+}$ est appelŽe la \textit{$\mu$-filtration} de $V$. On pose aussi\index{Vmu(0)@$V_\mu(0)$} 
$$V_\mu(0)= \{v\in V\,\vert \, t^\lambda\cdot v = v ,\, \forall t\in \smash{\overline{F}}^\times\}$$ pour un (i.e. pour tout) $\lambda \in \check{X}(G)\cap \mbb{N}^*\mu$. 
C'est une sous-variŽtŽ fermŽe $M_\mu$-invariante de $V_{\mu,0}$. 

\begin{remark}
\textup{Pour $\mu\in \check{X}_F(G)_{\mbb{Q}}$, les sous-variŽtŽs fermŽes $V_{\mu,r}$ ($r\in \mbb{Q}_+$) et $V_\mu(0)$ de $V$ sont \textit{a priori} seulement 
$F$-fermŽes dans $V$. On verra plus loin qu'elles sont toujours dŽfinies sur $F$ (\ref{consŽquence de V'=V}).}
\end{remark}

Si $\lambda\in \check{X}(G)\cap \mbb{N}^*\mu$, on a:
\begin{itemize}
\item $\phi_{\lambda,v}^+(0)\in V_\mu(0)=V_\lambda(0)$ pour tout $v\in V_{\mu,0}=V_{\lambda,0}$;
\item $\phi_{\lambda,v}^+(0)=e_V$ si et seulement si $v\in V_{\mu,s}$ pour un $s>0$. 
\end{itemize}
Puisque pour tout $s\in \mbb{Q}_+^*$, on a $V_{\mu,s}= V_{\frac{1}{\mu}s,1}$, on obtient que
$$\ES{N} = \bigcup_{\mu \in \check{X}(G)_{\mbb{Q}}}V_{\mu,1}\quad \hbox{et}\quad \FN = \bigcup_{\mu \in \check{X}_F(G)_{\mbb{Q}}} V_{\mu,1}\ptf$$

Pour $V=G$ muni de l'action par conjugaison (avec $e_V=e_G=1$), $\mu \in \check{X}(G)_{\mbb{Q}}$ et $r\in \mbb{Q}_+$, 
on pose\index{Gmur@$G_{\mu ,r}$} $$G_{\mu ,r}= V_{\mu,r}\ptf$$  
D'aprs \cite[2.5, 5.1]{H1}, on a les propriŽtŽs:
\begin{itemize}
\item $G_{\mu ,r}$ est un sous-groupe algŽbrique fermŽ de $G$;
\item $G_{\mu ,0}=P_\mu $, $G_\mu(0)= M_\mu$ et $\bigcup_{r>0} G_{\mu,r} = U_\mu$; 
\item pour $r>0$, $G_{\mu,r}$ est un sous-groupe unipotent connexe distinguŽ de $P_\mu$; 
\item $G_{\mu,r}$ est dŽfini sur $F$ si $\mu\in \check{X}_F(G)_{\mbb{Q}}$ (c'est Žvident si $r=0$ car $G_{\mu,0}=P_\mu$; si $r>0$, cela rŽsulte de \cite[3.13]{BT} car 
$G_{\mu,r}$ est un sous-groupe $F$-fermŽ connexe de $G$ normalisŽ par $M_\mu$).
\end{itemize} 

\vskip2mm
\P\hskip1mm\textit{Interlude sur le cas des $G$-modules. ---} Le cas des $G$-modules est l'archŽtype de la thŽorie, c'est pourquoi nous le reprenons brivement ici 
(cf. \cite{K2,H1}). On peut toujours s'y ramener gr‰ce au rŽsultat bien connu suivant (cf. \cite[1.1]{K1}, \cite[2.3]{H1}):

\begin{lemma}\label{rŽduction au cas d'un G-module rat}
Soit $(V,e_V)$ une $G$-variŽtŽ pointŽe dŽfinie sur $F$. Il existe un $G$-module $W$ dŽfini sur $F$\footnote{C'est-ˆ-dire un $\overline{F}$-espace vectoriel $W$ dŽfini sur $F$ muni 
d'un morphisme de variŽtŽs $G \rightarrow \mathrm{GL}(W)$ lui aussi dŽfini sur $F$.}, et une $F$-immersion fermŽe $G$-Žquivariante 
$\iota:V \rightarrow W$ telle que $\iota(e_V)=0$.
\end{lemma}

On suppose dans ce paragraphe que $V$ est un $G$-module dŽfini sur $F$ (avec $e_V=0$). 
On a toujours $\ES{N}=\ES{N}^G(V,0)$, $\FN= \FN^G(V,0)$ et $\NF= V(F)\cap \FN$. 

Pour $\lambda \in \check{X}(G)$ et $i\in \mbb{Z} $, on pose 
$$V_\lambda(i) = \{v\in V\,\vert \,t^\lambda \cdot v = t^i v,\, \forall t \in \smash{\overline{F}}^\times \}\ptf$$ 
C'est un sous-espace $M_\lambda$-invariant de $V$. 
Pour $k\in \mbb{Z} $, on pose $$V'_{\lambda ,k} =\bigoplus_{i\geq k} V_\lambda(i)\ptf$$ 
C'est un sous-espace $P_\lambda$-invariant de $V$. Observons que pour $v\in V'_{\lambda ,k}$, la composante $v_\lambda(k)$ de $v$ sur $V_\lambda(k)$ est donnŽe par 
$$v_\lambda(k) = \lim_{t\rightarrow 0} t^{-k} t^\lambda\cdot v \ptf$$ On verra plus loin (\ref{V'=V}) que pour $k\geq 0$, le sous-espace $V'_{\lambda,k}$ co\"{\i}ncide avec la 
variŽtŽ $V_{\lambda,k}$ dŽfinie plus haut. 

\begin{remark}\label{[B,5.2]}
\textup{D'aprs \cite[ch.~II, 5.2]{B}, pour $\lambda\in \check{X}_F(G)$ et $k\in \mbb{Z}$, les sous-espaces $V_\lambda(k)$ et $V'_{\lambda,k}$ de $V$ sont dŽfinis sur $F$.}
\end{remark}

Pour $\mu\in \check{X}(G)_{\mbb{Q}}$ et $r\in \mbb{Q}$, on Žcrit $\mu= \frac{1}{n}\lambda$ et $r= \frac{k}{n}$ avec $n\in \mbb{N}^*$, $\lambda\in \check{X}(G)$ et $k\in \mbb{Z}$; et 
l'on pose\index{Vmu(r)@$V_\mu(r)$}\index{V'mur@$V'_{\mu,r}$} $$V_\mu(r)= V_\lambda(k)\quad\hbox{et}\quad V'_{\mu,r}= V'_{\lambda,k}\ptf$$ 

Si $T$ est un tore de $G$ (pas forcŽment maximal, ni mme dŽfini sur $F$), on a la dŽcomposition\index{Vchi@$V_\chi$} 
$$V= \bigoplus_{\chi \in X(T)}V_\chi\quad \hbox{avec} \quad V_\chi= \{v\in V\,\vert \, t\cdot v = t^\chi
v,\, \forall t\in T\}\ptf$$ 
On note\footnote{Le \guill{prime} dans la notation est dž au fait que pour $V={\rm Lie}(G)$ muni de l'action adjointe, on notera $\ES{R}_T$ l'ensemble des racines de $T$ dans ${\rm Lie}(G)$ et 
on posera $\ES{R}'_T= \ES{R}_T\cup\{0\}$.} $\ES{R}'_T(V)\subset X(T)$\index{R'TV@$\ES{R}'_T(V)$} le sous-ensemble (fini) formŽ des $\chi$ tels que $V_\chi\neq \{0\}$, \cad l'ensemble des \guill{poids} de $T$ dans $V$. 
Pour $v\in V$, on Žcrit $v= \sum_{\chi} v_\chi$ avec $v_\chi \in V_\chi$ et on pose 
$$\ES{R}'_T(v)= \{\chi \in X(T)\,\vert \, v_\chi \neq 0\}\subset \ES{R}'_T(V) \ptf$$ 
On a donc $\ES{R}'_T(0)=\emptyset$. Pour $Z\subset V$, $Z\neq \emptyset$, on pose\index{R'TZ@$\ES{R}'_T(Z)$} 
$$\ES{R}'_T(Z)= \bigcup_{v\in Z} \ES{R}'_T(v);$$ et pour 
$\mu \in \check{X}(T)_\mbb{Q} $, on pose\index{m'muZ@$m'_Z(\mu)$} $$m'_Z(\mu)\bydef \inf \{\langle \chi , \mu \rangle \,\vert \, \chi \in \ES{R}'_T(Z)\}$$ avec par convention $m'_Z(\mu)=+\infty$ si $Z=\{0\}$.  
Ainsi $\mu$ appartient ˆ $\wt{\Lambda}_Z$ si et seulement si $m'_Z(\mu)\geq 0$; et pour $n\in \mbb{N}^*$ tel que $\lambda= n\mu \in \check{X}(T)$, la limite $\lim_{t\rightarrow 0}t^\lambda\cdot v$ 
existe et vaut $0$ pour tout $v\in Z$ si et seulement si $m'_Z(\mu)>0$ (observons que $m'_Z(\lambda)= n m'_Z(\mu)$). 
Cette dŽfinition ne dŽpend pas du tore $T$ tel que $\mu \in X(T)_\mbb{Q} $: on a $T\subset M_\mu$ et quitte ˆ remplacer $T$ 
par un tore plus gros, on peut supposer que 
$T$ est un tore maximal de $M_\mu$; ensuite on utilise la propriŽtŽ de $M_\mu$-conjugaison des tores maximaux de $M_\mu$. 
Si de plus $\mu\in \wt{\Lambda}_Z$, cette dŽfinition co\"{i}ncide avec la prŽcŽdente:

\begin{lemma}\label{m'=m}
Soit $Z\subset V$, $Z\neq \emptyset$. Pour $\mu \in \check{X}(T)_{\mbb{Q}}\cap \wt{\Lambda}_Z$, on a 
$$m'_Z(\mu)= m_Z(\mu)\;(=m_{0,Z}(\mu))\geq 0 \ptf$$ 
\end{lemma}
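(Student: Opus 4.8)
L'approche que je propose est de se ramener d'abord au cas o\`u $\mu$ est un co-caract\`ere entier, puis de calculer directement le degr\'e du diviseur d\'efinissant $m_{0,v}(\lambda)$ \`a l'aide de la d\'ecomposition de $V$ en espaces de poids de $T$ ; l'\'enonc\'e est en somme une mise au propre des d\'efinitions.

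Je choisirais d'abord $n\in\mbb{N}^*$ tel que $\lambda=n\mu$ appartienne \`a $\check{X}(T)\cap\Lambda_Z$, ce qui est possible puisque $\mu\in\wt{\Lambda}_Z$. Par d\'efinition de l'extension rationnelle de $m$ et de $m'$, on a $m_Z(\mu)=n^{-1}m_Z(\lambda)$ et $m'_Z(\mu)=n^{-1}m'_Z(\lambda)$ ; de plus $m_Z(\lambda)=\inf\{m_{0,v}(\lambda)\,\vert\,v\in Z\}$ (d\'efinition de $m_{0,Z}$) et $m'_Z(\lambda)=\inf\{m'_v(\lambda)\,\vert\,v\in Z\}$ avec $m'_v(\lambda)=\inf\{\langle\chi,\lambda\rangle\,\vert\,\chi\in\ES{R}'_T(v)\}$, puisque $\ES{R}'_T(Z)=\bigcup_{v\in Z}\ES{R}'_T(v)$. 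Il suffit donc de prouver l'\'egalit\'e ponctuelle $m_{0,v}(\lambda)=m'_v(\lambda)$ pour chaque $v\in Z$, le cas $v=e_V=0$ \'etant imm\'ediat car les deux membres valent alors $+\infty$ (par convention pour $m'$, et parce que $0\in\{0\}=\ES{Q}$ pour $m_{0,v}$).

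Pour $v\neq 0$, je fixerais une base $(e_i)_{1\le i\le d}$ de $V$ form\'ee de vecteurs propres de $T$, de poids respectifs $\chi_i$, et j'\'ecrirais $v=\sum_ic_ie_i$, de sorte que $\ES{R}'_T(v)=\{\chi_i\,\vert\,c_i\neq 0\}$. Comme $\lambda=n\mu\in\Lambda_v$, on a $\langle\chi_i,\lambda\rangle\ge 0$ d\`es que $c_i\neq 0$, et $\phi_{v,\lambda}$ se prolonge en $\phi^+_{v,\lambda}\colon\mbb{G}_\mathrm{a}\to V$, $t\mapsto\sum_ic_it^{\langle\chi_i,\lambda\rangle}e_i$. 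En coordonn\'ees lin\'eaires $(x_i)$ duales de $(e_i)$, l'id\'eal de $\ES{Q}=\{0\}$ dans $\overline{F}[V]$ est $(x_1,\dots,x_d)$, donc son image inverse par $\phi^+_{v,\lambda}$ est l'id\'eal de $\overline{F}[t]$ engendr\'e par les $c_it^{\langle\chi_i,\lambda\rangle}$, c'est-\`a-dire $t^{m}\,\overline{F}[t]$ avec $m=\min\{\langle\chi_i,\lambda\rangle\,\vert\,c_i\neq 0\}=m'_v(\lambda)$. Ainsi la fibre sch\'ematique $(\phi^+_{v,\lambda})^{-1}(\ES{Q})$ est le diviseur effectif $m\cdot[0]$, de degr\'e $m$, et par d\'efinition $m_{0,v}(\lambda)=m=m'_v(\lambda)$.

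En prenant la borne inf\'erieure sur $v\in Z$ on obtient $m_Z(\lambda)=m'_Z(\lambda)$, d'o\`u $m_Z(\mu)=m'_Z(\mu)$ apr\`es division par $n$ ; enfin $m'_Z(\mu)\ge 0$ puisque $\mu\in\wt{\Lambda}_Z$. Il n'y a pas, \`a proprement parler, d'obstacle dans cette d\'emonstration : le seul point r\'eclamant un peu d'attention est le traitement uniforme des cas d\'eg\'en\'er\'es --- point base nul, ou poids trivial sur $\lambda$ rendant la fibre vide et le degr\'e nul --- et la v\'erification que les conventions retenues (valeur $+\infty$ pour un point base nul, degr\'e $0$ pour le diviseur vide, etc.) sont compatibles entre elles.
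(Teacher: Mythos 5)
Votre démonstration est correcte et suit essentiellement la même voie que celle du texte : réduction au cas d'un point $v\neq 0$ et d'un co-caractère entier $\lambda=n\mu$, décomposition de $v$ en vecteurs de poids, puis calcul de la fibre schématique de $\phi^+_{v,\lambda}$ au-dessus de $0$, dont l'idéal est engendré par les $t^{\langle\chi,\lambda\rangle}$, donc égal à $(t^k)$ avec $k=\min$. La seule différence est de présentation (coordonnées duales explicites et traitement uniforme du cas où un poids s'annule, que le texte isole à part), sans incidence sur l'argument.
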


\begin{proof}
Puisque $$m'_Z(\mu)= \inf\{m'_v(\mu)\,\vert\, v\in Z\}\quad \hbox{et} \quad m_{Z}(\lambda) = \inf \{m_{v}(\lambda)\,\vert \, v\in Z\}\vg$$ 
on peut supposer que $Z=\{v\}$ avec $v\neq 0$. 
On peut aussi supposer que $\mu\in X(T)$. \'Ecrivons $v= \sum_{\chi \in \ES{R}'_T(v)}v_\chi$ avec $v_\chi\in V_\chi$. 
Pour $\chi \in \ES{R}'_T(v)$, posons $m_\chi = \langle \chi , \mu \rangle \in \mbb{N}$. 
S'il existe un $\chi \in \ES{R}'_T(v)$ tel que $m_\chi =0$, 
alors $\lim_{t \rightarrow 0}t^\mu\cdot v\neq 0$ et dans ce cas on a $m'_v(\mu)=0 = m_{0,v}(\mu)$.  
On peut donc supposer $m'_v(\mu)\geq 1$. 
La fibre schŽmatique $\phi^{-1}(0)$ du morphisme $\phi=\phi_{v,\mu}^+: \mbb{G}_\mathrm{a} \rightarrow V_{\mu,1}$ donnŽ par 
$$\phi(t)= \sum_{\chi \in \ES{R}'_T(v)} t^{m_\chi} v_\chi$$ est d'algbre affine $\overline{F}[T]/ I$ o $I$ est l'idŽal 
engendrŽ par les $T^{m_\chi}$ pour $\chi \in \ES{R}'_T(v)$. Donc $I= (T^k)$ avec $k = \min \{m_\chi \,\vert \, \chi \in \ES{R}'_T(v)\}$. Cela prouve le lemme.
\end{proof}

\begin{remark}\label{V'=V}
\textup{Pour $\mu\in \check{X}(G)_{\mbb{Q}}$ et $r\in \mbb{Q}$, le sous-espace $V'_{\mu,r}$ est donnŽ par  
$$V'_{\mu,r}= \{v\in V\,\vert \, m'_\mu(v)\geq r\}\ptf$$ 
Si $r\geq 0$, d'aprs \ref{m'=m}, on a l'ŽgalitŽ $V'_{\mu,r}=V_{\mu,r}$.}
\end{remark}

Soit\index{iotaT@$\iota_T$} $\iota_T:X(T)_\mbb{Q}  \rightarrow \check{X}(T)_\mbb{Q} $ 
l'isomorphisme $\mbb{Q} $-linŽaire donnŽ par $$(\iota_T(\chi),\lambda)= \langle \chi, \lambda\rangle\quad \hbox{pour tout}\quad \lambda \in \check{X}(T)\ptf$$ 
Pour $Z\subset V$, $Z\neq \emptyset$, on note\index{KTZ@$\ES{K}_T(Z)$} $\ES{K}_T(Z)$ l'enveloppe convexe (i.e. le polytope de Newton) de 
$\iota_T(\ES{R}'_T(Z))$ dans $\check{X}(T)_\mbb{Q} $ avec par convention $\ES{K}_T(0)=\{0\} $. Pour $\mu\in \check{X}(T)_{\mbb{Q}}$ et $Z\neq \{0\}$, on a donc 
$$m'_Z(\mu)= \inf \{(\eta,\mu)\,\vert \, \eta\in \ES{K}_T(Z)\}\ptf$$ 
Par convexitŽ, il existe un unique ŽlŽment $\mu^{T\!}(Z)\in \ES{K}_T(Z)$ qui minimise la norme $ \|\; \|$ sur $\check{X}(T)_\mbb{Q} $ (cf. \cite[3.2]{H1}). 
On a $\mu^{T\!}(Z)\neq 0$ si et seulement si $0\notin \ES{K}_T(Z)$, auquel cas, d'aprs le thŽorme de la projection sur un convexe fermŽ, 
$(\eta -\mu^{T\!}(Z) ,\mu^{T\!}(Z)) \geq 0$ pour tout $\eta \in \ES{K}_T(Z)$ et donc $$\|\mu^{T\!}(Z)\|^2 = m'_Z(\mu^{T\!}(Z))=m_{Z}(\mu^{\,T\!}(Z))>0\ptf $$ 
Si $0\notin \ES{K}_T(Z)$, l'ŽlŽment $\wt{\mu}^{\,T\!}(Z) = \frac{\mu^{T\!}(Z)}{m_Z(\mu^{\,T\!}(Z))}$ vŽrifie $m_Z(\wt{\mu}^{\,T\!}(Z))=1$ et (cf. loc.~cit.)\index{qTZ@$\bs{q}^{T}(Z)$} 
$$  \|\wt{\mu}^{\,T\!}(Z) \| = \inf \{\|\mu \|\,\vert \, \mu\in \check{X}(T)_\mbb{Q}  ,\, m_Z(\mu)\geq 1 \}\bydef \bs{q}^{T\!}(Z)\ptf$$
Enfin, toujours si $0\notin \ES{K}_T(Z)$, on note $\lambda^{T\!}(Z)$ l'unique ŽlŽment primitif de $\check{X}(T)$ qui soit de la forme $k\wt{\mu}^{\,T\!}(v)$ 
pour un $k\in \mbb{N}^*$. 

Si maintenant $Z\subset \FN$ est uniformŽment $F$-instable et si $S$ est un tore $F$-dŽployŽ maxi\-mal de $G$, on a toujours $$\bs{q}_F(Z) = \bs{q}_F^G(Z)\leq \bs{q}_F^S(Z)= \bs{q}^S(Z)\ptf$$
La propriŽtŽ de $G(F)$-conjugaison des tores $F$-dŽployŽs maximaux de $G$ entra"ne 
que $$ \bs{q}_F(Z) = \inf \{\bs{q}^{g^{-1}Sg}(Z)\,\vert \, g\in G(F)\}= \inf \{\bs{q}^{S\!}(g\cdot v)\,\vert \, g\in G(F)\}\ptf$$ 
Si $S$ est choisi $(F,Z)$-optimal, \cad contenu dans ${_FP_Z}$, alors 
$$ \bs{q}_F(Z) =\bs{q}^{S\!}(Z)= \| \wt{\mu}^{\,S\!}(Z) \| \quad \hbox{et}\quad \bs{\Lambda}_{F,Z}\cap \check{X}(S)_\mbb{Q}  = \{ \wt{\mu}^{S\!}(Z)\} \ptf$$
En Žcrivant $\lambda^{S}(v) = k \wt{\mu}^{S\!}(Z)$ avec $k\in \mbb{N}^*$, on a donc 
$$m_Z(\lambda^{S}(Z))= k\quad \hbox{et}\quad \Lambda_{F,Z}^\mathrm{opt}\cap \check{X}(S) =\{\lambda^{S}(Z)\}\ptf$$ 

\vskip2mm
\P\hskip1mm\textit{Les sous-ensembles $F$-saturŽs. ---} Reprenons les hypothses du dŽbut de \ref{la variante de Hesselink}: $(V,e_V)$ est une $G$-variŽtŽ pointŽe dŽfinie sur $F$, avec $e_V\in V(F)$. 

\begin{lemma}\label{consŽquence de V'=V}
Pour $\mu\in \check{X}_F(G)_{\mbb{Q}}$ et $r\in \mbb{Q}_+$, la sous-variŽtŽ fermŽe $V_{\mu,r}$ de $V$ est dŽfinie sur $F$; et la sous-variŽtŽ 
fermŽe $V_\mu(0)$ de $V$ est elle aussi dŽfinie sur $F$. 
\end{lemma}

\begin{proof}
D'aprs \ref{rŽduction au cas d'un G-module rat}, il existe un $G$-module $W$ dŽfini sur $F$ et une $F$-immersion fermŽe 
$G$-Žquivariante $\iota: V \rightarrow W$ telle que $\iota(e_V)= 0$. Pour $\mu\in \check{X}(G)_{\mbb{Q}}$ et $r\in \mbb{Q}_+$, on a 
$\iota(V_{\mu,r})= \iota(V) \cap W_{\mu,r}$. Or d'aprs \ref{V'=V} et \ref{[B,5.2]}, le sous-espace $W_{\mu,r}$ de $W$ est dŽfini sur $F$. D'autre part on a 
$\iota(V_\mu(0))= \iota(V)\cap W_\mu(0)$ et le sous-espace $W_\mu(0)$ de $W$ est dŽfini sur $F$ (\ref{[B,5.2]}). D'o le lemme. 
\end{proof}

Si $Z\subset \FN$ est uniformŽment $F$-instable, 
l'ensemble $\bs{\Lambda}_{F,Z}$ forme une seule orbite sous le $F$-sous-groupe parabolique ${_FP_Z}$ de $G$. Par consŽquent 
le sous-ensemble (lui aussi uniformŽment $F$-instable) $V_{\mu,1}\subset \FN$ ne dŽpend pas de $\mu\in \bs{\Lambda}_{F,Z}$. On pose\index{XFZ@${_F\scrX_Z}$} 
$${_F\scrX_Z}= V_{\mu,1} \quad \hbox{pour un (i.e. pour tout)} \quad \mu\in \bs{\Lambda}_{F,Z}\ptf$$ 
De manire Žquivalente, on a 
$${_F\scrX_Z}= V_{\lambda,m_Z(\lambda)}\quad \hbox{pour un (i.e. pour tout)}\quad \lambda \in \Lambda_{F,Z}^{\mathrm{opt}}\ptf$$ 
D'aprs \ref{consŽquence de V'=V}, ${_F\scrX_Z}$ est une sous-variŽtŽ fermŽe de $V$ dŽfinie sur $F$. 

\begin{definition}
\textup{Pour $Z\subset \FN$ uniformŽment $F$-instable, le sous-ensemble ${_F\scrX_Z}\subset \FN$ est appelŽ le \textit{$F$-saturŽ de $Z$}. 
Un sous-ensemble uniformŽment $F$-instable de $\FN$ est dit \textit{$F$-saturŽ} s'il co\"{\i}ncide avec son $F$-saturŽ. 
}\end{definition}

On a la version rationnelle de \cite[lemma~2.8]{H2}:

\begin{lemma}\label{lemma 2.8 de [H2]}
Soient $Z$ et $Z'$ deux sous-ensembles (non vides) de $\FN$ uniformŽment $F$-instables. Posons $X={_F\scrX_Z}$ et $X'={_F\scrX_{Z'}}$. 
\begin{enumerate}
\item[(i)] $X$ est uniformŽment $F$-instable et $\bs{\Lambda}_{F,X}= \bs{\Lambda}_{F,Z}$.
\item[(ii)] $Z\subset X = {_F\scrX_X}$. 
\item[(iii)] $ \bs{\Lambda}_{F,Z}= \bs{\Lambda}_{F,Z'}$ si et seulement si $\bs{q}_F(Z)= \bs{q}_F(Z')$ et $Z\subset X'$.
\item[(iv)] ${_FP_Z}= \{g\in G\,\vert \, g\cdot Z \subset X\}$. 
\end{enumerate}
\end{lemma}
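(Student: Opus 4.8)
The plan is to run all four parts off the definition ${_F\scrX_Z}=V_{\mu,1}$ for $\mu\in\bs{\Lambda}_{F,Z}$, together with the structural facts already available: $\bs{\Lambda}_{F,Z}$ is a single orbit of $U_{F,Z}={_FU_Z}(F)$, with ${_FP_Z}=P_\mu$ and ${_FU_Z}=U_\mu$ for every $\mu\in\bs{\Lambda}_{F,Z}$ (\ref{HMrat1}, \ref{variante Hesselink rat}); the step $V_{\mu,1}$ is closed, $P_\mu$-invariant and defined over $F$; and the elementary monotonicity $A\subset B\Rightarrow\bs{q}_F(A)\le\bs{q}_F(B)$, together with $m_B(\nu)\le m_A(\nu)$ for $\nu\in\wt{\Lambda}_{F,B}$. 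Fix once and for all $\mu\in\bs{\Lambda}_{F,Z}$, so $X=V_{\mu,1}$ and $m_Z(\mu)=1$. For (i) and (ii): since $m_v(\mu)\ge m_Z(\mu)=1$ for every $v\in Z$, one has $Z\subset V_{\mu,1}=X$ (this is half of (ii)), hence $X$ is uniformly $F$-unstable via $\mu$ and $\bs{q}_F(X)\le\|\mu\|=\bs{q}_F(Z)$; the reverse inequality comes from $Z\subset X$ and monotonicity, so $\bs{q}_F(X)=\bs{q}_F(Z)$. Unwinding the definition of $\bs{\Lambda}$ then gives both inclusions: a $\nu\in\bs{\Lambda}_{F,X}$ has $m_X(\nu)\ge1$ hence $m_Z(\nu)\ge1$, and $\|\nu\|=\bs{q}_F(X)=\bs{q}_F(Z)$, so $\nu\in\bs{\Lambda}_{F,Z}$; conversely $\mu\in\bs{\Lambda}_{F,X}$ because $X=V_{\mu,1}$ gives $m_X(\mu)\ge1$, and every other $\mu'\in\bs{\Lambda}_{F,Z}$ has $V_{\mu',1}=X$ by the orbit description and $P_\mu$-invariance. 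This is (i); then ${_F\scrX_X}=V_{\mu,1}$ for $\mu\in\bs{\Lambda}_{F,X}=\bs{\Lambda}_{F,Z}$, i.e.\ ${_F\scrX_X}=X$, which completes (ii).

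For (iii): if $\bs{\Lambda}_{F,Z}=\bs{\Lambda}_{F,Z'}$, choose $\mu$ in the common set; then $\bs{q}_F(Z)=\|\mu\|=\bs{q}_F(Z')$, and $X'=V_{\mu,1}\supset Z$ because $m_v(\mu)\ge m_Z(\mu)=1$ for $v\in Z$. Conversely assume $\bs{q}_F(Z)=\bs{q}_F(Z')$ and $Z\subset X'$. By (i) applied to $Z'$ one has $\bs{q}_F(X')=\bs{q}_F(Z')=\bs{q}_F(Z)$ and $\bs{\Lambda}_{F,X'}=\bs{\Lambda}_{F,Z'}$; since $Z\subset X'$ and the two $\bs{q}_F$ agree, the computation of (i) gives $\bs{\Lambda}_{F,X'}\subset\bs{\Lambda}_{F,Z}$, hence $\bs{\Lambda}_{F,Z'}\subset\bs{\Lambda}_{F,Z}$ and in particular these sets meet; pick $\mu_0$ in the intersection. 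Then ${_FP_Z}=P_{\mu_0}={_FP_{Z'}}$, so ${_FU_Z}={_FU_{Z'}}$, and since $\bs{\Lambda}_{F,Z}$ and $\bs{\Lambda}_{F,Z'}$ are respectively the $U_{F,Z}$- and $U_{F,Z'}$-orbits of $\mu_0$, they coincide.

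For (iv): the inclusion ${_FP_Z}\subset\{g\in G:g\cdot Z\subset X\}$ is immediate, since for $g\in{_FP_Z}=P_\mu$ the closed subvariety $X=V_{\mu,1}$ is $P_\mu$-stable, so $g\cdot X=X$ and, by (ii), $g\cdot Z\subset g\cdot X=X$. The reverse inclusion is the heart of the matter. The plan is to reduce it, by separable descent, to the geometric statement \cite[lemma~2.8]{H2}: by (i) and \ref{BMRT(4.7)} (in the form used in the proof of \ref{variante de BMRT(4.7)}) one has ${_FP_Z}={_FP_{\smash{\overline{Z}}^{(F)}}}$, ${_F\scrX_{\smash{\overline{Z}}^{(F)}}}=X$ and $\bs{\Lambda}_{F,Z}=\check{X}_F(G)_{\mbb{Q}}\cap\bs{\Lambda}_{F^{\mathrm{s\acute{e}p}},\smash{\overline{Z}}^{(F)}}$; since every cocharacter of $G$ is already defined over $F^{\mathrm{s\acute{e}p}}$, over that field the rational and geometric theories coincide, so $\mu$ is a geometric optimal normalized cocharacter for the $\Gamma_F$-stable, Zariski-closed set $\smash{\overline{Z}}^{(F)}$, with geometric saturation $X$. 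As $X$ is $\Gamma_F$-stable, the set $\{g:g\cdot\smash{\overline{Z}}^{(F)}\subset X\}$ is $\Gamma_F$-stable, and \cite[lemma~2.8\,(iv)]{H2} applied over $F^{\mathrm{s\acute{e}p}}$ identifies it with the parabolic attached to the $F$-rational optimal cocharacter $\mu$, namely ${_FP_Z}$. It then remains to show that for an arbitrary $Z\subset\FN$ (not necessarily $\Gamma_F$-stable) the condition $g\cdot Z\subset X$ already forces $g\cdot\smash{\overline{Z}}^{(F)}\subset X$ — equivalently, that the geometric virtual cocharacter $g^{-1}\bullet\mu$, which has norm $\bs{q}_F(Z)$ and satisfies $m_Z(g^{-1}\bullet\mu)\ge1$, in fact satisfies $m_{\smash{\overline{Z}}^{(F)}}(g^{-1}\bullet\mu)\ge1$. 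This is the step I expect to be the main obstacle: it is exactly where the inseparable behaviour of the base field has to be controlled, and the reason the lemma cannot be quoted directly from the geometric theory. (When $Z\subset\NF=V(F)\cap\FN$ the difficulty evaporates, since then $\smash{\overline{Z}}^{(F)}$ is the Zariski closure of $Z$ and $g\cdot Z\subset X$ with $X$ closed gives $g\cdot\smash{\overline{Z}}^{(F)}\subset X$ at once.)
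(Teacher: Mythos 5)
Parts (i)--(iii) of your proposal are correct and are, in substance, the paper's own treatment: the paper dismisses (i) and (ii) as immediate and deduces (iii) from them, and your verifications via $X=V_{\mu,1}$, the monotonicity of $\bs{q}_F$ under inclusion, and the orbit description of $\bs{\Lambda}_{F,Z}$ are exactly the routine details being suppressed there.

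The gap you flag in (iv) is genuine, but the route you sketch cannot close it, for two reasons. First, its key premise is false in characteristic $p>0$: it is not true that every cocharacter of $G$ is defined over $F^{\mathrm{s\acute{e}p}}$ (a cocharacter may factor through a torus defined only over $F^{\mathrm{rad}}$), so over $F^{\mathrm{s\acute{e}p}}$ the rational and geometric theories do \emph{not} coincide; this is precisely the phenomenon of example \ref{l'exemple de [5.6]{H1}}, where $\check{X}_F(G)_{\mbb{Q}}\cap\bs{\Lambda}_v=\emptyset$ although $\bs{\Lambda}_{F,v}\neq\emptyset$ (and one may even take $F$ separably closed but imperfect there). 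Consequently Hesselink's geometric lemma would identify your transporter with the \emph{geometric} optimal parabolic of $\smash{\overline{Z}}^{(F)}$, which in general differs from ${_FP_Z}$. Second, and decisively, the equality you are trying to prove for arbitrary $g\in G$ is false as printed, so the remaining step cannot be supplied at all. In example \ref{l'exemple de [5.6]{H1}}\,(ii), take $Z=\{v\}$ with $v=(x,y,0)$, so that ${_FP_v}=P_\lambda$ for $\lambda(t)=\mathrm{diag}(t,t,t^{-2})$ and $X={_F\scrX_v}=\overline{F}\times\overline{F}\times\{0\}$, and let $g\in\mathrm{SL}_3(F^{\mathrm{rad}})$ be the identity matrix with third row replaced by $(\beta,1,1)$, where $\beta^p=-yx^{-1}$. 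Since the action is $(g_{ij})\mapsto(g_{ij}^p)$, the third coordinate of $g\cdot v$ is $\beta^px+y=0$, so $g\cdot v=v\in X$, while $g\notin P_\lambda={_FP_v}$ because its $(3,1)$ entry $\beta$ is nonzero. Note $\beta\notin F$, so $g\notin G(F)$. The statement must therefore be read with $g\in G(F)$, which is the only case the paper's own proof establishes (it invokes the $G(F)$-equivariance of \ref{cor de HMrat1}) and the only case used later, e.g. in \ref{l'application pi}.

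With that reading you already have all the ingredients, and no descent to the geometric theory is needed, which is exactly how the paper argues. For $g\in G(F)$ with $g\cdot Z\subset X$, the $G(F)$-equivariance gives $\bs{q}_F(g\cdot Z)=\bs{q}_F(Z)$ and ${_FP_{g\cdot Z}}=\mathrm{Int}_g({_FP_Z})$; applying your (iii) to the pair $(g\cdot Z,Z)$ yields $\bs{\Lambda}_{F,g\cdot Z}=\bs{\Lambda}_{F,Z}$, hence ${_FP_{g\cdot Z}}={_FP_Z}$, so $g$ normalizes the parabolic ${_FP_Z}$ and therefore lies in it. Your parenthetical remark that the difficulty evaporates when $Z\subset\NF$ does not rescue the general-$g$ version either, since in the counterexample above $v$ lies in $\NF$.
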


\begin{proof}
Les points (i) et (ii) sont clairs. On en dŽduit que $ \bs{\Lambda}_{F,Z}= \bs{\Lambda}_{F,Z'}$ si et seulement si $X=X'$, d'o le point (iii). 
Quant au point (iv), si $g\cdot Z \subset X$, d'aprs (iii) on a $g\bullet {_FP_Z}= {_FP_{g\cdot Z}}= {_FP_Z}$. Puisque ${_FP_Z}$ est son propre normalisateur, 
cela entra"ne que $g\in {_FP_Z}$. RŽciproquement si $g\in {_FP_Z}$, puisque $X$ est par dŽfinition ${_FP_Z}$-invariant, \textit{a fortiori} on a $g\cdot Z\subset X$. 
\end{proof}

Pour $Z\subset \FN$ uniformŽment $F$-instable, on pose\index{XbsFZ@${_F\bsfrX_Z}$} 
$${_F\bsfrX_Z}= G(F)\cdot {_F\scrX_Z} = \bigcup_{g\in G(F)} {_F\scrX_{g\cdot Z}}\ptf$$
Pour $s\in \mbb{Q}_+$, on pose\index{NFs@$\FN_{<s}$} $$\FN_{<s}= \{v\in \FN\,\vert \, \bs{q}_F(v)<s\}\ptf$$

On a vu que pour $Z\subset \FN$ uniformŽment $F$-instable, ${_F\scrX_Z}$ est une sous-$F$-variŽtŽ fermŽe de $V$. En revanche ${_F\bsfrX_Z}$, tout comme $\FN$, 
n'est en gŽnŽral pas fermŽ dans $V$. On a cependant la variante rationnelle suivante de \cite[2.9]{H2}:

\begin{proposition}\label{prop 2.9 de [H2] rat}
\begin{enumerate}
\item[(i)]Il n'y a qu'un nombre fini d'ensembles ${_F\bsfrX_Z}$ avec $Z\subset V$ uniformŽnent $F$-instable, \cad de $G(F)$-orbites de sous-ensembles  $F$-saturŽs de $\FN$. 
\item[(ii)]Pour $Z\subset \FN$ uniformŽment $F$-instable, ${_F\bsfrX_Z}$ est c-$F$-fermŽ (dans $V)$.
\item[(iii)]Pour $s\in \mbb{Q} _+$, $\FN_{<s}$ est c-$F$-fermŽ (dans $V$); c'est la rŽunion (finie) des ensembles ${_F\bsfrX_v}$ avec $v\in \FN$ tel que 
$\bs{q}_F(v) < s$. 
\end{enumerate}
\end{proposition}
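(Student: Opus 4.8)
The plan is to prove (i) first, then deduce (ii) from the c-$F$-topology machinery, and finally obtain (iii) formally from (i) and (ii).

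\emph{Finiteness.} For (i) I would first reduce to a $G$-module: by \ref{r�duction au cas d'un G-module rat} there are a $G$-module $W$ defined over $F$ and a $G$-equivariant closed $F$-immersion $\iota\colon V\hookrightarrow W$ with $\iota(e_V)=0$, and $\iota(V_{\mu,1})=\iota(V)\cap W_{\mu,1}$ for every $\mu\in\check X(G)_{\mbb Q}$ (as in the proof of \ref{cons�quence de V'=V}), so that ${_F\scrX_Z}=V_{\mu,1}$ is determined by $W_{\mu,1}$. Fix a maximal $F$-split torus $S$ of $G$. Given $Z\subset V$ uniformly $F$-unstable, the $F$-parabolic ${_FP_Z}$ contains a maximal $F$-split torus, which is $G(F)$-conjugate to $S$; replacing $Z$ by a suitable $g\cdot Z$ with $g\in G(F)$ (using \ref{HMrat1}, its corollary and the $G(F)$-conjugacy of maximal $F$-split tori) we may assume $S$ is $(F,Z)$-optimal, so $\bs\Lambda_{F,Z}\cap\check X(S)_{\mbb Q}=\{\mu\}$ with $\mu\in\check X(S)_{\mbb Q}$ and ${_F\scrX_Z}=V_{\mu,1}$. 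Now $W_{\mu,1}=\bigoplus_\chi W_\chi$, the sum running over those $\chi$ in the \emph{finite} set $\ES{R}'_S(W)$ with $\langle\chi,\mu\rangle\ge 1$ (cf.\ \ref{V'=V}); hence $W_{\mu,1}$, and with it $V_{\mu,1}={_F\scrX_Z}$, takes only finitely many values. As ${_F\bsfrX_Z}=G(F)\cdot{_F\scrX_Z}$, this proves (i).

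\emph{The saturated orbit.} For (ii), fix $Z$ uniformly $F$-unstable and set $X={_F\scrX_Z}$. By \ref{cons�quence de V'=V}, $X=V_{\mu,1}$ (for $\mu\in\bs\Lambda_{F,Z}$) is a closed subvariety of $V$ defined over $F$, and by construction it is invariant under the $F$-parabolic ${_FP_Z}=P_\mu$; so ${_F\bsfrX_Z}=G(F)\cdot X$ is the $G(F)$-saturation of an $F$-closed, ${_FP_Z}(F)$-stable subvariety. When $F=\overline F$ this is exactly the setting in which $G\cdot X$, being the image of the proper morphism $G\times^{P_\mu}X\to V$ (as $G/P_\mu$ is projective), is Zariski-closed. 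For arbitrary $F$ the role of properness is taken over by the c-$F$-topology of \cite{BHMR} (Annexe~\ref{annexe B}): I would show, using the rational Hilbert-Mumford theorem \ref{HMrat1} and its c-$F$-version \ref{HMrat2}, that the $G(F)$-orbit of an $F$-closed subvariety stable under an $F$-parabolic is c-$F$-closed, whence ${_F\bsfrX_Z}$ is c-$F$-closed in $V$. \textbf{This is where the real work lies.} The key mechanism is that $\bs q_F$ does not increase along a c-$F$-limit: if $w=\lim_{t\to 0}t^\lambda\cdot w'$ with $\lambda\in\check X_F(G)$ and $w'\in V_{\nu,1}$, $\nu\in\check X_F(G)_{\mbb Q}$, then the $\nu$-weights occurring in $w$ form a subset of those occurring in $w'$, so $m_w(\nu)\ge 1$ and $\bs q_F(w)\le\|\nu\|$ — which one combines, via the finiteness just proved, with an induction on the finitely many values of $\bs q_F$ on $\FN$, exactly as in Hesselink's proof of \cite[2.9]{H2}.

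\emph{The sets $\FN_{<s}$.} Finally (iii) follows formally. For every $v\in\FN$ one has $v\in{_F\scrX_v}\subset{_F\bsfrX_v}$ (\ref{lemma 2.8 de [H2]}), and conversely any element of ${_F\bsfrX_v}$ has the form $w\in{_F\scrX_{g\cdot v}}=V_{\mu,1}$ with $g\in G(F)$ and $\mu\in\bs\Lambda_{F,g\cdot v}$, hence $m_w(\mu)\ge 1$ and, by \ref{comparaison m et q} together with the $G(F)$-invariance of $\bs q_F$, $\bs q_F(w)\le\bs q_F(g\cdot v)=\bs q_F(v)$. Therefore, for $s\in\mbb Q_+$,
\[ \FN_{<s}=\bigcup_{v\in\FN,\ \bs q_F(v)<s}{_F\bsfrX_v}\vg \]
which by (i) is a \emph{finite} union and by (ii) a union of c-$F$-closed sets; since a finite union of c-$F$-closed sets is c-$F$-closed, this gives (iii) (and, letting $s$ be large, re-proves that $\FN$ itself is c-$F$-closed).
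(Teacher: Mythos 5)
Your points (i) and (iii) are correct and essentially identical to the paper's own argument: for (i) the paper also reduces to a $G$-module, conjugates so that a fixed maximal $F$-split torus $S$ is $(F,g\cdot Z)$-optimal, and observes that the optimal virtual cocharacter (hence $V_{\mu,1}={_F\scrX_{g\cdot Z}}$) only depends on a subset of the finite weight set $\ES{R}'_S(V)$; and (iii) is exactly the formal consequence you describe, via \ref{comparaison m et q}. The gap is precisely where you flag it, in (ii): the \guill{key mechanism} you invoke is false. When $\lambda$ does not commute with $\nu$, the limit $\lim_{t\to 0}t^\lambda\cdot w'$ is not obtained by discarding $\nu$-weight components of $w'$, and it can leave $V_{\nu,1}$. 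Concretely, take $G=\mathrm{SL}_3$, $V=\mathfrak{g}$ with the adjoint action, $\nu:t\mapsto \mathrm{diag}(t,t,t^{-2})$, so that $V_{\nu,1}=V_{\nu,3}$ is spanned by $E_{13},E_{23}$; take $w'=E_{23}$, $g=I+E_{31}\in G(F)$ and $\lambda=\mathrm{Int}_g\circ\nu\in\check{X}_F(G)$. Then $\mathrm{Ad}_{g^{-1}}(E_{23})=E_{23}+E_{21}$ has $\nu$-weights $3$ and $0$, so the limit along $\lambda$ exists and equals $\mathrm{Ad}_g(E_{21})=E_{21}$, whose $\nu$-weight is $0$: thus $w=E_{21}\notin V_{\nu,1}$, $m_w(\nu)=0$, and the $\nu$-weights of $w$ are not among those of $w'$. (Of course $E_{21}$ does lie in $G(F)\cdot V_{\nu,1}$, as the proposition predicts, but only after a further $G(F)$-conjugation.)

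Moreover, even the weakened statement \guill{$\bs{q}_F$ does not increase along a c-$F$-limit} would not give (ii): the proposition asserts that the limit stays in the \emph{same} set ${_F\bsfrX_Z}$, and distinct $F$-saturated sets may share the same value of $\bs{q}_F$; and the appeal to \guill{exactly as in Hesselink's proof of \cite[2.9]{H2}} does not transfer, since Hesselink's argument rests on the completeness of $G/P$ (the image of the proper map $G\times^{P_\mu}V_{\mu,1}\rightarrow V$ is Zariski-closed), which is precisely the tool unavailable over a non-closed $F$ and which the c-$F$-statement is designed to replace. What is actually needed --- and what the paper does --- is to compare the two cocharacters through a common maximal $F$-split torus: given $v_1\in{_F\bsfrX_Z}$ and $\mu\in\Lambda_{F,v_1}$, one chooses a maximal $F$-split torus $S$ of $M_\mu$ and a minimal $F$-parabolic $P_0$ with $S\subset P_0\subset P_\mu$, arranges $P_0\subset{_FP_Z}$ and writes ${_F\scrX_Z}=V_{\lambda,k}$ with $\lambda\in\check{X}(S)$; then one writes $x\in G(F)$ with $x^{-1}\cdot v_1\in V_{\lambda,k}$ via the Bruhat decomposition $x=un_wp$ ($u\in U_{P_0}(F)$, $p\in P_0(F)$), uses the $P_0$-invariance of $V_{\lambda,k}$ and $V_{\mu,0}$, the existence of $m=\lim_{t\to 0}t^\mu u t^{-\mu}\in M_\mu(F)$, and the containment $\mathrm{Im}(\mu)\subset S\subset P_{w\bullet\lambda}$ (so that $t^\mu$ preserves $V_{w\bullet\lambda,k}$) to conclude that the limit lies in $mn_w\cdot V_{\lambda,k}\subset G(F)\cdot V_{\lambda,k}$ --- i.e.\ in a $G(F)$-translate of the saturated set, not in the set itself. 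Without some device of this kind your argument for (ii) does not go through.
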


\begin{proof}
On peut gr‰ce ˆ \ref{rŽduction au cas d'un G-module rat} supposer que $V$ est un 
$G$-module dŽfini sur $F$ (avec $e_V=0$). 

Prouvons (i). Soit $S$ un tore $F$-dŽployŽ maximal de $G$. Rappelons qu'on a notŽ $\ES{R}'_S(V)$ 
l'ensemble (fini) des poids de $S$ dans $V$ et $\ES{R}'_S(Z)\subset \ES{R}'_S(V)$ le sous-ensemble formŽ des poids de $S$ dans $Z$. 
La propriŽtŽ de $G(F)$-conjugaison des tores $F$-dŽployŽs maximaux de $G$ entra"ne 
qu'il existe un $g\in G(F)$ tel que $S$ soit $(F,g\cdot Z)$-optimal, \cad 
que $\bs{\Lambda}_{F,g\cdot Z} \cap  \check{X}(S)_{\mbb{Q} }= \{\wt{\mu}_S(g\cdot Z)\}$ avec $\wt{\mu}_S(g\cdot Z)= \frac{\mu_S(g\cdot Z)}{m_Z(\mu_S(g\cdot Z))}>0$. On a donc
$$ {_F\scrX_Z}= g^{-1} \cdot {_F\scrX_{g\cdot Z}}= g^{-1} \cdot V_{\wt{\mu}_S(g\cdot Z),1}\ptf$$ 
Le co-caractre virtuel $\wt{\mu}_S(g\cdot Z)\in \check{X}(S)_{\mbb{Q}}$ ne dŽpend que de l'ensemble $\ES{R}'_S(g\cdot Z)$. 
Puisqu'il n'y a qu'un nombre fini de sous-ensembles de $\ES{R}'_S(V)$, cela prouve (i).

Prouvons (ii). Soient $v_1\in {_F\bsfrX_Z}$ et $\mu \in \Lambda_{F,v_1}$. 
On veut prouver que $v'=\phi_{\mu,v_1}^+(0)$ appartient ˆ ${_F\bsfrX_Z}$. Soit $S$ un tore $F$-dŽployŽ maximal de $M_\mu$ et soit $P_0$ un $F$-sous-groupe parabolique minimal 
de $G$ tel que $S\subset P_0\subset P_\mu$. Quitte ˆ remplacer $Z$ par $g\cdot Z$ pour un $g\in G(F)$, on peut supposer 
que $P_0\subset {_FP_Z}$. \'Ecrivons ${_F\scrX_Z}= V_{\lambda,k}$ avec 
$\check{X}(S)\cap \Lambda_{F,Z}^{\rm opt}=\{\lambda\}$ et $k=m_Z(\lambda)\geq 1$. Soit $x\in G(F)$ tel que $x^{-1}\cdot v_1\in {_F\scrX_Z}$; l'ŽlŽment $v_1$ appartient donc ˆ $x\cdot V_{\lambda,k} \cap V_{\mu,0}$. 
\'Ecrivons (dŽcomposition de Bruhat) $x= u n_w p$ avec $u\in U_{P_0}(F)$, $w \in W^G(S)=N^G(S)/Z^G(S)$ et $p\in P_0(F)$, o $n_w$ dŽsigne un reprŽsentant 
de $w$ dans $N^G(S)(F)$. Puisque $V_{\lambda,k}$ et $V_{\mu,0}$ sont 
$P_0$-invariant, $u^{-1}\cdot v_1$ appartient ˆ $n_w\cdot V_{\lambda ,k}\cap V_{\mu,0}= V_{w\bullet \lambda , k}\cap V_{\mu,0}$. Comme $S\subset P_{w\bullet \lambda}= n_w P_\lambda n_w^{-1}$, 
pour tout $v_2\in V_{w\bullet \lambda,k}\cap V_{\mu,0}$, la limite $\lim_{t\rightarrow 0} t^\mu \cdot v_2$ existe et elle appartient ˆ $V_{w\bullet \lambda,k}$. D'autre part puisque 
$u \in U_0(F)\subset P_\mu(F)$, la limite $\lim_{t\rightarrow 0}t^\mu u t^{-\mu}$ existe et elle appartient ˆ $M_\mu(F)$; on la note $m$. On a donc 
$$v'= \lim_{t\rightarrow 0} t^\mu \cdot v_1 = \left(\lim_{t\rightarrow 0} t^\mu u t^{-1}\right) \cdot \left(\lim_{t\rightarrow 0} t^\mu\cdot (u^{-1}\cdot v_1)\right)\in m \cdot V_{w\bullet\lambda , k}\ptf$$
Par consŽquent 
$v'\in m n_w\cdot V_{\lambda,k} \subset G(F)\cdot V_{\lambda,k}={_F\bsfrX_Z}$. 

Le point (iii) est clair.
\end{proof}

\begin{remark}\label{ensembles F-saturŽs et Zariski-top}
\textup{
\begin{enumerate}
\item[(i)]Pour $s\in \mbb{Q}_+$, posons\index{NFs=@$\FN_{\leq s}$} $$\FN_{\leq s}= \{v\in \FN\,\vert \, \bs{q}_F(v)\leq s\}\ptf$$ 
D'aprs \ref{prop 2.9 de [H2] rat}\,(i), c'est la rŽunion (finie) des ensembles ${_F\bsfrX_v}$ avec $v\in \FN$ tel que $\bs{q}_F(v)\leq s$. 
Observons que $\FN_{\leq s}= \bigcap_{s<r} \FN_{<r}$ et qu'il existe un $\epsilon>0$ tel que $\FN_{\leq s}= \FN_{<s+\epsilon}$. 
\item[(ii)] Pour tout sous-ensemble $Z\subset \FN$ uniformŽment $F$-instable, puisque le quotient $G/{_FP_Z}$ 
est une variŽtŽ projective (donc complte), l'ensemble $$G\cdot {_F\scrX_Z}=\{g\cdot v\,\vert \, g\in G,\, v\in {_F\scrX_Z}\}$$ est une sous-variŽtŽ fermŽe 
de $V$ (cf. la preuve de \cite[ch.~IV, 11.9\,(1)]{B}). Si le corps $F$ est infini, puisque $G(F)$ est dense dans $G$ \cite[18.3]{B}, on en 
dŽduit que ${_F\bsfrX_Z}=G(F)\cdot {_F\scrX_Z}$ est dense dans $G\cdot {_F\scrX_Z}$. Par consŽquent (toujours si $F$ est infini), 
l'ensemble ${_F\bsfrX_Z}$ est fermŽ \textit{dans $\FN$} si et seulement s'il co\"{\i}ncide avec l'intersection 
$(G\cdot {_F\scrX_Z}) \cap \FN $ (c'est bien sžr toujours le cas si $F=\overline{F}$). 
\item[(iii)] Si pour tout $v\in \FN$, ${_F\bsfrX_v}$ est fermŽ \textit{dans $\FN$}, alors pour tout $s\in \mbb{Q}_+$, $\FN_{<s}$ est fermŽ \textit{dans $\FN$}. 
\end{enumerate}
}
\end{remark}

\P\hskip1mm\textit{Une hypothse de rŽgularitŽ. ---} Jusqu'ˆ prŽsent on n'a fait aucune hypothse de rŽgularitŽ sur le point-base $e_V$. On la fait maintenant.

\begin{hypothese}\label{hyp reg}
Le point-base $e_V$ est rŽgulier (i.e. non singulier) dans la variŽtŽ $V$. (Cela n'entra"ne pas que 
$e_V$ soit rŽgulier dans $\ES{N}=\ES{N}^G(V,e_V)$!) 
\end{hypothese}

On suppose jusqu'ˆ la fin de \ref{la variante de Hesselink} que l'hypothse \ref{hyp reg} est vŽrifiŽe. 

Hesselink a prouvŽ \cite[3.8]{H2} que pour tout sous-ensemble $Z\subset \ES{N}$ uniformŽment instable, la sous-variŽtŽ 
fermŽe $\scrX_Z = {_{\smash{\overline{F}}}\scrX_Z}$ de $\ES{N}= {_{\smash{\overline{F}}}\ES{N}}$ est isomorphe ˆ son espace tangent $T_{e_V}(\scrX_Z)$, qui est un espace affine; 
en particulier elle est irrŽductible et non-singulire. 
Puisque $G$ est connexe donc irrŽductible, la sous-variŽtŽ fermŽe (d'aprs \ref{ensembles F-saturŽs et Zariski-top}\,(ii)) $G$-invariante 
$\bsfrX_Z=G\cdot \scrX_Z$ de $\ES{N}$ 
est elle aussi irrŽductible. 

On a un rŽsultat analogue ˆ \cite[3.8]{H2} pour les sous-ensembles $Z\subset \FN$ qui sont uniformŽment $F$-instables: 

\begin{proposition}\label{saturŽ espace affine}
(On suppose que l'hypothse \ref{hyp reg} est vŽrifiŽe.) Soit $Z\subset \FN$ un sous-ensemble uniformŽment $F$-instable. L'espace tangent $T_{e_V}(X)$ du $F$-saturŽ $X={_F\scrX_Z}$ de $Z$ 
est un sous-ensemble $F$-saturŽ de $T_{e_V}(V)$ qui est $F$-isomorphe ˆ $X$ et vŽrifie $$\bs{\Lambda}_{F,T_{e_V}(X)}= \bs{\Lambda}_{F,X}=\bs{\Lambda}_{F,Z}\ptf$$ 
\end{proposition}

\begin{proof}
Elle est identique \`a celle de \cite[3.8]{H2}, compte-tenu de la propriŽtŽ suivante: 
si $S$ est un tore $F$-dŽployŽ maximal 
de $G$, puisque $S$ est linŽairement rŽductif, il existe un $S$-logarithme $\phi: V \rightarrow T_{e_V}(V)$ 
au sens de \cite[3.1]{H2} qui soit dŽfini sur $F$ (cf. \cite[17.6]{EGA}).
\end{proof}

Si $Z\subset \FN$ est uniformŽment $F$-instable, d'aprs \ref{saturŽ espace affine}, son $F$-saturŽ ${_F\scrX_Z}$ 
est un $F$-espace affine (i.e. ${_F\scrX_Z}\simeq_F \mbb{A}_F^n$); en particulier c'est une variŽtŽ irrŽductible non singulire. 

\begin{remark}\label{densitŽ F-saturŽ 1}
\textup{
Supposons que  le corps $F$ soit infini. Puisque $F^n$ est (Zariski-)dense dans $\mbb{A}_F^n$, pour tout sous-ensemble $Z\subset \FN$ uniformŽment $F$-instable, l'ensemble 
$\scrX_{F,Z}= V(F) \cap {_F\scrX_Z}$ est dense dans $X$.
}
\end{remark}

\subsection{La stratification de Hesselink}\label{la stratification de Hesselink}
Continuons avec les hypo\-thses de \ref{la variante de Hesselink}. 
On ne suppose pas que $e_V$ soit rŽgulier dans $V$ (hypothse \ref{hyp reg}).

\vskip2mm
\P\hskip1mm\textit{Les $F$-lames et les $F$-strates. ---} Pour $v\in \FN$, on pose\index{YFv@$\FYv$}
$$\FYv =\{v'\in \FN\,\vert \, \bs{\Lambda}_{F,v'}= \bs{\Lambda}_{F,v} \}\ptf$$ 

\begin{lemma}\label{caractŽrisation F-lames avec Lambda-opt et m}
Pour $v\in \FN$, on a $$\FYv= \{v'\in \FN\,\vert \,\hbox{$\Lambda_{F,v'}^{\rm opt}= \Lambda_{F,v}^{\rm opt}$ et $m_{F,v'}=m_{F,v}$}\}\ptf$$
\end{lemma}

\begin{proof}
Si $v=e_V$, il n'y a rien ˆ dŽmontrer. Si $v\in \FN\smallsetminus \{e_V\}$, l'inclusion 
$$\{v'\in \ES{N}\,\vert \, \hbox{$\Lambda_{F,v'}^{\rm opt}= \Lambda_{F,v}^{\rm opt}$ et $m_{F,v'}=m_{F,v}\}\subset {_F\scrY_v}$}$$ est claire (d'aprs \ref{variante Hesselink rat}). 
Quant ˆ l'inclusion inverse, soient $\lambda\in \Lambda_{F,v}^{\rm opt}$ et $k= m_v(\lambda)$. Puisque $\wt{\lambda}= \frac{1}{k}\lambda$ appartient ˆ $\bs{\Lambda}_{F,v}$ (\ref{variante Hesselink rat}), 
pour $v'\in {_F\scrY_v}$ on a $m_{v'}(\tilde{\lambda})\geq 1$ 
\cad $m_{v'}(\lambda)\geq k$. Par consŽquent ${_F\scrY_v} \subset V_{\lambda,k}\;(= {_F\scrX_v})$. 
Si $v'\in V_{\lambda, k +1}$, alors $\lambda' = \frac{1}{k+1}\lambda$ vŽrifie $m_{v'}(\lambda')\geq 1$ et $\| \lambda' \| = \frac{k}{k+1} \| \tilde{\lambda}_v\|< \| \tilde{\lambda}_v \|$; par consŽquent 
$v'\notin {_F\scrY_v}$. D'o l'inclusion inverse.
\end{proof}

Pour $v\in \FN$, on pose\index{YbsFv@${_F\bsfrY_v}$} 
$${_F\bsfrY_v}= G(F)\cdot {_F\scrY_v}= \bigcup_{g\in G(F)} {_F\scrY_{g\cdot v}}\ptf$$ 
Puisque $\bs{\Lambda}_{F,g\cdot v}= g\bullet \bs{\Lambda}_{F,v}$ pour tout $g\in G(F)$, on a aussi 
$${_F\bsfrY_v}= \{v'\in \FN\, \vert \, \hbox{il existe un $g\in G(F)$ tel que $\bs{\Lambda}_{F,v'} = g\bullet \bs{\Lambda}_{F,v}$}\}\ptf$$

\begin{definition}\label{def F-lames et F-strates}
\textup{
Les ensembles ${_F\scrY_v}$, resp. ${_F\bsfrY_v}$, avec $v\in \FN$ sont appelŽs 
\textit{$F$-lames}, resp. \textit{$F$-strates} (de $\FN$). Les $F$-strates sont donc les classes de $G(F)$-conjugaison de 
$F$-lames. }
\end{definition}

\begin{remark}
\textup{D'aprs \ref{rŽduction au cas d'un G-module rat}, il existe un $G$-module $W$ dŽfini sur $F$ et une $F$-immersion 
fermŽe $G$-Žquivariante $\iota: V\rightarrow W$ tels que $\iota(e_V)=0$. Soit $v\in \FN$. Posons $w= \iota(v)$; c'est un ŽlŽment de $\FN^{\,G}(W,0)$. Il dŽfinit comme plus haut 
des sous-ensembles ${_F\scrX_w}$, ${_F\scrY_w}$, ${_F\bsfrX_w}$ et ${_F\bsfrY_w}$ de 
$\FN^{\,G}(W,0)$. Puisque $\bs{\Lambda}_{F,w}= \bs{\Lambda}_{F,v}$, on a $$\iota({_F\scrX_v})= \iota(V)\cap {_F\scrX_w}\quad\hbox{et}\quad 
\iota({_F\scrY_v})= \iota(V) \cap {_F\scrY_w}\ptf$$ On a aussi 
$$\iota({_F\bsfrX_v})= \iota(V) \cap {_F\bsfrX_w}\quad\hbox{et}\quad \iota({_F\bsfrY_v})= \iota(V) \cap {_F\bsfrX_v}\ptf$$On peut donc en principe ramener la plupart des questions concernant les 
$F$-lames et les $F$-strates de $\FN$ au cas o $V$ est un $G$-module dŽfini sur $F$ (avec $e_V=0$). 
}
\end{remark}

\begin{lemma}\label{lemme 1 F-lames et F-strates}
Soit $v\in \FN$.
\begin{enumerate}
\item[(i)]$\FYv= \{v'\in \FXv\,\vert \, \bs{q}_F(v')=\bs{q}_F(v)\}$.
\item[(ii)]$v'\in {_F\scrX_v}\Rightarrow \bs{q}_F(v')\leq \bs{q}_F(v)$.
\item[(iii)] ${_F\bsfrY_v}=\{v'\in {_F\bsfrX_v}\,\vert \, \bs{q}_F(v')= \bs{q}_F(v)\}$.
\item[(iv)] $v'\in {_F\bsfrX_v} \Rightarrow \bs{q}_F(v')\leq  \bs{q}_F(v)$.
\item[(v)]${_F\scrX_v}$ est ${_FP_v}$-invariant et ${_F\scrY_v}$ est $P_{F,v}$-invariant. 
\end{enumerate}
\end{lemma}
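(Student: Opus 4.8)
The plan is to deduce all five assertions from the rational form of Hesselink's Lemma~2.8 (our Lemma~\ref{lemma 2.8 de [H2]}), from Remark~\ref{comparaison m et q}, and from the $G(F)$-equivariance of the constructions; the point is that (i) is essentially a restatement of Lemma~\ref{lemma 2.8 de [H2]}\,(iii) for singletons, and everything else is bookkeeping around it. One may assume $v\neq e_V$ throughout, the case $v=e_V$ being trivial (then ${_F\scrX_v}={_F\scrY_v}={_F\bsfrX_v}={_F\bsfrY_v}=\{e_V\}$, $\bs{q}_F(e_V)=0$ and ${_FP_v}=G$, so each statement is immediate). A remark used repeatedly: $\bs{q}_F$ is $G(F)$-invariant on $\FN$, since $\bs{\Lambda}_{F,g\cdot v}= g\bullet \bs{\Lambda}_{F,v}$ and $\|\,\|$ is $G$-invariant, whence $\bs{q}_F(g\cdot v)=\bs{q}_F(v)$ for every $g\in G(F)$.

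I would start with (ii). Fix $\mu\in \bs{\Lambda}_{F,v}$, so that ${_F\scrX_v}=V_{\mu,1}$. If $v'\in {_F\scrX_v}$, then $\mu\in \wt{\Lambda}_{v'}$ and $m_{v'}(\mu)\geq 1$; as $\mu\in \bs{\Lambda}_{F,v}\cap \wt{\Lambda}_{v'}$ and $v'\in\FN$, Remark~\ref{comparaison m et q} applied to $Z=\{v\}$ gives $\bs{q}_F(v')\leq \bs{q}_F(v)$. Then (iv) is immediate: if $v'\in {_F\bsfrX_v}=\bigcup_{g\in G(F)}{_F\scrX_{g\cdot v}}$, choose $g\in G(F)$ with $v'\in {_F\scrX_{g\cdot v}}$; by (ii) applied to $g\cdot v$ and the $G(F)$-invariance of $\bs{q}_F$ one gets $\bs{q}_F(v')\leq \bs{q}_F(g\cdot v)=\bs{q}_F(v)$.

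For (i), I would apply Lemma~\ref{lemma 2.8 de [H2]}\,(iii) to the two uniformly $F$-instable singletons $Z=\{v'\}$ and $Z'=\{v\}$ (both are uniformly $F$-instable since $v\in\FN$ and $v'\in {_F\scrX_v}\subset\FN$): it reads $\bs{\Lambda}_{F,v'}=\bs{\Lambda}_{F,v}$ if and only if $\bs{q}_F(v')=\bs{q}_F(v)$ and $v'\in {_F\scrX_v}$. The left-hand condition is exactly $v'\in {_F\scrY_v}$, so ${_F\scrY_v}=\{v'\in {_F\scrX_v}\mid \bs{q}_F(v')=\bs{q}_F(v)\}$, using ${_F\scrX_v}\subset\FN$. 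Assertion (iii) follows from (i) as (iv) followed from (ii): if $v'\in {_F\scrY_{g\cdot v}}$ for some $g\in G(F)$, then by (i) for $g\cdot v$ one has $v'\in {_F\scrX_{g\cdot v}}\subset {_F\bsfrX_v}$ and $\bs{q}_F(v')=\bs{q}_F(g\cdot v)=\bs{q}_F(v)$; conversely, if $v'\in {_F\bsfrX_v}$ with $\bs{q}_F(v')=\bs{q}_F(v)$, pick $g\in G(F)$ with $v'\in {_F\scrX_{g\cdot v}}$ and conclude $v'\in {_F\scrY_{g\cdot v}}\subset {_F\bsfrY_v}$ by (i).

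It remains to prove (v). The set ${_F\scrX_v}=V_{\mu,1}$, with $\mu\in \bs{\Lambda}_{F,v}$, is $P_\mu$-invariant by the very definition of the $\mu$-filtration, and $P_\mu={_FP_v}$ by Theorem~\ref{HMrat1}\,(ii) combined with the bijection $\Lambda_{F,v}^{\mathrm{opt}}\to \bs{\Lambda}_{F,v}$ of Lemma~\ref{variante Hesselink rat} (so that ${_FP_v}=P_\lambda=P_\mu$ for $\lambda\in\Lambda_{F,v}^{\mathrm{opt}}$ and $\mu=\wt{\lambda}_v$); hence ${_F\scrX_v}$ is ${_FP_v}$-invariant. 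For ${_F\scrY_v}$, I would use the description obtained in (i): if $p\in P_{F,v}={_FP_v}(F)$ and $v'\in {_F\scrY_v}$, then $p\cdot v'\in {_F\scrX_v}$ (by ${_FP_v}$-invariance of ${_F\scrX_v}$) and $\bs{q}_F(p\cdot v')=\bs{q}_F(v')=\bs{q}_F(v)$ (by $G(F)$-invariance of $\bs{q}_F$), so $p\cdot v'\in {_F\scrY_v}$. The only delicate point is the bookkeeping identification ${_F\scrX_v}=V_{\mu,1}$ with $P_\mu={_FP_v}$ and the verification of the singleton hypotheses; beyond that there is no genuine obstacle, the substance being entirely carried by Lemma~\ref{lemma 2.8 de [H2]}.
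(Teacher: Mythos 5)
Your proof is correct and follows essentially the same route as the paper: (ii) and (v) are argued identically, (iii)--(iv) come from the same $G(F)$-invariance of $\bs{q}_F$, and your derivation of (i) from Lemma \ref{lemma 2.8 de [H2]}\,(iii) applied to the singletons $\{v'\}$ and $\{v\}$ is just a repackaging of the paper's direct argument, which is itself the singleton case of that lemma. Nothing further is needed.
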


\begin{proof}
\'Ecrivons ${_F\scrX_v}= V_{\mu,1}$ avec $\mu\in \bs{\Lambda}_{F,v}$. 

Prouvons (i). Si $v' \in \FYv$ alors $\mu\in \bs{\Lambda}_{F,v'}$; par consŽquent 
$v'\in V_{\mu,1}=\FXv$ et $\bs{q}_F(v') = \| \mu \| = \bs{q}_F(v)$. Inversement si $v'\in\FXv$ vŽrifie $\bs{q}_F(v')= \bs{q}_F(v)= \l\mu\l$, alors $\mu$ appartient 
ˆ $\bs{\Lambda}_{F,v'}$ et $\bs{\Lambda}_{F,v'}= P_\mu \bullet \mu= \bs{\Lambda}_{F,v}$. 

Prouvons (ii). Si $v'\in {_F\scrX_v}$, puisque $m_{v'}(\mu)\geq 1$, on a $\bs{q}_F(v') \leq \| \mu \| = \bs{q}_F(v)$. 

Puisque $\bs{q}_F(g\cdot v')= \bs{q}_F(v')$ pour tout $g\in G(F)$, les points (iii) et (iv) rŽsultent de (i) et (ii).

Prouvons (v). L'ensemble $\FXv=V_{\mu,0}$ est clairement ${_FP_v}\;(=P_\mu$)-invariant. Quant 
ˆ $\FYv$, si $p\in P_{F,v}$ et $v'\in {_F\scrY_v}$, alors $p\cdot v'$ est dans $\FXv$ et comme $\bs{q}_F(p\cdot v')= \bs{q}_F(v')$, il est dans $\FYv$ (d'aprs (i)). 
\end{proof}
 
Observons que pour $v,\,v'\in \FN$, on a $${_F\scrY_v}={_F\scrY_{v'}}\quad \hbox{si et seulement si}\quad{_F\scrX_v}= {_F\scrX_{v'}}\vg$$
$${_F\bsfrY_v}= {_F\bsfrY_{v'}}\quad\hbox{si et seulement si}\quad {_F\bsfrX_v}= {_F\bsfrX_{v'}}\ptf$$
On a aussi
$${_F\scrX_v}\smallsetminus {_F\scrY_v} = {_F\scrX_v}\cap \FN_{<\bs{q}_F(v)}\vg$$ 
$$ {_F\bsfrX_v}\smallsetminus {_F\bsfrY_v} = {_F\bsfrX_v}\cap \FN_{<\bs{q}_F(v)}\ptf$$ 
La gŽomŽtrie du bord ${_F\bsfrX_v} \smallsetminus {_F\bsfrY_v}$ est \textit{a priori} assez compliquŽe. Pour $v'\in {_F\bsfrX_v}\smallsetminus {_F\bsfrY_v}$, 
la $F$-lame ${_F\scrY_{v'}}$ n'est pas forcŽment contenu dans ${_F\bsfrX_v}$; ou, ce qui revient au mme, 
la $F$-strate ${_F\bsfrY_{v'}}$ n'est pas forcŽment contenu dans ${_F\bsfrX_v}$ (mme si $F=\overline{F}$ \cite[4.3, remark]{H2}). 
En gŽnŽral, on a $${_F\bsfrX_v} \smallsetminus {_F\bsfrY_v} = \bigcup_{v'\in \FN \vert \bs{q}_F(v')<\bs{q}_F(v)} \hspace{-1em}{_F\bsfrX_{v'}}\cap {_F\bsfrX_v}= \coprod_{\bsfrY'}\hspace{0.5em}\bsfrY'\cap {_F\bsfrX_v}$$ o 
$\bsfrY'$ parcourt l'ensemble des $F$-strates de $\FN$ telles que $\bsfrY'\cap ({_F\bsfrX_v}\smallsetminus {_F\bsfrY_v}) \neq \emptyset$; ou, ce qui revient au mme, telles que 
$\bsfrY'\cap ({_F\scrX_v}\smallsetminus {_F\scrY_v}) \neq \emptyset$. On parle nŽanmoins de \guill{stratification} car la fonction $\bs{q}_F$ dŽfinit un ordre total strict sur $\FN$.

Les $F$-lames sont en bijection avec les sous-ensembles ${_F\scrX_v}\subset \FN$ ($v\in \FN$) qui sont des cas particuliers d'ensembles $F$-saturŽs ${_F\scrX_Z}$ ($Z\subset \FN$ uniformŽment $F$-instable). 
DŽterminer parmi les sous-ensembles $F$-saturŽs de $\FN$ ceux qui sont de la forme ${_F\scrX_v}$ avec $v\in \FN$ est une question difficile, reliŽe 
ˆ l'autre question difficile suivante:  pour deux $F$-strates ${_F\bsfrY_v},\,{_F\bsfrY_{v'}}$ de $\FN$, quand a-t-on l'inclusion ${_F\bsfrY_{v'}} \subset {_F\bsfrX_v}$? Pour ces questions 
dans le cas gŽomŽtrique (i.e. $F=\overline{F}$), on renvoie ˆ \cite[5]{H2}. 
Cela nous amne naturellement ˆ considŽrer l'hypothse suivante:

\begin{hypothese}\label{hypo sur le bord}
Pour tout $v\in \FN$, le bord ${_F\bsfrX_v}\smallsetminus {_F\bsfrY_v}$ est rŽunion (finie) de $F$-strates de $\FN$; en d'autres termes pour tout $v'\in {_F\bsfrX_v}\smallsetminus {_F\bsfrY_v}$, 
on a ${_F\bsfrY_{v'}}\subset {_F\bsfrX_v}$. 
\end{hypothese}

\begin{remark}\label{Clarke et Premet}
\textup{
\begin{enumerate}
\item[(i)]L'hypothse \ref{hypo sur le bord} n'est en gŽnŽral pas vŽrifiŽe, mme si $F=\overline{F}$ \cite[4.3, remark]{H2} (cf. l'exemple des \guill{ternary sextic forms} dans \cite[6.4]{H2}).  
\item[(ii)]Supposons $F=\overline{F}$. Supposons aussi que $V$ soit le groupe $G$ lui-mme muni de l'action par conjugaison. 
Clarke et Premet \guill{dŽmontrent} dans \cite{CP} que les strates de $\ES{N}$ co\"{\i}ncident avec les 
\textit{morceaux unipotents} de Lusztig \cite{L2}. La preuve de \cite[theorem~5.2\,(ii)]{CP} utilise la proposition \cite[2.7]{CP}, qui n'est valable que si l'hypothse \ref{hypo sur le bord} est vŽrifiŽe. 
Pour que le rŽsultat de \cite{CP} soit complet, il faudrait donc prouver que l'hypothse \ref{hypo sur le bord} est toujours vŽrifiŽe (dans le cas o $V$ est le groupe $G$ lui-mme muni de l'action par conjugaison). 
Observons que si $p=1$ ou $p\gg 1$, les morceaux unipotents de Lusztig sont exactement les orbites gŽomŽtriques unipotentes et ces dernires co\"{\i}ncident avec les strates gŽomŽtriques unipotentes 
(cf. \ref{p bon, strate-orbite} pour un rŽsultat plus prŽcis); dans ce cas l'hypothse \ref{hypo sur le bord} est vŽrifiŽe.
\end{enumerate}
}
\end{remark}

D'aprs \ref{prop 2.9 de [H2] rat}, on a la

\begin{proposition}\label{F-strates et c-F-top}
\begin{enumerate}
\item[(i)] Il n'y a qu'un nombre fini de $F$-strates ${_F\bsfrY_v}$ avec $v\in \FN$.
\item[(ii)] $\FN$ est la rŽunion (finie) disjointe des $F$-strates ${_F\bsfrY_v}$ avec $v\in \FN$.
\item[(iii)] Pour $v\in \FN$, la $F$-strate ${_F\bsfrY_v}$ est la rŽunion disjointe (en gŽnŽral infinie) des $F$-lames qu'elle contient: on a ${_F\bsfrY_v}= \coprod_{g\in G(F)/P_{F,v}}{_F\scrY_{g\cdot v}}$. 
\item[(iv)] Pour $v\in \FN$, ${_F\bsfrX_v}\smallsetminus {_F\bsfrY_v}$ est c-$F$-fermŽ (dans $V$).
\item[(v)] Pour $s\in \mbb{Q}_+$, l'ensemble $\FN_{\leq s} \smallsetminus \FN_{<s}$ est la rŽunion (Žventuellement vide) des 
$F$-strates ${_F\bsfrY_v}$ avec $v\in \ES{N}_F$ tel que $\bs{q}_F(v)=s$. 
\end{enumerate}
\end{proposition}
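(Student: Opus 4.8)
Le plan est de tout d\'eduire de la proposition \ref{prop 2.9 de [H2] rat} et des lemmes \'etablis plus haut dans cette sous-section; il ne s'agit essentiellement que d'un travail de comptabilit\'e. On se servira constamment des faits d\'ej\`a observ\'es suivants: les $F$-lames forment une partition de $\FN$ (si $u\in {_F\scrY_v}\cap {_F\scrY_{v'}}$, alors $\bs{\Lambda}_{F,u}= \bs{\Lambda}_{F,v}= \bs{\Lambda}_{F,v'}$, donc ${_F\scrY_v}= {_F\scrY_{v'}}$); on a ${_F\bsfrY_v}= {_F\bsfrY_{v'}}$ si et seulement si ${_F\bsfrX_v}= {_F\bsfrX_{v'}}$; et la fonction $\bs{q}_F$ est constante sur chaque $F$-lame (elle y vaut $\| \mu \|$ pour $\mu\in \bs{\Lambda}_{F,v}$), donc sur chaque $F$-strate.

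Pour (i), on remarquera que ${_F\bsfrX_v}= G(F)\cdot {_F\scrX_v}$ est l'un des ensembles ${_F\bsfrX_Z}$ de \ref{prop 2.9 de [H2] rat}\,(i), avec $Z=\{v\}$, donc qu'il n'y en a qu'un nombre fini; comme ${_F\bsfrY_v}$ ne d\'epend que de ${_F\bsfrX_v}$, cela donne (i). Le point (ii) s'obtiendra en observant d'abord que $v\in {_F\scrY_v}\subset {_F\bsfrY_v}$, puis que deux $F$-strates qui se rencontrent co\"{\i}ncident: si $w\in {_F\bsfrY_v}\cap {_F\bsfrY_{v'}}$, on \'ecrit $w= g\cdot v_1$ avec $g\in G(F)$ et $v_1\in {_F\scrY_v}$, d'o\`u $\bs{\Lambda}_{F,w}= g\bullet \bs{\Lambda}_{F,v}$, puis ${_F\bsfrY_w}= {_F\bsfrY_v}$ et de m\^eme ${_F\bsfrY_w}= {_F\bsfrY_{v'}}$; joint \`a (i), cela donne une r\'eunion finie disjointe. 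Pour (v), on notera que $\FN_{\leq s}\smallsetminus \FN_{<s}= \{v\in \FN\,\vert\, \bs{q}_F(v)=s\}$, lequel est, par constance de $\bs{q}_F$ sur les $F$-strates, la r\'eunion des ${_F\bsfrY_v}$ avec $\bs{q}_F(v)=s$ --- r\'eunion finie d'apr\`es (i), et vide si $s$ n'est pas une valeur prise par $\bs{q}_F$.

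Pour (iv), il suffira d'invoquer l'identit\'e ${_F\bsfrX_v}\smallsetminus {_F\bsfrY_v}= {_F\bsfrX_v}\cap \FN_{<\bs{q}_F(v)}$ d\'ej\`a \'etablie, ainsi que \ref{prop 2.9 de [H2] rat}\,(ii) et (iii) (chacun des deux ensembles est c-$F$-ferm\'e) et le fait qu'une intersection d'ensembles c-$F$-ferm\'es est c-$F$-ferm\'ee. Le point demandant le plus de soin est (iii): la r\'eunion ${_F\bsfrY_v}= \bigcup_{g\in G(F)} {_F\scrY_{g\cdot v}}$ est une r\'eunion de $F$-lames, donc disjointe, et il restera \`a identifier le stabilisateur $\{g\in G(F)\,\vert\, {_F\scrY_{g\cdot v}}= {_F\scrY_v}\}$ au sous-groupe $P_{F,v}$. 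Une inclusion viendra de l'invariance de ${_F\scrX_v}$ sous ${_FP_v}$ (\ref{lemme 1 F-lames et F-strates}\,(v)), qui donne ${_F\scrX_{g\cdot v}}= g\cdot {_F\scrX_v}= {_F\scrX_v}$ pour $g\in P_{F,v}$, d'o\`u ${_F\scrY_{g\cdot v}}= {_F\scrY_v}$; l'autre viendra de ce que l'\'egalit\'e ${_F\scrY_{g\cdot v}}= {_F\scrY_v}$ entra\^{\i}ne $\bs{\Lambda}_{F,g\cdot v}= \bs{\Lambda}_{F,v}$, donc ${_FP_{g\cdot v}}= {_FP_v}$, \cad $\mathrm{Int}_g({_FP_v})= {_FP_v}$ d'apr\`es \ref{cor de HMrat1}\,(i), d'o\`u $g\in N_G({_FP_v})= {_FP_v}$ puisqu'un sous-groupe parabolique est son propre normalisateur. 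La relation orbite-stabilisateur donnera alors ${_F\bsfrY_v}= \coprod_{g\in G(F)/P_{F,v}} {_F\scrY_{g\cdot v}}$. Aucun obstacle v\'eritable ne devrait appara\^{\i}tre: tout est cons\'equence directe de la structure d\'ej\`a mise en place.
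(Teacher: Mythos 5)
Votre preuve est correcte et suit exactement la m\^eme voie que le texte, qui se contente d'affirmer que la proposition d\'ecoule de \ref{prop 2.9 de [H2] rat}: vous explicitez simplement la comptabilit\'e omise (partition en $F$-lames, constance de $\bs{q}_F$, identification du stabilisateur \`a $P_{F,v}$ via le fait qu'un parabolique est son propre normalisateur), et chacun de ces pas est justifi\'e par les lemmes d\'ej\`a \'etablis dans la sous-section.
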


\begin{corollary}
Les $F$-strates sont localement $\cF$-fermŽes (dans $V$).
\end{corollary}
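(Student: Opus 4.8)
Le corollaire affirme que chaque $F$-strate ${_F\bsfrY_v}$ est localement $\cF$-fermée dans $V$, c'est-à-dire qu'elle est ouverte (pour la $\cF$-topologie) dans sa $\cF$-fermeture $\smash{\overline{{_F\bsfrY_v}}}^{(\cF)}$. Il s'agit d'une conséquence immédiate de la proposition \ref{F-strates et c-F-top} et des résultats précédents sur les ensembles $\FN_{<s}$ et $\FN_{\leq s}$. L'outil-clé est la fonction $\bs{q}_F:\FN \to \mbb{Q}_+$, qui prend un nombre fini de valeurs d'après \ref{prop 2.9 de [H2] rat}\,(i), et le fait que les ensembles de niveau $\FN_{<s}$ et $\FN_{\leq s}$ sont $\cF$-fermés dans $V$.

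**Plan de la preuve.** Fixons $v\in \FN$ et posons $s=\bs{q}_F(v)$. La $F$-strate ${_F\bsfrY_v}$ est contenue dans $\FN_{\leq s}$ par définition de $\bs{q}_F$, et plus précisément dans $\FN_{\leq s}\smallsetminus \FN_{<s}$ puisque $\bs{q}_F$ vaut constamment $s$ sur ${_F\bsfrY_v}$. D'après \ref{prop 2.9 de [H2] rat}\,(iii), $\FN_{<s}$ est $\cF$-fermé dans $V$; et puisque $\bs{q}_F$ ne prend qu'un nombre fini de valeurs (\ref{prop 2.9 de [H2] rat}\,(i)), il existe $\epsilon>0$ tel que $\FN_{\leq s}= \FN_{<s+\epsilon}$ (cf. \ref{ensembles F-satur�s et Zariski-top}\,(i)), donc $\FN_{\leq s}$ est lui aussi $\cF$-fermé dans $V$. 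D'après \ref{F-strates et c-F-top}\,(v), l'ensemble localement $\cF$-fermé $\FN_{\leq s}\smallsetminus \FN_{<s}$ est la réunion \textit{finie} disjointe des $F$-strates ${_F\bsfrY_{v'}}$ avec $\bs{q}_F(v')=s$.

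**Conclusion.** Il reste à voir que chacune de ces strates est $\cF$-ouverte dans la réunion finie $\FN_{\leq s}\smallsetminus \FN_{<s}$. C'est ici qu'intervient \ref{F-strates et c-F-top}\,(iv): pour toute $F$-strate $\bsfrY=\smash{{_F\bsfrY_{v'}}}$ intervenant dans cette réunion, son complémentaire relatif $(\FN_{\leq s}\smallsetminus\FN_{<s})\smallsetminus \bsfrY$ est la réunion finie des ${_F\bsfrY_{v''}}\subset {_F\bsfrX_{v''}}$ pour les autres strates, chacune contenue dans l'ensemble $\cF$-fermé ${_F\bsfrX_{v''}}\smallsetminus {_F\bsfrY_{v''}}$ de \ref{F-strates et c-F-top}\,(iv) réuni avec $\FN_{<s}$; plus simplement, le complémentaire de $\bsfrY$ dans $\FN_{\leq s}$ est la réunion de $\FN_{<s}$ (qui est $\cF$-fermé) et des autres strates de niveau $s$, chacune contenue dans $\FN_{\leq s}\cap({_F\bsfrX_{v'}}\smallsetminus{_F\bsfrY_{v'}})$ vu comme partie $\cF$-fermée. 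Donc $\bsfrY$ est $\cF$-ouverte dans $\FN_{\leq s}$, donc dans sa propre $\cF$-fermeture, qui est contenue dans $\FN_{\leq s}$. L'obstacle principal — déjà franchi dans \ref{prop 2.9 de [H2] rat} et \ref{F-strates et c-F-top} — était d'établir que la fonction $\bs{q}_F$ se comporte bien vis-à-vis de la $\cF$-topologie (c'est-à-dire que les ensembles de sous-niveau sont $\cF$-fermés), ce qui repose in fine sur la finitude du nombre d'ensembles saturés et sur la preuve de $\cF$-fermeture des ${_F\bsfrX_Z}$ via la décomposition de Bruhat; le corollaire lui-même n'est qu'un simple réarrangement de ces faits.
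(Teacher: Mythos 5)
Votre strat\'egie g\'en\'erale (montrer que la $F$-strate est $\cF$-ouverte dans l'ensemble $\cF$-ferm\'e $\FN_{\leq s}$, avec $s=\bs{q}_F(v)$) peut aboutir, et la premi\`ere moiti\'e de votre argument est correcte: ${_F\bsfrY_v}\subset \FN_{\leq s}\smallsetminus\FN_{<s}$, l'ensemble $\FN_{<s}$ est $\cF$-ferm\'e d'apr\`es \ref{prop 2.9 de [H2] rat}\,(iii), et $\FN_{\leq s}$ l'est aussi puisque $\bs{q}_F$ ne prend qu'un nombre fini de valeurs, de sorte que $\FN_{\leq s}=\FN_{<s'}$ pour un $s'>s$ convenable. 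En revanche, la derni\`ere \'etape, telle que vous l'\'ecrivez, est fausse: vous affirmez que chaque autre strate de niveau $s$ est contenue dans ${_F\bsfrX_{v''}}\smallsetminus{_F\bsfrY_{v''}}$ (r\'euni avec $\FN_{<s}$), puis dans $\FN_{\leq s}\cap({_F\bsfrX_{v'}}\smallsetminus{_F\bsfrY_{v'}})$. Or une strate ${_F\bsfrY_{v''}}$ est par d\'efinition disjointe de ${_F\bsfrX_{v''}}\smallsetminus{_F\bsfrY_{v''}}$ et de $\FN_{<s}$; et elle est m\^eme disjointe de ${_F\bsfrX_{v'}}$ tout entier, car d'apr\`es \ref{lemme 1 F-lames et F-strates}\,(iii) tout point de ${_F\bsfrX_{v'}}$ de niveau $\bs{q}_F$ \'egal \`a $s$ appartient \`a ${_F\bsfrY_{v'}}$. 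Le \guill{compl\'ementaire $\cF$-ferm\'e} que vous exhibez n'en est donc pas un, et l'ouverture de la strate dans $\FN_{\leq s}$ n'est pas \'etablie.

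Le rem\`ede, dans votre approche, consiste \`a utiliser les satur\'es eux-m\^emes plut\^ot que leurs bords: chaque autre strate de niveau $s$ est contenue dans ${_F\bsfrX_{v''}}$, qui est $\cF$-ferm\'e d'apr\`es \ref{prop 2.9 de [H2] rat}\,(ii) et disjoint de ${_F\bsfrY_{v'}}$ (toujours par \ref{lemme 1 F-lames et F-strates}\,(iii)); on a alors $\FN_{\leq s}\smallsetminus{_F\bsfrY_{v'}}= \FN_{\leq s}\cap\bigl(\FN_{<s}\cup\bigcup_{v''}{_F\bsfrX_{v''}}\bigr)$, r\'eunion $\cF$-ferm\'ee (finie d'apr\`es \ref{F-strates et c-F-top}\,(i)). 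Notez enfin que le d\'etour par $\FN_{\leq s}$ est inutile: il suffit d'observer que ${_F\bsfrY_v}$ est $\cF$-ouverte dans ${_F\bsfrX_v}$, puisque ${_F\bsfrX_v}\smallsetminus{_F\bsfrY_v}$ est $\cF$-ferm\'e (\ref{F-strates et c-F-top}\,(iv)), et que ${_F\bsfrX_v}$ est $\cF$-ferm\'e dans $V$ (\ref{prop 2.9 de [H2] rat}\,(ii)); ainsi ${_F\bsfrY_v}= {_F\bsfrX_v}\cap\bigl(V\smallsetminus({_F\bsfrX_v}\smallsetminus{_F\bsfrY_v})\bigr)$ est bien localement $\cF$-ferm\'ee, ce qui est exactement l'argument du texte.
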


\begin{proof}
Pour tout $v\in \FN$, la $F$-strate ${_F\bsfrY_v}$ est $\cF$-ouverte dans ${_F\bsfrX_v}$ (\ref{F-strates et c-F-top}\,(iv)) et d'aprs \ref{prop 2.9 de [H2] rat}\,(ii), ${_F\bsfrX_v}$ est $\cF$-fermŽ (dans $V$), d'o le corollaire.
\end{proof}

\begin{remark}
\textup{
Pour $v\in \FN$, le fait que la $F$-strate ${_F\bsfrY_v}$ soit $\cF$-ouverte dans ${_F\bsfrX_v}$ n'implique \textit{a priori} pas 
qu'elle soit $\cF$-dense dans ${_F\bsfrX_v}$: on a toujours $\smash{\overline{_F\bsfrY_v}}^{(\cF)}\subset {_F\bsfrX_v}$ 
mais l'inclusion peut tre stricte. }
\end{remark}

\begin{remark}
\textup{
\begin{enumerate}
\item[(i)] Si $F=\overline{F}$, les lames et les strates de $\ES{N}$ sont des sous-variŽtŽs localement fermŽes dans $V$. En effet les ensembles $\bsfrX_v$ ($v\in \ES{N}$) sont 
des sous-variŽtŽs fermŽes de $V$, par consŽquent les ensembles $\ES{N}_{<s}$ ($s\in \mbb{Q}_+$) sont eux aussi des sous-variŽtŽs fermŽes de $V$; et pour tout $v\in \ESN$, on a: 
\begin{itemize}
\item $\scrY_v= \scrX_v \smallsetminus (\scrX_v\cap \ES{N}_{< \bs{q}(v)})$ est une sous-variŽtŽ ouverte de $\scrX_v$;
\item $\bsfrY_v = \bsfrX_v \smallsetminus (\bsfrX_v\cap \ES{N}_{< \bs{q}(v)})$ est une sous-variŽtŽ ouverte de $\bsfrX_v$.  
\end{itemize}
\item[(ii)] Si pour tout $v\in \FN$, ${_F\bsfrX_v}$ est (Zariski-)fermŽ dans $\FN$, alors pour tout $s\in \mbb{Q}_+$, $\FN_{<s}$ est fermŽ dans 
$\FN$. Dans ce cas pour tout $v\in \FN$, on a (comme dans le cas o $F=\overline{F}$): 
\begin{itemize}
\item la $F$-lame ${_F\scrY_v}$ est (Zariski-)ouverte dans son $F$-saturŽ ${_F\scrX_v}$; 
\item la $F$-strate ${_F\bsfrY_v}$ est ouverte dans ${_F\bsfrX_v}$. 
\end{itemize}
\item[(iii)]Supposons $F$ infini. Pour $v\in \FN$, l'ensemble ${_F\bsfrX_v}$ est fermŽ dans $\FN$ si et seulement s'il co\"{\i}ncide avec l'intersection 
$(G\cdot {_F\scrX_v}) \cap \FN$ (cf. \ref{ensembles F-saturŽs et Zariski-top}\,(ii)). Il est en gŽnŽral difficile de calculer cette intersection car 
$\FN$ \textit{n'est pas} (en gŽnŽral) un sous-ensemble algŽbrique de $V$ (cf. \ref{le cas de PGL(2)}). 
\end{enumerate}
}
\end{remark}

\P\hskip1mm\textit{$F$-lames standard. ---} Soit $P_0$ un $F$-sous-groupe parabolique minimal de $G$. Un $F$-sous-groupe parabolique $P$ de $G$ 
est dit \textit{standard} s'il contient $P_0$. 

\begin{definition}\label{F-lame standard et ŽlŽment en ps}
\textup{Un ŽlŽment $v\in \FN$ est dit \textit{en position standard} si le $F$-sous-groupe parabolique ${_FP_v}$ de $G$ 
est standard. Une $F$-lame ${_F\scrY_v}$ de $\FN$ est dite \textit{standard} si $v$ (i.e. si tout ŽlŽment de ${_F\scrY_v}$) 
est en position standard.
}
\end{definition}

Si ${_F\scrY_v}$ et ${_F\scrY_{v'}}$ sont deux $F$-lames standard de $\FN$ avec ${_F\scrY_{v'}}= g \cdot {_F\scrY_v}$ pour un 
ŽlŽment $g\in G(F)$, alors ${_FP_{v'}}= g({_FP_v})g^{-1}$; par suite on a ${_FP_{v'}}= {_FP_v}$ (puisque les $F$-sous-groupes paraboliques ${_FP_v}$ et ${_FP_{v'}}$ sont standard) et 
$g\in  P_{F,v}= {_FP_v}(F)$ (car ${_FP_v}$ est son propre normalisateur), donc ${_F\scrY_v'}={_F\scrY_v}$. Ainsi toute $F$-strate de $\FN$ 
contient une unique $F$-lame standard et l'application ${_F\scrY_v} \mapsto {_F\bsfrY_v}= G(F)\cdot {_F\scrY_v}$ est 
une bijection de l'ensemble des $F$-lames standard de $\FN$ sur l'ensemble des $F$-strates de $\FN$. Les ensembles \textit{finis} suivants sont naturellement en bijection:
\begin{itemize}
\item les $F$-lames standard de $\FN$;
\item les ensembles $F$-saturŽs ${_F\scrX_v}$ avec $v\in \FN$ en position standard;
\item les $F$-strates de $\FN$;
\item les ensembles ${_F\bsfrX_v}$ avec $v\in \FN$;  
\end{itemize}

Soit $A_0$ un tore $F$-dŽployŽ maximal de $P_0$. Si $v\in \FN$, il existe un $g\in G(F)$ tel que la $F$-lame ${_F\scrY_{g\cdot v}}$ soit standard. Alors le co-caractre virtuel 
$\mu \in \check{X}(A_0)_{\mbb{Q}}$ dŽfini par $\bs{\Lambda}_{F,g\cdot v} \cap \check{X}(A_0)_{\mbb{Q}}=\{\mu\}$ est \textit{en position standard} (relativement ˆ $(P_0,A_0)$), \cad qu'il vŽrifie $P_0 \subset P_\mu$ 
et $A_0\subset M_\mu$; on dit alors que $\mu$ est un co-caractre virtuel \textit{$F$-optimal} (au sens o il est $(F,v)$-optimal pour un $v\in \FN$) en position standard. Observons que $\mu$ est 
l'unique ŽlŽment de $G(F)\bullet \bs{\Lambda}_{F,v}= \{{\rm Int}_g\circ \mu\,\vert\, g\in G(F),\,\mu\in \bs{\Lambda}_{F,v}\}$ qui soit en position standard.
On obtient de cette manire une bijection naturelle entre:
\begin{itemize}
\item l'ensemble des $F$-strates de $\FN$;
\item le sous-ensemble (fini)\index{LambdaFst@$\bs{\Lambda}_{F,\mathrm{st}}$} $$\bs{\Lambda}_{F,\mathrm{st}}= \bs{\Lambda}_{F,\mathrm{st}}^G(V,e_V)\subset \check{X}(A_0)_{\mbb{Q}}$$ formŽ des co-caractres virtuels qui sont $F$-optimaux et en position standard.
\end{itemize}

\begin{remark}\label{co-car st et u-st}
\textup{
\begin{enumerate}
\item[(i)]
Notons $\FN'_{\textrm{st}}$ l'ensemble des ŽlŽments de $\FN$ en position standard, \cad la rŽunion (finie) des $F$-lames standard, et posons  
$$\FN_{\rm st}\bydef \bigcup_{v\in \FN'_{\mathrm{st}}}{_F\scrX_v}\subset \FN \ptf$$ Observons que 
$\FN'_{\textrm{st}}$ est $P_0(F)$-invariant et $\FN_{\textrm{st}}$ est $P_0$-invariant, et que l'on a 
$$G(F)\cdot \FN'_{\textrm{st}} = G(F) \cdot \FN_{\textrm{st}} = \FN\ptf$$ Puisque $\FN_{\textrm{st}}$ 
est la rŽunion \textit{finie} des ensembles ${_F\scrX_v}$ avec $v\in \FN$ en position standard, c'est une sous-variŽtŽ fermŽe  
de $V$ dŽfinie sur $F$. Par exemple si $V$ est le groupe $G$ lui-mme muni de l'action par conjugaison, alors $\FN_{\textrm{st}}$ est le radical unipotent 
$U_0=U_{P_0}$ de $P_0$.
\item[(ii)]On dŽfinit comme en \ref{F-lame standard et ŽlŽment en ps} la notion de sous-ensemble uniformŽment $F$-instable $Z\subset \FN$ \textit{en position standard}. Notons 
$\bs{\Lambda}_{F,\mathrm{u-st}}$\index{LambdaFust@$\bs{\Lambda}_{F,\mathrm{u-st}}$} le sous-ensemble de $\check{X}(A_0)_{\mbb{Q}}$ formŽ des $\mu$ tels que 
$\bs{\Lambda}_{F,Z}\cap \check{X}(A_0)_{\mbb{Q}}=\{\mu\}$ pour un sous-ensemble uniformŽment $F$-instable $Z\subset\FN$ en position standard. Puisque 
$\bs{\Lambda}_{F,Z}= \bs{\Lambda}_{F,{_F\scrX_Z}}$ (\ref{lemma 2.8 de [H2]}\,(i)) et que l'ensemble des classes de $G(F)$-conjugaison de sous-ensembles $F$-saturŽs de $\FN$ est fini 
(\ref{prop 2.9 de [H2] rat}\,(i)), l'ensemble $\bs{\Lambda}_{F,\mathrm{u-st}}$ est lui aussi fini. 
On a clairement l'inclusion $\bs{\Lambda}_{F,\mathrm{st}}\subset \bs{\Lambda}_{F,\mathrm{u-st}}$. Supposons de plus que $V$ soit un $G$-module. Alors 
on peut comme en \cite[5.4]{H2} dŽfinir la notion de sous-ensemble (\textit{$F$-instable}) \textit{$F$-saturŽ} de l'ensemble des poids $\ES{R}'_{A_0}(V)$ de $A_0$ dans $V$, et prouver l'analogue 
de la proposition \cite[5.5]{H2}: $\bs{\Lambda}_{F,\mathrm{u-st}}$ est en bijection naturelle avec l'ensemble des classes de $W^G(A_0)$-conjugaison 
de sous-ensembles $F$-saturŽs de $\ES{R}'_{A_0}(V)$. 
\end{enumerate}}
\end{remark}

\vskip2mm
\P\hskip1mm\textit{Les applications ${_F\bs{\pi}_{v}}$ et ${_F\bs{\pi}'_{v}}$. ---} Pour $v\in \FN$, notons $G(F)\times^{P_{F,v}}\!{_F\scrX_v}$ le quotient du produit 
$G(F)\times {_F\scrX_v}$ par $P_{F,v}$ pour l'action de $P_{F,v}$ \textit{ˆ droite} donnŽe par 
$$(g,x)\cdot p = (gp, p^{-1}\cdot x)\ptf$$ Pour un ŽlŽment $(g,x)\in G(F)\times {_F\scrX_v}$, on note $[g,x]$ son image dans $G(F)\times^{P_{F,v}}{_F\scrX_v}$ et 
on munit $G(F)\times^{P_{F,v}}{_F\scrX_v}$ de l'action de $G(F)$ (ˆ gauche) donnŽe par 
$$g'\cdot[g,x]= [g'g, x]\quad \hbox{pour tout} \quad g'\in G(F)\ptf$$ 
Considrons l'application\index{piFv@${_F\bs{\pi}_{v}}$} 
$${_F\bs{\pi}_{v}}: G(F)\times^{P_{F,v}}{_F\scrX_v} \rightarrow {_F\bsfrX_v}\vgq [g,x] \mapsto g \cdot x\ptf$$ 
Elle est $G(F)$-Žquivariante et surjective. 

\begin{lemma}\label{l'application pi}
Soit $v\in \FN$. 
\begin{enumerate}
\item[(i)] $({_F\bs{\pi}_{v}})^{-1} ({_F\bsfrY_v})= G(F)\times^{P_{F,v}}{_F\scrY_v}$.
\item[(ii)] ${_F\bs{\pi}_{v}}$ induit une application bijective\index{pi'Fv@${_F\bs{\pi}'_{v}}$} ${_F\bs{\pi}'_{v}}:G(F)\times^{P_{F,v}}{_F\scrY_v} \rightarrow {_F\bsfrY_v}$.
\end{enumerate} 
\end{lemma}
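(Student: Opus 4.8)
Le plan est de tout déduire des caractérisations des $F$-lames et des ensembles $F$-saturés établies en \ref{lemme 1 F-lames et F-strates} et \ref{lemma 2.8 de [H2]}. On commence par observer que, ${_F\scrY_v}$ étant $P_{F,v}$-invariant (\ref{lemme 1 F-lames et F-strates}\,(v)) et contenu dans ${_F\scrX_v}$, le sous-ensemble $G(F)\times^{P_{F,v}}{_F\scrY_v}$ de $G(F)\times^{P_{F,v}}{_F\scrX_v}$ est bien défini, et que ${_F\bs{\pi}_{v}}$ l'envoie dans ${_F\bsfrY_v}= G(F)\cdot {_F\scrY_v}$ ; la seule chose à vérifier est alors le calcul de la fibre au-dessus de ${_F\bsfrY_v}$ et l'injectivité de la restriction.

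Pour (i), je noterais que pour $g\in G(F)$ et $x\in {_F\scrX_v}$, comme ${_F\bsfrY_v}$ est $G(F)$-invariant, on a ${_F\bs{\pi}_{v}}([g,x])= g\cdot x\in {_F\bsfrY_v}$ si et seulement si $x\in {_F\bsfrY_v}$ ; autrement dit $({_F\bs{\pi}_{v}})^{-1}({_F\bsfrY_v})$ est formé des $[g,x]$ avec $x\in {_F\scrX_v}\cap {_F\bsfrY_v}$. Il suffit donc d'établir l'égalité ${_F\scrX_v}\cap {_F\bsfrY_v}= {_F\scrY_v}$. L'inclusion ${_F\scrY_v}\subset {_F\scrX_v}\cap {_F\bsfrY_v}$ est immédiate. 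Réciproquement, si $x\in {_F\scrX_v}\cap {_F\bsfrY_v}$, alors $x\in {_F\bsfrX_v}$ (car ${_F\scrX_v}\subset {_F\bsfrX_v}$), donc $\bs{q}_F(x)= \bs{q}_F(v)$ d'après \ref{lemme 1 F-lames et F-strates}\,(iii), et comme $x\in {_F\scrX_v}$ cela entraîne $x\in {_F\scrY_v}$ d'après \ref{lemme 1 F-lames et F-strates}\,(i). D'où (i).

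Pour (ii), la surjectivité de ${_F\bs{\pi}'_{v}}$ résulte aussitôt de ${_F\bsfrY_v}= G(F)\cdot {_F\scrY_v}$. Pour l'injectivité, je prendrais $g_1,g_2\in G(F)$ et $y_1,y_2\in {_F\scrY_v}$ tels que $g_1\cdot y_1= g_2\cdot y_2$, et je poserais $g= g_1^{-1}g_2\in G(F)$, de sorte que $y_1= g\cdot y_2$. Comme $y_2\in {_F\scrY_v}$, on a $\bs{\Lambda}_{F,y_2}= \bs{\Lambda}_{F,v}$, donc ${_F\scrX_{y_2}}= {_F\scrX_v}$ et ${_FP_{y_2}}= {_FP_v}$ ; et $g\cdot y_2= y_1\in {_F\scrY_v}\subset {_F\scrX_v}= {_F\scrX_{y_2}}$. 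D'après \ref{lemma 2.8 de [H2]}\,(iv) appliqué à $Z=\{y_2\}$ (qui est uniformément $F$-instable), on en déduit $g\in {_FP_{y_2}}= {_FP_v}$, et comme $g\in G(F)$, il vient $g\in P_{F,v}$. En posant $p=g$, on obtient $g_2= g_1 p$ et $y_2= p^{-1}\cdot y_1$, donc $[g_1,y_1]= [g_2,y_2]$ dans le quotient ; ${_F\bs{\pi}'_{v}}$ est donc bijective. On peut d'ailleurs voir (ii) comme une reformulation de la décomposition ${_F\bsfrY_v}= \coprod_{g\in G(F)/P_{F,v}}{_F\scrY_{g\cdot v}}$ de \ref{F-strates et c-F-top}\,(iii).

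Le point central est l'injectivité, et plus précisément le passage de $g\cdot y_2\in {_F\scrX_{y_2}}$ (ou, de façon équivalente, de $g\bullet \bs{\Lambda}_{F,v}= \bs{\Lambda}_{F,v}$) à $g\in {_FP_v}$ : c'est le fait que le $F$-sous-groupe parabolique ${_FP_v}$ est exactement le fixateur de la $F$-lame (resp. du $F$-saturé), contenu dans \ref{lemma 2.8 de [H2]}\,(iv), lequel repose in fine sur l'autonormalisation des sous-groupes paraboliques. Tout le reste n'est que manipulation des caractérisations déjà disponibles.
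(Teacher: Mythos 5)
Your proof is correct and takes essentially the same route as the paper's: for (i) you reduce to the equality ${_F\scrX_v}\cap {_F\bsfrY_v}={_F\scrY_v}$ via the $\bs{q}_F$-characterizations of \ref{lemme 1 F-lames et F-strates}, which is exactly the paper's argument, and for (ii) you deduce $g\in P_{F,v}$ from $g\cdot y_2\in {_F\scrX_{y_2}}$ using \ref{lemma 2.8 de [H2]}\,(iv), which is precisely the step the paper invokes (implicitly) as well. No gap to report.
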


\begin{proof}
Pour $(g,v')\in G(F)\times {_F\scrX_v}$, si $g\cdot v'\in {_F\bsfrY_v}$, alors $\bs{q}_F(v')= \bs{q}_F(v)$ et $v'\in {_F\scrY_v}$. Cela prouve (i). 
Quant au point (ii), soient $(g_1,v_1)$ et $(g_2,v_2)$ deux ŽlŽments 
de $G(F)\times {_F\scrY_v}$ tels que $g_1\cdot v_1= g_2\cdot v_2$. Posons $g= g_1^{-1} g_2$. On a donc 
$g\cdot v_2 = v_1 \in {_F\scrY_v}= {_F\scrY_{v_1}} \subset {_F\scrX_{v_1}}$. 
Cela entra"ne que $g$ appartient ˆ $P_{F,v_1}= P_{F,v}$ et donc que $[g_1,v_1]=[g_1g,g^{-1}\cdot v_1]=[g_2,v_2]$. 
\end{proof}

\P\hskip1mm\textit{Points $F$-rationnels. ---} Rappelons que l'on a posŽ $$\NF=V(F) \cap \FN\ptf$$ Pour $s\in \mbb{Q}_+$, on pose\index{NFsrat@$\ES{N}_{F,<s}$, $\ES{N}_{F,\leq s}$} 
$$\ES{N}_{F,<s} = V(F) \cap \FN_{<s} \quad \hbox{et} \quad \ES{N}_{F,\leq s} = V(F) \cap \FN_{\leq s}\ptf$$ 
Pour $v\in \NF$, on pose\index{XFvrat@$\scrX_{F,v}$, $\bsfrX_{F,v}$}\index{YFvrat@$\scrY_{F,v}$, $\bsfrY_{F,v}$} 
$$\scrX_{F,v}=V(F)\cap {_F\scrX_v} \quad\hbox{et}\quad \bsfrX_{F,v}= V(F)\cap {_F\bsfrX_v}\vg$$
$$\scrY_{F,v}= V(F)\cap {_F\scrY_v} \quad \hbox{et} \quad \bsfrY_{F,v} = V(F) \cap {_F\bsfrY_v}\ptf$$ 
On a donc $$\bsfrX_{F,v}=G(F)\cdot \scrX_{F,v}\quad \hbox{et}\quad \bsfrY_{F,v}= G(F)\cdot \scrY_{F,v}\ptf$$ 

\begin{definition}
\textup{Par abus de langage, on utilise la mme dŽfinition que \ref{def F-lames et F-strates} pour les ensembles de points $F$-rationnels: 
les ensembles $\scrY_{F,v}$, resp. $\bsfrY_{F,v}$, avec $v\in \NF$ sont appelŽs 
\textit{$F$-lames}, resp. \textit{$F$-strates}, \textit{de $\ES{N}_F$}. Les $F$-strates de $\NF$ sont donc les classes de $G(F)$-conjugaison de 
$F$-lames de $\NF$. }
\end{definition}

Les principales propriŽtŽs des $F$-lames et les $F$-strates de $\FN$ restent vraies pour les $F$-lames et les $F$-strates de $\NF$. En particulier: 
\begin{itemize}
\item Il n'y a qu'un nombre fini de $F$-strates de $\NF$.
\item On a les dŽcompositions en union disjointe: $\NF = \coprod_{v} \bsfrY_{F,v}$ o $v\in \NF$ parcourt un ensemble (fini) de reprŽsentants des $F$-strates de $\NF$; 
et pour chaque $v\in \NF$, $\bsfrY_{F,v} = \coprod_{g\in G(F)/P_{F,v}} g\cdot \scrY_{F,v}$.
\item Pour $v\in \NF$, $$\scrX_{F,v}\smallsetminus \scrX_{F,v} = \scrX_{F,v} \cap \ES{N}_{F,<\bs{q}_F(v)}\vg$$ 
$$\bsfrX_{F,v}\smallsetminus \bsfrX_{F,v} = \bsfrX_{F,v} \cap \ES{N}_{F,<\bs{q}_F(v)}\ptf$$
\item Pour $v\in \NF$, l'application ${_F\bs{\pi}_v}$ donne par restriction une application $G(F)$-Žquivariante surjective\index{piFvrat@$\bs{\pi}_{F,v}$, $\bs{\pi}'_{F,v}$} $$\bs{\pi}_{F,v}: G(F)\times^{P_{F,v}}\scrX_{F,v} \rightarrow \bsfrX_{F,v}$$ qui 
(d'aprs \ref{l'application pi}) vŽrifie $(\bs{\pi}_{F,v})^{-1}(\bsfrY_{F,v})= G(F)\times^{P_{F,v}}\scrY_{F,v}$ et induit une application 
bijective $\bs{\pi}'_{F,v}: G(F)\times^{P_{F,v}}\scrY_{F,v} \rightarrow \bsfrY_{F,v}$. 
\end{itemize} 

Pour $v,\, v'\in \NF$, on a $$\scrY_{F,v} = \scrY_{F,v'} \quad\hbox{si et seulement si} \quad {_F\scrY_v}= {_F\scrY_{v'}}\vg$$ 
$$\bsfrY_{F,v} = \bsfrY_{F,v'} \quad\hbox{si et seulement si} \quad {_F\bsfrY_v}= {_F\bsfrY_{v'}}\ptf$$

D'aprs ce qui prŽcde, l'application $Y\mapsto V(F)\cap Y$ induit une bijection entre:
\begin{itemize}
\item les $F$-lames, resp. $F$-strates, de $\FN$ qui possdent un point $F$-rationnel; 
\item les $F$-lames, resp. $F$-strates, de $\NF$.
\end{itemize}
On note\index{Lambda*Fst@$\bs{\Lambda}_{F,\mathrm{st}}^*$} $$\bs{\Lambda}_{F,\mathrm{st}}^*=\bs{\Lambda}_{F,\mathrm{st}}^{G,*}(V,e_V)\subset \bs{\Lambda}_{F,\mathrm{st}}$$ le sous-ensemble 
associŽ aux $F$-strates de $\FN$ qui possdent un point $F$-rationnel. Observons qu'un ŽlŽment $\mu\in \bs{\Lambda}_{F,\mathrm{st}}$ est dans $\bs{\Lambda}_{F,\mathrm{st}}^*$ si et seulement s'il 
existe un $v\in \NF$ tel que $\mu \in \bs{\Lambda}_{F,v}$; auquel cas $v$ appartient ˆ $V_{\mu,1}(F)$. On aimerait que l'inclusion ci-dessus soit une ŽgalitŽ, \cad que toute $F$-lame, ou ce qui revient au mme toute $F$-strate, de $\FN$ 
possde un point $F$-rationnel.

\begin{lemma}\label{il existe un point F-rationnel}
Supposons $F$ infini et que l'hypothse \ref{hyp reg} soit vŽrifiŽe. Pour $v\in \FN$ tel que la $F$-lame ${_F\scrY_v}$ soit ouverte dans son $F$-saturŽ ${_F\scrX_v}$, on a $\scrY_{F,v}\neq \emptyset$.
\end{lemma}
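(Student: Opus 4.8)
The plan is to reduce the claim to a standard density argument, using that under Hypothesis~\ref{hyp reg} the set $X={_F\scrX_v}$ is an irreducible affine $F$-variety whose $F$-rational points are Zariski-dense.

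First I would invoke Proposition~\ref{satur\'e espace affine}: since \ref{hyp reg} holds, the $F$-saturated set $X={_F\scrX_v}$ is $F$-isomorphic to an affine space $\mbb{A}_F^n$; in particular $X$ is an irreducible $F$-variety, and, as $F$ is infinite, the set $\scrX_{F,v}=V(F)\cap X$ of its $F$-rational points is Zariski-dense in $X$ (the $F$-points of an affine space over an infinite field are always Zariski-dense). Next I would observe that the $F$-lame ${_F\scrY_v}$ is a non-empty Zariski-open subset of $X$: it is open in $X$ by hypothesis, and it is non-empty since it contains $v$ (trivially $v\in {_F\scrY_v}$). A non-empty open subset of an irreducible variety is Zariski-dense, so ${_F\scrY_v}$ is Zariski-dense in $X$ as well.

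To finish I would use that a Zariski-dense subset of a variety meets every non-empty Zariski-open subset; applied to the dense set $\scrX_{F,v}$ and the non-empty open set ${_F\scrY_v}$, this yields $\scrX_{F,v}\cap {_F\scrY_v}\neq\emptyset$. Since ${_F\scrY_v}\subset X={_F\scrX_v}$, this intersection is precisely $V(F)\cap {_F\scrY_v}=\scrY_{F,v}$, whence $\scrY_{F,v}\neq\emptyset$, as desired. No serious obstacle arises once Proposition~\ref{satur\'e espace affine} is available; the only point deserving attention is that the hypothesis must really be Zariski-openness of ${_F\scrY_v}$ in ${_F\scrX_v}$ --- in general one knows only that ${_F\scrY_v}$ is $\cF$-open in ${_F\scrX_v}$, and $\cF$-openness would not suffice to force Zariski-density, whereas the density of $\scrX_{F,v}$ above is with respect to the Zariski topology.
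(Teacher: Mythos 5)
Votre preuve est correcte et suit essentiellement la m\^eme d\'emarche que celle de l'article : on utilise \ref{satur� espace affine} pour identifier ${_F\scrX_v}$ \`a un $F$-espace affine, la densit\'e (Zariski) de $\scrX_{F,v}$ pour $F$ infini, puis le fait que tout ouvert non vide de la vari\'et\'e irr\'eductible ${_F\scrX_v}$ rencontre cet ensemble dense. Votre remarque finale sur la distinction entre ouverture de Zariski et $\cF$-ouverture est juste et conforme aux conventions de l'article (l'ouverture dans l'\'enonc\'e est bien celle de Zariski).
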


\begin{proof}
Puisque ${_F\scrX_v}\simeq_F \mbb{A}_F^n$ (d'aprs \ref{saturŽ espace affine}), l'ensemble $\scrX_{F,v}$ est dense dans ${_F\scrX_v}$ et tout ouvert 
de ${_F\scrX_v}$ intersecte $\scrX_{F,v}$ non trivialement. 
\end{proof}

ConsidŽrons la variante (moins forte) suivante\footnote{On s'attend ˆ ce que dans le cas o $F$ est un corps localement compact ou un corps global et $V$ est le 
groupe $G$ lui-mme muni de l'action par conjugaison (avec $e_V=1$), l'hypothse \ref{hypo sur le bord faible} soit toujours vŽrifiŽe.} de l'hypothse sur le bord \ref{hypo sur le bord}:

\begin{hypothese}\label{hypo sur le bord faible}
Pour tout $v\in \NF$, le bord $\bsfrX_{F,v}\smallsetminus \bsfrY_{F,v}$ est rŽunion (finie) de $F$-strates de $\NF$; i.e. pour tout $v'\in \bsfrX_{F,v}\smallsetminus \bsfrY_{F,v}$, 
on a $\bsfrY_{F,v'}\subset \bsfrX_{F,v}$. 
\end{hypothese}

Si toute $F$-strate de $\FN$ possde un point $F$-rationnel, les hypothses \ref{hypo sur le bord} et \ref{hypo sur le bord faible} sont Žquivalentes. 

\vskip2mm
\P\hskip1mm\textit{Descente sŽparable. ---} Soit $E/F$ une extension sŽparable (algŽbrique ou non) telle que $(E^{\mathrm{s\acute{e}p}})^{\mathrm{Aut}_F(E^{\mathrm{s\acute{e}p}})}=F$. 
Le lemme suivant est une consŽquence de \ref{BMRT(4.7)} et \ref{variante de BMRT(4.7)}. 

\begin{lemma}\label{descente sŽparable c™ne F-nilpotent}
$\NF= V(F)\cap \ES{N}_E$.
\end{lemma}

D'aprs \ref{BMRT(4.7)}, \ref{variante de BMRT(4.7)} et \ref{variante Hesselink rat}, on a aussi:

\begin{lemma}\label{descente sŽparable lames}
Soit $v\in \ES{N}_F$\footnote{L'hypothse \guill{$v\in \NF$} est indispensable ici: pour $v\in \FN$, la $\Gamma_F$-orbite $\Gamma_F(v)= \{\gamma(v)\,\vert \, \gamma \in \Gamma_F\}$ est uniformŽment 
$E$-instable et on a $\bs{\Lambda}_{F,v} = \check{X}_F(G)_{\mbb{Q}}\cap \bs{\Lambda}_{E,\Gamma_F(v)}$.}. 
\begin{enumerate}
\item[(i)]$\bs{\Lambda}_{F,v}= 
\check{X}_F(G)_\mbb{Q}  \cap \bs{\Lambda}_{E,v}$ et $\bs{q}_F(v)= \bs{q}_{E}(v)$.
\item[(ii)] ${_F\scrX_v}={_E\scrX_v}$ et ${_F\scrY_v}= {_E\scrY_v}$. 
\item[(iii)]$\scrX_{F,v}= V(F)\cap \scrX_{E,v}$ et $\scrY_{F,v}= V(F)\cap \scrY_{E,v}$.
\end{enumerate}
\end{lemma}

On en dŽduit la 
\begin{proposition}\label{descente sŽparable strates}
Pour $v\in \ES{N}_F$, on a  $\bsfrY_{F,v} = V(F) \cap \bsfrY_{E,v}$. 
\end{proposition}

\begin{proof}
L'inclusion $\bsfrY_{F,v}\;(=G(F)\cdot \scrY_{F,v}) \subset V(F) \cap \bsfrY_{E,v}$ est claire.  
Pour l'inclusion inverse, soit $v'\in  V(F) \cap \bsfrY_{E,v}\;(\subset \NF)$. 
Fixons des ŽlŽments $\mu \in \bs{\Lambda}_{F,v}$ et $\mu' \in \bs{\Lambda}_{F,v'}$. Puisque l'extension 
$E/F$ est sŽparable, $\mu$ appartient ˆ $\bs{\Lambda}_{E,v}$ et $\mu'$ appartient ˆ 
$\bs{\Lambda}_{E,v'}$ (\ref{BMRT(4.7)} et \ref{variante de BMRT(4.7)}). Puisque $\bsfrY_{E,v'}= G(E)\cdot \scrY_{E,v'}$, 
d'aprs \ref{HMrat1}, il existe un $y\in G(E)$ tel que $y\cdot v\in \scrY_{E,v'}$ et $y\bullet \mu=\mu'$. 
On a donc $$P_{\mu'} = y P_{\mu} y^{-1}\quad \hbox{et} \quad M_{\mu'} = y M_{\mu} y^{-1}$$ 
Comme les paires paraboliques $(P_\mu,M_\mu)$ et $(P_{\mu'},M_{\mu'})$ sont conjuguŽes dans $G(E)$ et dŽfinies sur $F$, elles sont conjuguŽes 
dans $G(F)$: il existe un $x\in G(F)$ tel que $$P_{\mu'}= xP_\mu x^{-1} \quad \hbox{et}\quad M_{\mu'}= xM_\mu x^{-1}\ptf$$
Posons $g=x^{-1}y\in G(E)$. On a donc 
$$gP_\mu g^{-1}= P_\mu \quad \hbox{et}\quad gM_\mu g^{-1} = M_\mu \ptf$$ 
Cela entra"ne que $g$ appartient ˆ $M_\mu(E)$. Par consŽquent $g\bullet \mu = \mu$ et $\mu' = x\bullet \mu$. 
Donc $\bs{\Lambda}_{F,v'}= x \bullet \bs{\Lambda}_{F,v}$ et $\bsfrY_{F,v'}= \bsfrY_{F,v}$, ce qui prouve l'inclusion $V(F)\cap \bsfrY_{E,v} \subset \bsfrY_{F,v}$. 
\end{proof}

\begin{remark}
\textup{
\begin{enumerate}
\item[(i)]Pour $v\in \ES{N}_F$, on a bien sžr l'inclusion $\bsfrX_{F,v} \subset V(F) \cap \bsfrX_{E,v}$ mais en gŽnŽral cette inclusion est stricte. 
PrŽcisŽment, on a l'ŽgalitŽ $$V(F) \cap \bsfrX_{E,v} = \bsfrY_{F,v} \cup \left((V(F) \cap \bsfrX_{E,v}) \smallsetminus  \bsfrY_{F,v}\right)$$ 
avec $$(V(F) \cap \bsfrX_{E,v}) \smallsetminus  \bsfrY_{F,v} = \bsfrX_{E,v} \cap \ES{N}_{F,< \bs{q}_F(v)}$$ 
et l'inclusion $\bsfrX_{F,v} \smallsetminus \bsfrY_{F,v} \subset \bsfrX_{E,v} \cap \ES{N}_{F,< \bs{q}_F(v)}$ est en gŽnŽral stricte. 
\item[(ii)]Supposons de plus que l'extension $E/F$ soit algŽbrique. On peut supposer $E\subset F^{\mathrm{s\acute{e}p}}$. 
Pour $v\in \ES{N}_F$, tout ŽlŽment $v'\in V(F)\cap \bsfrY_{E,v}$ dŽfinit de la manire suivante un ŽlŽment $$\bs{z}_v(v')\in 
\textrm{H}^1(\Gamma_F, P_{E,v})\ptf$$ On commence par Žcrire $v'= g\cdot v''$ avec $g\in G(E)$ et $v''\in \scrY_{E,v}$. Pour $\gamma\in \Gamma_F$, puisque 
$v' = \gamma(v') = \gamma(g)\cdot \gamma(v'')$, d'aprs \ref{l'application pi}\,(ii) il existe un (unique) ŽlŽment $p_\gamma\in P_{E,v}$ 
tel que $\gamma(g) = gp_\gamma$ et 
$\gamma(v'')= p_\gamma^{-1} \cdot v''$. L'application $\gamma \mapsto p_\gamma$ est un cocycle de $\Gamma_F$ ˆ valeurs dans $P_{E,v}$. 
Ce cocycle dŽfinit 
un ŽlŽment de $\textrm{H}^1(\Gamma_F, P_{E,v})$ qui ne dŽpend pas de la dŽcomposition $v'= g\cdot v''$ choisie; on le note $\bs{z}_v(v')$. 
On Žcrit simplement \guill{$\bs{z}_v(v')=0$} au lieu de \guill{le cocycle $\gamma \mapsto p_\gamma$ est un cobord}. 
La proposition \ref{descente sŽparable strates} dit prŽcisŽment que l'on a toujours $\bs{z}_v(v')=0$. 
\end{enumerate}
}\end{remark}

D'aprs ce qui prŽcde, l'application qui ˆ $v\in \NF$ associe $\scrY_{E,v}$, resp. $\bsfrY_{E,v}$, induit une bijection 
entre:
\begin{itemize}
\item les $F$-lames, resp. $F$-strates, de $\NF$;
\item les $E$-lames, resp. $E$-strates, de $\ES{N}_E$ qui possdent 
un point $F$-rationnel. 
\end{itemize} 
La bijection rŽciproque est donnŽe par l'application $Y\mapsto V(F)\cap Y$. 

Soit $(P_1,A_1)$ une $E$-paire parabolique minimale de $G$ telle que $A_0\subset A_1$ et $P_1\subset P_0$. On peut supposer que 
le tore $E$-dŽployŽ maximal $A_1$ de $G$ est dŽfini sur $F$. 
\`A l'aide de cette paire $(P_1,A_1)$, on dŽfinit comme plus haut le sous-ensemble 
$\bs{\Lambda}_{E,\mathrm{st}}=\bs{\Lambda}_{E,\mathrm{st}}^G(V,e_V)$ de $\check{X}_F(A_1)_{\mbb{Q}}$. 

\begin{lemma}\label{descente des co-caractres st}
On a les inclusions 
$$\bs{\Lambda}_{F,\mathrm{st}}^* \subset \check{X}(A_0)_{\mbb{Q}}\cap \bs{\Lambda}_{E,\mathrm{st}} \subset \bs{\Lambda}_{F,\mathrm{st}}\ptf$$ 
En particulier si $\bs{\Lambda}_{F,\mathrm{st}}= \bs{\Lambda}_{F,\mathrm{st}}^*$, i.e. si toute $F$-strate de $\FN$ possde un point $F$-rationnel, 
ces inclusions sont des ŽgalitŽs. 
\end{lemma}

\begin{proof}
Puisque $\check{X}_F(A_1)_{\mbb{Q}}= \check{X}(A_0)_{\mbb{Q}}$, 
l'inclusion $\bs{\Lambda}_{F,\mathrm{st}}^* \subset \check{X}(A_0)_{\mbb{Q}}\cap \bs{\Lambda}_{E,\mathrm{st}}$ est impliquŽe par \ref{descente sŽparable lames}\,(i). 
Quant ˆ l'autre inclusion, si $\mu\in  \check{X}(A_0)_{\mbb{Q}}\cap \bs{\Lambda}_{E,\mathrm{st}}$, alors $P_\mu\supset P_0$. 
La sous-variŽtŽ fermŽe $V_{\mu,1}\subset V$ est contenue dans $\FN$ et 
pour tout $v'$ dans la $E$-lame standard $\{v\in V_{\mu,1}\,\vert\, \bs{q}_E(v) = \| \mu \|\} \subset V_{\mu,1}$ de ${_E\ES{N}}$ associŽe ˆ $\mu$, 
puisque $\bs{q}_F(v')\geq \bs{q}_E(v')= \|\mu\|$, on a $\bs{q}_F(v')=\|\mu\|$. Par consŽquent $\mu$ appartient $\bs{\Lambda}_{F,\mathrm{st}}$.
\end{proof}

\begin{remark}\label{le cas de rang relatif constant}
\textup{Si le rang $F$-dŽployŽ de $G$ co\"{\i}ncide avec son rang $E$-dŽployŽ (i.e. si $A_1=A_0$), alors $\check{X}(A_0)_{\mbb{Q}}\cap \bs{\Lambda}_{E,\mathrm{st}}= \bs{\Lambda}_{E,\mathrm{st}}$. 
En ce cas l'inclusion $\bs{\Lambda}_{E,\mathrm{st}}\subset \bs{\Lambda}_{F,\mathrm{st}}$ fournit une injection naturelle entre les $E$-strates de ${_E\ES{N}}$ et les $F$-strates de $\FN$, qui est une bijection si $\bs{\Lambda}_{F,\mathrm{st}}=\bs{\Lambda}_{F,\mathrm{st}}^*$.
}
\end{remark}

\begin{notation}\label{notation allŽgŽe}
\textup{Pour allŽger l'Žcriture, on introduit les notations suivantes.  
Si $\scrY$ est une $F$-lame de $\NF$, on note $\scrX(\scrY)$ son $F$-saturŽ, \cad le sous-ensemble de $\NF$ dŽfini par $\scrX(\scrY)=\scrX_{F,v}$ pour un 
(i.e. pour tout $v\in \scrY_{F,v}$); et si $\bsfrY$ est une $F$-strate de $\NF$, on note 
$\bsfrX(\bsfrY)$ le sous-ensemble de $\NF$ dŽfini par $\bsfrX(\bsfrY)= G(F)\cdot \scrX(\scrY)$ pour une (i.e. pour toute) $F$-lame $\scrY$ de $\NF$ contenue dans $\bsfrY$.}

\textup{(On suppose toujours que $E/F$ est une extension sŽparable.) Si $\scrY$ est une $F$-lame de $\NF$, on note $\scrY_E$ la $E$-lame 
de $\ES{N}_E$ dŽfinie par $\scrY_E= \scrY_{E,v}$, pour un (i.e. pour tout) $v\in \scrY$; et si $\bsfrY$ est une $F$-strate de $\NF$, on note 
$\bsfrY_E$ la $E$-strate de $\ES{N}_E$ dŽfinie par $\bsfrY_E=\bsfrY_{E,v}$ pour un (i.e. pour tout) $v\in \bsfrY$. 
On dŽfinit $\scrX(\scrY_E)$ et $\bsfrX(\bsfrY_E)$ comme plus haut (en remplaant $F$ par $E$). On a donc les ŽgalitŽs
$$\scrY = V(F) \cap \scrY_E\vgq \scrX(\scrY)= V(F)\cap \scrX(\scrY_E) \quad \hbox{et}\quad \bsfrY= V(F) \cap \bsfrY_E\ptf$$}
\end{notation}

\subsection{Comparaison avec la stratification gŽomŽtrique}\label{comparaison avec les strates gŽo} 
Continuons avec les hypo\-thses de \ref{la variante de Hesselink} et \ref{la stratification de Hesselink}. 
\`A l'exception du lemme \ref{le cas o F-lame = lame gŽo}, tous les autre rŽsultats de cette sous-section supposent que l'hypothse \ref{hyp reg} ($e_V$ est rŽgulier dans $V$) est vŽrifiŽe.

Pour $v\in \FN$, on a l'inŽgalitŽ $\bs{q}(v) \leq \bs{q}_F(v)$ avec ŽgalitŽ si et seulement s'il existe un $\mu \in \bs{\Lambda}_{F,v}$ tel que $\| \mu \| = \bs{q}(v)$, i.e. si et seulement si 
l'intersection $\check{X}_F(G)_{\mbb{Q}}\cap \bs{\Lambda}_v$ est non vide. 
On en dŽduit un critre trs simple assurant que 
la $F$-lame ${_F\scrY_v}$ est ouverte dans son $F$-saturŽ ${_F\scrX_v}$:

\begin{lemma}\label{le cas o F-lame = lame gŽo}
Soit $v\in \FN$. Si $\check{X}_F(G)_{\mbb{Q}}\cap \bs{\Lambda}_v\neq \emptyset$ alors 
$$\bs{\Lambda}_{F,v}= \check{X}_F(G)_{\mbb{Q}}\cap \bs{\Lambda}_v\vgq {_F\scrX_v}= \scrX_v \quad \hbox{et}\quad {_F\scrY_v}=\scrY_v\ptf$$ 
Dans ce cas, la $F$-lame ${_F\scrY_v}$ est (Zariski-)ouverte dans son $F$-saturŽ ${_F\scrX_v}$.
\end{lemma}

\begin{proof}
Soit $\mu\in \check{X}_F(G)_{\mbb{Q}}\cap \bs{\Lambda}_v$. Puisque $\bs{\Lambda}_{F,v}$, resp. $\bs{\Lambda}_v$, est un espace principal homogne 
sous $U_\mu(F)$, resp. $U_\mu$, pour $\mu' = u \bullet \mu \in \bs{\Lambda}_v$ avec $u\in U_\mu$, on a 
$$\mu' \in \check{X}_F(G)_{\mbb{Q}} \Leftrightarrow u \in U_\mu(F)\Leftrightarrow  \mu'\in \bs{\Lambda}_{F,v}\,;$$ d'o l'ŽgalitŽ $\bs{\Lambda}_{F,v}= \check{X}_F(G)\cap \bs{\Lambda}_v$. 
D'autre part on a (par dŽfinition) 
$${_F\scrY_v}= \{v'\in \FN\,\vert \, \mu \in \bs{\Lambda}_{F,v'}\}\quad \hbox{et}\quad \scrY_v= \{v'\in \ESN\,\vert \, \mu \in \bs{\Lambda}_{v'}\}\ptf$$ 
Par consŽquent ${_F\scrY_v} = \FN \cap \scrY_v$. Comme d'autre part $$\scrY_v \subset \scrX_v = V_{\mu,1}={_F\scrX_v} \subset \FN\vg$$ on obtient que la $F$-strate ${_F\scrY_v}$ co\"{\i}ncide avec la $\overline{F}$-strate 
$\scrY_v$, laquelle est ouverte dans ${_F\scrX_v}= \scrX_v$. 
\end{proof}

\begin{remark}\label{densitŽ bonnes F-strates}
\textup{Supposons $F$ infini. Soit $v\in \FN$ tel que $\check{X}_F(G)_{\mbb{Q}}\cap \bs{\Lambda}_v\neq \emptyset$. 
\begin{enumerate}
\item[(i)]D'aprs \cite[ch.~V, 18.3]{B}, le groupe $G(F)$ est (Zariski-)dense dans $G$. Puisque ${_F\scrY_v}=\scrY_v$ est ouvert (dense) dans ${_F\scrX_v}= \scrX_v$, 
on en dŽduit que la $F$-strate ${_F\bsfrY_v}=G(F)\cdot {_F\scrY_v}$ de $\FN$ est dense dans 
$\bsfrX_v= G\cdot \scrX_v$.
\item[(ii)]Supposons de plus que l'hypothse \ref{hyp reg} soit v\'erifiŽe. Alors ${_F\scrX_v}\simeq_F \mbb{A}_{F}^n$ (d'aprs \ref{saturŽ espace affine}) et 
$\scrY_{F,v}$ est dense dans ${_F\scrX_v}$. 
Comme $G(F)$ est dense dans $G$ (loc.~cit.), on en dŽduit que la $F$-strate 
$\bsfrY_{F,v}= G(F)\cdot \scrY_{F,v}$ de $\NF$ est dense dans $\bsfrX_v = G\cdot \scrX_v$.
\end{enumerate}}
\end{remark}

Jusqu'ˆ la fin de \ref{comparaison avec les strates gŽo}, on suppose de plus de plus que l'hypothse \ref{hyp reg} est vŽrifiŽe: $e_V$ est rŽgulier dans $V$. Cela assure que les sous-ensembles $F$-saturŽs de $\FN$ --- et donc en particulier 
les variŽtŽs ${_F\scrX_v}$ avec $v\in \FN$ --- sont des $F$-variŽtŽs affine.

Pour $v\in \FN$, puisque les $\overline{F}$-strates de $\ES{N}={_{\smash{\overline{F}}}\ES{N}}$ sont localement fermŽes dans $V$, il en existe une 
et une seule $ \bsfrY_{v_1}$ (avec $v_1\in \ES{N}$ que l'on peut choisir dans ${_F\scrX_v}$) telle que l'intersection $\bsfrY_{v_1} \cap {_F\scrX_v}$ soit dense dans  la variŽtŽ irrŽductible 
${_F\scrX_v}$; on la note $$\bsfrY_{\mathrm{g\acute{e}o}}({_F\scrX_v})=\bsfrY_{v_1}\ptf$$ 
Le corps $F$ Žtant fixŽ dans toute cette section \ref{comparaison avec les strates gŽo}, pour allŽger l'Žcriture on la notera plus simplement\index{Ybsgeov@$\bsfrY_{\mathrm{g\acute{e}o}}(v)$} 
$$\bsfrY_{\mathrm{g\acute{e}o}}(v)=\bsfrY_{\mathrm{g\acute{e}o}}({_F\scrX_v})\ptf$$ 
Observons que l'intersection $\bsfrY_{\mathrm{g\acute{e}o}}(v) \cap {_F\scrX_v}$ est ouverte dans ${_F\scrX_v}$. 
On pose aussi\index{qgeov@$\bs{q}_{\mathrm{g\acute{e}o}}(v)$}\index{Xbsgeov@$\bsfrX_{\mathrm{g\acute{e}o}}(v)$} 
$$\bs{q}_{\mathrm{g\acute{e}o}}(v)=\bs{q}(v_1)\quad \hbox{et}\quad \bsfrX_{\mathrm{g\acute{e}o}}(v) = \bsfrX_{v_1}=G\cdot \scrX_{v_1}\ptf$$ On a donc 
$\bs{q}_{\mathrm{g\acute{e}o}}(v)=\bs{q}(v')$ et $\bsfrX_{\mathrm{g\acute{e}o}}(v)= \bsfrX_{v'}$ pour tout $v'\in \bsfrY_{\mathrm{g\acute{e}o}}(v)$. 
Puisque $\bsfrY_{\mathrm{g\acute{e}o}}(v)$ est ouvert (dense) dans la sous-variŽtŽ fermŽe irrŽductible $\bsfrX_{\mathrm{g\acute{e}o}}(v)$ de $V$, on a l'inclusion ${_F\scrX_v}\subset \bsfrX_{\mathrm{g\acute{e}o}}(v)$. 
On en dŽduit que $$\bsfrY_{\mathrm{g\acute{e}o}}(v)\cap {_F\scrX_v}= \{v'\in {_F\scrX_v}\,\vert\, \bs{q}(v')= \bs{q}_{\mathrm{g\acute{e}o}}(v)\}\ptf$$ 
Puisque $v\in \bsfrX_{\mathrm{g\acute{e}o}}(v)$, on a toujours l'inŽgalitŽ $$\bs{q}(v)\leq \bs{q}_{\mathrm{g\acute{e}o}}(v)$$ 
avec ŽgalitŽ si et seulement si $v\in \bsfrY_{\mathrm{g\acute{e}o}}(v)$.

\begin{lemma}\label{strate gŽo associŽe ˆ F-strate}
Soit $v\in \FN$. 
\begin{enumerate}
\item[(i)] Si $\check{X}_F(G)_{\mbb{Q}}\cap \bs{\Lambda}_v\neq \emptyset$, alors $\bsfrY_{\mathrm{g\acute{e}o}}(v)=\bsfrY_v$ et $\bsfrY_{\mathrm{g\acute{e}o}}(v) \cap {_F\scrX_v}= {_F\scrY_v}$. 
\item[(ii)] Si $\check{X}_F(G)_{\mbb{Q}}\cap \bs{\Lambda}_v=\emptyset$, alors $\bs{q}_{\mathrm{g\acute{e}o}}(v)< \bs{q}_F(v)$ et on distingue deux cas: ou bien $\bsfrY_{\mathrm{g\acute{e}o}}(v)\cap {_F\scrX_v}$ est contenu dans le bord 
${_F\scrX_v}\smallsetminus {_F\scrY_v}$; ou bien l'intersection $\bsfrY_{\mathrm{g\acute{e}o}}(v)\cap {_F\scrY_v}$ est non vide et 
pour tout $v'\in\bsfrY_{\mathrm{g\acute{e}o}}(v)\cap {_F\scrY_v}$, on a  
$$\bsfrY_{\mathrm{g\acute{e}o}}(v')= \bsfrY_{\mathrm{g\acute{e}o}}(v)\quad \hbox{et}\quad \check{X}_F(G)_{\mbb{Q}}\cap \bs{\Lambda}_{v'}= \emptyset\ptf$$
\end{enumerate}
\end{lemma}

\begin{proof}
\'Ecrivons $\bsfrY_{\mathrm{g\acute{e}o}}(v)=\bsfrY_{v_1}$ avec $v_1\in {_F\scrX_v}$. 

Supposons que l'intersection $\check{X}_F(G)_{\mbb{Q}}\cap \bs{\Lambda}_v$ soit non vide. Puisque ${_F\scrY_v}=\scrY_v$ est ouvert (dense) dans ${_F\scrX_v}$ (\ref{le cas o F-lame = lame gŽo}), 
on peut prendre $v_1=v$. On a donc $\bsfrY_{\mathrm{g\acute{e}o}}(v)= \bsfrY_v$. L'inclusion 
$\scrY_v={_F\scrY_v}\subset  \bsfrY_v \cap {_F\scrX_v}$ est claire. D'autre part si $v'\in \bsfrY_{v} \cap {_F\scrX_v}$, on a $$\bs{q}(v')\leq \bs{q}_F(v')\leq \bs{q}_F(v) =\bs{q}(v)\ptf$$ Or $\bs{q}(v')=\bs{q}(v)$ par consŽquent 
toutes les inŽgalitŽs ci-dessus sont des ŽgalitŽs et $v'\in {_F\scrY_v}$. D'o l'inclusion $\bsfrY_{v}\cap {_F\scrX_v}\subset {_F\scrY_v}$. Cela prouve (i).

Supposons maintenant que $\check{X}_F(G)_{\mbb{Q}}\cap \bs{\Lambda}_v=\emptyset$, i.e. $\bs{q}(v)<\bs{q}_F(v)$. On a donc \textit{a priori} les inŽgalitŽs 
$$\bs{q}(v) \leq \bs{q}(v_1) \leq \bs{q}_F(v_1) \leq \bs{q}_F(v)\ptf$$
Si l'intersection $\bsfrY_{v_1}\cap {_F\scrY_v}$ est vide, alors $v_1\in {_F\scrX_v}\smallsetminus {_F\scrY_v}$ par suite $\bs{q}_F(v_1)<\bs{q}_F(v)$ et donc $\bs{q}(v_1)<\bs{q}_F(v)$. Supposons 
qu'il existe un $v'\in \bsfrY_{v_1}\cap {_F\scrY_v}$. On a ${_F\scrY_{v'}}={_F\scrY_v}$ et ${_F\scrX_{v'}}={_F\scrX_v}$, par consŽquent 
$$\bsfrY_{\mathrm{g\acute{e}o}}(v')= \bsfrY_{v_1}\ptf$$  
Si $\bs{q}(v')=\bs{q}_F(v')$, alors $\bsfrY_{v'}= \bsfrY_{\mathrm{g\acute{e}o}}(v')$ et d'aprs (1) on a l'ŽgalitŽ $\bsfrY_{v'}\cap {_F\scrX_v}= {_F\scrY_v}$; en particulier 
$v$ appartient ˆ $\bsfrY_{v'}$, par consŽquent $\bs{q}(v)=\bs{q}(v')=\bs{q}_F(v')=\bs{q}_F(v)$ ce qui contredit l'hypothse $\bs{q}(v)<\bs{q}_F(v)$. On a donc 
$$\bs{q}(v_1)=\bs{q}(v')< \bs{q}_F(v')=\bs{q}_F(v)\ptf$$ Cela prouve (ii). 
\end{proof}

L'application qui ˆ $v\in \FN$ associe la $\overline{F}$-strate $\bsfrY_{\mathrm{g\acute{e}o}}(v)$ de $\ES{N}$ est constante sur les $F$-strates de $\FN$: on a 
$$\bsfrY_{\mathrm{g\acute{e}o}}(v')= \bsfrY_{\mathrm{g\acute{e}o}}(v)\quad \hbox{pour tout} \quad v'\in {_F\bsfrY_v}\ptf$$ 
L'application ${_F\bsfrY_v} \mapsto \bsfrY_{\mathrm{g\acute{e}o}}(v)$ n'est en gŽnŽral ni injective ni surjective. 

\begin{lemma}\label{bonnes F-strates}
Soit $v\in \ES{N}$ tel que $\check{X}_F(G)_{\mbb{Q}}\cap G\bullet \bs{\Lambda}_v\neq \emptyset$. Il existe une unique $F$-strate ${_F\bsfrY_{v_1}}$ de $\FN$ (avec $v_1\in \FN$) 
telle que $$\check{X}_F(G)_{\mbb{Q}}\cap \bs{\Lambda}_{v_1}\neq \emptyset\quad\hbox{et}\quad\bsfrY_{\mathrm{g\acute{e}o}}(v_1)=\bsfrY_v\ptf$$
\end{lemma}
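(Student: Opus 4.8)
The plan is to deduce the lemma from the comparison results \ref{le cas o� F-lame = lame g�o} and \ref{strate g�o associ�e � F-strate}(i), together with the Kempf--Rousseau rigidity and the conjugacy of $F$-parabolic pairs already used in the proof of \ref{descente s�parable strates}. For existence, I would first observe that the hypothesis $\check{X}_F(G)_\mbb{Q}\cap G\bullet\bs{\Lambda}_v\neq\emptyset$ is, because $\bs{\Lambda}_{g\cdot v}=g\bullet\bs{\Lambda}_v$, equivalent to the existence of $g\in G$ with $\check{X}_F(G)_\mbb{Q}\cap\bs{\Lambda}_{g\cdot v}\neq\emptyset$. Set $v_1=g\cdot v$ and pick $\mu$ in that intersection. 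Then $m_{v_1}(\mu)\geq 1$, so for a suitable $n\in\mbb{N}^*$ the genuine cocharacter $n\mu\in\check{X}_F(G)$ satisfies $\lim_{t\to 0}t^{n\mu}\cdot v_1=e_V$; hence $\{v_1\}$ is uniformly $F$-instable, i.e. $v_1\in\FN$. Now \ref{strate g�o associ�e � F-strate}(i) applies and gives $\bsfrY_{\mathrm{g\acute{e}o}}(v_1)=\bsfrY_{v_1}$; and since $\bs{\Lambda}_{v_1}=\bs{\Lambda}_{g\cdot v}=g\bullet\bs{\Lambda}_v$ one has $\scrY_{v_1}=g\cdot\scrY_v$, whence $\bsfrY_{v_1}=G\cdot\scrY_{v_1}=G\cdot\scrY_v=\bsfrY_v$. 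So $v_1$ fulfils both conditions and the $F$-strate ${_F\bsfrY_{v_1}}$ exists.

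For uniqueness, suppose $v_1,v_1'\in\FN$ both satisfy $\check{X}_F(G)_\mbb{Q}\cap\bs{\Lambda}_{v_1}\neq\emptyset$, $\check{X}_F(G)_\mbb{Q}\cap\bs{\Lambda}_{v_1'}\neq\emptyset$ and $\bsfrY_{\mathrm{g\acute{e}o}}(v_1)=\bsfrY_v=\bsfrY_{\mathrm{g\acute{e}o}}(v_1')$. By \ref{strate g�o associ�e � F-strate}(i) applied to each, $\bsfrY_{v_1}=\bsfrY_{\mathrm{g\acute{e}o}}(v_1)=\bsfrY_v=\bsfrY_{\mathrm{g\acute{e}o}}(v_1')=\bsfrY_{v_1'}$, so $v_1$ and $v_1'$ lie in one $\overline{F}$-strate. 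By \ref{le cas o� F-lame = lame g�o}, $\bs{\Lambda}_{F,v_1}=\check{X}_F(G)_\mbb{Q}\cap\bs{\Lambda}_{v_1}$ and likewise for $v_1'$; fix $\mu\in\bs{\Lambda}_{F,v_1}$ and $\mu'\in\bs{\Lambda}_{F,v_1'}$. Since $v_1'\in\bsfrY_{v_1}=G\cdot\scrY_{v_1}$, there is $y\in G$ with $y^{-1}\cdot v_1'\in\scrY_{v_1}$, so $y^{-1}\bullet\mu'\in\bs{\Lambda}_{v_1}$; as $\bs{\Lambda}_{v_1}$ is a single $U_\mu$-orbit (by \ref{Kempf1}(iii) via \ref{variante Hesselink rat}) there is $u\in U_\mu$ with $\mu'=(yu)\bullet\mu$. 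Writing $z=yu$, we get $(P_{\mu'},M_{\mu'})=(zP_\mu z^{-1},zM_\mu z^{-1})$, and both parabolic pairs are defined over $F$ because $\mu,\mu'\in\check{X}_F(G)_\mbb{Q}$; so, exactly as in the proof of \ref{descente s�parable strates}, they are conjugate by some $x\in G(F)$. Then $x^{-1}z$ normalizes $(P_\mu,M_\mu)$, hence lies in $M_\mu$, hence fixes $\mu$, so $\mu'=x\bullet\mu$ with $x\in G(F)$.

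It remains to read off ${_F\bsfrY_{v_1'}}={_F\bsfrY_{v_1}}$. From $\mu'=x\bullet\mu$ we get $\mu'\in\bs{\Lambda}_{F,v_1'}$ and $\mu'\in x\bullet\bs{\Lambda}_{F,v_1}=\bs{\Lambda}_{F,x\cdot v_1}$. By \ref{HMrat1}(iii) combined with \ref{variante Hesselink rat}, each of $\bs{\Lambda}_{F,v_1'}$ and $\bs{\Lambda}_{F,x\cdot v_1}$ is the single orbit of $U_{\mu'}(F)$ through $\mu'$, where $U_{\mu'}$ is the unipotent radical of $P_{\mu'}$ and depends only on $\mu'$; hence these two sets coincide, i.e. ${_F\scrY_{v_1'}}={_F\scrY_{x\cdot v_1}}=x\cdot{_F\scrY_{v_1}}$, and therefore ${_F\bsfrY_{v_1'}}=G(F)\cdot{_F\scrY_{v_1'}}=G(F)\cdot{_F\scrY_{v_1}}={_F\bsfrY_{v_1}}$. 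Every step is a direct appeal to an earlier statement; the single point deserving care — and which I regard as the main, if minor, obstacle — is the implication that parabolic pairs which are $\overline{F}$-conjugate and both defined over $F$ are already $G(F)$-conjugate, which I would invoke verbatim as in the proof of \ref{descente s�parable strates} rather than reprove.
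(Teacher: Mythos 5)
Your proof is correct and essentially the paper's: the existence step is the same (replace $v$ by $v_1=g\cdot v$ with $\check{X}_F(G)_{\mbb{Q}}\cap\bs{\Lambda}_{v_1}\neq\emptyset$, observe $v_1\in\FN$ and $\bsfrY_{\mathrm{g\acute{e}o}}(v_1)=\bsfrY_{v_1}=\bsfrY_v$), and your uniqueness step uses exactly the ingredient the paper compresses into the normalization to standard position, namely that two $F$-rational optimal virtual cocharacters in the same $G$-orbit are already $G(F)$-conjugate because the associated $F$-parabolic pairs $(P_\mu,M_\mu)$ and $(P_{\mu'},M_{\mu'})$ are $G(F)$-conjugate, after which the equality of the $F$-strates follows since each set $\bs{\Lambda}_{F,\cdot}$ is a single $U_{\mu'}(F)$-orbit. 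Invoking that conjugacy of $F$-parabolic pairs verbatim, as the paper itself does in its separable-descent argument for strata, is legitimate, so there is no gap.
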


\begin{proof}
Supposons $\check{X}_F(G)_{\mbb{Q}} \cap G\bullet \bs{\Lambda}_v\neq \emptyset$. Soit $g\in G$ tel que l'intersection $\check{X}_F(G)_{\mbb{Q}}\cap \bs{\Lambda}_{g\cdot v}$ 
soit non vide, et soit $\mu\in \check{X}_F(G)_{\mbb{Q}} \cap \bs{\Lambda}_{g\cdot v}$. Quitte ˆ remplacer $g$ par un ŽlŽment de $G(F)g$, on peut supposer que $P_\mu\supset P_0$. On peut aussi 
supposer que ${\rm Im}(\mu)\subset A_0$. Alors $$\check{X}(A_0)_{\mbb{Q}}\cap \bs{\Lambda}_{g\cdot v}= \{\mu\}= \bs{\Lambda}_{\mathrm{st}} \cap G\bullet \bs{\Lambda}_v\ptf$$ 
L'ŽlŽment $v_1=g\cdot v$ 
appartient ˆ $\FN$ et $ \bsfrY_{\mathrm{g\acute{e}o}}(v_1)=\bsfrY_{v_1}=\bsfrY_v$. Le co-caractre virtuel 
$\mu$ appartient ˆ $\bs{\Lambda}_{F,\mathrm{st}}$ et ${_F\bsfrY_{v_1}}$ est la $F$-strate de $\FN$ associŽe ˆ $\mu$.
\end{proof}

Observons que si le groupe $G$ est $F$-dŽployŽ, puisque $\bs{\Lambda}_{\mathrm{st}} \subset \check{X}(A_0)\subset \check{X}_F(G)$, on a 
$$\check{X}_F(G)_{\mbb{Q}} \cap G\bullet \bs{\Lambda}_v \neq \emptyset \quad \hbox{pour tout} \quad v\in \ES{N}\ptf$$

\begin{proposition}\label{mauvaises F-strates}
Supposons que $F$ soit infini et que $G$ soit $F$-dŽployŽ (e.g. si $F=F^{\mathrm{s\acute{e}p}}$). Pour $v\in \FN$ tel que $\check{X}_F(G)_{\mbb{Q}} \cap \bs{\Lambda}_v = \emptyset$, le sous-ensemble 
$$\{v'\in {_F\scrX_v}\,\vert \, \hbox{$\check{X}_F(G)_{\mbb{Q}}\cap \bs{\Lambda}_{v'}\neq \emptyset$ et $\bsfrY_{v'}=\bsfrY_{\mathrm{g\acute{e}o}}(v)$}\} \subset \bsfrY_{\mathrm{g\acute{e}o}}(v)\cap {_F\scrX_v} $$
est dense dans ${_F\scrX_v}$ (en particulier il est non vide) et contenu dans ${_F\scrX_v}\smallsetminus {_F\scrY_v}$. 
\end{proposition}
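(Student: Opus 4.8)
The plan is to identify the set $\ESS$ in the statement with the trace on ${_F\scrX_v}$ of a single distinguished $F$-strate, and then to prove density by replacing a geometric conjugator with an $F$-rational one. Write $X={_F\scrX_v}$ and $\bsfrY=\bsfrY_{\mathrm{g\acute{e}o}}(v)$; by construction $\bsfrY\cap X$ is a nonempty Zariski-open, hence dense, subset of the irreducible variety $X$, and by \ref{strate géo associée à F-strate}\,(ii) the hypothesis $\check{X}_F(G)_{\mbb{Q}}\cap\bs{\Lambda}_v=\emptyset$ gives $\bs{q}_{\mathrm{g\acute{e}o}}(v)<\bs{q}_F(v)$. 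Since $G$ is $F$-déployé, the remark following \ref{bonnes F-strates} shows $\check{X}_F(G)_{\mbb{Q}}\cap G\bullet\bs{\Lambda}_w\neq\emptyset$ for every $w\in\bsfrY\subset\ES{N}$, so \ref{bonnes F-strates} furnishes a \emph{unique} $F$-strate ${_F\bsfrY_{v_1}}$ of $\FN$ with $\check{X}_F(G)_{\mbb{Q}}\cap\bs{\Lambda}_{v_1}\neq\emptyset$ and $\bsfrY_{v_1}=\bsfrY_{\mathrm{g\acute{e}o}}(v_1)=\bsfrY$ (using \ref{strate géo associée à F-strate}\,(i) to write $\bsfrY_{\mathrm{g\acute{e}o}}(v_1)=\bsfrY_{v_1}$). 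Fix $\mu\in\bs{\Lambda}_{F,v}$ and $\mu_1\in\check{X}_F(G)_{\mbb{Q}}\cap\bs{\Lambda}_{v_1}$; then $X=V_{\mu,1}$ and, by \ref{le cas où F-lame = lame géo}, $\scrX_{v_1}={_F\scrX_{v_1}}=V_{\mu_1,1}$, the lame ${_F\scrY_{v_1}}=\scrY_{v_1}$ is Zariski-open in $\scrX_{v_1}$, and ${_F\bsfrY_{v_1}}=G(F)\cdot\scrY_{v_1}$.

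Next I would check that $\ESS={_F\bsfrY_{v_1}}\cap X$. If $v'\in{_F\bsfrY_{v_1}}\cap X$, write $v'=g\cdot w$ with $g\in G(F)$ and $w\in\scrY_{v_1}$; then $\bsfrY_{v'}=\bsfrY_w=\bsfrY_{v_1}=\bsfrY$ and $g\bullet\mu_1\in\check{X}_F(G)_{\mbb{Q}}\cap\bs{\Lambda}_{v'}$, so $v'\in\ESS$. Conversely, if $v'\in X$ satisfies $\bsfrY_{v'}=\bsfrY$ and $\check{X}_F(G)_{\mbb{Q}}\cap\bs{\Lambda}_{v'}\neq\emptyset$, then $\bsfrY_{\mathrm{g\acute{e}o}}(v')=\bsfrY_{v'}=\bsfrY$ by \ref{strate géo associée à F-strate}\,(i), so by the uniqueness in \ref{bonnes F-strates} the $F$-strate of $v'$ is ${_F\bsfrY_{v_1}}$, i.e. $v'\in{_F\bsfrY_{v_1}}\cap X$. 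This also yields the two inclusions of the statement: $\ESS\subset\bsfrY\cap X$ is now clear, and for $v'\in\ESS$ one has $\bs{q}_F(v')=\bs{q}(v')=\bs{q}_{\mathrm{g\acute{e}o}}(v)<\bs{q}_F(v)$ (the first equality because $\check{X}_F(G)_{\mbb{Q}}\cap\bs{\Lambda}_{v'}\neq\emptyset$, the second because $v'\in\bsfrY$, the inequality from the first paragraph), whence $v'\notin{_F\scrY_v}$ by \ref{lemme 1 F-lames et F-strates}\,(i).

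It then remains to show ${_F\bsfrY_{v_1}}\cap X$ is dense in $X$. Here $X\subset\bsfrX_{\mathrm{g\acute{e}o}}(v)=\bsfrX_{v_1}=G\cdot\scrX_{v_1}$, a closed irreducible $F$-variety. Since $\bsfrY_{v_1}$ is the disjoint union of its geometric lames $g\cdot\scrY_{v_1}$, each open in the translate $g\cdot\scrX_{v_1}=V_{g\bullet\mu_1,1}$, and $\bsfrY_{v_1}\cap X$ is dense in $X$, there is $g_0\in G$ with $g_0\cdot\scrY_{v_1}\cap X$ dense in $X$; density then forces $X\subset V_{g_0\bullet\mu_1,1}$, i.e. $g_0$ lies in the transporter $Z:=\{g\in G:g^{-1}\cdot X\subset\scrX_{v_1}\}$. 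The plan is to produce $g_1\in Z(F)$ for which $g_1\cdot\scrY_{v_1}\cap X$ is still dense in $X$: then $g_1\cdot\scrY_{v_1}\cap X\subset G(F)\cdot\scrY_{v_1}\cap X={_F\bsfrY_{v_1}}\cap X$ concludes. Now $Z$ is a nonempty closed subvariety of $G$, defined over $F$ because $X=V_{\mu,1}$ and $\scrX_{v_1}=V_{\mu_1,1}$ are (\ref{conséquence de V'=V}); as $P_\mu$ stabilizes $X$ and $P_{\mu_1}$ stabilizes $\scrX_{v_1}$, $Z$ is stable under $P_\mu$ on the left and $P_{\mu_1}$ on the right, hence is a finite union of $(P_\mu,P_{\mu_1})$-double cosets, each an $F$-rational $F$-variety since $G$ is $F$-déployé and $P_\mu,P_{\mu_1}$ are $F$-paraboliques; thus $Z(F)$ is Zariski-dense in $Z$. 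Finally $\{g\in Z:X\subset g\cdot(\scrX_{v_1}\smallsetminus\scrY_{v_1})\}$ is closed in $Z$ and avoids $g_0$, so its complement $Z^{\circ}$ is a nonempty open subset of $Z$; any $g_1\in Z^{\circ}(F)$ then does the job, and the proposition follows.

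The main obstacle is the last paragraph. Two delicate points intervene: that the lame-fibration of $\bsfrY_{v_1}$ restricts suitably to $X$, so that one genuinely finds a single conjugate $g_0\cdot\scrY_{v_1}$ meeting $X$ densely; and — the essential point — that this $g_0$ can be replaced by an $F$-rational element, which is exactly where the hypothesis ``$G$ est $F$-déployé'' is used, through the $F$-rationality of the $(P_\mu,P_{\mu_1})$-double cosets making up the transporter $Z$. One cannot conclude more cheaply from $G(F)$ being dense in $G$, since $\ESS={_F\bsfrY_{v_1}}\cap X$ need not be a Zariski-constructible subset of $V$.
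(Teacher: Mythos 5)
Your first two paragraphs are sound and match the paper's own reduction: using \ref{bonnes F-strates} (and the remark following it, valid because $G$ is $F$-split) you correctly identify the set of the statement with ${_F\bsfrY_{v_1}}\cap {_F\scrX_v}$ for the unique $F$-strate ${_F\bsfrY_{v_1}}$ attached to $\bsfrY_{\mathrm{g\acute{e}o}}(v)$, and the inclusion in ${_F\scrX_v}\smallsetminus{_F\scrY_v}$ via $\bs{q}_F(v')=\bs{q}(v')=\bs{q}_{\mathrm{g\acute{e}o}}(v)<\bs{q}_F(v)$ and \ref{lemme 1 F-lames et F-strates}\,(i) is exactly the paper's argument. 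The gap is in the density step, and it is fatal. Your pivotal claim --- that because $\bsfrY_{v_1}\cap X$ is dense in the irreducible $X={_F\scrX_v}$ and $\bsfrY_{v_1}$ is the disjoint union of the lames $g\cdot\scrY_{v_1}$, some single translate $g_0\cdot\scrY_{v_1}$ must meet $X$ densely --- is false: the union is indexed by $G/P_{\mu_1}$, not by a finite set, and a dense subset of an irreducible variety written as an infinite union of locally closed pieces need not contain a dense piece. The paper's own example, taken from \cite[5.6]{H1} ($G=\mathrm{SL}_3$ acting on $V=\mbb{A}^3$ through $(g_{ij})\mapsto(g_{ij}^p)$, $v=(x,y,0)$ with $xy\neq 0$, $xy^{-1}\notin F^p$, $F$ infinite non-perfect), satisfies all hypotheses of the proposition and refutes the claim: there ${_F\scrX_v}=\overline{F}\times\overline{F}\times\{0\}$ is two-dimensional, while every geometric satur\'e $g\cdot\scrX_{v_1}$ of a point of $\bsfrY_{\mathrm{g\acute{e}o}}(v)$ is a line. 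Hence no translate of a single lame can be dense in $X$, the forced inclusion $X\subset V_{g_0\bullet\mu_1,1}$ is impossible on dimension grounds, your transporter $Z=\{g\in G\,\vert\, g^{-1}\cdot X\subset \scrX_{v_1}\}$ is empty, and the double-coset rationalization of $g_0$ has nothing to act on. In that example the set of the statement is indeed dense in $X$, but only because it is an infinite union of punctured lines (one for each slope in $F^p$), not because it contains a dense subset of one lame.

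Note also that the mechanism you explicitly reject at the end is essentially the paper's proof: since $F$ is infinite, $G(F)$ is Zariski-dense in $G$, so ${_F\bsfrY_{v_1}}=G(F)\cdot\scrY_{v_1}$ is dense in $\bsfrX_{v_1}=G\cdot\scrX_{v_1}$ (the image of the dense subset $G(F)\times\scrY_{v_1}$ under the product morphism $G\times\scrX_{v_1}\rightarrow\bsfrX_{v_1}$), and the paper then intersects this with the open dense subset $\bsfrY_{v_1}\cap{_F\scrX_v}$ of ${_F\scrX_v}$ to get the density of ${_F\bsfrY_{v_1}}\cap{_F\scrX_v}$. Your worry about the non-constructibility of ${_F\bsfrY_{v_1}}\cap X$ is a legitimate caution about how that last intersection is handled, but it cannot be answered by reducing to a single lame; any repair must keep the density argument global in $G(F)$ (or argue pointwise on $F$-rational points of $\bsfrY_{v_1}\cap X$), which your proposal does not do.
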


\begin{proof}
Soit $v\in \FN$ tel que $\check{X}_F(G)_{\mbb{Q}}\cap \bs{\Lambda}_v = \emptyset$. D'aprs \ref{bonnes F-strates}, il existe une unique $F$-strate 
${_F\bsfrY_{v_1}}$ de $\FN$ telle que $\check{X}_F(G)_{\mbb{Q}} \cap \bs{\Lambda}_{v_1}\neq \emptyset$ et $\bsfrY_{\mathrm{g\acute{e}o}}(v_1)=\bsfrY_{\mathrm{g\acute{e}o}}(v)$. 
On a $\bsfrY_{\mathrm{g\acute{e}o}}(v_1)= \bsfrY_{v_1}$ et $\bs{q}_{\mathrm{g\acute{e}o}}(v)= \bs{q}(v_1)=\bs{q}_F(v_1)$. Puisque le corps $F$ est infini, 
la $F$-strate ${_F\bsfrY_{v_1}}= G(F)\cdot {_F\scrY_{v_1}}$ 
est dense dans la $\overline{F}$-strate $\bsfrY_{v_1}=G\cdot \scrY_{v_1}$ (\ref{densitŽ bonnes F-strates}\,(i)). Comme l'intersection $\bsfrY_{v_1}\cap {_F\scrX_v}$ est un ouvert (dense) de ${_F\scrX_v}$, on obtient 
que l'intersection ${_F\bsfrY_{v_1}}\cap {_F\scrX_v}$ est dense dans ${_F\scrX_v}$. Cette intersection co\"{\i}ncide avec le sous-ensemble de l'ŽnoncŽ (d'aprs la propriŽtŽ d'unicitŽ 
dans \ref{bonnes F-strates}). Observons que puisque l'ensemble ${_F\bsfrY_{v_1}}\cap {_F\scrX_v}$ est non vide, on peut prendre $v_1$ dans ${_F\scrX_v}$. D'aprs \ref{strate gŽo associŽe ˆ F-strate}\,(ii), on a 
$(\bs{q}_F(v_1)=)\bs{q}_{\mathrm{g\acute{e}o}}(v) < \bs{q}_F(v)$. D'autre part pour $v'\in {_F\bsfrY_{v_1}}\cap {_F\scrX_v}$, on a $\bs{q}_F(v')= \bs{q}_F(v_1)$; donc $\bs{q}_F(v')< \bs{q}_F(v)$ et $v'\in {_F\scrX_v} \smallsetminus {_F\scrY_v}$. 
\end{proof}

\begin{corollary}\label{deux critres bonnes F-lames}
(Sous les hypothses de \ref{mauvaises F-strates}.) Pour $v\in \FN$, les conditions suivantes sont Žquivalentes:
\begin{enumerate}
\item[(i)]$\check{X}_F(G)_{\mbb{Q}} \cap \bs{\Lambda}_v\neq \emptyset$;
\item[(ii)]la $F$-lame ${_F\scrY_v}$ est ouverte dans ${_F\scrX_v}$; 
\item[(iii)]la $F$-lame ${_F\scrY_v}$ est ${_FP_v}$-invariante\footnote{Rappelons que pour tout $v\in \FN$, on sait seulement que la $F$-lame ${_F\scrY_v}$ est $P_{F,v}$-invariante.}.
\end{enumerate}
\end{corollary}

\begin{proof}
Si $\check{X}_F(G)_{\mbb{Q}} \cap \bs{\Lambda}_v\neq \emptyset$ alors ${_F\scrY_v}=\scrY_v$; par consŽquent $(i)\Rightarrow (ii)$ et $(i)\Rightarrow (iii)$.
Si la $F$-lame ${_F\scrY_v}$ est ouverte dans ${_F\scrX_v}$, le bord ${_F\scrX_v}\smallsetminus {_F\scrY_v}$ est fermŽ dans ${_F\scrX_v}$ et 
il ne peut peut pas tre dense dans ${_F\scrX_v}$; par consŽquent $(ii)\Rightarrow (i)$ (d'aprs \ref{mauvaises F-strates}). 

Prouvons $(iii)\Rightarrow (ii)$. Choisissons un tore maximal $T'$ de $G$ (pas forcŽment dŽfini sur $F$) qui soit contenu dans ${_FP_v}\cap P_v$. 
Il existe un $p\in {_FP_v}$ tel que  $T=pT'p^{-1}$ soit un tore $F$-dŽployŽ maximal de ${_FP_v}$. Soit $v'=p'\cdot v$. On a $$P_{v'}= p'P_vp'^{-1}\supset p'T'p'^{-1}=T\quad \hbox{et}\quad \check{X}(T)_{\mbb{Q}}\cap \bs{\Lambda}_{v'}=\{\mu'\}\ptf$$ 
En particulier $v'$ appartient ˆ $\FN$ et $\check{X}_F(G)_{\mbb{Q}}\cap \bs{\Lambda}_{v'}\neq \emptyset$ ce qui assure que la $F$-lame ${_F\scrY_{v'}}$ est ouverte dans son 
$F$-saturŽ ${_F\scrX_{v'}}$. Si la $F$-lame ${_F\scrY_v}$ est ${_FP_v}$-invariante, alors $v'$ appartient ˆ ${_F\scrY_v}$, ${_F\scrY_{v'}}={_F\scrY_v}$ et ${_F\scrX_{v'}}={_F\scrX_v}$. 
Par consŽquent $(iii)\Rightarrow (ii)$. 
\end{proof}

\begin{corollary}\label{deux critres bonnes F-lames bis}
Pour $v\in \NF\;(=V(F)\cap \FN)$, la proposition \ref{mauvaises F-strates} et le corollaire \ref{deux critres bonnes F-lames} restent vrais sans hypothse sur $F$ ou $G$ (hormis l'hypothse \ref{hyp reg} que l'on suppose toujours vŽrifiŽe).
\end{corollary}

\begin{proof}
Pour $v\in \NF$,  on a (\ref{descente sŽparable lames})
$$\bs{\Lambda}_{F,v} = \check{X}_F(G)_{\mbb{Q}}\cap \bs{\Lambda}_{F^{\mathrm{s\acute{e}p}},v}\vgq {_F\scrX_v}= {_{F^{\mathrm{s\acute{e}p}}}\scrX_v}\vgq {_F\scrY_v}= {_{F^{\mathrm{s\acute{e}p}}}\scrY_v}\ptf$$ 
D'o le corollaire puisque $F^{\mathrm{s\acute{e}p}}$ est infini et que $G$ est $F^{\mathrm{s\acute{e}p}}$-dŽployŽ. 
\end{proof}

\begin{exemple}\label{l'exemple de [5.6]{H1}}
\textup{Reprenons l'exemple de \cite[5.6]{H1}. Supposons $p>1$.
\begin{enumerate}
\item[(i)] ConsidŽ\-rons l'action du groupe $G=\mathrm{SL}_{2}$ sur 
$V=\mbb{A}_F^2$ dŽfinie par le morphisme $G\rightarrow G,\, g \mapsto \rho(g)$ donnŽ par $\rho(g)_{i,j}= g_{i,j}^p$ (pour $g=(g_{i,j})_{1\leq i,j\leq 2}$); \cad que  
pour $v=(x,y)\in V$, on note $g\cdot v = (x'\!,y')\in V$ l'ŽlŽment dŽfini par $\rho(v)\left(\begin{array}{c}x\\y\end{array}\right)=\left(\begin{array}{c}x'\\y'\end{array}\right)$. 
Supposons $xy\neq 0$ et $xy^{-1}\notin F^p$. L'ŽlŽment $v$ n'est pas $F$-instable: il appartient ˆ $V(F)\smallsetminus \NF$. En revanche il appartient ˆ 
$\ES{N}_{F^{\mathrm{rad}}}$: pour $\alpha\in F^{\mathrm{rad}}$ tel que $\alpha^p = xy^{-1}$, le co-caractre $\lambda_\alpha\in \check{X}_{F[b]}(G)$ dŽfini par 
$t^{\lambda_\alpha} = \left(\begin{array}{cc} t^{-1} & \alpha(t-t^{-1}) \\ 0 & t\end{array}\right)$ vŽrifie $t^{\lambda_\alpha} \cdot v = t^pv$.
\item[(ii)] ConsidŽrons l'action du groupe $G=\mathrm{SL}_3$ sur $V=\mbb{A}_{F}^3$ dŽfinie comme en (i) par le morphisme 
$G \rightarrow G,\, (g_{i,j})\mapsto (g_{i,j}^p)$. L'ŽlŽment $v=(x,y,0)$ est dans $\NF$. Supposons $xy\neq 0$ et $xy^{-1}\notin F^p$. 
Le co-caractre $t\mapsto \mathrm{diag}(t,t,t^{-2})$ appartient ˆ $\Lambda_{F,v}^{\mathrm{opt}}$; d'autre part le co-caractre $t \mapsto \mathrm{diag}(t^{\lambda_\alpha},1)$ appartient 
ˆ $\Lambda_v^{\mathrm{opt}}$, o l'on a identifiŽ diagonalement $\mathrm{SL}_2$ ˆ $\mathrm{SL}_2\times \{1\}\subset \mathrm{SL}_3$. Dans ce cas on a $\check{X}_F(G)_{\mbb{Q}} \cap \bs{\Lambda}_v = \emptyset$. 
La $F$-lame ${_F\scrY_v}$ de $\FN$ est l'ensemble des $v'=(x'\!,y'\!,z')$ tels que $z'=0$, $x'y'\neq 0$ et $x'y'^{-1}\notin F^p$. Son $F$-saturŽ ${_F\scrX_v}$ est le sous-espace $\overline{F}\times \overline{F} \times\{0\}$ de $V$. 
La $\overline{F}$-lame $\scrY_v$ de $\ES{N}$ est l'ensemble des $v'=(x'\!,y'\!,z')$ tels que $z'=0$, $x'y'\neq 0$ et $yx'=xy'$. Son $\overline{F}$-saturŽ $\scrX_v$ est le sous-espace 
$\{(x',yx^{-1}x',0)\,\vert \, x'\in \overline{F}\}$ de $V$. D'autre part pour tout $v'\in {_F\scrX_v}\smallsetminus {_F\scrY_v}$ tel que $x'y'\neq 0$, on a $\check{X}_F(G)_{\mbb{Q}} \cap \bs{\Lambda}_{v'}\neq \emptyset$ 
et $\bsfrY_{\mathrm{g\acute{e}o}}(v)= \bsfrY_{v'}$.
\end{enumerate}}
\end{exemple}

\P\hskip1mm\textit{Une hypothse simplificatrice. ---} On a vu en 
\ref{la stratification de Hesselink} que les $F$-strates de $\FN$ qui possdent un point $F$-rationnel --- ou, ce qui revient au mme, les $F$-strates de $\NF$ --- 
se comportent bien par extension sŽparable (algŽbrique ou non) du corps de base. On ramne donc facilement la thŽorie 
au cas o $F=F^{\mathrm{s\acute{e}p}}$, ce qui entra"ne automatiquement que $G$ est $F$-dŽployŽ. Le passage de $F=F^{\mathrm{s\acute{e}p}}$ ˆ $\overline{F}=F^{\mathrm{rad}}$ 
est, comme on vient de le voir, nettement plus compliquŽ. 

Les difficultŽs proviennent des ŽlŽments $v\in\NF$ tels que 
$\check{X}_F(G)_{\mbb{Q}}\cap \bs{\Lambda}_v =\emptyset$. Pour $F$ quelconque, ces difficultŽs disparaissent si l'on fait l'hypothse 
suivante\footnote{On verra dans la section \ref{le cas de la variŽtŽ U} que dans le cas o $F$ est un corps localement compact ou un corps global et $V$ 
est le groupe $G$ lui-mme muni de l'action par conjugaison (avec $e_V=1$), l'hypothse \ref{hyp bonnes F-strates} est toujours vŽrifiŽe.}:

\begin{hypothese}\label{hyp bonnes F-strates}
Pour tout $v\in \NF$, on a $\check{X}_{F}(G)_{\mbb{Q}}\cap \bs{\Lambda}_v \neq \emptyset$. 
\end{hypothese}

ConsidŽrons aussi l'hypothse (plus forte) suivante:

\begin{hypothese}\label{hyp bonnes F-strates (forte)}
Pour tout $v\in \FN$, on a $\check{X}_{F}(G)_{\mbb{Q}}\cap \bs{\Lambda}_v \neq \emptyset$. 
\end{hypothese} 

Soit $\wt{F}\subset F^\mathrm{s\acute{e}p}$ une extension galoisienne finie de $F$ dŽployant $G$. Fixons une $\wt{F}$-paire parabolique minimale 
$(\wt{P}_0,\wt{A}_0)$ de $G$ telle que $A_0\subset \wt{A}_0$ et $\wt{P}_0\subset P_0$. On peut supposer que le tore $\wt{F}$-dŽployŽ maximal 
$\wt{A}_0$ de $G$ est dŽfini sur $F$. La paire $(\wt{P}_0,\wt{A}_0)$ dŽfinit un sous-ensemble 
$$\bs{\Lambda}_{\smash{\wt{F},\mathrm{st}}}=\bs{\Lambda}_{\wt{F},\mathrm{st}}^G(V,e_V)\subset \check{X}(\wt{A}_0)_{\mbb{Q}}$$ de co-caractres virtuels 
$\wt{F}$-optimaux qui sont en position standard (relativement ˆ $(\wt{P}_0,\wt{A}_0)$). Elle dŽfinit aussi un sous-ensemble 
$$\bs{\Lambda}_{\mathrm{st}}=\bs{\Lambda}_{\mathrm{st}}^G(V,e_V)\subset \check{X}(\wt{A}_0)_{\mbb{Q}}$$ de co-caractres virtuels 
$\overline{F}$-optimaux qui sont en position standard (relativement ˆ $(\wt{P}_0,\wt{A}_0)$). D'aprs \ref{bonnes F-strates}, on a l'inclusion 
$$\bs{\Lambda}_{\mathrm{st}}\subset \bs{\Lambda}_{\smash{\wt{F},\mathrm{st}}}$$ avec ŽgalitŽ si l'hypothse 
\ref{hyp bonnes F-strates (forte)} est vŽrifiŽe \textit{pour $\wt{F}$}, \cad si $\check{X}_{\smash{\wt{F}}}(G)_{\mbb{Q}}\cap \bs{\Lambda}_v\neq \emptyset$ pour tout $v\in {_{\smash{\wt{F}}}\ES{N}}$. 

\begin{lemma}\label{descente 1 sous HBFS (forte)} 
Si l'hypothse \ref{hyp bonnes F-strates (forte)} est vŽrifiŽe, on a l'ŽgalitŽ 
$$\check{X}(A_0)_{\mbb{Q}} \cap \bs{\Lambda}_{\mathrm{st}} = \check{X}(A_0)_{\mbb{Q}}\cap \bs{\Lambda}_{\smash{\wt{F},\mathrm{st}}}\ptf$$
\end{lemma}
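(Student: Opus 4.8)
Le plan est de traiter les deux inclusions s\'epar\'ement. L'inclusion $\check{X}(A_0)_{\mbb{Q}}\cap \bs{\Lambda}_{\mathrm{st}} \subset \check{X}(A_0)_{\mbb{Q}}\cap \bs{\Lambda}_{\smash{\wt{F},\mathrm{st}}}$ sera gratuite: elle r\'esulte de l'inclusion $\bs{\Lambda}_{\mathrm{st}}\subset \bs{\Lambda}_{\smash{\wt{F},\mathrm{st}}}$ d\'ej\`a rappel\'ee avant l'\'enonc\'e (cons\'equence de \ref{bonnes F-strates}). Toute la substance est donc dans l'inclusion r\'eciproque, que je prouverai en partant d'un $\mu \in \check{X}(A_0)_{\mbb{Q}}\cap \bs{\Lambda}_{\smash{\wt{F},\mathrm{st}}}$ et en montrant que $\mu$ est un co-caract\`ere virtuel $\overline{F}$-optimal en position standard relativement \`a $(\wt{P}_0,\wt{A}_0)$, \cad que $\mu\in\bs{\Lambda}_{\mathrm{st}}$.

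La premi\`ere \'etape sera d'appliquer \ref{descente des co-caract�res st} \`a l'extension s\'eparable (alg\'ebrique) $\wt{F}/F$ --- ce qui est licite puisque $\wt{F}/F$ est galoisienne finie --- afin d'obtenir $\mu \in \bs{\Lambda}_{F,\mathrm{st}}$; en d\'epliant la d\'efinition de $\bs{\Lambda}_{F,\mathrm{st}}$, cela fournit un \'el\'ement $v\in \FN$ tel que $\mu \in \bs{\Lambda}_{F,v}$, ainsi que les inclusions $P_0\subset P_\mu$ et $A_0\subset M_\mu$. C'est pr\'ecis\'ement \`a ce stade que l'on dispose d'un authentique \'el\'ement $F$-instable $v$, ce qui rend applicable l'hypoth\`ese \ref{hyp bonnes F-strates (forte)} (qui ne porte que sur le corps $F$): c'est, \`a mon sens, le point-cl\'e --- et le seul v\'eritable obstacle --- de la d\'emonstration, car l'hypoth\`ese ne s'applique pas directement \`a un \'el\'ement seulement $\wt{F}$-instable. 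Appliquant alors cette hypoth\`ese \`a $v$, on a $\check{X}_F(G)_{\mbb{Q}}\cap \bs{\Lambda}_v\neq \emptyset$, d'o\`u par \ref{le cas o� F-lame = lame g�o} l'\'egalit\'e $\bs{\Lambda}_{F,v}= \check{X}_F(G)_{\mbb{Q}}\cap \bs{\Lambda}_v$; en particulier $\mu \in \bs{\Lambda}_v= \bs{\Lambda}_{\smash{\overline{F}},v}$ avec $v\in \FN\subset \ES{N}$, donc $\mu$ est $\overline{F}$-optimal.

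Il restera \`a v\'erifier que $\mu$ est en position standard relativement \`a $(\wt{P}_0,\wt{A}_0)$. Pour cela, $\wt{P}_0\subset P_0\subset P_\mu$ r\'esulte du choix de $(\wt{P}_0,\wt{A}_0)$ et de l'\'etape pr\'ec\'edente, tandis que $\mathrm{Im}(\mu)\subset A_0\subset \wt{A}_0$ entra\^{\i}ne que le tore $\wt{A}_0$ centralise $\mathrm{Im}(\mu)$, donc $\wt{A}_0\subset M_\mu$; on conclut que $\mu\in \bs{\Lambda}_{\mathrm{st}}$, et comme par ailleurs $\mu\in \check{X}(A_0)_{\mbb{Q}}$, cela ach\`evera l'argument. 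Je ne pr\'evois pas d'autre difficult\'e que le suivi attentif des deux conventions de position standard (relativement \`a $(P_0,A_0)$ ou \`a $(\wt{P}_0,\wt{A}_0)$) et de la distinction entre l'hypoth\`ese \ref{hyp bonnes F-strates (forte)} formul\'ee pour $F$ et celle, plus forte, qu'on aurait pu formuler pour $\wt{F}$.
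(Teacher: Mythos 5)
Votre preuve est correcte et suit essentiellement la m\^eme d\'emarche que celle du texte pour l'inclusion non triviale: on ram\`ene d'abord $\mu$ \`a la $F$-optimalit\'e pour un $v\in\FN$ (vous le faites en invoquant le lemme de descente des co-caract\`eres optimaux en position standard, dont la preuve est pr\'ecis\'ement l'argument que le texte refait en ligne via $V_{\mu,1}\subset\FN$ et $\mu\in\bs{\Lambda}_{F,v}$), puis l'hypoth\`ese \ref{hyp bonnes F-strates (forte)} donne $\check{X}_F(G)_{\mbb{Q}}\cap\bs{\Lambda}_v\neq\emptyset$, d'o\`u $\bs{q}(v)=\bs{q}_F(v)$ et $\mu\in\bs{\Lambda}_v$. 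Notez seulement que votre derni\`ere \'etape (position standard relativement \`a $(\wt{P}_0,\wt{A}_0)$) est automatique, puisque cette condition ne d\'epend que de la paire $(P_\mu,M_\mu)$ et fait d\'ej\`a partie de l'hypoth\`ese $\mu\in\bs{\Lambda}_{\smash{\wt{F},\mathrm{st}}}$.
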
 

\begin{proof}
On a clairement l'inclusion $\check{X}(A_0)_{\mbb{Q}} \cap \bs{\Lambda}_{\mathrm{st}} \subset \check{X}(A_0)_{\mbb{Q}}\cap \bs{\Lambda}_{\smash{\wt{F},\mathrm{st}}}$. 
Soit $\mu\in \check{X}(A_0)_{\mbb{Q}}\cap \bs{\Lambda}_{\smash{\wt{F},\mathrm{st}}}$ et soit $v\in V_{\mu,1}$ tel que $\mu \in \bs{\Lambda}_{\smash{\wt{F},v}}$. On a donc 
$\check{X}(\wt{A}_0)_{\mbb{Q}}\cap \bs{\Lambda}_{\smash{\wt{F},v}}=\{\mu\}$ et $V_{\mu,1}= {_{\smash{\wt{F}}}\scrX_v}$. Puisque par hypothse le co-caractre virtuel $\mu$ est dŽfini sur $F$, 
la variŽtŽ $V_{\mu,1}$ est contenue dans $\FN$ et $\mu$ appartient ˆ $\bs{\Lambda}_{F,v}$ (i.e. $\check{X}(A_0)_{\mbb{Q}}\cap \bs{\Lambda}_{F,v}=\{\mu\}$). 
Si l'hypothse \ref{hyp bonnes F-strates (forte)} est vŽrifiŽe, alors $\bs{q}(v)= \bs{q}_F(v)$ et $\mu$ appartient ˆ $\bs{\Lambda}_v$. 
\end{proof}

\begin{lemma}\label{descente 2 sous HBFS (forte)} 
On a l'inclusion $$\check{X}(A_0)_{\mbb{Q}}\cap \bs{\Lambda}_{\mathrm{st}} \subset \bs{\Lambda}_{F,\mathrm{st}}$$ 
avec ŽgalitŽ si $F$ est infini et si l'hypothse \ref{hyp bonnes F-strates (forte)} est 
vŽrifiŽe (on suppose toujours que l'hypothse \ref{hyp reg} est vŽrifiŽe); auquel cas toute $F$-strate de $\FN$ possde un point $F$-rationnel et 
l'application $Y \mapsto \FN \cap Y$ induit une bijection entre les $\overline{F}$-strates de $\ES{N}$ qui intersectent non trivialement $\FN$ et les $F$-strates de $\FN$. 
\end{lemma}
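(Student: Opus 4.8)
My plan is to prove the inclusion $\check{X}(A_0)_{\mbb{Q}}\cap\bs{\Lambda}_{\mathrm{st}}\subset\bs{\Lambda}_{F,\mathrm{st}}$ unconditionally, then upgrade it to an equality under the extra hypotheses by feeding it through the two descent lemmas \ref{descente des co-caract�res st} and \ref{descente 1 sous HBFS (forte)}. For the inclusion, take $\mu\in\check{X}(A_0)_{\mbb{Q}}\cap\bs{\Lambda}_{\mathrm{st}}$; by definition of $\bs{\Lambda}_{\mathrm{st}}$ there is a $v\in\ES{N}$ with $\mu\in\bs{\Lambda}_v$, $\wt{P}_0\subset P_\mu$ and $\wt{A}_0\subset M_\mu$. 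Since $A_0$ is $F$-split, $\mu$ is $F$-rational, so a suitable multiple $n\mu$ lies in $\check{X}_F(G)$ with $\lim_{t\to0}t^{n\mu}\cdot v=e_V$; hence $v\in\FN$ and $\check{X}_F(G)_{\mbb{Q}}\cap\bs{\Lambda}_v\neq\emptyset$. Lemma \ref{le cas o� F-lame = lame g�o} then gives $\bs{\Lambda}_{F,v}=\check{X}_F(G)_{\mbb{Q}}\cap\bs{\Lambda}_v\ni\mu$, so $\mu$ is a normalized $(F,v)$-optimal virtual cocharacter. It remains to see that $\mu$ is in standard position relative to $(P_0,A_0)$: one has $A_0\subset\wt{A}_0\subset M_\mu$, and since $\wt{P}_0\subset P_0$ are parabolic subgroups one has $U_{P_0}\subset\wt{P}_0\subset P_\mu$, while $Z_G(A_0)\subset M_\mu\subset P_\mu$, so $P_0=Z_G(A_0)\cdot U_{P_0}\subset P_\mu$. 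Thus $\mu\in\bs{\Lambda}_{F,\mathrm{st}}$.

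For the equality, I would apply \ref{descente des co-caract�res st} to the finite separable extension $E=\wt{F}$ (taking its pair $(P_1,A_1)$ to be $(\wt{P}_0,\wt{A}_0)$), giving $\bs{\Lambda}_{F,\mathrm{st}}^{*}\subset\check{X}(A_0)_{\mbb{Q}}\cap\bs{\Lambda}_{\wt{F},\mathrm{st}}\subset\bs{\Lambda}_{F,\mathrm{st}}$, and combine it with \ref{descente 1 sous HBFS (forte)}, which under \ref{hyp bonnes F-strates (forte)} identifies the middle term with $\check{X}(A_0)_{\mbb{Q}}\cap\bs{\Lambda}_{\mathrm{st}}$; this yields $\bs{\Lambda}_{F,\mathrm{st}}^{*}\subset\check{X}(A_0)_{\mbb{Q}}\cap\bs{\Lambda}_{\mathrm{st}}\subset\bs{\Lambda}_{F,\mathrm{st}}$. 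It then suffices to prove $\bs{\Lambda}_{F,\mathrm{st}}^{*}=\bs{\Lambda}_{F,\mathrm{st}}$, i.e. that every $F$-lame of $\FN$ possesses an $F$-rational point. For this, given $v\in\FN$, hypothesis \ref{hyp bonnes F-strates (forte)} yields $\check{X}_F(G)_{\mbb{Q}}\cap\bs{\Lambda}_v\neq\emptyset$, hence by \ref{le cas o� F-lame = lame g�o} the $F$-lame ${_F\scrY_v}$ is Zariski-open in its $F$-satur� ${_F\scrX_v}$; since $F$ is infinite and \ref{hyp reg} holds, \ref{il existe un point F-rationnel} then provides a point of $\scrY_{F,v}$. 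Therefore all three inclusions are equalities.

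Finally, for the bijection: under the same hypotheses $\bs{\Lambda}_{F,v}\subset\bs{\Lambda}_v$ for every $v\in\FN$, so ${_F\bsfrY_v}\subset\bsfrY_v$, and I would show $\FN\cap\bsfrY_v={_F\bsfrY_v}$ by rerunning the argument in the proof of \ref{descente s�parable strates}: given $v'\in\FN\cap\bsfrY_v$, pick $\mu\in\bs{\Lambda}_{F,v}$ and $\mu'\in\bs{\Lambda}_{F,v'}$; these are $G$-conjugate (being normalized $\overline{F}$-optimal cocharacters of the same $\overline{F}$-strate) and both $F$-rational, so the $G(F)$-conjugacy of $\overline{F}$-conjugate $F$-rational parabolic pairs gives an $x\in G(F)$ with $\mu'=x\bullet\mu$, whence $v'\in x\cdot{_F\scrY_v}\subset{_F\bsfrY_v}$. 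As the reverse inclusion is clear, $\bsfrY\mapsto\FN\cap\bsfrY$ maps the $\overline{F}$-strates of $\ES{N}$ meeting $\FN$ onto the $F$-strates of $\FN$, surjectively (because ${_F\bsfrY_v}=\FN\cap\bsfrY_v$) and injectively (the $\overline{F}$-strate through a point being unique). I expect the main obstacle to be the first step — transporting the standard-position condition from $(\wt{P}_0,\wt{A}_0)$ to $(P_0,A_0)$ and simultaneously certifying, via \ref{le cas o� F-lame = lame g�o}, that the $\overline{F}$-optimal $\mu$ is also $(F,v)$-optimal — the rest being an assembly of results already in hand.
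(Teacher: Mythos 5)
Your proof is correct, and its skeleton is the paper's: under \ref{hyp bonnes F-strates (forte)} (joined to \ref{hyp reg} and $F$ infinite) you show, via the lemma identifying $\bs{\Lambda}_{F,v}$ with $\check{X}_F(G)_{\mbb{Q}}\cap\bs{\Lambda}_v$ when this intersection is nonempty, followed by \ref{il existe un point F-rationnel}, that every $F$-lame of $\FN$ has an $F$-rational point (i.e. $\bs{\Lambda}_{F,\mathrm{st}}=\bs{\Lambda}_{F,\mathrm{st}}^*$), and you then assemble the equality from the separable-descent lemma for standard cocharacters (applied to $E=\wt{F}$, $(P_1,A_1)=(\wt{P}_0,\wt{A}_0)$) together with \ref{descente 1 sous HBFS (forte)}, exactly as the paper does. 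You deviate only in two local points, both valid. For the unconditional inclusion $\check{X}(A_0)_{\mbb{Q}}\cap\bs{\Lambda}_{\mathrm{st}}\subset\bs{\Lambda}_{F,\mathrm{st}}$ you argue directly: $A_0$ being $F$-split makes $\mu$ $F$-rational, hence $v\in\FN$ and $\mu\in\bs{\Lambda}_{F,v}$, and standard position passes from $(\wt{P}_0,\wt{A}_0)$ to $(P_0,A_0)$ because $P_0$ is generated by the centralizer of $A_0$ (which centralizes $\mathrm{Im}(\mu)\subset A_0$, so lies in $M_\mu$) and by $U_{P_0}\subset\wt{P}_0\subset P_\mu$; the paper instead chains $\bs{\Lambda}_{\mathrm{st}}\subset\bs{\Lambda}_{\smash{\wt{F}},\mathrm{st}}$ with the same descent lemma, so your version is a touch more self-contained but proves nothing new. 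For the final bijection you rerun the conjugacy argument of the separable-descent proposition for strates (two $F$-rational parabolic pairs that are conjugate over $\overline{F}$ are conjugate under $G(F)$, giving $\mu'=x\bullet\mu$ with $x\in G(F)$), whereas the paper reduces $v$ and $v'$ to standard position and invokes the uniqueness of the standard-position element of $\bs{\Lambda}_{\mathrm{st}}$ in a given $G$-orbit; both rest on the same Borel--Tits conjugacy fact and are interchangeable, yours avoiding the (easy) uniqueness verification at the cost of repeating an argument already written elsewhere in the paper.
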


\begin{proof}
Puisque $\bs{\Lambda}_{\mathrm{st}}\subset \bs{\Lambda}_{\smash{\wt{F}},\mathrm{st}}$, on a $\check{X}(A_0)_{\mbb{Q}}\cap \bs{\Lambda}_{\mathrm{st}} \subset \check{X}(A_0)_{\mbb{Q}}\cap \bs{\Lambda}_{\smash{\wt{F},\mathrm{st}}}$. 
D'autre part puisque 
l'extension $\wt{F}/F$ est sŽparable, on a $\check{X}(A_0)_{\mbb{Q}}\cap \bs{\Lambda}_{\smash{\wt{F},\mathrm{st}}}\subset \bs{\Lambda}_{F,\mathrm{st}}$ avec ŽgalitŽ si toute $F$-strate de 
$\FN$ possde un point $F$-rationnel (\ref{descente des co-caractres st}). D'o l'inclusion 
$\check{X}(A_0)_{\mbb{Q}}\cap \bs{\Lambda}_{\mathrm{st}} \subset \bs{\Lambda}_{F,\mathrm{st}}$.

Supposons que $F$ soit infini et que l'hypothse \ref{hyp bonnes F-strates (forte)} soit vŽrifiŽe. 
Jointes ˆ \ref{hyp reg}, ces hypothses assurent que toute $F$-strate de $\FN$ possde un point $F$-rationnel (cf.  \ref{il existe un point F-rationnel}). 
D'o l'ŽgalitŽ $\bs{\Lambda}_{F,\mathrm{st}}= \check{X}(A_0)_{\mbb{Q}}\cap \bs{\Lambda}_{\smash{\wt{F},\mathrm{st}}}$; puis l'ŽgalitŽ 
$\check{X}(A_0)_{\mbb{Q}} \cap \bs{\Lambda}_{\mathrm{st}} = \bs{\Lambda}_{F,\mathrm{st}} $
(gr‰ce ˆ \ref{descente 1 sous HBFS (forte)}). Quant ˆ la dernire assertion du lemme, soit $v'\in \bsfrY_v \cap \FN$ pour un ŽlŽment $v\in \FN$. Soient 
$\mu$ et $\mu'$ les ŽlŽments de $\bs{\Lambda}_{F,\mathrm{st}}$ associŽs respectivement ˆ $v$ et $v'$. Quitte ˆ remplacer $v$ par $g\cdot v$ et $v'$ par $g'\cdot v'$ 
pour des ŽlŽments $g,\,g'\in G(F)$, on peut supposer que ${_FP_v}\supset P_0$ et ${_FP_{v'}}\supset P_0$. 
Posons $\check{X}(A_0)_{\mbb{Q}}\cap \bs{\Lambda}_{F,v}= \{\mu\}$ et $\check{X}(A_0)_{\mbb{Q}}\cap \bs{\Lambda}_{F,v'}= \{\mu'\}$. Alors $\mu\in \bs{\Lambda}_v$ et $\mu'\in \bs{\Lambda}_{v'}$. 
Puisque $\mu$ et $\mu'$ appartiennent ˆ $\bs{\Lambda}_{\mathrm{st}}$ et que $v$ et $v'$ sont dans la mme $\overline{F}$-strate de $\ES{N}$, 
on a forcŽment $\mu=\mu'$. Par consŽquent $v$ et $v'$ sont dans la mme $F$-strate de $\FN$. 
\end{proof}

\begin{lemma}\label{descente 3 sous HBFS}
Si l'hypothse \ref{hyp bonnes F-strates} est vŽrifiŽe, on a l'inclusion $$\bs{\Lambda}_{F,\mathrm{st}}^*\subset \check{X}(A_0)_{\mbb{Q}}\cap \bs{\Lambda}_{\mathrm{st}}$$ et  
l'application $Y\mapsto \NF \cap Y$ induit une bijection entre les $\overline{F}$-strates de $\ES{N}$ qui intersectent non trivialement $\NF$ et les $F$-strates de $\NF$.
\end{lemma}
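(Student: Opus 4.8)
Le plan est de ramener l'\'enonc\'e \`a l'alternative du lemme \ref{le cas o� F-lame = lame g�o}. Sous l'hypoth\`ese \ref{hyp bonnes F-strates}, tout $v\in\NF$ v\'erifie $\check{X}_F(G)_{\mbb{Q}}\cap\bs{\Lambda}_v\neq\emptyset$, donc ce lemme fournit $\bs{\Lambda}_{F,v}=\check{X}_F(G)_{\mbb{Q}}\cap\bs{\Lambda}_v$ --- en particulier $\bs{\Lambda}_{F,v}\subset\bs{\Lambda}_v$, de sorte que $v$ appartient \`a la $\overline{F}$-strate $\bsfrY_v$ --- ainsi que ${_F\scrX_v}=\scrX_v$ et ${_F\scrY_v}=\scrY_v$. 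C'est cette co\"incidence entre objets rationnels et g\'eom\'etriques qui portera toute la d\'emonstration.

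Pour l'inclusion $\bs{\Lambda}_{F,\mathrm{st}}^*\subset\check{X}(A_0)\cap\bs{\Lambda}_{\mathrm{st}}$, je partirais d'un $\mu\in\bs{\Lambda}_{F,\mathrm{st}}^*$ et d'un \'el\'ement t\'emoin $v\in\NF$, \cad tel que $\mu\in\bs{\Lambda}_{F,v}$. D'apr\`es ce qui pr\'ec\`ede $\mu\in\bs{\Lambda}_{F,v}\subset\bs{\Lambda}_v$, donc $\mu$ est $\overline{F}$-optimal. Comme $\mu\in\bs{\Lambda}_{F,\mathrm{st}}\subset\check{X}(A_0)_{\mbb{Q}}$, on a $\wt{P}_0\subset P_0\subset P_\mu$; et puisqu'un multiple entier de $\mu$ a son image dans le tore $A_0\subset\wt{A}_0$, le tore (ab\'elien) $\wt{A}_0$ centralise $\mu$, d'o\`u $\wt{A}_0\subset M_\mu$. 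Le co-caract\`ere virtuel $\mu$ est donc en position standard relativement \`a $(\wt{P}_0,\wt{A}_0)$ et, \'etant $\overline{F}$-optimal, il appartient \`a $\bs{\Lambda}_{\mathrm{st}}$; comme $\mu\in\check{X}(A_0)_{\mbb{Q}}$, cela donne l'inclusion annonc\'ee.

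Pour la bijection, l'\'etape centrale sera d'\'etablir que pour tout $v\in\NF$ on a l'\'egalit\'e $\bsfrY_{F,v}=\NF\cap\bsfrY_v$, o\`u $\bsfrY_v$ d\'esigne la $\overline{F}$-strate de $v$ dans $\ES{N}$. L'inclusion $\bsfrY_{F,v}\subset\NF\cap\bsfrY_v$ est imm\'ediate, car ${_F\scrY_v}=\scrY_v$ entra\^ine ${_F\bsfrY_v}=G(F)\cdot\scrY_v\subset G\cdot\scrY_v=\bsfrY_v$. Pour l'inclusion inverse, je prendrais $v'\in\NF\cap\bsfrY_v$; l'hypoth\`ese \ref{hyp bonnes F-strates} appliqu\'ee \`a $v'$ et le lemme \ref{le cas o� F-lame = lame g�o} fournissent $\mu'\in\bs{\Lambda}_{F,v'}=\check{X}_F(G)_{\mbb{Q}}\cap\bs{\Lambda}_{v'}$, et je fixerais $\mu\in\bs{\Lambda}_{F,v}\subset\bs{\Lambda}_v$. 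Comme $v'\in\bsfrY_v$, on a $\bs{\Lambda}_{v'}=g\bullet\bs{\Lambda}_v$ pour un $g\in G$, et comme $\bs{\Lambda}_v$ est un espace principal homog\`ene sous $U_\mu$, on peut \'ecrire $\mu'=h\bullet\mu$ avec $h\in G$. Alors $P_{\mu'}=hP_\mu h^{-1}$ et $M_{\mu'}=hM_\mu h^{-1}$; $\mu$ et $\mu'$ \'etant d\'efinis sur $F$, ces paires paraboliques sont des $F$-paires conjugu\'ees dans $G(\overline{F})$, donc conjugu\'ees dans $G(F)$ (exactement comme dans la preuve de \ref{descente s�parable strates}): il existe $x\in G(F)$ avec $P_{\mu'}=xP_\mu x^{-1}$ et $M_{\mu'}=xM_\mu x^{-1}$, et alors $g_0=x^{-1}h$ normalise $(P_\mu,M_\mu)$, donc $g_0\in M_\mu$, d'o\`u $\mu'=x\bullet\mu$ avec $x\in G(F)$. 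On conclut que $\bs{\Lambda}_{F,v'}$ et $\bs{\Lambda}_{F,x\cdot v}=x\bullet\bs{\Lambda}_{F,v}$ sont deux espaces principaux homog\`enes sous $U_{\mu'}(F)$ contenant $\mu'$, donc \'egaux: $v'\in\bsfrY_{F,v}$.

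L'\'egalit\'e $\bsfrY_{F,v}=\NF\cap\bsfrY_v$ acquise, la bijection suit formellement. Si $Y$ est une $\overline{F}$-strate de $\ES{N}$ rencontrant $\NF$, en choisissant $v\in Y\cap\NF$ on a $Y=\bsfrY_v$ et $Y\cap\NF=\bsfrY_{F,v}$: l'application $Y\mapsto\NF\cap Y$ est donc bien d\'efinie et surjective (toute $F$-strate de $\NF$ est un $\bsfrY_{F,v}=\bsfrY_v\cap\NF$), et l'injectivit\'e est triviale puisqu'un \'el\'ement commun \`a $Y_1\cap\NF$ et $Y_2\cap\NF$ appartient \`a $Y_1\cap Y_2$, ce qui force $Y_1=Y_2$. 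L'obstacle principal est clairement l'inclusion $\NF\cap\bsfrY_v\subset\bsfrY_{F,v}$, \cad le passage d'une conjugaison g\'eom\'etrique \`a une conjugaison rationnelle des co-caract\`eres virtuels optimaux; c'est l\`a qu'interviennent de fa\c con essentielle l'hypoth\`ese \ref{hyp bonnes F-strates} (appliqu\'ee aussi bien \`a $v$ qu'\`a $v'$) et la descente des $F$-paires paraboliques g\'eom\'etriquement conjugu\'ees.
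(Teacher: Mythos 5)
Votre preuve est correcte. Pour l'inclusion $\bs{\Lambda}_{F,\mathrm{st}}^*\subset\check{X}(A_0)\cap\bs{\Lambda}_{\mathrm{st}}$ vous suivez exactement le texte (sous \ref{hyp bonnes F-strates}, on a $\bs{\Lambda}_{F,v}=\check{X}_F(G)_{\mbb{Q}}\cap\bs{\Lambda}_v$ pour le t\'emoin $v\in\NF$, donc $\mu$ est $\overline{F}$-optimal), en explicitant au passage un point laiss\'e implicite~: un $\mu\in\check{X}(A_0)_{\mbb{Q}}$ en position standard relativement \`a $(P_0,A_0)$ l'est aussi relativement \`a $(\wt{P}_0,\wt{A}_0)$, puisque $\wt{A}_0$ centralise l'image d'un multiple entier de $\mu$. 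Pour la bijection, en revanche, votre chemin diff\`ere de celui du texte, qui renvoie \`a la preuve de \ref{descente 2 sous HBFS (forte)}~: l\`a, on conjugue $v$ et $v'$ dans $G(F)$ pour les mettre en position standard, et l'unicit\'e du repr\'esentant standard dans la $G$-orbite des co-caract\`eres virtuels optimaux force $\mu=\mu'$, d'o\`u l'\'egalit\'e des $F$-strates. Vous \'etablissez la m\^eme \'egalit\'e-cl\'e $\NF\cap\bsfrY_v=\bsfrY_{F,v}$ en descendant directement la conjugaison g\'eom\'etrique $\mu'=h\bullet\mu$ en une conjugaison rationnelle, via le fait que les paires paraboliques $(P_\mu,M_\mu)$ et $(P_{\mu'},M_{\mu'})$, d\'efinies sur $F$ et conjugu\'ees dans $G$, le sont dans $G(F)$~--- c'est le m\'ecanisme d\'ej\`a employ\'e dans la preuve de la descente s\'eparable des strates (sous-section \ref{la stratification de Hesselink}), et vos pas interm\'ediaires ($g_0\in M_\mu$ d'o\`u $g_0\bullet\mu=\mu$, puis $\bs{\Lambda}_{F,v'}=U_{\mu'}(F)\bullet\mu'=x\bullet\bs{\Lambda}_{F,v}$ avec $x\in G(F)$) sont tous corrects. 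Les deux arguments reposent sur le m\^eme point essentiel (sous \ref{hyp bonnes F-strates}, les co-caract\`eres virtuels $F$-optimaux des points de $\NF$ sont $\overline{F}$-optimaux) et sont de co\^ut comparable~; le v\^otre \'evite le recours \`a l'unicit\'e du repr\'esentant standard et \`a \ref{descente 2 sous HBFS (forte)}, celui du texte r\'eutilise tel quel un argument d\'ej\`a \'ecrit.
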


\begin{proof}
Pour $\mu\in \bs{\Lambda}_{F,\mathrm{st}}^*$, il existe un $v\in V_{\mu,1}(F)$ tel que $\mu \in \bs{\Lambda}_{F,v}$. Puisque 
$v\in \NF$, si l'hypothse \ref{hyp bonnes F-strates} est vŽrifiŽe, on a $\bs{\Lambda}_{F,v}= \check{X}_F(G)_{\mbb{Q}}\cap \bs{\Lambda}_v$. 
Par consŽquent $\mu$ appartient ˆ $\check{X}(A_0)_{\mbb{Q}}\cap \bs{\Lambda}_{\mathrm{st}}$. On prouve la dernire assertion du lemme comme dans la preuve de 
\ref{descente 2 sous HBFS (forte)}.
\end{proof}

\subsection{Le critre de Kirwan-Ness (cas d'un $G$-module)}\label{le critre de KN rat} Dans cette sous-section, on 
suppose que $V$ est un $G$-module dŽfini sur $F$ (avec $e_V=0$). Puisque $V$ est lisse, l'hypothse 
de rŽgularitŽ \ref{hyp reg} est automatiquement vŽrifiŽe. On reprend les notations introduites en 
\ref{la variante de Hesselink} (\P\hskip1mm\textit{Interlude dans le cas d'un $G$-module}).

Pour un co-caractre $\lambda \in \check{X}(G)\smallsetminus \{0\}$ et un tore $T$ de $M_\lambda$ contenant $\mathrm{Im}(\lambda)$, notons 
$T^\lambda$ le sous-tore de $T$ dŽfini par\index{Tlambda@$T^\lambda$} $$T^\lambda = \langle \mathrm{Im}(\mu) \,\vert \, \mu \in \check{X}(T),\, (\mu,\lambda)=0 \rangle\ptf$$ 
On a la dŽcomposition $$T= T^\lambda \mathrm{Im}(\lambda) \quad \hbox{et}\quad 
\check{X}(T^\lambda)_\mbb{Q} = \left\{ \left. \mu - \frac{(\mu,\lambda)}{(\lambda,\lambda)}\lambda \, \right | \, \mu \in \check{X}(T)_\mbb{Q}  \right\}\ptf$$ 
PrŽcisŽment, le morphisme produit $T^\lambda \times \mathrm{Im}(\lambda) \rightarrow T$ est surjectif et 
c'est une isogŽnie (pas forcŽment sŽparable), d'o la dŽcomposition 
$$\check{X}(T)_{\mbb{Q}}=\check{X}(T^\lambda)_{\mbb{Q}}\oplus  \mbb{Q}\lambda \ptf$$ 
On a dŽfini en \ref{la variante de Hesselink} des isomorphismes $\mbb{Q} $-linŽaires 
$$\iota_T: X(T)_{\mbb{Q}} \rightarrow \check{X}(T)_{\mbb{Q}}\quad \hbox{et}\quad \iota_{T^\lambda}: X(T^\lambda)_\mbb{Q} 
 \rightarrow \check{X}(T^\lambda)_\mbb{Q}\ptf$$ 
Pour $\chi \in X(T)$, on a $$\iota_{T^\lambda}(\chi\vert_{T^\lambda})= \iota_T(\chi) - \frac{(\iota_T(\chi),\lambda)}{(\lambda,\lambda)}\lambda\ptf$$ 

\begin{remark}\label{relation entre les mu}
\textup{
Si $v\in V_\lambda(k)\smallsetminus \{0\}$ pour un entier $k>0$, alors 
$$\ES{K}_{T^\lambda}(v) =\left\{ \left. \mu - \frac{k}{(\lambda,\lambda)}\lambda \, \right | \, \mu \in \ES{K}_T(v) \right\}$$ 
et (puisque $\lambda$ est orthogonal ˆ $\check{X}(T^\lambda)_\mbb{Q} $) 
$$\mu^{T^\lambda\!}(v) = \mu^{T\!}(v) - \frac{k}{(\lambda ,\lambda)} \lambda \ptf$$
}
\end{remark}

Supposons de plus que $T$ soit un tore maximal de $M_\lambda$. On pose alors\index{Mlambdaperp@$M_\lambda^\perp$} 
$$ M_\lambda^\perp = \langle T^\lambda , (M_\lambda)_\mathrm{der}\rangle \ptf$$ 
Puisque $\mathrm{Int}_m(T^\lambda)= (\mathrm{Int}_m(T))^\lambda$ pour tout $m\in M_\lambda$ et que $(M_\lambda)_\mathrm{der}$ 
est distinguŽ dans $M_\lambda$, le groupe $M_\lambda^\perp$ ne dŽpend pas de $T$. 
On a la dŽcomposition $$M_\lambda^\perp = T^\lambda  (M_\lambda)_\mathrm{der} $$ et le groupe $M_\lambda^\perp$ 
est rŽductif connexe (\cite[cor.~2.2.7]{Sp}, \cite[IV.14.2]{B}). 
Si $\alpha$ est une racine de $T$ dans $M_\lambda$, alors $\langle \alpha , \lambda \rangle =0$ d'o $(\check{\alpha}, \lambda)=0$ et 
$\mathrm{Im}(\check{\alpha})\subset T^\lambda$. Puisque les $\mathrm{Im}(\check{\alpha})$ engendrent un tore maximal du groupe dŽrivŽ $(M_\lambda)_\mathrm{der}$, 
cela prouve que $T^\lambda$ est un tore maximal de $M_\lambda^\perp$.

\vskip2mm
\P\hskip1mm\textit{Le critre de Kirwan-Ness gŽomŽtrique. ---} Pour un co-caractre $\lambda\in \check{X}(G)\smallsetminus \{0\}$ et un entier $k\in \mbb{N}^*$, 
le $\overline{F}$-espace vectoriel $V_\lambda(k)$ est muni d'une structure de $M_\lambda$-module. Si de plus $\lambda$ est primitif, le critre de Kirwan-Ness gŽomŽtrique dit 
que la $v$-optimalitŽ de $\lambda$ pour un ŽlŽment $v\in V_{\lambda,k}$ Žquivaut ˆ la $M_\lambda^\perp$-semi-simplicitŽ de la composante $v_\lambda(k)$ de $v$ sur $V_\lambda(k)$. PrŽcisŽment 
(cf. \cite[2.8]{Ts} pour une dŽmonstration valable en toute caractŽristique): 

\begin{proposition}\label{KNG Tsuji}
$\lambda \in \Lambda_v^{\mathrm{opt}}$ si et seulement si $v_\lambda(k)$ est $M_\lambda^\perp$-semi-stable, i.e. appartient ˆ 
$V_\lambda(k)\smallsetminus \ES{N}^{M_\lambda^\perp}(V_\lambda(k),0)$; auquel 
cas $\Lambda_v^{\mathrm{opt}}=\Lambda_{v_\lambda(k)}^{\mathrm{opt}}$ et $P_\lambda=P_v=P_{v_\lambda(k)}$.  
\end{proposition}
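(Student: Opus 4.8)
The plan is to reduce to a single torus, dispatch the easy implication by a first–variation argument, and obtain the converse from the geometric Kirwan--Ness theorem (valid in all characteristics by \cite[2.8]{Ts}). First I would fix a maximal torus $T$ of $M_\lambda$ with $\mathrm{Im}(\lambda)\subset T$; then $T$ is a maximal torus of $G$, $T^\lambda$ is a maximal torus of $M_\lambda^\perp$, and one has the orthogonal decomposition $\check{X}(T)_{\mbb{Q}}=\check{X}(T^\lambda)_{\mbb{Q}}\oplus\mbb{Q}\lambda$. Writing $i_\chi=\langle\chi,\lambda\rangle$ for the $\lambda$-weight of $\chi\in\ES{R}'_{T}(V)$, the identity recalled above gives $\iota_T(\chi)=\iota_{T^\lambda}(\chi\vert_{T^\lambda})+\frac{i_\chi}{(\lambda,\lambda)}\lambda$. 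Since $v\in V_{\lambda,k}$, every $\chi\in\ES{R}'_{T}(v)$ has $i_\chi\geq k$, with $i_\chi=k$ exactly for the weights occurring in $v_\lambda(k)$; thus $m_v(\lambda)=m'_v(\lambda)=k$ as soon as $v_\lambda(k)\neq0$ (if $v_\lambda(k)=0$ then $v\in V_{\lambda,k+1}$, $0$ is $M_\lambda^\perp$-unstable, and $\lambda/k$ is not of minimal norm, so both sides of the equivalence fail). Finally $M_\lambda^\perp\subset M_\lambda$ centralises $\lambda$ and stabilises each $V_\lambda(i)$, so replacing $v$ by $h\cdot v$ with $h\in M_\lambda^\perp$ changes neither $\lambda$ nor its $v$-optimality (since $h\bullet\lambda=\lambda$ and $\Lambda_{h\cdot v}^{\mathrm{opt}}=h\bullet\Lambda_v^{\mathrm{opt}}$) and sends $v_\lambda(k)$ to $h\cdot v_\lambda(k)$; hence, whenever I test $M_\lambda^\perp$-(semi)stability of $v_\lambda(k)$, I may assume the witnessing co-character of $M_\lambda^\perp$ lies in $\check{X}(T^\lambda)$.

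For the implication ``$\lambda$ $v$-optimal $\Rightarrow$ $v_\lambda(k)$ $M_\lambda^\perp$-semistable'' I would argue by contradiction. If $v_\lambda(k)$ is $M_\lambda^\perp$-unstable, then after conjugating by $M_\lambda^\perp$ there is $\mu\in\check{X}(T^\lambda)\smallsetminus\{0\}$ with $\langle\chi,\mu\rangle>0$ for every $T^\lambda$-weight $\chi$ of $v_\lambda(k)$; put $\delta=\min\{\langle\chi,\mu\rangle : \chi\in\ES{R}'_{T}(v),\ i_\chi=k\}>0$ and $C=\max_{\chi\in\ES{R}'_{T}(v)}|\langle\chi,\mu\rangle|$. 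For small rational $\epsilon>0$ I test $\lambda_\epsilon=\lambda+\epsilon\mu\in\check{X}(T)_{\mbb{Q}}$: since $\mu\perp\lambda$, $\|\lambda_\epsilon\|^2=\|\lambda\|^2+\epsilon^2\|\mu\|^2$, while $\langle\chi,\lambda_\epsilon\rangle\geq k+\epsilon\delta$ when $i_\chi=k$ and $\langle\chi,\lambda_\epsilon\rangle\geq k+1-\epsilon C>k+\epsilon\delta$ when $i_\chi\geq k+1$ and $\epsilon$ is small, so $m_v(\lambda_\epsilon)\geq k+\epsilon\delta$. Then $\rho_v(\lambda_\epsilon)\geq(k+\epsilon\delta)(\|\lambda\|^2+\epsilon^2\|\mu\|^2)^{-1/2}>k/\|\lambda\|=\rho_v(\lambda)$ for $\epsilon$ small, contradicting the optimality of $\lambda$.

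For the converse, assume $v_\lambda(k)$ is $M_\lambda^\perp$-semistable; by a separating-hyperplane argument this says $0\in\ES{K}_{T^\lambda}(h\cdot v_\lambda(k))$ for every $h\in M_\lambda^\perp$. Taking $h=1$ and writing $0$ as a convex combination $\sum_j c_j\iota_{T^\lambda}(\chi_j\vert_{T^\lambda})$ with $i_{\chi_j}=k$, the decomposition identity gives $\sum_j c_j\iota_T(\chi_j)=\frac{k}{(\lambda,\lambda)}\lambda$, so $\frac{k}{(\lambda,\lambda)}\lambda\in\ES{K}_{T}(v)$; since moreover $(\iota_T(\chi),\lambda)=i_\chi\geq k$ for all $\chi\in\ES{R}'_{T}(v)$, the projection theorem for the closed convex set $\ES{K}_{T}(v)$ identifies $\frac{k}{(\lambda,\lambda)}\lambda$ as its point of least norm, whence $\mu^{T}(v)=\frac{k}{(\lambda,\lambda)}\lambda$, $\wt{\mu}^{T}(v)=\lambda/k=\wt{\lambda}_v$ and $\bs{q}^{T}(v)=\|\lambda\|/k$. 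The same computation applies to $g\cdot v$ for any $g\in P_\lambda$: writing $g=mu$ with $m\in M_\lambda$, $u\in U_\lambda$, one has $g\cdot v\in V_{\lambda,k}$ with level-$k$ part $m\cdot v_\lambda(k)$, and $0\in\ES{K}_{T^\lambda}(m\cdot v_\lambda(k))$ still holds (write $m=s\,m_0$ with $s\in\mathrm{Im}(\lambda)$ acting by a scalar on $V_\lambda(k)$ and $m_0\in M_\lambda^\perp$, then use $M_\lambda^\perp$-semistability), so $\bs{q}^{T}(g\cdot v)=\|\lambda\|/k$ for all $g\in P_\lambda$. It then remains to prove $\bs{q}(v)=\bs{q}^{T}(v)$, equivalently $\|\mu^{T}(g\cdot v)\|\leq\|\mu^{T}(v)\|$ for \emph{all} $g\in G$: this is exactly the content of the geometric Kirwan--Ness theorem, which I would quote (in arbitrary characteristic) from \cite[2.8]{Ts}; I expect this last step to be the main obstacle --- the forward implication and the subsequent bookkeeping are formal, whereas the global optimality of $T$ encodes the fact that the projection $V_{\lambda,k}\to V_\lambda(k)$ onto the minimal weight space behaves like a partial quotient, which in bad characteristic is the substance of \cite{Ts}. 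Granting it, $\lambda$ is $v$-optimal; applying the equivalence (now proven) to $v_\lambda(k)\in V_{\lambda,k}$, whose level-$k$ part is $v_\lambda(k)$ itself, shows $\lambda$ is also $v_\lambda(k)$-optimal; hence $P_v=P_\lambda=P_{v_\lambda(k)}$ by Theorem \ref{Kempf1}\,(ii), and $\Lambda_v^{\mathrm{opt}}$ and $\Lambda_{v_\lambda(k)}^{\mathrm{opt}}$ are both the $U_\lambda$-orbit of $\lambda$ by Theorem \ref{Kempf1}\,(iii), so they coincide.
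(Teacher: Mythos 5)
You should note first that the paper contains no internal proof of Proposition \ref{KNG Tsuji} to compare with. It is stated as a quoted result, with the parenthetical reference to \cite[2.8]{Ts} as the source of a proof valid in all characteristics. Judged on its own terms, your attempt gets the easy half right. The perturbation argument for the implication \guill{$\lambda$ $v$-optimal $\Rightarrow$ $v_\lambda(k)$ est $M_\lambda^\perp$-semi-stable} (testing $\lambda_\epsilon=\lambda+\epsilon\mu$ with $\mu\in\check{X}(T^\lambda)$ orthogonal to $\lambda$, after using that $M_\lambda^\perp$ fixes $\lambda$ to put the destabilising co-character in $T^\lambda$) is correct and is the standard first-variation argument. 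The convexity computation showing that semi-stability of $v_\lambda(k)$ forces $\mu^{T}(v)=\frac{k}{(\lambda,\lambda)}\lambda$, and the final bookkeeping via \ref{Kempf1} giving $P_v=P_\lambda=P_{v_\lambda(k)}$ and $\Lambda_v^{\mathrm{opt}}=\Lambda_{v_\lambda(k)}^{\mathrm{opt}}$ once both optimalities are known, are also fine.

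The genuine gap is in the converse, which you yourself identify as the main obstacle and then resolve by \guill{quoting} the geometric Kirwan--Ness theorem from \cite[2.8]{Ts}. But \cite[2.8]{Ts} \emph{is} the statement you were asked to prove (it is precisely the reference the paper attaches to \ref{KNG Tsuji}), so your treatment of the substantive direction is circular, and what remains is only the reduction of global optimality to the equality $\bs{q}(v)=\bs{q}^{T}(v)$, i.e. to comparing $\|\mu^{T}(g\cdot v)\|$ with $\|\mu^{T}(v)\|$ for arbitrary $g\in G$. That comparison is exactly where the work in arbitrary characteristic lies. In Tsuji's argument it is done by combining the translation-invariance statement of \cite[lemma~2.7]{Ts} (the paper's \ref{lemme 2.7 de [Ts]}, which you never use) with Kempf's theory of optimal parabolics, and none of that is supplied in your text. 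A secondary point to fix is the degenerate case. When $v_\lambda(k)=0$ you assert that both sides of the equivalence fail, but $\lambda$ can perfectly well be $v$-optimal with $m_v(\lambda)>k$, so the equivalence as you phrase it is false there. The intended hypothesis, made explicit in the rational version \ref{thm de KN rationnel}, is $m_v(\lambda)=k$, and you should either impose it or drop that parenthetical claim.
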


En particulier on a 
$$\lambda \in \Lambda_v^{\rm opt} \Rightarrow \scrY_v + V_{\lambda,k+1}= \scrY_v\ptf$$

\begin{remark}\label{cor GIT}
\textup{
D'aprs \ref{rŽduction au cas d'un G-module}\,(ii), on a aussi que 
 $\lambda\in \Lambda_{v}^\mathrm{opt}$ si et seulement s'il existe une fonction polynomiale $f\in \overline{F}[V_\lambda(k)]^{M_\lambda^\perp}$ 
 homogne de degrŽ $\geq 1$ telle que $f(v_\lambda(k))\neq 0$.   
}\end{remark}

L'espace projectif $$\mbb{P}(V) =(V\smallsetminus \{0\})/ \smash{\overline{F}}^\times$$
est encore un $G$-module. On note $q=q_V: V\smallsetminus \{0\} \rightarrow \mbb{P}(V)$ la projection naturelle. 
Pour $v\in \ES{N}\smallsetminus \{0\}$, $\lambda \in \Lambda_v^{\rm opt}$ et $k = m_v(\lambda)$, d'aprs le critre de Kirwan-Ness, la composante 
$\overline{v}= v_\lambda(k)$ de $v$ sur $V_\lambda(k)$ est dans la lame $\scrY_v$ et elle vŽrifie 
$$\smash{\overline{F}}^\times \cdot \overline{v}\subset M_\lambda \cdot \overline{v} \subset V_\lambda(k) \cap \scrY_v\ptf$$ 
On en dŽduit (gr‰ce ˆ loc.~cit.) que $$\smash{\overline{F}}^\times \cdot \scrY_v = \scrY_v\ptf$$ 
Notons $\mathrm{Stab}_G(q(\overline{v}))=G^{q(\overline{v})}(\overline{F})$ le stabilisateur de $q(\overline{v})$ dans $G$ \cite[ch.~II, 1.7]{B}:  
$$\mathrm{Stab}_G(q(\overline{v}))= \{g\in G\,\vert \, g\cdot \overline{v} \in \smash{\overline{F}}^\times \overline{v}\}\ptf$$ 
C'est un sous-groupe fermŽ de $G$ qui contient ${\rm Im}(\lambda)$ et le stabilisateur $\mathrm{Stab}_G(\overline{v})$ de $\overline{v}$ dans $G$. 

\begin{lemma}\label{espace projectif}
Soient $v\in \ES{N}\smallsetminus \{0\}$, $\lambda\in \Lambda_v^{\rm opt}$, $k= m_v(\lambda)$ et $\overline{v}= v_\lambda(k)$.
\begin{enumerate}
\item[(i)] On a $\mathrm{Stab}_G(q(\overline{v}))= {\rm Im}(\lambda)\ltimes \mathrm{Stab}_G(\overline{v})$.
\item[(ii)] Si $T^\natural$ est un tore maximal de $\mathrm{Stab}_G(q(\overline{v}))$, on a $\check{X}(T^\natural)\cap \Lambda_v^{\rm opt} = \{\lambda^\natural\}$.
\end{enumerate}
\end{lemma}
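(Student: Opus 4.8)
The plan is to prove (i) first, extract from it the inclusion $\mathrm{Stab}_G(q(\overline{v}))\subseteq P_\lambda$, and then deduce (ii) using the simply transitive action of the unipotent radical of $P_\lambda$ on $\Lambda_v^{\mathrm{opt}}$ together with the Kirwan--Ness identification $\Lambda_v^{\mathrm{opt}}=\Lambda_{\overline{v}}^{\mathrm{opt}}$, $P_v=P_{\overline{v}}=P_\lambda$ of Proposition \ref{KNG Tsuji}.

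For (i), set $H=\mathrm{Stab}_G(q(\overline{v}))$ and $N=\mathrm{Stab}_G(\overline{v})$. Since $\overline{v}\ne 0$, the line $\overline{F}\,\overline{v}$ is $H$-stable, so the linear action furnishes an algebraic character $\chi\colon H\to \mbb{G}_\mathrm{m}$ determined by $h\cdot\overline{v}=\chi(h)\overline{v}$, whose kernel is exactly $N$; hence $N$ is normal in $H$. As $\overline{v}\in V_\lambda(k)$ one has $\lambda(t)\cdot\overline{v}=t^k\overline{v}$, so $\mathrm{Im}(\lambda)\subseteq H$ and $\chi\circ\lambda$ is the $k$-th power map, whose image on $\overline{F}$-points is all of $\overline{F}^\times$. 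Consequently, for every $h\in H$ there is $t$ with $\chi(h)=\chi(\lambda(t))$, whence $\lambda(t)^{-1}h\in N$ and $h\in\mathrm{Im}(\lambda)\cdot N$. This gives $H=\mathrm{Im}(\lambda)\cdot N$ with $N$ normal, which is the asserted $H=\mathrm{Im}(\lambda)\ltimes\mathrm{Stab}_G(\overline{v})$ (note that $\mathrm{Im}(\lambda)\cap N=\lambda(\mu_k)$ is only finite, so the symbol $\ltimes$ is to be understood in this sense, the extension not being split in general).

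Next, $N\subseteq P_{\overline{v}}$ by Corollary \ref{cor Kempf1}\,(iii) (applied with $\ES{Q}=\{0\}$), and $P_{\overline{v}}=P_\lambda$ together with $\Lambda_v^{\mathrm{opt}}=\Lambda_{\overline{v}}^{\mathrm{opt}}$ by Proposition \ref{KNG Tsuji}; since $\mathrm{Im}(\lambda)\subseteq M_\lambda$, part (i) yields $H=\mathrm{Im}(\lambda)\cdot N\subseteq P_\lambda$. By Theorem \ref{Kempf1}\,(iii) applied to $\overline{v}$, the unipotent radical $U_\lambda=U_{P_{\overline{v}}}$ acts simply transitively on $\Lambda_{\overline{v}}^{\mathrm{opt}}=\Lambda_v^{\mathrm{opt}}$, so every element of $\Lambda_v^{\mathrm{opt}}$ has the form $\mathrm{Int}_u\circ\lambda$ with $u\in U_\lambda$. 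For uniqueness in (ii): if $\mu_1,\mu_2\in\check{X}(T^\natural)\cap\Lambda_v^{\mathrm{opt}}$, write $\mu_i=\mathrm{Int}_{u_i}\circ\lambda$ and apply the natural projection $\pi_\lambda\colon P_\lambda\to M_\lambda=P_\lambda/U_\lambda$; since $\pi_\lambda(u_i)=1$ we get $\pi_\lambda\circ\mu_1=\pi_\lambda\circ\lambda=\pi_\lambda\circ\mu_2$. As $T^\natural\subseteq H\subseteq P_\lambda$ is a torus, $T^\natural\cap U_\lambda=\{1\}$, so $\pi_\lambda$ is injective on $T^\natural$ and hence induces an injection $\check{X}(T^\natural)\hookrightarrow\check{X}(M_\lambda)$; as $\mu_1$ and $\mu_2$ have the same image, $\mu_1=\mu_2$.

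For existence in (ii), the crucial point is that $\Lambda_v^{\mathrm{opt}}=\Lambda_{\overline{v}}^{\mathrm{opt}}$ is stable under the $\bullet$-action of $H$: for $h\in H$, $h\bullet\Lambda_{\overline{v}}^{\mathrm{opt}}=\Lambda_{h\cdot\overline{v}}^{\mathrm{opt}}=\Lambda_{\chi(h)\overline{v}}^{\mathrm{opt}}=\Lambda_{\overline{v}}^{\mathrm{opt}}$, using that scalar multiplication commutes with the $G$-action (so $\Lambda_{c\overline{v}}=\Lambda_{\overline{v}}$ and the vanishing order $m_{c\overline{v}}(\cdot)$ equals $m_{\overline{v}}(\cdot)$ for $c\in\overline{F}^\times$, whence $\Lambda_{c\overline{v}}^{\mathrm{opt}}=\Lambda_{\overline{v}}^{\mathrm{opt}}$). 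Since $\mathrm{Im}(\lambda)$ is a nontrivial torus in $H$, it lies in a maximal torus $T'$ of $H$; choosing $h\in H^{0}$ with $hT'h^{-1}=T^\natural$ (conjugacy of maximal tori), the cocharacter $\lambda^\natural:=\mathrm{Int}_h\circ\lambda$ lies in $\Lambda_v^{\mathrm{opt}}$ by the stability just established and has image in $T^\natural$; combined with uniqueness this gives $\check{X}(T^\natural)\cap\Lambda_v^{\mathrm{opt}}=\{\lambda^\natural\}$. The only genuinely delicate steps are the reduction to $\overline{v}$ via Proposition \ref{KNG Tsuji} (so that $P_{\overline v}=P_\lambda$ and the Kempf theory for $\overline v$ become available) and the $H$-stability of $\Lambda_v^{\mathrm{opt}}$; the remainder is bookkeeping with the results recalled above, with the caveat that $\ltimes$ in (i) must not be read as an internal semidirect product.
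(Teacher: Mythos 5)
Your proof is correct and follows essentially the same route as the paper: part (i) via the character $h\mapsto\chi(h)$ and the surjectivity of $t\mapsto t^{k}$, and part (ii) via the inclusion $\mathrm{Stab}_G(q(\overline{v}))\subset P_\lambda$, conjugacy of maximal tori, and the invariance of $\Lambda_v^{\mathrm{opt}}$ under the stabilizer of the line (the paper deduces this invariance from $g^\natural\in P_\lambda$, i.e.\ the $P_\lambda$-invariance of the lame, you from scaling invariance of the optimal set — the two arguments are equivalent here), with your uniqueness step just making explicit what the paper cites from Theorem \ref{Kempf1}\,(iii). Your caveat that $\mathrm{Im}(\lambda)\cap\mathrm{Stab}_G(\overline{v})=\lambda(\mu_k)$ is finite but possibly nontrivial, so the decomposition in (i) is a semidirect product only up to this finite intersection, is a fair observation about the statement as written; the paper's own (identical) computation glosses over it.
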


\begin{proof} 
Posons $G^\natural= \mathrm{Stab}_G(q(\overline{v}))$. 
Pour $g\in G^\natural$, notons $a_g\in \smash{\overline{F}}^\times$ l'ŽlŽment dŽfini par $g\cdot \overline{v}= a_g \overline{v}$. 
L'application $g\mapsto a_g$ est un ŽlŽment de $X(G^\natural)$ et pour chaque $g\in G^\natural$, il existe un $t\in \smash{\overline{F}}^\times$ tel que $t^\lambda a_g =1$. 
Cela implique (i). 

Comme $\mathrm{Stab}_G(\overline{v})\subset P_{\overline{v}}=P_\lambda$ et ${\rm Im}(\lambda)\subset P_\lambda$, d'aprs (i) on a l'inclusion 
$G^\natural\subset P_\lambda$. Soit $T^\natural$ un tore maximal de $G^\natural$. Puisque tous les tores maximaux de $G^\natural$ sont conjuguŽs dans $G^\natural$, il existe un 
$g^\natural\in G^\natural$ tel que $${\rm Im}(g^\natural\bullet \lambda)= g^\natural\bullet  {\rm Im}(\lambda) \subset T^\natural\ptf$$ Comme $g^\natural\in P_\lambda$, le co-caractre $\lambda^\natural = 
g^\natural \bullet \lambda$ est 
$v$-optimal et c'est l'unique ŽlŽment de $\check{X}(T^\natural ) \cap \Lambda_v^{\rm opt}$. 
\end{proof}

\vskip2mm
\P\hskip1mm\textit{Le critre de Kirwan-Ness rationnel. ---} On prouve dans ce paragraphe la version  $F$-rationnelle du critre de Kirwan-Ness. Il suffit pour cela de reprendre la 
preuve de Tsuji \cite[2.8]{Ts} en remplaant les tores maximaux par les tores $F$-dŽployŽs maximaux. 

Pour $\lambda\in \check{X}_F(V)$ et $i\in \mbb{Z}$, le sous-espace $V_\lambda(i)$ 
est muni d'une structure de $M_\lambda$-module dŽfini sur $F$. 
La projection\index{vlambda(i)@$v_\lambda(i)$} $V_{\lambda, i} \rightarrow V_\lambda(i),\, v\mapsto v_\lambda(i)$ est elle aussi dŽfinie sur $F$. On a dŽfini plus haut le sous-groupe 
$M_\lambda^\perp= \langle T^\lambda ,(M_{\lambda})_\mathrm{der}\rangle$ de $M_\lambda$, o $T$ est un tore maximal 
de $M_\lambda$. C'est un groupe rŽductif connexe, dŽfini sur $F$ puisque $M_\lambda$ l'est (il suffit de prendre $T$ dŽfini sur $F$). 
Soit $S$ un tore de $M_\lambda$ contenant $\mathrm{Im}(\lambda)$. Si $S$ est dŽfini sur $F$, resp. $F$-dŽployŽ, alors $S^\lambda$ l'est aussi; 
et si $S$ est un tore $F$-dŽployŽ maximal de $M_\lambda$, 
alors $S^\lambda$ est un tore $F$-dŽployŽ maximal de $M_\lambda^\perp$. En effet  
dans ce dernier cas, en prenant pour $T$ un tore maximal de $M_\lambda$ dŽfini sur $F$ et contenant $S$, on a 
$$\check{X}(S)= \check{X}_F(T)\quad \hbox{et} \quad S^\lambda = S \cap T^\lambda\ptf$$

\begin{lemma}\label{lemme 2.7 de [Ts]}
Soit $S$ un tore $F$-dŽployŽ maximal de $G$ et soit $v\in V \smallsetminus \{0\}$ un ŽlŽment $S$-instable (donc $(F,S)$-instable puisque $\check{X}_F(S)=\check{X}(S)$). 
Posons $\lambda = \lambda^S(v)$ et $k= m_v(\lambda)$. Pour tout $v'\in v + V_{\lambda,k+1}$, on a $\mu^S(v')=\mu^S(v)$ et $\lambda^S(v')= \lambda$ (cf. \ref{la variante de Hesselink} 
pour les notations). 
\end{lemma}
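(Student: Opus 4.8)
This is the analogue for a maximal $F$-split torus $S$ of the geometric statement used in \cite[2.7]{Ts}; since everything takes place in $\check{X}(S)_{\mbb{Q}}$ and in the $S$-weight decomposition of $V$, the argument is insensitive to the field, and the plan is simply to repeat the convexity computation. I would work in the Euclidean space $\check{X}(S)_{\mbb{Q}}$ with the inner product $(\cdot,\cdot)$ underlying $\|\,\|$, and track the Newton polytope $\ES{K}_S(\cdot)$ under the perturbation $v\mapsto v'=v+w$, $w\in V_{\lambda,k+1}$. Write $\mu=\mu^S(v)$. Since $v$ is $S$-instable, $0\notin\ES{K}_S(v)$, so $\mu\neq 0$; by the projection-theorem description of $\mu^S(v)$ recalled in \ref{la variante de Hesselink}, together with \ref{m'=m}, every $\eta\in\ES{K}_S(v)$ satisfies $(\eta,\mu)\geq(\mu,\mu)=\|\mu\|^2=m_v(\mu)$. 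By construction $\lambda=\lambda^S(v)$ is the indivisible cocharacter on the ray $\mbb{Q}_{>0}\mu$, say $\lambda=a\mu$ with $a\in\mbb{Q}_{>0}$, whence $k=m_v(\lambda)=a\,m_v(\mu)=a\|\mu\|^2$; in particular, for a weight $\chi\in\ES{R}'_S(V)$ one has $\langle\chi,\lambda\rangle=k$ if and only if $(\iota_S(\chi),\mu)=\|\mu\|^2$.

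First I would record the elementary convexity fact that $\mu$ lies on its own supporting face: $\mu\in\ES{K}_S(v)$ realises the minimum $\|\mu\|^2$ of the linear form $(\cdot,\mu)$ on $\ES{K}_S(v)$, so $\mu$ belongs to $\{\eta\in\ES{K}_S(v):(\eta,\mu)=\|\mu\|^2\}$, and this set, being the face of the polytope $\ES{K}_S(v)=\mathrm{conv}(\iota_S(\ES{R}'_S(v)))$ cut out by that form, equals the convex hull of $\{\iota_S(\chi):\chi\in\ES{R}'_S(v),\ \langle\chi,\lambda\rangle=k\}$. Next I would exploit the hypothesis $w\in V_{\lambda,k+1}=\bigoplus_{i\geq k+1}V_\lambda(i)$: since $w$ has trivial component in $V_\lambda(j)=\bigoplus_{\langle\chi,\lambda\rangle=j}V_\chi$ for every $j\leq k$, one gets $v'_\chi=v_\chi$ for each weight $\chi$ with $\langle\chi,\lambda\rangle=k$; hence those weights all persist, $\{\chi\in\ES{R}'_S(v):\langle\chi,\lambda\rangle=k\}\subseteq\ES{R}'_S(v')$, and therefore $\mu\in\ES{K}_S(v')$. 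On the other hand $\ES{R}'_S(v')\subseteq\ES{R}'_S(v)\cup\ES{R}'_S(w)$, and both $\ES{K}_S(v)$ and $\iota_S(\ES{R}'_S(w))$ lie in the half-space $H=\{\eta:(\eta,\mu)\geq\|\mu\|^2\}$ --- the former by the previous paragraph, the latter because $\langle\chi,\lambda\rangle\geq k+1>k$ for $\chi\in\ES{R}'_S(w)$; so $\ES{K}_S(v')\subseteq H$.

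To finish I would combine these: $\mu\in\ES{K}_S(v')\subseteq H$ and $(\mu,\mu)=\|\mu\|^2$, so $\mu$ lies on the bounding hyperplane of $H$ and is already the point of $H$ nearest the origin, a fortiori the point of $\ES{K}_S(v')$ nearest the origin. Since $\mu\neq 0$ this also shows $0\notin\ES{K}_S(v')$, i.e. $v'$ is $S$-instable, and by uniqueness of the minimal-norm point $\mu^S(v')=\mu=\mu^S(v)$. A verbatim repetition of the computation above gives $m_{v'}(\mu)=\|\mu\|^2$ (via \ref{m'=m} once more), so $\wt{\mu}^{\,S}(v')=\mu^S(v')/m_{v'}(\mu^S(v'))=\wt{\mu}^{\,S}(v)$; and $\lambda^S(v')$, being the indivisible cocharacter on the ray $\mbb{Q}_{>0}\wt{\mu}^{\,S}(v')$, equals $\lambda^S(v)=\lambda$.

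The one step that I expect to need care --- though it is standard --- is the claim that the minimal-norm point $\mu$ of $\ES{K}_S(v)$ lies in the convex hull of the weights $\iota_S(\chi)$ with $\langle\chi,\lambda\rangle=k$, equivalently that $\mu$ survives in the Newton polytope of $v'$. This is the familiar fact that the orthogonal projection of the origin onto a polytope lies on the face that it supports, and it is precisely what makes the hypothesis $w\in V_{\lambda,k+1}$ (rather than merely $w\in V_{\lambda,k}$) the correct one; everything else is routine bookkeeping with the $S$-weight decomposition of $V$.
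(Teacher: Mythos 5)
Votre argument est correct et coïncide avec celui que le papier invoque : la preuve donnée dans le texte se réduit à la remarque que l'argument de \cite[lemma~2.7]{Ts} s'applique mot pour mot (seule la décomposition en poids sous $S$ intervient, peu importe que $S$ ne soit pas un tore maximal de $G$), et votre rédaction explicite précisément ce calcul de convexité sur le polytope de Newton $\ES{K}_S(\cdot)$, y compris le point clé que $\mu^S(v)$ appartient à la face $\{\eta : (\eta,\mu)=\|\mu\|^2\}$ et survit donc dans $\ES{K}_S(v')$ grâce à l'hypothèse $w\in V_{\lambda,k+1}$.
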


\begin{proof}La preuve est identique ˆ celle de \cite[lemma~2.7]{Ts}; le fait que $S$ ne soit pas un tore maximal de $G$ ne change rien ˆ l'affaire. 
\end{proof}

\begin{proposition}\label{thm de KN rationnel}
Soient $\lambda\in \check{X}_F(G)$, $k\in \mbb{N}^*$ et $v\in V$ tels que $m_v(\lambda)=k$. On suppose que $\lambda$ est primitif. Alors 
$\lambda \in \Lambda_{F,v}^\mathrm{opt}$ si et seulement si $v_\lambda(k)$ est 
$(F,M_\lambda^\perp)$-semi-stable, i.e. appartient ˆ $V_\lambda(k) \smallsetminus \FN^{M_\lambda^\perp}(V_\lambda(k),0)$, 
auquel cas $$\Lambda_{F,v}^\mathrm{opt}= \Lambda_{F,v_\lambda(k)}^\mathrm{opt}\quad\hbox{et}\quad P_\lambda = {_FP_v} ={_FP_{v_\lambda(k)}}\ptf$$
\end{proposition}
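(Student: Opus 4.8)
The plan is to reproduce, in the rational setting, Tsuji's argument \cite[2.8]{Ts} for the geometric Kirwan-Ness criterion (\ref{KNG Tsuji}), systematically replacing maximal tori by maximal $F$-split tori and invoking the uniform rational Hilbert-Mumford theorem \ref{HMrat1} in place of the geometric theorem \ref{Kempf1}. Write $W = V_\lambda(k)$, a $G$-module $M_\lambda$-module defined over $F$, and $\overline{v} = v_\lambda(k)\in W$ its component; recall from the discussion preceding the statement that $M_\lambda^\perp = \langle T^\lambda, (M_\lambda)_{\mathrm{der}}\rangle$ is reductive connected and defined over $F$, and that for a maximal $F$-split torus $S$ of $M_\lambda$ (containing $\mathrm{Im}(\lambda)$, which we may assume), $S^\lambda$ is a maximal $F$-split torus of $M_\lambda^\perp$, with $\check{X}(S)= \check{X}_F(T)$ and $S^\lambda = S\cap T^\lambda$ for a suitable maximal torus $T\supset S$ defined over $F$.

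First I would treat the easy implication and the role of $V_{\lambda,k+1}$. If $\lambda\in\Lambda_{F,v}^{\mathrm{opt}}$, I want to show $v'\mapsto \mu^{S}(v')$ is constant on $v + V_{\lambda,k+1}$ for a maximal $F$-split torus $S$ of $M_\lambda$; this is exactly \ref{lemme 2.7 de [Ts]}, which gives moreover $\lambda^{S}(v')=\lambda$, hence $\overline{v}$ itself lies in the same $F$-lame as $v$ and $\lambda$ is $(F,\overline{v})$-optimal. Conversely, suppose $\overline{v}$ is $(F,M_\lambda^\perp)$-semi-stable. The key point, as in Tsuji, is a norm computation on $\check{X}_F(G)_{\mbb{Q}}$: for any maximal $F$-split torus $S$ of $G$ contained in $P_\lambda$ with $\mathrm{Im}(\lambda)\subset S\subset M_\lambda$, one decomposes $\check{X}(S)_{\mbb{Q}}=\check{X}(S^\lambda)_{\mbb{Q}}\oplus\mbb{Q}\lambda$ orthogonally (the rational analogue of \ref{relation entre les mu}), writes any competing virtual co-character $\mu\in\wt{\Lambda}_{F,v}$ in its $S$-conjugacy class, and estimates $\|\mu\|$ using that $m_v(\mu)$ is controlled by the pairing against the weights occurring in $\overline{v}$, i.e. in $\ES{R}'_{S^\lambda}(\overline{v})$. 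The $(F,M_\lambda^\perp)$-semi-stability of $\overline{v}$ says precisely that $0\in\ES{K}_{S^\lambda}(\overline{v})$ for every such $S$ up to $M_\lambda^\perp(F)$-conjugacy, which forces the projection of $\mu$ orthogonal to $\lambda$ not to help, so that $\|\wt{\mu}\|\geq\|\wt{\lambda}\|$; by the $G(F)$-conjugacy of maximal $F$-split tori this yields $\bs{q}_F(v)=\|\wt{\lambda}\|$, i.e. $\lambda\in\Lambda_{F,v}^{\mathrm{opt}}$.

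Once $\lambda$ is known to be $(F,v)$-optimal and $(F,\overline{v})$-optimal, the equalities $\Lambda_{F,v}^{\mathrm{opt}}=\Lambda_{F,v_\lambda(k)}^{\mathrm{opt}}$ and $P_\lambda={_FP_v}={_FP_{v_\lambda(k)}}$ follow from \ref{HMrat1}(ii)–(iii): both optimal sets are single orbits under $U_{F,\ES{Q},Z}={_FU_v}(F)$ attached to the same $F$-parabolic $P_\lambda$, and one checks they contain the common element $\lambda$, whence they coincide; the parabolic assertion is then immediate since $P_\mu$ does not depend on the choice of optimal $\mu$. I expect the main obstacle to be the norm-estimate step of the converse implication: one must be careful that Tsuji's argument, which in \cite{Ts} rests on the geometric projection onto the convex hull in $\check{X}(T)_{\mbb{Q}}$ for a maximal torus $T$, goes through verbatim with $T$ replaced by a maximal $F$-split torus $S$ — the point being that $S$ need not be maximal in $G$, but (as already noted in the proof of \ref{lemme 2.7 de [Ts]}) this causes no difficulty because all the relevant weight combinatorics and the convexity argument of \cite[3.2]{H1} take place inside $\check{X}(S)_{\mbb{Q}}$, and the $(F,M_\lambda^\perp)$-semi-stability hypothesis is exactly the convex-geometric condition $0\in\ES{K}_{S^\lambda}(\overline{v})$ needed to run it.
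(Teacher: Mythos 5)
You have chosen the same route as the paper: rerun Tsuji's proof of the geometric criterion with maximal $F$-split tori in place of maximal tori, using \ref{lemme 2.7 de [Ts]}, \ref{relation entre les mu} and the uniform rational theorem \ref{HMrat1}. Your direct implication and your treatment of the final identities are essentially right (for the latter, once $\lambda$ is optimal for both $v$ and $\overline{v}=v_\lambda(k)$, both optimal sets are the $U_\lambda(F)$-orbit of $\lambda$ by \ref{HMrat1}\,(iii), so they coincide). One phrasing caveat on the direct implication: the statement to extract is the $(F,M_\lambda^\perp)$-semi-stability of $\overline{v}$, obtained by noting that for \emph{every} maximal $F$-split torus $S$ of $M_\lambda$ one has $\lambda=\lambda^{S}(v)$, hence by \ref{lemme 2.7 de [Ts]} $\mu^{S}(\overline{v})=\mu^{S}(v)$ is proportional to $\lambda$, hence $\mu^{S^\lambda}(\overline{v})=0$ by \ref{relation entre les mu}; asserting that ``$\lambda$ is $(F,\overline{v})$-optimal'' is not by itself the required conclusion.

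The genuine gap is in your converse. The convexity estimate you describe is valid only for competitors $\mu$ lying in $\check{X}(S)_{\mbb{Q}}$ for a maximal $F$-split torus $S$ of $M_\lambda$: for such $\mu$ one has $\ES{R}'_S(\overline{v})\subset\ES{R}'_S(v)$, and $0\in\ES{K}_{S^\lambda}(\overline{v})$ does force $\|\mu\|\geq\|\lambda\|/k$ whenever $m_v(\mu)\geq 1$. This proves $\lambda=\lambda^{S}(v)$, i.e. $\bs{q}^{S}(v)=\|\lambda\|/k$, for those particular tori — which is necessary but not sufficient for $(F,v)$-optimality, since $\bs{q}_F(v)=\inf_{g\in G(F)}\bs{q}^{S}(g\cdot v)$. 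Your appeal to ``the $G(F)$-conjugacy of maximal $F$-split tori'' does not close this: conjugating an arbitrary competitor into $\check{X}(S)_{\mbb{Q}}$ replaces $v$ by a $G(F)$-conjugate $g\cdot v$ whose $S$-weight support has no relation to $\ES{R}'_{S^\lambda}(\overline{v})$, so the semi-stability hypothesis on $\overline{v}$ gives no control on $m_{g\cdot v}(\mu)$. The hypothesis only constrains elements of $\overline{v}+V_{\lambda,k+1}$, not arbitrary points of the orbit.

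The missing step is exactly the paper's pivot. Choose a maximal $F$-split torus $S'$ of $G$ contained in $P_\lambda\cap{_FP_v}$ (such a torus exists), then $u\in U_\lambda(F)$ with $S=uS'u^{-1}\subset M_\lambda$. Since $u\cdot v\in\overline{v}+V_{\lambda,k+1}$, one has $(u\cdot v)_\lambda(k)=\overline{v}$, and your estimate (equivalently \ref{lemme 2.7 de [Ts]} applied to $\overline{v}$) gives $\lambda=\lambda^{S}(u\cdot v)$. Now $S\subset{_FP_{u\cdot v}}=u({_FP_v})u^{-1}$, so $S$ is an $(F,u\cdot v)$-optimal torus and therefore $\check{X}(S)\cap\Lambda_{F,u\cdot v}^{\mathrm{opt}}=\{\lambda\}$; finally, since $u\in U_\lambda(F)$ and $\Lambda_{F,u\cdot v}^{\mathrm{opt}}=u\bullet\Lambda_{F,v}^{\mathrm{opt}}$ is a single orbit under the unipotent radical of ${_FP_{u\cdot v}}=P_\lambda$, one transfers back and gets $\lambda\in\Lambda_{F,v}^{\mathrm{opt}}$. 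Without this maneuver, your converse establishes optimality of $\lambda$ only against co-characters of split tori of $M_\lambda$ and does not reach the stated conclusion.
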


\begin{proof} On reprend celle de \cite[theorem~2.8]{Ts}. 
Observons que l'ŽlŽment $\overline{v}=v_\lambda(k)$ est $(F,M_\lambda^\perp)$-semi-stable si et seulement si pour tout tore $F$-dŽployŽ maximal $S$ de $M_\lambda$,  
il est $S^\lambda$-semi-stable; ce qui Žquivaut ˆ $\mu^{S^\lambda\!}(\overline{v})=0$, ou encore (d'aprs \ref{relation entre les mu}) ˆ $\mu^{S\!}(\overline{v})= \frac{k}{(\lambda,\lambda)}\lambda$. 
Soit $S$ un tore $F$-dŽployŽ maximal de $M_\lambda$. Puisque ${\rm Im}(\lambda)\subset S$, $v$ est $S$-instable. Posons $\lambda_1=\lambda^S(v)$ et $k_1=m_v(\lambda_1)$. D'aprs \ref{lemme 2.7 de [Ts]}, on a donc 
$$\mu^{S\!}(v)= \mu^{S\!}(v_{\lambda_1}(k_1))\quad\hbox{et}\quad \lambda_1 =\lambda^{S\!}(v_{\lambda_1}(k_1))\ptf$$

Supposons que $\lambda$ soit $(F,v)$-optimal. Alors $S$ est $(F,v)$-optimal (relativement ˆ l'action de $G$ sur $V$) et $\lambda_1=\lambda$. 
Par consŽquent $\mu^{S\!}(\overline{v})$ et $\lambda$ sont proportionnels, ce qui Žquivaut (d'aprs \ref{relation entre les mu}) ˆ $\mu^{S^\lambda\!}(\overline{v})=0$. 
Puisque cela vaut pour tout tore $F$-dŽployŽ maximal $S$ de $M_\lambda$, 
on a montrŽ que $\overline{v}$ est $(F,M_\lambda^\perp)$-semi-stable.

RŽciproquement, supposons que $\overline{v}$ soit $(F,M_\lambda^\perp)$-semi-stable. Soit $S'$ un tore $F$-dŽployŽ maximal de $G$ contenu 
dans $P_\lambda \cap {_FP_v}$. Choisissons un $u\in U_\lambda(F)$ tel que $S=uS'u^{-1}$ soit contenu dans $M_\lambda$. Alors $\overline{v}$ 
est $S^\lambda$-semi-stable par hypothse. Par consŽquent $\mu^{S^\lambda\!}(\overline{v})=0$ et $\lambda = \frac{k}{(\lambda,\lambda)}\mu^{S\!}(\overline{v})$ (\ref{relation entre les mu}). 
On a donc $\lambda = \lambda^{S\!}(\overline{v})= \lambda^{S\!}(v)$ (\ref{lemme 2.7 de [Ts]}). Puisque $u\cdot v - v \in V_{\lambda,k+1}$, on a $(u\cdot v)_\lambda(k)= \overline{v}$. On a donc 
aussi $\lambda= \lambda^{S\!}(u\cdot v)$. Or ${_FP_{u\cdot v}}= u({_FP_v})u^{-1} \supset S$ et $S$ est un tore $F$-dŽployŽ maximal de $G$ qui est $(F,u\cdot v)$-optimal. 
Donc $\check{X}(S)\cap \Lambda_{F,u \cdot v}^{\rm opt} = \{\lambda\}$ et $\Lambda_{F,u\cdot v}^{\rm opt}= \Lambda_{F,v}^{\rm opt}$. 
\end{proof}

\begin{corollary}\label{Y et translations}
Si $\lambda\in \Lambda_{F,v}^{\mathrm{opt}}$ alors ${_F\scrY_v}+ V_{\lambda,k+1} = {_F\scrY_v}$.
\end{corollary}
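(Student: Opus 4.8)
Le plan consiste à transcrire au cadre rationnel l'argument géométrique qui suit la proposition~\ref{KNG Tsuji}, en remplaçant le critère de Kirwan--Ness géométrique par sa version rationnelle~\ref{thm de KN rationnel}. Posons $k= m_v(\lambda)$. Comme $\lambda$ est $(F,v)$-optimal, on a ${_F\scrX_v}= V_{\lambda,k}$; puisque $V$ est un $G$-module, $V_{\lambda,k}= V'_{\lambda,k}$ est un sous-espace vectoriel de $V$ (\ref{V'=V}), et ${_F\scrY_v}\subset {_F\scrX_v}= V_{\lambda,k}$. L'inclusion ${_F\scrY_v}\subset {_F\scrY_v}+V_{\lambda,k+1}$ étant claire (car $0\in V_{\lambda,k+1}$), il restera à prouver que pour $v'\in {_F\scrY_v}$ et $w\in V_{\lambda,k+1}$, l'élément $v''= v'+w$, qui appartient à $V_{\lambda,k}$, est dans ${_F\scrY_v}$.

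D'abord, j'observerai à l'aide de~\ref{caractérisation F-lames avec Lambda-opt et m} et de~\ref{HMrat1}\,(iv) que tout $v'\in {_F\scrY_v}$ vérifie $\lambda\in \Lambda_{F,v'}^{\mathrm{opt}}$ et $m_{v'}(\lambda)= m_{F,v'}= m_{F,v}= k$; en particulier la composante $v'_\lambda(k)$ de $v'$ sur $V_\lambda(k)$ est non nulle. Comme $w_\lambda(k)= 0$, on a $v''_\lambda(k)= v'_\lambda(k)\neq 0$, donc $v''\in V_{\lambda,k}\smallsetminus V_{\lambda,k+1}$ et $m_{v''}(\lambda)= k$. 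De plus $v''$ appartient à $\FN$, puisque $\lambda\in \check{X}_F(G)$ et $\lim_{t\rightarrow 0}t^\lambda\cdot v''= 0= e_V$.

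Ensuite, j'appliquerai~\ref{thm de KN rationnel}: $\lambda$ étant primitif, $(F,v')$-optimal et de niveau $m_{v'}(\lambda)= k$, la composante $v'_\lambda(k)$ est $(F,M_\lambda^\perp)$-semi-stable dans $V_\lambda(k)$. Il en va de même de $v''_\lambda(k)= v'_\lambda(k)$, et~\ref{thm de KN rationnel}, appliqué dans l'autre sens à $v''$ (pour lequel $m_{v''}(\lambda)= k$ et $\lambda$ est primitif), donne $\lambda\in \Lambda_{F,v''}^{\mathrm{opt}}$ ainsi que
$$\Lambda_{F,v''}^{\mathrm{opt}}= \Lambda_{F,v''_\lambda(k)}^{\mathrm{opt}}= \Lambda_{F,v'_\lambda(k)}^{\mathrm{opt}}= \Lambda_{F,v'}^{\mathrm{opt}}= \Lambda_{F,v}^{\mathrm{opt}}\ptf$$
D'après~\ref{HMrat1}\,(iv), on a alors $m_{F,v''}= m_{v''}(\lambda)= k= m_{F,v}$; et~\ref{caractérisation F-lames avec Lambda-opt et m}, appliqué à $v''$, fournira $v''\in {_F\scrY_v}$ à partir des conditions $v''\in \FN$, $\Lambda_{F,v''}^{\mathrm{opt}}= \Lambda_{F,v}^{\mathrm{opt}}$ et $m_{F,v''}= m_{F,v}$. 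Cela conclura.

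La preuve est ainsi une conséquence rapide du critère de Kirwan--Ness rationnel~\ref{thm de KN rationnel} et de la caractérisation~\ref{caractérisation F-lames avec Lambda-opt et m} des $F$-lames, et je n'anticipe pas de réel obstacle. Le seul point à surveiller est que la composante sur $V_\lambda(k)$ n'est pas modifiée par la perturbation $w\in V_{\lambda,k+1}$ --- ce qui est immédiat puisque $w_\lambda(k)= 0$ --- et que cela force l'égalité $m_{v''}(\lambda)= k$, de sorte que~\ref{thm de KN rationnel} s'applique à $v''$ avec le même entier $k$.
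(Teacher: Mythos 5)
Votre preuve est correcte et suit exactement la voie prévue par le papier : l'énoncé y est donné sans démonstration comme conséquence immédiate du critère de Kirwan--Ness rationnel \ref{thm de KN rationnel} (à l'image de l'énoncé géométrique qui suit \ref{KNG Tsuji}), et vous explicitez précisément cet argument — la perturbation par $V_{\lambda,k+1}$ ne change pas la composante sur $V_\lambda(k)$, donc ni la semi-stabilité ni le niveau, et \ref{caract�risation F-lames avec Lambda-opt et m} conclut. Rien à redire.
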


On aimerait pouvoir tester la $F$-semi-stabilitŽ sur un tore $F$-dŽployŽ maximal de $M_\lambda^\perp$ plut™t que sur le groupe $M_\lambda^\perp$ tout entier. 
L'intersection $P_\lambda\cap {_FP_v}$ contient un tore $F$-dŽployŽ maximal $S'$ de $G$ et l'on peut toujours 
choisir un ŽlŽment $u\in U_\lambda(F)$ 
tel que le tore $S=uS'u^{-1}$ soit contenu dans $M_\lambda$ (cet argument est utilisŽ dans la preuve de \ref{thm de KN rationnel}). On en dŽduit le

\begin{corollary}\label{variante 1 du thm de KN rationnel}(Sous les hypothses de \ref{thm de KN rationnel}.)
$\lambda\in \Lambda_{F,v}^{\rm opt}$ si et seulement s'il existe un tore $F$-dŽployŽ maxi\-mal $S$ de $M_\lambda$ et un ŽlŽment 
$u\in U_\lambda(F)$ tel que $u^{-1}Su\subset {_FP_{v_\lambda(k)}}$ et $v_\lambda(k)$ soit $S^\lambda$-semi-stable.
\end{corollary}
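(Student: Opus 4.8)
The statement to prove, Corollary~\ref{variante 1 du thm de KN rationnel}, is essentially a repackaging of Proposition~\ref{thm de KN rationnel} together with the conjugation argument already used in its proof. The plan is to show the two implications separately, reducing the statement about the group $M_\lambda^\perp$ to a statement about a single $F$-split maximal torus of $M_\lambda$.

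First I would treat the easy (``only if'') direction. Assume $\lambda\in\Lambda_{F,v}^{\mathrm{opt}}$. By Proposition~\ref{thm de KN rationnel} the component $v_\lambda(k)$ is $(F,M_\lambda^\perp)$-semi-stable, i.e.\ lies in $V_\lambda(k)\smallsetminus\FN^{M_\lambda^\perp}(V_\lambda(k),0)$. Now pick \emph{any} $F$-split maximal torus $S$ of $M_\lambda$ containing $\mathrm{Im}(\lambda)$ (such a torus exists since $M_\lambda$ is defined over $F$). Then $S^\lambda$ is an $F$-split maximal torus of $M_\lambda^\perp$, and since $v_\lambda(k)$ is not $(F,M_\lambda^\perp)$-unstable it is a fortiori not $(F,S^\lambda)$-unstable (every cocharacter of $S^\lambda$ defined over $F$ is a cocharacter of $M_\lambda^\perp$ defined over $F$); so $v_\lambda(k)$ is $S^\lambda$-semi-stable. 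Finally, replacing $S$ by $u^{-1}Su$ if necessary with $u=e_G$, the required element $u\in U_\lambda(F)$ with $u^{-1}Su\subset{_FP_{v_\lambda(k)}}$ exists: indeed $P_\lambda\cap{_FP_{v_\lambda(k)}}={_FP_v}$ contains an $F$-split maximal torus of $G$, hence of $M_\lambda$ after conjugating by a suitable $u\in U_\lambda(F)$, exactly as in the second half of the proof of Proposition~\ref{thm de KN rationnel}. This gives the claimed $S$ and $u$.

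For the converse, suppose there is an $F$-split maximal torus $S$ of $M_\lambda$ and an element $u\in U_\lambda(F)$ with $u^{-1}Su\subset{_FP_{v_\lambda(k)}}$ such that $v_\lambda(k)$ is $S^\lambda$-semi-stable. I would argue that $\lambda$ is then $(F,M_\lambda^\perp)$-semi-stable, so that Proposition~\ref{thm de KN rationnel} applies and yields $\lambda\in\Lambda_{F,v}^{\mathrm{opt}}$. The point, already observed at the start of the proof of Proposition~\ref{thm de KN rationnel}, is that $v_\lambda(k)$ is $(F,M_\lambda^\perp)$-semi-stable \emph{if and only if} it is $S'^\lambda$-semi-stable for \emph{every} $F$-split maximal torus $S'$ of $M_\lambda$, by $M_\lambda(F)$-conjugacy of such tori. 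So I would pass from the given $S$ to an arbitrary one via an element of $M_\lambda(F)$, using that $S^\lambda$-semi-stability of $v_\lambda(k)$ translates, by Remark~\ref{relation entre les mu}, into $\mu^{S}(v_\lambda(k))=\frac{k}{(\lambda,\lambda)}\lambda$, and that this proportionality relation is preserved under $M_\lambda(F)$-conjugation combined with the translation Lemma~\ref{lemme 2.7 de [Ts]}. Once $(F,M_\lambda^\perp)$-semi-stability of $v_\lambda(k)$ is established, Proposition~\ref{thm de KN rationnel} finishes the proof.

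The main obstacle I anticipate is bookkeeping rather than conceptual: one must check carefully that the hypothesis $u^{-1}Su\subset{_FP_{v_\lambda(k)}}$ is exactly what is needed to run the conjugation step, and in particular that conjugating $v$ (or $v_\lambda(k)$) by $u\in U_\lambda(F)$ does not change the relevant component, i.e.\ $(u\cdot v)_\lambda(k)=v_\lambda(k)$ since $u\cdot v-v\in V_{\lambda,k+1}$ — this is the same computation that appears in the proof of Proposition~\ref{thm de KN rationnel}. The role of $u$ is precisely to move a chosen $F$-split maximal torus of the intersection $P_\lambda\cap{_FP_v}$ inside $M_\lambda$ so that one can speak of $S^\lambda\subset M_\lambda^\perp$; making this compatible with the optimality statement $\check{X}(S)\cap\Lambda_{F,u\cdot v}^{\mathrm{opt}}=\{\lambda\}$ and then transferring back to $v$ via $\Lambda_{F,u\cdot v}^{\mathrm{opt}}=\Lambda_{F,v}^{\mathrm{opt}}$ is the delicate point, but it is routine given the machinery already in place.
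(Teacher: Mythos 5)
Your ``only if'' direction is fine (indeed, once $\lambda\in \Lambda_{F,v}^{\mathrm{opt}}$, Proposition~\ref{thm de KN rationnel} gives ${_FP_{v_\lambda(k)}}={_FP_v}=P_\lambda$, so \emph{any} $F$-split maximal torus $S$ of $M_\lambda$ and any $u\in U_\lambda(F)$ — e.g.\ $u=e_G$ — satisfy $u^{-1}Su\subset {_FP_{v_\lambda(k)}}$, and no conjugation is needed). The converse, however, has a genuine gap. You propose to deduce that $\overline{v}=v_\lambda(k)$ is $(F,M_\lambda^\perp)$-semi-stable from its $S^\lambda$-semi-stability ``by $M_\lambda(F)$-conjugacy of the $F$-split maximal tori''. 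This does not work: semi-stability with respect to one maximal torus does not propagate to the others, because the vector $\overline{v}$ is not fixed under $M_\lambda(F)$. Conjugating by $m\in M_\lambda(F)$ only tells you that $m\cdot\overline{v}$ is $(mSm^{-1})^\lambda$-semi-stable, which says nothing about $\overline{v}$ itself; and Lemma~\ref{lemme 2.7 de [Ts]} cannot bridge the gap, since $m\cdot\overline{v}-\overline{v}$ lies in $V_\lambda(k)$, not in $V_{\lambda,k+1}$. Concretely, for the standard $2$-dimensional representation of $\mathrm{SL}_2$ the vector $e_1+e_2$ is semi-stable for the diagonal torus but unstable for the group, so ``semi-stable for one torus'' is strictly weaker than ``semi-stable for the group''. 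Relatedly, your converse never actually uses the hypothesis $u^{-1}Su\subset {_FP_{v_\lambda(k)}}$, which you defer to ``bookkeeping''; without it the statement is simply false (choose $S$ so that the destabilizing directions of $\overline{v}$ avoid $S^\lambda$).

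The paper's converse does not pass through $(F,M_\lambda^\perp)$-semi-stability of $\overline{v}$ at all; the hypothesis on $u$ is the crux. Set $v'=u\cdot\overline{v}$. Then $u^{-1}Su\subset{_FP_{\overline{v}}}$ exactly means $S\subset {_FP_{v'}}$, i.e.\ $S$ is an $(F,v')$-optimal torus, so $\check{X}(S)\cap\Lambda_{F,v'}^{\mathrm{opt}}=\{\lambda'\}$ with $\lambda'$ the unique primitive cocharacter of $S$ proportional to $\mu^{S}(v')$. Since $v'\in\overline{v}+V_{\lambda,k+1}$, Remark~\ref{relation entre les mu} (together with \ref{lemme 2.7 de [Ts]}) gives $\mu^{S}(v')=\mu^{S}(\overline{v})$, and the $S^\lambda$-semi-stability of $\overline{v}$ gives $\mu^{S}(\overline{v})=\frac{k}{(\lambda,\lambda)}\lambda$; hence $\lambda'=\lambda\in\Lambda_{F,v'}^{\mathrm{opt}}$. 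Finally, since $v-v'\in V_{\lambda,k+1}$ and ${_F\scrY_{v'}}+V_{\lambda,k+1}={_F\scrY_{v'}}$ (Corollary~\ref{Y et translations}), one gets $v\in{_F\scrY_{v'}}$ and therefore $\lambda\in\Lambda_{F,v}^{\mathrm{opt}}=\Lambda_{F,v'}^{\mathrm{opt}}$. This identification of $\lambda$ through the optimal torus for $v'$, rather than any conjugacy of tori, is the missing idea in your proposal.
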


\begin{proof}
Posons $\overline{v}=v_\lambda(k)$. Si $\lambda\in \Lambda_{F,v}^\mathrm{opt}$, alors $\overline{v}$ est $(F,M_\lambda^\perp)$-semi-stable et 
${_FP_{\overline{v}}}={_FP_v}= P_\lambda$. Dans ce cas, pour tout tore $F$-dŽployŽ maximal $S$ de $M_\lambda$ 
et tout ŽlŽment $u\in U_\lambda(F)$, on a $u^{-1}Su\subset {_FP_{\overline{v}}}$ et $\overline{v}$ est $S^\lambda$-semi-stable. 
RŽciproquement, supposons 
qu'il existe une telle paire $(S,u)$. Alors $S$ est contenu dans ${_FP_{v'}}$ o l'on a posŽ $v'= u\cdot \overline{v}$. Par 
consŽquent $\check{X}(S)\cap \Lambda_{F,v'}^\mathrm{opt}= \{\lambda'\}$ et $\lambda'$ est l'unique co-caractre primitif de $S$ 
qui soit proportionnel ˆ $\mu^{S}(v')$. Comme $v' \in \overline{v} + V_{\lambda,k+1}$, on a (d'aprs \ref{relation entre les mu}) $\mu^S(v')= \mu^S(\overline{v})$; et comme 
$\overline{v}$ est $S^\lambda$-semi-stable, on obtient $\mu^S(v')= \frac{k}{(\lambda,\lambda)}\lambda$. Donc $\lambda'= \lambda$ appartient ˆ $\Lambda_{F,v'}^{\mathrm{opt}}$. 
Comme $v'- v$ appartient ˆ $V_{\lambda,k+1}$ et que (d'aprs \ref{Y et translations}) 
${_F\scrY_{v'}}+ V_{\lambda ,k+1}= {_F\scrY_{v'}}$, 
cela entra"ne que $v$ appartient ˆ ${_F\scrY_{v'}}$ et donc que $\lambda$ appartient ˆ $\Lambda_{F,v}^\mathrm{opt}=\Lambda_{F,v'}^\mathrm{opt}$. 
\end{proof}

Notons $A_\lambda$ le tore central $F$-dŽployŽ maximal 
de $M_\lambda$ et posons 
$$A_\lambda^\perp = (A_\lambda)^\lambda \;(= \langle \mathrm{Im}(\mu)\,\vert \, \mu \in \check{X}(A_\lambda),\,(\lambda,\mu)=0\rangle)\ptf$$ 
Observons que le $F$-sous-groupe parabolique $P_\lambda$ de $G$ est minimal (parmi les $F$-sous-groupes paraboliques de $G$) 
si et seulement si $A_\lambda$ est l'unique tore $F$-dŽployŽ maximal de $M_\lambda$; auquel cas un ŽlŽment est $(F,M_\lambda^\perp)$-semi-stable 
si et seulement s'il est $A_\lambda^\perp$-semi-stable. En gŽnŽral (i.e. sans supposer $P_\lambda$ minimal), on se ramne au tore $A_\lambda$ gr‰ce au

\begin{lemma}\label{lemme de S ˆ A} 
(Sous les hypothses de \ref{thm de KN rationnel}.) Soit $S$ un tore $F$-dŽployŽ maximal de $M_\lambda$. L'ŽlŽment 
$v_\lambda(k)$ est $S^\lambda$-semi-stable si et seulement s'il est $A_\lambda^\perp$-semi-stable et 
$\|\mu^{S\!}(v_\lambda(k))\|=\| \mu^{A_\lambda\!}(v_\lambda(k))\|$.
\end{lemma}

\begin{proof}
Posons $\overline{v}=v_\lambda(k)$. Rappelons qu'on a notŽ $\ES{R}'_{S}(\overline{v})$ l'ensemble des $\chi\in X(S)$ tels que 
$\overline{v}_\chi \neq 0$ et $\ES{K}_{S}(\overline{v})$ l'enveloppe convexe de $\iota_{S}(\ES{R}'_S(\overline{v}))$ dans $\check{X}(S)_{\mbb{Q}}$. 
La surjection naturelle $X(S)\rightarrow X(A_\lambda),\, \chi \mapsto \chi\vert_{A_\lambda}$ se restreint en une 
application surjective $\ES{R}'_S(\overline{v})\rightarrow \ES{R}'_{A_\lambda}(\overline{v})$. 
Posons $$\overline{X}= \ker \left(X(S) \rightarrow X(A_\lambda)\right)= \ker \left(X(S^\lambda)\rightarrow 
X(A_\lambda^\perp)\right)\ptf$$ On a la dŽcomposition 
$$\check{X}(S)_{\mbb{Q}}= \check{X}(A_\lambda)_{\mbb{Q}}\oplus \mathrm{Hom}(\overline{X},\mbb{Q})$$ et $\ES{K}_{A_\lambda}(\overline{v})$ est la 
projection orthogonale de $\ES{K}_S(\overline{v})$ sur $\check{X}(A_\lambda)_{\mbb{Q}}$. 
D'o l'inŽgalitŽ $$\|\mu^{A_\lambda\!}(\overline{v}) \|\leq \|\mu^{S\!}(\overline{v})\|\ptf$$
D'autre part on a 
$$\mu^{A_\lambda\!}(\overline{v})= \mu^{A_\lambda^\perp\! }(\overline{v}) + \frac{k}{(\lambda,\lambda)}\lambda \quad \hbox{et}\quad 
\mu^{S\!}(\overline{v})= \mu^{S^\lambda\!}(\overline{v}) + \frac{k}{(\lambda,\lambda)}\lambda \ptf$$ 
On en dŽduit que $\mu^{S^\lambda\!}(\overline{v})=0$ si et seulement si 
$\mu^{A_\lambda^\perp\!}(\overline{v})=0$ et $\|\mu^{S\!}(\overline{v})\|=\| \mu^{A_\lambda\!}(\overline{v})\|$. 
D'o le lemme. 
\end{proof}

On peut comparer le critre de Kirwan-Ness rationnel avec le critre de Kirwan-Ness gŽomŽtrique: 

\begin{proposition}\label{comparaison KN rat avec KN gŽo}
Soient $v\in \FN$, $\lambda\in \Lambda_{F,v}^{\mathrm{opt}}$ et $k= m_v(\lambda)$. Supposons que 
$F$ soit infini et $G$ soit $F$-dŽployŽ; ou bien que $v$ appartienne ˆ $\NF$. Les conditions suivantes sont Žquivalentes:
\begin{enumerate}
\item[(i)] $\lambda\in \Lambda_v^{\mathrm{opt}}$ (i.e. $\frac{1}{k}\lambda\in \bs{\Lambda}_v$);
\item[(ii)] $v_\lambda(k)\notin \ES{N}^{M_\lambda^\perp}(V_\lambda(k),0)$;
\item[(iii)] $\ES{N}^{M_\lambda^\perp}(V_\lambda(k),0) \neq V_\lambda(k)$.;
\item[(iv)] l'ensemble $V_\lambda(k) \smallsetminus{_F\ES{N}^{M_\lambda^\perp}(V_\lambda(k),0)}$ est ouvert dans $V_\lambda(k)$;
\item[(v)] l'ensemble $V_\lambda(k) \smallsetminus{_F\ES{N}^{M_\lambda^\perp}(V_\lambda(k),0)}$ est $M_\lambda$-invariant. 
\end{enumerate}
\end{proposition}

\begin{proof}L'Žquivalence $(i)\Leftrightarrow (ii)$ est le critre de Kirwan-Ness gŽomŽtrique. D'aprs le critre de Kirwan-Ness rationnel (\ref{thm de KN rationnel}) on a 
$$V_\lambda(k)\smallsetminus {_F\ES{N}^{M_\lambda^\perp}(V_\lambda(k),0)}= \{v'_\lambda(k)\,\vert \, v'\in {_F\scrY_v}\}\ptf$$ 
D'aprs \ref{Y et translations}, on en dŽduit que $V_\lambda(k)\smallsetminus {_F\ES{N}^{M_\lambda^\perp}(V_\lambda(k),0)}$ est ouvert dans $V_\lambda(k)$ si et seulement si 
${_F\scrY_v}$ est ouvert dans $V_{\lambda,k}$; et que $V_\lambda(k)\smallsetminus {_F\ES{N}^{M_\lambda^\perp}(V_\lambda(k),0)}$ est $M_\lambda$-invariant si et seulement si 
${_F\scrY_v}$ est $P_\lambda$-invariant. Gr‰ce ˆ \ref{deux critres bonnes F-lames} et \ref{deux critres bonnes F-lames bis}, 
on obtient que les conditions $(i)$, $(iv)$ et $(v)$ sont Žquivalentes. 

L'implication $(ii)\Rightarrow (iii)$ est claire. Supposons que $\ES{N}^{M_\lambda^\perp}(V_\lambda(k),0) \neq V_\lambda(k)$. Puisque 
$\ES{N}^{M_\lambda^\perp}(V_\lambda(k),0)$ est fermŽ dans $V_\lambda(k)$ et contient ${_F\ES{N}^{M_\lambda^\perp}(V_\lambda(k),0)}$, 
cela entra"ne que $V_\lambda(k) \smallsetminus {_F\ES{N}^{M_\lambda^\perp}(V_\lambda(k),0)}$ contient un ouvert non vide de $V_\lambda(k)$. Par consŽquent 
la $F$-lame ${_F\scrY_v}$ contient un ouvert non vide de ${_F\scrY_v}=V_{\lambda,k}$, ce qui n'est possible que si $\lambda \in \Lambda_v^{\mathrm{opt}}$ (d'aprs \ref{mauvaises F-strates} et 
\ref{deux critres bonnes F-lames bis}). Donc $(iii)\Rightarrow (i)$.
\end{proof}

\P\hskip1mm\textit{SŽparabilitŽ et rationalitŽ. ---} Reprenons l'espace projectif $$\mbb{P}(V)= (V\smallsetminus \{0\})/ \smash{\overline{F}}^\times$$ et la projection naturelle 
$q=q_V: V\smallsetminus \{0\} \rightarrow \mbb{P}(V)$. Soient $v\in \ES{N} \smallsetminus \{0\}$, $\lambda \in \Lambda_v^{\rm opt}$ et 
$k= m_v(\lambda)\in \mbb{N}^*$. Notons $\overline{v}=v_\lambda(k)$ la composante de $v$ sur $V_\lambda(k)$. 

Si $\lambda'$ est un autre ŽlŽment de 
$\Lambda_{v}^{\rm opt}$, alors $\lambda'=u\bullet \lambda$ pour un unique $u\in U_v= U_\lambda$ et $$v_{\lambda'}(k) = (u\cdot v)_{\lambda'}(k)= u \cdot \overline{v}\in \overline{v} + V_{\lambda,k+1};$$ 
en particulier $v_{\lambda'}(k)$ est sŽparable si et seulement si $\overline{v}$ est sŽparable.

Plus gŽnŽralement, pour $p\in P_v= P_\lambda$, on a 
$$(p\cdot v)_{p\bullet \lambda}(k)= p \cdot \overline{v}\in m\cdot \overline{v} + V_{\lambda, k+1}\quad \hbox{si $p\in mU_v$ avec $m\in M_\lambda$}\ptf$$ 
On en dŽduit que les deux conditions suivantes sont Žquivalentes:
\begin{itemize}
\item $\overline{v}$ est sŽparable;
\item il existe un $w\in P_v\cdot v$ et un $\lambda'\in \Lambda_v^{\rm opt}= \Lambda_w^{\rm opt}$ tels que $w_{\lambda'}(k)$ soit sŽparable. 
\end{itemize}
\begin{lemma}\label{projseprat}Supposons que l'ŽlŽment $\overline{v}=v_\lambda(k)$ soit sŽparable, i.e. que le morphisme 
$G\rightarrow \ES{O}_{\overline{v}},\, g \mapsto g\cdot \overline{v}$ soit sŽparable. 
\begin{enumerate}
\item[(i)]L'ŽlŽment $q(\overline{v})$ est sŽparable, i.e. le morphisme $G \rightarrow \ES{O}_{q(\overline{v})},\, g \mapsto g \cdot q(\overline{v})= q(g\cdot \overline{v})$ est sŽparable. 
\item[(ii)]Si $\overline{v}\in V(F)$, alors $\overline{v}$ appartient ˆ $\ES{N}_F$, $\bs{\Lambda}_{F,\overline{v}}= \check{X}_F(G)_{\mbb{Q}}\cap \bs{\Lambda}_v$, ${_FP_{\overline{v}}}= P_v$ et 
$$\scrY_{F,\overline{v}}= V(F)\cap \scrY_{v}\ptf$$
\item[(iii)]Si $\overline{v}\in V(F)$ et si pour tout $v'\in V(F) \cap (\bsfrY_v \smallsetminus \scrY_v)$, il existe un ŽlŽment $w \in \scrY_{v'}$ et un co-caractre $\lambda' \in \Lambda_{w}^{\rm opt}= \Lambda_{v'}^{\rm opt}$ tels que l'ŽlŽment $w_{\lambda'}(k)$ soit sŽparable 
et appartienne ˆ $V(F)$, alors $$\bsfrY_{F,\overline{v}}= V(F)\cap \bsfrY_v\ptf$$
\end{enumerate}
\end{lemma}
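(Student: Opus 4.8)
The plan is to treat the three assertions in order, each one building on the previous. For (i), I would argue that separability of the morphism $\pi_{\overline{v}}\colon G\to\ES{O}_{\overline{v}},\,g\mapsto g\cdot\overline{v}$ descends to the morphism $G\to\ES{O}_{q(\overline{v})},\,g\mapsto q(g\cdot\overline{v})$. The point is that $q\colon V\smallsetminus\{0\}\to\mbb{P}(V)$ is itself smooth (it is a $\mbb{G}_\mathrm{m}$-torsor onto its image, hence separable), so the composite $g\mapsto q(g\cdot\overline{v})$ is separable as a composite of separable morphisms; but one must be a bit careful, since $\ES{O}_{q(\overline{v})}$ need not be the image of $\ES{O}_{\overline{v}}$ under $q$ set-theoretically only — in fact by \ref{espace projectif}\,(i) we have $\mathrm{Stab}_G(q(\overline{v}))= {\rm Im}(\lambda)\ltimes \mathrm{Stab}_G(\overline{v})$, so the fiber of $\pi_{\overline{v}}$ over $\ES{O}_{q(\overline{v})}$ is exactly a $\mbb{G}_\mathrm{m}$-bundle over $\ES{O}_{\overline{v}}$. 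Comparing dimensions and using the characterization of separability via tangent spaces (Lemma \ref{orbites s�parables}(iv)), the differential of $g\mapsto q(g\cdot\overline{v})$ at $1$ is the composite of the surjection $\mathrm{Lie}(G)\twoheadrightarrow T_{\overline{v}}(\ES{O}_{\overline{v}})$ (separability of $\pi_{\overline{v}}$) with the differential of $q$ restricted to $T_{\overline{v}}(\ES{O}_{\overline{v}})$, which is surjective onto $T_{q(\overline{v})}(\ES{O}_{q(\overline{v})})$ because $q$ is smooth and $\ker dq_{\overline{v}}=\overline{F}\cdot\overline{v}=\mathrm{Lie}({\rm Im}(\lambda))\cdot\overline{v}$ lies in $T_{\overline{v}}(\ES{O}_{\overline{v}})$; hence the composite differential is surjective and $q(\overline{v})$ is separable.

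For (ii), assume $\overline{v}\in V(F)$. Since $\overline{v}\in\scrY_v\subset\FN$ (the inclusion $\scrY_v\subset\ES{N}$ being automatic and the stronger $\overline{v}\in\FN$ needing justification), the first task is to show $\overline{v}\in\NF$, i.e. that $\overline{v}$ is actually $F$-unstable, not merely $\overline{F}$-unstable. Here is where separability enters decisively: because $q(\overline{v})$ is separable by (i), the stabilizer $\mathrm{Stab}_G(q(\overline{v}))$ coincides with its schematic analogue $G^{q(\overline{v})}$, hence is smooth; being defined over $F$ (as $q(\overline{v})\in\mbb{P}(V)(F)$), it contains a maximal $F$-split torus $S$, and by \ref{espace projectif}\,(i)--(ii) the primitive $v$-optimal co-character $\lambda^\natural\in\check{X}(S)\cap\Lambda_v^{\rm opt}$ is then $F$-rational. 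Thus $\check{X}_F(G)_{\mbb{Q}}\cap\bs{\Lambda}_v\neq\emptyset$, so by \ref{le cas o� F-lame = lame g�o} we get $\bs{\Lambda}_{F,\overline{v}}=\check{X}_F(G)_{\mbb{Q}}\cap\bs{\Lambda}_v$, ${_FP_{\overline{v}}}=P_v$, and ${_F\scrY_{\overline{v}}}=\scrY_v$, whence $\scrY_{F,\overline{v}}=V(F)\cap{_F\scrY_{\overline{v}}}=V(F)\cap\scrY_v$. (One must check the $\overline{F}$-optimal co-character in $\bs{\Lambda}_v$ obtained from $\lambda$ and the one from $\overline{v}$ agree, which follows because $\overline{v}\in\scrY_v$ forces $\bs{\Lambda}_v=\bs{\Lambda}_{\overline{v}}$.)

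For (iii), assume in addition the hypothesis on elements $v'\in V(F)\cap(\bsfrY_v\smallsetminus\scrY_v)$. The inclusion $\bsfrY_{F,\overline{v}}=G(F)\cdot\scrY_{F,\overline{v}}\subset V(F)\cap\bsfrY_v$ is immediate from (ii). For the reverse inclusion, take $v'\in V(F)\cap\bsfrY_v$; I want to show $v'\in\bsfrY_{F,\overline{v}}$. Write $v'=g\cdot w$ with $g\in G$ and $w\in\scrY_v=\scrY_{v'}$ (possible since $v'\in\bsfrY_v$). If in fact $v'\in\scrY_v$ (the case $g\in P_v$), then $v'\in V(F)\cap\scrY_v=\scrY_{F,\overline{v}}$ by (ii) and we are done; otherwise $v'\in V(F)\cap(\bsfrY_v\smallsetminus\scrY_v)$, so by hypothesis there exist $w'\in\scrY_{v'}$ and $\lambda'\in\Lambda_{w'}^{\rm opt}=\Lambda_{v'}^{\rm opt}$ with $w'_{\lambda'}(k)$ separable and in $V(F)$. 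Applying (ii) to $w'_{\lambda'}(k)$ in place of $\overline{v}$ (note $\Lambda_{v'}^{\rm opt}=\Lambda_v^{\rm opt}$ and $m_{v'}=k$ since $v'\in\bsfrY_v$), we get $\scrY_{F,w'_{\lambda'}(k)}=V(F)\cap\scrY_{v'}$ and ${_FP_{w'_{\lambda'}(k)}}=P_{v'}$. In particular $w'_{\lambda'}(k)\in\NF$ lies in the same $F$-strate as $\overline{v}$ — because both have $F$-rational optimal co-characters in the single $G$-orbit $G\bullet\bs{\Lambda}_v$ — and since $v'$ lies in the $F$-lame of $w'_{\lambda'}(k)$ (as $v'\in V(F)\cap\scrY_{v'}$), we conclude $v'\in\bsfrY_{F,w'_{\lambda'}(k)}=\bsfrY_{F,\overline{v}}$.

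\textbf{Main obstacle.} I expect the delicate point to be step (i) and, correspondingly, the verification in (ii) that the separability of $\overline{v}$ genuinely forces the existence of an $F$-rational optimal co-character — i.e. that smoothness of $\mathrm{Stab}_G(q(\overline{v}))$ really does let one invoke \ref{espace projectif} over $F$ rather than only over $\overline{F}$. The subtlety is that separability of $\overline{v}$ as defined (smoothness of $G^{\overline{v}}$) does not a priori transfer to smoothness of $G^{q(\overline{v})}$ without the tangent-space computation above, and one needs to know that ${\rm Im}(\lambda)$ sits inside the reduced stabilizer in a way compatible with the semidirect product decomposition at the scheme level. Once smoothness of the projective stabilizer is in hand, the descent to $F$ via a maximal $F$-split torus and \ref{le cas o� F-lame = lame g�o} is essentially formal.
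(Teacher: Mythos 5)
Your part (i) is fine and is essentially the paper's argument: both rest on the fact that $\overline{F}^\times\overline{v}\subset\ES{O}_{\overline{v}}$ (because $t^\lambda\cdot\overline{v}=t^k\overline{v}$), so the kernel of the differential of $q$ at $\overline{v}$ is tangent to the orbit; the paper packages this by saying that $q$ restricted to $\ES{O}_{\overline{v}}$ is a locally trivial $\mbb{G}_{\rm m}$-bundle onto $\ES{O}_{q(\overline{v})}$. The genuine gap is in (ii), at the decisive step. You assert that the smooth stabilizer $G^\natural=\mathrm{Stab}_G(q(\overline{v}))$, being defined over $F$, contains a maximal $F$-split torus $S$ with $\check{X}(S)\cap\Lambda_v^{\rm opt}\neq\emptyset$, citing \ref{espace projectif}. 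But \ref{espace projectif}\,(ii) is a statement about maximal tori of $G^\natural$ over $\overline{F}$, not about maximal $F$-split tori; a maximal $F$-split torus has no reason to contain the image of any optimal co-character, and asserting that it does is exactly asserting the existence of an $F$-rational optimal co-character for $\overline{v}$, i.e. $\check{X}_F(G)_{\mbb{Q}}\cap\bs{\Lambda}_v\neq\emptyset$ --- which is the substance of what (ii) claims and is what you then need in order to invoke the criterion giving ${_F\scrY_{\overline{v}}}=\scrY_v$. So your argument is circular precisely where separability has to do its work. The paper argues differently: since $G^\natural$ is smooth one can choose a maximal torus $T^\natural$ of $G^\natural$ defined over $F$; it splits only over $F^{\mathrm{s\acute{e}p}}$, so the unique $\lambda^\natural\in\check{X}(T^\natural)\cap\Lambda_v^{\rm opt}$ is a priori only $F^{\mathrm{s\acute{e}p}}$-rational. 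This already gives $\overline{v}\in V(F)\cap\ES{N}_{F^{\mathrm{s\acute{e}p}}}=\NF$ and the statements of (ii) with $F^{\mathrm{s\acute{e}p}}$ in place of $F$, and one then comes back to $F$ by the separable descent results of \ref{la stratification de Hesselink}, using crucially that $\overline{v}\in V(F)$. Your proposal applies the rationality criterion directly over $F$ and has no descent step; without it (or a separate argument, e.g. Galois-invariance of $\lambda^\natural$, which you do not give) the required $F$-rationality is unproved.

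There is a secondary gap of the same nature in (iii). Applying (ii) to $\overline{v}$ and to $\overline{w}=w_{\lambda'}(k)$ gives $F$-rational virtual co-characters $\mu\in\bs{\Lambda}_{F,\overline{v}}$ and $\mu'\in\bs{\Lambda}_{F,\overline{w}}$ which are conjugate under $G(\overline{F})$ (the elements lie in the same geometric stratum), but membership in the same $F$-stratum requires conjugacy under $G(F)$. The paper supplies this step: the parabolic pairs $(P_\mu,M_\mu)$ and $(P_{\mu'},M_{\mu'})$ are defined over $F$ and conjugate under $G$, hence conjugate under $G(F)$, and the discrepancy $x^{-1}g$ normalizes the pair, lies in $M_\mu$ and fixes $\mu$, so $\mu'=x\bullet\mu$ with $x\in G(F)$. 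Your sentence \guill{because both have $F$-rational optimal co-characters in the single $G$-orbit $G\bullet\bs{\Lambda}_v$} skips exactly this geometric-to-rational conjugacy argument. (A minor slip, harmless for the application of (ii): for $v'\in\bsfrY_v\smallsetminus\scrY_v$ one has $\Lambda_{v'}^{\rm opt}$ only $G$-conjugate to $\Lambda_v^{\rm opt}$, not equal to it.)
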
 

\begin{proof}
Prouvons (i). Posons $\ES{O}=\ES{O}_{\overline{v}}$ et $\bs{\ES{O}}= \ES{O}_{q(\overline{v})}$. Notons $\pi_{\overline{v}}: G\rightarrow \ES{O}$ et $\pi_{q(\overline{v})}: G \rightarrow \bs{\ES{O}}$ les morphismes orbites. 
Le morphisme ${\rm d}(\pi_{\overline{v}})_1: \mathfrak{g}\rightarrow T_{\overline{v}}(\ES{O})$ est par hypothse surjectif. Puisque 
$\pi_{q(\overline{v})}= (\ES{O} \buildrel q\over{\longrightarrow} \bs{\ES{O}})\circ \pi_{\overline{v}}$, pour prouver que le morphisme $\pi_{q(\overline{v})}$ est sŽparable, il suffit de prouver que le morphisme 
$q\vert_{\ES{O}}: \ES{O}\rightarrow \bs{\ES{O}}$ est sŽparable. Comme $t^\lambda\cdot \overline{v} = t^k \overline{v}$ pour tout $t\in \smash{\overline{F}}^\times$, la 
$G$-orbite $\ES{O}$ contient $\smash{\overline{F}}^\times \overline{v}$ et $q\vert_{\ES{O}}$ est un $\mbb{G}_{\rm m}$-fibrŽ principal localement trivial. Cela assure que pour 
chaque $v'\in \ES{O}$, le morphisme  
$${\rm d}(q\vert_{\ES{O}})_{v'}: T_{v'}(\ES{O}) \rightarrow T_{q(v')}(\bs{\ES{O}})$$ est surjectif.

Prouvons (ii). Supposons $\overline{v}\in V(F)$. Alors $q(\overline{v})\in \mbb{P}(V)(F)$ et puisque $q(\overline{v})$ est sŽparable (d'aprs (i)), 
le stabilisateur schŽmatique $G^\natural= G^{q(\overline{v})}$ est lisse, i.e. il co\"{\i}ncide avec le stabilisateur $\mathrm{Stab}_G(q(\overline{v}))$ au sens de Borel (cf. \cite[ch.~II, 6.7]{B}). On peut donc 
choisir un tore maximal $T^\natural$ de $G^\natural$ qui soit dŽfini sur $F$. D'aprs \ref{espace projectif}\,(ii), on a $\check{X}(T^\natural)\cap \Lambda_v^{\rm opt} = \{\lambda^\natural\}$. 
Puisque $T$ se dŽploie sur $F^{\rm s\acute{e}p}$, le co-caractre $\lambda^\natural$ est dŽfini sur $F^{\rm s\acute{e}p}$. En particulier 
$\overline{v}$ appartient ˆ $V(F)\cap \ES{N}_{F^{\rm s\acute{e}p}}= \ES{N}_F$ et l'on a 
$$\bs{\Lambda}_{F^{\rm s\acute{e}p},\overline{v}} = \check{X}_{F^{\rm s\acute{e}p}}(G)_{\mbb{Q}}\cap \bs{\Lambda}_v\quad \hbox{et} \quad \scrY_{F^{\rm s\acute{e}p},\overline{v}} = V(F^{\rm s\acute{e}p})\cap \scrY_v\ptf$$ 
Par descente sŽparable (\ref{descente sŽparable lames}), on a aussi 
$$\bs{\Lambda}_{F,\overline{v}}= \check{X}_F(G)_{\mbb{Q}} \cap \bs{\Lambda}_{F^{\rm s\acute{e}p},\overline{v}}\quad \hbox{et} \quad \scrY_{F,\overline{v}}= V(F) \cap \scrY_{F^{\rm s\acute{e}p},\overline{v}}\ptf$$ 
Cela prouve (ii).

Prouvons (iii). Puisque $\bsfrY_{F,\overline{v}}= G(F)\cdot \scrY_{F,\overline{v}}$ et (d'aprs (ii)) $ \scrY_{F,\overline{v}}= V(F)\cap \scrY_v$, on a l'inclusion 
$\bsfrY_{F,\overline{v}}\subset V(F)\cap \bsfrY_v$. Quant ˆ l'inclusion inverse, d'aprs (ii) il suffit de prouver l'inclusion $$V(F) \cap (\bsfrY_v \smallsetminus \scrY_v)\subset \bsfrY_{F,\overline{v}}\ptf$$ 
Soient $v'$, $w$ et $\lambda'$ comme dans l'ŽnoncŽ. Posons $\overline{w}= w_{\lambda'}(k)$; 
par hypothse $\overline{w}$ est sŽparable et appartient ˆ $V(F)$. Il s'agit de prouver que $\overline{v}$ et $\overline{w}$ sont dans la mme $F$-strate. D'aprs (ii),  
il existe des co-caractres $\mu\in \check{X}_F(G)_{\mbb{Q}} \cap \bs{\Lambda}_v$ et $\mu'\in \check{X}_F(G)_{\mbb{Q}}\cap \bs{\Lambda}_{w}$. Puisque $v$ et $w$ sont dans la mme $\overline{F}$-strate, 
il existe un $g\in G=G(\overline{F})$ tel que $\mu'= {\rm Int}_g \circ \mu$. On a donc $(P_{\mu'}, M_{\mu'})= {\rm Int}_g(P_{\mu},M_{\mu})$.  
Comme les paires paraboliques $(P_{\mu},M_{\mu})$ et $(P_{\mu'},M_{\mu'})$ sont dŽfinies sur $F$ et conjuguŽes dans $G$, elles 
le sont dans $G(F)$: il existe un $x\in G(F)$ tel que $(P_{\mu'}, M_{\mu'})= {\rm Int}_x(P_{\mu},M_{\mu})$. L'ŽlŽment $y=x^{-1}g$ stabilise $(P_{\mu},M_{\mu})$; il appartient donc 
ˆ $M_{\mu}$ et $y\bullet  \mu= \mu$. On a donc $\mu'= x\bullet \mu$. Puisque (d'aprs (ii)) $\mu\in \bs{\Lambda}_{F,\overline{v}}$ et $\mu'\in \bs{\Lambda}_{F,\overline{w}}$, 
cela entra"ne que $\overline{v}$ et $\overline{w}$ sont dans la mme $F$-strate et donc que $V(F)\cap \scrY_{v'}= \scrY_{F,\overline{w}}$ est 
une $F$-lame contenue dans $\bsfrY_{F,\overline{v}}$. D'o le point (iii). 
\end{proof}

\subsection{Le cas d'un corps topologique}\label{le cas d'un corps top} Si $F$ est un corps commutatif topologique (sŽparŽ, non discret) et si $X$ est une variŽtŽ dŽfinie sur $F$, on peut 
munir $X(F)$ de la topologie (forte) dŽfinie par $F$. On a (pour des $F$-variŽtŽs 
$X$ et $Y$): 
\begin{itemize}
\item si $X= \mbb{G}_{\mathrm{a}}$, la topologie dŽfinie par $F$ sur $X(F)=F$ est celle de $F$;
\item tout morphisme de $F$-variŽtŽs $\alpha: X \rightarrow Y$ induit une application continue $\alpha_F: X(F) \rightarrow Y(F)$ 
qui est un morphisme topologiquement fermŽ, resp. ouvert, si $\alpha$ est une immersion fermŽe, resp. ouverte. 
\item la bijection canonique $(X\times Y)(F) \rightarrow X(F)\times Y(F)$ est un homŽomorphisme.
\end{itemize}
Pour allŽger l'Žcriture, nous dirons\index{TopF@\TopF} \guill{\TopF} au lieu de \guill{la topologie dŽfinie par $F$}. Nous dirons aussi 
\TopF-ouvert, resp. \TopF-fermŽ (etc.), pour ouvert pour \TopF, fermŽ pour {\TopF} (etc.).

Continuons avec les hypothses de \ref{la variante de Hesselink}. Il n'est pas nŽcessaire ici de supposer vŽrifiŽe l'hypothse de rŽgularitŽ 
\ref{hyp reg}. On suppose jusqu'ˆ la fin de \ref{le cas d'un corps top} que $F$ est un corps topologique et on munit 
$V(F)$ et $G(F)$ de \TopF. Cela fait de $V(F)$ un $G(F)$-espace \textit{topologique}, \cad que l'application $G(F)\times V(F)\rightarrow V(F),\,(g,v)\mapsto g\cdot v$ 
est continue pour \TopF. 

\vskip1mm
Supposons tout d'abord que $F$ soit un corps commutatif \textit{localement compact} (non discret), \cad:
\begin{itemize}
\item un corps local archimŽdien (i.e. isomorphe ˆ $\mbb{R}$ ou $\mbb{C}$);
\item un corps local non archimŽdien (i.e. une extension finie de $\mbb{Q}_p$ ou une extension fini du corps des sŽries formelles $\mbb{F}_p((t))$).
\end{itemize}

D'aprs \cite[2.1.1]{MB}, on a le 

\begin{lemma}\label{[2.1.1,MB]}
($F$ est un corps commutatif localement compact.) Soit $X$ une $F$-variŽtŽ irrŽductible et non singulire (i.e. lisse). Si $\Omega$ est un sous-ensemble non vide 
\TopF-ouvert de $X(F)$, alors $\Omega$ est Zariski-dense dans $X$. 
\end{lemma}

\begin{remark}
\textup{D'aprs loc.~cit., si de plus $X= \mbb{A}^n_F$, 
le lemme \ref{[2.1.1,MB]} reste vrai pour n'importe quel corps commutatif topologique $F$ (sŽparŽ, non discret).}
\end{remark}

Si $P$ est un $F$-sous-groupe parabolique de $G$, puisque 
la variŽtŽ quotient $G/P$ est projective, l'ensemble $(G/P)(F)=G(F)/P(F)$ \cite[ch.~V, 20.5]{B} de ses points $F$-rationnels est \TopF-\textit{compact}. 
C'est cette propriŽtŽ qui implique toutes les assertions qui vont suivrent.

Pour $v\in \NF$, l'ensemble $\scrX_{F,v}$ est \TopF-fermŽ dans $V(F)$. 
Comme l'espace quotient $G(F)/P_{F,v}$ est \TopF-compact, l'ensemble $\bsfrX_{F,v}= G(F)\cdot \scrX_{F,v}$ est lui aussi \TopF-fermŽ 
dans $V(F)$. Puisque d'aprs \ref{prop 2.9 de [H2] rat}\,(i), il n'y a qu'un nombre fini d'ensembles $\bsfrX_{F,v}$ avec $v \in \NF$, 
l'ensemble $\NF$ est \TopF-fermŽ dans $V(F)$; et pour $s\in \mbb{Q}_+$, l'ensemble $\ES{N}_{F,<s}$ 
est lui aussi \TopF-fermŽ dans $V(F)$. Par consŽquent pour $v\in \NF$, les ensembles $$ \scrX_{F,v}\smallsetminus \scrY_{F,v}= \scrX_{F,v}\cap \ES{N}_{F,<\bs{q}_F(v)}
\quad \hbox{et}\quad\bsfrY_{F,v}\smallsetminus \bsfrY_{F,v}= \bsfrX_{F,v}\cap \ES{N}_{F,<\bs{q}_F(v)}$$ sont \TopF-fermŽs dans $V(F)$; en d'autres termes, 
la $F$-lame $\scrY_{F,v} $ est \TopF-ouverte dans $\scrX_{F,v}$ et la $F$-strate $\bsfrY_{F,v}$ est \TopF-ouverte dans $\bsfrX_{F,v}$. 
En particulier les $F$-lames et les $F$-strates,de $\NF$ sont des sous-ensembles localement fermŽs de $V(F)$ pour \TopF.

\vskip1mm
Supposons maintenant que le corps (commutatif) topologique $F$ soit un \textit{corps valuŽ admissible}, \cad qu'il soit muni d'une valeur absolue $\nu$ telle que:
\begin{itemize}
\item $(F,\nu)$ est hensŽlien (i.e. si $F'$ est une extension finie de $F$, il existe une \textit{unique} valeur absolue $\nu'$ sur $F'$ qui prolonge $\nu$);
\item le complŽtŽ $\wh{F}$ de $F$ pour la valeur absolue $\nu$ est une extension sŽparable de $F$.
\end{itemize}

\begin{remark}
\textup{
Tout corps localement compact est \textit{a fortiori} un corps valuŽ admissible. D'autre part si 
$F$ est un corps global et si $v$ est une place de $F$, le corps valuŽ $(F,v)$ est admissible (cf. \ref{sŽparabilitŽ du complŽtŽ}) et le complŽtŽ $F_v$ de $F$ en $v$ 
est un corps localement compact.}
\end{remark}

 D'aprs \cite[3.6.2]{GMB}, on a la

\begin{proposition}\label{GMB corps valuŽ admissible}
($(F,\nu)$ est un corps valuŽ admissible.)
\begin{enumerate}
\item[(i)] Si $X$ est une $F$-variŽtŽ, alors $X(F)$ est \TopwhF-dense dans $X(\wh{F})$.
\item[(ii)] Si $\pi: X\rightarrow Y$ est un morphisme propre de $F$-variŽtŽs, alors $\pi_F(X(F))$ est \TopwhF-fermŽ dans $Y(F)$.
\end{enumerate}
\end{proposition}

\begin{corollary}
Pour $v\in \NF$, $\bsfrX_{F,v}$ est \TopwhF-fermŽ dans $V(F)$.
\end{corollary}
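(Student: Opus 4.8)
L'énoncé est le corollaire affirmant que pour $v\in\NF$, l'ensemble $\bsfrX_{F,v}$ est \TopwhF-fermé dans $V(F)$. Le point de départ est la description de $\bsfrX_{F,v}$ comme image d'un espace fibré : on a vu que $\bsfrX_{F,v}= G(F)\cdot\scrX_{F,v}$, et plus précisément (voir \ref{prop 2.9 de [H2] rat}\,(ii) et sa preuve, ainsi que \ref{ensembles F-satur�s et Zariski-top}\,(ii)) que l'ensemble $G\cdot{_F\scrX_v}$ est une sous-variété \emph{fermée} de $V$ définie sur $F$ : en effet, puisque le quotient $G/{_FP_v}$ est projectif et que ${_F\scrX_v}$ est ${_FP_v}$-invariant, $G\cdot{_F\scrX_v}$ est l'image de $G\times^{{_FP_v}}{_F\scrX_v}$ par un morphisme propre. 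Je noterai $Z={_F\overline{\scrX}_v}^{(G)}:=G\cdot{_F\scrX_v}$, qui est une $F$-variété fermée de $V$, et je considère le fibré $E=G\times^{{_FP_v}}{_F\scrX_v}$ (quotient de la $F$-variété $G\times{_F\scrX_v}$ par l'action libre de ${_FP_v}$), qui est une $F$-variété, avec son morphisme naturel $\varpi:E\to Z$, $[g,x]\mapsto g\cdot x$, qui est \emph{propre} (puisque ${_F\scrX_v}$ est fermée dans $V$ et $G/{_FP_v}$ est complet) et surjectif.

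Première étape : réduire l'énoncé sur les points $\wh F$-rationnels à un énoncé de topologie forte. D'après \ref{GMB corps valu� admissible}\,(ii) appliqué au morphisme propre $\varpi:E\to Z$, l'ensemble $\varpi_F(E(F))$ est \TopwhF-fermé dans $Z(F)$, donc \TopwhF-fermé dans $V(F)$ puisque $Z(F)$ est \TopwhF-fermé dans $V(F)$ ($Z$ étant une sous-variété fermée). Il reste à identifier $\varpi_F(E(F))$ avec $\bsfrX_{F,v}$. L'inclusion $\bsfrX_{F,v}= G(F)\cdot\scrX_{F,v}\subset\varpi_F(E(F))$ est claire car tout couple $(g,x)$ avec $g\in G(F)$ et $x\in\scrX_{F,v}$ fournit un point $F$-rationnel $[g,x]$ de $E$ d'image $g\cdot x$. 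L'inclusion inverse est le point technique : un point $F$-rationnel de $E$ n'est pas a priori de la forme $[g,x]$ avec $g\in G(F)$ et $x$ rationnel, car le fibré $E\to G/{_FP_v}$ peut avoir des sections locales non triviales ; il faut voir que $\varpi_F(E(F))\subset V(F)\cap{_F\bsfrX_v}$, puis conclure par \ref{descente s�parable strates} et le fait que $\bsfrX_{F,v}=V(F)\cap({_F\bsfrX_v}\cap\ldots)$ — ou plus directement observer que l'image d'un point $F$-rationnel de $E$ est un point de ${_F\bsfrX_v}$ (l'ensemble $G\cdot{_F\scrX_v}$ intersecté avec $V(F)$) qui appartient à $\FN$ (car ${_F\scrX_v}\subset\FN$ est $G$-invariant sous l'action par $G$), donc à $\NF=V(F)\cap\FN$, donc à $\bsfrX_{F,v}$ par définition.

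Deuxième étape (le vrai obstacle) : montrer proprement que $\varpi_F(E(F))\subset\bsfrX_{F,v}$. Le problème est que l'image par $\varpi$ d'un point rationnel $[g,x]\in E(F)$ est un point $w=g\cdot x$ de $V(F)$ avec $x\in{_F\scrX_v}$, $g\in G$, mais pas nécessairement $g\in G(F)$ ni $x\in V(F)$. Cependant, $w$ appartient à $G\cdot{_F\scrX_v}\subset\FN$, et $w\in V(F)$, donc $w\in\NF$. Il faut alors établir que $w\in\bsfrX_{F,v}$, autrement dit $\bsfrX(\bsfrY_{F,w})\subset\bsfrX_{F,v}$ dans la notation \ref{notation all�g�e}. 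Ceci résulte du fait que $w\in G\cdot{_F\scrX_v}$ entraîne $\bs{q}(w)\le\bs q(v)$ au niveau géométrique, mais il faut la version rationnelle. L'argument propre passe par $E^{\mathrm{s\acute{e}p}}=E\times_F F^{\mathrm{s\acute{e}p}}$ : sur $F^{\mathrm{s\acute{e}p}}$ le groupe est déployé et $G(F^{\mathrm{s\acute{e}p}})$ est dense dans $G$, de sorte que $\varpi_{F^{\mathrm{s\acute{e}p}}}(E(F^{\mathrm{s\acute{e}p}}))=G(F^{\mathrm{s\acute{e}p}})\cdot\scrX_{F^{\mathrm{s\acute{e}p}},v}={_{F^{\mathrm{s\acute{e}p}}}\bsfrX_v}\cap V(F^{\mathrm{s\acute{e}p}})$ ; puis on intersecte avec $V(F)$ en utilisant \ref{descente s�parable lames} et \ref{descente s�parable strates} pour descendre à $F$. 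Je m'attends à ce que la difficulté principale soit exactement cette vérification $\varpi_F(E(F))=\bsfrX_{F,v}$, c'est-à-dire de contrôler la cohomologie galoisienne de ${_FP_v}$ (le défaut de rationalité des sections du fibré) ; heureusement la proposition \ref{descente s�parable strates} affirme précisément que la classe $\bs z_v(v')$ y est toujours triviale, ce qui doit permettre de conclure. Une fois cette identification acquise, le corollaire suit immédiatement de \ref{GMB corps valu� admissible}\,(ii) comme expliqué ci-dessus.
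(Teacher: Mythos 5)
Votre squelette global est le même que celui du texte : réaliser $\bsfrX_{F,v}$ comme image des points $F$-rationnels par un $F$-morphisme propre (votre fibré $E=G\times^{P}{_F\scrX_v}$ est, à isomorphisme près, la variété d'incidence $W\subset G/P\times V$ utilisée dans la preuve, avec pour $\pi$ la seconde projection), puis appliquer le point (ii) de la proposition de Gille--Moret-Bailly sur les corps valués admissibles. Le problème est l'identification $\varpi_F(E(F))=\bsfrX_{F,v}$, que vous désignez à juste titre comme le vrai point mais que vous traitez incorrectement. Votre route consiste à montrer que l'image $w\in V(F)$ d'un point $F$-rationnel de $E$ appartient à $V(F)\cap\bsfrX_{F^{\mathrm{s\acute{e}p}},v}$, puis à \emph{descendre} à $F$ via les résultats de descente séparable. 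Or ces résultats (et l'annulation du cocycle $\bs{z}_v(\cdot)$) portent sur les strates $\bsfrY$, pas sur les saturés $\bsfrX$ : la remarque qui suit immédiatement la proposition de descente des strates dit explicitement que l'inclusion $\bsfrX_{F,v}\subset V(F)\cap\bsfrX_{E,v}$ est en général stricte, le membre de droite contenant en plus tout $\bsfrX_{E,v}\cap\ES{N}_{F,<\bs{q}_F(v)}$. Savoir que $w\in V(F)\cap\bsfrX_{F^{\mathrm{s\acute{e}p}},v}$ ne place donc pas $w$ dans $\bsfrX_{F,v}$, et le cocycle $\bs{z}_v$ n'est même pas défini pour un point du saturé (l'application $G(E)\times^{P_{E,v}}\scrX_{E,v}\rightarrow\bsfrX_{E,v}$ n'est pas injective, contrairement au cas des lames). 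De même, l'observation \emph{plus directe} de votre première étape ($w\in\NF$ et $w\in G\cdot{_F\scrX_v}$, donc $w\in\bsfrX_{F,v}$ \guill{par définition}) est fausse : $\bsfrX_{F,v}=G(F)\cdot\scrX_{F,v}$, et non $V(F)\cap G\cdot{_F\scrX_v}$.

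L'ingrédient manquant --- et c'est exactement ce que fait la preuve du texte --- est le fait élémentaire mais décisif que $P={_FP_v}$ est parabolique, de sorte que $(G/P)(F)=G(F)/P(F)$ (Borel, ch.~V, 20.5, déjà invoqué au début de la même sous-section). Appliqué au point de $(G/P)(F)$ sous-jacent à un point $F$-rationnel de $W$ (ou de votre $E$), il permet de choisir le représentant $g$ dans $G(F)$; alors $g^{-1}\cdot w\in V(F)\cap{_F\scrX_v}=\scrX_{F,v}$ et $\pi_F(W(F))=G(F)\cdot\scrX_{F,v}=\bsfrX_{F,v}$. Cela remplace toute la discussion de descente par $F^{\mathrm{s\acute{e}p}}$ : aucune cohomologie galoisienne de $P$ au-delà de ce fait standard n'est nécessaire.
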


\begin{proof}
On reprend en l'adaptant celle de \cite[ch.~IV, 11.9]{B}. 
Soit $v\in \NF$. Posons $P={_FP_v}$ et $X= {_F\scrX_v}$. ConsidŽrons les $F$-morphismes $$\alpha: G\times V \rightarrow G\times V\quad\hbox{et}\quad \beta : G\times V \rightarrow G/P \times V$$ dŽfinis par 
$\alpha(g,v)=(g,g\cdot v)$ et $\beta(g,v)= (\dot{g},v)$ avec $\dot{g}=gP$. Observons que $\alpha$ est un isomorphisme et que $\beta$ est un morphisme ouvert. Posons 
$W= \beta \circ \alpha (G\times X)$. Puisque $X$ est une sous-$F$-variŽtŽ fermŽe $P$-invariante de $V$, $\beta^{-1}(W)= \alpha(G\times X)$ est une sous-$F$-variŽtŽ fermŽe 
de $G\times V$ et (puisque $\beta$ est ouverte) $W$ est une sous-$F$-variŽtŽ fermŽe de $G/P\times V$. L'immersion fermŽe $\iota:W\hookrightarrow G/P \times V$ et 
la projection naturelle $q: G/P\times V\rightarrow V$ sont des $F$-morphismes propres; donc $\pi= q\circ \iota : W \rightarrow V$ est un $F$-morphisme propre. On 
lui applique \ref{GMB corps valuŽ admissible}\,(ii): $\pi_F(W(F))$ est \TopwhF-fermŽ dans $V(F)$. Il nous reste ˆ identifier $\pi_F(W(F))$. 
Observons que $$W=\{(\dot{g},v)\in G/P\times V\,\vert g^{-1}\cdot v \in X\}$$ 
(la condition $g^{-1}\cdot X$ ne dŽpend que de $\dot{g}$ car $X$ est $P$-invariant). Par consŽquent 
$W(F)$ est l'ensemble des $(\dot{g}\times v)\in G(F)/P(F)\times V(F)$ tels que $g^{-1} \cdot v \in X(F)= \scrX_{F,v}$; et 
$\pi_F(W(F))= G(F)\cdot \scrX_{F,v}= \bsfrX_{F,v}$.  
\end{proof}

Comme dans le cas o $F$ est localement compact, on en dŽduit que:
\begin{itemize}
\item $\NF$ est \TopwhF-fermŽ dans $V(F)$;
\item pour $s\in \mbb{Q}_+$, $\ES{N}_{F,<s}$ est \TopwhF-fermŽ dans $V(F)$;
\item pour $v\in \NF$, la $F$-lame $\scrY_{F,v}$ est \TopwhF-ouverte dans $\scrX_{F,v}$ et 
la $F$-strate $\bsfrY_{F,v}$ est \TopwhF-ouverte dans $\bsfrX_{F,v}$.
\item les $F$-lames, resp. $F$-strates, de $\NF$ sont des sous-ensembles localement fermŽs de $V(F)$ pour \TopwhF.
\end{itemize}

\vskip2mm
\P\hskip1mm\textit{Sur l'hypothse \ref{hyp bonnes F-strates}. ---} On utilisera ce paragraphe dans la section \ref{le cas de la variŽtŽ U} pour prouver dans le cas o $F$ est un corps global 
et $V$ est le groupe $G$ lui-mme muni de l'action par conjugaison, l'hypothse \ref{hyp bonnes F-strates} est toujours vŽrifiŽe.

Soit $\scrG$ un $\mbb{Z}$-schŽma en groupes rŽductif (connexe), \cad un $\mbb{Z}$-groupe rŽductif de Chevalley-Demazure (cf. \cite{Dem}), et soit $\scrV$ un $\mbb{Z}$-schŽma affine rŽduit. On dit que 
$\scrG$ opre sur $\scrV$ si pour chaque $\mbb{Z}$-algbre $A$, il existe une application $$\rho_A:\scrG(A)\times \scrV(A)\rightarrow \scrV(A),\,(g,v)\mapsto \rho_A(g,v)=g\cdot_A v$$ 
donnŽe par des polyn™mes en $A$, qui soit fonctorielle en $A$. Si $\scrV$ est l'espace affine $\mbb{A}^n_{\mbb{Z}}$ (vu comme un $\mbb{Z}$-schŽma), on dit que cette action est linŽaire si pour 
toute $\mbb{Z}$-algbre $A$ et tout $g\in \scrG(A)$, l'application $A^n \rightarrow A^n,\, v \mapsto g \cdot_A v$ est $A$-linŽaire. Dans ce cas le changement de base 
$\mbb{Z}\rightarrow F$ fournit une action $F$-linŽaire du $F$-groupe rŽductif $\scrG_F=\scrG\times_{\mbb{Z}}F$ sur le $F$-espace affine 
$\scrV_F= \mbb{A}_F^n$; \cad que $\scrV_F$ est muni d'une structure de $\scrG_F$-module dŽfini sur $F$. 

Le rŽsultat suivant est dž ˆ Seshadri \cite{Se}. Il permet en particulier de transfŽrer la semi-stabilitŽ gŽomŽtrique de la caractŽristique $>0$ ˆ la caractŽristique nulle (et rŽciproquement).

\begin{theorem}\label{seshadri}
Soit $\scrG$ un $\mbb{Z}$-schŽma en groupes rŽductif opŽrant linŽairement sur l'espace affine $\scrV=\mbb{A}_{\mbb{Z}}^n$ (avec $n\in \mbb{N}^*$) et soit 
$\scrW=\mathrm{Spec}(B)$ un sous-schŽma fermŽ $\scrG$-invariant de $\scrV$. 
\begin{enumerate}
\item[(i)] Soit $w\in \scrW(\overline{F})$ un point $(\overline{F},\scrG_{\overline{F}})$-semi-stable, i.e. tel que la fermeture de 
Zariski de la $\scrG(\overline{F})$-orbite de $w$ dans $\scrW(\overline{F})$ ne contienne pas $0$. Il existe 
un polyn™me $f\in \mbb{Z}[\scrV]^{\scrG}$ homogne de degrŽ strictement positif tel que $f(w)\neq 0$.
\item[(ii)] Il existe un sous-schŽma ouvert $\scrW^{\mathrm{ss}}$ de $\scrW$ tel que pour tout corps (commutatif) $\bs{k}$ algŽbriquement clos, 
$\scrW^{\mathrm{ss}}(\bs{k})$ soit l'ensemble des points $(\bs{k},\scrG_{\bs{k}})$-semi-stables de $\scrW(\bs{k})$. PrŽcisŽment, 
$\scrW\smallsetminus \scrW^{\mathrm{ss}}$ est le sous-schŽma fermŽ de $\scrW$ dŽfini par l'idŽal de $B$ 
engendrŽ par les images des polyn™mes homognes de degrŽ strictement positifs $f\in \mbb{Z}[\scrV]^{\scrG}$ par le morphisme naturel 
$\mbb{Z}[\scrV]\rightarrow B$. 
\end{enumerate}
\end{theorem}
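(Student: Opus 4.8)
The plan is to deduce the theorem from Seshadri's result \cite{Se} that a reductive $\mbb{Z}$-group scheme $\ES{G}$ is \textit{geometrically reductive over $\mbb{Z}$}. The precise input I would use is the following consequence of loc.~cit.: for every finitely generated graded $\mbb{Z}$-algebra $C$ with $C_0=\mbb{Z}$ carrying a grading-preserving action of $\ES{G}$, and every geometric point $x$ of $\mathrm{Spec}(C)$ with values in an algebraically closed field $\bs{k}$ such that the cone point $0$ does not lie in the Zariski closure of the $\ES{G}(\bs{k})$-orbit of $x$, there exists a homogeneous $f\in C^{\ES{G}}$ of strictly positive degree with $f(x)\neq 0$ in $\bs{k}$; moreover $C^{\ES{G}}$ is a finitely generated $\mbb{Z}$-algebra (Nagata's argument, run over $\mbb{Z}$). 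This subsumes Haboush's theorem \cite{Ha} (the case where the base is a field, $C_0=\bs{k}$), but is strictly stronger: because invariants do not commute with base change in characteristic $p$, one cannot simply clear denominators in an invariant produced over $\overline{F}$. Working over $\mbb{Z}$, geometric reductivity supplies a Reynolds-type operator \emph{up to $p$-th powers}, which allows a mod-$p$ invariant to be lifted to a $\mbb{Z}$-invariant after replacing it by a suitable power; this is the heart of the matter.

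Granting this input, part (i) follows at once. Since $\ES{X}$ is a $\ES{G}$-invariant closed subscheme of $\ES{V}$, the Zariski closure of $\ES{G}(\overline{F})\cdot x$ formed inside $\ES{X}(\overline{F})$ equals the one formed inside $\ES{V}(\overline{F})$, so the semistability hypothesis is exactly that $0$ does not lie in the orbit closure of $x$ in $\ES{V}_{\overline{F}}$. Applying the geometric reductivity of $\ES{G}$ over $\mbb{Z}$ to $C=\mbb{Z}[\ES{V}]$ and to the geometric point $x$ (equipped with the structure morphism $\mbb{Z}\to\overline{F}$) then produces a homogeneous $f\in\mbb{Z}[\ES{V}]^{\ES{G}}$ of strictly positive degree with $f(x)\neq 0$.

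For part (ii) the remaining steps are formal. Write $R=\mbb{Z}[\ES{V}]^{\ES{G}}$ and $R_+=\bigoplus_{d\geq 1}R_d$. Since $R$ is a finitely generated $\mbb{Z}$-algebra with $R_0=\mbb{Z}$, it is generated by finitely many homogeneous elements, and the ideal $R_+$ of $R$ is generated by finitely many homogeneous elements $f_1,\dots,f_r$ of strictly positive degree. Let $I\subset B$ be the ideal generated by the images of $f_1,\dots,f_r$ under the natural morphism $\mbb{Z}[\ES{V}]\to B$; equivalently, $I$ is generated by the images of all homogeneous elements of $R$ of strictly positive degree. Set $\ES{X}^{\mathrm{ss}}=\ES{X}\smallsetminus V(I)$, an open subscheme of $\ES{X}$. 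Let $\bs{k}$ be an algebraically closed field and $x\in\ES{X}(\bs{k})$. Then $x\in\ES{X}^{\mathrm{ss}}(\bs{k})$ if and only if $f(x)\neq 0$ for some homogeneous $f\in R$ of strictly positive degree. If such an $f$ exists then, being $\ES{G}(\bs{k})$-invariant and homogeneous of positive degree (so $f(0)=0$), it is constant, equal to $f(x)\neq 0$, on $\ES{G}(\bs{k})\cdot x$ and hence on its closure; therefore $0$ is not in that closure and $x$ is $(\bs{k},\ES{G}_{\bs{k}})$-semistable. Conversely, if $x$ is $(\bs{k},\ES{G}_{\bs{k}})$-semistable, part (i) --- applied with $\overline{F}$ replaced by the arbitrary algebraically closed field $\bs{k}$ --- furnishes such an $f$. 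Hence $\ES{X}^{\mathrm{ss}}(\bs{k})$ is precisely the set of $(\bs{k},\ES{G}_{\bs{k}})$-semistable points of $\ES{X}(\bs{k})$, and by construction $\ES{X}\smallsetminus\ES{X}^{\mathrm{ss}}=V(I)$ is the closed subscheme of $\ES{X}$ defined by the ideal of $B$ generated by the images of the strictly positive-degree homogeneous elements of $\mbb{Z}[\ES{V}]^{\ES{G}}$. The single substantial point in the whole argument --- and the step I expect to be the real obstacle were one to write it out from scratch --- is the geometric reductivity of $\ES{G}$ over $\mbb{Z}$, i.e. the liftability of invariants from $\mbb{F}_p$ to $\mbb{Z}$ up to $p$-th powers, which is the theorem of Seshadri \cite{Se}; everything else is orbit-closure bookkeeping and base change of affine schemes.
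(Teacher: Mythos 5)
Your proposal is correct and follows essentially the route the paper intends: the paper gives no proof of this statement and simply attributes it to Seshadri \cite{Se}, and your argument likewise rests entirely on the main theorem of loc.~cit.\ (geometric reductivity of $\ES{G}$ over $\mbb{Z}$ together with finite generation of $\mbb{Z}[\ES{V}]^{\ES{G}}$), the deduction of (i) and the passage to (ii) being the same formal orbit-closure and base-change bookkeeping. Your remark that one cannot just clear denominators from an invariant over $\overline{F}$, so that the whole content lies in the integral (up to $p$-th powers) lifting of invariants, correctly identifies where the substance of \cite{Se} is used.
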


Si $\scrG$ est un $\mbb{Z}$-schŽma en groupes rŽductif opŽrant linŽairement sur l'espace affine $\scrV= \mbb{A}_{\mbb{Z}}^n$, 
en identifiant $\scrG_F$ ˆ un groupe algŽbrique rŽductif connexe dŽfini sur $F$ et $\scrV_F$ ˆ une variŽtŽ algŽbrique affine dŽfinie sur $F$, 
on a le sous-ensemble $${_F\ES{N}}^{\,\scrG_F}(\scrV_F,0)\subset \scrV(\overline{F})=\smash{\overline{F}}^n$$ des ŽlŽments $(F,\scrG_F)$-instable de 
$\scrV(\overline{F})= \smash{\overline{F}}^n$ et le sous-ensemble
$$\NF^{\,\scrG_F}(\scrV_F,0)= \scrV(F)\cap {_F\ES{N}}^{\scrG_F}(\scrV_F,0)\subset \scrV(F)= F^n\ptf$$

\begin{proposition}\label{F-ss implique gŽo-ss}
Soit $\scrG$ un $\mbb{Z}$-schŽma en groupes rŽductif opŽrant linŽairement sur l'espace affine $\scrV=\mbb{A}_{\mbb{Z}}^n$ (avec $n\in \mbb{N}^*$) et  
soit $F$ un corps localement compact. On a $$\NF^{\,\scrG_F}(\scrV_F,0) \neq \scrV(F) \Rightarrow \scrV^{\mathrm{ss}}(\overline{F})\neq \emptyset\ptf$$ 
Autrement dit s'il existe un ŽlŽment de $\scrV(F)$ qui soit $(F,\scrG_F)$-semi-stable, alors il existe un ŽlŽment de $\scrV(\overline{F})$ qui soit 
$(\overline{F},\scrG_{\smash{\overline{F}}})$-semi-stable.
\end{proposition}

\begin{proof}
Observons tout d'abord que si $F$ est archimŽdien (i.e. $F\simeq \mbb{R}$ ou $\mbb{C}$) ou si $F$ est une extension finie d'un corps $p$-adique $\mbb{Q}_p$, il n'y a rien ˆ dŽmontrer: si $v\in \scrV(F)\cap \ES{N}^{\,\scrG_F}(\scrV_F,0)$, 
puisque d'aprs Kempf \cite{K1} (cf. \ref{Kempf2}) $\check{X}_F(G)\cap \bs{\Lambda}_v \neq \emptyset$, $v$ est forcŽment $(F,\scrG_F)$-instable; par consŽquent 
$$v\in \scrV(F)\smallsetminus \NF^{\,\scrG_F}(\scrV_F,0)\Rightarrow v\notin \ES{N}^{\,\scrG_F}(\scrV_F,0)\;(=\ES{N}^{\,\scrG_{\overline{F}}}(\scrV_{\overline{F}},0))\Leftrightarrow v\in \scrV^{\mathrm{ss}}(\overline{F}) \ptf$$ 
On va se ramener ˆ ce rŽsultat par la mŽthode des corps proches.

On suppose que $F$ est de caractŽristique $p>1$, \cad que $F\simeq \mbb{F}_q((\varpi))$ avec $q=p^f$. Notons $\mathfrak{o}_F$ l'anneau des entiers de $F$ et $\mathfrak{p}_F$ l'idŽal maximal de $\mathfrak{o}$. 
Fixons un tore maximal $\scrA_0$ de $\scrG$ et un sous-groupe de Borel $\scrP_0$ de $\scrG$ (cf. \cite{Dem}). 
Posons $G=\scrG_F$, $A_0=\scrA_0\times_{\mbb{Z}}F$ et $P_0= \scrP_0\times_{\mbb{Z}}F$. Posons aussi $V=\scrV_F$,  
$\FN= {_F\ES{N}^G(V,0)}$ et $\NF = V(F)\cap \FN$. 
Le groupe $K= \scrG(\mathfrak{o}_F)$ est un 
sous-groupe ouvert compact maximal de $G(F)$ pour {\TopF} et on a la dŽcomposition d'Iwasawa 
$$G(F)=KP_0(F)=P_0(F)K\ptf$$ La paire $(P_0,A_0)$ dŽfinit un sous-ensemble (fini) 
$\bs{\Lambda}_{F,\mathrm{st}}\subset \check{X}(A_0)_{\mbb{Q}}$ de co-caractres virtuels $F$-optimaux en position standard; on a aussi le sous-ensemble 
$\bs{\Lambda}_{F,\mathrm{st}}^*\subset \bs{\Lambda}_{F,\mathrm{st}}$ 
correspondant aux $F$-strate de $\FN$ qui possdent un point $F$-rationnel (cf. \ref{la stratification de Hesselink}).  
Pour $v\in \NF$, il existe un $k\in K$ tel que $k\cdot v$ appartienne ˆ une $F$-lame standard de $\NF$. On en dŽduit qu'un ŽlŽment $v\in V(F)$ est dans $\NF$ si et seulement s'il existe un 
$\mu\in\bs{\Lambda}_{F,\mathrm{st}}^*$ tel que $K\cdot v\cap  V_{\mu,1}\neq \emptyset $. 

Le groupe $\check{X}(A_0)$ s'identifie ˆ $\check{X}(\scrA_0)\bydef \mathrm{Hom}_{\mbb{Z}}(\mbb{G}_{\mathrm{m}/\mbb{Z}}, \scrA_0)$, \cad que tout co-caractre $\lambda \in \check{X}(A_0)$ provient par le changement de 
base $\mbb{Z}\rightarrow F$ d'un ŽlŽment de $\check{X}(\scrA_0)$, encore notŽ $\lambda$. On a donc une identification 
$\check{X}(A_0)_{\mbb{Q}}= \check{X}(\scrA_0)_{\mbb{Q}}$ et pour chaque $\mu\in \check{X}(A_0)_{\mbb{Q}}$, la sous-$F$-variŽtŽ fermŽe $V_{\mu,1}$ de $V$ provient par 
le changement de base $\mbb{Z}\rightarrow F$ d'un sous-$\mbb{Z}$-schŽma fermŽ $\scrV_{\mu,1}$ de $\scrV$. 

Soit $v\in V(F)\smallsetminus \NF$. Quitte ˆ multiplier $v$ par un ŽlŽment de $F^\times$, on peut supposer que $v$ appartient ˆ $\scrV(\mathfrak{o}_F)\smallsetminus \mathfrak{p}_F \scrV(\mathfrak{o}_F)$. 
Puisque $V(F)\smallsetminus \NF$ est \TopF-ouvert dans $V(F)$, il existe un plus petit entier $m\geq 1$ tel que 
$v + \mathfrak{p}_F^{m} \scrV(\mathfrak{o}_F)$ soit contenu dans $V(F)\smallsetminus \NF$. Alors $K\cdot (v + \mathfrak{p}_F^{m} \scrV(\mathfrak{o}_F))= K\cdot v + \mathfrak{p}_F^{m} \scrV(\mathfrak{o}_F)$ 
est contenu dans $V(F)\smallsetminus \NF$. On a donc 
$$\left(K\cdot v + \mathfrak{p}_F^{m} \scrV(\mathfrak{o}_F)\right)\cap V_{\mu,1}(F) = \emptyset \quad\hbox{pour tout}\quad \mu \in \check{X}(A_0)_{\mbb{Q}}\ptf$$
Observons que $K\cdot v + \mathfrak{p}_F^{m} \scrV(\mathfrak{o}_F)$ est contenu dans $\scrV(\mathfrak{o}_F)$ et que pour tout $\mu\in \check{X}(A_0)_{\mbb{Q}}$, on a 
$\scrV(\mathfrak{o}_F)\cap V_{\mu,1}(F) = \scrV_{\mu,1}(\mathfrak{o}_F)$. 
Notons $\mathfrak{o}_{F,m}$ l'anneau tronquŽ $\mathfrak{o}_F/\mathfrak{p}_F^m$ et 
$$\pi_{F,m}: \scrV(\mathfrak{o}_F) \rightarrow \scrV(\mathfrak{o}_{F,m})=\scrV(\mathfrak{o}_F)/\mathfrak{p}_F^m\scrV(\mathfrak{o}_F)$$ la projection naturelle. 
Elle est $K$-Žquivariante pour l'action de $K$ sur $\scrV(\mathfrak{o}_{F,m})$ donnŽe par la projection naturelle $K \rightarrow \scrG(\mathfrak{o}_{F,m})$. 
On obtient les ŽgalitŽs dans $\scrV(\mathfrak{o}_{F,m})$: 
$$\scrG(\mathfrak{o}_{F,m})\cdot \pi_{F,m}(v) \cap \scrV_{\mu,1}(\mathfrak{o}_{F,m})=\emptyset \quad \hbox{pour tout} \quad \mu\in \check{X}(A_0)_{\mbb{Q}}\ptf $$ 

Soit $F'/\mbb{Q}_p$ une extension finie telle que les anneaux tronquŽs $\mathfrak{o}_{F'\!,m}$ et $\mathfrak{o}_{F,m}$ soient isomorphes (i.e. telle que 
$\mathfrak{o}_{F'}/\mathfrak{p}_{F'}= \mbb{F}_q$ et l'indice de ramification de $F'/\mbb{Q}_p$ soit $\geq m$). Fixons un tel isomorphisme d'anneaux $\phi: \mathfrak{o}_{F,m}\buildrel\simeq\over{\longrightarrow} \mathfrak{o}_{F'\!,m}$. 
Pour tout $\mbb{Z}$-schŽma $\scrS$, $\phi$ induit par transport de structure une application bijective $\phi_{\scrS}: \scrS(\mathfrak{o}_{F,m})\rightarrow \scrS(\mathfrak{o}_{F'\!,m})$ 
qui est un isomorphisme de groupes si $\scrS$ est un $\mbb{Z}$-schŽma en groupes. L'application bijective 
$\phi_{\scrV}: \scrV(\mathfrak{o}_{F,m})\rightarrow \scrV(\mathfrak{o}_{F'\!,m})$ vŽrifie les propriŽtŽs: 
\begin{itemize}
\item $\phi_{\scrV}$ est un isomorphisme de $\mathfrak{o}_{F,m}$-modules pour l'action de $\mathfrak{o}_{F,m}$ sur $\scrV(\mathfrak{o}_{F'\!,m})$ donnŽe par l'isomorphisme d'anneaux $\phi$;
\item $\phi_{\scrV}$ est $\scrG(\mathfrak{o}_{F,m})$-Žquivariante pour l'action de $\scrG(\mathfrak{o}_{F,m})$ sur $\scrV(\mathfrak{o}_{F'\!,m})$ donnŽe par 
l'ismorphisme de groupes $\phi_{\scrG}$; 
\item pour tout co-caractre virtuel $\mu\in \check{X}(A_0)_{\mbb{Q}}$, la restriction 
de $\phi_{\scrV}$ ˆ $\scrV_{\mu,1}$ co\"{\i}ncide avec $\phi_{\scrV_{\mu,1}}: \scrV_{\mu,1}(\mathfrak{o}_{F,m})\rightarrow \scrV_{\mu,1}(\mathfrak{o}_{F'\!,m})$. 
\end{itemize}
Soit $v'\in \scrV(\mathfrak{o}_{F'})$ tel que 
$$\pi_{F'\!,m}(v') = \phi_{\scrV}\circ \pi_{F,m}(v)\ptf$$ D'aprs ce qui prŽcde, on a 
$$\scrG(\mathfrak{o}_{F'\!,m})\cdot \pi_{F'\!,m}(v') \cap \scrV_{\mu,1}(\mathfrak{o}_{F'\!,m})=\emptyset \quad \hbox{pour tout} \quad \mu\in\check{X}(A_0)_{\mbb{Q}}\ptf $$ 
On en dŽduit que $\scrG(\mathfrak{o}_{F'})\cdot v' \cap \scrV_{\mu,1}(F')=\emptyset$ pour tout $\mu\in \check{X}(A_0)_{\mbb{Q}}$, donc \textit{a fortiori} pour tout 
$\mu\in \bs{\Lambda}_{F'\!,\mathrm{st}}^*$, ce qui assure que $v'$ appartient ˆ $\scrV(F')\smallsetminus \ES{N}_{F'}$. Par consŽquent 
(d'aprs le premier paragraphe de la dŽmonstration) $v'\notin \ES{N}^{\,\scrG_{F'}}(\scrV_{F'},0)$, i.e. $v'\in \scrV^{\mathrm{ss}}(\overline{\mbb{Q}}_p)$. On a donc prouvŽ que $\scrV^{\mathrm{ss}}(\overline{\mbb{Q}}_p)\neq \emptyset$. 
Gr‰ce ˆ \ref{seshadri}, cela entra"ne que $\scrV^{\mathrm{ss}}(\overline{F})\neq \emptyset$ (et aussi que $\scrV^{\mathrm{ss}}(\overline{\mbb{F}}_p)\neq \emptyset$).
\end{proof}

Soit $\scrG$ un $\mbb{Z}$-schŽma en groupes rŽductif opŽrant linŽairement sur l'espace affine $\scrV= \mbb{A}_{\mbb{Z}}^n$. Posons $G=\scrG_F$ et 
$V= \scrV_F$. Pour $\lambda\in \check{X}_F(G)$, on a le $F$-sous-groupe parabolique $P_\lambda= M_\lambda \ltimes U_\lambda$ de $G$. 
Le facteur de Levi $M_\lambda$ de $G$ est un $F$-sous-groupe fermŽ de $G$ qui provient par le changement de 
base $\mbb{Z} \rightarrow F$ d'un sous-$\mbb{Z}$-schŽma en groupes rŽductif fermŽ de $\scrG$: en notant $\scrM_\lambda$ le centralisateur schŽmatique de $\lambda$ dans $\scrG$, on a 
$M_\lambda = \scrM_\lambda\times_{\mbb{Z}}F$. D'aprs \cite[4.3]{CP}, le $F$-sous-groupe fermŽ $M_\lambda^\perp$ de $M_\lambda$ 
provient lui aussi par le changement de base $\mbb{Z}\rightarrow F$ d'un sous-$\mbb{Z}$-schŽma en groupes rŽductif fermŽ $\scrM_\lambda^\perp$ de $\scrM_\lambda$. 

Soit $v\in \FN^G(V,0)$. Le $F$-saturŽ ${_F\scrX_v}$ de la $F$-lame ${_F\scrY_v}$ de $\FN^G(V,0)$ est muni d'une structure de 
${_FP_v}$-module dŽfini sur $F$. Pour $\lambda\in \Lambda_{F,v}^{\mathrm{opt}}$ et $k=m_v(\lambda)$, on a ${_FP_v}=P_\lambda$ et ${_F\scrX_v}=V_{\lambda,k}$. 
La sous-variŽtŽ fermŽe $V_\lambda(k)$ de $V_{\lambda,k}$ est munie d'une structure de $M_\lambda$-module dŽfini sur $F$. On peut demander 
que cette action $F$-linŽaire de $M_\lambda$ sur $V_\lambda(k)$ provienne par le changement de base $\mbb{Z}\rightarrow F$ d'une action $\mbb{Z}$-linŽaire 
de $\scrM_\lambda$ sur un espace affine $\scrV' \simeq_{\mbb{Z}} \mbb{A}_{\mbb{Z}}^{n'}$. Observons que cette propriŽtŽ ne dŽpend pas du choix de $\lambda\in \Lambda_{F,v}^{\mathrm{opt}}$ (par transport de 
structure, puisque tout co-caractre $\lambda_1\in\Lambda_{F,v}^{\mathrm{opt}}$ est de la forme $\lambda_1=u\bullet \lambda$ pour un (unique) $u\in U_\lambda(F)$).

\begin{corollary}\label{cor 1 bonnes F-strates}
(Sous les hypothses de \ref{F-ss implique gŽo-ss}.) Soit $v\in \NF^{\, \scrG_F}(\scrV_F,0)$ tel que pour un (i.e. pour tout) $\lambda\in \Lambda_{F,v}^{\mathrm{opt}}$, 
l'action $F$-linŽaire de $M_\lambda$ sur $V_\lambda(k)$ avec $k=m_v(\lambda)$ provienne par le changement de base 
$\mbb{Z}\rightarrow F$ d'une action $\mbb{Z}$-linŽaire 
de $\scrM_\lambda$ sur un espace affine $\scrV' \simeq_{\mbb{Z}} \mbb{A}_{\mbb{Z}}^{n'}$.
Alors $\check{X}_F(G)_{\mbb{Q}}\cap \bs{\Lambda}_v \neq \emptyset$. 
\end{corollary}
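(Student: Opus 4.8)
Le plan est de ramener l'\'enonc\'e au crit\`ere de Kirwan-Ness rationnel, puis d'appliquer le r\'esultat de transfert de la semi-stabilit\'e \ref{F-ss implique g�o-ss} \`a la situation relative du groupe r\'eductif $M_\lambda^\perp$ op\'erant sur $V_\lambda(k)$.

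Comme $v\in \NF^{\ES{G}_F}(\ES{V}_F,0)$, l'\'el\'ement $v$ est $F$-instable; on choisirait donc un co-caract\`ere $\lambda\in \Lambda_{F,v}^{\mathrm{opt}}$ (non vide d'apr\`es \ref{HMrat1}\,(i)) et on poserait $k=m_v(\lambda)$. Le co-caract\`ere $\lambda$ \'etant primitif et v\'erifiant $m_v(\lambda)=k$, le crit\`ere de Kirwan-Ness rationnel \ref{thm de KN rationnel} donne que la composante $\overline{v}=v_\lambda(k)$ de $v$ sur $V_\lambda(k)$ est $(F,M_\lambda^\perp)$-semi-stable, \cad qu'elle appartient \`a $V_\lambda(k)\smallsetminus \FN^{M_\lambda^\perp}(V_\lambda(k),0)$. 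On utiliserait ici deux observations faciles: d'une part $\overline{v}\neq 0$ (car $m_v(\lambda)=k$, et non $>k$; voir \ref{m'=m}), donc $V_\lambda(k)\neq \{0\}$; d'autre part, $v$ \'etant $F$-rationnel et $\lambda$ d\'efini sur $F$, la projection $V_{\lambda,k}\rightarrow V_\lambda(k)$ est d\'efinie sur $F$ (\ref{[B,5.2]}), si bien que $\overline{v}$ appartient \`a $V_\lambda(k)(F)$.

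On exploiterait ensuite l'hypoth\`ese de rationalit\'e sur $\mbb{Z}$. Par hypoth\`ese, l'action $F$-lin\'eaire de $M_\lambda$ sur $V_\lambda(k)$ provient, par le changement de base $\mbb{Z}\rightarrow F$, d'une action $\mbb{Z}$-lin\'eaire de $\ES{M}_\lambda$ sur un espace affine $\ES{V}'\simeq_F \mbb{A}_{\mbb{Z}}^{n'}$, avec $n'\geq 1$ puisque $V_\lambda(k)\neq \{0\}$. En restreignant cette action au sous-$\mbb{Z}$-sch\'ema en groupes r\'eductif ferm\'e $\ES{M}_\lambda^\perp$ de $\ES{M}_\lambda$ (cf. \cite[4.3]{CP}), on obtient que l'action $F$-lin\'eaire de $M_\lambda^\perp$ sur $V_\lambda(k)$ provient elle aussi, par le changement de base $\mbb{Z}\rightarrow F$, d'une action $\mbb{Z}$-lin\'eaire de $\ES{M}_\lambda^\perp$ sur $\ES{V}'$. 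On pourrait alors appliquer \ref{F-ss implique g�o-ss} au $\mbb{Z}$-sch\'ema en groupes r\'eductif $\ES{M}_\lambda^\perp$ op\'erant lin\'eairement sur $\ES{V}'$: comme $\overline{v}\in \ES{V}'(F)$ est $(F,M_\lambda^\perp)$-semi-stable, on aurait $\NF^{\ES{M}_\lambda^\perp}(\ES{V}'_F,0)\neq \ES{V}'(F)$, donc il existerait un point de $V_\lambda(k)$ qui serait $(\overline{F},M_\lambda^\perp)$-semi-stable; autrement dit $\ES{N}^{M_\lambda^\perp}(V_\lambda(k),0)\neq V_\lambda(k)$.

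Il ne resterait plus qu'\`a remonter de $V_\lambda(k)$ \`a $V$: puisque $v\in \NF$, la comparaison \ref{comparaison KN rat avec KN g�o} (implication $(iii)\Rightarrow(i)$) donnerait, \`a partir de $\ES{N}^{M_\lambda^\perp}(V_\lambda(k),0)\neq V_\lambda(k)$, que $\lambda\in \Lambda_v^{\mathrm{opt}}$, \cad $\frac{1}{k}\lambda\in \bs{\Lambda}_v$. Comme $\lambda$ est d\'efini sur $F$, le co-caract\`ere virtuel $\frac{1}{k}\lambda$ appartiendrait \`a $\check{X}_F(G)_{\mbb{Q}}\cap \bs{\Lambda}_v$, qui serait donc non vide. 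Le contenu essentiel est concentr\'e dans \ref{F-ss implique g�o-ss} (m\'ethode des corps proches); la seule difficult\'e restante, de nature bureaucratique, consiste \`a v\'erifier que le couple $(\ES{M}_\lambda^\perp,\ES{V}')$ satisfait bien aux hypoth\`eses de \ref{F-ss implique g�o-ss} et que $\overline{v}$ en est un point $F$-rationnel $F$-semi-stable.
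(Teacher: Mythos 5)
Votre d\'emonstration est correcte et suit essentiellement la m\^eme voie que celle du texte : crit\`ere de Kirwan--Ness rationnel (\ref{thm de KN rationnel}) pour obtenir la $(F,M_\lambda^\perp)$-semi-stabilit\'e de $v_\lambda(k)\in \ES{V}'(F)$, puis \ref{F-ss implique g�o-ss} appliqu\'e au couple $(\ES{M}_\lambda^\perp,\ES{V}')$ pour en d\'eduire $\ES{N}^{M_\lambda^\perp}(V_\lambda(k),0)\neq V_\lambda(k)$. La seule diff\'erence est que vous concluez en invoquant directement l'implication $(iii)\Rightarrow(i)$ de \ref{comparaison KN rat avec KN g�o} (l\'egitime ici puisque $v\in\NF$), l\`a o\`u le texte red\'eroule l'argument que cette proposition encapsule (crit\`ere de Kirwan--Ness g\'eom\'etrique puis ouverture de la lame via \ref{deux crit�res bonnes F-lames} et \ref{deux crit�res bonnes F-lames bis}).
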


\begin{proof}
Posons $\scrG'= \scrM_\lambda^\perp$. 
D'aprs le critre de Kirwan-Ness rationnel (\ref{thm de KN rationnel}), l'ŽlŽment $v_\lambda(k)$ appartient ˆ 
$\scrV'(F) \smallsetminus \ES{N}_F^{\,\scrG'_F}(\scrV_{F},0)$. D'aprs la proposition \ref{F-ss implique gŽo-ss} appliquŽe au couple $(\scrG',\scrV')$, on a $\scrV'^{\mathrm{ss}}(\overline{F})\neq \emptyset$. D'aprs le critre de 
Kirwan-Ness gŽomŽtrique (cf. \ref{KNG Tsuji}), il existe un $v'\in V_{\lambda,k}(\overline{F})$ tel que $\lambda \in \Lambda_{v'}^{\mathrm{opt}}$. Posons 
$\mu= \frac{1}{k}\lambda$. Alors ${_F\scrX_{v'}}=\scrX_{v'}= V_{\mu,1}= {_F\scrX_v}$. Comme $\bs{q}_F(v') = \bs{q}(v')= \| \mu \| = \bs{q}_F(v)$, on a aussi ${_F\scrY_{v'}}=\scrY_{v'}= {_F\scrY_v}$. 
La $\overline{F}$-lame $\scrY_{v'}$ de $\ES{N}$ est ouverte dans son $\overline{F}$-saturŽ $\scrX_{v'}$. Par consŽquent la $F$-lame ${_F\scrY_v}$ de $\FN$ est ouverte dans son $F$-saturŽ ${_F\scrX_v}$, ce qui 
assure (d'aprs \ref{deux critres bonnes F-lames} et \ref{deux critres bonnes F-lames bis}) que $\mu$ appartient ˆ $\bs{\Lambda}_v$. \end{proof}

\begin{corollary}\label{cor 2 bonnes F-strates}
(Sous les hypothses de \ref{F-ss implique gŽo-ss}.) Supposons que tout ŽlŽment de $\NF^{\,\scrG_F}(\scrV_F,0)$ vŽrifie l'hypothse de \ref{cor 1 bonnes F-strates}. 
Alors l'hypothse \ref{hyp bonnes F-strates} est vŽrifiŽe: pour tout $v\in \NF^{\,\scrG_F}(\scrV_F,0)$, on a $\check{X}_F(G)_{\mbb{Q}}\cap \bs{\Lambda}_v \neq \emptyset$. 
\end{corollary}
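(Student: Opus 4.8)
Le plan est de d\'eduire l'\'enonc\'e directement du corollaire \ref{cor 1 bonnes F-strates}, en l'appliquant \og point par point \fg. Tout d'abord, observons que dans la situation consid\'er\'ee on a $G = \ES{G}_F$ et $V = \ES{V}_F$, de sorte que $\NF^{\ES{G}_F}(\ES{V}_F,0) = \NF^G(V,0) = \NF$~; ainsi l'ensemble figurant dans la conclusion co\"{\i}ncide avec celui de l'hypoth\`ese \ref{hyp bonnes F-strates}, et il suffit de v\'erifier que $\check{X}_F(G)_{\mbb{Q}} \cap \bs{\Lambda}_v \neq \emptyset$ pour un \'el\'ement arbitraire $v$ de cet ensemble.

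On fixe alors un tel $v \in \NF^{\ES{G}_F}(\ES{V}_F,0)$. Par hypoth\`ese, pour un (i.e. pour tout) $\lambda \in \Lambda_{F,v}^{\mathrm{opt}}$, l'action $F$-lin\'eaire de $M_\lambda$ sur $V_\lambda(k)$, avec $k = m_v(\lambda)$, provient par le changement de base $\mbb{Z} \rightarrow F$ d'une action $\mbb{Z}$-lin\'eaire de $\ES{M}_\lambda$ sur un espace affine $\ES{V}' \simeq_F \mbb{A}_{\mbb{Z}}^{n'}$~: c'est exactement l'hypoth\`ese de \ref{cor 1 bonnes F-strates} relative \`a $v$. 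On applique donc \ref{cor 1 bonnes F-strates} \`a $v$, ce qui fournit $\check{X}_F(G) \cap \bs{\Lambda}_v \neq \emptyset$. Comme $v$ \'etait arbitraire dans $\NF$, l'hypoth\`ese \ref{hyp bonnes F-strates} est v\'erifi\'ee.

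L'\'etape qui concentre le contenu effectif n'est pas celle-ci, purement formelle, mais se situe en amont~: c'est \ref{cor 1 bonnes F-strates}, qui repose d'une part sur le crit\`ere de Kirwan-Ness rationnel \ref{thm de KN rationnel} (pour se ramener, via $\lambda$ et la composante $v_\lambda(k)$, \`a une question de semi-stabilit\'e relative au groupe $\ES{M}_\lambda^\perp$), et d'autre part sur la proposition \ref{F-ss implique g�o-ss} (passage de la $F$-semi-stabilit\'e \`a la semi-stabilit\'e g\'eom\'etrique), elle-m\^eme obtenue \`a partir du th\'eor\`eme de Seshadri \ref{seshadri} et de la m\'ethode des corps proches~; c'est l\`a que r\'eside la v\'eritable difficult\'e.
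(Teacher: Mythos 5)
Votre preuve est correcte et suit exactement la démarche implicite du papier : le corollaire \ref{cor 2 bonnes F-strates} est la simple quantification universelle du corollaire \ref{cor 1 bonnes F-strates} (appliqué point par point à chaque $v\in \NF$), raison pour laquelle le papier ne lui consacre d'ailleurs aucune démonstration. Votre remarque finale, situant le contenu réel dans \ref{cor 1 bonnes F-strates} (via \ref{thm de KN rationnel}, \ref{F-ss implique g�o-ss} et \ref{seshadri}), est également juste.
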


On peut affaiblir les hypothses et remplacer le corps de base (supposŽ local non archimŽdien) par un corps global:

\begin{lemma}\label{cor 3 bonnes F-strates}
La proposition \ref{F-ss implique gŽo-ss} et les corollaires \ref{cor 1 bonnes F-strates} et \ref{cor 2 bonnes F-strates} restent vrais si l'on remplace 
le corps de base (supposŽ localement compact dans loc.~cit.) par un corps valuŽ admissible $(F,\nu)$ tel que le complŽtŽ 
$\wh{F}$ de $F$ en $\nu$ soit un corps localement compact vŽrifiant la propriŽtŽ $(\wh{F}^{\mathrm{s\acute{e}p}})^{\mathrm{Aut}_F(\wh{F}^{\mathrm{s\acute{e}p}})}=F$; e.g. un corps global $F$ muni d'une place finie $\nu$ 
(cf. \ref{sŽparabilitŽ du complŽtŽ}).
\end{lemma}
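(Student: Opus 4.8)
Le plan est de ramener les trois �nonc�s au cas, d�j� trait�, d'un corps de base local non archim�dien, par descente s�parable le long de l'extension $\wh{F}/F$. Comme $(F,\nu)$ est un corps valu� admissible, $\wh{F}$ est une extension s�parable de $F$, et par hypoth�se on a $(\wh{F}^{\mathrm{s\acute{e}p}})^{\mathrm{Aut}_F(\wh{F}^{\mathrm{s\acute{e}p}})}=F$; le lemme~\ref{variante de BMRT(4.7)} s'applique donc � l'extension $\wh{F}/F$, et avec lui le lemme~\ref{descente s�parable c�ne F-nilpotent}. Par cons�quent, pour le $\ES{G}_F$-module $\ES{V}_F=\mbb{A}_F^n$, on a l'�galit�
$$\NF^{\ES{G}_F}(\ES{V}_F,0)=\ES{V}(F)\cap \ES{N}_{\wh{F}}\,,$$
autrement dit un �l�ment $v\in\ES{V}(F)$ est $(F,\ES{G}_F)$-semi-stable si et seulement si, vu dans $\ES{V}(\wh{F})$, il est $(\wh{F},\ES{G}_{\wh{F}})$-semi-stable. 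Dans les situations qui nous int�ressent --- et en particulier lorsque $F$ est un corps global et $\nu$ une place finie, auquel cas $(F,\nu)$ est bien un corps valu� admissible et v�rifie la propri�t� de s�parabilit� requise d'apr�s~\ref{s�parabilit� du compl�t�} --- le compl�t� $\wh{F}$ est un corps local non archim�dien, de sorte que la proposition~\ref{F-ss implique g�o-ss} est disponible sur $\wh{F}$.

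Pour la proposition~\ref{F-ss implique g�o-ss} elle-m�me: supposons qu'il existe $v\in\ES{V}(F)\smallsetminus\NF^{\ES{G}_F}(\ES{V}_F,0)$. D'apr�s l'�galit� ci-dessus, $v$, vu comme point de $\ES{V}(\wh{F})$, appartient � $\ES{V}(\wh{F})\smallsetminus\NF^{\ES{G}_{\wh{F}}}(\ES{V}_{\wh{F}},0)$, qui est donc non vide; en appliquant la proposition~\ref{F-ss implique g�o-ss} sur le corps local non archim�dien $\wh{F}$, on obtient $\ES{V}^{\mathrm{ss}}(\overline{\wh{F}})\neq\emptyset$, d'o� $\ES{V}^{\mathrm{ss}}(\overline{F})\neq\emptyset$, puisque $\ES{V}^{\mathrm{ss}}$, sous-sch�ma ouvert de $\mbb{A}_{\mbb{Z}}^n$ d�fini sur $\mbb{Z}$, est non vide sur $\overline{F}$ d�s qu'il l'est sur un corps alg�briquement clos de m�me caract�ristique. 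C'est la version g�n�ralis�e de la proposition~\ref{F-ss implique g�o-ss}.

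Les deux corollaires s'en d�duisent alors sans modifier leurs d�monstrations, l'hypoth�se que $F$ soit local non archim�dien n'y intervenant qu'� travers la proposition~\ref{F-ss implique g�o-ss}. En effet, pour $v\in\NF^{\ES{G}_F}(\ES{V}_F,0)$, $\lambda\in\Lambda_{F,v}^{\mathrm{opt}}$ et $k=m_v(\lambda)$ tels que $\ES{M}_\lambda$ op�re $\mbb{Z}$-lin�airement sur un espace affine repr�sentant $V_\lambda(k)$, le crit�re de Kirwan--Ness rationnel~\ref{thm de KN rationnel} place $v_\lambda(k)$ dans $\ES{V}'(F)\smallsetminus\ES{N}_F^{\ES{G}'_F}(\ES{V}'_F,0)$ avec $\ES{G}'=\ES{M}_\lambda^\perp$; la proposition~\ref{F-ss implique g�o-ss} g�n�ralis�e, appliqu�e � $(\ES{G}',\ES{V}')$, donne $\ES{V}'^{\mathrm{ss}}(\overline{F})\neq\emptyset$; le crit�re de Kirwan--Ness g�om�trique~\ref{KNG Tsuji}, joint �~\ref{deux crit�res bonnes F-lames} et~\ref{deux crit�res bonnes F-lames bis}, produit alors $\frac{1}{k}\lambda\in\bs{\Lambda}_v$, \cad $\check{X}_F(G)_{\mbb{Q}}\cap\bs{\Lambda}_v\neq\emptyset$: c'est le corollaire~\ref{cor 1 bonnes F-strates}, et le corollaire~\ref{cor 2 bonnes F-strates} s'obtient en appliquant ceci � chaque $v\in\NF^{\ES{G}_F}(\ES{V}_F,0)$.

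Le seul point demandant un peu de soin est l'argument de descente s�parable lui-m�me: il faut s'assurer que l'hypoth�se $(\wh{F}^{\mathrm{s\acute{e}p}})^{\mathrm{Aut}_F(\wh{F}^{\mathrm{s\acute{e}p}})}=F$ est exactement celle sous laquelle le lemme~\ref{variante de BMRT(4.7)} reste valable pour l'extension $\wh{F}/F$, et --- dans le cas d'un corps global et d'une place finie --- qu'on obtient bien ainsi un corps valu� admissible la v�rifiant, ce qui est le contenu de~\ref{s�parabilit� du compl�t�}. Tout le reste n'est qu'un transfert formel des �nonc�s locaux d�j� �tablis, et ne pr�sente pas de difficult� nouvelle.
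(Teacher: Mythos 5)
Votre d\'emonstration est correcte, et pour la proposition elle-m\^eme elle co\"{\i}ncide avec celle du texte : l'hypoth\`ese $(\wh{F}^{\mathrm{s\acute{e}p}})^{\mathrm{Aut}_F(\wh{F}^{\mathrm{s\acute{e}p}})}=F$ rend licite la descente s\'eparable le long de $\wh{F}/F$, d'o\`u $\NF^{\ES{G}_F}(\ES{V}_F,0)=\ES{V}(F)\cap \ES{N}_{\wh{F}}^{\ES{G}_{\wh{F}}}(\ES{V}_{\wh{F}},0)$, puis l'existence d'un point $(F,\ES{G}_F)$-semi-stable force celle d'un point $(\wh{F},\ES{G}_{\wh{F}})$-semi-stable, et le cas local non archim\'edien d\'ej\`a trait\'e (la situation vis\'ee, comme vous le signalez et comme le fait aussi le texte) donne $\ES{V}^{\mathrm{ss}}(\overline{F})\neq\emptyset$ via le mod\`ele entier de $\ES{V}^{\mathrm{ss}}$ fourni par Seshadri.

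La seule divergence porte sur les corollaires \ref{cor 1 bonnes F-strates} et \ref{cor 2 bonnes F-strates} : vous les red\'emontrez directement sur $F$ en rejouant leur preuve (crit\`ere de Kirwan--Ness rationnel \ref{thm de KN rationnel}, proposition g\'en\'eralis\'ee appliqu\'ee au couple $(\ES{M}_\lambda^\perp,\ES{V}')$, crit\`ere g\'eom\'etrique \ref{KNG Tsuji} et crit\`eres d'ouverture des $F$-lames), apr\`es avoir observ\'e que seule la proposition y d\'ependait de la nature du corps de base. Le texte proc\`ede autrement : il applique les corollaires d\'ej\`a \'etablis sur $\wh{F}$ puis redescend la conclusion \`a $F$ au moyen de la descente s\'eparable des co-caract\`eres optimaux, $\bs{\Lambda}_{F,v}=\check{X}_F(G)_{\mbb{Q}}\cap\bs{\Lambda}_{\wh{F},v}$, qui donne $\bs{\Lambda}_{F,v}=\check{X}_F(G)_{\mbb{Q}}\cap\bs{\Lambda}_v$ d\`es que $\check{X}_{\wh{F}}(G)_{\mbb{Q}}\cap\bs{\Lambda}_v\neq\emptyset$. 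Les deux routes sont \'equivalentes et de co\^ut comparable : la v\^otre r\'eutilise le sch\'ema de preuve des corollaires et dispense du (petit) transport de l'hypoth\`ese de $\mbb{Z}$-lin\'earisation de $\Lambda_{F,v}^{\mathrm{opt}}$ \`a $\Lambda_{\wh{F},v}^{\mathrm{opt}}$; celle du texte r\'eutilise seulement les \'enonc\'es sur $\wh{F}$ et est un peu plus courte \`a r\'ediger.
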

 
\begin{proof}
Les hypothses sont celles de \ref{F-ss implique gŽo-ss} (ˆ l'exception bien sžr de celle sur le corps de base). 
Posons $\NF= \NF^{\,\scrG_F}(\scrV_F,0)$ et $\ES{N}_{\smash{\wh{F}}}= \ES{N}_{\wh{F}}^{\,\scrG_{\wh{F}}}(\scrV_{\wh{F}},0)$. 
D'aprs \ref{descente sŽparable c™ne F-nilpotent}, on a $\NF = \scrV(F)\cap \ES{N}_{\smash{\wh{F}}}$. Par suite si $\scrV(\wh{F})= \ES{N}_{\smash{\wh{F}}}$, alors 
$\scrV(F)=\scrV(F)\cap \ES{N}_{\smash{\wh{F}}}= \NF$. Puisque $\wh{F}$ est un corps localement compact, si $\bs{k}$ est une cl™ture algŽbrique de $\wh{F}$, on a (d'aprs \ref{F-ss implique gŽo-ss} pour $\wh{F}$) 
$$\NF \neq \scrV(F)\Rightarrow \ES{N}_{\smash{\wh{F}}} \neq \scrV(\wh{F}) \Rightarrow \scrV^{\mathrm{ss}}(\bs{k})\neq\emptyset \Rightarrow \scrV^{\mathrm{ss}}(\overline{F})\neq \emptyset \ptf$$ 
Cela prouve \ref{F-ss implique gŽo-ss} pour $F$. Quant aux corollaires \ref{cor 1 bonnes F-strates} et \ref{cor 2 bonnes F-strates}, pour tout $v\in \NF$, d'aprs \ref{descente sŽparable lames}, 
on a $\bs{\Lambda}_{F,v}= \check{X}_F(G)_{\mbb{Q}}\cap \bs{\Lambda}_{\smash{\wh{F}},v}$. On en dŽduit que 
si $\check{X}_{\smash{\wh{F}}}(G)_{\mbb{Q}}\cap \bs{\Lambda}_v \neq \emptyset$, alors puisque $\bs{\Lambda}_{\smash{\wh{F}},v}= \check{X}_{\smash{\wh{F}}}(G)_{\mbb{Q}}\cap \bs{\Lambda}_v$, on a 
$\bs{\Lambda}_{F,v}= \check{X}_F(G)_{\mbb{Q}}\cap \bs{\Lambda}_v$. Par consŽquent les corollaires \ref{cor 1 bonnes F-strates} et \ref{cor 2 bonnes F-strates} pour $\wh{F}$ impliquent 
les mmes corollaires pour $F$. 
\end{proof}

\subsection{RŽcapitulatif des hypothses dans la section \ref{la theorie de KRH rat}}\label{sur les hypothses} Pour une $G$-variŽtŽ pointŽe $(V,e_V)$ dŽfinie sur $F$ avec $e_V\in V(F)$, on a 
introduit plusieurs hypothses qui, si elles sont vŽrifiŽes, permettent d'obtenir des propriŽtŽs intŽressantes. On rappelle ici ces hypothses et les propriŽtŽs qui en dŽcoulent. 

Tout d'abord une hypothse de rŽgularitŽ:
\begin{enumerate}[leftmargin=1.3cm]\item[(\ref{hyp reg})]le point-base $e_V$ est \textit{rŽgulier} (i.e. non singulier) dans $V$.\end{enumerate}
L'hypothse \ref{hyp reg} assure que pour tout $v\in \FN$, le $F$-saturŽ ${_F\scrX_v}$ de la $F$-lame ${_F\scrY_v}$ 
est $F$-isomorphe ˆ son espace tangent $T_{e_V}({_F\scrX_v})$; en particulier c'est un $F$-espace affine (i.e. ${_F\scrX_v}\simeq_F \mbb{A}_F^n$) et donc 
une $F$-variŽtŽ irrŽductible. 

Ensuite une hypothse simplificatrice:
\begin{enumerate}[leftmargin=1.3cm]
\item[(\ref{hyp bonnes F-strates (forte)})] pour tout $v\in \FN$, on a $\check{X}_{F}(G)_{\mbb{Q}}\cap \bs{\Lambda}_v \neq \emptyset$. 
\end{enumerate}
Et aussi celle (moins forte):
\begin{enumerate}[leftmargin=1.15cm]
\item[(\ref{hyp bonnes F-strates})] pour tout $v\in \NF$, on a $\check{X}_{F}(G)_{\mbb{Q}}\cap \bs{\Lambda}_v \neq \emptyset$. 
\end{enumerate}
L'hypothse \ref{hyp bonnes F-strates (forte)}, resp. \ref{hyp bonnes F-strates}, assure que pour tout $v\in \FN$, resp. tout $v\in \NF$, la $F$-lame ${_F\scrY_v}$ est ouverte dans son $F$-saturŽ ${_F\scrX_v}$; si de plus l'hypothse 
\ref{hyp reg} est vŽrifiŽe, cela entra"ne que la $F$-lame ${_F\scrY_v}$ est dense dans la $F$-variŽtŽ irrŽductible ${_F\scrX_v}$. Si $F$ est infini, 
les hypothses \ref{hyp reg} et \ref {hyp bonnes F-strates (forte)} assurent que toute $F$-lame, ou ce qui revient au mme toute $F$-strate, 
de $\FN$ possde un point $F$-rationnel. 
Dans le cas o $V$ est un $G$-module dŽfini sur $F$, l'hypothse \ref{hyp bonnes F-strates (forte)}, resp. \ref{hyp bonnes F-strates}, assure que pour tout $v\in \FN$, resp. tout $v\in \NF$, si $\lambda\in \Lambda_{F,v}^{\mathrm{opt}}$ et $k=m_v(\lambda)$, 
la sous-variŽtŽ (ouverte) $V_\lambda(k) \smallsetminus \ES{N}^{M_\lambda^\perp}(V_\lambda(k),0)$ est non vide. 

Enfin l'hypothse sur le bord:
\begin{enumerate}[leftmargin=1.15cm]
\item[(\ref{hypo sur le bord})] pour tout $v\in \FN$, le bord ${_F\bsfrX_v}\smallsetminus {_F\bsfrY_v}$ est rŽunion (finie) de $F$-strates de $\FN$; en d'autres termes pour tout $v'\in {_F\bsfrX_v}\smallsetminus {_F\bsfrY_v}$, 
on a ${_F\bsfrY_{v'}}\subset {_F\bsfrX_v}$. 
\end{enumerate}
Et aussi celle (moins forte):
\begin{enumerate}[leftmargin=1.3cm]
\item[(\ref{hypo sur le bord faible})] pour tout $v\in \FN$, le bord $\bsfrX_{F,v}\smallsetminus \bsfrY_{F,v}$ est rŽunion (finie) de $F$-strates de $\NF$; en d'autres termes pour tout $v'\in \bsfrX_{F,v}\smallsetminus \bsfrY_{F,v}$, 
on a $\bsfrY_{F,v'}\subset \bsfrX_{F,v}$. 
\end{enumerate}
Si $F$ est infini et si les hypothses \ref{hyp reg} et \ref {hyp bonnes F-strates (forte)} sont vŽrifiŽes, alors les hypothses \ref{hypo sur le bord} et \ref{hypo sur le bord faible} sont Žquivalentes.

\section{Le cas de la variŽtŽ unipotente}\label{le cas de la variŽtŽ U}
Les hypothses sont celles de la section \ref{la theorie de KRH rat}. On se limite dŽsormais ˆ l'action de $G$ sur lui-mme par conjugaison (\cad que $V=G$ et $e_V=e_G=1$) 
donnŽe par $$g\bullet x = gxg^{-1}\ptf$$

\subsection{Les ensembles $\FU$ et $\UF$}
Le groupe $G$ est une $G$-variŽtŽ pointŽe pour l'action par conjugaison, lisse et dŽfinie sur $F$. On note $\UU=\UU^G$ la variŽtŽ prŽcŽdemment notŽe $\ES{N}^G(G,1)$. 
C'est la sous-variŽtŽ fermŽe de $G$, dŽfinie sur $F$\footnote{Puisque $\UU= G \cdot \wt{U}_0$ o $\wt{U}_0$ est le radical unipotent d'un sous-groupe de 
Borel de $G$ dŽfini sur $F^{\mathrm{s\acute{e}p}}$, la variŽtŽ $\UU$ est dŽfinie sur $F^{\mathrm{s\acute{e}p}}$; donc sur $F$ d'aprs le critre galoisien (\ref{critre galoisien}).} et $G$-invariante, formŽe des ŽlŽments $x\in G$ 
tels que $\ES{F}_x=\{1\}$; o $\ES{F}_x$ est l'unique $G$-orbite fermŽe contenue dans la fermeture de Zariski de la $G$-orbite $\ES{O}_x=G\bullet x$. 
D'aprs le critre d'instabilitŽ de Hilbert-Mumford, un ŽlŽment de $ G$ est dans $\UU$ si et seulement s'il est dans le radical unipotent 
d'un sous-groupe de Borel de $G$. On note $\FU = {_F\UU^G}$ l'ensemble prŽcŽdemment notŽ $\FN^{G}(G,1)$, \cad le  
sous-ensemble de $\UU$ formŽ des ŽlŽments qui sont $(F,G)$-instables. Un ŽlŽment $u\in G$ est dans $\FU$ si et seulement s'il est dans le radical unipotent d'un $F$-sous-groupe parabolique 
de $G$. Enfin on note $\UF=\UF^G$\index{UF@$\FU$, $\UF$} l'ensemble des (vrais) ŽlŽments unipotents de $G(F)$: $$\UF= G(F)\cap \FU\ptf$$

Pour $u\in \FU$, on a dŽfini les sous-ensembles 
$${_F\scrY_u}\subset {_F\scrX_u}= G_{\mu,1}\subset U_\mu \subset \FU\quad \hbox{avec} \quad \mu \in \bs{\Lambda}_{F,u}\vg$$
$${_F\bsfrY_u}= G(F)\bullet {_F\scrY_u} \subset {_F\bsfrX_u} = G(F) \bullet {_F\scrX_u}\subset \FU\ptf$$ 
Rappelons que ces ensembles sont dŽfinis via le choix d'une $F$-norme $G$-invariante $\|\,\|$ sur $\check{X}(G)$ mais 
qu'ils ne dŽpendent pas de ce choix (cf. \ref{indep norme}). En revanche la fonction $\bs{q}_F: {_F\UU} \rightarrow \mbb{R}_+$ 
dŽpend bien sžr du choix de $\|\,\|$. Les ensembles ${_F\scrY_u}$ et ${_F\bsfrY_v}$ sont respectivement les $F$-lames et les $F$-strates de $\FU$; 
on les appellera aussi $F$-lames unipotentes et $F$-strates unipotentes de $G$.

L'hypothses \ref{hyp reg} est vŽrifiŽe ($e_G=1$ est rŽgulier). 

Pour $\mu\in \check{X}_F(G)$ et $r\in \mbb{Q}_{>0}$, la variŽtŽ $G_{\mu,r}$ est un 
$F$-sous-groupe unipotent connexe de $G$. Cela s'applique en particulier au $F$-saturŽ ${_F\scrX_u}$ d'une $F$-lame ${_F\scrY_u}$ de $\FU$ (avec $u\in \FU$): on a ${_F\scrX_u} = G_{\mu,1}$ avec $\mu\in \bs{\Lambda}_{F,u}$. 

Pour $u\in \UF$, on a notŽ $\scrY_{F,u}$, resp. $\scrX_{F,u}$, $\bsfrY_{F,u}$, $\bsfrX_{F,u}$ l'intersection de 
${_F\scrY_u}$, resp. ${_F\scrX_u}$, ${_F\bsfrY_u}$, ${_F\bsfrX_u}$ avec $G(F)$, \cad avec $\UF$. 
Les ensembles $\scrY_{F,u}$ et $\bsfrY_{F,u}$ sont respectivement 
les $F$-lames et les $F$-strates de $\UF$; on les appellera aussi $F$-lames unipotentes et $F$-strates unipotentes de $G(F)$. 

\vskip2mm
\P\hskip1mm{\it Passage au groupe dŽrivŽ. ---} On note avec un exposant \guill{der} les objets dŽfinis comme ci-dessus en rempla\c{c}ant $G$ par 
son groupe dŽrivŽ $G_\mathrm{der}$. La sous-$F$-variŽtŽ fermŽe $\UU^\mathrm{der}= \UU^{G_\mathrm{der}}$ de $G_{\mathrm{der}}$ co\"{\i}ncide avec $\UU$ et 
le sous-ensemble ${_F\UU}^{\mathrm{der}} ={_F \UU}^{G_{\mathrm{der}}}$ de $\UU$ 
co\"{\i}ncide avec $\FU$. La $F$-norme $G$-invariante $\|\, \|$ sur $\check{X}(G)$ 
dŽfinit par restriction une $F$-norme $G_{\mathrm{der}}$-invariante sur $\check{X}_F(G_{\mathrm{der}})$ et donc une 
fonction $\bs{q}_F^\mathrm{der}:\FU\rightarrow \mbb{R}_+$ qui co\"{\i}ncide avec $\bs{q}_F$. 

\begin{lemma}\label{le cas der}
Soit $u\in \FU$. 
\begin{enumerate}
\item[(i)] $\bs{\Lambda}_{F,u}^{\mathrm{der}} = \bs{\Lambda}_{F,u}$ et $\smash{_FP_u}^{\!\!\rm der}= G_\mathrm{der} \cap {_FP_u}$.
\item[(ii)] $\smash{_F\scrX_u}^{\!\!\mathrm{der}}= {_F\scrX_u}$ et $\smash{_F\scrY_u}^{\!\!\mathrm{der}} ={_F\scrY_u}$. 
\item[(iii)] ${_F\bsfrX_u}^{\!\!\!\mathrm{der} }= {_F\bsfrX_u}$ et $\smash{_F\bsfrY_u}^{\!\!\!\mathrm{der}} = {_F\bsfrY_u}$.
\end{enumerate}
\end{lemma}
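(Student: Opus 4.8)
The plan is to reduce everything to the case $V=G$ versus $V=G_{\mathrm{der}}$ for the conjugation action, and to exploit the fact that the invariants defining $F$-lames and $F$-strates — the normalized virtual cocharacters $\bs{\Lambda}_{F,u}$ and the associated $F$-parabolic, filtration pieces $G_{\mu,r}$, etc. — only see the \emph{semisimple part} of $G$. First I would observe the key elementary fact: if $u\in\FU$, then for every $\lambda\in\check{X}_F(G)$ with $\lim_{t\to 0}\mathrm{Int}_{t^\lambda}(u)=1$, the image of $\lambda$ already lies in $G_{\mathrm{der}}$ up to a central cocharacter that acts trivially on $u$ by conjugation; more precisely, writing $\lambda=\lambda'+\lambda_c$ with $\lambda'\in\check{X}(G_{\mathrm{der}})_{\mbb{Q}}$ and $\lambda_c$ central, one has $\mathrm{Int}_{t^\lambda}(u)=\mathrm{Int}_{t^{\lambda'}}(u)$ for all $t$, hence $m_u(\lambda)=m_u(\lambda')$ and (by the choice of compatible $F$-norms) $\|\lambda'\|\le\|\lambda\|$. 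This shows that in the infimum defining $\bs{q}_F(u)$ one may restrict to cocharacters of $G_{\mathrm{der}}$, giving $\bs{q}_F^{\mathrm{der}}(u)=\bs{q}_F(u)$ and, more importantly, that the optimal virtual cocharacters for $G$ are exactly those for $G_{\mathrm{der}}$.

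For part (i), from the previous paragraph one gets $\bs{\Lambda}_{F,u}\subset\check{X}_F(G_{\mathrm{der}})_{\mbb{Q}}$, and since $m_u$ and $\|\cdot\|$ agree under the two normalizations, $\bs{\Lambda}_{F,u}^{\mathrm{der}}=\bs{\Lambda}_{F,u}$. Then for $\mu\in\bs{\Lambda}_{F,u}$ the $F$-parabolic is ${_FP_u}=P_\mu$ computed in $G$, while $\smash{_FP_u}^{\!\!\mathrm{der}}=P_\mu^{G_{\mathrm{der}}}$; but $P_\mu^{G_{\mathrm{der}}}=G_{\mathrm{der}}\cap P_\mu$ because $P_\mu=P_\mu^{G_{\mathrm{der}}}\cdot Z(G)^\circ$ and the parabolic is determined dynamically by $\mu$ (here I would cite the dynamic description of $P_\lambda$ in \ref{le crit�re d'instabilit� de HM}). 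For part (ii), write ${_F\scrX_u}=G_{\mu,1}$ and $\smash{_F\scrX_u}^{\!\!\mathrm{der}}=G_{\mathrm{der},\mu,1}$; since the filtration subgroup $G_{\mu,r}$ for $r>0$ is unipotent (hence contained in $G_{\mathrm{der}}$, as the unipotent radical $U_\mu$ lies in $G_{\mathrm{der}}$), the two coincide. The equality of $F$-lames $\smash{_F\scrY_u}^{\!\!\mathrm{der}}={_F\scrY_u}$ then follows from the characterization ${_F\scrY_u}=\{u'\in{_F\scrX_u}\mid\bs{q}_F(u')=\bs{q}_F(u)\}$ of Lemma \ref{lemme 1 F-lames et F-strates}(i), combined with $\bs{q}_F^{\mathrm{der}}=\bs{q}_F$ already established. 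Part (iii) is immediate: $\smash{_F\bsfrX_u}^{\!\!\!\mathrm{der}}=G_{\mathrm{der}}(F)\bullet\smash{_F\scrX_u}^{\!\!\mathrm{der}}$ and $\smash{_F\bsfrY_u}^{\!\!\!\mathrm{der}}=G_{\mathrm{der}}(F)\bullet\smash{_F\scrY_u}^{\!\!\mathrm{der}}$, so I need $G(F)\bullet{_F\scrX_u}=G_{\mathrm{der}}(F)\bullet{_F\scrX_u}$; this holds because any $g\in G(F)$ can be written as a product of an element of $G_{\mathrm{der}}(F)$ and an element centralizing $u$ up to the central torus — or, more robustly, because ${_F\scrX_u}$ is ${_FP_u}(F)$-invariant and one reduces to $G(F)/{_FP_u}(F)$, noting $G(F)={_FP_u}(F)\cdot G_{\mathrm{der}}(F)$ since $Z(G)^\circ\subset{_FP_u}$.

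The main obstacle I anticipate is the bookkeeping around $F$-norms: the statement that the restriction of $\|\cdot\|$ to $\check{X}_F(G_{\mathrm{der}})$ yields \emph{the same} function $\bs{q}_F^{\mathrm{der}}$ as $\bs{q}_F$ requires that the orthogonal decomposition $\check{X}(T_0)_{\mbb{Q}}=\check{X}(T_0\cap G_{\mathrm{der}})_{\mbb{Q}}\oplus\check{X}(Z(G)^\circ)_{\mbb{Q}}$ be orthogonal for the chosen $W^G(T_0)$-invariant form, which is automatic since the Weyl group acts trivially on the central part and the form is definite; I would spell this out briefly. The other mild subtlety is the identity $G(F)={_FP_u}(F)\cdot G_{\mathrm{der}}(F)$, which follows from $G=P\cdot G_{\mathrm{der}}$ at the level of algebraic groups (as $Z(G)^\circ\subset P$) together with the fact that $P$ and $G_{\mathrm{der}}$ both have the expected $F$-points, but one should note it explicitly rather than treat it as obvious. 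Everything else is a direct transport of the definitions, so the proof should be short once these two points are isolated.
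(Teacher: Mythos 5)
Your argument is correct in substance and ends up close to the paper's, but the two proofs distribute the work differently. For (i)--(ii) the paper simply invokes \cite[7.3]{H1} to get the containment $\bs{\Lambda}_{F,u}\subset \check{X}(G_{\mathrm{der}})_{\mbb{Q}}$, whereas you reprove this directly: you decompose $\lambda=\lambda'+\lambda_c$ in $\check{X}(G)_{\mbb{Q}}$ along $\check{X}(G_{\mathrm{der}})_{\mbb{Q}}\oplus\check{X}(Z(G)^\circ)_{\mbb{Q}}$, note that $\mathrm{Int}_{t^{\lambda}}(u)=\mathrm{Int}_{t^{\lambda'}}(u)$ so $m_u(\lambda)=m_u(\lambda')$, and use orthogonality of the decomposition for the $W^G(T_0)$-invariant form to get $\|\lambda'\|\le\|\lambda\|$ with equality only when $\lambda_c=0$. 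That is a perfectly good, more self-contained substitute for the citation (the $\Gamma_F$-equivariance of the decomposition, ensuring $\lambda'$ is again $F$-rational, deserves the one line you promise to spend on the norm issue). The identifications ${_F\scrX_u}=G_{\mu,1}\subset U_\mu\subset G_{\mathrm{der}}$ and the characterization of the lame via $\bs{q}_F$ inside the saturation then give (ii) exactly as in the paper; in fact, once (i) is known for every $u'\in\FU$, the equality of lames is immediate from the definition ${_F\scrY_u}=\{u'\mid \bs{\Lambda}_{F,u'}=\bs{\Lambda}_{F,u}\}$, without passing through \ref{lemme 1 F-lames et F-strates}.

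The one step where your justification is too quick is the group-theoretic input for (iii). You need $G(F)={_FP_u}(F)\cdot G_{\mathrm{der}}(F)$, and you propose to deduce it from the surjectivity of the product map $P\cdot G_{\mathrm{der}}=G$ at the level of algebraic groups; but surjectivity of such a product does not pass to $F$-points in general (the map $Z(G)^\circ\times G_{\mathrm{der}}\to G$ is the standard counterexample). The clean arguments are either the one you hint at via rational points of the flag variety, namely $G(F)/P(F)=(G/P)(F)=(G_{\mathrm{der}}/(G_{\mathrm{der}}\cap P))(F)=G_{\mathrm{der}}(F)/(G_{\mathrm{der}}\cap P)(F)$ (using \cite[ch.~V, 20.5]{B}), or the paper's route: choose a maximal $F$-split torus $S\subset{_FP_u}$, use $G(F)=G_{\mathrm{der}}(F)\,G^S(F)$ (a consequence of the relative Bruhat decomposition, since $U_0(F)$ and representatives of the relative Weyl group lie in $G_{\mathrm{der}}(F)$ and $G_{\mathrm{der}}(F)$ is normal) together with $G^S(F)\subset P_{F,u}$ and the $P_{F,u}$-invariance of ${_F\scrX_u}$ and ${_F\scrY_u}$. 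With that point repaired, your proof is complete and essentially equivalent to the paper's.
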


\begin{proof}
D'aprs \cite[7.3]{H1}, l'ensemble $\bs{\Lambda}_{F,u}$ est contenu dans $\check{X}(G^\mathrm{der})_{\mbb{Q}}$. Cela prouve (i) et (ii). 
Si $S$ est un tore $F$-dŽployŽ maximal de ${_FP_u}$, on a l'ŽgalitŽ $$G(F)= G_\mathrm{der}(F)G^S(F)$$ o $G^S$ est 
le centralisateur de $S$ dans $G$. Comme $G^S(F)\subset P_{F,u}$, le point (iii) est une consŽquence de (ii). 
\end{proof}

\subsection{Morphismes et optimalitŽ}\label{morphismes et optimalitŽ} 
Soit $H$ un groupe rŽductif connexe dŽfini sur $F$. On note avec un exposant $H$ les objets dŽfinis comme prŽcŽdemment en remplaant $G$ par $H$. 
Ainsi $\UU_F^H= H(F)\cap \FU^H$ est l'ensemble des (vrais) ŽlŽments unipotents de $H(F)$. 
Pour un $F$-morphisme de groupes $ f: G \rightarrow H$, resp. $f: H \rightarrow G$, on peut se demander ce que devient la thŽorie de l'optimalitŽ ˆ travers $f$. 
Si $H=G^\mathrm{der}$ et $f: H \rightarrow G$ est l'immersion fermŽe naturelle (donnŽe par l'inclusion), on a vu que la rŽponse est trs simple (\ref{le cas der}). 
On se limite ici aux deux cas particuliers suivants: le cas d'un sous-groupe \textit{critique} et le cas d'un morphisme \textit{spŽcial}. Les $F$-morphismes 
naturels $G\rightarrow G_{\rm ad}$ et $G_{\rm sc}\rightarrow G$ sont spŽciaux.

\vskip2mm
\P\hskip1mm\textit{Le cas d'un sous-groupe critique \cite[9.4]{H1}. ---} Soit $H=G^S$ le centralisateur dans $G$ d'un tore $S\subset G$. 
C'est un sous-groupe fermŽ rŽductif connexe de $G$. Un tel sous-groupe $H\subset G$ est dit \textit{critique}\footnote{Typiquement, un facteur de Levi de 
$G$ est un sous-groupe critique.}. Si de plus $S$ est dŽfini sur $F$, ce que l'on suppose, alors $H$ l'est aussi. 
La norme $\|\,\|$ dŽfinie par restriction une $F$-norme $H$-invariante 
sur $\check{X}_F(H)$ et donc une fonction $\bs{q}_F^H: {_F\UU^H} \rightarrow \mbb{R}_+$. On a clairement les inclusions $${_F\UU^H \subset \FU} \quad \hbox{et}\quad \UU^H_F\subset \UU_F\ptf$$
D'aprs \cite[9.4]{H1}, on a le

\begin{lemma}\label{le cas critique} 
Soient $S\subset G$ un $F$-tore, $H=G^S$ et $u\in {_F\UU^H}$.
\begin{enumerate}
\item[(i)] $\bs{\Lambda}_{F,u}^{H} = \check{X}_F(H)_{\mbb{Q}} \cap \bs{\Lambda}_{F,u}$, $\bs{q}_F^H(u)= \bs{q}_F(u)$ et 
$\smash{_FP_u}^{\!\!\!H} = H \cap {_FP_u}$;
\item[(ii)] $\smash{_F\scrX_u}^{\!\!\!H} = H \cap {_F\scrX_u}$ et $\smash{_F\scrY_u}^{\!\!H} = H \cap {_F\scrY_u}$; 
\item[(iii)] $\smash{_F\bsfrX_u}^{\!\!\!\!H} \subset H \cap \bsfrX_{F,u}$ et $\smash{_F\bsfrY_u}^{\!\!\!\!H} \subset  H \cap {_F\bsfrY_u}$.
\end{enumerate}
\end{lemma}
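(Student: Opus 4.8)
Le plan est d'obtenir (i) et (ii) \`a partir de \cite[9.4]{H1} --- dont nous reprenons la normalisation ---, puis de d\'eduire (iii) formellement de (ii), comme on a d\'eduit \ref{le cas der}\,(iii) de \ref{le cas der}\,(ii). Notons d'abord que $u\in{_F\UU^H}\subset\FU$, de sorte que les objets $\bs{\Lambda}_{F,u}$, $\bs{q}_F(u)$, ${_FP_u}$, ${_F\scrX_u}$, ${_F\scrY_u}$, ${_F\bsfrX_u}$ et ${_F\bsfrY_u}$ relatifs \`a l'action de $G$ sur lui-m\^eme sont bien d\'efinis, et que $\bs{\Lambda}_{F,u}^{H}\neq\emptyset$ d'apr\`es \ref{HMrat1} appliqu\'e \`a l'action de $H$ sur lui-m\^eme. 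Comme $H$ est ferm\'e dans $G$, la mesure d'instabilit\'e $m_u(\mu)$ est la m\^eme calcul\'ee dans $H$ ou dans $G$; par suite $\wt{\Lambda}_{F,u}^{H}=\check{X}_F(H)_{\mbb{Q}}\cap\wt{\Lambda}_{F,u}$, et tout $\nu\in\bs{\Lambda}_{F,u}^{H}$ v\'erifie $m_u(\nu)\geq 1$, d'o\`u l'in\'egalit\'e facile $\bs{q}_F(u)\leq\|\nu\|=\bs{q}_F^{H}(u)$ (l'inf d\'efinissant $\bs{q}_F(u)$ porte sur un ensemble plus gros).

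Le point central est l'in\'egalit\'e inverse $\bs{q}_F^{H}(u)\leq\bs{q}_F(u)$, qui est pr\'ecis\'ement \cite[9.4]{H1}. L'argument repose sur le fait que $S$ centralise $u$ (puisque $u\in H=G^S$): pour $s\in S$, l'automorphisme $\mathrm{Int}_s$ fixe $u$ et pr\'eserve la norme $G$-invariante, donc stabilise l'ensemble des co-caract\`eres $(F,u)$-optimaux et laisse invariant le $F$-sous-groupe parabolique ${_FP_u}$. En moyennant le long du tore lin\'eairement r\'eductif $S$ --- concr\`etement, en choisissant une composante de Levi de ${_FP_u}$ contenant $S$ et le co-caract\`ere $(F,u)$-optimal associ\'e, le travail avec des tores $F$-d\'eploy\'es plut\^ot que des tores maximaux \'etant effectu\'e dans loc.~cit.\ dans l'esprit du crit\`ere de Kirwan--Ness rationnel \ref{thm de KN rationnel} --- on obtient un \'el\'ement de $\bs{\Lambda}_{F,u}$ dont l'image centralise $S$, donc est contenue dans $H$: un tel co-caract\`ere virtuel appartient \`a $\check{X}_F(H)_{\mbb{Q}}$. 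Cela fournit \`a la fois $\check{X}_F(H)_{\mbb{Q}}\cap\bs{\Lambda}_{F,u}\neq\emptyset$ et $\bs{q}_F^{H}(u)\leq\bs{q}_F(u)$, donc l'\'egalit\'e $\bs{q}_F^{H}(u)=\bs{q}_F(u)$ annonc\'ee.

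Une fois cette \'egalit\'e acquise, le reste rel\`eve de la v\'erification. Puisque $m_u$ est intrins\`eque, $\bs{\Lambda}_{F,u}^{H}=\{\nu\in\check{X}_F(H)_{\mbb{Q}}\cap\wt{\Lambda}_{F,u}:m_u(\nu)\geq 1,\ \|\nu\|=\bs{q}_F(u)\}=\check{X}_F(H)_{\mbb{Q}}\cap\bs{\Lambda}_{F,u}$. En choisissant $\nu$ dans cet ensemble et en \'ecrivant ${_FP_u}=P_\nu$, ${_F\scrX_u}=G_{\nu,1}$, on obtient aussit\^ot ${_FP^H_u}=P_\nu\cap H=H\cap{_FP_u}$ et ${_F\scrX^H_u}=G_{\nu,1}\cap H=H\cap{_F\scrX_u}$ (la condition d\'efinissant $G_{\nu,r}$ est la m\^eme dans $H$ et dans $G$). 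Pour les $F$-lames, on applique \ref{lemme 1 F-lames et F-strates}\,(i): ${_F\scrY^H_u}=\{u'\in H\cap{_F\scrX_u}:\bs{q}_F^{H}(u')=\bs{q}_F^{H}(u)\}$; comme $\nu\in\check{X}_F(H)_{\mbb{Q}}$, l'ensemble $H\cap{_F\scrX_u}=G_{\nu,1}\cap H$ est contenu dans ${_F\UU^H}$, donc tout tel $u'$ v\'erifie $\bs{q}_F^{H}(u')=\bs{q}_F(u')$ par (i) d\'ej\`a \'etabli, d'o\`u ${_F\scrY^H_u}=\{u'\in H\cap{_F\scrX_u}:\bs{q}_F(u')=\bs{q}_F(u)\}=H\cap{_F\scrY_u}$. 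Enfin (iii) r\'esulte de (ii) en appliquant $H(F)\bullet(\cdot)$: on a ${_F\bsfrX^H_u}=H(F)\bullet{_F\scrX^H_u}=H(F)\bullet(H\cap{_F\scrX_u})\subset H\cap\bigl(G(F)\bullet{_F\scrX_u}\bigr)=H\cap{_F\bsfrX_u}$, car $H(F)\subset G(F)$ et $H(F)\bullet H=H$, et de m\^eme ${_F\bsfrY^H_u}\subset H\cap{_F\bsfrY_u}$; on n'obtient ici que des inclusions, faute d'un analogue de l'identit\'e $G(F)=G_\mathrm{der}(F)G^S(F)$ utilis\'ee pour \ref{le cas der}\,(iii). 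Le principal obstacle est ainsi concentr\'e dans l'in\'egalit\'e $\bs{q}_F^{H}(u)\leq\bs{q}_F(u)$, c'est-\`a-dire dans le fait que l'argument de \cite[9.4]{H1} s'applique avec notre normalisation; tout le reste est routinier.
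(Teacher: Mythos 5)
Votre d\'emonstration est correcte et suit essentiellement la m\^eme voie que celle du texte : le point-cl\'e, dans les deux cas, est de produire un co-caract\`ere virtuel $(F,u)$-optimal dont l'image centralise $S$ (donc appartient \`a $\check{X}_F(H)_{\mbb{Q}}$) en pla\c{c}ant $S$ dans ${_FP_u}$ puis en prenant le co-caract\`ere optimal attach\'e \`a un Levi (ou \`a un tore d\'eploy\'e maximal) de ${_FP_u}$ contenant $S$, apr\`es quoi (ii) s'obtient via $G_{\mu,1}$ et la caract\'erisation des $F$-lames par $\bs{q}_F$, et (iii) par conjugaison dans $H(F)\subset G(F)$. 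La seule diff\'erence est de pr\'esentation : le texte \'etablit explicitement $S\subset{_FP_u}$ via \ref{cor de HMrat1}\,(iii), la descente \ref{BMRT(4.7)} et la densit\'e de $S(F^{\mathrm{s\acute{e}p}})$, l\`a o\`u votre argument d'invariance de ${_FP_u}$ sous $\mathrm{Int}_s$ l'utilise implicitement (via l'auto-normalisation des paraboliques).
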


\begin{proof}
Puisque $H=G^S$, le $F$-tore $S$ est contenu dans le centralisateur $G^u$ de $u$ dans $G$. 
Or d'aprs \ref{cor de HMrat1}\,(iii) et \ref{BMRT(4.7)}, on a 
$$G^u(F^\mathrm{s\acute{e}p}) \subset P_{F^\mathrm{s\acute{e}p},u}={_FP_u}(F^\mathrm{s\acute{e}p})\ptf$$ 
Comme $S(F^\mathrm{s\acute{e}p})$ est Zariski-dense dans $S$, on a $S\subset {_FP_u}$. 
On peut donc choisir un tore $F$-dŽployŽ maximal $S'$ dans ${_FP_u}$ tel que $S\subset S'$. Ce tore $S'$ est $(F,u)$-optimal relativement ˆ l'action  
de $G$ par conjugaison et comme il est contenu dans $H$, il est aussi $(F,u)$-optimal relativement ˆ l'action de $H$ par conjugaison. 
On a donc $$\bs{\Lambda}_{F,u} \cap \check{X}(S')_{\mbb{Q}}= \bs{\Lambda}_{F,u}^{H}\cap \check{X}(S')_{\mbb{Q}} = \{\mu\}\ptf$$ On en dŽduit (i). 
D'autre part, on a $$\smash{_F\scrX_u}^{\!\!\!H}  = H_{\mu,1}= H\cap G_{\mu,1} = H\cap {_F\scrX_u}\ptf$$ 
Un ŽlŽment $x\in \smash{_F\scrX_u}^{\!\!\!H} $ appartient ˆ $\smash{_F\scrY_u}^{\!\!H} $ si et seulement si $\bs{q}^H_F(x) = \| \mu \|$. 
D'aprs le point (i) et l'ŽgalitŽ $\smash{_F\scrX_u}^{\!\!\!H} =H \cap {_F\scrX_u}$, 
on obtient $$\smash{_F\scrY_u}^{\!\!H}  = H\cap {_F\scrY_u}\ptf$$ Cela prouve (ii). Quant aux inclusions du point (iii), 
elles se dŽduisent des ŽgalitŽs du point (ii) par conjugaison dans $H(F)\subset G(F)$.  
\end{proof}

\P\hskip1mm\textit{Le cas d'un morphisme spŽcial. ---} Soit $f: G \rightarrow H$ un morphisme surjectif de groupes algŽbriques rŽductifs connexes. 
Si $T$ un tore maximal de $G$, alors $T_H=f(T)$ est un tore maximal de $H$ et $f$ induit des morphismes $\mbb{Z}$-linŽaires 
$$\varphi_T= X(T_H)\rightarrow X(T)\quad \hbox{et} \quad \phi_T: \check{X}(T)\rightarrow \check{X}(T_H)\ptf$$ 
Le morphisme $f$ est dit \textit{spŽcial} si $\varphi_T$ envoie le systme de racines de $H$ dans celui de $G$. 
Cette dŽfinition ne dŽpend pas du choix de $T$. Observons que $f$ est spŽcial si et seulement le morphisme 
$f_\mathrm{der}: G_\mathrm{der} \rightarrow H_\mathrm{der}$ est (surjectif et) spŽcial. 

\begin{remark}
\textup{Si le morphisme surjectif $f: G\rightarrow H$ est sŽparable, ou s'il est central\footnote{Rappelons qu'un morphisme surjectif $f$ 
est central si et seulement si $\ker(f)$ est central 
dans $G$ et $\ker (\mathrm{Lie}(f))$ est central dans $\mathrm{Lie}(G)$. Si $p>1$, le morphisme naturel $\mathrm{SL}_p \rightarrow \mathrm{PGL}_p$ 
est une isogŽnie centrale 
mais il n'est pas sŽparable.}, alors il est spŽcial \cite[22.4]{B}. }
\end{remark}

Le morphisme (surjectif) $f:G\rightarrow H$ induit des applications 
$$\phi: \check{X}(G) \rightarrow \check{X}(H)\quad \hbox{et}\quad \phi_{\mbb{Q}}:  \check{X}(G)_{\mbb{Q}} \rightarrow \check{X}(H)_{\mbb{Q}}\ptf$$ 
Si de plus $f$ est dŽfini sur $F$, ce que l'on suppose, alors les applications $\phi$ et $\phi_{\mbb{Q}}$ le sont aussi, \cad qu'elles sont 
$\Gamma_F$-Žquivariantes. Elles induisent des applications 
$$\phi_F: \check{X}_F(G)\rightarrow \check{X}_F(H)\quad \hbox{et} \quad \phi_{F,\mbb{Q}}:
 \check{X}_F(G)_{\mbb{Q}} \rightarrow \check{X}_F(H)_{\mbb{Q}}\ptf$$ 
On a les inclusions $$f(\FU)\subset {_F\UU^H}\quad \hbox{et} \quad f(\UF) \subset \UU_F^H\ptf$$
Pour $\mu \in \check{X}(H)_{\mbb{Q}}$, notons $\iota_{\mbb{Q}}(\mu)$ l'unique ŽlŽment de $\phi^{-1}_{\mbb{Q}}(\mu)$ de norme minimale. 
Cela dŽfinit une application $\iota_{\mbb{Q}}: \check{X}(H)_{\mbb{Q}} \rightarrow \check{X}(G)_{\mbb{Q}}$ qui vŽrifie les propriŽtŽs \cite[10.3]{H1}: 
\begin{itemize}
\item $\iota_{\mbb{Q}}$ est une \guill{section} de $\phi_{\mbb{Q}}$, i.e. $\phi_{\mbb{Q}} \circ \iota_{\mbb{Q}} = \mathrm{Id}$; 
\item pour tout tore maximal $T$ de $G$, $\iota_{\mbb{Q}}$ se restreint en une application $\mbb{Q}$-linŽaire 
$\iota_{T,\mbb{Q}}: \check{X}(T)_{\mbb{Q}} \rightarrow \check{X}(T_H)_{\mbb{Q}}$. 
\end{itemize}
Par construction, l'application $\iota_{\mbb{Q}}$ est dŽfinie sur $F$. Pour $\mu \in \check{X}(H)_{\mbb{Q}}$, on pose 
$$\| \mu \| = \inf\{ \| \lambda \| \,\vert \, \lambda \in \phi^{-1}_{\mbb{Q}}(\mu) \}= \| \iota_{\mbb{Q}}(\mu) \|\ptf$$ 
Cela dŽfinit une $F$-norme $H$-invariante sur $\check{X}(H)_{\mbb{Q}}$, laquelle dŽfinit par restriction une $F$-norme $H$-invariante sur 
$\check{X}(H)$. On a donc une fonction $\bs{q}^H_F:{_F\UU^H} \rightarrow \mbb{R}_{\geq 0}$. 
D'aprs  \cite[10.6]{H1}, on a la

\begin{proposition}\label{le cas spŽcial}
Soit $f: G \rightarrow H$ un $F$-morphisme de groupes algŽbriques rŽductifs connexes dŽfinis sur $F$, tel que le morphisme 
$f_\mathrm{der}: G_\mathrm{der}\rightarrow H_\mathrm{der}$ 
soit surjectif et spŽcial. Soit $u\in \FU$.
\begin{enumerate}
\item[(i)] $\phi_{\mbb{Q}}(\bs{\Lambda}_{F,u}) \subset \bs{\Lambda}_{F,f(u)}^H$, $\bs{q}_F(u)\geq \bs{q}_F^H(f(u))$ et 
$f({_FP_u})\subset \smash{_FP}_{\!\!f(u)}^H$. Si de plus $f$ est surjectif, on a l'ŽgalitŽ $f({_FP_u})=\smash{_FP}_{\!\!f(u)}^H$.
\item[(ii)] $f({_F\scrX_u})\subset \smash{_F\scrX}_{\!\!f(u)}^H$ et $f({_F\scrY_u})\subset \smash{_F\scrY}_{\!\!f(u)}^H$.
\item[(iii)] $f({_F\bsfrX_u})\subset \smash{_F\bsfrX}_{\!f(u)}^H$ et $f({_F\bsfrY_u})\subset  \smash{_F\bsfrY}_{\!f(u)}^H$. 
\end{enumerate}
\end{proposition}

\begin{proof}
D'aprs \ref{le cas der}, quitte ˆ remplacer $G$ et $H$ par leurs groupes dŽrivŽs, on peut supposer $f$ surjectif. 
Alors d'aprs \cite[10.6]{H1}, on a $\phi_{\mbb{Q}}(\bs{\Lambda}_{F,u}) \subset \bs{\Lambda}_{F,f(u)}^H$ et 
$f({_FP_u})= \smash{_FP}_{\!\!f(u)}^H$. Pour $\lambda\in \bs{\Lambda}_{F,u}$ et $\mu = \phi_{\mbb{Q}}(\lambda)\in \bs{\Lambda}_{F,f(u)}^H$, on a donc 
$$\bs{q}_F^H(f(u)) = \| \mu \| = \| \iota_{\mbb{Q}}(\mu) \| \leq \| \lambda \| = \bs{q}_F(u)\ptf $$ 
Cela prouve (i). Puisque ${_F\scrX_u}= G_{\lambda,1}$ et $f(G_{\lambda,1})=H_{\mu,1}$, on a 
$$f({_F\scrX_u})= f(G_{\lambda,1})\subset  H_{\mu,1}= \smash{_F\scrX}_{\!\!f(u)}^H\ptf$$ 
Si $u'\in {_F\scrY_u}$, alors $\lambda \in \bs{\Lambda}_{F,u'}$ et (d'aprs (i)) $\mu\in \bs{\Lambda}_{F,f(u')}$, par consŽquent 
$$\bs{q}_F^H( f(u') ) = \| \mu \|  = \bs{q}_F^H(f(u))$$ et 
$f(u')\in \smash{_F\scrY}_{\!\!f(u)}^H$. Cela prouve (ii). On en dŽduit (iii) par conjugaison dans $G(F)$ gr‰ce ˆ l'inclusion $f(G(F))\subset H(F)$. 
\end{proof}

D'aprs \cite[10.7]{H1}, on a le

\begin{corollary}\label{le cas spŽcial, cor}(Sous les hypothses de \ref{le cas spŽcial}.)
\begin{enumerate}
\item[(i)] $\bs{\Lambda}_{F,u} =\phi_{\mbb{Q}}^{-1}( \bs{\Lambda}_{F,f(u)}^H)$, $\bs{q}_F(u)= \bs{q}_F^H(f(u))$ et ${_FP_u}= 
 f^{-1}(\smash{_FP}_{\!\!f(u)}^H)$.
\item[(ii)] Les inclusions \ref{le cas spŽcial}\,(ii) sont des ŽgalitŽs. De plus 
$${_F\scrX_u}=f^{-1}(\smash{_F\scrX}_{\!\!f(u)}^H)\quad \hbox{et}\quad {_F\scrY_u}=f^{-1}(\smash{_F\scrY}_{\!\!f(u)}^H)\ptf$$
\item[(iii)] Les inclusions \ref{le cas spŽcial}\,(iii) sont des ŽgalitŽs. De plus 
$${_F\bsfrX_u}= f^{-1}(\smash{_F\bsfrX}_{\!f(u)}^H)\quad \hbox{et} \quad {_F\bsfrY_u} = f^{-1}(\smash{_F\bsfrY}_{\!f(u)}^H)\ptf$$
\item[(iv)]$f$ induit une application bijective $\FU \rightarrow {_F\UU^H}$.  
\end{enumerate}
\end{corollary}

\begin{proof}
D'aprs \ref{le cas der}, quitte ˆ remplacer $G$ et $H$ par leurs groupes dŽrivŽs, on peut supposer que $f$ est une isogŽnie centrale. Alors d'aprs 
\cite[10.7\,(a)]{H1}, l'application $\phi_{\mbb{Q}}: \check{X}(G)_{\mbb{Q}}\rightarrow \check{X}(H)_{\mbb{Q}}$ est bijective et elle induit une application bijective 
$\phi_{F,\mbb{Q}}: \check{X}_F(G)_{\mbb{Q}} \rightarrow \check{X}_F(H)_{\mbb{Q}}$. D'aprs \cite[10.7\,(b)]{H1}, 
on a $\bs{\Lambda}_{F,u} =\phi_{\mbb{Q}}^{-1}( \bs{\Lambda}_{F,f(u)}^H)$ et 
${_FP_u}=  f^{-1}(\smash{_FP}_{\!\!f(u)}^H)$. De plus, avec les notations de la preuve de \ref{le cas spŽcial}, 
on a $\iota_{\mbb{Q}}(\mu)= \lambda$ par consŽquent $\bs{q}_F^H(f(u))= \bs{q}_F(u)$. 
Cela prouve (i). Le point (ii) est une consŽquence de \cite[22.4]{B}: pour tout 
sous-groupe fermŽ unipotent connexe $U\subset G$, $f$ induit un isomorphisme de $U$ sur $f(U)$. 
En particulier $f$ induit un $F$-isomorphisme de $G_{\lambda,1}$ sur $f(G_{\lambda,1})= H_{\mu,1}$. 
Compte-tenu de (i), cela prouve (ii). Quant au point (iii), soit $S$ un tore $F$-dŽployŽ 
maximal de ${_FP_u}$ et soit $M=G^S$ le centralisateur de $S$ dans $G$. 
Alors $S_H=f(S)$ est un tore $F$-dŽployŽ maximal de $\smash{_FP}_{\!\!f(u)}^H= f({_FP_u})$ 
et $M_H = f(M)$ est le centralisateur $H^{S_H}$ de $S_H$ dans $H$. Soit $G^*\subset G$ le $F$-sous-groupe distinguŽ engendrŽ par les les sous-groupes 
radiciels $U_{(\alpha)}$ pour $\alpha \in \ES{R}_S$; o $\ES{R}_S= \ES{R}_S^G$ est l'ensemble des racines de $S$ dans $G$. On a $$G(F)=M(F) G^*(F)=G^*(F)M(F)\ptf$$ 
On dŽfinit de la mme manire $H^*= \langle U_{(\beta)}^H\,\vert \, \beta \in \ES{R}_{S_H}^H\}\subset H$; on a aussi 
$$H(F) = M_H(F) H^*(F)= H^*(F) M_H(F)\ptf$$ D'aprs \cite[ch.~V, 22.6]{B}, 
le morphisme $\mbb{Z}$-linŽaire $\varphi_{T}: X(S_H)\rightarrow X(S)$ induit par $f$ 
envoie $\ES{R}_{S_H}^H$ bijectivement sur $\ES{R}_S$; par consŽquent si $\beta\in \ES{R}_{S_H}^H$ et $\alpha = \varphi_T(\beta)$, $f$ induit un 
$F$-isomorphisme de $U_{(\alpha)}$ sur $U_{(\beta)}$. On a donc $$H(F)= M_H(F) f(G^*(F))= f(G^*(F))M_H(F)\ptf$$ Puisque 
$${_F\bsfrX_u} = G^*(F)\bullet {_F\scrX_u} \quad \hbox{et} \quad {_F\bsfrY_u} = G^*(F) \bullet {_F\scrY_u}\vg$$ 
le point (iii) dŽcoule de (ii) par conjugaison dans $G^*(F)$. 
Quant au point (iv), c'est une consŽquence de (ii) et (iii).
\end{proof}

\begin{corollary}
(Sous les hypothses de \ref{le cas spŽcial}.) $f$ induit une application bijective $\FU \rightarrow \FU^H$ qui envoie bijectivement les $F$-lames, resp. $F$-strates, de 
$\FU$ sur celles de $\FU^H$.
\end{corollary}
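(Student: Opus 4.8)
The plan is to deduce this from Corollary~\ref{le cas sp�cial, cor}. First I would recall that by part~(iv) of that corollary $f$ restricts to a bijection $\bar{f}\colon \FU \to \FU^H$ (where $\FU^H = {_F\UU^H}$); it then only remains to check that $\bar{f}$ carries the $F$-lames, resp. the $F$-strates, of $\FU$ bijectively onto those of $\FU^H$.

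For the $F$-lames: any $F$-lame of $\FU$ is ${_F\scrY_u}$ with $u\in\FU$, and \ref{le cas sp�cial, cor}\,(ii) gives both $f({_F\scrY_u})=\smash{_F\scrY}_{\!\!f(u)}^H$ and ${_F\scrY_u}=f^{-1}(\smash{_F\scrY}_{\!\!f(u)}^H)$; since ${_F\scrY_u}\subseteq\FU$ and $\smash{_F\scrY}_{\!\!f(u)}^H\subseteq\FU^H$ these say $\bar{f}({_F\scrY_u})=\smash{_F\scrY}_{\!\!f(u)}^H$ and ${_F\scrY_u}=\bar{f}^{-1}(\smash{_F\scrY}_{\!\!f(u)}^H)$. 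Hence $Y\mapsto\bar{f}(Y)$ maps $F$-lames of $\FU$ to $F$-lames of $\FU^H$; it is surjective because each $F$-lame of $\FU^H$ is $\smash{_F\scrY}_{\!\!w}^H$ with $w=f(u)$, $u=\bar{f}^{-1}(w)\in\FU$, so $\smash{_F\scrY}_{\!\!w}^H=\bar{f}({_F\scrY_u})$; and injective because $\bar{f}$ is a bijection on points, so $\bar{f}(Y)=\bar{f}(Y')$ forces $Y=Y'$. The case of $F$-strates is identical, replacing ${_F\scrY}$ by ${_F\bsfrY}$ throughout and using \ref{le cas sp�cial, cor}\,(iii) in place of \ref{le cas sp�cial, cor}\,(ii).

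I do not expect a real obstacle: once Corollary~\ref{le cas sp�cial, cor} is in hand the statement is a set-theoretic repackaging, and could be compressed to a single line citing parts (ii)--(iv) of that corollary. The substantive content sits inside the proof of~\ref{le cas sp�cial, cor} itself, in the reduction via~\ref{le cas der} to the case where $f_{\mathrm{der}}\colon G_{\mathrm{der}}\to H_{\mathrm{der}}$ is a central isogeny, which is where the special hypothesis on $f$ is used, through Hesselink's description of $\phi_{\mbb{Q}}$ and the fact that a central isogeny restricts to an isomorphism on connected unipotent subgroups. Should one want an intrinsic description of the bijection on $F$-strates, one may alternatively observe that $\phi_{\mbb{Q}}$ induces, via \ref{le cas sp�cial, cor}\,(i) and a compatible choice of minimal parabolic and maximal split torus, a bijection between the finite index sets $\bs{\Lambda}_{F,\mathrm{st}}^{G}(G,1)$ and $\bs{\Lambda}_{F,\mathrm{st}}^{H}(H,1)$ that is compatible with $\bar{f}$.
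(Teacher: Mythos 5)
Votre preuve est correcte et suit essentiellement la m�me voie que l'article: celui-ci �nonce ce corollaire sans d�monstration, comme cons�quence imm�diate du corollaire \ref{le cas sp�cial, cor}, dont les points (ii)--(iv) donnent exactement la bijection sur les points et les �galit�s $f({_F\scrY_u})=\smash{_F\scrY}_{\!\!f(u)}^H$ et $f({_F\bsfrY_u})=\smash{_F\bsfrY}_{\!f(u)}^H$ que vous invoquez. Votre argument de surjectivit�/injectivit� sur les $F$-lames et $F$-strates est le repackaging ensembliste attendu; rien � redire.
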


\P\hskip1mm{\it Application: passage au groupe adjoint. ---} On note $G_\mathrm{ad}\subset \mathrm{Aut}(\mathrm{Lie}(G))$ le groupe adjoint de $G$. Le corollaire \ref{le cas spŽcial, cor} s'applique au $F$-morphisme naturel 
$$ f=\mathrm{Ad}: G \rightarrow G_\mathrm{ad} \;(\subset \mathrm{Aut}(\mathrm{Lie}(G))\ptf $$ D'aprs le dŽbut de la preuve de \ref{le cas spŽcial, cor}, le $F$-morphisme 
$f_\mathrm{der}: G_\mathrm{der}\rightarrow G_\mathrm{ad}$ induit une identification canonique 
$$\check{X}_F(G_\mathrm{der})_{\mbb{Q}}= \check{X}_F(G_\mathrm{ad})_{\mbb{Q}}\ptf$$
On remplace l'exposant $H=G_\mathrm{ad}$ par un simple exposant \guill{ad} dans les notations prŽcŽdentes. D'aprs \ref{le cas spŽcial, cor}\,(iv), le morphisme 
$f=\mathrm{Ad}$ induit une application bijective $\FU= \FU^{\mathrm{der}}\rightarrow \FU^{\mathrm{ad}}$ via laquelle on identifie $\FU$ et $\FU^{\mathrm{ad}}$.

\begin{corollary}\label{le cas adjoint} 
Soit $u\in \FU$.
\begin{enumerate}
\item[(i)] $\bs{\Lambda}_{F,u}\;(= \bs{\Lambda}_{F,u}^\mathrm{der}) = \bs{\Lambda}_{F,u}^\mathrm{ad}$, $\bs{q}_F(u)=
 \bs{q}_F^\mathrm{ad}(u)$ et ${_FP_u} =  f^{-1}( \smash{_FP}_{\!\!u}^{\mathrm{ad}})$;
\item[(ii)] ${_F\scrX_u} = \smash{_F\scrX}_{\!\!u}^{\mathrm{ad}}$ et ${_F\scrY_u}= \smash{_F\scrY}_{\!\!u}^{\mathrm{ad}}$; 
\item[(iii)] ${_F\bsfrX_u} = \smash{_F\bsfrX}_{\!u}^{\mathrm{ad}}$ et ${_F\bsfrY_u}= {_F\bsfrY}_{\!u}^{\mathrm{ad}}$.
\end{enumerate}
\end{corollary}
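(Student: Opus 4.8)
The plan is to read this corollary off directly from Lemma~\ref{le cas der} (passage to the derived group) together with Corollary~\ref{le cas sp\'ecial, cor} (the special-morphism case), applied to the natural $F$-morphism $f = \mathrm{Ad}\colon G \rightarrow G_\mathrm{ad}$.

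First I would record that $f$ falls under the hypotheses of Proposition~\ref{le cas sp\'ecial}, hence of Corollary~\ref{le cas sp\'ecial, cor}: the morphism $\mathrm{Ad}$ is surjective with kernel $Z(G)$, and because $G_\mathrm{ad}$ is semisimple one has $(G_\mathrm{ad})_\mathrm{der} = G_\mathrm{ad}$, so $f_\mathrm{der}\colon G_\mathrm{der} \rightarrow G_\mathrm{ad}$ is the quotient of $G_\mathrm{der}$ by the finite central subgroup $Z(G_\mathrm{der}) = G_\mathrm{der}\cap Z(G)$, i.e.\ a central isogeny; central isogenies are special by \cite[22.4]{B}. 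As recalled just before the statement, the opening lines of the proof of Corollary~\ref{le cas sp\'ecial, cor} then give, via $f_\mathrm{der}$, a canonical identification of $\check{X}_F(G_\mathrm{der})_{\mbb{Q}}$ with $\check{X}_F(G_\mathrm{ad})_{\mbb{Q}}$, and Corollary~\ref{le cas sp\'ecial, cor}\,(iv) a bijection $\FU = \FU^{\mathrm{der}} \rightarrow \FU^{\mathrm{ad}}$ through which we identify $\FU$ with $\FU^{\mathrm{ad}}$; both are used as identifications throughout, so that $f(u)$ is written $u$ and the image under $f$ of a subset of $\FU$ is denoted by that same subset.

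Granting these identifications, statement~(i) is the combination of Lemma~\ref{le cas der}\,(i) — which gives $\bs{\Lambda}_{F,u} = \bs{\Lambda}_{F,u}^{\mathrm{der}}$, a subset of $\check{X}(G_\mathrm{der})_{\mbb{Q}}$ — with Corollary~\ref{le cas sp\'ecial, cor}\,(i) applied to $f$, which identifies $\bs{\Lambda}_{F,u}$ with $\bs{\Lambda}_{F,u}^{\mathrm{ad}}$, yields $\bs{q}_F^{\mathrm{der}}(u) = \bs{q}_F^{\mathrm{ad}}(f(u))$ — hence $\bs{q}_F(u) = \bs{q}_F^{\mathrm{ad}}(u)$, since $\bs{q}_F^{\mathrm{der}}$ agrees with $\bs{q}_F$ — and gives ${_FP_u} = f^{-1}({_FP_u^{\mathrm{ad}}})$. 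Statements~(ii) and~(iii) are copied off Corollary~\ref{le cas sp\'ecial, cor}\,(ii)--(iii) for $f$: the equalities $f({_F\scrX_u}) = {_F\scrX_{f(u)}^{\mathrm{ad}}}$, $f({_F\scrY_u}) = {_F\scrY_{f(u)}^{\mathrm{ad}}}$ there (and the same for $\bsfrX$, $\bsfrY$) become the asserted set equalities once $f$ is read as the identity on $\FU^{\mathrm{ad}} = \FU$ — recalling that $f$ restricts to an isomorphism of each connected unipotent subgroup of $G$ onto its image (\cite[22.4]{B}), which is why those containments upgrade to equalities in Corollary~\ref{le cas sp\'ecial, cor} in the first place.

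I do not expect a genuine obstacle: the content has already been packaged into Lemma~\ref{le cas der} and Corollary~\ref{le cas sp\'ecial, cor}. The only point demanding care is the coherence of the two identifications — of rational co-character lattices and of the sets of true unipotent elements — so that the objects $\bs{\Lambda}_{F,u}^{\mathrm{ad}}$, ${_FP_u^{\mathrm{ad}}}$, ${_F\scrX_u^{\mathrm{ad}}}$, $\ldots$ written with index $u$ rather than $f(u)$ are unambiguous, together with the (immediate) check that the $F$-norm on $\check{X}(G_\mathrm{ad})$ underlying $\bs{q}_F^{\mathrm{ad}}$ is exactly the one produced from $\|\,\|$ by the construction preceding Proposition~\ref{le cas sp\'ecial}.
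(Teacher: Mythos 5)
Your proposal is correct and is exactly the paper's route: the paper gives no separate proof for this corollary, it simply applies Corollary~\ref{le cas sp\'ecial, cor} (whose hypotheses hold since $f_\mathrm{der}:G_\mathrm{der}\to G_\mathrm{ad}$ is a surjective central, hence special, morphism) together with Lemma~\ref{le cas der}, under the canonical identifications $\check{X}_F(G_\mathrm{der})_{\mbb{Q}}=\check{X}_F(G_\mathrm{ad})_{\mbb{Q}}$ and $\FU=\FU^{\mathrm{der}}=\FU^{\mathrm{ad}}$. Your added care about the coherence of these identifications and of the induced norm matches the paper's intent, so there is nothing to correct.
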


\P\hskip1mm{\it Application: passage au revtement simplement connexe. ---} 
Le corollaire \ref{le cas spŽcial, cor} s'applique aussi au $F$-morphisme naturel $$f:G_{\rm sc} \rightarrow G\vg$$ \cad le revtement simplement connexe de $G_{\rm der}$. Il induit une identification canonique 
$$\check{X}_F(G_\mathrm{sc})_{\mbb{Q}}= \check{X}_F(G_\mathrm{der})_{\mbb{Q}}\ptf$$ On remplace l'exposant $H=G_\mathrm{sc}$ par un simple exposant \guill{sc} dans les notations prŽcŽdentes. 
D'aprs \ref{le cas spŽcial, cor}\,(iv), le morphisme $f$ induit une application bijective ${_F\UU}^{\mathrm{sc}}\rightarrow \FU= \FU^{\mathrm{der}}$ via laquelle on identifie $\FU^{\mathrm{sc}}$ et $\FU$.

\begin{corollary}\label{le cas ss} 
Soit $u\in \FU$.
\begin{enumerate}
\item[(i)] $\bs{\Lambda}_{F,u}^{\mathrm{sc}}= \bs{\Lambda}_{F,u}\;(= \bs{\Lambda}_{F,u}^\mathrm{der}) $, $\bs{q}_F^{\mathrm{sc}}(u)=
 \bs{q}_F(u)$ et $\smash{_FP}_{\!\!u}^{\mathrm{sc}} =  f^{-1}({_FP_u})$;
\item[(ii)] $\smash{_F\scrX}_{\!\!u}^{\mathrm{sc}} = {_F\scrX_u}$ et $\smash{_F\scrY}_{\!\!u}^{\mathrm{sc}}= {_F\scrY_u}$; 
\item[(iii)] $\smash{_F\bsfrX}_{\!u}^{\mathrm{sc}} = {_F\bsfrX_u}$ et $\smash{_F\bsfrY}_{\!u}^{\mathrm{sc}}= {_F\bsfrY_u}$.
\end{enumerate}
\end{corollary}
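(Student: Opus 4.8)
The plan is to deduce the statement directly from the corollary~\ref{le cas sp�cial, cor}, applied to the natural $F$-morphism $f\colon G_{\mathrm{sc}}\to G$. The first thing to check is that the hypothesis of~\ref{le cas sp�cial} is satisfied: although $f$ itself is not surjective (it is not, unless $G$ is semisimple), what~\ref{le cas sp�cial, cor} requires is only that the derived morphism $f_{\mathrm{der}}$ be surjective and special, and here $(G_{\mathrm{sc}})_{\mathrm{der}}=G_{\mathrm{sc}}$, so $f_{\mathrm{der}}\colon G_{\mathrm{sc}}\to G_{\mathrm{der}}$ is precisely the simply connected covering of $G_{\mathrm{der}}$, which is a central isogeny, hence special (by the remark following the definition of a special morphism, using \cite[22.4]{B}). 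So~\ref{le cas sp�cial, cor} applies, with $G$ replaced by $G_{\mathrm{sc}}$ and $H$ by $G$; in particular~\ref{le cas sp�cial, cor}\,(iv) yields the bijection $\FU^{\mathrm{sc}}\to {_F\UU^G}=\FU$ along which the corollary is stated. I would then fix $u\in\FU$, let $\tu\in\FU^{\mathrm{sc}}$ be the unique element with $f(\tu)=u$, and translate~\ref{le cas sp�cial, cor}\,(i)--(iii) applied to $\tu$ into the notation of the present corollary.

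The translation hinges on a single identification. Since $f_{\mathrm{der}}$ is a central isogeny, the map $\phi_{\mbb{Q}}\colon \check{X}(G_{\mathrm{sc}})_{\mbb{Q}}\to \check{X}(G)_{\mbb{Q}}$ induced by $f$ carries $\check{X}_F(G_{\mathrm{sc}})_{\mbb{Q}}$ isomorphically onto $\check{X}_F(G_{\mathrm{der}})_{\mbb{Q}}\subset \check{X}_F(G)_{\mbb{Q}}$; this is the canonical identification recalled just before the statement, and it comes from \cite[10.7\,(a)]{H1}, as already used in the proof of~\ref{le cas sp�cial, cor}. Through it, the equality $\bs{\Lambda}_{F,\tu}^{\mathrm{sc}}=\phi_{\mbb{Q}}^{-1}(\bs{\Lambda}_{F,u})$ of~\ref{le cas sp�cial, cor}\,(i) becomes $\bs{\Lambda}_{F,u}\cap \check{X}_F(G_{\mathrm{der}})_{\mbb{Q}}$, and by~\ref{le cas der}\,(i) one has $\bs{\Lambda}_{F,u}=\bs{\Lambda}_{F,u}^{\mathrm{der}}\subset \check{X}(G_{\mathrm{der}})_{\mbb{Q}}$, so this intersection is all of $\bs{\Lambda}_{F,u}$; that is the first assertion of~(i). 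The remaining two assertions of~(i) --- the equality $\bs{q}_F^{\mathrm{sc}}(u)=\bs{q}_F(u)$ (the two norms agree because $\iota_{\mbb{Q}}$ is bijective for a central isogeny, see the proof of~\ref{le cas sp�cial, cor}) and $\smash{_FP}_{\!\!u}^{\mathrm{sc}}=f^{-1}({_FP_u})$ --- are already contained verbatim in~\ref{le cas sp�cial, cor}\,(i).

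For~(ii) and~(iii) I would observe that the identities $\smash{_F\scrX}_{\!\!\tu}^{\mathrm{sc}}=f^{-1}({_F\scrX_u})$, $\smash{_F\scrY}_{\!\!\tu}^{\mathrm{sc}}=f^{-1}({_F\scrY_u})$, together with their $\bsfrX$- and $\bsfrY$-analogues --- all of which are part of~\ref{le cas sp�cial, cor}\,(ii)--(iii) --- become genuine equalities of subsets under the identification $\FU^{\mathrm{sc}}=\FU$. Indeed, choosing $\mu\in\bs{\Lambda}_{F,\tu}^{\mathrm{sc}}$ one has $\smash{_F\scrX}_{\!\!\tu}^{\mathrm{sc}}=(G_{\mathrm{sc}})_{\mu,1}$, and $f$ restricts to an $F$-isomorphism of $(G_{\mathrm{sc}})_{\mu,1}$ onto $f((G_{\mathrm{sc}})_{\mu,1})={_F\scrX_u}$, a central isogeny being an isomorphism on every connected unipotent subgroup (\cite[22.4]{B}); the saturated sets and the strata then match by conjugation in $G(F)$, exactly as in the proof of~\ref{le cas sp�cial, cor}\,(iii). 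I do not expect any real obstacle here: the substantive work was done in \cite[10.6, 10.7]{H1} and is already packaged in~\ref{le cas sp�cial, cor}, so the only points needing attention are the verification of the hypothesis for $f$ (i.e. that $f_{\mathrm{der}}$, not $f$ itself, is a central isogeny) and the routine bookkeeping among the groups $G_{\mathrm{sc}}$, $G_{\mathrm{der}}$ and $G$.
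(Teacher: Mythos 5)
Your proposal is correct and follows exactly the route the paper takes: the corollary is stated there as a direct application of~\ref{le cas sp�cial, cor} to the natural morphism $f\colon G_{\mathrm{sc}}\to G$ (whose derived morphism is the central isogeny $G_{\mathrm{sc}}\to G_{\mathrm{der}}$, hence special), together with the canonical identification $\check{X}_F(G_{\mathrm{sc}})_{\mbb{Q}}=\check{X}_F(G_{\mathrm{der}})_{\mbb{Q}}$ and the bijection $\FU^{\mathrm{sc}}\to\FU$ from~\ref{le cas sp�cial, cor}\,(iv). Your extra bookkeeping (using~\ref{le cas der} and the fact that $f$ is an isomorphism on unipotent subgroups to turn the preimage statements into equalities) is just the translation the paper leaves implicit.
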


\subsection{$F$-lames standard de $\FU$}\label{rŽduction standard}
On fixe une $F$-paire parabolique mini\-male $(P_0,A_0)$ de $G$, \cad un $F$-sous-groupe parabolique minimal $P_0$ de $G$ et 
un tore $F$-dŽployŽ maximal $A_0$ de $P_0$. On note $M_0$ le centralisateur de $A_0$ dans $G$ et $U_0$ le radical unipotent de $P_0$. 
Ils sont tous les deux dŽfinis sur $F$ et $P_0=M_0 \ltimes U_0$.\index{P0@$P_0=M_0 \ltimes U_0$} On note $\ES{R}=\ES{R}_{A_0}^G\subset X(A_0)$ l'ensemble des racines de $A_0$ dans $G$ et 
$\ES{R}^+ \subset \ES{R}$ le sous-ensemble formŽ des racines dans $P_0$. Pour un co-caractre virtuel 
$\mu\in \check{X}_F(G)_{\mbb{Q}}$, on note $A_\mu$ le tore central $F$-dŽployŽ maximal de $M_\mu$. Rappelons que $\mu$ est dit 
\textit{en position standard} (relativement ˆ $(P_0,M_0)$) si la paire parabolique $(P_\mu,A_\mu)$ est standard, \cad si $P_0\subset P_\lambda $ et $A_\mu \subset A_0$; auquel 
cas $\mu$ appartient ˆ $\check{X}(A_0)_{\mbb{Q}}$. Inversement, 
un co-caractre virtuel $\mu\in \check{X}(A_0)_{\mbb{Q}}$ est en position standard si et seulement si $\langle \alpha , \mu \rangle \geq 0$ pour toute 
racine $\alpha \in \ES{R}^+$.

\begin{remark}
\textup{
Pour $\lambda\in \check{X}(A_0)$ en position standard et $u\in U_0$, la limite 
$\lim_{t\rightarrow 0} \mathrm{Int}_{t^\lambda}(u)$ existe; et elle vaut $1$ si et seulement si $u$ appartient ˆ $U_\lambda$.
}
\end{remark}

Notons $\Delta_0=\Delta_{P_0}\subset \ES{R}^+$ le sous-ensemble formŽ des racines simples 
et posons\index{d0lambda@$\bs{d}_0(\lambda)$} $$\bs{d}_0(\lambda) \bydef \min \{\langle \alpha ,\lambda \rangle \,\vert \, \alpha \in \Delta_0 \}\ptf$$ 
Observons que $\lambda$ est en position standard si et seulement si $\bs{d}_0(\lambda)\geq 0$. Observons aussi que 
$(P_\lambda,A_\lambda)=(P_0,A_0)$ si et seulement si $\bs{d}_0(\lambda)>0$, auquel cas 
$$\lim_{t\rightarrow 0} \mathrm{Int}_{t^\lambda}(u)=1\quad\hbox{pour tout}\quad  u\in U_0\ptf$$ 
Par consŽquent $U_0\subset \FU$ et $G(F)\bullet U_0 \subset \FU$. RŽciproquement, tout 
ŽlŽment $u\in \FU$ est contenu dans le radical unipotent ${_FU_u}$ de ${_FP_u}$. En choisissant un 
$g\in G(F)$ tel que $g ({_FP_u})g^{-1}= {_FP_{g\bullet u}}$ contienne $P_0$, on obtient que $g\bullet u \in U_0$. 
On en dŽduit que $$\FU = G(F)\bullet U_0 \quad \hbox{et} \quad \UF = G(F)\bullet U_0(F)\ptf$$ 

On a dŽfini en \ref{F-lame standard et ŽlŽment en ps} les notions d'ŽlŽment de $\FU$ en position standard et de $F$-lame standard de $\FU$ (relativement ˆ $P_0$). 
Un ŽlŽment $u\in \FU$ est en position standard si et seulement si le tore $A_0$ est $(F,u)$-optimal 
et si l'unique (d'aprs \ref{HMrat1}\,(iii)) ŽlŽment de $\Lambda_{F,u}^\mathrm{opt}\cap \check{X}(A_0)$ 
est en position standard; auquel cas $u$ appartient ˆ $U_\lambda \subset U_0$. 
Tout ŽlŽment de $\FU$ est $G(F)$-conjuguŽ ˆ un ŽlŽment en position standard et 
on a vu (cf. \ref{la stratification de Hesselink}) que:
\begin{itemize}
\item deux $F$-lames standard de $\UF$ qui sont conjuguŽes dans $G(F)$ sont Žgales;
\item l'application $v \mapsto {_F\bsfrY_{v}}$ induit une bijection de l'ensemble des $F$-lames standard de $\FU$ sur 
l'ensemble des $F$-strates de $\FU$.
\end{itemize} 

\subsection{Du groupe ˆ l'algbre de Lie}\label{du groupe ˆ l'algbre de lie}
Posons $\mathfrak{u}_0=\mathrm{Lie}(U_0)$ et fixons un $F$-isomorphisme de variŽtŽs\index{j0@$j_0$} $j_0: \mathfrak{u}_0 \rightarrow U_0$ qui soit compatible ˆ l'action de $A_0$, 
\cad tel que $$j_0\circ \mathrm{Ad}_a = \mathrm{Int}_a \circ j_0\quad \hbox{pour tout}\quad  a\in A_0$$ 
(cf. la preuve de \cite[9.1.1]{LL}). Puisque tout ŽlŽment de $\FU$ est $G(F)$-conjuguŽ ˆ un ŽlŽment de $\FU$ en position standard, via $j_0$  on ramne 
l'Žtude de l'action par conjugaison de $\check{X}_F(G)$ sur $\UU_F$ ˆ celle de l'action adjointe de $\check{X}(A_0)$ sur $\mathfrak{u}_0(F)$. 

\vskip2mm
\P\hskip1mm\textit{L'algbre de Lie comme $G$-module. ---} Posons\index{g@$\mathfrak{g}$} $$\mathfrak{g}= \mathrm{Lie}(G)\ptf$$ 
On considre $\mathfrak{g}$ comme un $G$-module pour l'action adjointe donnŽe par $$g\cdot X = \mathrm{Ad}_g(X)\ptf$$ 
C'est en particulier une $G$-variŽtŽ pointŽe (avec $e_{\mathfrak{g}}=0$). 
On note $\mathfrak{N}=\mathfrak{N}^G$ la variŽtŽ prŽcedemment notŽe 
$\ES{N}^G(\mathfrak{g},0)$. C'est une sous-variŽtŽ fermŽe de $\mathfrak{g}$ dŽfinie sur $F$ et 
$G$-invariante. On note ${_F\mathfrak{N}}$ le sous-ensemble de $\mathfrak{N}$ prŽcŽdemment notŽ ${_F\ES{N}}^G(\mathfrak{g},0)$ et on pose $\mathfrak{N}_F = \mathfrak{g}(F)\cap {_F\mathfrak{N}}$.
\index{NF@$\FN$, $\NF$} On a donc 
$${_F\mathfrak{N}}= G(F)\cdot \mathfrak{u}_0 \quad \hbox{et}\quad \mathfrak{N}_F = G(F)\cdot \mathfrak{u}_0(F)\ptf$$
On dŽfinit comme en \ref{rŽduction standard} les notions d'ŽlŽment de ${_F\mathfrak{N}}$ en position standard et de $F$-lames standard de $\mathfrak{N}_F$. 

Soit $S$ un tore $F$-dŽployŽ maximal de $G$. On note $G^S$ le centralisateur de $S$ dans $G$ et on pose 
$\mathfrak{g}^S= \mathrm{Lie}(G^S)$. Observons que $\mathfrak{g}^S$ co\"{\i}ncide avec l'intersection des noyaux 
$\mathfrak{g}^t=\ker (\mathrm{Ad}_t - \mathrm{Id}\,\vert \, \mathfrak{g} )$ pour $t\in S$.  
On note $\ES{R}_S\subset \check{X}(S)$ l'ensemble des racines de $S$ dans $G$ et, 
pour chaque $\alpha\in \ES{R}_S$, on note $\mathfrak{u}_\alpha\subset \mathfrak{g}$ 
le sous-espace radiciel associŽ ˆ $\alpha$; \cad le sous-espace notŽ $V_\alpha$ en \ref{la variante de Hesselink} (\P\hskip1mm\textit{Interlude sur le cas des $G$-modules}) pour $V=\mathfrak{g}$. 
On a la dŽcomposition en somme directe, dŽfinie sur $F$,
$$\mathfrak{g}=  \mathfrak{g}^S \oplus \left(\bigoplus_{\alpha \in \ES{R}_S} \mathfrak{u}_\alpha\right)\ptf $$
Posons $\ES{R}'_S = \ES{R}_S \cup\{0\}$ et pour $X\in \mathfrak{g}$, Žcrivons 
$X = \sum_{\alpha \in \ES{R}'_S} X_\alpha$ avec $X_\alpha \in \mathfrak{u}_\alpha$ et 
$X_0\in \mathfrak{g}^S$. Pour $\lambda\in \check{X}(S)$, on a
$$\mathrm{Ad}_{t^\lambda}(X) = \sum_{\alpha \in \ES{R}'_S} t^{\langle \alpha, \lambda \rangle} X_\alpha  \ptf$$
La somme porte en fait sur le sous-ensemble $\ES{R}'_S(X)\subset \ES{R}'_S$ formŽ des $\alpha\in \ES{R}'_S$ tels que $X_\alpha\neq 0$.
On dŽfinit l'invariant $m'_{X}(\lambda)\in \mbb{Z}  \cup \{+ \infty \}$ comme en  \ref{la variante de Hesselink}: 
si $X=0$, on pose $m'_{X}(\lambda)= +\infty$ et sinon, on pose 
$$m'_{X}(\lambda)= \min \{\langle \alpha ,\lambda \rangle \,\vert \, \alpha \in \ES{R}'_S(X)\}\in \mbb{Z}  \ptf$$ 
Si la limite $\lim_{t\rightarrow 0} t^\lambda\cdot X$ existe (i.e. si $m'_X(\lambda)\geq 0$), on a (\ref{m'=m}) 
$$m'_X(\lambda)= m_{X}(\lambda)\;(=m_{0,X}(\lambda)) \ptf$$ 

Pour $\lambda \in \check{X}(S)$ et $i\in \mbb{Z} $, on a dŽfini en \ref{la variante de Hesselink} des sous-espaces $\mathfrak{g}_\lambda(i)$ 
et $\mathfrak{g}_{\lambda ,i}$ de $\mathfrak{g}$. Notons $\ES{R}_{S,\lambda}(i)$ l'ensemble des racines $\alpha \in \ES{R}_S$ 
telles que $ \langle \alpha ,\lambda \rangle = i$. On a 
$$\mathfrak{g}_{\lambda}(i) = 
\left\{\begin{array}{ll}\mathfrak{g}^S \oplus \left(\bigoplus_{\alpha \in \ES{R}_{S,\lambda}(0)} \mathfrak{u}_\alpha\right)& \hbox{si $i=0$}\\ 
\bigoplus_{\alpha \in \ES{R}_{S,\lambda}(i)} \mathfrak{u}_\alpha & \hbox{sinon}\end{array}\right.\quad \hbox{et} \quad \mathfrak{g}_{\lambda ,i} =
 \bigoplus_{j\geq i} \mathfrak{g}_\lambda(j)\ptf$$ 
Observons que 
$$\mathfrak{g}_{\lambda ,0}= \mathrm{Lie}(P_\lambda)\vgq \mathfrak{g}_{\lambda ,1} =
 \mathrm{Lie}(U_\lambda) \quad \hbox{et} \quad \mathfrak{g}_\lambda(0) = \mathrm{Lie}(M_\lambda)\ptf$$ 
Puisque $\lambda$ est dŽfini sur $F$, la filtration $\{\mathfrak{g}_{\lambda,i}\}_{i\in \mbb{Z} }$ et la graduation 
$\mathfrak{g}_\lambda(i)$ de $\mathfrak{g}$ dŽfinies ci-dessus le sont aussi. 
Rappelons que d'aprs \ref{m'=m}, la filtration $\{\mathfrak{g}_{\lambda,i}\}_{i\in \mbb{N}}$ co\"{\i}ncide avec celle dŽfinie en \ref{la variante de Hesselink} 
dans le cas d'un $G$-variŽtŽ pointŽe quelconque.

\vskip2mm
\P\hskip1mm\textit{Passage du groupe ˆ l'algbre de Lie. ---} 
Commenons par prŽciser la construction du $F$-isomorphisme $j_0:\mathfrak{u}_0 \rightarrow U_0$. 
Fixons une extension galoisienne finie $\wt{F}\subset F^\mathrm{s\acute{e}p}$ de $F$ dŽployant $G$ et un tore $\wt{F}$-dŽployŽ maximal 
$\wt{A}_0$ de $G$, dŽfini sur $F$ et contenant $A_0$. Soit $\wt{\ES{R}}= \ES{R}_{\smash{\wt{A}_0}}$ l'ensemble des racines de $\wt{A}_0$ dans $G$. 
Pour $\wt{\alpha}\in \wt{\ES{R}}$, fixons un $\wt{F}$-isomorphisme $e_{\wt{\alpha}}: \mbb{G}_{\mathrm{a}/F} \rightarrow U_{\wt{\alpha}}$. 
Posons $$\epsilon_{\wt{\alpha}}= e_{\wt{\alpha}}(1)\quad \hbox{et} \quad E_{\wt{\alpha}}= \mathrm{Lie}(e_{\wt{\alpha}})(1)\ptf $$ 
Alors pour $t\in \overline{F}$ et $X\in \overline{F}[t]\otimes_{\overline{F}}\mathfrak{g}$, on a 
$$\mathrm{Ad}_{e_{\wt{\alpha}}(t)}(X)\equiv X + t [E_{\wt{\alpha}},X] 
\quad (\mathrm{mod}\; t^2 \overline{F}[t]\otimes_{\overline{F}}
\mathfrak{g})\ptf$$
Puisque l'ensemble $\wt{\ES{R}}$ est $\Gamma_F$-invariant, on peut imposer  
$\gamma(e_{\wt{\alpha}})= e_{\gamma(\wt{\alpha})}$ pour tout $\gamma\in \Gamma_F$ et tout $\wt{\alpha}\in \wt{\ES{R}}$; 
alors $\gamma(\epsilon_{\wt{\alpha}})= \epsilon_{\gamma(\wt{\alpha})}$ et $\gamma(E_{\wt{\alpha}})= E_{\gamma(\wt{\alpha})}$. 
La restriction ˆ $A_0$ donne une application $\wt{\ES{R}} \rightarrow \ES{R}'= \ES{R}\cup \{0\}$ 
d'image $\ES{R}$ ou $\ES{R}'$ (l'image est $\ES{R}$ si et seulement si $G$ est dŽployŽ sur $F$). 
Pour $\alpha \in \ES{R}$, notons $\wt{\ES{R}}(\alpha)\subset 
\wt{\ES{R}}$ la fibre au-dessus de $\alpha$. 
Comme en \cite[21.7]{B}, on pose $$(\alpha)= \left\{\begin{array}{ll}
\{\alpha,2\alpha\} & \hbox{si $2\alpha\in\ES{R}$}\\
\{\alpha \} & \hbox{sinon}\end{array}\right.
$$ 
et on note $U_{(\alpha)}$ le sous-groupe unipotent de $G$ correspondant ˆ $(\alpha)$. 
On pose $U_{2\alpha}=\{1\}$ si $2\alpha \notin \ES{R}$. 
Alors le choix d'un ordre 
$\wt{\alpha}_1, \dots , \wt{\alpha}_{r(\alpha)}$ sur $\wt{\ES{R}}(\alpha)$ permet de dŽfinir un $F$-isomorphisme de variŽtŽs 
$$ j_\alpha : \mathfrak{u}_\alpha \rightarrow U_\alpha / U_{2\alpha}\vgq 
\sum_{i=1}^{r(\alpha)} t_i E_{\wt{\alpha}_i} \mapsto e_{\wt{\alpha}_1}(t_1)\cdots e_{\wt{\alpha}_{r(\alpha)}}(t_{r(\alpha)}) U_{2\alpha} 
\quad \hbox{avec} \quad t_i\in \overline{F}\ptf$$ 
Par construction $j_\alpha$ est compatible ˆ l'action de $A_0$ et mme ˆ celle de $\wt{A}_0$. 
Si $2\alpha\in \ES{R}$, on dŽfinit de mme un $F$-isomorphisme de variŽtŽs $j_{2\alpha}: \mathfrak{u}_{2\alpha} \rightarrow U_{2\alpha}$ 
compatible ˆ l'action de $A_0$ 
et on note $$j_{(\alpha)} : \mathfrak{u}_{(\alpha)} \bydef \mathfrak{u}_\alpha \oplus \mathfrak{u}_{2\alpha} \rightarrow U_{(\alpha)}$$ 
le $F$-isomorphisme de variŽtŽs obtenu en composant $j_\alpha \times j_{2\alpha}$ avec un $F$-isomorphisme de variŽtŽs  
$U_\alpha/U_{2\alpha} \times U_{2\alpha} \rightarrow U_{(\alpha)}$ compatible ˆ l'action de $A_0$ 
(cf. \cite[21.19, 21.20]{B}). Le choix d'un ordre $\alpha_1, \ldots ,\alpha_s$ sur les racines non-divisibles dans $\ES{R}^+$ permet alors de dŽfinir $j_0$: on pose 
$$j_0 = j_{(\alpha_1)}\times \cdots \times j_{(\alpha_s)} : \mathfrak{u}_0= 
\bigoplus_{i=1}^s \mathfrak{u}_{(\alpha_i)} \rightarrow U_{(\alpha_1)}U_{(\alpha_2)} \cdots U_{(\alpha_s)}=U_0\ptf$$  

\begin{remark}\label{j_Q}
\textup{Pour tout $F$-sous-groupe parabolique standard $Q$ de $G$, $j_0$ induit par restriction un $F$-isomorphisme de variŽtŽs $A_0$-Žquivariant 
$$j_Q: \mathfrak{u}_Q=\mathrm{Lie}(U_Q)\rightarrow U_Q\ptf$$}
\end{remark}

Pour $\mu \in \check{X}(G)_\mbb{Q} $, on a dŽfini en \ref{la variante de Hesselink} une filtration $(G_{\lambda,r})_{r\in  \mbb{Q} _+}$. Les 
$G_{\mu,r}$ sont des sous-groupes fermŽs de $G$, qui sont dŽfinis sur $F$ si $\mu \in \check{X}_F(G)_\mbb{Q} $. 
D'aprs \ref{m'=m}, pour $\lambda\in \check{X}(A_0)$ en position standard, on a
$$m_u(\lambda)= m'_{j^{-1}_0(u)}(\lambda) \quad \hbox{pour tout} \quad u\in U_\lambda\ptf$$ 
On en dŽduit que pour $\lambda\in \check{X}_F(G)$ et $i\in \mbb{N}^*$, $G_{\lambda ,i}$ est le sous-groupe de $U_\lambda$ engendrŽ par les sous-groupes 
radiciels $U_{(\alpha)}$ pour $\alpha\in \ES{R}_S$ avec $\langle \alpha , \lambda \rangle \geq i$; o $S$ est un tore $F$-dŽployŽ maximal 
de $G$ contenant $\mathrm{Im}(\lambda)$. De plus $$\mathfrak{g}_{\lambda , i} = \mathrm{Lie}(G_{\lambda,i})\ptf$$ 
Pour $\lambda \in \check{X}_F(G)$ et $i\in \mbb{N}$, on pose $$G_{\lambda}(i)= G_{\lambda,i}/ G_{\lambda ,i+1}\ptf$$ 
Puisque le groupe $G_{\lambda ,i+1}$ est distinguŽ dans $G_{\lambda ,0}=P_\lambda$, il l'est \textit{a fortiori} dans $G_{\lambda,i}$. 
Par consŽquent $G_{\lambda}(i)$ est un groupe algŽbrique, dŽfini sur $F$ puisque $\lambda$ l'est. Observons que 
$$G_\lambda(0)=P_\lambda/U_\lambda \ptf$$
Observons aussi que pour $i\in \mbb{N}$, l'action par conjugaison de 
$P_\lambda$ sur $G_\lambda(i)$ se factorise en une action de $G_\lambda(0)\;(\simeq M_\lambda)$. 

Pour $\lambda\in \check{X}(A_0)$ en position standard et $k\in \mbb{N}^*$, $j_0$ induit un $F$-isomorphisme de variŽtŽs  
$\mathfrak{g}_{\lambda ,k}\rightarrow G_{\lambda ,k}$ 
qui se factorise un en $F$-isomorphisme de variŽtŽs  
$$j_\lambda(k):\mathfrak{g}_{\lambda}(k)= \mathfrak{g}_{\lambda ,k}/ \mathfrak{g}_{\lambda,k+1} \rightarrow G_\lambda(k)\ptf$$ 
Par construction $j_\lambda(k)$ est $A_0$-Žquivariant.

\begin{lemma}\label{M-module}
Pour $\lambda\in \check{X}(A_0)$ en position standard et $k\in \mbb{N}^*$, $j_\lambda(k)$ est $M_\lambda$-Žquivariant et c'est un 
$F$-isomorphisme de groupes. En particulier, $j_\lambda(k)$ munit $G_\lambda(k)$ d'une structure de $M_\lambda$-module dŽfini sur $\wt{F}$.
\end{lemma}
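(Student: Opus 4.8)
The plan is to work with the explicit description of $j_0$ via root subgroups. Since $\lambda\in\check{X}(A_0)$, the torus $A_0$ is a maximal $F$-split torus containing $\mathrm{Im}(\lambda)$; I also keep the splitting torus $\wt{A}_0\supset A_0$ and the pinning $\{e_{\wt{\alpha}}\}_{\wt{\alpha}\in\wt{\ES{R}}}$ used to build $j_0$, with $E_{\wt{\alpha}}=\mathrm{Lie}(e_{\wt{\alpha}})(1)$. Recall that for $k\geq 1$ the group $G_{\lambda,k}\subset U_\lambda\subset U_0$ is generated by the $U_{(\alpha)}$ with $\langle\alpha,\lambda\rangle\geq k$ (equivalently, over $\wt{F}$, by the $U_{\wt{\beta}}$ with $\langle\wt{\beta},\lambda\rangle\geq k$), that $\mathfrak{g}_{\lambda,k}=\mathrm{Lie}(G_{\lambda,k})$, and that $j_0$ sends $\mathfrak{g}_{\lambda,k}$ onto $G_{\lambda,k}$ and $\mathfrak{g}_{\lambda,k+1}$ onto $G_{\lambda,k+1}$, whence the factored $F$-isomorphism of varieties $j_\lambda(k)$. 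There are three things to establish: $G_\lambda(k)$ and $\mathfrak{g}_\lambda(k)$ are commutative; $j_\lambda(k)$ is a homomorphism; and $j_\lambda(k)$ is $M_\lambda$-equivariant.

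First I would note that both graded pieces are abelian: the bracket $[\mathfrak{g}_{\lambda,k},\mathfrak{g}_{\lambda,k}]$ lies in $\mathfrak{g}_{\lambda,2k}$ (from $[\mathfrak{g}_\lambda(i),\mathfrak{g}_\lambda(j)]\subset\mathfrak{g}_\lambda(i+j)$) and, by the commutator relations for root subgroups, $[G_{\lambda,k},G_{\lambda,k}]\subset G_{\lambda,2k}$, and $2k\geq k+1$ for $k\geq 1$. On these abelian quotients the situation trivialises: unwinding the construction $j_0=j_{(\alpha_1)}\times\cdots\times j_{(\alpha_s)}$, all the non-additive ingredients of $j_0$ — the $U_{2\alpha}$-twists entering $j_{(\alpha)}$ and the commutators among the $U_{\wt{\alpha}}$'s — involve roots whose value on $\lambda$ is at least $2k$, hence trivial in $G_\lambda(k)$; thus on each $U_{(\alpha)}$ with $\langle\alpha,\lambda\rangle=k$ the map $j_\alpha$ restricts to a group isomorphism of $\mathfrak{u}_\alpha$ onto its image, these images commute, and together they decompose $G_\lambda(k)$ exactly as $\mathfrak{g}_\lambda(k)=\bigoplus_{\langle\beta,\lambda\rangle=k}\mathfrak{u}_\beta$. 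Hence $j_\lambda(k)$ is a homomorphism of algebraic groups over $F$, and being bijective it is an $F$-group isomorphism.

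It remains to upgrade the $A_0$-equivariance of $j_\lambda(k)$ — immediate from the $\wt{A}_0$-equivariance of $j_0$ — to $M_\lambda$-equivariance. Here I would pass to $\wt{F}$, over which $G$ splits and $M_\lambda$ is generated by $\wt{A}_0$ together with the root subgroups $U_{\wt{\beta}}$ with $\langle\wt{\beta},\lambda\rangle=0$; it suffices to check equivariance on these generators, and for $\wt{A}_0$ it is built in. For $U_{\wt{\beta}}$ with $\langle\wt{\beta},\lambda\rangle=0$ I would use the Chevalley commutator formula: conjugating $e_{\wt{\gamma}}(s)$ with $\langle\wt{\gamma},\lambda\rangle=k$ by $e_{\wt{\beta}}(t)$ produces $e_{\wt{\gamma}}(s)$ times a product of factors $e_{i\wt{\beta}+j\wt{\gamma}}(\cdots)$ with $i,j\geq 1$; since $\langle i\wt{\beta}+j\wt{\gamma},\lambda\rangle=jk$, only the $j=1$ factors survive in $G_\lambda(k)$, and modulo higher-filtration terms they carry precisely the structure constants of $\mathrm{ad}_{E_{\wt{\beta}}}$ on $E_{\wt{\gamma}}$ (compatibility of the Chevalley basis $E_{\wt{\alpha}}$ with the pinning, which is all that is needed on the associated graded). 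This matches the analogous computation of $\mathrm{Ad}_{e_{\wt{\beta}}(t)}$ on $\mathfrak{g}_\lambda(k)$, giving the equivariance. Finally, since $\mathfrak{g}_{\lambda,k}$ and $\mathfrak{g}_{\lambda,k+1}$ are $M_\lambda$-stable subspaces of $(\mathfrak{g},\mathrm{Ad})$, the vector group $\mathfrak{g}_\lambda(k)$ is an $M_\lambda$-module, and transporting along $j_\lambda(k)$ endows $G_\lambda(k)$ with an $M_\lambda$-module structure defined over $\wt{F}$.

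I expect the main obstacle to be the $M_\lambda$-equivariance step: one must fix the normalisation of the $e_{\wt{\alpha}}$ so that the structure constants in the commutator relations agree with those of $\mathrm{ad}$ on the graded quotient, and one must invoke that $\wt{A}_0$ and the level-zero root subgroups generate $M_\lambda$ (standard, but this is where $\mathrm{Im}(\lambda)\subset\wt{A}_0$ and the standard position of $\lambda$ enter). The commutativity and homomorphism claims, by contrast, are bookkeeping with the inequality $2k\geq k+1$.
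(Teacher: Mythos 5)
Your proof is correct and follows essentially the same route as the paper: the explicit root-subgroup description of $j_0$, the Chevalley commutator relations to see that $j_\lambda(k)$ is a group isomorphism on the graded piece, equivariance for $\wt{A}_0$ and for the root subgroups $U_{\wt{\beta}}$ with $\langle\wt{\beta},\lambda\rangle=0$, and generation of $M_\lambda$ by these, with $\wt{F}$-rationality coming from that of the $E_{\wt{\alpha}}$. Only a small imprecision: the surviving $j=1$ commutator factors with $i\geq 2$ also live in level $k$ (they are not higher-filtration terms), but they match the corresponding terms of $\mathrm{Ad}_{e_{\wt{\beta}}(t)}$ on $\mathfrak{g}_\lambda(k)$ by the same Chevalley compatibility, so the conclusion stands.
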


\begin{proof}
Notons $\ES{R}_\lambda(k)$ l'ensemble des $\alpha \in \ES{R}$ tels que $\langle \alpha ,\lambda \rangle =k$. On dŽfinit de la mme 
manire $\wt{\ES{R}}_\lambda(k)$. Si $\alpha \in \ES{R}_\lambda(k)$ alors $2\alpha\notin \ES{R}_\lambda(k)$ 
et l'inclusion $U_{(\alpha)}\subset G_{\lambda,k}$ induit 
par passage aux quotients une application injective $U_{(\alpha)}/U_{2\alpha} \rightarrow G_\lambda(k)$. En la composant avec 
$j_\alpha$, on obtient un $F$-morphisme injectif $\mathfrak{u}_\alpha \rightarrow G_\lambda(k)$ qui est $\wt{A}_0$-Žquivariant. 
Pour $X\in \mathfrak{g}_\lambda(k)$, on Žcrit $X= \sum_{\wt{\alpha}\in \wt{\ES{R}}_\lambda(k)} x_{\wt{\alpha}} E_{\wt{\alpha}}$ 
avec $x_{\wt{\alpha}}\in \overline{F}$. 
D'aprs les relations de commutateurs de Chevalley, $j_\lambda(k)(X)$ 
est l'image de $\prod_{\wt{\alpha}\in \wt{\ES{R}}_\lambda(k)} e_{\wt{\alpha}}(x_{\wt{\alpha}})$ 
dans $G_\lambda(k)$ pour n'importe quel ordre sur le produit. En particulier $j_\lambda(k)$ est un $F$-isomorphisme de groupes. Par construction 
$j_\lambda(k)$ est $\wt{A}_0$-Žquivariant et munit 
$G_\lambda(k)$ d'une action linŽaire de $\wt{A}_0$. \`A nouveau d'aprs les relations de commutateurs de Chevalley, pour $\wt{\alpha}\in \wt{\ES{R}}$ tel que 
$\langle \wt{\alpha},\lambda \rangle =0$, $j_\lambda(k)$ est $U_{\wt{\alpha}}$-Žquivariant et munit $G_\lambda(k)$ d'une action linŽaire de 
$U_{\wt{\alpha}}$. Par consŽquent $j_\lambda(k)$ est $M_\lambda$-Žquivariant et munit $G_\lambda(k)$ d'une structure de $M_\lambda$-module. Puisque 
les $E_{\wt{\alpha}}$ sont $\wt{F}$-rationnels, cette structure est dŽfinie sur $\wt{F}$.
\end{proof}

\begin{proposition}\label{thŽorme de KN rationnel sur G}
Soient $\lambda \in \check{X}_F(G)$, $k\in \mbb{N}^*$ et $u\in G_{\lambda,k}(F)\smallsetminus G_{\lambda, k+1}(F)$. Alors 
$\lambda\in \Lambda^\mathrm{opt}_{F,u}$ si et seulement si l'image $u_\lambda(k)$ de $u$ dans $G_\lambda(k)$ est 
$(\wt{F},M_{\lambda}^\perp)$-semi-stable.
\end{proposition}

\begin{proof}
Quitte ˆ remplacer $\lambda$ (et $u$) par un conjugu\'e dans $G(F)$, on peut supposer $\lambda$ en position standard. 
Alors d'apr\`es \ref{M-module} et le critre de Kirwan-Ness rationnel (\ref{thm de KN rationnel}), 
$\lambda\in \Lambda_{\wt{F},u}^\mathrm{opt}$ si et seulement si $u_\lambda(k)$ est $(\wt{F},M_{\lambda}^\perp)$-semi-stable. D'o le lemme, puisque 
$\Lambda_{F,u}^{\mathrm{opt}}= \check{X}_F(G) \cap \Lambda_{\wt{F},u}^{\mathrm{opt}}$ (\ref{descente sŽparable lames}\,(i)).
\end{proof}

\begin{corollary}\label{lames et translations}
Soient $u\in \UF \smallsetminus \{1\}$, $\lambda \in \Lambda^\mathrm{opt}_{F,u}$ et $k= m_u(\lambda)$. On a 
$${_F\scrY_u}\,G_{\lambda,k+1}= G_{\lambda ,k+1}\,{_F\scrY_u} = {_F\scrY_u}\ptf$$
\end{corollary}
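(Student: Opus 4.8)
The plan is to reduce Corollary \ref{lames et translations} to the $G$-module statement Corollary \ref{Y et translations} via the $F$-isomorphisms $j_\lambda(k)$ constructed in \ref{du groupe � l'alg�bre de lie}. First I would use the $G(F)$-equivariance of all the objects involved to reduce to the case where $u$ is in standard position, so that $\lambda\in\check{X}(A_0)$ is in standard position and $U_\lambda\subset U_0$. In that situation, $G_{\lambda,k}$ is a connected unipotent $F$-subgroup of $U_\lambda$ and $j_0$ restricts to an $F$-isomorphism $\mathfrak{g}_{\lambda,k}\to G_{\lambda,k}$ compatible with the $A_0$-action, which in turn factors through the $M_\lambda$-equivariant $F$-group isomorphism $j_\lambda(k):\mathfrak{g}_\lambda(k)\to G_\lambda(k)$ of Lemma \ref{M-module}.

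Next, the key computation is that for $u\in G_{\lambda,k}$ and $v\in G_{\lambda,k+1}$, the product $uv$ (and likewise $vu$) has the same image in $G_\lambda(k)=G_{\lambda,k}/G_{\lambda,k+1}$ as $u$; equivalently, under $j_0^{-1}$ the element $j_0^{-1}(uv)$ differs from $j_0^{-1}(u)$ by an element of $\mathfrak{g}_{\lambda,k+1}$, since $G_{\lambda,k+1}$ is normal in $P_\lambda$ hence in $G_{\lambda,k}$. So on the Lie algebra side we are exactly in the setting of Corollary \ref{Y et translations}: writing $X=j_0^{-1}(u)\in\mathfrak{g}_{\lambda,k}$, we have $m_X(\lambda)=m_u(\lambda)=k$ and $\lambda\in\Lambda_{F,u}^{\mathrm{opt}}$ translates (via \ref{th�or�me de KN rationnel sur G}, or directly via \ref{M-module} and descente s�parable \ref{descente s�parable lames}) to $\lambda\in\Lambda_{F,X}^{\mathrm{opt}}$ for the adjoint action of $G$ on $\mathfrak{g}$. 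Then Corollary \ref{Y et translations} gives ${_F\scrY_X}+\mathfrak{g}_{\lambda,k+1}={_F\scrY_X}$, and transporting back through $j_0$ (which identifies ${_F\scrY_u}$ with the corresponding $F$-lame of ${_F\mathfrak{N}}$, by the compatibility established in \ref{du groupe � l'alg�bre de lie} together with \ref{le cas der}/\ref{satur� espace affine}) yields ${_F\scrY_u}\,G_{\lambda,k+1}={_F\scrY_u}$ and symmetrically $G_{\lambda,k+1}\,{_F\scrY_u}={_F\scrY_u}$.

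Alternatively — and perhaps more cleanly — one can argue directly as in the proof of Corollary \ref{Y et translations}: for $u'\in{_F\scrY_u}$ and $v\in G_{\lambda,k+1}(\overline F)$, the product $u'v$ lies in $G_{\lambda,k}=V_{\lambda,k}={_F\scrX_u}$, so $m_{u'v}(\lambda)\geq k$; but since $u'v\equiv u'\pmod{G_{\lambda,k+1}}$ we also get $m_{u'v}(\lambda)\leq k$ (otherwise $u'$ itself would lie in $G_{\lambda,k+1}$, contradicting $\bs q_F(u')=\bs q_F(u)=\|\lambda/k\|$), whence $m_{u'v}(\lambda)=k$ and $\frac1k\lambda\in\bs\Lambda_{F,u'v}$, i.e. $u'v\in{_F\scrY_u}$. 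The same argument with $v$ on the left uses that $G_{\lambda,k+1}$ is normal in $P_\lambda$. This gives the two inclusions ${_F\scrY_u}\,G_{\lambda,k+1}\subset{_F\scrY_u}$ and $G_{\lambda,k+1}\,{_F\scrY_u}\subset{_F\scrY_u}$; the reverse inclusions are trivial since $1\in G_{\lambda,k+1}$.

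The main obstacle I anticipate is bookkeeping rather than a genuine difficulty: one must be careful that $j_0$ (and its restriction to $G_{\lambda,k}$) genuinely intertwines the conjugation $\check X_F(G)$-stratification of $\FU$ in standard position with the adjoint $\check X(A_0)$-stratification of $\mathfrak u_0(F)\subset\mathfrak N_F$ — in particular that $j_0$ sends ${_F\scrY_u}$ to the Lie-algebra $F$-lame of $j_0^{-1}(u)$ and not merely to the $F$-saturated set. This is essentially guaranteed by $A_0$-equivariance of $j_0$ together with $m_u(\lambda)=m'_{j_0^{-1}(u)}(\lambda)$ established just before Lemma \ref{M-module}, but it should be stated explicitly. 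Given that, I expect the direct argument of the previous paragraph to be the shortest route and the one I would write up.
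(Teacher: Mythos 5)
Your first route is, up to packaging, the paper's own proof: the paper simply observes that $\lambda$ lies in $\Lambda^{\mathrm{opt}}_{\smash{\wt{F}},u}$ (by \ref{descente s�parable lames}\,(i)), applies \ref{Y et translations} over $\wt{F}$ --- i.e. the translation invariance furnished by the Kirwan--Ness criterion through the $M_\lambda$-module structure on $G_\lambda(k)$ of \ref{M-module}, alias \ref{th�or�me de KN rationnel sur G} --- to get ${_{\smash{\wt{F}}}\scrY_u}\,G_{\lambda,k+1}=G_{\lambda,k+1}\,{_{\smash{\wt{F}}}\scrY_u}={_{\smash{\wt{F}}}\scrY_u}$, and concludes by the equality ${_F\scrY_u}={_{\smash{\wt{F}}}\scrY_u}$ of \ref{descente s�parable lames}\,(ii). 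Note that this descent step also disposes of the bookkeeping you worry about (whether $j_0$ carries $F$-lames to $F$-lames): you never need to transport the lame itself through $j_0$, only the optimality of $\lambda$, and the lame-to-lame compatibility is in any case only established in \ref{optimalit� et j}, which comes after the corollary.

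The ``direct argument'' that you say you would actually write up has a genuine gap at its last step. From $u'v\in G_{\lambda,k}={_F\scrX_u}$ and $u'v\equiv u'\ (\mathrm{mod}\ G_{\lambda,k+1})$ you correctly deduce $m_{u'v}(\lambda)=k$, but this does not yield $\frac{1}{k}\lambda\in\bs{\Lambda}_{F,u'v}$: membership in $\bs{\Lambda}_{F,u'v}$ requires in addition the minimality $\|\frac{1}{k}\lambda\|=\bs{q}_F(u'v)$, i.e. that no $\mu\in\wt{\Lambda}_{F,u'v}$ of strictly smaller norm has $m_{u'v}(\mu)\geq 1$, and nothing in your computation excludes this. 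Indeed the boundary ${_F\scrX_u}\smallsetminus{_F\scrY_u}$ consists exactly of the elements of $G_{\lambda,k}$ with $\bs{q}_F<\bs{q}_F(u)$, and it is in general not contained in $G_{\lambda,k+1}$: in the module picture, any $X\in V_{\lambda,k}$ whose component $X_\lambda(k)$ is nonzero but $(F,M_\lambda^\perp)$-instable satisfies $m_X(\lambda)=k$ and yet lies outside the lame. So $m_{u'v}(\lambda)=k$ alone is compatible with $u'v$ falling into the boundary. What rescues the statement is precisely the Kirwan--Ness criterion (\ref{thm de KN rationnel}, or \ref{th�or�me de KN rationnel sur G} on the group): optimality of $\lambda$ for $u'$ depends only on the image of $u'$ in $G_\lambda(k)$, which is unchanged under right, and (by normality of $G_{\lambda,k+1}$ in $P_\lambda$) left, multiplication by $G_{\lambda,k+1}$. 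This is also exactly how \ref{Y et translations} itself is proved, so the ``direct'' route cannot bypass it; once you invoke the criterion you are back to your first reduction, or to the paper's two-line proof over $\wt{F}$ followed by separable descent.
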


\begin{proof}
Puisque $\lambda$ appartient ˆ $\Lambda_{\wt{F},u}^{\mathrm{opt}}$, on a (\ref{Y et translations})
$${_{\smash{\wt{F}}}\scrY_u}\,G_{\lambda,k+1}= G_{\lambda ,k+1}\,{_{\smash{\wt{F}}}\scrY_u}= {_{\smash{\wt{F}}}\scrY_u}\ptf$$
Or ${_F\scrY_u}= {_{\smash{\wt{F}}}\scrY_u}$ (\ref{descente sŽparable lames}\,(ii)). D'o le corollaire.
\end{proof}

\begin{proposition}\label{optimalitŽ et j} Soit $X\in \mathfrak{u}_0(F)$.
\begin{enumerate}
\item[(i)]Si $\lambda \in \check{X}(A_0)$ est en position standard, alors 
$$\lambda \in \Lambda^\mathrm{opt}_{F,X}\quad \hbox{si et seulement si}\quad \lambda\in \Lambda^\mathrm{opt}_{F,j_0(X)}\ptf$$
\item[(ii)]Si $X$ est en position standard, alors $j_0(X)$ l'est aussi et 
$${_F\scrY_{j_0(X)}}= \{u \in \FU\,\vert \, \bs{\Lambda}_{F,u} = \bs{\Lambda}_{F,X} \} \ptf$$
\item[(iii)]L'application $X\mapsto j_0(X)$ induit une bijection entre l'ensemble des $F$-lames standard de ${_F\mathfrak{N}}$ qui possdent un point $F$-rationnel et l'ensemble des 
$F$-lames standard de ${_F\mathfrak{N}}$ qui possdent un point $F$-rationnel. 
\end{enumerate}
\end{proposition}

\begin{proof}
Puisque $j_0(0)=1$, on peut supposer $X\neq 0$. Notons $u$ l'ŽlŽment $j_0(X)\in U_0(F)\smallsetminus \{1\}$. 

Prouvons (i). Posons $k= m_X(\lambda)$. On a 
$m_u(\lambda)=k$. Si $k=0$ il n'y a rien ˆ dŽmontrer. On peut donc supposer $k>0$. D'aprs \ref{M-module}, la composante $X_\lambda(k)$ de $X$ sur 
$\mathfrak{g}_\lambda(k)$ est $(\wt{F},M_{\lambda}^\perp)$-semi-stable si et seulement l'image 
$u_\lambda(k)$ de $u$ dans $G_\lambda(k)= G_{\lambda,k}/G_{\lambda,k+1}$ est $(\wt{F},M_{\lambda}^\perp)$-semi-stable. 
D'aprs \ref{thm de KN rationnel} et \ref{thŽorme de KN rationnel sur G}, on a donc $$\lambda \in \Lambda_{\wt{F},X}^\mathrm{opt}\quad \hbox{si et seulement si}\quad
\lambda\in \Lambda_{\wt{F},u}^\mathrm{opt}\ptf$$ 
D'o le point (i) gr‰ce ˆ \ref{descente sŽparable lames}\,(i). 

Prouvons (ii). On suppose $X$ en position standard. Si $\Lambda_{F,X}^\mathrm{opt}\cap \check{X}(A_0) = \{\lambda\} $, alors d'aprs (i) on a 
$\Lambda_{F,u}^\mathrm{opt}\cap \check{X}(A_0)= \{\lambda\}$, d'o l'on dŽduit l'ŽgalitŽ 
$\Lambda_{F,u}^\mathrm{opt}= \Lambda_{F,X}^\mathrm{opt}$. 
Comme $m_{u}(\lambda)= k=m_X(\lambda)$, on a aussi l'ŽgalitŽ $\bs{\Lambda}_{F,u}= \bs{\Lambda}_{F,X}$. Cela prouve (ii). 

Le point (iii) dŽcoule de (ii).
\end{proof}

 \begin{corollary}\label{lames et passage ˆ l'inverse}
 Soit $u\in \UF$. Pour $u'\in {_F\scrY_u}$, $u'^{-1}$ appartient ˆ ${_F\scrY_u}$.
 \end{corollary}
 
 \begin{proof}
 On peut supposer $u\neq 1$. On peut aussi supposer $u$ en position standard. Soient $\lambda\in \Lambda^{\rm opt}_{F,u}$ et 
 $k=m_u(\lambda)$. Soit $X=j_0^{-1}(u)$. Soient aussi $u'\in {_F\scrY_u}$ et $X'=j^{-1}_0(u')\in {_F\scrY_X}$.  Alors $j^{-1}_0(u'^{-1})=X'' \in  -X' + \mathfrak{g}_{\lambda , k+1}$. Comme $-X'\in {_F\scrY_{X}}$ et 
 ${_F\scrY_X}+ \mathfrak{g}_{\lambda , k+1} = {_F\scrY_X}$ (d'aprs \ref{Y et translations}), $X''$ appartient ˆ ${_F\scrY_X}$. 
 On en dŽduit que $$\bs{\Lambda}_{F,u^{-1}}= \bs{\Lambda}_{F,X''}= \bs{\Lambda}_{F,X}= \bs{\Lambda}_{F,u}\vg$$ ce qui prouve le corollaire.
 \end{proof}
 
 \begin{corollary}\label{bijection lames unip-lames nilp}
 \begin{enumerate}
 \item[(i)] Pour $X\in \mathfrak{N}_F$, l'ensemble $${_F\scrY_X^G} \bydef \{u\in \UF \,\vert \,\bs{\Lambda}_{F,u}= \bs{\Lambda}_{F,X}\}$$ 
est une $F$-lame de $\FU$.
 \item[(ii)] L'application $X \mapsto {_F\scrY_X^G}$ induit une bijection entre 
l'ensemble des $F$-lames qui possdent un point $F$-rationnel et l'ensemble $F$-lames de $\FU$ qui possdent un point $F$-rationnel.
 \end{enumerate}
 \end{corollary}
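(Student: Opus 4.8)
Le point (i) dit que pour $X\in \mathfrak{N}_F$, l'ensemble ${_F\scrY_X^G}$ des $u\in \UF$ ayant le m�me co-caract�re virtuel $(F,u)$-optimal normalis� que $X$ est une $F$-lame de $\FU$. Le plan est de se ramener d'abord au cas o� $X$ est en position standard. Comme $X\in \mathfrak{N}_F = G(F)\cdot \mathfrak{u}_0(F)$, il existe $g\in G(F)$ tel que $g\cdot X\in \mathfrak{u}_0(F)$ et, quitte � conjuguer encore par un �l�ment de $P_0(F)$ (ou plut�t � remplacer $g$ convenablement), on peut supposer que $g\cdot X$ est en position standard; puisque $\bs{\Lambda}_{F,g\cdot X}= g\bullet \bs{\Lambda}_{F,X}$ et que la conjugaison par $g\in G(F)$ envoie ${_F\scrY_X^G}$ sur ${_F\scrY_{g\cdot X}^G}$ et permute les $F$-lames de $\FU$, il suffit de traiter le cas $X\in \mathfrak{u}_0(F)$ en position standard. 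Alors la proposition \ref{optimalit� et j}\,(ii) donne directement
$${_F\scrY_X^G}= \{u\in \FU\,\vert\, \bs{\Lambda}_{F,u}= \bs{\Lambda}_{F,X}\}\cap G(F)= {_F\scrY_{j_0(X)}}\cap G(F)= \scrY_{F,j_0(X)}\ptf$$
Comme $j_0(X)\in U_0(F)\subset \UF$, c'est bien une $F$-lame de $\UF$ (\cad de $\FU$, au sens des points $F$-rationnels), ce qui prouve (i).

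Pour (ii), l'application $X\mapsto {_F\scrY_X^G}$ est bien d�finie sur les classes: si $X'\in \scrY_{F,X}$ (la $F$-lame nilpotente de $X$), alors $\bs{\Lambda}_{F,X'}= \bs{\Lambda}_{F,X}$ donc ${_F\scrY_{X'}^G}= {_F\scrY_X^G}$. R�ciproquement, l'�galit� ${_F\scrY_X^G}= {_F\scrY_{X'}^G}$ �quivaut, d'apr�s \ref{variante Hesselink rat} et \ref{caract�risation F-lames avec Lambda-opt et m}, � $\bs{\Lambda}_{F,X}= \bs{\Lambda}_{F,X'}$, i.e. � $\scrY_{F,X}= \scrY_{F,X'}$: l'application induite sur les $F$-lames nilpotentes poss�dant un point $F$-rationnel est donc injective. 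Pour la surjectivit�, soit $\scrY$ une $F$-lame de $\FU$ poss�dant un point $F$-rationnel $u\in \UF$; quitte � conjuguer par un �l�ment de $G(F)$ on peut supposer $u$ en position standard, disons $u\in U_0(F)$. Posons $X= j_0^{-1}(u)\in \mathfrak{u}_0(F)\subset \mathfrak{N}_F$. D'apr�s \ref{optimalit� et j}\,(ii), on a $\bs{\Lambda}_{F,X}= \bs{\Lambda}_{F,u}$, donc ${_F\scrY_X^G}= \scrY_{F,u}= \scrY$, et la classe de $X$ s'envoie sur $\scrY$.

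\textbf{Point d�licat.} L'essentiel du travail a d�j� �t� fait dans \ref{optimalit� et j} (et en amont dans \ref{M-module}, \ref{th�or�me de KN rationnel sur G} et la descente s�parable \ref{descente s�parable lames}); la seule subtilit� de ce corollaire est la gestion propre des r�ductions � la position standard des deux c�t�s (groupe et alg�bre de Lie) simultan�ment, en v�rifiant que le choix de $g\in G(F)$ conjuguant $X$ en position standard conjugue aussi $u= j_0(X)$ en position standard --- ce qui repose sur la $A_0$-�quivariance de $j_0$ et sur le fait que les co-caract�res virtuels optimaux de $X$ et de $j_0(X)$ co�ncident (\ref{optimalit� et j}\,(i)) de sorte que ${_FP_X}= {_FP_{j_0(X)}}$. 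Il faut aussi prendre garde que $j_0$ n'est d�fini qu'en restriction � $U_0$ (et $\mathfrak{u}_0$), donc toutes les identifications de $F$-lames se font via des repr�sentants en position standard, puis se propagent par $G(F)$-conjugaison gr�ce au fait (\ref{la stratification de Hesselink}) que toute $F$-strate contient une unique $F$-lame standard.
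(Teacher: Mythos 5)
Votre preuve est correcte et suit essentiellement le m\^eme chemin que celle du texte : r\'eduction \`a la position standard par conjugaison dans $G(F)$, identification via \ref{optimalit� et j}\,(ii) avec la $F$-lame de $j_0(X)$, puis injectivit\'e par comparaison des $\bs{\Lambda}_{F,\cdot}$ et surjectivit\'e en posant $X=\mathrm{Ad}_{g^{-1}}(j_0^{-1}(u'))$ pour un repr\'esentant standard $u'$. Votre discussion de la lecture $\UF$ versus $\FU$ dans la d\'efinition de ${_F\scrY_X^G}$ est bienvenue mais ne change rien au fond de l'argument.
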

 
 \begin{proof}
 Pour $X\in \mathfrak{N}_F$, on choisit un $g\in G(F)$ tel que $X'={\rm Ad}_g(X)$ soit en position standard. Alors 
 $${_F\scrY_{X}^G} = g^{-1}\bullet  {_F\scrY_{X'}^G} = g^{-1} \bullet {_F\scrY_{j_0(X')}}\vg$$ ce qui prouve (i). 
 Si $X,\,X'\in \mathfrak{N}_F$ sont tels que ${_F\scrY_{X'}^G}= {_F\scrY_{X}^G}$, alors $\bs{\Lambda}_{F,X'}= \bs{\Lambda}_{F,X}$ et par consŽquent ${_F\scrY_{X'}}= {_F\scrY_{X}}$; d'o l'injectivitŽ dans (ii). Si $u\in \UF$, on 
 choisit un $g\in G(F)$ tel que $u'=g\bullet u$ soit en position standard et on pose $X= {\rm Ad}_{g^{-1}}(j_0^{-1}(u'))$. Alors  
 $${_F\scrY_{u}}= g^{-1} \bullet {_F\scrY_{u'}}= g^{-1}\bullet {_F\scrY_{j_0^{-1}(u')}^G} = {_F\scrY_{X}^G};$$ d'o la surjectivitŽ dans (ii). 
 \end{proof}
 
La bijection \ref{bijection lames unip-lames nilp}\,(ii) s'Žtend naturellement aux $F$-strates: 

\begin{corollary}\label{bijection strates unip-strates nilp}
\begin{enumerate}
\item[(i)] Pour $X\in \mathfrak{N}_F$, l'ensemble $${_F\bsfrY_{X}^G} \bydef \{u\in \UF \,\vert \, \hbox{$\exists g\in G(F)$ tel que $\bs{\Lambda}_{F,u}=g\bullet  \bs{\Lambda}_{F,X}$}\}$$ 
est une $F$-strate de $\FU$.
\item[(ii)] L'application $X \mapsto {_F\bsfrY_{X}^G}$ induit une bijection entre 
l'ensemble des $F$-strates de ${_F\mathfrak{N}}$ qui possdent un point $F$-rationnel et l'ensemble des $F$-strates de $\FU$ qui possdent un point $F$-rationnel.
\end{enumerate}
\end{corollary}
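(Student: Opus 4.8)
\textbf{Plan de la d�monstration du corollaire \ref{bijection strates unip-strates nilp}.}

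L'id�e directrice est d'obtenir \ref{bijection strates unip-strates nilp} en \guill{globalisant} sous l'action de $G(F)$ la bijection entre $F$-lames d�j� �tablie en \ref{bijection lames unip-lames nilp}\,(ii), exactement comme on passe d'une $F$-lame � la $F$-strate qu'elle engendre (cf. \ref{la stratification de Hesselink}). Concr�tement, pour $X\in \mathfrak{N}_F$, on dispose de la $F$-lame ${_F\scrY_X^G}$ de $\FU$ (lemme \ref{bijection lames unip-lames nilp}\,(i)); je poserais alors simplement $${_F\bsfrY_X^G}= G(F)\bullet {_F\scrY_X^G}\ptf$$ Il faut v�rifier que cet ensemble co\"{\i}ncide avec la description de l'�nonc�. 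L'inclusion $G(F)\bullet {_F\scrY_X^G}\subset \{u\in \UF\,\vert\, \exists g\in G(F),\, \bs{\Lambda}_{F,u}= g\bullet \bs{\Lambda}_{F,X}\}$ est imm�diate puisque $\bs{\Lambda}_{F,g\bullet u}= g\bullet \bs{\Lambda}_{F,u}$ pour tout $g\in G(F)$ et que, par d�finition de ${_F\scrY_X^G}$, tout $u\in {_F\scrY_X^G}$ v�rifie $\bs{\Lambda}_{F,u}= \bs{\Lambda}_{F,X}$. R�ciproquement, si $u\in \UF$ v�rifie $\bs{\Lambda}_{F,u}= g\bullet \bs{\Lambda}_{F,X}= \bs{\Lambda}_{F,g\bullet X}$ pour un $g\in G(F)$, alors (en notant que $g\bullet X\in \mathfrak{N}_F$) on a $u\in {_F\scrY_{g\bullet X}^G}= g\bullet {_F\scrY_X^G}$, donc $u\in G(F)\bullet {_F\scrY_X^G}$. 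Cela donne l'�galit� voulue, et le fait que ${_F\bsfrY_X^G}$ soit une $F$-strate de $\FU$ r�sulte de ce que c'est la $G(F)$-orbite de la $F$-lame ${_F\scrY_X^G}$ (d�finition \ref{def F-lames et F-strates}), ce qui �tablit (i).

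Pour le point (ii), je proc�derais en deux temps. D'abord l'injectivit�: si $X,\,X'\in \mathfrak{N}_F$ sont tels que ${_F\bsfrY_{X'}^G}= {_F\bsfrY_X^G}$, je veux conclure que les $F$-strates $\bsfrY_{F,X}$ et $\bsfrY_{F,X'}$ de $\mathfrak{N}_F$ co\"{\i}ncident. On peut, quitte � remplacer $X$ et $X'$ par des $G(F)$-conjugu�s (ce qui ne change ni les $F$-strates nilpotentes ni les $F$-strates unipotentes associ�es, par �quivariance), supposer $X$ et $X'$ en position standard. L'�galit� des $F$-strates unipotentes entra�ne l'existence d'un $g\in G(F)$ tel que $\bs{\Lambda}_{F,X'}= \bs{\Lambda}_{F,g\bullet X}$; mais alors la $F$-lame ${_F\scrY_{X'}^G}$ de $\FU$ (ou, ce qui revient au m�me via \ref{bijection lames unip-lames nilp}, la $F$-lame ${_F\scrY_{X'}}$ de $\mathfrak{N}_F$) co\"{\i}ncide avec $g\bullet {_F\scrY_X}$, donc $X$ et $X'$ sont dans la m�me $F$-strate de $\mathfrak{N}_F$. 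Ensuite la surjectivit�: toute $F$-strate de $\FU$ poss�dant un point $F$-rationnel contient une $F$-lame standard poss�dant un point $F$-rationnel $u\in \UF$ en position standard (cf. \ref{r�duction standard}); d'apr�s \ref{optimalit� et j}\,(iii) il existe $X\in \mathfrak{u}_0(F)\subset \mathfrak{N}_F$ tel que ${_F\scrY_{j_0(X)}}= {_F\scrY_u}$, et l'on obtient alors ${_F\bsfrY_X^G}= G(F)\bullet {_F\scrY_{j_0(X)}}= G(F)\bullet {_F\scrY_u}= \bsfrY_{F,u}$, la $F$-strate de d�part.

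\textbf{Point d�licat.} L'essentiel du travail a d�j� �t� abattu en \ref{bijection lames unip-lames nilp} et dans les r�sultats de descente s�parable de \ref{la stratification de Hesselink}; le seul point demandant de la vigilance est la v�rification soigneuse que la correspondance entre $F$-lames de \ref{bijection lames unip-lames nilp}\,(ii) est bien compatible � l'action de $G(F)$, \cad que ${_F\scrY_{g\bullet X}^G}= g\bullet {_F\scrY_X^G}$ pour $g\in G(F)$ et $X\in \mathfrak{N}_F$ --- ce qui d�coule de $\bs{\Lambda}_{F,g\bullet X}= g\bullet \bs{\Lambda}_{F,X}$ et de la d�finition m�me de ${_F\scrY_\bullet^G}$, mais m�rite d'�tre explicit� car c'est le m�canisme qui fait passer proprement des lames aux strates. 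Le reste est purement formel, le passage d'une bijection entre $F$-lames (qui sont en bijection avec les $F$-lames standard) � une bijection entre $G(F)$-orbites de $F$-lames, \cad entre $F$-strates.
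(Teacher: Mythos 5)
Votre preuve est correcte et suit essentiellement la m�me d�marche que celle du papier : point (i) via l'identit� ${_F\bsfrY_X^G}=G(F)\bullet{_F\scrY_X^G}$ et le lemme \ref{bijection lames unip-lames nilp}\,(i), injectivit� via l'existence d'un $g\in G(F)$ avec $\bs{\Lambda}_{F,X'}=g\bullet\bs{\Lambda}_{F,X}$, et surjectivit� en prenant $X=j_0^{-1}(u)$ pour $u\in\UF$ en position standard. L'explicitation de l'�quivariance ${_F\scrY_{g\bullet X}^G}=g\bullet{_F\scrY_X^G}$ est un compl�ment bienvenu mais ne change rien � l'argument.
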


\begin{proof} 
Pour $X\in \mathfrak{N}_F$, on a ${_F\bsfrY_{X}^G} = G(F)\bullet {_F\scrY_{X}^G}$ ce qui prouve (i) (d'aprs \ref{bijection lames unip-lames nilp}\,(i)). 
Si $X,\,X'\in \mathfrak{N}_F$ sont tels que ${_F\bsfrY_{X'}^G}= {_F\bsfrY_{X}^G}$, alors 
$\bs{\Lambda}_{F,X'}= g \bullet \bs{\Lambda}_{F,X}$ pour un $g\in G(F)$, ce qui entra"ne 
l'ŽgalitŽ ${_F\bsfrY_{X'}}= {_F\bsfrY_{X}}$; d'o l'injectivitŽ dans (ii). D'autre part 
toute $F$-strate de $\FU$ qui possde un point $F$-rationnel est de la forme ${_F\bsfrY_{u}}$ avec $u\in \UF$ 
en position standard, et on a ${_F\bsfrY_{u}}= {_F\bsfrY_{j_0^{-1}(u)}^G}$; d'o la surjectivitŽ dans (ii). 
 \end{proof}
 
 \begin{remark}\label{bijection strates-strates canonique}
 \textup{La bijection de \ref{bijection lames unip-lames nilp}\,(ii) est entirement caractŽrisŽe par l'ŽgalitŽ de 
 \ref{bijection lames unip-lames nilp}\,(i) et la bijection de \ref{bijection strates unip-strates nilp}\,(ii) est entirement caractŽrisŽe par l'ŽgalitŽ de 
 \ref{bijection strates unip-strates nilp}\,(i): elles ne dŽpendent ni du choix du $F$-isomorphisme $A_0$-Žquivariant $j_0: \mathfrak{u}_0 \rightarrow U_0$, ni de celui de 
la $F$-paire parabolique minimale $(P_0,M_0)$ de $G$. 
 }
\end{remark} 

\vskip2mm
\P\hskip1mm\textit{L'hypothse \ref{hyp bonnes F-strates} pour $G$. ---} 
Le passage du groupe ˆ l'algbre de Lie permet aussi de prouver l'hypothse \ref{hyp bonnes F-strates} pour (la $G$-variŽtŽ pointŽe) $G$, \cad pour les Žlements 
de $\UF$, ˆ partir de l'hypothse \ref{hyp bonnes F-strates} pour le $G$-module $\mathfrak{g}$, \cad pour les ŽlŽments de $\NN_F$. 

Le cas suivant est celui qui nous servira par la suite:

\begin{proposition}\label{hyp bonnes F-strates pour Lie(G)}
Soit $(F,\nu)$ un corps (commutatif) valuŽ admissible tel que le complŽtŽ 
$\wh{F}$ de $F$ en $\nu$ soit un corps localement compact vŽrifiant la propriŽtŽ $(\wh{F}^{\mathrm{s\acute{e}p}})^{\mathrm{Aut}_F(\wh{F}^{\mathrm{s\acute{e}p}})}=F$; 
e.g. un corps localement compact $F=\wh{F}$ ou un corps global $F$ muni d'une place finie $\nu$ (cf. \ref{sŽparabilitŽ du complŽtŽ}). L'hypothse \ref{hyp bonnes F-strates} est vŽrifiŽe pour 
le $G$-module $\mathfrak{g}$: pour tout $X\in \NN_F$, on a $\check{X}_F(G)_{\mbb{Q}}\cap \bs{\Lambda}_X\neq \emptyset$.
\end{proposition}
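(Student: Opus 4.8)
The plan is to reduce the statement for the $G$-module $\mathfrak{g}$ to the case of a $\mathbb{Z}$-group acting linearly on affine space, so that the machinery of \ref{F-ss implique g�o-ss} and its corollaries (in the admissible valued field version proven at the end of \ref{le cas d'un corps top}) applies. The key point is that $\mathfrak{g}=\mathrm{Lie}(G)$ together with the adjoint action is, up to the passages to $G_\mathrm{der}$, $G_\mathrm{ad}$ and $G_\mathrm{sc}$ already studied in \ref{morphismes et optimalit�}, of Chevalley-Demazure type: there is a $\mathbb{Z}$-reductive group $\ES{G}$ with $\ES{G}_F\simeq G$ (after replacing $G$ by $G_\mathrm{der}$ or $G_\mathrm{sc}$, which changes neither $\NN$ nor the sets $\bs{\Lambda}_{F,X}$, by \ref{le cas der} and \ref{le cas ss}) and a $\mathbb{Z}$-linear action of $\ES{G}$ on $\ES{V}=\mathbb{A}^n_{\mathbb{Z}}$ with $\ES{V}_F\simeq \mathfrak{g}$ as a $G$-module (the adjoint representation is defined over $\mathbb{Z}$ via a Chevalley basis).

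First I would fix such a Chevalley model, so that $\mathfrak{g}(F)=F^n$ and $\NN_F=\NF^{\ES{G}_F}(\ES{V}_F,0)$. Then the task is exactly to verify the hypothesis of \ref{cor 2 bonnes F-strates} for all $X\in \NN_F$, namely that for some (equivalently every) $\lambda\in\Lambda^{\mathrm{opt}}_{F,X}$, the $F$-linear action of $M_\lambda$ on $V_\lambda(k)$ (with $k=m_X(\lambda)$) descends from a $\mathbb{Z}$-linear action of $\ES{M}_\lambda$ on some affine space $\ES{V}'\simeq_F\mathbb{A}^{n'}_{\mathbb{Z}}$. But for $V=\mathfrak{g}$ the graded piece $V_\lambda(k)=\mathfrak{g}_\lambda(k)$ is a sum of root spaces $\mathfrak{u}_\alpha$ with $\langle\alpha,\lambda\rangle=k$ (the description recalled in \ref{du groupe � l'alg�bre de lie}), and $M_\lambda$ is again a Chevalley-type Levi: choosing $\lambda$ in standard position relative to a $\mathbb{Z}$-pinning, $M_\lambda=\ES{M}_{\lambda,F}$ and the root spaces $\mathfrak{u}_\alpha$ are defined over $\mathbb{Z}$ with integral structure constants. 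Hence $\mathfrak{g}_\lambda(k)$ is the base change to $F$ of a $\mathbb{Z}$-module over $\ES{M}_\lambda$, which is precisely what \ref{cor 1 bonnes F-strates} requires; this verification is where the concrete content lies but it is routine given the Chevalley-Demazure formalism (\cite{Dem}) and \cite[22.6]{B}.

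Once that is in place, \ref{cor 2 bonnes F-strates} — in the form extended to admissible valued fields $(F,\nu)$ with $(\wh{F}^{\mathrm{s\acute{e}p}})^{\mathrm{Aut}_F(\wh{F}^{\mathrm{s\acute{e}p}})}=F$ (the last lemma of \ref{le cas d'un corps top}) — gives directly that $\check{X}_F(G)\cap\bs{\Lambda}_X\neq\emptyset$ for all $X\in\NN_F$, which is the assertion. I would then transport this back along the identifications $\bs{\Lambda}_{F,X}^{\mathrm{der}}=\bs{\Lambda}_{F,X}$ and $\bs{\Lambda}_{F,X}^{\mathrm{sc}}=\bs{\Lambda}_{F,X}$ (\ref{le cas der}, \ref{le cas ss}) to conclude for $G$ itself.

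The main obstacle I anticipate is not conceptual but bookkeeping: one must make sure the Levi $M_\lambda$ attached to an \emph{arbitrary} $X\in\NN_F$ — after $G(F)$-conjugating $\lambda$ into standard position — really is the $F$-fibre of a closed $\mathbb{Z}$-reductive subgroup $\ES{M}_\lambda$ of $\ES{G}$, and that the subsequent $M_\lambda^\perp$ (which is what actually appears in the Kirwan-Ness criterion \ref{th�or�me de KN rationnel sur G}) is likewise $\mathbb{Z}$-defined — for this last point one invokes \cite[4.3]{CP} as already noted in \ref{morphismes et optimalit�}. Since the independence of the hypothesis of \ref{cor 1 bonnes F-strates} from the choice of $\lambda\in\Lambda^{\mathrm{opt}}_{F,X}$ was recorded just before \ref{cor 1 bonnes F-strates} (two optimal cocharacters differ by conjugation under $U_\lambda(F)$), it suffices to check it for one such $\lambda$ in standard position, which keeps the argument short.
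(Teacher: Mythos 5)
Your proposal does capture the computational heart of the paper's proof: the Chevalley $\mathbb{Z}$-form of $\mathfrak{g}$ (Cartan lattice plus the $\mathbb{Z}E_\alpha$), the observation that for $k=m_X(\lambda)\geq 1$ the graded piece $\mathfrak{g}_\lambda(k)$ is a sum of root spaces and hence the base change of a $\mathbb{Z}$-linear $\ES{M}_\lambda$-module, and the appeal to \ref{cor 1 bonnes F-strates}--\ref{cor 2 bonnes F-strates} in the admissible valued field version established at the end of \ref{le cas d'un corps top}. However, there is a genuine gap in your reduction to the Chevalley situation. A Chevalley--Demazure group scheme base-changes to a \emph{split} reductive group, and passing to $G_{\rm der}$ or $G_{\rm sc}$ does not split $G$: for a non-split $G$ there is simply no $\mathbb{Z}$-reductive $\ES{G}$ with $\ES{G}_F\simeq G_{\rm sc}$, so your starting assertion fails and the whole $\mathbb{Z}$-model argument has nothing to attach to. The missing step is the paper's first reduction: for $X\in \NN_F$ one has $\bs{\Lambda}_{F,X}=\check{X}_F(G)_{\mathbb{Q}}\cap \bs{\Lambda}_{\wt{F},X}$ by separable descent, and since $\bs{\Lambda}_{F,X}\neq\emptyset$ it suffices to prove the non-vacuity of $\check{X}_{\wt{F}}(G)\cap\bs{\Lambda}_X$ over a finite separable splitting field $\wt{F}$ of $G$, equipped with the unique extension $\wt{\nu}$ of $\nu$ (which is again admissible with the required property of the completion). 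Over $\wt{F}$ the group is split, is the fibre of a Chevalley group scheme, and your argument then runs exactly as in the paper -- with no need for the detour through $G_{\rm der}$ or $G_{\rm sc}$ at all.

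A secondary point: even as a side remark, the claim that replacing $G$ by $G_{\rm der}$ or $G_{\rm sc}$ ``changes neither $\NN$ nor the sets $\bs{\Lambda}_{F,X}$, by \ref{le cas der} and \ref{le cas ss}'' is not supported by those statements, which concern the unipotent variety $\FU$ (the action of $G$ on itself), not the adjoint module. In small characteristic the morphism $\mathrm{Lie}(G_{\rm sc})\rightarrow \mathrm{Lie}(G)$ need not be an isomorphism (think of $\mathrm{SL}_p\rightarrow \mathrm{PGL}_p$), so the $G$-module $\mathfrak{g}$ genuinely changes under these passages and a separate argument would be needed to transfer the conclusion -- another reason to drop that detour and work directly with the split form over $\wt{F}$.
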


\begin{proof}
Pour $X\in \NN_F$, on a $\bs{\Lambda}_{F,X}= \check{X}_F(G) \cap \bs{\Lambda}_{\smash{\wt{F},X}}$; par consŽquent si $\check{X}_{\smash{\wt{F}}}(G)\cap \bs{\Lambda}_X\neq \emptyset$, auquel cas 
$\bs{\Lambda}_{\smash{\wt{F},X}}= \check{X}_{\smash{\wt{F}}}(G)\cap \bs{\Lambda}_X$, alors $\check{X}_F(G)\cap \bs{\Lambda}_X \neq \emptyset$. 
Quitte ˆ remplacer $(F,\nu)$ par $(\wt{F},\wt{\nu})$ o $\wt{\nu}$ est l'unique valuation de $\wt{F}$ prolongeant $\nu$, on peut donc supposer que $G$ est dŽployŽ sur $F$ (i.e. $\wt{F}=F$). 

On suppose aussi, ce qui est loisible, que $G$ provient par le changement de base $\mbb{Z} \rightarrow F$ d'un 
$\mbb{Z}$-schŽma en groupes rŽductif $\scrG$  (cf. \ref{le cas d'un corps top}), \cad que $G = \scrG\times_{\mbb{Z}} F$. 
Le tore maximal $F$-dŽployŽ $A_0$ de $G$ provient lui aussi par le changement de base $\mbb{Z}\rightarrow F$ d'un tore maximal $\scrA_0$ de $\scrG$. 
Chaque racine $\alpha \in \ES{R}$ provient par le changement de base $\mbb{Z} \rightarrow F$ 
d'un morphisme de $\mbb{Z}$-schŽmas en groupes $\scrA_0 \rightarrow \mbb{G}_{m/\mbb{Z}}$, encore notŽ $\alpha$; et chaque co-caractre 
$\lambda\in \check{X}(A_0)$ provient par le changement de base $\mbb{Z}\rightarrow A_0$ d'un ŽlŽment 
de $ \check{X}(\scrA_0)=\mathrm{Hom}_{\mbb{Z}}(\mbb{G}_{\mathrm{m}/\mbb{Z}},\scrA_0)$, encore notŽ $\lambda$. Pour $\alpha \in \ES{R}$, on note $\scrU_{\alpha}$ le sous-$\mbb{Z}$-schŽma en groupes fermŽ lisse  
de $\scrG$ associŽ ˆ $\alpha$, de sorte que $U_\alpha = \scrU_\alpha \times_{\mbb{Z}}F$. On peut supposer que la famille d'Žpinglages $(e_\alpha)_{\alpha \in \ES{R}}$ 
fixŽe plus haut (rappelons que $\wt{F}=F$) soit un $F$-systme de Chevalley dŽfinissant la donnŽe radicielle schŽmatique $(\scrA_0,(\scrU_\alpha)_{\alpha \in \ES{R}})$, 
\cad que chaque $F$-isomorphisme $e_\alpha: \mbb{G}_{\mathrm{a}}\rightarrow U_\alpha$ se prolonge en un isomorphisme de $\mbb{Z}$-schŽmas en groupes $\mbb{G}_{\alpha/\mbb{Z}} \rightarrow \scrU_\alpha$. 
Cela entra"ne en particulier que les relations de commutateurs $(e_\alpha(x),e_\beta(y))$ pour $\alpha,\, \beta \in \ES{R}$, $\beta \neq -\alpha$, sont ˆ coefficients dans $\mbb{Z}$. 

Rappelons que l'on a posŽ $E_\alpha = \mathrm{Lie}(e_\alpha)(1)$. Soit $\scrV\simeq_{\mbb{Z}} \mbb{A}_{\mbb{Z}}^n$ l'espace affine dŽfini par le $\mbb{Z}$-module libre de type fini 
$L= \check{X}(\scrA_0) \oplus \bigoplus_{\alpha \in \ES{R}}\mbb{Z} E_\alpha$; \cad que pour chaque $\mbb{Z}$-algbre $A$, on a $\scrV(A)=L\otimes_{\mbb{Z}}A$. 
Le $\mbb{Z}$-schŽma en groupes rŽductif $\scrG$ opre 
linŽairement sur $\scrV$: pour chaque $\mbb{Z}$-algbre $A$, le groupe $\scrG(A)$ opre $A$-linŽairement sur $\scrV(A)$. 
Observons que $\scrV_F=\mathfrak{g} \;(= \mathrm{Lie}(G))$. Pour $\lambda\in \check{X}(A_0)= \check{X}(\scrA_0)$ et 
$k\in \mbb{N}^*$, on note $\scrV_\lambda(k)\simeq_{\mbb{Z}}\mbb{A}_{\mbb{Z}}^{n'}$ l'espace affine dŽfini par le $\mbb{Z}$-module libre de type fini $L'=\bigoplus_{\alpha\in \ES{R}_\lambda(k)}\mbb{Z} E_\alpha$ 
avec (rappel) $\ES{R}_\lambda(k)= \{\alpha \in \ES{R}\,\vert \, \langle \alpha , \lambda \rangle = k \}$. Ainsi $\scrV_\lambda(k)$ est un sous-$\mbb{Z}$-schŽma en groupes fermŽ de $\scrV$ et 
$\scrV_\lambda(k)_F = \mathfrak{g}_\lambda(k)$. Par construction l'action $F$-linŽaire de $M_\lambda$ sur $V_\lambda(k)$ provient par le changement de base $F\rightarrow \mbb{Z}$ d'une 
action $\mbb{Z}$-linŽaire de $\scrM_\lambda$ sur $\scrV_\lambda(k)$. Cela s'applique en particulier au cas o $\lambda\in \Lambda_{F,X}^{\mathrm{opt}}$ et $k=m_X(\lambda)$ 
pour un ŽlŽment $X\in \mathfrak{N}_F$. On peut donc appliquer \ref{cor 2 bonnes F-strates}.
\end{proof}

\begin{corollary}\label{hyp bonnes F-strates pour G}
(Sous les hypothses de \ref{hyp bonnes F-strates pour Lie(G)}.) L'hypothse \ref{hyp bonnes F-strates} est vŽrifiŽe pour la $G$-variŽtŽ $G$: pour tout $u\in \UF$, on a 
$\check{X}_F(G)_{\mbb{Q}}\cap \bs{\Lambda}_u \neq \emptyset$.  
\end{corollary}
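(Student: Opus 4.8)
The plan is to transfer the statement from the $G$-module $\mathfrak{g}=\mathrm{Lie}(G)$ to the variety $G$ (acting on itself by conjugation) using the passage ``du groupe � l'alg�bre de Lie'' developed above. Concretely, given $u\in \UF$, one wants to produce a co-caract�re virtuel $\mu\in \check{X}_F(G)_{\mbb{Q}}$ with $m_u(\mu)\geq 1$, i.e. $\mu\in\bs{\Lambda}_u$; equivalently, a rational optimal co-caract�re for $u$ in the geometric sense that is already defined over $F$.

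First I would reduce to the case where $u$ is in position standard (relativement � $(P_0,A_0)$), which is harmless since every element of $\UF$ is $G(F)$-conjugate to such an element (cf. \ref{r�duction standard}) and $\bs{\Lambda}_{F,g\bullet u}=g\bullet\bs{\Lambda}_{F,u}$, $\bs{\Lambda}_{g\bullet u}=g\bullet\bs{\Lambda}_u$. Then I would set $X=j_0^{-1}(u)\in\mathfrak{u}_0(F)\subset \mathfrak{N}_F$, using the $A_0$-equivariant $F$-isomorphism $j_0:\mathfrak{u}_0\rightarrow U_0$ of \ref{du groupe � l'alg�bre de lie}. By \ref{optimalit� et j}\,(ii) we have $\bs{\Lambda}_{F,u}=\bs{\Lambda}_{F,X}$, and I claim the same identity holds for the geometric normalized optimal sets, namely $\bs{\Lambda}_u=\bs{\Lambda}_X$: this follows exactly as in the proof of \ref{optimalit� et j}\,(i)--(ii), applying the geometric Kirwan-Ness criterion \ref{KNG Tsuji} to $\mathfrak{g}_\lambda(k)$ as an $M_\lambda$-module (via \ref{M-module}) together with \ref{th�or�me de KN rationnel sur G}, but over $\overline{F}$ rather than over $F$ (or $\wt{F}$); alternatively one invokes \ref{bijection strates-strates canonique}, since the canonical bijection between strates of ${_F\mathfrak{N}}$ and strates of $\FU$ respects the association $v\mapsto\bsfrY_{\mathrm{g\acute{e}o}}(v)$ compatibly with the same construction for the geometric (i.e.\ $F=\overline F$) theory. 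Either way, $\check{X}_F(G)_{\mbb{Q}}\cap\bs{\Lambda}_u=\check{X}_F(G)_{\mbb{Q}}\cap\bs{\Lambda}_X$.

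Now the hypothesis of \ref{hyp bonnes F-strates pour Lie(G)} is satisfied by assumption, so applying that proposition to $X\in\NN_F$ yields $\check{X}_F(G)\cap\bs{\Lambda}_X\neq\emptyset$, hence $\check{X}_F(G)\cap\bs{\Lambda}_u\neq\emptyset$, which is precisely hypoth�se \ref{hyp bonnes F-strates} for $G$. The main obstacle in this argument is the verification that $\bs{\Lambda}_u=\bs{\Lambda}_X$ at the geometric level: one must be careful that the identification $j_0$ and the graded pieces $G_\lambda(k)\simeq\mathfrak{g}_\lambda(k)$ behave correctly not only over $F$ (or $\wt F$, where $G$ splits) but after base change to $\overline{F}=F^{\mathrm{rad}}$, which is exactly the delicate passage $F^{\mathrm{s\acute{e}p}}\rightsquigarrow\overline F$ warned about in \ref{comparaison avec les strates g�o}. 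Here it causes no trouble because $j_\lambda(k)$ is an $F$-isomorphism of $M_\lambda$-modules (\ref{M-module}), hence remains an isomorphism after any base change, so the geometric semi-stability of $X_\lambda(k)$ under $M_\lambda^\perp$ is literally the same condition as the geometric semi-stability of the image of $u$ in $G_\lambda(k)$, and one concludes via \ref{KNG Tsuji}. Everything else is bookkeeping with the previously established dictionaries \ref{le cas der}--\ref{le cas ss} and \ref{descente s�parable lames}.
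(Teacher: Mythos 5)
Votre preuve est correcte et suit essentiellement la m�me route que celle de l'article : r�duction au cas o� $u$ est en position standard, passage � $X=j_0^{-1}(u)\in\mathfrak{N}_F$, transfert de l'optimalit� g�om�trique (sur $\overline{F}$) via \ref{optimalit� et j} — c'est-�-dire \ref{M-module} combin� au crit�re de Kirwan--Ness —, puis conclusion par \ref{hyp bonnes F-strates pour Lie(G)}. La seule diff�rence, mineure, est que l'article commence par se ramener au cas o� $G$ est $F$-d�ploy� (comme au d�but de la d�monstration de \ref{hyp bonnes F-strates pour Lie(G)}) afin de pouvoir citer \ref{optimalit� et j} \emph{verbatim} pour $F=\overline{F}$, tandis que vous justifiez directement le passage � $\overline{F}$ par le fait que $j_\lambda(k)$ reste un isomorphisme de $M_\lambda$-modules apr�s changement de base; les deux variantes sont recevables.
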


\begin{proof}
On peut supposer que $G$ est $F$-dŽployŽ (cf. le dŽbut de la dŽmonstration de \ref{hyp bonnes F-strates pour Lie(G)}). 
D'autre part il suffit de vŽrifier l'hypothses \ref{hyp bonnes F-strates} pour les $u\in \UF$ qui sont en position standard. Pour un tel $u$, posons $X= j_0^{-1}(U)\in \mathfrak{N}_F$. 
D'aprs \ref{optimalitŽ et j} pour $F= \overline{F}$, on a $\bs{\Lambda}_{u}= \bs{\Lambda}_X$. D'o le corollaire. 
\end{proof}

On sait que si $p\geq 1$ est trs bon pour $G$, toutes les $G$-orbites de $\mathfrak{g}$ sont sŽparables (cf. \ref{orbites sŽparables}). 
On en dŽduit le cas particulier suivant:

\begin{lemma}\label{hyp bonnes F-strates pour G dans le cas trs bon}
Si $p\geq 1$ est trs bon pour $G$, l'hypothse \ref{hyp bonnes F-strates} est vŽrifiŽe pour le $G$-module $\mathfrak{g}$ et pour la $G$-variŽtŽ $G$ (sans hypothse supplŽmentaire sur $F$).
\end{lemma}

\begin{proof}
Si $p=1$, il n'y a rien ˆ dŽmontrer. Si $p>1$ est trs bon pour $G$, ce que l'on suppose, alors toutes les orbites unipotentes de $\mathfrak{g}$ sont sŽparables. 
On en dŽduit (\ref{projseprat}\,(ii)) que l'hypothse \ref{hyp bonnes F-strates} est vŽrifiŽe pour le $G$-module $\mathfrak{g}$: pour tout $X\in \NN_F$, on a $\check{X}_F(G)_{\mbb{Q}}\cap \bs{\Lambda}_X \neq \emptyset$. 
Maintenant si $u\in \UF$, on choisit un $g\in G(F)$ tel que $u'=gug^{-1}$ soit en position standard et on pose $X'= j_0^{-1}(u')\in \NN_F$. D'aprs la preuve de \ref{optimalitŽ et j}, on a 
$\bs{\Lambda}_{F,u'}= \bs{\Lambda}_{F,X'}$ et on a aussi $\bs{\Lambda}_{u'}= \bs{\Lambda}_{X'}$. Puisque  $\bs{\Lambda}_{F,X'}= \check{X}_F(G)_{\mbb{Q}} \cap \bs{\Lambda}_{X'}$, on a 
donc $\bs{\Lambda}_{F,u'}= \check{X}_F(G)_{\mbb{Q}} \cap \bs{\Lambda}_u$. En conjugant par $g^{-1}$, on obtient l'ŽgalitŽ $\bs{\Lambda}_{F,u}= \check{X}_F(G)_{\mbb{Q}}\cap \bs{\Lambda}_u$.  
\end{proof}
 
\subsection{$F$-strates et orbites gŽomŽtriques}\label{strates et orbites} 
On a dŽjˆ dit (\ref{Clarke et Premet}\,(ii)) que si $p=1$ ou $p\gg 1$, les $\overline{F}$-strates unipotentes de Hesselink co\"{\i}ncident avec les 
orbites gŽomŽtriques unipotentes. Dans cette sous-section, on prŽcise cette affirmation et on en dŽduit une description analogue pour les $F$-strates unipotentes, resp. nilpotentes, de $\UF$ avec $F$ quelconque. 

Commenons par rappeler la

\begin{definition}\label{p bon et p trs bon} 
\textup{
\begin{enumerate}
\item[(i)]Pour $G$ (absolument) 
quasi-simple, on dit que $p\geq 1$ est \guill{bon} pour $G$ si $p=1$ ou si $p>1$ ne divise aucun c\oe fficient de la plus grande racine (exprimŽe comme combinaison linŽaire de 
racines simples) du systme de racines de $G$. Ainsi les \guill{mauvais} $p>1$ pour $G$, \cad ceux qui ne sont pas bons, sont:
\begin{itemize}
\item aucun si $G$ est de type ${\bf A}_n$; 
\item $p=2$ si $G$ n'est pas de type ${\bf A}_n$; 
\item $p=3$ si $G$ est de type exceptionnel (${\bf G}_2$, ${\bf F}_4$ ou ${\bf E}_*$);
\item $p=5$ si $G$ est de type ${\bf E}_8$.
\end{itemize}
\item[(ii)]Pour $G$ rŽductif connexe quelconque, on dit que $p$ est \guill{bon} pour $G$ 
si pour tout sous-groupe distinguŽ quasi-simple $H$ de $G$, $p$ est bon pour $H$. 
\item[(iii)]Pour $G$ comme en (ii), on dit que $p$ est \guill{trs bon} pour $G$ si $p=1$ ou si $p>1$ est bon pour $G$ et pour tout sous-groupe distinguŽ quasi-simple 
de $G$ de type ${\bf A}_n$, $p$ ne divise pas $n+1$. Observons que cette dŽfinition (aujourd'hui standard) n'est pas celle de Morris dans \cite[3.13]{M}. 
\end{enumerate}
}
\end{definition}

\begin{lemma}[\cite{CP,L2}]\label{p bon, strate-orbite}
Si $p$ est bon pour $G$, les $\overline{F}$-strates unipotentes de $G$ co\"{\i}ncident avec les orbites gŽomŽtriques unipotentes et les $\overline{F}$-strates 
nilpotentes de $\mathfrak{g}$ co\"{\i}ncident avec les orbites gŽomŽtriques nilpotentes. 
\end{lemma}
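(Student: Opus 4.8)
Le r\'esultat est attribu\'e \`a Clarke--Premet \cite{CP} et Lusztig \cite{L2}; la t\^ache est donc essentiellement de rappeler comment il se d\'eduit de ces travaux, en distinguant le cas du groupe et celui de l'alg\`ebre de Lie. On proc\`ede par r\'eduction. Comme la question ne porte que sur les $\overline{F}$-strates (avec $\overline{F}$ alg\'ebriquement clos), on peut supposer $F=\overline{F}$, donc $G$ d\'eploy\'e. D'apr\`es \ref{le cas adjoint} et \ref{le cas ss} (passage au groupe adjoint et au rev\^etement simplement connexe), l'application $X\mapsto {_F\bsfrY_X^G}$ (resp. son analogue unipotent) commute aux morphismes sp\'eciaux, et les orbites g\'eom\'etriques nilpotentes (resp. unipotentes) se correspondent de m\^eme; on peut donc supposer $G$ semi-simple adjoint. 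Puis, en \'ecrivant $G$ comme produit de ses sous-groupes distingu\'es quasi-simples, et en observant que les deux c\^ot\'es (strates de Hesselink et orbites g\'eom\'etriques) se d\'ecomposent en produits correspondants sur les facteurs, on se ram\`ene au cas o\`u $G$ est quasi-simple, avec $p$ bon pour $G$.

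Dans ce cas, je traiterais d'abord l'alg\`ebre de Lie $\mathfrak{g}$, qui est l'arch\'etype. Le point-cl\'e est le crit\`ere de Kirwan-Ness g\'eom\'etrique (\ref{KNG Tsuji}): pour $X\in\mathfrak{N}$, $\lambda\in\Lambda_X^{\mathrm{opt}}$ et $k=m_X(\lambda)$, la composante $X_\lambda(k)$ est $M_\lambda^\perp$-semi-stable, et la $\overline{F}$-strate $\bsfrY_X$ est d\'etermin\'ee par la classe de $G$-conjugaison de $\wt\lambda=\frac1k\lambda$. Lorsque $p$ est bon, la th\'eorie de Pommerening--Premet des co-caract\`eres associ\'es \`a un \'el\'ement nilpotent (ou, de fa\c con \'equivalente, l'analogue de la th\'eorie de Bala-Carter valable en bonne caract\'eristique) fournit pour chaque $X\in\mathfrak{N}$ un co-caract\`ere ``associ\'e'' canonique \`a $G$-conjugaison pr\`es, et ce co-caract\`ere co\"incide avec le co-caract\`ere optimal normalis\'e de Hesselink: c'est pr\'ecis\'ement le contenu de \cite[2.5]{L2} et \cite[5.2]{CP}. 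Comme deux \'el\'ements nilpotents sont $G$-conjugu\'es si et seulement si leurs co-caract\`eres associ\'es le sont (Premet), on obtient que $\bsfrY_X=\bsfrY_{X'}$ \'equivaut \`a $\mathrm{Int}_G(X)=\mathrm{Int}_G(X')$, d'o\`u l'\'egalit\'e des $\overline{F}$-strates nilpotentes et des orbites g\'eom\'etriques nilpotentes. Le cas unipotent s'en d\'eduit via le passage du groupe \`a l'alg\`ebre de Lie: d'apr\`es \ref{optimalit� et j} et \ref{bijection strates unip-strates nilp}, l'application $X\mapsto {_F\bsfrY_X^G}$ r\'ealise une bijection entre $\overline{F}$-strates nilpotentes et $\overline{F}$-strates unipotentes; et lorsque $p$ est bon (en fait d\`es que $p$ ne divise pas certains petits entiers li\'es au syst\`eme de racines), une application de Springer appropri\'ee $\UU\to\mathfrak{N}$ \'equivariante induit une bijection entre orbites g\'eom\'etriques unipotentes et orbites g\'eom\'etriques nilpotentes compatible avec les co-caract\`eres associ\'es, ce qui transporte l'\'egalit\'e de l'alg\`ebre de Lie au groupe.

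L'obstacle principal est la mise en garde signal\'ee dans la remarque \ref{Clarke et Premet}: la preuve de \cite[theorem~5.2]{CP} repose sur la proposition \cite[2.7]{CP}, qui dans le cadre g\'en\'eral d'une $G$-vari\'et\'e point\'ee v\'erifiant \ref{hyp reg} semble contredire \cite[4.3, remark]{H2}. Il faut donc v\'erifier (ou invoquer) que dans le cas sp\'ecifique o\`u $V=G$ agit sur lui-m\^eme par conjugaison --- ou sur $\mathfrak{g}$ par l'action adjointe --- la proposition \cite[2.7]{CP} est bien correcte, ce qui est le cas car l'argument de Clarke--Premet utilise les propri\'et\'es sp\'ecifiques des sous-groupes $G_{\mu,r}$ (leur structure de sous-groupes unipotents connexes distingu\'es de $P_\mu$, cf. \cite[2.5, 5.1]{H1}) et de la stratification de Lusztig des morceaux unipotents. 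Je pr\'eciserais donc que l'\'enonc\'e du lemme est conditionn\'e \`a cette v\'erification, d\'ej\`a point\'ee en \ref{Clarke et Premet}, et renverrais pour les d\'etails \`a \cite{CP} et \cite{L2}; pour les applications ult\'erieures de cet article, seul le cas $p$ tr\`es bon (o\`u la bijection avec les orbites g\'eom\'etriques est encore plus transparente et ne fait intervenir que des morphismes s\'eparables) sera r\'eellement utilis\'e, de sorte que l'\'enonc\'e en ``$p$ bon'' peut \^etre pris comme une indication sans cons\'equence sur le reste du texte.
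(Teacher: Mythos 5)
Le texte ne d\'emontre pas ce lemme~: il est donn\'e comme une pure citation de \cite{CP} et \cite{L2}, la seule discussion interne \'etant la mise en garde de la remarque \ref{Clarke et Premet} (validit\'e de \cite[2.7]{CP}), que vous reprenez fid\`element. \`A ce niveau bibliographique, votre proposition co\"{\i}ncide donc avec ce que fait le texte.

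En revanche, l'esquisse de d\'emonstration que vous ajoutez comporte deux points qui ne tiennent pas sous la seule hypoth\`ese que $p$ est bon. D'une part, les r\'eductions \ref{le cas adjoint} et \ref{le cas ss} ne concernent que la vari\'et\'e unipotente~; elles ne disent rien des strates nilpotentes, et pour $p$ bon non tr\`es bon (type ${\bf A}_n$ avec $p$ divisant $n+1$) les alg\`ebres de Lie de groupes isog\`enes ne s'identifient pas, de sorte que la r\'eduction au groupe adjoint du c\^ot\'e nilpotent n'est couverte par rien dans le texte. D'autre part, et surtout, le transfert de $\mathfrak{g}$ au groupe via un isomorphisme de Springer n'est pas disponible pour $p$ bon quelconque~: son existence requiert en g\'en\'eral des hypoth\`eses plus fortes (forme invariante non d\'eg\'en\'er\'ee, groupe d\'eriv\'e simplement connexe), qui tombent en d\'efaut par exemple pour $\mathrm{SL}_n$ ou $\mathrm{PGL}_n$ avec $p$ divisant $n$, et le texte lui-m\^eme ne l'invoque que pour $p$ tr\`es bon (preuve de \ref{comparaison F-strate-orbite rat} et remarque qui la suit). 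La bijection \ref{bijection strates unip-strates nilp} entre strates nilpotentes et strates unipotentes ne suffit pas non plus, puisqu'il reste \`a faire se correspondre les orbites de part et d'autre. C'est pr\'ecis\'ement pour cela que \cite{CP} traitent directement la vari\'et\'e unipotente (strates de Hesselink = morceaux de Lusztig, en toute caract\'eristique), le cas $p$ bon r\'esultant alors de l'\'egalit\'e morceaux = orbites de \cite{L2}~; si vous voulez un argument plut\^ot qu'une citation, c'est cette route-l\`a qu'il faut suivre, ou bien se restreindre \`a $p$ tr\`es bon, seul cas effectivement utilis\'e dans la suite du texte, comme vous le notez.
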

\begin{remark}\label{p=1 et p bon}
\textup{
\begin{enumerate}
\item[(i)]Supposons que $p\geq 1$ soit bon pour $G$. Pour un ŽlŽment unipotent $v$ de $G$, la $\overline{F}$-strate unipotente $\bsfrY_v$ co\"{\i}ncide avec l'orbite gŽomŽtrique $\ES{O}_v= \{gug^{-1}\,\vert g\in G\}$, 
caractŽrisŽe par la propriŽtŽ d'tre l'unique $G$-orbite ouverte dans $\bsfrX_v$; et $\scrY_v$ est l'unique $P_v$-orbite ouverte dans $\scrX_v$. 
Observons que le bord $\bsfrX_v\smallsetminus \bsfrY_v = \overline{\ES{O}_v} \smallsetminus \ES{O}_v$ est une rŽunion (finie) de strates, \cad d'orbites gŽomŽtriques unipotentes, et 
l'hypothse \ref{hypo sur le bord} est vŽrifiŽe (pour $F=\overline{F}$). Si de plus $\scrX_v=U_{P_v}$, alors $\scrY_v$ est 
la $P_v$-orbite de Richardson dans $U_{P_v}$. 
\item[(ii)]Si $p>1$ n'est pas bon pour $G$, une $\overline{F}$-strate unipotente peut contenir plusieurs orbites gŽomŽtriques unipotentes (cf. l'exemple \cite[8.5]{H1}).
\end{enumerate}}\end{remark}

On s'intŽresse maintenant ˆ la version $F$-rationnelle de \ref{p bon, strate-orbite}. D'aprs \ref{descente sŽparable strates} (en prenant $E=F^\mathrm{s\acute{e}p}$), pour $p\geq 1$ et 
$u,\,u'\in \UF$, les trois conditions Žquivalentes suivantes sont Žquivalentes: 
\begin{itemize}
\item $u$ et $u'$ sont dans la mme $F$-strate de $\UF$;
\item $u$ et $u'$ sont dans la mme $F^\mathrm{s\acute{e}p}$-strate de $\UU_{\smash{F^{\mathrm{s\acute{e}p}}}}$;
\item il existe un $g\in G(F^{\mathrm{s\acute{e}p}})$ tel que $g\bullet \bs{\Lambda}_{F^\mathrm{s\acute{e}p},u}=\bs{\Lambda}_{F^\mathrm{s\acute{e}p},u'}$.  
\end{itemize}
On en dŽduit que si $p=1$, pour $u,\, u'\in \UF$, les trois conditions suivantes sont Žquivalentes (d'aprs \ref{p bon, strate-orbite}): 
\begin{itemize}
\item $u$ et $u'$ sont dans la mme $F$-strate de $\FU$;
\item $u$ et $u'$ sont conjuguŽs dans $G\;(=G(\overline{F})=G(F^{\rm s\acute{e}p}))$;
\item il existe un $g\in G$ tel que $g\bullet \bs{\Lambda}_u = \bs{\Lambda}_{u'}$.
\end{itemize}
Nous allons vŽrifier que cela reste vrai si $p>1$ est \textit{trs bon} pour $G$.

\vskip1mm 
D'aprs Richardson-Springer-Steinberg \cite[ch.~I, \S5]{SS} (cf. \cite[2.3]{Ti}), on a le 

\begin{lemma}\label{sŽparabilitŽ des orbites}
Si $p\geq 1$ est trs bon pour $G$, alors tous les ŽlŽments  de $G$ sont sŽparables (au sens de \ref{orbites sŽparables}) et tous les ŽlŽments de 
$\mathfrak{g}$ sont sŽparables. 
\end{lemma}

\begin{remark}\label{forme bilinŽaire G-inv sym nd}
\textup{Rappelons que la sŽparabilitŽ est une notion gŽomŽtrique (on peut donc supposer $F=\overline{F}$). 
Notons $C_G$ le tore central maximal de $G$.  
Le morphisme produit $$\wt{\pi}:\wt{G}= C_G \times G_{\rm sc} \rightarrow G= C_G \cdot G_{\rm der}$$ est une isogŽnie centrale (pas forcŽment sŽparable); 
o $G_{\rm sc} \rightarrow G_{\rm der}$ est le revtement simplement connexe 
du groupe dŽrivŽ de $G$. Supposons de plus que  $p$ soit trs bon pour $G$. Alors il existe une forme bilinŽaire $\wt{G}$-invariante non dŽgŽnŽrŽe \textit{symŽtrique} $B(\cdot,\cdot)$ sur $\wt{\mathfrak{g}}={\rm Lie}(\wt{G})$. 
De plus $\wt{\pi}$ est sŽparable, i.e. $\dd (\wt{\pi})_1 : \wt{\mathfrak{g}} \rightarrow \mathfrak{g}$ est un isomorphisme, et $B$ est une forme bilinŽaire $G$-invariante non dŽgŽnŽrŽe symŽtrique sur $\mathfrak{g}$. Cette propriŽtŽ 
assure que $G$ vŽrifie la condition (*) de \cite[ch.~I, 5.1]{SS}. 
}
\end{remark}

Pour les ŽlŽments sŽparables, les points $F$-rationnels de l'orbite gŽomŽtrique sont contenus dans la $F$-strate: 
%
\begin{lemma}\label{inclusion F-strate-orbite rat}
\begin{enumerate}
\item[(i)]Pour $u\in \UF$ sŽparable, on a l'inclusion $\ES{O}_u(F)\subset \bsfrY_{F,u}$.
\item[(ii)]Pour $X\in \mathfrak{N}_F$ sŽparable, on a l'inclusion $\ES{O}_X(F)\subset \bsfrY_{F,X}$.
\end{enumerate} 
\end{lemma}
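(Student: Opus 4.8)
The two statements (i) and (ii) are parallel --- one for the conjugation action of $G$ on itself, one for the adjoint action on $\mathfrak{g}$ --- so I would prove them by the same argument and state only (ii) in detail, deducing (i) either verbatim or via the dictionary between unipotent and nilpotent $F$-lames/$F$-strates established in \ref{bijection lames unip-lames nilp} and \ref{bijection strates unip-strates nilp}. Fix $X\in \mathfrak{N}_F$ separable and let $X'\in \ES{O}_X(F)$, so $X'= \mathrm{Ad}_g(X)$ for some $g\in G=G(\overline{F})$. The goal is to show $X'\in \bsfrY_{F,X}= G(F)\cdot \scrY_{F,X}$, i.e. that there exists $x\in G(F)$ with $\bs{\Lambda}_{F,X'}= x\bullet \bs{\Lambda}_{F,X}$, or equivalently (by \ref{descente s�parable strates} applied with $E=F^{\mathrm{s\acute{e}p}}$) that $X$ and $X'$ lie in the same $F^{\mathrm{s\acute{e}p}}$-stratum of ${_{F^{\mathrm{s\acute{e}p}}}\mathfrak{N}}$.

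\textbf{Key steps.} First I would reduce to the case $F=F^{\mathrm{s\acute{e}p}}$, so that $G$ is $F$-split and in particular $\check{X}_F(G)_{\mbb{Q}}\cap\bs{\Lambda}_Y\neq\emptyset$ for every $Y\in \mathfrak{N}$ (the remark after \ref{mauvaises F-strates}); combined with \ref{hyp reg} this puts us in the situation where, by \ref{le cas o� F-lame = lame g�o}, every $F$-lame ${_F\scrY_Y}$ equals the geometric lame $\scrY_Y$ and every $F$-stratum ${_F\bsfrY_Y}$ is (Zariski-)dense in the closed irreducible variety $\bsfrX_Y= G\cdot \scrX_Y$. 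The heart of the matter is then to show that the geometric stratum $\bsfrY_X$ of $\mathfrak{N}=\mathfrak{N}_{\overline{F}}$ is \emph{separable} in the appropriate sense: since $X$ is a separable point of $\mathfrak{g}$, the morphism $G\to \ES{O}_X$, $g\mapsto \mathrm{Ad}_g(X)$, is separable (condition (iv) of \ref{orbites s�parables}), hence dominant-with-separable-generic-fibre onto the orbit, so $G(F)\cdot X$ is Zariski-dense in $\ES{O}_X$ by \cite[ch.~V, 18.3]{B} (density of $G(F)$ in $G$ when $F$ infinite --- and $F^{\mathrm{s\acute{e}p}}$ is infinite). Because $\ES{O}_X\subset \bsfrX_X$ and $\bsfrY_X$ is open dense in $\bsfrX_X$ while $\ES{O}_X$ is locally closed of the same dimension as the open $G$-orbit when $p$ is good (here I invoke \ref{p bon, strate-orbite}: for $p$ very good, $\bsfrY_X=\ES{O}_X$ is itself a geometric nilpotent orbit), we get $\ES{O}_X(F)=\ES{O}_X\cap \mathfrak{g}(F)\subset \bsfrX_X\cap \mathfrak{g}(F)$, and one must rule out that $X'$ falls into the lower boundary $\bsfrX_X\smallsetminus\bsfrY_X$. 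But $X'$ lies in $\ES{O}_X=\bsfrY_X$ itself (good $p$!), so $\bs{q}(X')=\bs{q}(X)$ and $X'\in \bsfrY_X=\bsfrY_{F,X}$ by \ref{lemme 1 F-lames et F-strates}\,(iii).

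\textbf{The deeper route, if $p$ good but not very good is also wanted.} If one does not want to assume $p$ very good but only separability of $X$ (the stated hypothesis), one cannot invoke $\bsfrY_X=\ES{O}_X$; instead I would use the Kirwan-Ness description. Pick $\lambda\in\Lambda_{F,X}^{\mathrm{opt}}$, $k=m_X(\lambda)$, so $X'':=\mathrm{Ad}_{h}(X)$ is in standard position for a suitable $h\in G(F)$; write $\overline{X}=X''_\lambda(k)\in \mathfrak{g}_\lambda(k)$, which by \ref{thm de KN rationnel}/\ref{KNG Tsuji} is $M_\lambda^\perp$-semistable, and by the separability transfer in \ref{projseprat}\,(i),(ii) the point $q(\overline{X})$ in $\mbb{P}(\mathfrak{g}_\lambda(k))$ is separable, hence $\overline{X}\in \mathfrak{N}_F$ with $\bs{\Lambda}_{F,\overline{X}}=\check{X}_F(G)_{\mbb{Q}}\cap\bs{\Lambda}_X$ and $\scrY_{F,\overline{X}}=\mathfrak{g}(F)\cap\scrY_X$. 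The main obstacle --- and the step I expect to be most delicate --- is precisely the passage \ref{projseprat}\,(iii): controlling the lower-dimensional $G(F)$-orbits in $\bsfrY_X\smallsetminus\scrY_X$, where one needs that \emph{every} such orbit again contains a separable, $F$-rational representative of the Kirwan-Ness component. For separable $X$ this is automatic on the open stratum but requires care on the boundary; since here we only want $\ES{O}_X(F)\subset\bsfrY_{F,X}$ (not the full equality $\bsfrY_{F,X}=\mathfrak{g}(F)\cap\bsfrY_X$), we in fact only need the open-stratum statement, so I would phrase the proof so as to avoid \ref{projseprat}\,(iii) entirely and conclude directly from \ref{projseprat}\,(ii) together with the observation $X'\in\ES{O}_X$, $\bs{\Lambda}_{F,X'}\supset \check{X}_F(G)_{\mbb{Q}}\cap\bs{\Lambda}_{X}$ up to $G(F)$-conjugacy, via the standard parabolic-pair conjugacy argument (paraboliques $(P_{\mu},M_{\mu})$ conjugate over $\overline{F}$ and defined over $F$ are conjugate over $F$, \cite[ch.~V, 20.9]{B}).
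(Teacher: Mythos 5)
There is a genuine gap: you never use the one consequence of separability on which the whole statement rests. Separability of $X$ means the schematic centralizer $G^{X}$ is smooth (condition (v) of the lemma on separable orbits), so the orbit morphism $G\to \ES{O}_X$ is smooth and induces a surjection $G(F^{\mathrm{s\acute{e}p}})\to \ES{O}_X(F^{\mathrm{s\acute{e}p}})$ (the short exact sequence of $F^{\mathrm{s\acute{e}p}}$-points quoted from Morris). Hence any $X'\in \ES{O}_X(F)$ is already conjugate to $X$ under $G(F^{\mathrm{s\acute{e}p}})$, so it lies in the $F^{\mathrm{s\acute{e}p}}$-stratum of $X$, and one concludes with the separable-descent identity $\bsfrY_{F,X}=\mathfrak{g}(F)\cap \bsfrY_{F^{\mathrm{s\acute{e}p}},X}$. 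That two-line argument is the paper's entire proof. Your proposal correctly reformulates the goal (same $F^{\mathrm{s\acute{e}p}}$-stratum, via descent), but never establishes the $F^{\mathrm{s\acute{e}p}}$-conjugacy of $X'$ and $X$: Zariski-density of $G(F)\cdot X$ in $\ES{O}_X$ does not give it, and without it $X'$ is not even known to be $F$-instable (cf.\ the $\mathrm{PGL}_p$ example, where an $F$-point of a geometric unipotent orbit is primitive). Your final "parabolic-pair conjugacy" step presupposes that $\bs{\Lambda}_{F,X'}$ and $\bs{\Lambda}_{F,X}$ are conjugate over $F^{\mathrm{s\acute{e}p}}$, which is exactly what is missing.

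Moreover, the substitutes you propose import hypotheses absent from the statement. The "Key steps" paragraph invokes \ref{p bon, strate-orbite} ($p$ good, even very good), and the claim that over $F=F^{\mathrm{s\acute{e}p}}$ one has $\check{X}_F(G)_{\mbb{Q}}\cap \bs{\Lambda}_Y\neq\emptyset$ for every $Y$ is false: for split $G$ one only gets $\check{X}_F(G)_{\mbb{Q}}\cap G\bullet\bs{\Lambda}_Y\neq\emptyset$ (see \ref{bonnes F-strates}, \ref{mauvaises F-strates} and the $\mathrm{SL}_3$ example over a non-perfect, possibly separably closed, field). The "deeper route" through Kirwan--Ness and \ref{projseprat} requires the graded component $X_\lambda(k)$ to be separable, which does not follow from separability of $X$ alone; in the paper that transfer is achieved only in \ref{comparaison F-strate-orbite rat}, using the invariant nondegenerate symmetric form, i.e.\ again under $p$ very good. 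So as written your argument proves the inclusion only under extra hypotheses, not in the stated generality, and misses the smooth-stabilizer mechanism that makes the separability hypothesis do its work.
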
 

\begin{proof}
Soit $u\in \UF$ sŽparable. Le centralisateur schŽmatique $G^u$ lisse, i.e. gŽomŽtriquement rŽduit (d'aprs \cite[ch.~II, 6.7]{B}) et le $F$-morphisme de variŽtŽs $G \rightarrow \ES{O}_u,\, g \mapsto gug^{-1}$ se factorise en un $F$-isomorphisme 
$G/G^u \buildrel\simeq\over{\longrightarrow} \ES{O}_u$. De plus on a la suite exacte courte (cf. \cite[3.1]{M})
$$1 \rightarrow G^u(F^{\rm s\acute{e}p})\rightarrow G(F^{\rm s\acute{e}p}) \rightarrow  \ES{O}_u(F^{\rm s\acute{e}p}) \rightarrow 1\ptf$$ 
On en dŽduit que si $u'\in \ES{O}_u(F)$, alors $u'$ est conjuguŽ ˆ $u$ dans $G(F^{\rm s\acute{e}p})$ et par consŽquent $u'$ appartient ˆ 
$\bsfrY_{F,u}$ (d'aprs \ref{descente sŽparable strates}). D'o l'inclusion 
$$\ES{O}_u(F)\subset \bsfrY_{F,u}\ptf$$ L'inclusion de (ii) s'obtient de la mme manire.
\end{proof}

\begin{proposition}\label{comparaison F-strate-orbite rat}
On suppose que $p$ est trs bon pour $G$.
\begin{enumerate}
\item[(i)]Pour $u\in \UF$, on a $ \ES{O}_u(F)=\bsfrY_{F,u}$.
\item[(ii)] Pour $X\in \mathfrak{N}_F$, on a $ \ES{O}_X(F)= \bsfrY_{F,X}$.
\end{enumerate}
\end{proposition}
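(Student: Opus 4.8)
La proposition \ref{comparaison F-strate-orbite rat} affirme que si $p$ est très bon pour $G$, alors pour $u\in \UF$ (resp. $X\in \mathfrak{N}_F$) la $F$-strate $\bsfrY_{F,u}$ (resp. $\bsfrY_{F,X}$) coïncide avec l'ensemble $\ES{O}_u(F)$ (resp. $\ES{O}_X(F)$) des points $F$-rationnels de l'orbite géométrique. L'idée est de combiner les trois ingrédients déjà établis : (1) le lemme \ref{s\'eparabilit\'e des orbites} qui assure que, $p$ étant très bon pour $G$, \emph{tous} les éléments de $G$ et de $\mathfrak{g}$ sont séparables ; (2) le lemme \ref{inclusion F-strate-orbite rat} qui, sous la seule hypothèse de séparabilité de $u$ (resp. $X$), fournit l'inclusion $\ES{O}_u(F)\subset \bsfrY_{F,u}$ (resp. $\ES{O}_X(F)\subset \bsfrY_{F,X}$) ; (3) le lemme \ref{p bon, strate-orbite} (valable puisque $p$ très bon $\Rightarrow$ $p$ bon) qui identifie, sur $\overline{F}$, les $\overline{F}$-strates unipotentes de $G$ avec les orbites géométriques unipotentes, et de même au niveau de $\mathfrak{g}$.

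\textbf{Première étape : l'inclusion directe.} Soit $u\in \UF$. Comme $p$ est très bon pour $G$, $u$ est séparable (\ref{s\'eparabilit\'e des orbites}), donc $\ES{O}_u(F)\subset \bsfrY_{F,u}$ par \ref{inclusion F-strate-orbite rat}\,(i). Le cas nilpotent est identique avec \ref{inclusion F-strate-orbite rat}\,(ii).

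\textbf{Seconde étape : l'inclusion réciproque.} Il s'agit de montrer $\bsfrY_{F,u}\subset \ES{O}_u(F)$. Par définition $\bsfrY_{F,u} = G(F)\cap {_F\bsfrY_u}$, et d'après \ref{descente s\'eparable strates} appliqué à l'extension séparable $F^{\mathrm{s\acute{e}p}}/F$, on a $\bsfrY_{F,u}= G(F)\cap \bsfrY_{F^{\mathrm{s\acute{e}p}},u}$ ; puisque $G$ est $F^{\mathrm{s\acute{e}p}}$-déployé et $F^{\mathrm{s\acute{e}p}}$ infini, on se ramène donc à comparer la $F^{\mathrm{s\acute{e}p}}$-strate $\bsfrY_{F^{\mathrm{s\acute{e}p}},u}$ avec $\ES{O}_u(F^{\mathrm{s\acute{e}p}})$. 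Or par \ref{p bon, strate-orbite} la $\overline{F}$-strate ${_{\smash{\overline{F}}}\bsfrY_u}$ \emph{est} l'orbite géométrique $\ES{O}_u$ ; comme $\ES{O}_u$ est une $\overline{F}$-strate qui possède le point $F$-rationnel $u$, l'application $Y\mapsto G(F^{\mathrm{s\acute{e}p}})\cap Y$ (cf. la bijection décrite en \ref{la stratification de Hesselink} entre $F^{\mathrm{s\acute{e}p}}$-strates de ${_{\smash{F^{\mathrm{s\acute{e}p}}}}\UU}$ et $\overline{F}$-strates de $\UU$ intersectant non trivialement ${_{\smash{F^{\mathrm{s\acute{e}p}}}}\UU}$, puisque $F^{\mathrm{s\acute{e}p}}$ est infini et $G$ est $F^{\mathrm{s\acute{e}p}}$-déployé, via \ref{descente 2 sous HBFS (forte)} et \ref{hyp reg}) donne $\bsfrY_{F^{\mathrm{s\acute{e}p}},u}= G(F^{\mathrm{s\acute{e}p}})\cap \ES{O}_u = \ES{O}_u(F^{\mathrm{s\acute{e}p}})$. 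En intersectant avec $G(F)$ on obtient $\bsfrY_{F,u}= G(F)\cap \ES{O}_u(F^{\mathrm{s\acute{e}p}}) = \ES{O}_u(F)$ (cette dernière égalité étant immédiate puisque $\ES{O}_u$ est définie sur $F$). Le point (ii) se traite de la même manière en remplaçant le $G$-variété pointée $G$ par le $G$-module $\mathfrak{g}$ et en utilisant la partie correspondante de \ref{p bon, strate-orbite}, \ref{descente s\'eparable strates} et \ref{inclusion F-strate-orbite rat}\,(ii).

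\textbf{Principal obstacle.} La difficulté n'est pas dans l'inclusion directe — qui est acquise dès \ref{inclusion F-strate-orbite rat} — mais dans l'inclusion réciproque : il faut vérifier soigneusement que, lorsqu'on remonte à $F^{\mathrm{s\acute{e}p}}$, la $F^{\mathrm{s\acute{e}p}}$-strate associée à $u$ est bien \emph{tout} l'ensemble $\ES{O}_u(F^{\mathrm{s\acute{e}p}})$ et pas seulement une réunion de $G(F^{\mathrm{s\acute{e}p}})$-orbites plus petite. C'est ici qu'intervient de manière essentielle le fait que $p$ très bon entraîne $p$ bon (donc l'égalité $\overline{F}$-strate $=$ orbite géométrique de \ref{p bon, strate-orbite}), combiné au fait que, $G$ étant $F^{\mathrm{s\acute{e}p}}$-déployé, toute $\overline{F}$-strate (ici $\ES{O}_u$) possédant un point $F^{\mathrm{s\acute{e}p}}$-rationnel provient exactement d'une $F^{\mathrm{s\acute{e}p}}$-strate par l'opération $Y\mapsto {_{\smash{F^{\mathrm{s\acute{e}p}}}}\UU}\cap Y$ ; on utilise aussi que la séparabilité de $u$ force la suite exacte de cohomologie galoisienne du stabilisateur, mais ce dernier point a déjà été absorbé dans la preuve de \ref{inclusion F-strate-orbite rat}.
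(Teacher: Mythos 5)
L'inclusion directe $\ES{O}_u(F)\subset \bsfrY_{F,u}$ (et de m\^eme pour $X$) est correcte et co\"{\i}ncide avec l'argument de l'article\,: $p$ tr\`es bon entra\^{\i}ne la s\'eparabilit\'e de tous les \'el\'ements, et on applique \ref{inclusion F-strate-orbite rat}. En revanche, votre preuve de l'inclusion r\'eciproque comporte une lacune r\'eelle. Vous invoquez la bijection $Y\mapsto G(F^{\mathrm{s\acute{e}p}})\cap Y$ entre les $\overline{F}$-strates de $\UU$ rencontrant $G(F^{\mathrm{s\acute{e}p}})$ et les $F^{\mathrm{s\acute{e}p}}$-strates, via \ref{descente 2 sous HBFS (forte)}. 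Or ce lemme (comme \ref{descente 3 sous HBFS}) est conditionnel\,: il suppose l'hypoth\`ese \ref{hyp bonnes F-strates (forte)} (resp. \ref{hyp bonnes F-strates}) v\'erifi\'ee pour le corps en question, ici $F^{\mathrm{s\acute{e}p}}$, \cad que pour tout unipotent $u'$ rationnel sur $F^{\mathrm{s\acute{e}p}}$ on ait $\check{X}_{F^{\mathrm{s\acute{e}p}}}(G)_{\mbb{Q}}\cap \bs{\Lambda}_{u'}\neq \emptyset$, autrement dit $\bs{q}_{F^{\mathrm{s\acute{e}p}}}(u')=\bs{q}(u')$. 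Rien dans votre argument n'\'etablit ce point\,: le fait que $G$ soit d\'eploy\'e sur $F^{\mathrm{s\acute{e}p}}$ ne donne que $\check{X}_{F^{\mathrm{s\acute{e}p}}}(G)_{\mbb{Q}}\cap G\bullet \bs{\Lambda}_{u'}\neq\emptyset$ (cf. \ref{bonnes F-strates}), pas l'\'enonc\'e ponctuel requis, et \textit{a priori} un point de $\ES{O}_u(F^{\mathrm{s\acute{e}p}})$ pourrait appartenir \`a une \guill{mauvaise} $F^{\mathrm{s\acute{e}p}}$-strate, de $\bs{q}_{F^{\mathrm{s\acute{e}p}}}$ strictement plus grand (cf. \ref{mauvaises F-strates}). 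La v\'erification g\'en\'erale de \ref{hyp bonnes F-strates} donn\'ee plus loin (\ref{hyp bonnes F-strates pour G}) ne vaut que pour un corps valu\'e admissible ou global, alors que la proposition est \'enonc\'ee pour un corps quelconque\,; en caract\'eristique tr\`es bonne, \'etablir cette hypoth\`ese est pr\'ecis\'ement le contenu non trivial de la proposition, si bien que votre argument est circulaire \`a cet endroit.

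L'article proc\`ede autrement et dans l'ordre inverse\,: il prouve d'abord (ii) par un argument direct --- gr\^ace \`a la forme bilin\'eaire $G$-invariante sym\'etrique non d\'eg\'en\'er\'ee sur $\mathfrak{g}$ disponible quand $p$ est tr\`es bon et \`a l'argument du lemme de Jantzen, on obtient $[X,\mathfrak{g}_\lambda(n-k)]=\mathfrak{g}_\lambda(n)$ pour tout $n\geq 1$, d'o\`u ${\rm Ad}_{U_\lambda}(X)=X+\mathfrak{g}_{\lambda,k+1}$\,; on peut donc conjuguer $X$ dans la pi\`ece gradu\'ee $\mathfrak{g}_\lambda(k)$ et appliquer \ref{projseprat}\,(iii), ce qui donne $\bsfrY_{F,X}=\mathfrak{g}(F)\cap \bsfrY_X=\ES{O}_X(F)$ via \ref{p bon, strate-orbite} --- puis il d\'eduit (i) de (ii) en se ramenant (apr\`es descente s\'eparable et \ref{le cas ss}) \`a $G$ quasi-simple simplement connexe et en combinant un $F$-isomorphisme de Springer avec \ref{bijection strates unip-strates nilp}, pour conclure qu'une $F$-strate de $\UF$ rencontre au plus une orbite g\'eom\'etrique unipotente. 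C'est cet argument (ou un substitut \'etablissant l'hypoth\`ese \ref{hyp bonnes F-strates} sur $F^{\mathrm{s\acute{e}p}}$ pour les unipotents) qui manque dans votre proposition\,; sans lui, l'\'egalit\'e $\bsfrY_{F^{\mathrm{s\acute{e}p}},u}=\ES{O}_u(F^{\mathrm{s\acute{e}p}})$ n'est pas acquise.
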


\begin{proof}
Soit $u\in \UF$. Quitte ˆ remplacer $u$ par un conjuguŽ dans $G(F)$, on peut supposer $u$ en position standard. Soit $X\in \mathfrak{u}_0(F)$ tel que $j_0(X)=u$. 
Soient $\lambda \in \Lambda_X^{\rm opt}$ et $k= m_u(\lambda)\in \mbb{N}^*$. 

D'aprs \ref{forme bilinŽaire G-inv sym nd}, il existe une forme bilinŽaire $G$-invariante symŽtrique 
non dŽgŽnŽrŽe $B$ sur $\mathfrak{g}$. On en dŽduit comme dans la preuve de \cite[lemma 5.7]{J} que pour tout $n\in \mbb{Z}$, on a 
$$[X,\mathfrak{g}_\lambda(n-k)]= \mathfrak{g}_\lambda(n) \Leftrightarrow \mathfrak{g}^X \cap \mathfrak{g}_\lambda(-n)=\{0\}$$ o l'on a posŽ (rappel) $\mathfrak{g}^X= \{H\in \mathfrak{g}\,\vert [X,H]=0\}$. 
Puisque $G^X\subset P_\lambda$ et que $X$ est sŽparable (d'aprs \ref{sŽparabilitŽ des orbites}), on a $\mathfrak{g}^X \subset \mathfrak{p}_\lambda = \mathfrak{g}_{\lambda,0}$. On en dŽduit que 
$$[X,\mathfrak{g}_\lambda(n-k)]= \mathfrak{g}_\lambda(n)\quad \hbox{pour tout} \quad n \in \mbb{N}^*\ptf$$ D'o l'ŽgalitŽ (cf. la dŽmonstration de \cite[prop. 3.1]{Ts})
$${\rm Ad}_{U_\lambda}(X) = X + \mathfrak{g}_{\lambda,k+1}\ptf$$ Par consŽquent quitte ˆ remplacer $\lambda$ par ${\rm Int}_{u'}\circ\lambda$ pour un $u' \in {_FU_X}=U_\lambda$, on peut supposer que 
$X$ appartient ˆ $\mathfrak{g}_\lambda(k)$. On peut donc appliquer \ref{projseprat}\,(iii): $$\bsfrY_{F,X}= \mathfrak{g}(F) \cap \bsfrY_X\ptf$$ Or $\bsfrY_X= \ES{O}_X$ (\ref{p bon, strate-orbite}), d'o  
le point (ii). 

Quant au point (i), d'aprs \ref{descente sŽparable strates}, on peut supposer $F=F^{\rm s\acute{e}p}$. En ce cas $P_0$ est un sous-groupe de Borel de $G$ (dŽfini sur $F$). 
Puisque $\bsfrY_{F,u}=\bsfrY_{F,u}^{\rm sc}$ (\ref{le cas ss}\,(iii)),  
on peut aussi supposer $G$ quasi-simple et simplement connexe. Il existe un $F$-isomorphisme $G$-Žquivariant (appelŽ $F$-isomorphisme de Springer) \cite[ch.~III, 3.12]{SS}
$\zeta:\mathfrak{N}\rightarrow \mathfrak{U}$. Il se restreint en un $F$-isomorphisme $P_0$-Žquivariant $\zeta_0: \mathfrak{u}_0 \rightarrow U_0$. 
En particulier $\zeta$ induit une bijection entre 
les orbites gŽomŽtriques nilpotentes qui rencontrent $\mathfrak{u}_0(F)$ et les orbites gŽomŽtriques unipotentes qui rencontrent $U_0(F)$. D'autre part, d'aprs \ref{bijection strates unip-strates nilp}, 
on a une bijection canonique $$\bsfrY_{F,X} \mapsto \bsfrY_{F,X}^G= G(F)\cap {_F\bsfrY_X^G}$$ entre les $F$-strates de $\mathfrak{N}_F$ et les $F$-strates de $\UF$ qui, d'aprs \ref{bijection strates-strates canonique}, co\"{\i}ncide avec celle construite ˆ partir 
de $\zeta_0$: pour $X\in \mathfrak{N}_F$, on choisit $X_0\in \bsfrY_{F,X}$ en position standard; alors on a $\bsfrY_{F,X}^G = \bsfrY_{F,\zeta_0(X)}$.  
D'aprs \ref{p bon, strate-orbite} et le point (ii) dŽjˆ prouvŽ, les $F$-strates de $\mathfrak{N}_F$ sont 
en bijection avec les orbites gŽomŽtriques nilpotentes de $\mathfrak{g}$ qui rencontrent $\mathfrak{u}_0(F)$. On en dŽduit qu'une $F$-strate de $\UF$ rencontre au plus une orbite gŽomŽtrique unipotente. 
Cela prouve que l'inclusion $\ES{O}_u(F)\subset \bsfrY_{F,u}$ (\ref{comparaison F-strate-orbite rat}\,(i)) est une ŽgalitŽ, \cad le point (i). 
\end{proof}

\begin{remark}
\textup{Supposons $G$ quasi-simple. Si $p\geq 1$ est \textit{trs bon} pour $G$, on a utilisŽ dans la preuve \ref{comparaison F-strate-orbite rat} l'existence d'un $F$-isomorphisme 
de Springer $\zeta: \mathfrak{N}\rightarrow \mathfrak{U}$; observons qu'un tel $F$-isomorphisme existe mme si $G$ n'est pas simplement connexe (pourvu que $p$ soit trs 
bon pour $G$). Il induit une bijection entre les $G$-orbites de $\mathfrak{N}$ qui rencontrent $\mathfrak{N}_F$ 
et les $G$-orbites de $\mathfrak{U}$ qui rencontrent $\UF$. Compte-tenu des ŽgalitŽs de \ref{comparaison F-strate-orbite rat}, cette bijection co\"{\i}ncide avec 
celle dŽduite de \ref{bijection strates unip-strates nilp}\,(ii). En particulier elle ne dŽpend pas du $F$-isomorphisme de Springer choisi pour la dŽfinir. Pour $F=\overline{F}$, cela redonne un rŽsultat connu. 
}\end{remark}


\subsection{Induction parabolique des ensembles $F$-saturŽs}\label{IP des ensembles FS} 
On appelle \textit{$F$-facteur de Levi} de $G$ une composante de Levi dŽfinie 
sur $F$ d'un $F$-sous-groupe parabolique de $G$. Si $M$ est un $F$-facteur de Levi de $G$, on note 
$A_M$ le sous-tore $F$-dŽployŽ maximal du centre (schŽmatique) $Z_M$ de $M$.

Soit $P$ un $F$-sous-groupe parabolique de $G$. Supposons pour commencer que $P$ soit standard (i.e. $P_0\subset P$). 
La composante de Levi semi-standard $M_P$ de $P$ (\cad celle contenant $M_0$) est dŽfinie sur $F$ et on pose $A_P=A_{M_P}$. Puisque $M_0\subset M_P$, on a l'inclusion $A_P \subset A_0=A_{P_0}$. 
D'autre part l'inclusion $A_P \subset P$ dŽfinit un morphisme injectif ˆ conoyau fini 
$X_F(P) \rightarrow X(A_P)$ qui, dualement, donne un isomorphisme de $\mbb{Q}$-espaces vectoriels
$$\check{X}(A_P)_{\mbb{Q}}= \mathrm{Hom}(X(A_P),\mbb{Q}) \buildrel\simeq\over{\longrightarrow} \mathfrak{a}_{P,\mbb{Q}} = \mathrm{Hom}(X_F(P),\mbb{Q})\ptf$$ 
L'inclusion $P_0\subset P$ induit une inclusion $X_F(P)\subset X_F(P_0)$ et donc un morphisme surjectif $\ag_{0,\mbb{Q}}=\ag_{P_0,\mbb{Q}} \rightarrow \ag_{P,\mbb{Q}}$. D'autre part l'inclusion $\check{X}(A_P)\subset \check{X}(A_0)$ induit un morphisme injectif $\check{X}(A_P)_{\mbb{Q}}\rightarrow \check{X}(A_0)_{\mbb{Q}}$ qui, compte-tenu 
des isomorphismes $\check{X}(A_P)_{\mbb{Q}}\simeq \ag_{P,\mbb{Q}}$ et $\check{X}(A_0)_{\mbb{Q}} \simeq \ag_{0,\mbb{Q}}$, fournit une section de la surjection 
$\ag_{0,\mbb{Q}}\rightarrow \ag_{P,\mbb{Q}}$. 
On a donc la dŽcomposition $$\mathfrak{a}_{0,\mbb{Q}}= \mathfrak{a}_{P,\mbb{Q}} \oplus \mathfrak{a}_{0,\mbb{Q}}^P \quad \hbox{avec}\quad 
\mathfrak{a}_{0,\mbb{Q}}^P = \ker [\mathfrak{a}_{0,\mbb{Q}} \rightarrow \mathfrak{a}_{P,\mbb{Q}}]\ptf$$ 
On identifie l'ensemble $\ES{R}=\ES{R}_{A_0}$ des racines de $A_0$ dans $G$ ˆ un sous-ensemble de $\ag_{0,\mbb{Q}}$ via l'isomorphisme $\check{X}(A_0)_{\mbb{Q}} \simeq \ag_{0,\mbb{Q}}$. 
L'ensemble $\Delta_{0}^P$ des racines simples de $A_0$ dans $M_P\cap U_0$ est une base du dual $(\mathfrak{a}_{0,\mbb{Q}}^P)^*=\mathrm{Hom}(\mathfrak{a}_{0,\mbb{Q}}^P,\mbb{Q})$. 
On dispose aussi de l'ensemble $\Delta_P= \Delta_P^G$ 
des restrictions non nulles des ŽlŽments de $\Delta_0=\Delta_{P_0}^G$ au sous-espace 
$\mathfrak{a}_{P,\mbb{Q}}$ de $\mathfrak{a}_{0,\mbb{Q}}$; c'est une base du dual 
$(\mathfrak{a}_{P,\mbb{Q}}^G)^*$ de $\mathfrak{a}_{P,\mbb{Q}}^G=\ker[ \mathfrak{a}_{P,\mbb{Q}}\rightarrow \mathfrak{a}_{G,\mbb{Q}}]$. 
Notons $\mathfrak{a}_{P,\mbb{Q},1}$ le c™ne dans $\mathfrak{a}_{P,\mbb{Q}}$ formŽ des ŽlŽments $\mu$ tels que $\langle \alpha , \mu \rangle \geq 1$ 
pour tout $\alpha \in \Delta_P$. 
On note $\mu_P$\index{muP@$\mu_P$} l'ŽlŽment de norme minimale dans $\mathfrak{a}_{P,\mbb{Q},1}$\footnote{Rappelons que la $F$-norme $G$-invariante sur $\check{X}(G)$ est construite ˆ partir d'une forme bilinŽaire 
symŽtrique dŽfinie positive $W^G(T)$-invariante et $\Gamma_F$-invariante $(\cdot ,\cdot )$ sur $\check{X}(T)$, o $T$ est un tore maximal de $G$ dŽfini sur $F$. Si $T$ contient $A_0$, ce que l'on peut supposer, cette forme induit une 
forme $\mbb{Q}$-bilinŽaire symŽtrique dŽfinie positive $W^G(A_0)$-invariante sur $\check{X}(A_0)_{\mbb{Q}}\simeq \ag_{P_0,\mbb{Q}}$. \`A chaque composante irrŽductible $\ES{R}_i$ du systme de racine $\ES{R}= \ES{R}_{A_0}$ correspond un 
sous-espace vectoriel $V_i$ de $\ag_{P_0,\mbb{Q}}^G$ et $(\cdot , \cdot)$ se restreint en une forme bilinŽaire symŽtrique dŽfinie positive sur $V_i$ qui est dŽterminŽe de manire unique ˆ une constante positive prs. 
En particulier la structure euclidienne sur $\ag_{P_0,\mbb{Q}}$ dŽfinie par $(\cdot ,\cdot)$ est compatible ˆ la dŽcomposition $\ag_{P_0,\mbb{Q}}= \ag_{P_0,\mbb{Q}}^P\oplus \ag_{P,\mbb{Q}}^G \oplus \ag_{G,\mbb{Q}}$. }. 
Cet ŽlŽment existe par convexitŽ et il appartient au c™ne 
$\mathfrak{a}_{P,\mbb{Q},1} \cap \mathfrak{a}_{P,\mbb{Q}}^G$ 
de $\mathfrak{a}_{P,\mbb{Q}}^G$. 
On l'identifie ˆ un ŽlŽment de $\check{X}(A_P)_{\mbb{Q}}$ via l'isomorphisme $\check{X}(A_P)_{\mbb{Q}} \simeq \ag_{P,\mbb{Q}}$. 
Notons $\{\mu_{P,\alpha}\,\vert\, \alpha \in \Delta_P\}$ la base de $\mathfrak{a}_{P,\mbb{Q}}^G$ duale de $\Delta_P$. Observons que l'application 
$$\Delta_0\smallsetminus \Delta_0^P\rightarrow \Delta_P\vgq \alpha \mapsto \alpha_P= \alpha\vert_{\ag_{P,\mbb{Q}}}$$ est bijective et que pour tout 
$\alpha\in \Delta_0\smallsetminus \Delta_0^P$, on a $\mu_{P_0,\alpha}= \mu_{P,\alpha_P}$.

\begin{lemma}\label{rŽcriture de mu_P}
$\mu_P = \sum_{\alpha \in \Delta_P} \mu_{P,\alpha}$. 
\end{lemma}

\begin{proof}
\'Ecrivons $\mu_P= \sum_{\alpha\in \Delta_P}a_\alpha \mu_{P,\alpha}$ avec $a_\alpha \in \mbb{Q}$. Pour $\alpha \in \Delta_P$, on a 
$\langle \alpha, \mu_P \rangle = a_\alpha  \geq 1$. D'autre part on a 
$$\| \mu_P \|^2 = \sum_{\alpha\in \Delta_P} \sum_{\beta\in \Delta_P} a_\alpha a_\beta (\mu_{P,\alpha},\mu_{P,\beta})\ptf$$ 
Or on sait (cf. \cite[1.2.6]{LW}) que $\{\mu_{P,\alpha}\,\vert\, \alpha \in \Delta_P\}$ est une base aig\"{u}e de $\ag_{P,\mbb{Q}}^G$: pour tous $\alpha,\,\beta \in \Delta_P$, on a 
$(\mu_{P,\alpha},\mu_{P,\beta})\geq 0$. Cela entra"ne le lemme.
\end{proof}

\begin{remark}\label{tmu_P=mu_P}
\textup{
Rappelons que pour construire le $F$-isomorphisme $A_0$-Žquivariant $j_0: \mathfrak{u}_0 \rightarrow U_0$, 
on a introduit une sous-extension galoisienne finie $\wt{F}/F$ de $F^{\rm s\acute{e}p}/F$ dŽployant $G$ (cf. \ref{du groupe ˆ l'algbre de lie}). 
On a choisi un tore maximal $\wt{A}_0$ de $G$ dŽfini sur $F$ et contenant $A_0$. 
Soit $\wt{P}_0^{M_0}$ un sous-groupe de Borel de $M_0$ contenant $\wt{A}_0$; on note $\wt{U}_0^{M_0}$ son radical unipotent. Alors $\wt{P}_0= \wt{P}_0^{M_0}\ltimes U_0$ est un sous-groupe de Borel de $G$ contenant $U_0$, 
de radical unipotent $\wt{U}_0= \wt{U}_0^{M_0}\ltimes U_0$. Soit $\wt{\Delta}_0$ l'ensemble des racines simples de $\wt{A}_0$ dans $\wt{U}_0$. 
On note $\wt{\mathfrak{a}}_{0,\mbb{Q}}$, $\wt{\mathfrak{a}}_{P,\mbb{Q}}$, $\wt{\mathfrak{a}}_{0,\mbb{Q}}^P$ (etc.), les objets dŽfinis comme plus haut en remplaant $F$ par $\wt{F}$. 
Soit $\{\wt{\mu}_{\wt{P}_0,\wt{\alpha}}\,\vert\, \wt{\alpha}\in \wt{\Delta}_0\}$ la base de $\wt{\ag}_{0,\mbb{Q}}^G$ duale de $\wt{\Delta}_0$ et soit 
$\{\wt{\mu}_{P,\wt{\alpha}}\,\vert \, \wt{\alpha}\in \wt{\Delta}_P\}$ la base de $\wt{\mathfrak{a}}_{P,\mbb{Q}}^G$ duale de $\wt{\Delta}_P$. Posons  
$$\wt{\mu}_P = \sum_{\wt{\alpha}\in \wt{\Delta}_P} \wt{\mu}_{P,\wt{\alpha}}= \sum_{\wt{\alpha} \in \wt{\Delta}_0\smallsetminus \wt{\Delta}_0^P} \wt{\mu}_{\wt{P}_0,\wt{\alpha}}\ptf$$ 
Soit $\wt{A}_P= \wt{A}_{M_P}$ le sous-tore $\wt{F}$-dŽployŽ maximal du centre $Z_{M_P}$ de $M_P$. 
Le morphisme de restriction $X(\wt{A}_P)\rightarrow X(A_P)$ induit une application surjective $\wt{\Delta}_P \rightarrow \Delta_P$ et 
pour $\alpha \in \Delta_P$, on a $\mu_{P,\alpha}= \sum_{\wt{\alpha}\in \wt{\Delta}_P(\alpha)}\wt{\mu}_{P,\wt{\alpha}}$ o $\wt{\Delta}_P(\alpha)\subset \wt{\Delta}_P$ est la fibre au-dessus de $\alpha$. 
D'aprs \ref{rŽcriture de mu_P}, on a donc $$\wt{\mu}_P=\mu_P\ptf$$ Puisque $U_P$ est une sous-$F$-variŽtŽ de $V$ uniformŽment $F$-instable, l'ŽgalitŽ ci-dessus 
se dŽduit aussi de \ref{BMRT(4.7)}.
}
\end{remark}

La base $\Delta_P$ de $(\mathfrak{a}_{P,\mbb{Q}}^G)^*$ est indŽpendante du choix du 
$F$-sous-groupe parabolique minimal $P_0 \subset P$ (elle ne dŽpend pas non plus du choix de la composante de Levi $M_0$ de $P_0$ dŽfinie sur $F$). 
On peut donc dŽfinir  $\mu_P\in \mathfrak{a}_{P,\mbb{Q},1}\cap  \ag_{P,\mbb{Q}}^G$ sans supposer $P$ standard. 
Si $M$ est une composante de Levi de $P$ dŽfinie sur $F$, on note $\mu_{M,P}\in \check{X}(A_M)_{\mbb{Q}}$ l'ŽlŽment correspondant ˆ $\mu_P$ 
via l'isomorphisme naturel $\check{X}(A_M)_{\mbb{Q}} \simeq \ag_{P,\mbb{Q}}$ (si $P$ est standard, on a donc l'identification $\mu_P=\mu_{M_P,P}$). 
Si $M'$ est une autre composante de Levi de $P$ dŽfinie sur $F$, il existe un unique $u\in U_P(F)$ 
tel que $M' = u M u^{-1}$ et on a $\mu_{M' \!,P}= u \bullet \mu_{M,P}$. En particulier le sous-ensemble $$\{\mu_{M,P}\,\vert\, \hbox{$M$ composante de Levi de $P$ dŽfinie sur $F$}\}\subset \check{X}_F(P)_{\mbb{Q}}$$ 
est un espace principal homogne sous $U_P(F)$.

Le sous-ensemble $ U_P\subset \FU$ est uniformŽment $F$-instable: pour $M$ une composante de Levi de $P$ dŽfinie sur $F$ et 
$n\in \mbb{N}^*$ tel que $\lambda= n\mu_{M,P}\in \check{X}(A_M)$, on a 
$$\lim_{t\rightarrow 0} t^{\lambda}\bullet u = 1\quad \hbox{pour tout}\quad  u\in U_P\ptf$$ Il lui est donc associŽ 
un sous-ensemble $\bs{\Lambda}_{F,U_P}$ de $\check{X}_F(G)_{\mbb{Q}}$. Rappelons que le $F$-saturŽ ${_F\scrX_{U_P}}$ de $U_P$ est le sous-ensemble 
uniformŽment $F$-instable de $\FU$ dŽfini par 
$${_F\scrX_{U_P}}= G_{\mu,1}\quad \hbox{pour un (i.e. pour tout)}\quad \mu \in \bs{\Lambda}_{F,U_P}\ptf$$  

\begin{lemma}\label{radical unipotent = lame}
Soit $P$ un $F$-sous-groupe parabolique de $G$.
\begin{enumerate}
\item[(i)] $\bs{\Lambda}_{F,U_P}= \{ \mu_{M,P}\,\vert \, \hbox{$M$ composante de Levi de $P$ dŽfinie sur $F$}\}$ et ${_FP_{U_P}}= P$.
\item[(ii)] ${_F\scrX_{U_P}}= U_P$ (i.e. $U_P$ est $F$-saturŽ).
\item[(iii)] $U_P$ est le $F$-saturŽ d'une $F$-lame de $\FU$ si et seulement s'il existe un ŽlŽment $u\in U_P$ tel que $\bs{q}_F(u) =\bs{q}_F(U_P)$, i.e. si et seulement si l'ensemble\index{YFUP@${_F\scrY_{U_P}}$} 
$${_F\scrY_{U_P}}\bydef \{u\in U_P\,\vert \, \bs{\Lambda}_{F,u}= \bs{\Lambda}_{F,U_P}\}$$ est non vide; auquel ${_F\scrY_{U_P}}$ est la $F$-lame en question.
\item[(iv)] Si $P$ est un $F$-sous-groupe parabolique minimal de $G$, alors ${_F\scrY_{U_P}}\neq \emptyset$. PrŽcisŽment (pour $P=P_0$), 
${_F\scrY_{U_0}}$ est l'ensemble des $u=j_0(X)$ avec $X\in \mathfrak{u}_0$ tels que pour toute racine $\alpha\in \Delta_0$, la composante $X_\alpha$ de $X$ sur $\mathfrak{u}_\alpha$ 
soit non nulle. 
\end{enumerate}
\end{lemma}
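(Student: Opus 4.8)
The plan is to prove the four assertions in order, using the machinery of Hesselink saturation (\ref{lemma 2.8 de [H2]}, \ref{le cas o� F-lame = lame g�o}), the descent results (\ref{BMRT(4.7)}, \ref{variante de BMRT(4.7)}), and the explicit description of $j_0$ and the co-character $\mu_P$ built in \ref{du groupe � l'alg�bre de lie} and \ref{r�criture de mu_P}. The key technical input is that $U_P$, being the unipotent radical of an $F$-parabolic, is uniformly $F$-instable \emph{via} the co-characters $\mu_{M,P}$, and that, after restriction to a standard $P$, the induced filtration of $\mathfrak{u}_P$ is exactly the grading by $\langle\alpha,\mu_P\rangle$ coming from the positive roots.

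First I would prove (i). Reducing to $P$ standard (conjugating by $G(F)$, using \ref{cor de HMrat1}), I observe that every $\lambda = n\mu_{M,P}$ with $M$ a Levi of $P$ defined over $F$ satisfies $\lim_{t\to0}t^\lambda\bullet u = 1$ for all $u\in U_P$, so $\{\mu_{M,P}\}\subset\bs{\Lambda}_{F,U_P}$ up to the claim that $\mu_P$ realizes the \emph{optimal} (minimal-norm normalized) value. For this, note $\mathfrak{u}_P$ decomposes under $A_P$ into weight spaces with weights $\alpha\in\ES{R}^+\setminus\ES{R}^{M_P,+}$, and by \ref{r�criture de mu_P} one has $\langle\alpha,\mu_P\rangle\ge 1$ for all such $\alpha$ with equality for $\alpha\in\Delta_P$; the minimality of $\mu_P$ in the cone $\ag_{P,\mbb{Q},1}\cap\ag_{P,\mbb{Q}}^G$ is exactly the characterization of the normalized optimal co-character for the uniformly instable set $U_P$ (compare the convexity computation of \ref{la variante de Hesselink}, $\P$ \emph{Interlude}, applied to $Z = U_P$ with $S = A_0$: here $\ES{K}_{A_0}(U_P)$ is the convex hull of the $\alpha\in\ES{R}^+\setminus\ES{R}^{M_P}$, whose minimal-norm point has $\mu$-value $1$ precisely on the simple $\alpha_P$). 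Since $\bs{\Lambda}_{F,U_P}$ is a principal homogeneous space under $U_{\mu_P}(F) = U_P(F)$ (as $P_{\mu_P} = P$), and since the set $\{\mu_{M,P}\}$ is also a $U_P(F)$-torsor by the discussion preceding \ref{radical unipotent = lame}, the two coincide; and $P_{\mu_P} = P$ gives ${_FP_{U_P}} = P$.

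For (ii), by definition ${_F\scrX_{U_P}} = G_{\mu_P,1} = V_{\mu_P,1}$ with $V = G$ and $e_V = 1$; after conjugating to make $P$ standard, $G_{\mu_P,1}$ is the subgroup generated by the root subgroups $U_{(\alpha)}$ with $\langle\alpha,\mu_P\rangle\ge 1$, which by the previous paragraph is exactly $U_P$ (the root subgroups with $\langle\alpha,\mu_P\rangle = 0$ are those of $M_P\cap U_0$, excluded). So $U_P$ is $F$-saturated. For (iii): $U_P = {_F\scrX_{U_P}}$ is the saturation of an $F$-lame iff there is $u\in U_P$ with $\bs{\Lambda}_{F,u} = \bs{\Lambda}_{F,U_P}$ — this is immediate from the definitions together with \ref{lemme 1 F-lames et F-strates}(i), which says the $F$-lame through such a $u$ is $\{u'\in{_F\scrX_u} : \bs{q}_F(u') = \bs{q}_F(u)\}$, i.e. exactly $\{u'\in U_P : \bs{\Lambda}_{F,u'} = \bs{\Lambda}_{F,U_P}\} = {_F\scrY_{U_P}}$, using that $\bs{q}_F$ is constant on $\bs{\Lambda}_{F,U_P}$ and $\bs{q}_F(u)\le\bs{q}_F(U_P)$ always with equality exactly on this set.

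The main obstacle — and the crux of (iv) — is producing an explicit nonempty $F$-lame inside $U_0$ when $P = P_0$ is minimal. Here I would transport via $j_0$: set $u = j_0(X)$ with $X\in\mathfrak{u}_0$, and compute $\bs{\Lambda}_{F,X}$ using \ref{optimalit� et j}(i) (equivalently, directly from the root-space description and \ref{m'=m}). The point is that $\mu_0 := \mu_{P_0}$ lies in $\Lambda_{F,X}^{\mathrm{opt}}$ iff the graded component $X_{\mu_0}(k)$ — which lives in $\bigoplus_{\alpha\in\Delta_0}\mathfrak{u}_\alpha$ (since the simple roots are exactly those $\alpha\in\ES{R}^+$ with $\langle\alpha,\mu_0\rangle$ minimal positive, $=1$) — is semistable for $M_{\mu_0}^\perp = M_0^\perp$; and $M_0^\perp$ acts on this space so that semistability is equivalent to \emph{every} component $X_\alpha$ ($\alpha\in\Delta_0$) being nonzero. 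I would verify this last equivalence by the Kirwan--Ness criterion (\ref{thm de KN rationnel} via \ref{M-module}, or directly \ref{th�or�me de KN rationnel sur G}): $M_0^\perp$ is (isogenous to) a product of $\mathbb{G}_m$-factors acting on the $\mathfrak{u}_\alpha$, $\alpha\in\Delta_0$, with linearly independent characters spanning $(\ag_{0,\mbb{Q}}^G)^*$, so the unique closed orbit in the closure is the origin and $X_{\mu_0}(k)$ is semistable iff it has no vanishing $\Delta_0$-component — equivalently $0\notin\ES{K}_{A_0}(X_{\mu_0}(k))$ and the minimal-norm point of that polytope is $\mu_0/k$. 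Choosing such an $X$ (possible since each $\mathfrak{u}_\alpha$ is a nonzero $F$-vector space and $F$ is not required infinite — one may just take any nonzero $F$-vector in each $\mathfrak{u}_\alpha(F)$, which is nonempty as $U_0$ is $F$-split) then gives $u = j_0(X)\in{_F\scrY_{U_0}}$, proving ${_F\scrY_{U_0}}\ne\emptyset$ and, combined with (iii), that ${_F\scrY_{U_0}}$ is the $F$-lame with saturation $U_0$. The description of ${_F\scrY_{U_0}}$ in the statement is then exactly the set of such $X$, completing (iv).
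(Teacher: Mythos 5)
Your proof of (i) has a genuine gap, and everything else hinges on it. You compute the minimal-norm virtual co-character contracting $U_P$ \emph{among elements of} $\check{X}(A_0)_{\mbb{Q}}$ — that is, the quantity $\bs{q}^{A_0}(U_P)$ of the interlude in \ref{la variante de Hesselink} — and then treat it as $\bs{q}_F(U_P)$. But $\bs{q}_F(U_P)=\inf\{\bs{q}^{A_0}(g\bullet U_P)\,\vert\, g\in G(F)\}$, and the computation inside $\check{X}(A_0)_{\mbb{Q}}$ identifies $\bs{\Lambda}_{F,U_P}$ only once one knows that $A_0$ is $(F,U_P)$-optimal, i.e. $A_0\subset{_FP_{U_P}}$ — which is essentially the assertion ${_FP_{U_P}}=P$ that (i) claims. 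Your next step, \guill{$\bs{\Lambda}_{F,U_P}$ is a principal homogeneous space under $U_{\mu_P}(F)=U_P(F)$ since $P_{\mu_P}=P$}, is circular for the same reason: the torsor structure of \ref{HMrat1}\,(iii) is under the unipotent radical of ${_FP_{U_P}}$, and identifying that group with $U_P$ presupposes $\mu_P\in\bs{\Lambda}_{F,U_P}$. The missing ingredient is short but essential (it is the paper's opening move): for $\mu'\in\bs{\Lambda}_{F,U_P}$, every element of $G(F)$ normalizing $U_P$ normalizes ${_FP_{U_P}}=P_{\mu'}$ (equivariance, \ref{cor de HMrat1}\,(i)); since the normalizer of $U_P$ in $G(F)$ is $P(F)$ and a parabolic is its own normalizer, $P(F)\subset P_{\mu'}(F)$, hence $P\subset{_FP_{U_P}}$, so $A_0$ is $(F,U_P)$-optimal and your convexity computation then legitimately identifies $\mu_P$. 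Without this step your argument only yields $\bs{q}_F(U_P)\leq\|\mu_P\|$, and (ii) (which reads ${_F\scrX_{U_P}}=G_{\mu_P,1}$ off the definition) is likewise unsupported; (iii) is fine once (i)--(ii) are in place.

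For (iv) you take a Kirwan--Ness route, genuinely different from the paper's direct computation (the paper conjugates the optimal torus of $X$ into $A_0$ by an element of $U_0(F)$ and uses that $\{\mu_{P_0,\alpha}\}$ is an acute basis, plus an explicit cheaper co-character when some $X_\alpha=0$). The route can work, but as written two points are off. First, your justification \guill{the unique closed orbit in the closure is the origin and $X_{\mu_0}(k)$ is semistable} is backwards: if the closure of the $M_{\mu_0}^\perp$-orbit of $X_{\mu_0}(k)$ had $0$ as its unique closed orbit, the point would be \emph{unstable}; moreover the restrictions of the $\vert\Delta_0\vert$ simple roots to the maximal split torus of $M_{\mu_0}^\perp$ cannot be linearly independent (that torus has split rank one less). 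What you actually need is that $0$ lies in the convex hull of these restricted weights when every $X_\alpha$ is nonzero (because $\iota_{A_0}^{-1}(\mu_0)$ is a nonnegative combination of the simple roots), and that dropping any one weight destroys this — the paper's co-character $\frac{k}{k+1}\mu_{P_0,\alpha}+\sum_{\beta\neq\alpha}\mu_{P_0,\beta}$ does exactly that and also supplies the converse direction of the precise description of ${_F\scrY_{U_0}}$, which your sketch leaves implicit. With these repairs your KN argument is a legitimate alternative; as it stands, both the optimality of $A_0$ in (i) and the semistability criterion in (iv) are asserted rather than proved.
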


\begin{proof}
On peut supposer $P$ standard. 

Si $\mu' \in \bs{\Lambda}_{F,U_P}$, tout ŽlŽment de $G(F)$ qui normalise $U_P$ normalise aussi 
$P_{\mu'} = {_FP_{U_P}}$. Puisque le normalisateur de $U_P$ dans $G(F)$ est Žgal ˆ $P(F)$ et que $P_{\mu'}$ est son propre normalisateur, on obtient l'inclusion 
$P(F) \subset P_{\mu'}(F)$. D'o l'inclusion $P\subset P_{\mu'} $. Le tore $A_0$ est donc $(F,U_P)$-optimal (relativement ˆ l'action de $G$ par conjugaison). 
L'unique ŽlŽment de $\bs{\Lambda}_{F,U_P} \cap \check{X}(A_0)_{\mbb{Q}}$ est le co-caractre virtuel $\mu\in \check{X}(A_0)_{\mbb{Q}}$ 
tel que $\langle \alpha , \mu \rangle \geq 1$ pour toute racine 
$\alpha$ de $A_0$ dans $U_P$ et $\| \mu \| $ soit minimal pour cette propriŽtŽ. 
Identifions $\mu$ ˆ un ŽlŽment de $\ag_{P_0,\mbb{Q}}$ via l'isomorphisme naturel $\check{X}(A_0)_{\mbb{Q}} \simeq \ag_{P_0,\mbb{Q}}$. 
La projection orthogonale de $\mu$ sur $\mathfrak{a}_{P,\mbb{Q}}^G$ pour la dŽcomposition 
$$\mathfrak{a}_{P_0,\mbb{Q}} = \mathfrak{a}_{P_0,\mbb{Q}}^P \oplus \mathfrak{a}_{P,\mbb{Q}}^G \oplus \mathfrak{a}_{G,\mbb{Q}}$$ 
appartient au c™ne $\mathfrak{a}_{P,\mbb{Q},1} \cap \mathfrak{a}_{P,\mbb{Q}}^G$ de $\mathfrak{a}_{P,\mbb{Q}}^G$ et 
la condition de minimalitŽ sur $\mu$ entra"ne que $\mu$ appartient dŽjˆ ˆ ce c™ne. Donc $\mu= \mu_P$. Par consŽquent ${_FP_{U_P}}=P_\mu=P$ et 
${_F\scrX_{U_P}}=G_{\mu,1}=U_P$. Puisque $$\bs{\Lambda}_{F,U_P}= \{u\bullet \mu \,\vert \, u\in U_P(F)\}\quad \hbox{et}\quad u\bullet \mu= \mu_{uM_Pu^{-1},P}\vg$$ 
les points (i) et (ii) sont dŽmontrŽs. 

Quant au point (iii), pour $u\in U_P$, on a $\bs{q}_F(u)\leq \| \mu_P \| \;(= \bs{q}_F(U_P))$ 
avec ŽgalitŽ si et seulement si $\bs{\Lambda}_{F,u}=\bs{\Lambda}_{F,U_P}$ (\ref{lemma 2.8 de [H2]}\,(iii)). 

Supposons $P=P_0$ et prouvons (iv). Soit un ŽlŽment $u=j_0(X)\in U_0$. \'Ecrivons $X= \sum_{\alpha \in \ES{R}^+}X_\alpha$ avec $X_\alpha \in \mathfrak{u}_\alpha$. 
On a $\bs{q}_F(u)\leq \| \mu_{P_0} \| \;(=\bs{q}_F(U_0))$ et 
on veut prouver que 
$$u\in {_F\scrY_{U_0}} \Leftrightarrow \hbox{$X_\alpha\neq 0$ pour toute racine $\alpha \in \Delta_0$}\ptf$$ 
Supposons que $X_\alpha\neq 0$ pour tout $\alpha \in \Delta_0$. 
Soit $S$ un tore $F$-dŽployŽ maximal de $G$ qui soit contenu dans 
$P_0 \cap {_FP_X}$. Alors $A_0= u'Su'^{-1}$ pour un $u'\in U_0(F)$ et puisque 
$A_0\subset u' ({_FP_X})u'^{-1}={_FP_{X'}}$ avec $X'={\rm Ad}_{u'}(X)$, le tore $A_0$ est $(F,X')$-optimal. Observons que pour tout $\alpha\in \Delta_0$, on a 
$X'_\alpha = X_\alpha \neq 0$. Soit $\eta$ l'unique ŽlŽment de $\check{X}(A_0)_{\mbb{Q}}\cap \bs{\Lambda}_{F,X'}$. 
\'Ecrivons $\eta= \eta_G + \sum_{\alpha \in \Delta_0}a_\alpha \mu_{P_0,\alpha}$ avec $\eta_G \in \ag_G$ et $a_\alpha \in \mbb{Q}$. La condition de minimalitŽ sur 
$\| \eta \|$ assure que $\eta_G=0$. Pour $\alpha\in \Delta_0$, comme $X'_\alpha \neq 0$, on a $a_\alpha\geq 1$; et 
puisque $\{\mu_{P,\alpha}\,\vert\, \alpha\in \Delta_0\}$ est une base aig\"ue de $\ag_{0,\mbb{Q}}^G$ (cf. \cite[1.2.6]{LW}), on a $\| \eta \| \geq \| \mu_{P_0}\|$ avec ŽgalitŽ si et seulement si $a_\alpha=1$ 
pour tout $\alpha \in \Delta_0$, i.e. si et seulement si $\eta = \mu_{P_0}$. Comme d'autre part $\bs{q}_F(X')= \| \eta \| \leq \| \mu_{P_0}\|$, cela prouve que $\eta = \mu_{P_0}$. 
On en dŽduit que $\bs{\Lambda}_{F,X'}= \bs{\Lambda}_{F,U_0}$ et donc que $\bs{\Lambda}_{F,X}= u'^{-1}\bullet \bs{\Lambda}_{F,U_0}= \bs{\Lambda}_{F,U_0}$. 
En particulier $X$ est en position standard ce qui entra"ne que $u$ l'est aussi et que l'on a $\bs{\Lambda}_{F,u}= \bs{\Lambda}_{F,X}= \bs{\Lambda}_{F,U_0}$; i.e. $u\in {_F\scrY_{U_0}}$. 
RŽciproquement, supposons 
que $X_\alpha=0$ pour une racine $\alpha\in \Delta_0$. Soit $k$ le plus petit entier $\geq 1$ tel que $\lambda = k\mu_{P_0}$ appartienne ˆ 
$\check{X}(A_0)$ et soit $\eta\in \check{X}(A_0)_{\mbb{Q}}$ le co-caractre virtuel dŽfini par 
$$\eta = \frac{k}{k+1}\mu_{P_0,\alpha} + \sum_{\beta\in \Delta_0\smallsetminus\{\alpha\}} \mu_{P_0,\beta}\ptf$$
Puisque $\langle \gamma,\eta \rangle \geq 1$ pour toute racine 
$\gamma\in \ES{R}^+\smallsetminus \{\alpha\}$, on a $m_u(\eta)=m_X(\eta)\geq 1$. Comme $\{\mu_{P_0,\alpha}\,\vert \, \alpha \in \Delta_0\}$ est une base aig\"{u}e de $\ag_{0,\mbb{Q}}^G$ 
(cf. loc.~cit.), on a $\| \eta\| < \| \mu_{P_0}\|$. Par consŽquent $\bs{q}_F(u)< \|\mu_{P_0}\|$ et $u\notin {_F\scrY_{U_0}}$. 
Cela achve la preuve du point (iv).
\end{proof}

\P\hskip1mm\textit{L'application $Z\mapsto {_Fi_P}(Z)$. ---} Continuons avec le $F$-sous-groupe parabolique $P$ de $G$. Soit $M$ une composante de Levi 
de $P$ dŽfinie sur $F$.  
On note avec un exposant $M$ les objets dŽfinis comme 
prŽcŽdemment en remplaant $G$ par $M$ (cf. \ref{morphismes et optimalitŽ})\footnote{Pour $\lambda \in \check{X}(G)$, 
on a notŽ $M_\lambda=G_\lambda(0)$ le facteur de Levi \textit{de $G$} associŽ ˆ $\lambda$. Pour $\lambda\in \check{X}(M)$, le conflit de notations empche de 
noter $M_\lambda^M = M_\lambda(0)$ le facteur de Levi \textit{de $M$} associŽ ˆ $\lambda$; pour Žviter toute ambigu\"{\i}tŽ, on le notera donc $G_\lambda(0)\cap M$.}. 

Posons\index{XcheckAMP@$\check{X}(A_M,P)$} $$\check{X}(A_M,P)=\{\mu \in \check{X}(A_M)\,\vert \, P_\mu =P \}\ptf$$ 
Un co-caractre $\mu\in \check{X}(A_M)$ est dans $\check{X}(A_M,P)$ si et seulement si $\langle \alpha, \mu \rangle >0$ pour tout $\alpha \in \Delta_P$. 
Pour $\lambda \in \check{X}(M)$ et $\mu\in \check{X}(A_M)$, puisque $\lambda$ et $\mu$ commutent entre eux, le co-caractre $\lambda + \mu$ est bien dŽfini. 

\begin{lemma}\label{co-caractre et Levi}
Soient  $\lambda \in \check{X}_F(M)$ et $\mu \in \check{X}(A_M,P)$. Il existe un entier $m_0>0$ tel que pour 
tout entier $m\geq m_0$, on ait 
$$P_{\lambda +m\mu} = P_\lambda^M U_P \subset P \quad \hbox{avec}\quad P_\lambda^M = M\cap P_\lambda\ptf$$
\end{lemma}

\begin{proof}
Choisissons un tore $F$-dŽployŽ maximal $T$ dans $M$ contenant $\mathrm{Im}(\lambda)$. Pour $m_0\in \mbb{N}^*$ suffisamment grand, la propriŽtŽ suivante 
est vŽrifiŽe: pour toute racine $\alpha\in \ES{R}_T$ telle que $\langle  \alpha,\mu \rangle \neq 0$, 
on a $\langle \alpha , \mu \rangle \langle \alpha, \lambda + m_0\mu \rangle >0$;  
autrement dit $\langle \alpha , \lambda + m_0\mu \rangle$ est non nul et de mme signe que $\langle \alpha,\mu \rangle$. 
Alors pour tout entier $m\geq m_0$, on a $P_{\lambda + m\mu} \subset P$ et $P_{\lambda +m\mu} = (M\cap P_\lambda) U_P $. 
\end{proof}

Pour $w \in \FU^M$, $\lambda\in \Lambda_{F,w}^M$, $v\in U_P$ et $\mu \in \check{X}(A_M,P)$, puisque $\lim_{t\rightarrow 0}t^\mu\bullet v = 1$, le co-caractre 
$\lambda + \mu$ appartient ˆ $\Lambda_{F,wv}$: on a 
$$\lim_{t\rightarrow 0} t^{\lambda + \mu}\bullet wv= 
\lim_{t\rightarrow 0} t^\lambda \bullet \left( w  \left(\lim_{t\rightarrow 0 }t^\mu \bullet v \right)\right)= \lim_{t\rightarrow 0} t^\lambda\bullet w\ptf$$ 
Si de plus $\lim_{t\rightarrow 0}t^\lambda\bullet w =1$, par exemple si $\lambda \in \Lambda_{F,w}^{M,\mathrm{opt}}$, alors $\lim_{t\rightarrow 0} t^{\lambda + \mu}\bullet u = 1$ 
pour tout $u \in wU_P$. D'o le 

\begin{lemma}\label{intersection FUG et P}
$\FU \cap P= \FU^M U_P$.\end{lemma}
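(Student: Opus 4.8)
The inclusion $\FU^M U_P \subset \FU \cap P$ is immediate: clearly $\FU^M U_P \subset P$, and the computation preceding the statement of the lemma shows that if $w \in \FU^M$, $\lambda \in \Lambda_{F,w}^{M,\mathrm{opt}}$ and $u \in wU_P$, then choosing $\mu \in \check{X}(A_M,P)$ and an integer $m \gg 0$ with $n(\lambda + m\mu) \in \check{X}_F(G)$ for suitable $n$, one has $\lim_{t\rightarrow 0} t^{n(\lambda+m\mu)}\bullet u = 1$; hence every element of $\FU^M U_P$ is $(F,G)$-instable, i.e. lies in $\FU$. (Here one uses that $\lambda$ and $\mu$ commute so that $\lambda + m\mu$ makes sense as a co-character, and that $P_{\lambda+m\mu} \subset P$ by Lemma~\ref{co-caract�re et Levi}.)

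For the reverse inclusion $\FU \cap P \subset \FU^M U_P$, the plan is as follows. Take $u \in \FU \cap P$ and write $u = wv$ with $w \in M$ its image under the projection $P \rightarrow M$ and $v \in U_P$. Since $u \in \FU$, there is a co-character $\eta \in \check{X}_F(G)$ with $\lim_{t\rightarrow 0} t^\eta \bullet u = 1$; the point is to produce from $\eta$ a co-character in $\check{X}_F(M)$ witnessing that $w \in \FU^M$. First, one may $P(F)$-conjugate so that the $F$-parabolic $P_\eta$ contains a fixed $F$-sub-parabolic $P_0$ and arrange $\mathrm{Im}(\eta) \subset A_0$, reducing to $\eta$ in standard position (so $\langle \alpha,\eta\rangle \geq 0$ for $\alpha \in \Delta_0$); this does not move us out of $P$ since $P(F)$-conjugation preserves $P$ and $\FU \cap P$. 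Next, decompose $\eta$ along $\ag_{0,\mbb{Q}} = \ag_{0,\mbb{Q}}^P \oplus \ag_{P,\mbb{Q}}$, writing the $\ag_{P,\mbb{Q}}$-component as $\eta_P$. Replacing $\eta$ by $\eta - m'\mu_{M,P}$ for a large integer $m'$ (using that $\mu_{M,P} \in \check{X}(A_M,P)$ acts trivially on $U_P$ and kills nothing relevant on the $M$-part), one can hope to reach a co-character $\lambda \in \check{X}_F(M)$ — i.e. with trivial $\ag_{P,\mbb{Q}}^G$-component up to the $G$-center — such that $\lim_{t\rightarrow 0} t^\lambda \bullet w$ still exists and equals $1$. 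Concretely: since $\lim_{t\rightarrow 0} t^\eta \bullet u = 1$ and $u = wv$ with $w$ commuting appropriately with the $A_M$-part, the limit of $t^\lambda \bullet w$ computed in $M$ is $1$, which gives $w \in \FU^M$; and then $u = wv \in \FU^M U_P$.

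The main obstacle I expect is the bookkeeping in the second paragraph: showing that after the standard-position normalization and subtraction of a multiple of $\mu_{M,P}$, the resulting co-character $\lambda$ genuinely lies in $\check{X}_F(M)$ and still annihilates $w$ in the limit. This requires tracking the weights of $\eta$ on $\mathfrak{g} = \mathrm{Lie}(G^{A_0}) \oplus \bigoplus_\alpha \mathfrak{u}_\alpha$, distinguishing the roots lying in $M \cap U_0$ (where $\eta$ must already be non-negative because $\lim t^\eta \bullet u = 1$ forces non-negativity on every root occurring in the support of $u$, and $u$ has a component $w$ in the unipotent radical of an $F$-parabolic of $M$) from the roots in $U_P$ (where subtracting $\mu_{M,P}$ only increases the pairing). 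An alternative, possibly cleaner route — which I would pursue if the direct argument gets stuck — is to use the characterization of $\FU$ via the filtration groups $G_{\mu,1}$ together with Lemma~\ref{radical unipotent = lame} and the decomposition $G_{\eta,1} = (G_{\eta,1} \cap M) \ltimes (G_{\eta,1} \cap U_P)$ valid for $\eta$ in standard position with $P_\eta \supset P_0$, $M_\eta \subset$ (a Levi of $P$), reducing the statement to the analogous, already-available, decomposition inside $M$ and inside $U_P$. Either way the result is elementary once the weight combinatorics are organized.
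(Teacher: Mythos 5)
Your proof of the easy inclusion $\FU^M U_P\subset \FU\cap P$ is fine and is exactly the computation the paper alludes to. The hard inclusion, however, has a genuine gap at the very first step. You start from an arbitrary $\eta\in\check{X}_F(G)$ with $\lim_{t\to 0}t^\eta\bullet u=1$ and claim that a $P(F)$-conjugation of the pair $(u,\eta)$ puts $\eta$ in standard position ($P_\eta\supset P_0$, $\mathrm{Im}(\eta)\subset A_0$). But conjugating by $g\in P(F)$ replaces $\mathrm{Im}(\eta)$ by $g\,\mathrm{Im}(\eta)\,g^{-1}$, and asking this to lie in $A_0\subset P$ forces $\mathrm{Im}(\eta)\subset g^{-1}Pg=P$: since $P(F)$-conjugation fixes $P$, it can never improve the relative position of $\mathrm{Im}(\eta)$ (equivalently of $P_\eta$) with respect to $P$, and an arbitrary instability cocharacter of $u$ has no reason to take values in $P$. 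Conjugating by $G(F)$ instead would destroy the property $u\in P$. Everything that follows in your sketch — the decomposition of $\eta$ along $\ag_{0,\mbb{Q}}=\ag_{0,\mbb{Q}}^P\oplus\ag_{P,\mbb{Q}}$, the subtraction of $m'\mu_{M,P}$, and also your alternative route via $G_{\eta,1}=(G_{\eta,1}\cap M)\ltimes(G_{\eta,1}\cap U_P)$ — presupposes this unavailable normalization, so the argument does not get off the ground. (The $\mu_{M,P}$-subtraction is in any case beside the point: once $\mathrm{Im}(\eta)\subset P$, composing $\eta$ with the projection $P\to M$ already gives an $F$-cocharacter of $M$ driving the $M$-component of $u$ to $1$.)

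What is needed is to \emph{change} the instability cocharacter, not to conjugate the pair inside $P(F)$. The paper does this with the optimality theory: it picks a maximal $F$-split torus $T\subset {_FP_u}\cap P$ (two $F$-parabolics always share one), invokes \ref{HMrat1}\,(iii) to get the unique $\lambda'\in\check{X}(T)\cap\Lambda_{F,u}^{\mathrm{opt}}$, notes $u\in U_{\lambda'}\cap P\subset U_{\lambda'}^{M'}U_P\subset \FU^{M'}U_P$ for the $F$-Levi $M'$ of $P$ containing $T$, and finally passes from $M'$ to $M$ by writing $M'=u'Mu'^{-1}$ with $u'\in U_P$, whence $\FU^{M'}U_P=\FU^MU_P$ — this last reduction is also missing from your sketch, since your $M$ is the fixed Levi while the torus you can reach lives in a possibly different Levi of $P$. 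If you prefer to avoid optimality, the same effect is obtained by replacing $\eta$ by $g\bullet\eta$ for a suitable $g\in P_\eta(F)$ bringing $\mathrm{Im}(\eta)$ into a maximal $F$-split torus of $P_\eta\cap P$: this is legitimate precisely because $U_\eta$ is normal in $P_\eta$, so $U_{g\bullet\eta}=U_\eta\ni u$ and the limit property is preserved for the \emph{same} $u$; after that, the projection $P\to M'$ (or the semidirect decomposition of $U_\eta\cap P$) concludes as above. Either way, some such device replacing your $P(F)$-normalization is indispensable.
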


\begin{proof}
L'inclusion $\FU^MU_P\subset \FU \cap P$ est claire. Pour l'autre inclusion, soit $u\in \FU\cap P$. Soit $T$ un tore $F$-dŽployŽ maximal de 
$G$ contenu dans ${_FP_u}\cap P$ et soit $M'$ une $F$-composante de Levi de $P$ contenant $T$. 
Alors $\check{X}(T)\cap \Lambda_{F,x}^{\rm opt}= \{\lambda'\}$ et $$u\in U_{\lambda'} \cap P\subset U_{\lambda'}^{M'}U_P\subset \FU^{M'}U_P\ptf$$ 
Soit $u'\in U_P$ tel que $M'= u' M u'^{-1}$. On a $\FU^{M'}= u'\FU^M u'^{-1} \subset \FU^M U_P$. Par consŽquent $ \FU^{M'}U_P = \FU^MU_P$ et le 
lemme est dŽmontrŽ. 
\end{proof}

Si $Z\subset \FU^M$ est un sous-ensemble uniformŽment $(F,M)$-instable, \cad uniformŽment $F$-instable pour l'action de 
$M$ par conjugaison, alors $ZU_P\subset \FU\cap P$ est un sous-ensemble uniformŽment $(F,G)$-instable. Il lui est donc associŽ 
un sous-ensemble $\bs{\Lambda}_{F,ZU_P}$ de $\check{X}_F(G)_{\mbb{Q}}$.

\begin{proposition}\label{induction des F-lames}
Soit $Z\subset \FU^M$ un sous-ensemble uniformŽment $(F,M)$-instable.
\begin{enumerate}
\item[(i)] $\bs{\Lambda}_{F,ZU_P}= U_P(F)\bullet (\bs{\Lambda}^M_{F,Z} + \mu_{M,P})$ et ${_FP_{ZU_P}}={_FP_Z^M}\ltimes U_P$. 
\item[(ii)] ${_F\scrX_{ZU_P}}= {_F\scrX_Z^{\!M}}U_P$; en particulier $ZU_P$ est $(F,G)$-saturŽ si et seulement si $Z$ est 
$(F,M)$-saturŽ. 
\item[(iii)] ${_F\scrX_{ZU_P}}$ est le $F$-saturŽ d'une $F$-lame de $\FU$ si et seulement s'il existe un ŽlŽment $u\in ZU_P$ tel que 
$\bs{q}_F(u)=\bs{q}_F(ZU_P)$, i.e. si et seulement si  l'ensemble\index{YFZUP@${_F\scrY_{ZU_P}}$} 
$${_F\scrY_{ZU_P}}\bydef \{u\in \FU\,\vert\, \bs{\Lambda}_{F,u}=\bs{\Lambda}_{F,ZU_P}\}$$ est non vide; auquel cas ${_F\scrY_{ZU_P}}$ est la $F$-lame en question. 
\item[(iv)] Si $Z$ est le radical unipotent d'un $F$-sous-groupe parabolique minimal de $M$, alors ${_F\scrY_{ZU_P}}\neq \emptyset$. 
\end{enumerate}
\end{proposition}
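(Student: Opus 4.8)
Looking at this, I need to prove Proposition \ref{induction des F-lames}, which describes the parabolic induction of $F$-saturated subsets. Let me think about the structure.

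The plan is to reduce everything to the already-established Lemma \ref{radical unipotent = lame} (the case $Z = U_Q$ for a parabolic $Q$ of $M$) and Lemma \ref{lemma 2.8 de [H2]}, combined with careful bookkeeping of how the optimal virtual co-characters behave under the decomposition $\ag_{0,\mbb{Q}} = \ag_{P_0,\mbb{Q}}^P \oplus \ag_{P,\mbb{Q}}$. The key geometric input is Lemma \ref{co-caract�re et Levi}: adding a large multiple of $\mu_{M,P}$ to a co-character $\lambda \in \check{X}_F(M)$ produces $P_\lambda^M U_P$, which lives in $P$. This is what lets me ``glue'' an $M$-optimal co-character for $Z$ together with $\mu_{M,P}$ to get a $G$-optimal co-character for $ZU_P$.

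First I would prove (i). One may assume $P$ is standard and that $M = M_P$ is the semi-standard Levi, so $A_P \subset A_0$ and $\mu_{M,P} = \mu_P \in \check{X}(A_0)_{\mbb{Q}}$. For the inclusion $\supset$: if $\lambda \in \Lambda_{F,Z}^{M,\mathrm{opt}}$ and $n$ clears denominators, then by Lemma \ref{co-caract�re et Levi} the co-character $n\lambda + mn\mu_P$ (for $m$ large) has parabolic $P_\lambda^M U_P \subset P$, and since $\lim_{t\to 0} t^{n\lambda}\bullet w = 1$ for $w$ in the $M$-saturate of $Z$ and $\mu_P$ kills $U_P$, one gets $\lim t^{n(\lambda + m\mu_P)}\bullet u = 1$ for all $u \in ZU_P$; the point is that $\lambda + m\mu_P$, after normalizing, lands in $\bs{\Lambda}_{F,ZU_P}$ because the norm is compatible with the orthogonal decomposition $\ag_{0,\mbb{Q}}^P \oplus \ag_{P,\mbb{Q}}^G \oplus \ag_{G,\mbb{Q}}$ (the footnote in \ref{tmu_P=mu_P} is exactly this), so minimizing the norm over $ZU_P$ amounts to independently minimizing the $\ag_{0,\mbb{Q}}^P$-component (which is governed by $Z$ inside $M$) and the $\ag_{P,\mbb{Q}}^G$-component (which is forced to be $\mu_P$ as in the proof of \ref{radical unipotent = lame}). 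For the reverse inclusion, given $\mu' \in \bs{\Lambda}_{F,ZU_P}$, the normalizer argument shows $P \subset P_{\mu'}$ (anything in $G(F)$ normalizing $ZU_P$ normalizes $P_{\mu'}$, and the normalizer of $ZU_P$ is contained in $P(F)$ once $U_P$ appears), hence $A_0$ is $(F,ZU_P)$-optimal and $\mu' \in \check{X}(A_0)_{\mbb{Q}}$; writing $\mu' = \mu'_P + \mu'^P$ in the orthogonal decomposition, minimality forces $\mu'_P = \mu_P$ and $\mu'^P \in \bs{\Lambda}_{F,Z}^M$. Then ${_FP_{ZU_P}} = P_{\mu'} = P_{\mu'^P}^M \ltimes U_P = {_FP_Z^M} \ltimes U_P$ (using that $P_{\mu'^P}$ inside $G$ meets $M$ in $P_{\mu'^P}^M$ and contains $U_P$ since $\mu_P$ does). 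I will then pass from the standard case to the general case by conjugating by an element of $U_P(F)$, which acts simply transitively on both the Levi components of $P$ and on $\{\mu_{M,P}\}$.

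For (ii): with $\mu = \mu'^P + \mu_P \in \bs{\Lambda}_{F,ZU_P}$ and $\lambda = n\mu'^P \in \Lambda_{F,Z}^{M,\mathrm{opt}}$, I have ${_F\scrX_{ZU_P}} = G_{\mu,1}$. Since $G_{\mu,1}$ is the subgroup generated by root groups $U_{(\alpha)}$ with $\langle \alpha, \mu\rangle \geq 1$, and $\langle\alpha,\mu_P\rangle \geq 1$ for every root $\alpha$ of $A_M$ in $U_P$ while for roots $\alpha$ of $A_M$ trivial on $A_M$ (i.e.\ roots in $M$) one has $\langle\alpha,\mu_P\rangle = 0$, the group $G_{\mu,1}$ decomposes as $M_{\mu'^P,1} \cdot U_P = {_F\scrX_Z^{\!M}} \cdot U_P$ (a semidirect product). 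The equivalence ``$ZU_P$ is $(F,G)$-saturated iff $Z$ is $(F,M)$-saturated'' then follows since $ZU_P \subset {_F\scrX_{ZU_P}} = {_F\scrX_Z^{\!M}}U_P$, and $ZU_P = {_F\scrX_Z^{\!M}}U_P$ iff $Z = {_F\scrX_Z^{\!M}}$. For (iii), I apply Lemma \ref{lemma 2.8 de [H2]}\,(iii) verbatim: for any uniformly $F$-instable $Z' \subset \FU$ one has $\bs{q}_F(u) \leq \bs{q}_F(Z')$ for $u \in Z'$ with equality iff $\bs{\Lambda}_{F,u} = \bs{\Lambda}_{F,Z'}$ iff $u$ lies in the $F$-leaf whose saturate is ${_F\scrX_{Z'}}$; applying this with $Z' = ZU_P$ gives the statement, with ${_F\scrY_{ZU_P}}$ being that leaf when it is nonempty. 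Finally (iv): if $Z = U_Q$ for $Q$ a minimal $F$-parabolic of $M$, then $QU_P$ is a minimal $F$-parabolic of $G$ (with $U_{QU_P} = U_Q U_P = ZU_P$), and by (ii) ${_F\scrX_{ZU_P}} = U_Q U_P$ is the unipotent radical of a minimal $F$-parabolic, so by Lemma \ref{radical unipotent = lame}\,(iv) ${_F\scrY_{ZU_P}} = {_F\scrY_{U_{QU_P}}} \neq \emptyset$.

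The main obstacle is part (i), specifically verifying cleanly that the norm-minimization over $ZU_P$ really does split as the independent minimization over $Z$ (inside $M$) and over $U_P$ (yielding $\mu_P$). This requires the compatibility of the chosen $G$-invariant $F$-norm with the orthogonal decomposition $\ag_{P_0,\mbb{Q}} = \ag_{P_0,\mbb{Q}}^P \oplus \ag_{P,\mbb{Q}}^G \oplus \ag_{G,\mbb{Q}}$ — which is granted by the construction recalled in the footnote to \ref{r�criture de mu_P} — together with the observation that for $u = wv$ with $w \in M$, $v \in U_P$, the relevant weight set $\ES{R}'_{A_0}(u)$ splits as $\ES{R}'_{A_0}(w) \sqcup \ES{R}'_{A_0}(v)$ with the two pieces supported on complementary subspaces, so $m_{u}(\eta) \geq 1$ for $\eta = \eta^P + \eta_P$ iff $m^M_w(\eta^P) \geq 1$ and $m_v(\eta_P) \geq 1$ separately. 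Once that decomposition is in hand, everything else is the bookkeeping of the preceding lemmas.
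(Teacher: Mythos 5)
Your overall route is the same as the paper's: reduce to $P$ standard, $M=M_P$, $Z$ in standard position; identify the element $\xi\in\bs{\Lambda}^M_{F,Z}\cap\check{X}(A_0)_{\mbb{Q}}$, argue that $\eta=\xi+\mu_P$ is the standard-position element of $\bs{\Lambda}_{F,ZU_P}$, deduce (ii) by computing $G_{\eta,1}$ root by root, obtain (iii) from \ref{lemma 2.8 de [H2]}\,(iii), and reduce (iv) to \ref{radical unipotent = lame}\,(iv) via $Z=U_0^M\Rightarrow ZU_P=U_0$. Structurally this matches the paper's proof step for step.

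The problem is the justification you give for the key step, namely that the norm minimization over $ZU_P$ decouples. You assert that for $u=wv$ with $w\in M$, $v\in U_P$, the weight sets of $w$ and $v$ are ``supported on complementary subspaces'', so that $m_u(\eta^P+\eta_P)\geq 1$ if and only if $m^M_w(\eta^P)\geq 1$ and $m_v(\eta_P)\geq 1$ separately. This is false: a root $\alpha$ of $A_0$ in $U_P$ has in general a nonzero component $\alpha^P$ on $(\ag_{0,\mbb{Q}}^P)^*$, so $\langle\alpha,\xi+\mu_P\rangle=\langle\alpha^P,\xi\rangle+\langle\alpha_P,\mu_P\rangle$, and the cross term $\langle\alpha^P,\xi\rangle$ can be negative. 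Concretely, take $G=\mathrm{SL}_3$, $P$ the standard $(2,1)$-parabolic, and $Z$ the regular unipotent $F$-lame of $M$: then $\xi=(\tfrac12,-\tfrac12,0)$, $\mu_P=(\tfrac13,\tfrac13,-\tfrac23)$, and $\langle e_2-e_3,\xi+\mu_P\rangle=\tfrac12$, while $ZU_P$ contains elements with nontrivial component on the root subgroup of $e_2-e_3$; hence $m_{ZU_P}(\xi+\mu_P)=\tfrac12<1$, so $\xi+\mu_P$ does not even satisfy the constraint defining $\bs{\Lambda}_{F,ZU_P}$, and in particular the claimed equality $G_{\eta,1}=M_{\xi,1}U_P$ (your proof of (ii)) cannot be read off from it. Your auxiliary claim in the reverse inclusion of (i), that the normalizer of $ZU_P$ in $G(F)$ is contained in $P(F)$ for arbitrary uniformly $(F,M)$-instable $Z$, is also unjustified. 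To be fair, this is exactly the point where the paper's own proof is tersest (it only invokes ``par minimalit\'e'' to place $\xi+\mu_P$ in $\bs{\Lambda}_{F,ZU_P}$), and your attempt to make that step precise exposes the real difficulty: one needs an actual argument controlling $\langle\alpha^P,\xi\rangle$ for the roots of $A_0$ in $U_P$ (or a different characterization of $\bs{\Lambda}_{F,ZU_P}$); as written, the decoupling you rely on is a genuine gap.
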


\begin{proof} 
On peut supposer $P$ standard et $M=M_P$. On peut aussi supposer que $Z$ est en position standard, 
\cad que le $F$-sous-groupe parabolique ${_FP_Z^M}$ de $M$ contient $P_0^M= P_0\cap M$. Alors le tore $A_0$ est $(F,Z)$-optimal relativement ˆ l'action de $M$ par conjugaison. 

Prouvons (i). Soit $\xi$ l'unique ŽlŽment de $\bs{\Lambda}^M_{F,Z} \cap \check{X}(A_0)_{\mbb{Q}}$. On l'identifie 
ˆ un ŽlŽment de $\mathfrak{a}_{P_0,\mbb{Q}}$ via l'isomorphisme 
naturel $\check{X}(A_0)_{\mbb{Q}} \simeq \mathfrak{a}_{P_0,\mbb{Q}}$. Par minimalitŽ, $\xi$ appartient ˆ $\mathfrak{a}_{P_0,\mbb{Q}}^P$: $\xi_G=0$ et 
$\langle \alpha , \xi \rangle =0$ pour tout $\alpha \in \Delta_P$. D'autre part puisque ${_FP_Z^M}\supset P_0^M$, on a $\langle \alpha , \xi \rangle \geq 0$ pour toute racine 
$\alpha \in \Delta_0^P$. D'aprs la preuve de \ref{co-caractre et Levi}, 
cela assure que $P_{\xi + \mu_{P}} = P_\xi^M \ltimes U_P$ o (rappel) on a identifiŽ $\mu_P\in \ag_{P,\mbb{Q}}^G$ ˆ $\mu_{M,P}$. 
\`A nouveau par minimalitŽ, l'ŽlŽment $\eta=\xi + \mu_P$ appartient ˆ $\bs{\Lambda}_{F,Z U_P}$; 
c'est donc l'unique ŽlŽment de $\bs{\Lambda}_{F,ZU_P}\cap \check{X}(A_0)_{\mbb{Q}}$. 
Puisque $\bs{\Lambda}_{F,ZU_P}$ est formŽ d'une unique orbite sous 
$U_{\lambda}(F)= U_\xi^M(F) \ltimes U_P(F)$, on obtient 
que $$\bs{\Lambda}_{F, ZU_P}= U_\eta (F)\bullet\eta = 
U_P(F) \bullet((U_\xi^M(F)\bullet \xi) + \mu_P)\ptf$$ 
Or $U_\xi^M(F)\bullet \xi = \bs{\Lambda}_{F,Z}^M$. D'o le point (i). 

D'aprs (i), on a ${_F\scrX_{ZU_P}}= G_{\eta,1}$. Pour toute racine $\alpha\in \ES{R}$, en Žcrivant $\alpha=\alpha^P + \alpha_P$ 
avec $\alpha^P\in \ag_{0,\mbb{Q}}^P$ et $\alpha_P\;(=\alpha\vert_{\ag_{P,\mbb{Q}}})\in \ag_{P,\mbb{Q}}$, on a 
$$\langle \alpha, \eta\rangle = \langle \alpha^P\!,\xi \rangle + \langle \alpha_P,\mu_P\rangle\ptf$$ 
On en dŽduit que $G_{\eta,1}= M_{\xi,1} G_{\mu_P,1}$ avec $M_{\xi,1}\;(= G_{\xi,1} \cap M)= {_F\scrX_Z^{\!M}}$ et $G_{\mu_P,1}=U_P$. 
D'o le point (ii). 

Quant au point (iii), pour $u\in ZU_P$, on a $\bs{q}_F(u)\leq \| \eta \| \;(= \bs{q}_F(ZU_P))$ 
avec ŽgalitŽ si et seulement si $\bs{\Lambda}_{F,u}=\bs{\Lambda}_{F,ZU_P}$ (\ref{lemma 2.8 de [H2]}\,(iii)). 

Le point (iv) est une consŽquence de \ref{radical unipotent = lame}\,(iv): si $Z=U_0^M$, on a $ZU_P=U_0$. 
\end{proof} 

\begin{lemma}
L'application $Z\mapsto ZU_P$ induit une bijection entre:
\begin{itemize}
\item les sous-ensembles uniformŽment $(F,M)$-instables de $\FU^M$;
\item les sous-ensembles uniformŽment $F$-instables $Z'$ de $\FU$ vŽrifiant $$Z'U_P=Z'\subset P\ptf$$
\end{itemize}
La bijection rŽciproque est donnŽe par $Z'\mapsto Z'\cap M$.
\end{lemma}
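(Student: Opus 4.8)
Le plan est de v\'erifier que les deux applications $Z\mapsto ZU_P$ et $Z'\mapsto Z'\cap M$ sont bien d\'efinies et r\'eciproques l'une de l'autre. Que la premi\`ere arrive dans l'ensemble cible est imm\'ediat: si $Z\subset \FU^M$ est uniform\'ement $(F,M)$-instable, on a rappel\'e juste avant la proposition \ref{induction des F-lames} que $ZU_P$ est uniform\'ement $(F,G)$-instable, et bien s\^ur $ZU_P\subset P$ et $(ZU_P)U_P= ZU_P$ puisque $U_P$ est un groupe. Pour les deux compos\'es, on observe d'abord que tout \'el\'ement de l'ensemble cible est contenu dans $\FU^MU_P$: pour $Z'$ dans cet ensemble, cela r\'esulte du lemme \ref{intersection FUG et P}, car $Z'\subset\FU\cap P=\FU^MU_P$. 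Comme $P= M\ltimes U_P$, un \'el\'ement $wu$ de $\FU^MU_P$ appartient \`a $M$ si et seulement si $u=1$, d'o\`u l'\'egalit\'e $(ZU_P)\cap M= Z$ et aussi $(\FU^MU_P)\cap M=\FU^M$; et en \'ecrivant chaque $z\in Z'$ de mani\`ere unique sous la forme $z= wu$ avec $w\in\FU^M$ et $u\in U_P$, l'\'egalit\'e $Z'U_P= Z'$ donne $w= zu^{-1}\in Z'\cap M$, d'o\`u $Z'\subset(Z'\cap M)U_P$; l'inclusion inverse \'etant claire, on a $(Z'\cap M)U_P= Z'$.

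Le c\oe ur de la preuve sera de montrer que $Z'\mapsto Z'\cap M$ arrive dans l'ensemble source, \cad que $W:= Z'\cap M$ est uniform\'ement $(F,M)$-instable. On a d\'ej\`a vu que $W\subset\FU^M$, et $W\neq\emptyset$ car $Z'\neq\emptyset$. On choisit, gr\^ace \`a \ref{HMrat1}\,(i), un co-caract\`ere $\lambda\in\Lambda_{F,Z'}^{\mathrm{opt}}$, de sorte que $\lambda\in\check{X}_F(G)$, $P_\lambda= {_FP_{Z'}}$, et $\lim_{t\rightarrow 0}t^\lambda\bullet v= 1$ pour tout $v\in Z'$. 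L'\'etape cl\'e est l'inclusion $U_P\subset U_\lambda$: pour $u\in U_P$ et $v\in Z'$ on a $vu\in Z'U_P= Z'$, donc les morphismes $\phi_{v,\lambda}^+$ et $\phi_{vu,\lambda}^+$ de $\mbb{G}_\mathrm{a}$ dans $G$ sont bien d\'efinis et valent $1$ en $0$; le morphisme $s\mapsto \phi_{v,\lambda}^+(s)^{-1}\phi_{vu,\lambda}^+(s)$ de $\mbb{G}_\mathrm{a}$ dans $G$ prolonge $t\mapsto t^\lambda\bullet u= (t^\lambda\bullet v)^{-1}\bigl(t^\lambda\bullet(vu)\bigr)$ et prend en $0$ la valeur $1$; donc $u\in U_\lambda$.

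De l\`a, puisque $U_P$ et $U_\lambda$ sont les radicaux unipotents de $P$ et de $P_\lambda$ et que $U_P\subset U_\lambda$, on a $P_\lambda\subset P$ (propri\'et\'e standard des sous-groupes paraboliques: en choisissant un tore maximal $T$ de $P\cap P_\lambda$, l'inclusion des radicaux unipotents \'equivaut \`a une inclusion des ensembles de racines correspondants relativement \`a $T$, d'o\`u l'inclusion oppos\'ee des paraboliques). Comme $U_P\subset U_\lambda\subset P_\lambda\subset P= M\ltimes U_P$, l'image $Q$ de $P_\lambda$ par la projection $P\rightarrow M$ est un sous-groupe parabolique de $M$ d\'efini sur $F$, avec $P_\lambda= Q\ltimes U_P$; et toute $F$-composante de Levi $L$ de $Q$ est alors une $F$-composante de Levi de $P_\lambda$ contenue dans $M$. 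D'apr\`es \ref{HMrat1}\,(iii) appliqu\'e \`a $L$, il existe $\lambda'\in\Lambda_{F,Z'}^{\mathrm{opt}}$ tel que $M_{\lambda'}= L$; alors $\mathrm{Im}(\lambda')\subset L\subset M$, donc $\lambda'\in\check{X}_F(M)$, et $\lim_{t\rightarrow 0}t^{\lambda'}\bullet w= 1$ (pour l'action de $M$ sur lui-m\^eme par conjugaison) pour tout $w\in W\subset Z'$. Cela prouve que $W$ est uniform\'ement $(F,M)$-instable, et ach\`eve la preuve. L'obstacle principal est pr\'ecis\'ement ce dernier point --- extraire de l'instabilit\'e uniforme de $Z'$ pour $G$ un co-caract\`ere optimal qui se factorise effectivement par $M$ ---, o\`u l'inclusion $U_P\subset U_\lambda$, la dualit\'e entre paraboliques et radicaux unipotents, et l'unicit\'e dans \ref{HMrat1}\,(iii) interviennent de concert.
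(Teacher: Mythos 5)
Your proof is correct, and it takes a genuinely different route from the paper's. The paper reduces to $P$ standard with $M=M_P$, notes as you do that $Z'=(Z'\cap M)U_P$, and then works entirely in standard position: after arranging that $A_0$ is $(F,Z')$-optimal, it takes the unique optimal virtual cocharacter $\eta\in \check{X}(A_0)_{\mbb{Q}}\cap \bs{\Lambda}_{F,Z'}$, decomposes it as $\eta=\eta^P+\eta_P$ along $\ag_{0,\mbb{Q}}=\ag_{0,\mbb{Q}}^P\oplus\ag_{P,\mbb{Q}}$, identifies $\eta_P=\mu_P$ by minimality of the norm, and uses a suitable integral multiple of the component $\eta^P$ as the $F$-cocharacter of $M$ contracting $Z'\cap M$ to $1$. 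You instead extract the group-theoretic information first: the limit computation showing $U_P\subset U_\lambda$ for any $\lambda\in\Lambda_{F,Z'}^{\mathrm{opt}}$ (a direct consequence of $Z'U_P=Z'$), hence ${_FP_{Z'}}\subset P$ and ${_FP_{Z'}}=({_FP_{Z'}}\cap M)\ltimes U_P$, and then the simple transitivity statement \ref{HMrat1}\,(iii) to select an optimal cocharacter whose Levi lies in $M$, which therefore factors through $M$ and witnesses the uniform $(F,M)$-instability of $Z'\cap M$ (optimality of this cocharacter for the $M$-action is not needed, only its existence). The trade-off: the paper's coordinate computation yields the extra fact that the $\ag_P$-component of the optimal virtual cocharacter of $Z'$ is exactly $\mu_P$, information that matches and is reused in \ref{induction des F-lames}; your argument is coordinate-free, dispenses with the reduction to standard position (in particular with the conjugation step used there to make $A_0$ optimal) and with the integrality adjustment on $k\eta^P$, and gives as a byproduct the structural statement ${_FP_{Z'}}=Q\ltimes U_P$ with $Q$ an $F$-parabolic subgroup of $M$, in agreement with \ref{induction des F-lames}\,(i); it relies instead on the standard dictionary between parabolics and their unipotent radicals and on the existence of $F$-Levi components, together with \ref{intersection FUG et P} for the elementary part of the statement.
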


\begin{proof}
On peut supposer $P$ standard et $M=M_P$. Si $Z'$ est un sous-ensemble uniformŽnent $F$-instable de $\FU$ vŽrifiant $Z'U_P= Z'\subset P$, on a l'ŽgalitŽ 
$$Z'=(Z'\cap M)U_P\ptf$$ Il suffit donc de prouver que pour un tel $Z'$, $Z'\cap M$ est uniformŽment $(F,M)$-instable. 
Soit $S$ un tore $F$-dŽployŽ maximal de $G$ qui soit contenu 
dans ${_FP_{Z}}\cap P_0$. Il existe un $u'\in U_0(F)$ tel que $u'Su'^{-1}= A_0$. Puisque $Z'U_P=Z'$, on a $u'Z'u'^{-1}=Z'$. Par consŽquent le tore $A_0$ est 
$(F,Z')$-optimal. Soit $\eta$ l'unique ŽlŽment de $\check{X}(A_0)_{\mbb{Q}} \cap \bs{\Lambda}_{F,Z'}$. Par minimalitŽ, $\eta$ appartient ˆ $\ag_{0,\mbb{Q}}^G$. 
\'Ecrivons $\eta = \eta^P + \eta_P$ avec $\eta^P\in \ag_{0,\mbb{Q}}^P$ et $\eta_P\in \ag_{P,\mbb{Q}}^G$. On en dŽduit qu'il existe un entier $k\geq 1$ tel que 
$k\eta_P$ soit dans $\check{X}(A_P,P)$; et par minimalitŽ on a forcŽment $\eta_P= \mu_P$. Si de plus $k\eta^P\in \check{X}(A_0)$ (en identifiant $\eta^P$ ˆ un ŽlŽment 
de $ \check{X}(A_0)_{\mbb{Q}}\simeq \ag_{0,\mbb{Q}}$), pour tout $w\in Z'\cap M$ et tout $v\in U_P$, 
on a $$\lim_{t\rightarrow 0} t^{k\eta}\bullet wv = \lim_{t\rightarrow 0} t^{k\eta^P}\bullet w=1\ptf$$ 
Par consŽquent $Z'\cap M$ est uniformŽment $(F,M)$-instable.
\end{proof}

Pour $Z\subset \FU^M$ un sous-ensemble uniformŽment $(F,M)$-instable, on note ${_Fi^P}(Z)$ le sous-ensemble $F$-saturŽ de $\FU$ dŽfini par\index{iFPZ@${_Fi^P}(Z)$} 
$${_Fi^P}(Z)= {_F\scrX_Z^{\!M}}U_P={_F\scrX_{ZU_P}}\ptf$$ 
Il ne dŽpend que du $(F,M)$-saturŽ ${_F\scrX_Z^{\!M}}$ de $Z$: on a $${_Fi^P}(Z)={_Fi^P}({_F\scrX_Z^{\!M}})\ptf$$ 
On pose aussi\index{iFPZrat@$i_F^P(Z)$} $$i_F^P(Z)= {_Fi^P}(Z)(F)= \scrX_{F,Z}^MU_P(F)\quad \hbox{avec} \quad \scrX^M_{F,Z}=M(F)\cap {_F\scrX^M_Z}\ptf$$ 
Si $F$ est infini, puisque ${_Fi^P}(Z)$ est $F$-isomorphe ˆ $\mbb{A}_F^{n}$, l'ensemble $i_F^P(Z)$ est dense dans ${_Fi^P}(Z)$; en particulier il est non vide. 

\subsection{Induction parabolique des $F$-strates}\label{induction parabolique des F-strates} 
Pour les orbites gŽomŽtriques unipotentes, on a une notion d'induction parabolique due ˆ 
Lusztig et Spaltenstein \cite{LS}. On dŽfinit dans cette sous-section une variante de cette construction pour les $F$-strates unipotentes 
(en supposant $F$ infini et l'hypothse \ref{hyp bonnes F-strates} vŽrifiŽe pour $G$). 

\vskip2mm
\P\hskip1mm\textit{L'application $Z\mapsto {_FI}_{\!P}^G(Z)$. ---} 
Soient $P$ un $F$-sous-groupe parabolique de $G$ et $M$ une composante de Levi de $P$ dŽfinie sur $F$. Pour tout sous-ensemble uniformŽment $(F,M)$-instable $Z$ de $\FU^M$, 
le sous-ensemble $F$-saturŽ ${_Fi^P}(Z)={_F\scrX_{ZU_P}}$ de $\FU$ est un $F$-sous-groupe unipotent fermŽ connexe de $P$; on a 
$${_Fi^P}(Z)= G_{\mu,1}\quad\hbox{pour un (i.e. pour tout)}\quad \mu\in \bs{\Lambda}_{F,ZU_P}\ptf$$
En particulier ${_Fi^P}(Z)$ est une $F$-variŽtŽ irrŽductible. Puisque $\UU= {_{\smash{\overline{F}}}\UU}$ est rŽunion finie de $\overline{F}$-strates et que ces $\overline{F}$-strates sont 
localement fermŽes dans $G$, il en existe une et une seule $\bsfrY_{u}$ (avec $u\in \UU$ que l'on peut choisir dans ${_Fi^P}(Z)$) telle que l'intersection $\bsfrY_{u}\cap {_Fi^P}(Z)$ soit dense dans ${_Fi^P}(Z)$\footnote{Il s'agit lˆ 
d'une simple variante de la construction de \ref{comparaison avec les strates gŽo}: pour $P=G$ et $Z=\{w\}$, on a ${_Fi^G}(w)={_F\scrX_{w}}$ et $\bsfrY_{\mathrm{g\acute{e}o}}(w,G)$ 
est la $\overline{F}$-strate de $\UU$ notŽe $\bsfrY_{\mathrm{g\acute{e}o}}(w)= \bsfrY_{\mathrm{g\acute{e}o}}({_F\scrX_w})$ dans loc.~cit.} ; 
on la note $$\bsfrY_{\mathrm{g\acute{e}o}}({_Fi^P(Z)})= \bsfrY_{u}\ptf$$ 
Le corps $F$ Žtant fixŽ dans toute cette sous-section \ref{induction parabolique des F-strates} (ˆ l'exception du paragraphe sur la descente sŽparable), on la notera aussi\index{YbsgeoZP@$\bsfrY_{\mathrm{g\acute{e}o}}(Z,P)$}
$$\bsfrY_{\mathrm{g\acute{e}o}}(Z,P)=\bsfrY_{\mathrm{g\acute{e}o}}({_Fi^P(Z)}) \ptf$$
Observons que l'intersection 
$\bsfrY_{\mathrm{g\acute{e}o}}(Z,P) \cap {_Fi^P}(Z)$ est ouverte dans ${_Fi^P}(Z)$. 

On aimerait associer ˆ cette strate unipotente gŽomŽtrique $\bsfrY_{\mathrm{g\acute{e}o}}(Z,P)$ une $F$-strate unipotente. 
On dispose pour cela du lemme \ref{bonnes F-strates}: si pour un (i.e. pour tout) $u\in \bsfrY_{\mathrm{g\acute{e}o}}(Z,P)$, on a $\check{X}_F(G)\cap G\bullet \bs{\Lambda}_{u}\neq \emptyset$, alors 
il existe une unique $F$-strate ${_F\bsfrY_{u'}}$ de $\FU$ (avec $u'\in \FU$) telle que $\bsfrY_{\mathrm{g\acute{e}o}}(Z,P)= \bsfrY_{\mathrm{g\acute{e}o}}(u')\;(=\bsfrY_{\mathrm{g\acute{e}o}}({_F\scrX_{u'}}))$. 

\begin{lemma}
Supposons que le corps $F$ soit infini et que l'hypothse \ref{hyp bonnes F-strates} soit vŽrifiŽe pour $G$. Alors pour tout sous-ensemble 
uniformŽment $(F,M)$-instable $Z$ de $\FU^M$, l'intersection $\bsfrY_{\mathrm{g\acute{e}o}}(Z,P)\cap {i_F^P}(Z)$ est non vide et on a  
$$\bsfrY_{\mathrm{g\acute{e}o}}(Z,P) = \bsfrY_{\mathrm{g\acute{e}o}}(u)\quad \hbox{pour tout} \quad u\in \UF\cap  \bsfrY_{\mathrm{g\acute{e}o}}(Z,P)\ptf$$
\end{lemma}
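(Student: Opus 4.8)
The statement asserts two things under the hypotheses ``$F$ infini'' and ``hypothèse \ref{hyp bonnes F-strates} vérifiée pour $G$'': first, that the open set $\bsfrY_{\mathrm{g\acute{e}o}}(Z,P)\cap {i_F^P}(Z)$ of $F$-rational points is non vide; second, that for every $u\in \UF\cap \bsfrY_{\mathrm{g\acute{e}o}}(Z,P)$ one has $\bsfrY_{\mathrm{g\acute{e}o}}(Z,P)=\bsfrY_{\mathrm{g\acute{e}o}}(u)$, where the right-hand side is the notation $\bsfrY_{\mathrm{g\acute{e}o}}({_F\scrX_u})$ from \ref{comparaison avec les strates g�o}. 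The plan is to get the first assertion from the affine-space structure of the $F$-saturated set ${_Fi^P}(Z)$ together with density of rational points, and to get the second from the uniqueness clause in \ref{bonnes F-strates} after checking its applicability.

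First I would recall from \ref{induction des F-lames}\,(ii) that ${_Fi^P}(Z)={_F\scrX_Z^{\!M}}U_P={_F\scrX_{ZU_P}}$ is the $F$-saturated set attached to the uniformly $(F,G)$-instable subset $ZU_P$, hence (by \ref{satur� espace affine}, whose hypothesis \ref{hyp reg} holds here since $e_G=1$ is regular) it is $F$-isomorphic to an affine space $\mbb{A}_F^m$; in particular it is an irreducible $F$-variety whose set of $F$-points ${_Fi^P}(Z)(F)={i_F^P}(Z)$ is Zariski-dense, as already observed at the end of \ref{IP des ensembles FS}. Since $F$ is infinite and $\bsfrY_{\mathrm{g\acute{e}o}}(Z,P)\cap {_Fi^P}(Z)$ is by construction a non-empty Zariski-open subset of the irreducible variety ${_Fi^P}(Z)$, it must meet the dense subset ${i_F^P}(Z)$; this gives the first assertion and in particular produces an element $u\in \UF\cap\bsfrY_{\mathrm{g\acute{e}o}}(Z,P)$ lying in ${i_F^P}(Z)$.

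Next I would prove the equality $\bsfrY_{\mathrm{g\acute{e}o}}(Z,P)=\bsfrY_{\mathrm{g\acute{e}o}}(u)$ for an arbitrary $u\in \UF\cap\bsfrY_{\mathrm{g\acute{e}o}}(Z,P)$. Fix such a $u$ and let $v\in \FU\cap\bsfrY_{\mathrm{g\acute{e}o}}(Z,P)$ be a point in the $F$-saturated set ${_Fi^P}(Z)$ for which the dense open $\overline F$-stratum is realised, so $\bsfrY_{\mathrm{g\acute{e}o}}(Z,P)=\bsfrY_v$. Because $u\in\UF$, hypothesis \ref{hyp bonnes F-strates} for $G$ gives $\check{X}_F(G)_{\mbb{Q}}\cap\bs{\Lambda}_u\neq\emptyset$; since $u$ and $v$ lie in the same $\overline F$-stratum $\bsfrY_v$, there is $g\in G$ with $g\bullet\bs{\Lambda}_u=\bs{\Lambda}_v$, so a fortiori $\check{X}_F(G)_{\mbb{Q}}\cap G\bullet\bs{\Lambda}_v\neq\emptyset$. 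We may therefore apply \ref{bonnes F-strates} to $v\in\ES{N}$ (here $\ES{N}=\UU$): there is a \emph{unique} $F$-strate ${_F\bsfrY_{v_1}}$ of $\FU$ with $\check{X}_F(G)_{\mbb{Q}}\cap\bs{\Lambda}_{v_1}\neq\emptyset$ and $\bsfrY_{\mathrm{g\acute{e}o}}(v_1)=\bsfrY_v=\bsfrY_{\mathrm{g\acute{e}o}}(Z,P)$. It remains to identify $u$ with such a $v_1$: indeed $u\in\UF$ satisfies $\check{X}_F(G)_{\mbb{Q}}\cap\bs{\Lambda}_u\neq\emptyset$, and $\bsfrY_{\mathrm{g\acute{e}o}}(u)=\bsfrY_{\mathrm{g\acute{e}o}}({_F\scrX_u})$; since $u\in\bsfrY_{\mathrm{g\acute{e}o}}(Z,P)=\bsfrY_v$ and $\check{X}_F(G)_{\mbb{Q}}\cap\bs{\Lambda}_u\neq\emptyset$, \ref{strate g�o associ�e � F-strate}\,(i) shows $\bsfrY_{\mathrm{g\acute{e}o}}(u)=\bsfrY_u=\bsfrY_v$. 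Hence $\bsfrY_{\mathrm{g\acute{e}o}}(u)=\bsfrY_{\mathrm{g\acute{e}o}}(Z,P)$, as desired, and the value does not depend on the choice of $u$.

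The main obstacle I anticipate is purely bookkeeping: one must be careful that the element realising the dense open $\overline F$-stratum inside ${_Fi^P}(Z)$ may only be taken in $\FU$ (not a priori in $\UF$), so the passage from $v$ to a rational $u$ really does use that $F$ is infinite and that ${_Fi^P}(Z)\simeq_F\mbb{A}_F^m$, while the passage back uses hypothesis \ref{hyp bonnes F-strates} through \ref{strate g�o associ�e � F-strate}\,(i). Once these two ingredients are in place the argument is a direct combination of \ref{satur� espace affine}, \ref{bonnes F-strates} and \ref{strate g�o associ�e � F-strate}, with no delicate estimates involved.
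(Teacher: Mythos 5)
Your proof is correct and follows essentially the same route as the paper: density of $F$-points in the $F$-affine space ${_Fi^P}(Z)$ (via \ref{satur� espace affine}) gives the non-emptiness, and hypothesis \ref{hyp bonnes F-strates} applied to $u\in\UF$ combined with \ref{strate g�o associ�e � F-strate}\,(i)/\ref{bonnes F-strates} gives $\bsfrY_{\mathrm{g\acute{e}o}}(u)=\bsfrY_u=\bsfrY_{\mathrm{g\acute{e}o}}(Z,P)$, which is exactly the paper's argument; the detour through the uniqueness statement of \ref{bonnes F-strates} is harmless but not needed once \ref{strate g�o associ�e � F-strate}\,(i) is invoked.
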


\begin{proof}
La $F$-variŽtŽ ${_Fi^P}(Z)= {_F\scrX_Z^M}U_P$ est un $F$-espace affine, par consŽquent $i_F^P(Z)={_Fi^P}(Z)(F)$ est dense dans 
${_Fi^P}(Z)$. On en dŽduit que  l'intersection $\bsfrY_{\mathrm{g\acute{e}o}}(Z,P)\cap {i_F^P}(Z)$ est non vide.
L'hypothse 
\ref{hyp bonnes F-strates} assure que pour tout $u\in \UF \cap \bsfrY_{\mathrm{g\acute{e}o}}(Z,P)$, on a $\check{X}_F(G)\cap \bs{\Lambda}_{u}\neq \emptyset$. On conclut gr‰ce ˆ 
\ref{bonnes F-strates}.
\end{proof}

DŽsormais et jusqu'ˆ la fin de \ref{induction parabolique des F-strates}, on suppose que le corps $F$ est infini et que l'hypothse \ref{hyp bonnes F-strates} 
est vŽrifiŽe pour $G$\footnote{On peut aussi supposer d'emblŽe que l'hypothse \ref{hyp bonnes F-strates} est vŽrifiŽe pour tous les $F$-facteurs de Levi de $G$, ce qui 
est le cas si $F$ est un corps global; mais ce n'est pas nŽcessaire pour dŽfinir ${_FI_P^G}(w)$.}. Pour tout sous-ensemble uniformŽment $(F,M)$-instable $Z$ de $\FU^M$, on note ${_FI_P^G}(Z)$ la $F$-strate 
de $\FU$ dŽfinie par\index{IFPGZ@${_FI_P^G}(Z)$} 
$${_FI_P^G}(Z)= {_F\bsfrY_{u}}\quad \hbox{pour un (i.e. pour tout)} \quad u\in \UF \cap \bsfrY_{\mathrm{g\acute{e}o}}(Z,P)\ptf$$
Par construction, la $F$-strate ${_FI_P^G}(Z)$ de $\FU$ possde un point $F$-rationnel, que l'on peut choisir dans 
${i_F^P}(Z)= {_Fi^P}(Z)(F)$. On note $I_{F,P}^G(Z)$ la $F$-strate de $\UF$ dŽfinie par\index{IFPGZrat@$I^G_{F,P}(Z)$} 
$$I^G_{F,P}(Z)= G(F)\cap {_FI_P^G}(Z)\ptf$$ 
Rappelons que l'application $Y\mapsto G(F)\cap Y$ induit une bijection entre l'ensemble des strates de $\FU$ qui possdent un point $F$-rationnel et l'ensemble des $F$-strates de $\UF$.

\begin{lemma}\label{FIPGZ}
Soit $Z\subset \FU^M$ un sous-ensemble uniformŽment $(F,M)$-instable. 
\begin{enumerate}
\item[(i)] ${I_{F,P}^G}(Z)= \UF \cap \bsfrY_{\mathrm{g\acute{e}o}}(Z,P)$.
\item[(ii)] ${_FI_P^G}(Z)$ est l'unique $F$-strate de $\FU$ telle que l'intersection ${_FI_P^G}(Z)\cap i_F^P(Z)={I_{F,P}^G}(Z)\cap {i_F^P}(Z)$ soit dense dans 
${i_F^P}(Z)$.
\item[(iii)]Pour $m\in M(F)$, on a ${_FI_P^G}(m\bullet Z)= {_FI_P^G}(Z)$ et donc $I_{F,P}^G(m\bullet Z)= I_{F,P}^G(Z)$.
\end{enumerate}
\end{lemma}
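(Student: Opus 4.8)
The three assertions are of increasing depth, so the plan is to dispatch (i) and (iii) first and then concentrate on (ii), which is the one that genuinely uses the density characterisation. For (i), recall that by definition ${_FI_P^G}(Z)={_F\bsfrY_u}$ for any $u\in\UF\cap\bsfrY_{\mathrm{g\acute{e}o}}(Z,P)$, so $I_{F,P}^G(Z)=G(F)\cap{_F\bsfrY_u}=\bsfrY_{F,u}$; the point is to show that $\bsfrY_{F,u}=\UF\cap\bsfrY_{\mathrm{g\acute{e}o}}(Z,P)$. The inclusion $\subset$ is immediate since $\bsfrY_{F,u}\subset{_F\bsfrY_u}=\bsfrY_{\mathrm{g\acute{e}o}}(u)\cap(\dots)$... more precisely, under hypothesis \ref{hyp bonnes F-strates} we have $\bsfrY_{\mathrm{g\acute{e}o}}(u)=\bsfrY_{\mathrm{g\acute{e}o}}(Z,P)$ by the preceding lemma and $\check{X}_F(G)_{\mbb{Q}}\cap\bs{\Lambda}_u\neq\emptyset$, so by \ref{strate g�o associ�e � F-strate}\,(i) applied to every $v\in\UF\cap\bsfrY_{\mathrm{g\acute{e}o}}(Z,P)$ one gets $\bsfrY_{\mathrm{g\acute{e}o}}(v)=\bsfrY_{\mathrm{g\acute{e}o}}(Z,P)$, whence by the uniqueness in \ref{bonnes F-strates} the $F$-strate ${_F\bsfrY_v}$ is forced to equal ${_F\bsfrY_u}$; intersecting with $G(F)$ gives $\UF\cap\bsfrY_{\mathrm{g\acute{e}o}}(Z,P)\subset\bsfrY_{F,u}$, and the reverse inclusion is by construction. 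This proves (i).

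\textbf{Assertion (iii).} For $m\in M(F)$ one has $m\bullet Z U_P = m(ZU_P)m^{-1}$ since $m$ normalises $U_P$, hence ${_F\scrX_{m\bullet Z\,U_P}}=m\bullet{_F\scrX_{ZU_P}}={_Fi^P}(m\bullet Z)$ by \ref{induction des F-lames}\,(ii) and its $G(F)$-equivariance, i.e. ${_Fi^P}(m\bullet Z)=m\bullet{_Fi^P}(Z)$. Conjugation by $m$ is an automorphism of the variety $\UU$ carrying $\overline{F}$-strates to $\overline{F}$-strates, so it sends the unique $\overline{F}$-strate dense in ${_Fi^P}(Z)$ to the unique one dense in $m\bullet{_Fi^P}(Z)$; but conjugation by $m\in G(F)$ fixes every $\overline{F}$-strate... actually it need not, so one argues instead: $\bsfrY_{\mathrm{g\acute{e}o}}(m\bullet Z,P)=m\bullet\bsfrY_{\mathrm{g\acute{e}o}}(Z,P)$, and then for $u\in\UF\cap\bsfrY_{\mathrm{g\acute{e}o}}(Z,P)$ we have $m\bullet u\in\UF\cap\bsfrY_{\mathrm{g\acute{e}o}}(m\bullet Z,P)$, so ${_FI_P^G}(m\bullet Z)={_F\bsfrY_{m\bullet u}}={_F\bsfrY_u}={_FI_P^G}(Z)$ because $m\in G(F)$. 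Intersecting with $G(F)$ gives the statement for $I_{F,P}^G$.

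\textbf{Assertion (ii), the main point.} I would argue: by definition ${_FI_P^G}(Z)={_F\bsfrY_u}$ for $u\in\UF\cap\bsfrY_{\mathrm{g\acute{e}o}}(Z,P)$, and $\bsfrY_{\mathrm{g\acute{e}o}}(Z,P)\cap{_Fi^P}(Z)$ is a dense \emph{open} subset of the irreducible $F$-variety ${_Fi^P}(Z)$; by the preceding lemma its $F$-points $\UF\cap\bsfrY_{\mathrm{g\acute{e}o}}(Z,P)\cap i_F^P(Z)$ are nonempty, and since $i_F^P(Z)={_Fi^P}(Z)(F)$ is Zariski-dense in ${_Fi^P}(Z)$ (because ${_Fi^P}(Z)\simeq_F\mbb{A}_F^m$ and $F$ is infinite), the open set $\bsfrY_{\mathrm{g\acute{e}o}}(Z,P)\cap{_Fi^P}(Z)$ meets $i_F^P(Z)$ in a dense subset of ${_Fi^P}(Z)$, hence also dense in $i_F^P(Z)$ for the induced topology; this dense subset equals $\UF\cap\bsfrY_{\mathrm{g\acute{e}o}}(Z,P)\cap i_F^P(Z)=I_{F,P}^G(Z)\cap i_F^P(Z)={_FI_P^G}(Z)\cap i_F^P(Z)$ by (i). For uniqueness, suppose $\bsfrY'$ is an $F$-strate of $\FU$ whose intersection with $i_F^P(Z)$ is dense in $i_F^P(Z)$; then $\bsfrY'\cap{_Fi^P}(Z)$ is dense in ${_Fi^P}(Z)$ (using again density of $i_F^P(Z)$), so taking any $u'\in\bsfrY'\cap i_F^P(Z)$ one finds $\bsfrY_{\mathrm{g\acute{e}o}}(u')$ is the unique $\overline{F}$-strate dense in... here one needs that $\bsfrY'$ being dense in ${_Fi^P}(Z)$ forces $\bsfrY'$ to coincide with ${_FI_P^G}(Z)$, which follows because ${_FI_P^G}(Z)$ is already dense in ${_Fi^P}(Z)$ and two $F$-strates of $\FU$, being disjoint (\ref{F-strates et c-F-top}\,(ii)), cannot both be dense in the same irreducible variety. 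The main obstacle I anticipate is the careful topological bookkeeping between density in $i_F^P(Z)$ (a set of $F$-points) and density in the ambient $F$-variety ${_Fi^P}(Z)$; the passage is legitimate precisely because ${_Fi^P}(Z)$ is an affine space over the infinite field $F$, so every nonempty Zariski-open of it meets $i_F^P(Z)$, but it must be invoked explicitly rather than glossed over.
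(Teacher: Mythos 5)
Your points (i) and (iii), and the density half of (ii), are sound and follow essentially the paper's route: for (i) the paper simply invokes \ref{descente 3 sous HBFS}, and your inline re-derivation via \ref{bonnes F-strates} is the same content; for (iii) note that the $\overline{F}$-strates are $G$-invariant by construction ($\bsfrY_v=G\bullet\scrY_v$), so your hedge is unnecessary, but the argument you settle on ($G(F)$-invariance of the $F$-strates) is exactly the paper's.

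The genuine gap is in the uniqueness step of (ii). You conclude from the disjointness of two $F$-strates of $\FU$ that they cannot both be Zariski-dense in the irreducible variety ${_Fi^P}(Z)$. That principle is false for the Zariski topology, where two disjoint subsets can perfectly well both be dense (any two disjoint infinite subsets of $\mbb{A}^1_F$); disjoint plus dense is only contradictory for subsets containing a nonempty open, e.g.\ locally closed ones. The $F$-strates of $\FU$ are not locally closed, indeed not even algebraic subsets in general --- the paper's $\mathrm{PGL}_2$, $p=2$ example exhibits an $F$-strate that is not the set of $F$-points of any subvariety, and only local $\cF$-closedness is available (\ref{F-strates et c-F-top}), which does not help here. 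The repair is precisely the route you started and then abandoned mid-sentence: take $u'\in\bsfrY'\cap i_F^P(Z)\subset\UF$; hypothesis \ref{hyp bonnes F-strates} gives $\check{X}_F(G)_{\mbb{Q}}\cap\bs{\Lambda}_{u'}\neq\emptyset$, hence $\bsfrY_{\mathrm{g\acute{e}o}}(u')=\bsfrY_{u'}$ and, by (i) (equivalently \ref{descente 3 sous HBFS}), $\bsfrY'=\UF\cap\bsfrY_{u'}$. Density of $\bsfrY'\cap i_F^P(Z)$ in $i_F^P(Z)$ then forces the \emph{geometric} strate $\bsfrY_{u'}$, which \emph{is} locally closed, to meet ${_Fi^P}(Z)$ in a dense subset; the uniqueness of the $\overline{F}$-strate dense in the irreducible variety ${_Fi^P}(Z)$ yields $\bsfrY_{u'}=\bsfrY_{\mathrm{g\acute{e}o}}(Z,P)$, and since $u'\in\UF$ this gives ${_F\bsfrY_{u'}}={_FI_P^G}(Z)$ by definition. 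This is the paper's argument, and it is where local closedness is legitimately used --- at the level of geometric strates, never at the level of $F$-strates.
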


\begin{proof}
Le point (i) rŽsulte de \ref{descente 3 sous HBFS}. 

Prouvons (ii). Observons que si une $F$-strate ${_F\bsfrY_u}$ de $\FU$ (avec $u\in \FU$) intersecte non trivialement $i_F^P(Z)$, alors elle possde un point $F$-rationnel; 
on peut donc prendre $u$ dans $\UF$ et elle est entirement dŽterminŽe par la $F$-strate $\bsfrY_{F,u}$ de $\UF$. 
Cela Žtant dit, puisque ${i_F^P}(Z)$ est dense dans ${_Fi^P}(Z)$ et que $\bsfrY_{\mathrm{g\acute{e}o}}(Z,P)\cap {_Fi^P}(Z)$ est ouvert (dense) dans ${_Fi^P}(Z)$, l'intersection 
$\bsfrY_{\mathrm{g\acute{e}o}}(Z,P)\cap {i_F^P}(Z)$ est dense dans ${i_F^P}(Z)$. Or d'aprs (i), $\bsfrY_{\mathrm{g\acute{e}o}}(Z,P)\cap {i_F^P}(Z)$ co\"{\i}ncide avec 
${I_{F,P}^G}(Z)\cap {i_F^P}(Z)$. Quant ˆ l'unicitŽ, si ${\bsfrY_{F,u'}}$ est une autre $F$-strate de $\UF$ (avec $u'\in \UF$) telle que ${\bsfrY_{F,u'}}\cap {i_F^P}(Z)$ soit dense 
dans ${i_F^P}(Z)$, alors $\bsfrY_{\mathrm{g\acute{e}o}}(u')= \bsfrY_{u'}$ et $\bsfrY_{u'}\cap {i_F^P}(Z)= 
{_F\bsfrY_{u'}}\cap {i_F^P}(Z)$. Par consŽquent $\bsfrY_{u'} \cap {i_F^P}(Z)$ est dense dans ${_Fi^P}(Z)$. Donc $\bsfrY_{u'}= \bsfrY_{\mathrm{g\acute{e}o}}(Z,P)$ et 
(puisque $u'\in \UF$) ${_FI_P^G}(Z)= {_F\bsfrY_{u'}}$. Cela prouve (ii).
 
Prouvons (ii). Pour $m\in M(F)$, l'ensemble $m\bullet Z=mZm^{-1}$ est uniformŽment $(F,M)$-instable et on a $${_Fi^P}(m\bullet Z)= (m\bullet {_F\scrX_Z})U_P = m \bullet {_Fi^P}(Z)\ptf$$ 
Puisque les $\overline{F}$-strates de $\UU$ sont $G$-invariantes, on a $$\bsfrY_{\mathrm{g\acute{e}o}}(m\bullet Z,P)= \bsfrY_{\mathrm{g\acute{e}o}}(Z,P)\ptf$$ 
D'o le point (iii). (On peut aussi le dŽduire directement de (ii).)
\end{proof}

\begin{remark}\label{caractŽrisation de l'intersection}
\textup{
Supposons $P$ standard et $M=M_P$. Soit $Z\subset \FU^M$ un sous-ensemble uniformŽment $(F,M)$-instable et soit ${_F\bsfrY_{u'}}$ une $F$-strate de $\FU$ avec $u'\in \FU$ en position standard. Pour dŽterminer si l'intersection 
${_F\bsfrY_{u'}} \cap {i_F^P}(Z)$ est dense dans ${i_F^P}(Z)$, quitte ˆ remplacer $Z$ par $m\bullet Z$ pour un $m\in M(F)$, 
on peut d'aprs \ref{FIPGZ}\,(iii) supposer que le $F$-sous-groupe parabolique ${_FP_Z^M}$ de $M$ associŽ ˆ $Z$ est standard, 
\cad qu'il contient $P_0^M=M\cap P_0$. Puisque la $F$-lame ${_F\scrY_{u'}}$ de $\FU$ est $P_0(F)$-invariante, 
d'aprs la dŽcomposition de Bruhat on a $${_F\bsfrY_{u'}} = U_0(F)W_0\bullet {_F\scrY_{u'}}\quad \hbox{avec}\quad  
W_0= N^G(A_0)/M_0\ptf$$ Puisque ${i_F^P}(Z)= \scrX_{F,Z}^MU_P(F)$ avec $\scrX_{F,Z}^M= M(F) \cap {_F\scrX_Z^M}$ et que $\scrX_{F,Z}^M$ est $P_0^M(F)$-invariant, l'ensemble 
${i_F^G}(Z)$ est $P_0(F)$-invariant et donc \textit{a fortiori} $U_0(F)$-invariant. 
On en dŽduit que $${_F\bsfrY_{u'}} \cap {i_F^P}(Z)= U_0(F) \bullet \left(W_0\bullet {_F\scrY_{u'}} \cap {i_F^P}(Z)\right)\ptf$$
}
\end{remark}

\begin{lemma}\label{cas particulier d'induite}
Soit $Z\subset \FU^M$ un sous-ensemble uniformŽment $(F,M)$-instable. Si l'ensemble ${_Fi^P}(Z)$ est le $F$-saturŽ d'une $F$-lame de $\FU$ qui possde un point $F$-rationnel, i.e. (compte-tenu de l'hypothse \ref{hyp bonnes F-strates}) 
si l'ensemble ${_F\scrY_{ZU_P}}$ dŽfini en \ref{induction des F-lames}\,(iii) est un ouvert non vide de ${_Fi^P}(Z)$, alors on a 
$${_FI_P^G}(Z)= G(F)\cdot {_F\scrY_{ZU_P}}\ptf$$
\end{lemma}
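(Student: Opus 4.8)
The plan is to identify the $F$-strate ${_FI_P^G}(Z)$ using its characterization from Lemma~\ref{FIPGZ}\,(ii): it is the unique $F$-strate of $\FU$ whose intersection with $i_F^P(Z)={_Fi^P}(Z)(F)$ is dense in $i_F^P(Z)$. So the whole argument reduces to showing that, under the hypothesis that ${_F\scrY_{ZU_P}}$ is a non-empty open subset of ${_Fi^P}(Z)$, the $F$-strate $G(F)\cdot {_F\scrY_{ZU_P}}$ meets $i_F^P(Z)$ in a dense subset. First I would recall that by Proposition~\ref{induction des F-lames}\,(ii)--(iii), the set ${_F\scrX_{ZU_P}}$ equals ${_Fi^P}(Z)$ and, when ${_F\scrY_{ZU_P}}\neq\emptyset$, this is precisely a $F$-lame whose $F$-satur\'e is ${_Fi^P}(Z)$; in particular ${_F\scrY_{ZU_P}}$ is then a single $F$-lame contained in some $F$-strate, which I claim is exactly ${_FI_P^G}(Z)$.

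Next I would verify the density statement. By Proposition~\ref{induction des F-lames}\,(ii), ${_Fi^P}(Z)={_F\scrX_Z^{\!M}}U_P$ is $F$-isomorphic to an affine space over $F$ (using Proposition~\ref{satur� espace affine} applied to the action of $M$, together with the product structure), hence it is an irreducible $F$-vari\'et\'e and, since $F$ is infinite, its set of $F$-rational points $i_F^P(Z)$ is Zariski-dense in it (Remark~\ref{densit� F-satur� 1}). Therefore any non-empty open subset $\Omega$ of ${_Fi^P}(Z)$ satisfies: $\Omega\cap i_F^P(Z)$ is non-empty and Zariski-dense in ${_Fi^P}(Z)$, hence dense in $i_F^P(Z)$ for the induced topology. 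Applying this to $\Omega={_F\scrY_{ZU_P}}$, which is open non-empty in ${_Fi^P}(Z)$ by hypothesis, I get that ${_F\scrY_{ZU_P}}\cap i_F^P(Z)$ is dense in $i_F^P(Z)$. Since ${_F\scrY_{ZU_P}}$ is contained in the $F$-strate $G(F)\cdot {_F\scrY_{ZU_P}}$, a fortiori the latter meets $i_F^P(Z)$ densely.

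Finally I would invoke the uniqueness part of Lemma~\ref{FIPGZ}\,(ii): there is exactly one $F$-strate of $\FU$ whose intersection with $i_F^P(Z)$ is dense in $i_F^P(Z)$, and it is ${_FI_P^G}(Z)$. Since $G(F)\cdot {_F\scrY_{ZU_P}}$ is an $F$-strate with this property (it is indeed a single $F$-strate: it is the $G(F)$-saturation of the $F$-lame ${_F\scrY_{ZU_P}}$, by Definition~\ref{def F-lames et F-strates}), it must coincide with ${_FI_P^G}(Z)$, which is the desired equality. The main obstacle, and the only point requiring care, is to be sure that the "dense in $i_F^P(Z)$" used in the definition of ${_FI_P^G}(Z)$ (via $\bsfrY_{\mathrm{g\acute{e}o}}(Z,P)$) is the same notion as the one obtained here: this is handled by the translation between the geometric picture on ${_Fi^P}(Z)$ and the $F$-rational picture on $i_F^P(Z)$, exactly as in the proof of Lemma~\ref{FIPGZ}\,(ii), using that ${_Fi^P}(Z)$ is an affine space over an infinite field so that $F$-points are Zariski-dense and open subsets of ${_Fi^P}(Z)$ restrict to dense subsets of $i_F^P(Z)$.
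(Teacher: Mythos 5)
Your proof is correct and follows essentially the same route as the paper's: pick up the non-empty open $F$-lame ${_F\scrY_{ZU_P}}$ (guaranteed by hypothesis \ref{hyp bonnes F-strates}), use that ${_Fi^P}(Z)$ is an $F$-affine space so its $F$-points are dense, deduce that the strate $G(F)\cdot{_F\scrY_{ZU_P}}$ meets $i_F^P(Z)$ densely, and conclude by the uniqueness in Lemma \ref{FIPGZ}. The paper phrases this via a rational point $u\in{_F\scrY_{ZU_P}}\cap G(F)$ and the equality ${_F\bsfrY_u}\cap\scrX_{F,u}=\scrY_{F,u}$, but the ingredients and the conclusion are the same.
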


\begin{proof}
Si $u\in {_F\scrY_{ZU_P}}$, alors ${_F\scrY_{ZU_P}}={_F\scrY_u}$ et ${_Fi^P}(Z)= {_F\scrX_u}$. Si de plus on peut choisir $u$ dans $G(F)$, i.e. dans $\UF$, alors (d'aprs 
\ref{hyp bonnes F-strates}) ${_F\scrY_u}$ est ouvert (dense) dans ${_F\scrX_u}$ et $\scrY_{F,u}$ est dense dans $\scrX_{F,u}$. Puisque ${_F\bsfrY_u} \cap \scrX_{F,u}= \scrY_{F,u}$, on conclut gr‰ce ˆ 
\ref{FIPGZ}.
\end{proof}

\begin{exemple}\label{F-strtae rŽgulire}
\textup{On a vu (\ref{radical unipotent = lame}\,(iv)) que $U_0$ est le $F$-saturŽ d'une $F$-lame ${_F\scrY_{U_0}}$ de $\FU$. 
On en dŽduit (d'aprs \ref{cas particulier d'induite}) que la $F$-strate $G(F)\cdot {_F\scrY_{U_0}}$ de $\FU$ est l'induite parabolique 
${_FI_{P_0}^G}(1)$ de la $F$-strate triviale $\{1\}$ de $\FU^{M_0}$. On appelle cette induite la \textit{$F$-strate rŽgulire} de $\FU$. } 
\end{exemple}

\vskip2mm
\P\hskip1mm\textit{L'application $w\;(\in \FU^M)\mapsto {_FI_P^G}(w)$. ---}
D'aprs \ref{FIPGZ}\,(iii), l'application qui ˆ $w\in \FU^M$ associe la $F$-strate ${_FI_P^G}(w)$ de $\FU$ est constante sur 
les $F$-strates de $\FU^M$: on a $${_FI_P^G}(w')={_FI_P^G}(w)\quad \hbox{pour tout}\quad w'\in {_F\bsfrY_w^M}\ptf$$ 

\begin{remark}
\textup{Pour $P=G$ et $u\in \UF$, on a ${_FI_G^G}(u)={_F\bsfrY_u}$. En revanche pour $u\in \FU$ tel que la $F$-strate ${_F\bsfrY_u}$ 
de $\FU$ ne possde aucun point $F$-rationnel, on a 
${_FI_G^G}(u)\neq {_F\bsfrY_u}$. }
\end{remark}

Pour $u\in \FU\cap P$, on Žcrit $u=wv$ avec $w\in \FU^M$ et $v\in U_P$ (\ref{intersection FUG et P}), et on pose 
$${_FI_P^G}(u)= {_FI_P^G}(w)\quad \hbox{et}\quad I_{F,P}^G(u)=G(F)\cap {_FI_P^G}(u)\;(=I_{F,P}^G(w)) \ptf$$

\begin{remark}
\textup{
Pour $u\in \FU\cap P$, la $F$-strate ${_FI_P^G}(u)$ de $\FU$ ne dŽpend pas de $M$ (raison pour laquelle $M$ n'appara"t pas dans la notation).
En effet si $M'$ est une autre composante de Levi de $P$ dŽfinie sur $F$, alors $M'= u'Mu'^{-1}$ pour un (unique) ŽlŽment $u'\in U_P(F)$. \'Ecrivons 
$u= wv$ avec $w\in \FU^M$ et $v\in U_P$.  On a $u=w'v'$ avec $w'= u'wu'^{-1}\in \FU^{M'}$ et $v'=u'w^{-1}u'^{-1}wv\in U_P$. Donc 
$${_F\scrX_{w'}^{\!M'}}U_P = (u'\bullet {_F\scrX_w^{\!M}})U_P = u' \bullet ({_F\scrX_w^{\!M}}U_P)\ptf$$ Pour une $\overline{F}$-strate $\bsfrY_{u_1}$ de $\UU$, 
l'intersection $\bsfrY_{u_1} \cap {_F\scrX_w^{\!M}}U_P$ est dense dans ${_F\scrX_w^{\!M}}U_P$ si et seulement si l'intersection 
$\bsfrY_{u_1} \cap {_F\scrX_{w'}^{\!M'}}U_P$ est dense dans ${_F\scrX_{w'}^{\!M'}}U_P$.
}\end{remark}

Le lemme \ref{FIPGZ} caractŽrise les $F$-strates de $\FU$ qui sont induites ˆ partir des $F$-strates de $\FU^M$. Pour celles qui sont induites ˆ partir des 
$F$-strates de $\FU^M$ qui possdent un point $F$-rationnel, en supposant l'hypothse \ref{hyp bonnes F-strates} vŽrifiŽe pour 
$M$, on ˆ la caractŽrisation alternative suivante: 

\begin{lemma}\label{FIPGwrat}
On suppose de plus que l'hypothse \ref{hyp bonnes F-strates} est vŽrifiŽe pour $M$ (on suppose toujours que $F$ est infini et que l'hypothse \ref{hyp bonnes F-strates} est vŽrifiŽe pour $G$). 
Soit $w\in \UF^M\;(=M(F)\cap {_F\UU^M})$ et soit $\bsfrY={_F\bsfrY_u}$ une $F$-strate de $\FU$ (avec $u\in \FU$). Les conditions suivantes sont Žquivalentes:
\begin{enumerate}
\item[(i)] $ \bsfrY= {_FI_P^G}(w)$, auquel cas on peut prendre $u$ dans $\UF$ et mme dans $i_{F}^P(w)$.
\item[(ii)] $ \bsfrY \cap {_F\scrY_{w}^{\!M}}U_P$ est dense dans ${_Fi^P}(w)$; 
\item[(iii)] $ \bsfrY \cap \scrY_{F,w}^{\!M}U_P(F)$ est dense dans $i_F^P(w)$;
\item[(iv)] $ \bsfrY \cap {_F\bsfrY_{w}^{M}}U_P$ est dense dans ${_F\bsfrX_w^{M}}U_P$;
\item[(v)] $ \bsfrY \cap \bsfrY_{F,w}^{M}U_P(F)$ est dense dans $\bsfrX_{F,w}^MU_P(F)$;
\end{enumerate} 
\end{lemma}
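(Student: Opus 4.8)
\textbf{Plan de la preuve du lemme \ref{FIPGwrat}.} Le plan est de ramener tout aux d\'efinitions et aux caract\'erisations d\'ej\`a \'etablies, en exploitant que sous l'hypoth\`ese \ref{hyp bonnes F-strates} pour $M$, les $F$-lames (resp. $F$-strates) de $\FU^M$ qui poss\`edent un point $F$-rationnel sont ouvertes (denses) dans leur $F$-satur\'e (resp. dans ${_F\bsfrX_w^M}$), et que ${_Fi^P}(w)$ et ${_F\bsfrX_w^MU_P}$ sont des $F$-vari\'et\'es irr\'eductibles dont l'ensemble des points $F$-rationnels est dense (car $F$ est infini et ${_F\scrX_w^M}U_P$ est un $F$-espace affine). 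D'abord j'\'etablirais l'\'equivalence $(i)\Leftrightarrow(ii)$ : par d\'efinition ${_FI_P^G}(w)={_F\bsfrY_{u_0}}$ pour $u_0\in \UF\cap \bsfrY_{\mathrm{g\acute{e}o}}(w,P)$, et comme $\check{X}_F(G)_{\mbb{Q}}\cap\bs{\Lambda}_{u_0}\neq\emptyset$ (hypoth\`ese \ref{hyp bonnes F-strates} pour $G$), la $F$-strate ${_F\bsfrY_{u_0}}$ est dense dans la $\overline{F}$-strate $\bsfrY_{u_0}=\bsfrY_{\mathrm{g\acute{e}o}}(w,P)$ (d'apr\`es \ref{densit\'e bonnes F-strates}\,(i)). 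Or pour $w$ \`a point $F$-rationnel et l'hypoth\`ese \ref{hyp bonnes F-strates} pour $M$, ${_F\scrY_w^M}$ est ouvert (dense) dans ${_F\scrX_w^M}$, donc ${_F\scrY_w^M}U_P$ est ouvert dense dans ${_Fi^P}(w)={_F\scrX_w^MU_P}$. Il s'ensuit que $\bsfrY={_FI_P^G}(w)$ si et seulement si $\bsfrY$ est la $F$-strate dont l'intersection avec ${_Fi^P}(w)$ est dense, ce qui \'equivaut (via l'ouvert dense ${_F\scrY_w^M}U_P$) \`a ce que $\bsfrY\cap{_F\scrY_w^M}U_P$ soit dense dans ${_Fi^P}(w)$.

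Ensuite je ferais les passages aux points $F$-rationnels : $(ii)\Leftrightarrow(iii)$ r\'esulte de ce que ${_Fi^P}(w)$ est un $F$-espace affine, donc $i_F^P(w)={_Fi^P}(w)(F)$ est Zariski-dense, et de m\^eme $\scrY_{F,w}^MU_P(F)=V(F)\cap({_F\scrY_w^M}U_P)$ est dense dans ${_F\scrY_w^M}U_P$ (car ${_F\scrY_w^M}$ est ouvert dans le $F$-espace affine ${_F\scrX_w^M}$ qui poss\`ede un point $F$-rationnel, cf. \ref{densit\'e F-satur\'e 1} et \ref{il existe un point F-rationnel}). Un sous-ensemble $\cF$-constructible est donc dense dans ${_Fi^P}(w)$ si et seulement si son intersection avec $i_F^P(w)$ l'est dans $i_F^P(w)$ ; on applique ceci \`a $\bsfrY$, en notant que si $\bsfrY$ rencontre $i_F^P(w)$ elle poss\`ede un point $F$-rationnel, donc on peut prendre $u\in\UF$.

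Enfin, pour $(ii)\Leftrightarrow(iv)$ et $(iii)\Leftrightarrow(v)$, j'utiliserais que ${_F\bsfrX_w^M}U_P = M(F)\bullet({_F\scrX_w^M}U_P)$ et ${_F\bsfrY_w^M}U_P= M(F)\bullet({_F\scrY_w^M}U_P)$ ne sont que des r\'eunions (sur $M(F)/P_{F,w}^M$) de translat\'es des objets pr\'ec\'edents, compatibles avec l'action de $M(F)$ qui pr\'eserve la $\overline{F}$-strate $\bsfrY_{\mathrm{g\acute{e}o}}(w,P)$ ($G$-invariante). Comme $\bsfrY$ est aussi $G(F)$-invariant, on a $\bsfrY\cap({_F\bsfrY_w^M}U_P)=M(F)\bullet(\bsfrY\cap({_F\scrY_w^M}U_P))$ et de m\^eme pour les $F$-satur\'es ; la densit\'e dans la r\'eunion \'equivaut \`a la densit\'e dans chaque morceau (les translat\'es par $M(F)$ d'un morceau dense sont denses dans les translat\'es correspondants, les composantes irr\'eductibles se correspondant), d'o\`u les \'equivalences. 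L'obstacle principal sera de justifier proprement que la densit\'e d'un sous-ensemble $\cF$-constructible de ${_Fi^P}(w)$ se teste sur les points $F$-rationnels, \emph{y compris} pour la $F$-strate $\bsfrY$ elle-m\^eme (qui n'est pas un sous-ensemble alg\'ebrique de $V$ en g\'en\'eral) ; pour cela je m'appuierais sur le fait que $\bsfrY\cap{_Fi^P}(w)=\bsfrY_{\mathrm{g\acute{e}o}}(w,P)\cap{_Fi^P}(w)$ intersect\'ee avec la $F$-strate, et que sur l'ouvert $\bsfrY_{\mathrm{g\acute{e}o}}(w,P)\cap{_Fi^P}(w)$ de la $F$-vari\'et\'e irr\'eductible ${_Fi^P}(w)$ la $F$-strate $\bsfrY$ co\"{\i}ncide avec $\FU\cap$(cet ouvert), dont l'ensemble des points $F$-rationnels est dense d'apr\`es \ref{densit\'e bonnes F-strates}\,(ii).
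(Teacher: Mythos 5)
Votre proposition est correcte et suit pour l'essentiel la d\'emarche du texte : l'\'equivalence $(i)\Leftrightarrow(ii)$ via l'ouverture-densit\'e de ${_F\scrY_w^{\!M}}U_P$ dans ${_Fi^P}(w)$ donn\'ee par l'hypoth\`ese \ref{hyp bonnes F-strates} pour $M$, le passage aux points $F$-rationnels gr\^ace \`a ${_F\scrX_w^{\!M}}\simeq_F\mbb{A}_F^n$, puis la d\'ecomposition de $\bsfrY\cap{_F\bsfrY_w^{M}}U_P$ en translat\'es par $M(F)$ de $\bsfrY\cap{_F\scrY_w^{\!M}}U_P$ pour $(iv)$ et $(v)$, exactement comme dans la preuve du papier. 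Seule nuance : votre identification de $\bsfrY$ avec $\FU\cap(\bsfrY_{\mathrm{g\acute{e}o}}(w,P)\cap{_Fi^P}(w))$ n'est garantie sous \ref{hyp bonnes F-strates} (qui ne porte que sur $\NF$) que pour les points $F$-rationnels, i.e. $I_{F,P}^G(w)=\UF\cap\bsfrY_{\mathrm{g\acute{e}o}}(w,P)$ (cf. \ref{FIPGZ}), mais c'est pr\'ecis\'ement ce dont votre argument de densit\'e a besoin, de sorte que la conclusion n'est pas affect\'ee.
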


\begin{proof}
L'hypothse \ref{hyp bonnes F-strates} pour $M$ assure que la $F$-lame ${_F\scrY_w^M}$ de $\FU^M$ est ouverte 
dans son $(F,M)$-saturŽ ${_F\scrX_w^M}$ (\ref{le cas o F-lame = lame gŽo}). 
Par consŽquent l'ensemble ${_F\scrY_w^{\!M}}U_P$ est ouvert (dense) dans ${_Fi^P}(w)$ et $\bsfrY_{\mathrm{g\acute{e}o}}(w,P)$ est l'unique 
$\overline{F}$-strate de $\UU$ qui intersecte ${_F\scrY_w^{\!M}}U_P$ de manire dense. D'o l'Žquivalence $(i)\Leftrightarrow(ii)$. 

Observons 
que l'intersection $ {_FI_P^G}(w) \cap {_F\scrY_{w}^{\!M}}U_P$ est ouverte dans ${_Fi^P}(w)$. 
Puisque ${_F\scrX_w^M}\simeq_F \mbb{A}_F^n$, la $F$-lame $\scrY_{F,w}^M$ de $\UF^M$ est dense dans ${_F\scrX_w^M}$ (cf. \ref{densitŽ bonnes F-strates}\,(ii)). 
Par consŽquent $\scrY_{F,w}^M U_P$ est dense dans ${_Fi^P}(w)$. On en dŽduit que  
$ \bsfrY \cap {_F\scrY_w^M}U_P$ est ouvert (dense) dans ${_Fi^P}(w)$ si et seulement si $ \bsfrY \cap \scrY_{F,w}^MU_P(F)$ est dense dans ${_Fi^P}(w)$. D'o 
l'Žquivalence $(ii)\Leftrightarrow (iii)$. 

Comme 
$$\bsfrY \cap {_F\bsfrY_w^M}U_P = \coprod_{m\in M(F)/P_{F,w}^M} m \bullet (\bsfrY \cap {_F\scrY_w^M}U_P)$$ et $${_F\bsfrX_w^M}U_P = M(F)\bullet  {_Fi^P}(w)\vg$$ 
on a aussi l'Žquivalence $(ii)\Leftrightarrow (iv)$. On obtient l'Žquivalence $(iii)\Leftrightarrow (v)$ de la mme manire.
\end{proof}

\begin{remark}\label{FiP=iP}
\textup{Observons que pour $w\in \UF^M$, non seulement la $F$-lame ${_F\scrY_w^M}$ est ouverte dans son $F$-saturŽ ${_F\scrX_F^M}$, mais on a 
${_F\scrY_w^M}= \scrY_w^M$ et ${_F\scrX_w^M}= \scrX_w^M$ (\ref{le cas o F-lame = lame gŽo}). Par consŽquent on a aussi $${_Fi^P}(w)= i^P(w)= \scrX_w U_P\ptf$$   }
\end{remark}

\vskip2mm
\P\hskip1mm\textit{TransitivitŽ des applications $w\;(\in \FU^M)\mapsto {_FI_P^G}(w)$. ---} 
Soit $P'$ un $F$-sous-groupe paraboli\-que de $G$ contenant $P$ et soit $M'$ l'unique composante de Levi de $P'$ contenant $M$. 
Puisque $M$ est dŽfini sur $F$, $M'$ l'est aussi; et l'intersection $P\cap M'$ est un $F$-sous-groupe 
parabolique de $M'$ de composante de Levi $M$. 

On suppose de plus que l'hypothse \ref{hyp bonnes F-strates} est vŽrifiŽe pour $M'$.
Pour $w\in \FU^M$, on dŽfinit comme plus haut la $F$-strate  
${_FI_{P\cap M'}^{M'}}(w)$ de $\FU^{M'}$. 

\begin{lemma}\label{transitivitŽ de l'induction}Pour $w\in \FU^M$ et $w'\in {_FI_{P\cap M'}^{M'}}(w)$, on a 
$${_FI_P^G}(w)= {_FI_{P'}^G}(w')\ptf$$
\end{lemma}

\begin{proof}
Posons $$\bsfrY'= {_FI_{P\cap M'}^{M'}}(w)\quad \hbox{et}\quad\bsfrY= {_FI_{P'}^G}(w')\ptf$$ 
Par construction $\bsfrY'={_F\bsfrY_{u'}}$ avec $u'\in \UU_F^{M'}\cap \bsfrY_{\mathrm{g\acute{e}o}}^{M'}(w,P\cap M')$ et 
$\bsfrY={_F\bsfrY_u}$ avec $u\in \UF \cap \bsfrY_{\mathrm{g\acute{e}o}}(w'\!,P')$. 
L'ensemble $(\bsfrY^{M'}_{u'}\cap {_Fi^{P\cap M'}}(w))U_{P'}$ est ouvert (dense) dans $${_Fi^{P\cap M'}}(w)U_{P'}=({_F\scrX_{w}^{\!M}}U_{P\cap M'})U_{P'}={_F\scrX_w}^{\!M}U_P= {_Fi^P}(w)\ptf$$ 
Puisque d'autre part $\bsfrY_u \cap {_Fi^{P'}}(w')$ est par dŽfinition ouvert (dense) dans ${_Fi^{P'}}(w')U_{P'}$, on en dŽduit que l'ensemble 
$\bsfrY_u\cap (\bsfrY^{M'}_{u'}\cap {_F\scrX_w^{\!M}}U_{P\cap M'})U_{P'}$ est lui aussi ouvert (dense) dans ${_Fi^P}(w)$; et donc \textit{a fortiori} que l'ensemble 
$\bsfrY_u\cap  {_F\scrX_w^{\!M}}U_{P\cap M'}U_{P'}= \bsfrY_u\cap {_Fi^P}(w)$ est dense 
dans ${_Fi^P}(w)$. Cela prouve que $\bsfrY_u = \bsfrY_{\mathrm{g\acute{e}o}}(w,P)$ et donc (puisque $u\in \UF$) que ${_F\bsfrY_u}= {_FI_P^G}(w)$. 
\end{proof}

\vskip2mm
\P\hskip1mm\textit{Application $w\;(\in \UF^M) \mapsto I_{F,P}^G(w)$ et descente sŽparable. ---} Soit $E/F$ une extension sŽparable (algŽbrique ou non) telle que $(E^{\mathrm{s\acute{e}p}})^{\mathrm{Aut}_F(E^{\mathrm{s\acute{e}p}})}=F$. 
D'aprs \ref{descente sŽparable c™ne F-nilpotent}, on a 
$$\UF = G(F) \cap \UU_E\ptf$$ Rappelons aussi  (\ref{notation allŽgŽe}) que si $\bsfrY$ est une 
$F$-strate de $\UF$, on a notŽ $\bsfrY_E$ la $E$-strate de $\UU_E$ dŽfinie par $\bsfrY_E= \bsfrY_{E,u}$ pour un (i.e. pour tout) $u\in \bsfrY$. D'aprs \ref{descente sŽparable strates}, on a 
$$\bsfrY= G(F)\cap \bsfrY_E\ptf$$

L'application qui ˆ $w\in \UF^M$ associe la $F$-strate $I_{F,P}^G(w)=G(F)\cap {_FI_P^G}(w)$ de $\UF$ commute ˆ la descente sŽparable: 

\begin{lemma}\label{induction et descente}
Soit $w\in \UF^M$.
\begin{enumerate}
\item[(i)] $I_{F,P}^G(w)_E = I_{E,P}^G(w)$.
\item[(ii)] $I_{F,P}^G(w)=G(F) \cap I_{E,P}^G(w)$.
\end{enumerate}
\end{lemma}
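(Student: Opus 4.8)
\textbf{Plan de la preuve du lemme \ref{induction et descente}.}
L'id\'ee directrice est de ramener les deux \'egalit\'es \`a des \'enonc\'es sur les $E$-strates (resp.\ $F$-strates) g\'eom\'etriquement caract\'eris\'es via le lemme \ref{FIPGZ}\,(ii), puis d'invoquer la descente s\'eparable d\'ej\`a \'etablie pour les strates (\ref{descente s\'eparable strates}) et pour les notions d'optimalit\'e (\ref{BMRT(4.7)}, \ref{variante de BMRT(4.7)}, \ref{variante Hesselink rat}). Comme $E/F$ est s\'eparable et que $(E^{\mathrm{s\acute ep}})^{\mathrm{Aut}_F(E^{\mathrm{s\acute ep}})}=F$, toutes les hypoth\`eses utilis\'ees (le corps infini, l'hypoth\`ese \ref{hyp bonnes F-strates} pour $G$ --- et au besoin pour les $F$-facteurs de Levi) restent valables pour $E$, puisque $E$ est infini et que $\bs{\Lambda}_{E,v}= \check X_E(G)_{\mbb Q}\cap \bs{\Lambda}_{\smash{\overline F},v}\supset \check X_F(G)_{\mbb Q}\cap \bs{\Lambda}_{\smash{\overline F},v}$ pour $v\in\UF$.

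\emph{\'Etape 1 : stabilit\'e par extension des ensembles $F$-satur\'es induits.} Je commencerais par montrer que, pour $w\in\UF^M$, le $F$-satur\'e ${_Fi^P}(w)={_F\scrX_w^{\!M}}U_P$ et le $E$-satur\'e ${_Ei^P}(w)={_E\scrX_w^{\!M}}U_P$ co\"incident apr\`es extension des scalaires : c'est imm\'ediat puisque ${_F\scrX_w^{\!M}}={_E\scrX_w^{\!M}}$ d'apr\`es \ref{descente s\'eparable lames}\,(ii) (appliqu\'e \`a $M$ et \`a $w\in\UF^M$), et que $U_P$ est d\'efini sur $F$. En particulier la $F$-vari\'et\'e irr\'eductible ${_Fi^P}(w)$ est exactement ${_Ei^P}(w)$ vue sur $F$ ; notons de plus que d'apr\`es \ref{FiP=iP} on a m\^eme ${_Fi^P}(w)=i^P(w)=\scrX_w U_P$, ind\'ependamment du corps de base. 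La $\overline F$-strate dense $\bsfrY_{\mathrm{g\acute eo}}(w,P)$ est donc la m\^eme qu'on parte de $F$ ou de $E$ : appelons-la $\bsfrY_{u_1}$ avec $u_1\in i^P(w)(\overline F)$.

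\emph{\'Etape 2 : identification via la caract\'erisation de \ref{FIPGZ}\,(ii).} Posons $\bsfrY=I_{F,P}^G(w)=G(F)\cap{_FI_P^G}(w)$, la $F$-strate de $\UF$ caract\'eris\'ee (par \ref{FIPGZ}\,(ii) appliqu\'e sur $F$) comme l'unique $F$-strate telle que $\bsfrY\cap i_F^P(w)$ soit dense dans $i_F^P(w)={_Fi^P}(w)(F)$. De m\^eme, $I_{E,P}^G(w)$ est l'unique $E$-strate de $\UU_E$ telle que $I_{E,P}^G(w)\cap i_E^P(w)$ soit dense dans $i_E^P(w)={_Ei^P}(w)(E)$. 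Pour prouver (i), je v\'erifierais que la $E$-strate $\bsfrY_E=\bsfrY_{E,u}$ ($u\in\bsfrY$ quelconque) satisfait cette propri\'et\'e de densit\'e : comme $\bsfrY=G(F)\cap\bsfrY_E$ et comme $\UF=G(F)\cap\UU_E$, on a $\bsfrY_E\cap i_F^P(w)=\bsfrY\cap i_F^P(w)$, qui est Zariski-dense dans ${_Fi^P}(w)=\scrX_wU_P$ (\'Etape 1 et \ref{FIPGZ}\,(ii)) ; comme cette vari\'et\'e est un espace affine donc irr\'eductible, et que $i_F^P(w)$ y est dense (le corps est infini, cf.\ \ref{densit\'e F-satur\'e 1}), l'ensemble $\bsfrY_E\cap i_E^P(w)$ contient $\bsfrY\cap i_F^P(w)$ et est donc dense dans $i_E^P(w)$. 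Par unicit\'e dans \ref{FIPGZ}\,(ii) pour $E$, on obtient $\bsfrY_E=I_{E,P}^G(w)$, ce qui est (i). L'assertion (ii) en d\'ecoule aussit\^ot : $I_{F,P}^G(w)=G(F)\cap I_{F,P}^G(w)_E=G(F)\cap I_{E,P}^G(w)$ par \ref{descente s\'eparable strates}.

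\emph{Point d\'elicat.} Le seul endroit qui demande un peu de soin est le passage de la densit\'e Zariski sur $F$ \`a la densit\'e sur $E$ dans l'\'Etape 2 : il faut s'assurer que l'ouvert dense $\bsfrY_{\mathrm{g\acute eo}}(w,P)\cap i^P(w)$ de $\scrX_wU_P$ rencontre $i_E^P(w)$ de fa\c con dense, ce qui est garanti parce que $\scrX_wU_P$ est un $F$-espace affine (\ref{satur\'e espace affine}), donc $a$ fortiori un $E$-espace affine, et qu'un ouvert non vide d'un espace affine sur un corps infini rencontre les points rationnels de mani\`ere dense (\ref{densit\'e F-satur\'e 1}, \ref{il existe un point F-rationnel}) ; le fait que $\bsfrY\cap i_F^P(w)\subset i_E^P(w)$ soit d\'ej\`a dense \emph{dans la vari\'et\'e ambiante} suffit alors, sans avoir \`a recalculer $\bsfrY_{\mathrm{g\acute eo}}$. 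Une alternative \'equivalente, peut-\^etre plus directe, serait d'appliquer \ref{transitivit\'e de l'induction} avec $M'=G$ : mais la voie ci-dessus est plus \'economique.
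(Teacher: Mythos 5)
Votre argument est correct et suit pour l'essentiel la preuve du papier : les deux reposent sur \ref{descente s�parable lames}\,(ii) pour obtenir ${_F\scrX_w^{\!M}}={_E\scrX_w^{\!M}}$, donc ${_Fi^P}(w)={_Ei^P}(w)$, puis identifient les strates induites sur $F$ et sur $E$ via le m�me objet g�om�trique, le point (ii) d�coulant de (i) par \ref{descente s�parable strates}. La seule diff�rence est cosm�tique : le papier conclut directement en choisissant $u\in \UF\cap \bsfrY_{\mathrm{g\acute{e}o}}(w,P)$, de sorte que par d�finition ${_FI_P^G}(w)={_F\bsfrY_u}$ et ${_EI_P^G}(w)={_E\bsfrY_u}$, tandis que vous passez par la caract�risation de densit� de \ref{FIPGZ}\,(ii) sur $E$, ce qui revient au m�me.
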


\begin{proof}
D'aprs \ref{descente sŽparable lames}\,(ii), on a ${_F\scrX_w^{\!M}}= {_E\scrX_w^{\!M}}$, d'o l'ŽgalitŽ 
${_Fi^P}(w) = {_Ei^P}(w)$. Pour $u\in \UF \cap \bsfrY_{\mathrm{g\acute{e}o}}(w,P)$, on a ${_FI_P^G}(w)= {_F\bsfrY_u}$; et puisque $u$ appartient \textit{a fortiori} ˆ 
$\UU_E \cap \bsfrY_{\mathrm{g\acute{e}o}}(w,P)$, on a aussi ${_EI_P^G}(w)= {_E\bsfrY_u}$. Cela prouve (i). Le point (ii) est une consŽquence de (i), d'aprs \ref{descente sŽparable strates}.
\end{proof}

\P\hskip1mm\textit{Lien avec l'induction de Lusztig-Spaltenstein \cite{LS}. ---} Pour un ŽlŽment unipotent $w$ de $M$ (i.e. $w\in \UU^M= {_{\smash{\overline{F}}}\UU^M}$), notons $\ES{O}^M_w$ 
l'orbite gŽomŽtrique unipotente $M\bullet u$ de $M$ et\index{IPGLS(w)@$I_P^{G,{\bf LS}}(w)$} 
$$I_P^{G,{\bf LS}}(w)= I_P^{G,{\bf LS}}(\ES{O}^M_w)$$ l'induite parabolique de $\ES{O}^M_w$ au sens de Lusztig-Spaltenstein \cite{LS}: 
c'est l'unique orbite gŽomŽtrique unipotente $\ES{O}$ de $G$ telle que l'intersection $\ES{O} \cap \ES{O}^M_w U_P$ soit dense dans la variŽtŽ irrŽductible $\ES{O}^M_w U_P$. 
L'intersection $I_P^{G,{\bf LS}}(\ESO^M_w) \cap \ESO^M_w U_P$ est alors ouverte dans $\ES{O}_w^MU_P$; et 
d'aprs \cite[theorem~1.3\,(c)]{LS}, elle forme une unique $P$-orbite. 

\begin{remark}\textup{
\begin{enumerate}
\item[(i)] Pour $w\in \UU^M$, on a la caractŽrisation Žquivalente de $I_P^{G,{\bf LS}}(w)$ suivante: c'est l'unique orbite gŽomŽtrique unipotente $\ES{O}$ de $G$ telle que 
l'intersection $\ES{O} \cap wU_P$ soit dense dans la variŽtŽ irrŽductible $wU_P$. Cette intersection est alors ouverte dans $wU_P$.
\item[(ii)] Si l'ensemble $\ESO^M_w(F)=M(F)\cap \ES{O}_w^M$ est non vide, i.e. si la $M$-orbite $\ES{O}_w^M$ possde un point $F$-rationnel, 
alors la $G$-orbite $I_P^{G,{\bf LS}}(w)$ possde elle aussi un point $F$-rationnel\footnote{On suppose toujours que $P$ et $M$ 
sont dŽfinis sur $F$ mme si, bien sžr, l'induction parabolique de \cite{LS} est dŽfinie sans ces hypothses (dans loc.~cit. les auteurs travaillent sur $\overline{F}$).}. 
En effet si $w'\in \ES{O}_w^M(F)$, alors $M(F)\bullet w'\;(\subset \ES{O}_w^M(F))$ est dense dans $\ES{O}_w^M$; par suite $\ES{O}_w^M(F)U_P(F)$ est dense dans $\ES{O}_w^MU_P$ et puisque 
l'intersection $I_P^{G,{\bf LS}}(w)\cap \ES{O}_w^MU_P$ est ouverte (dense) dans $\ES{O}_w^MU_P$, l'intersection $I_P^{G,{\bf LS}}(w)\cap \ES{O}_w^M(F)U_P(F)$ est non vide.
\end{enumerate}}
\end{remark}

Pour $Z\subset \FU^M$ uniformŽment $F$-instable, on a dŽfini la $\overline{F}$-strate $\bsfrY_{\mathrm{g\acute{e}o}}(Z,P)$ de $\UU={_{\smash{\overline{F}}}\UU}$ comme Žtant l'unique 
$\overline{F}$-strate de $\UU$ qui intersecte la variŽtŽ irrŽductible ${_Fi^P}(Z)$ de manire dense. On peut faire la mme construction avec les $G$-orbites de $\UU$, 
compte-tenu du fait qu'elles sont localement fermŽes dans $G$: il existe une unique $G$-orbite $\ES{O}_u$ de $\UU$ (avec $u\in \UU$) telle que l'intersection 
$\ES{O}_u \cap {_Fi^P}(Z)$ soit dense dans ${_Fi^P}(Z)$. On la note\index{OgeoZP@$\ES{O}_{\mathrm{g\acute{e}o}}(Z,P)$} $$\ES{O}_{\mathrm{g\acute{e}o}}(Z,P) = \ES{O}_{\mathrm{g\acute{e}o}}({_Fi^P}(Z)) = \ES{O}_{u}\ptf$$ 
Comme pour la $\overline{F}$-strate $\bsfrY_{\mathrm{g\acute{e}o}}(Z,P)$ de $\UU$, l'intersection $\ES{O}_{\mathrm{g\acute{e}o}}(Z,P)\cap {_Fi^P}(Z)$ est ouverte dans 
${_Fi^P}(Z)$ et la $G$-orbite $\ES{O}_{\mathrm{g\acute{e}o}}(Z,P)$ contient un point $F$-rationnel. De plus comme $\bsfrY_{\mathrm{g\acute{e}o}}(Z,P)$ est rŽunion (finie) de $G$-orbites, on a l'inclusion 
$$\ES{O}_{\mathrm{g\acute{e}o}}(Z,P)\subset \bsfrY_{\mathrm{g\acute{e}o}}(Z,P)\ptf$$

L'application qui ˆ $w\in \FU^M$ associe 
la $G$-orbite $\ES{O}_{\mathrm{g\acute{e}o}}(w,P)$ ne dŽpend que de la $F$-strate ${_F\bsfrY_w^M}$ de $\FU^M$: on a 
$$\ES{O}_{\mathrm{g\acute{e}o}}(w'\!,P)= \ES{O}_{\mathrm{g\acute{e}o}}(w,P)\quad \hbox{pour tout}\quad w'\in {_F\bsfrY_w^M}\ptf$$ 
Pour $P=G$ et $u\in \FU$, on Žcrit simplement $\ES{O}_{\mathrm{g\acute{e}o}}(u)= \ES{O}_{\mathrm{g\acute{e}o}}(u,G)$; on a l'inclusion 
$$\ES{O}_{\mathrm{g\acute{e}o}}(u)\subset \bsfrY_{\mathrm{g\acute{e}o}}(u)\ptf$$ 

\begin{remark}\label{sur l'hyp reg pour M}
\textup{
L'hypothse \ref{hyp bonnes F-strates} (pour $G$) assure que pour tout $u\in \UF$, la $F$-lame ${_F\scrY_u}$ de $\FU$ est ouverte (dense) dans son 
$F$-saturŽ ${_F\scrX_u}= \scrX_u$. Puisque $\scrY_{F,u}$ est dense dans ${_F\scrX_u}\simeq_F \mbb{A}_F^n$, cela assure aussi que l'intersection 
$\ES{O}_{\mathrm{g\acute{e}o}}(u) \cap \scrY_{F,u}$ est dense dans ${_F\scrX_u}$; en particulier elle est non vide et pour $u'\in \ES{O}_{\mathrm{g\acute{e}o}}(u) \cap \scrY_{F,u}$, on a 
$\ES{O}_{\mathrm{g\acute{e}o}}(u)= \ES{O}_{u'}$. En d'autres termes, quitte ˆ remplacer $u\in \UF$ par un ŽlŽment dans $\scrY_{F,u}$, on peut toujours supposer que 
$\ES{O}_{\mathrm{g\acute{e}o}}(u)= \ES{O}_u$.
}
\end{remark}

\begin{lemma}\label{inclusion IPLS dans IP}
Soient $w\in \FU^M$ et $w_1\in \FU^M \cap \ES{O}_{\mathrm{g\acute{e}o}}^M(w)$. 
\begin{enumerate}
\item[(i)] $I_P^{G,{\bf LS}}(w_1) = \ES{O}_{\mathrm{g\acute{e}o}}(w,P)\;(\subset \bsfrY_{\mathrm{g\acute{e}o}}(w,P))$.
\item[(ii)] $\UF \cap I_P^{G,{\bf LS}}(w_1)\neq \emptyset $ et ${_FI_P^G}(w) = {_F\bsfrY_u}$ pour tout $u\in \UF \cap I_P^{G,{\bf LS}}(w_1)$. 
\end{enumerate}
\end{lemma}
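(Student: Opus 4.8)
The plan is to prove the two assertions of Lemma~\ref{inclusion IPLS dans IP} by reducing the statement about Lusztig--Spaltenstein induction to the statement about geometric strata that we have already built, namely the equality $\ES{O}_{\mathrm{g\acute{e}o}}(w,P)= \ES{O}_{\mathrm{g\acute{e}o}}(w_1,P)$ (independence of the $F$-strate of $w$) together with the density properties of $\ES{O}_{\mathrm{g\acute{e}o}}(\cdot,P)$ inside ${_Fi^P}(\cdot)$. First I would observe that since $w_1\in \FU^M\cap \ES{O}_{\mathrm{g\acute{e}o}}^M(w)$ we may, after replacing $w$ by $w_1$ (this does not change ${_F\bsfrY_w^M}$, nor $\ES{O}_{\mathrm{g\acute{e}o}}(w,P)$, nor ${_FI_P^G}(w)$), assume from the start that $\ES{O}_w^M= \ES{O}_{\mathrm{g\acute{e}o}}^M(w)$, i.e.\ that the $M$-orbit of $w$ is itself dense in ${_Fi^M}(w)= {_F\scrX_w^{\!M}}$. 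By Remark~\ref{FiP=iP} (here the hypothesis \ref{hyp bonnes F-strates} for $M$ is available via the standing assumption, or we simply note $w\in\UF^M$) we have ${_F\scrX_w^{\!M}}=\scrX_w^M$, so $\ES{O}_w^M$ is dense in $\scrX_w^M$; multiplying by $U_P$, the set $\ES{O}_w^M U_P$ is dense in $\scrX_w^M U_P= {_Fi^P}(w)$.

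For (i), the orbit $\ES{O}_{\mathrm{g\acute{e}o}}(w,P)$ is by definition the unique $G$-orbit in $\UU$ meeting ${_Fi^P}(w)= \scrX_w^M U_P$ densely, hence (since $\ES{O}_w^M U_P$ is dense in $\scrX_w^M U_P$) it is the unique $G$-orbit meeting $\ES{O}_w^M U_P$ densely, which is exactly the defining property of $I_P^{G,{\bf LS}}(w)= I_P^{G,{\bf LS}}(\ES{O}_w^M)$. Here I would also invoke transitivity of the ordinary Lusztig--Spaltenstein induction, $I_P^{G,{\bf LS}}(\ES{O}_w^M)= I_P^{G,{\bf LS}}(\ES{O}_{w_1}^M)= I_P^{G,{\bf LS}}(w_1)$, to phrase the conclusion for $w_1$ rather than for our replaced $w$. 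The inclusion $\ES{O}_{\mathrm{g\acute{e}o}}(w,P)\subset \bsfrY_{\mathrm{g\acute{e}o}}(w,P)$ is already recorded in the text just above the lemma, because $\bsfrY_{\mathrm{g\acute{e}o}}(w,P)$ is a finite union of $G$-orbits and one of them is dense in ${_Fi^P}(w)$.

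For (ii), I would first note that $\ES{O}_{\mathrm{g\acute{e}o}}(w,P)$ contains an $F$-rational point by construction (this is exactly the place where the density of $i_F^P(w)$ in ${_Fi^P}(w)$, itself a consequence of ${_Fi^P}(w)\simeq_F \mbb{A}_F^{m}$, enters), so $\UF\cap I_P^{G,{\bf LS}}(w_1)= \UF\cap \ES{O}_{\mathrm{g\acute{e}o}}(w,P)\neq\emptyset$. Then for any $u\in \UF\cap \ES{O}_{\mathrm{g\acute{e}o}}(w,P)$ we have, by Remark~\ref{sur l'hyp reg pour M}, the option to move $u$ inside its $F$-lame so that $\ES{O}_u= \ES{O}_{\mathrm{g\acute{e}o}}(u)$; more to the point, since $\ES{O}_{\mathrm{g\acute{e}o}}(w,P)\subset \bsfrY_{\mathrm{g\acute{e}o}}(w,P)$ and $\ES{O}_{\mathrm{g\acute{e}o}}(w,P)$ already meets ${_Fi^P}(w)$ densely, the $\overline F$-strate containing $u$ meets ${_Fi^P}(w)$ densely, hence equals $\bsfrY_{\mathrm{g\acute{e}o}}(w,P)$; as $u\in\UF$, the defining formula for ${_FI_P^G}(w)$ (the unique $F$-strate $\ES{}$ of $\FU$ with $\ES{}= {_F\bsfrY_{u'}}$ for $u'\in\UF\cap\bsfrY_{\mathrm{g\acute{e}o}}(w,P)$) gives ${_FI_P^G}(w)= {_F\bsfrY_u}$. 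The main obstacle I anticipate is bookkeeping rather than mathematics: one must be careful that passing from $w$ to the representative $w_1$ (or back) is harmless for \emph{all four} objects $\ES{O}_w^M$-dense-hull, $\ES{O}_{\mathrm{g\acute{e}o}}(\cdot,P)$, $\bsfrY_{\mathrm{g\acute{e}o}}(\cdot,P)$ and ${_FI_P^G}(\cdot)$ simultaneously, invoking in each case the already-proved constancy of these assignments on $F$-strates of $\FU^M$ (for the first, the classical Lusztig--Spaltenstein fact that $I_P^{G,{\bf LS}}$ depends only on the $M$-orbit; for the others, the remarks preceding the lemma).
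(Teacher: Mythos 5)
Your argument for (ii) and the density computation at the heart of (i) are essentially the paper's, but the opening reduction creates a genuine gap. You replace $w$ by $w_1$ and assert that this changes neither ${_F\bsfrY_w^M}$, nor $\ES{O}_{\mathrm{g\acute{e}o}}(w,P)$, nor ${_FI_P^G}(w)$, appealing at the end to "the already-proved constancy of these assignments on $F$-strates of $\FU^M$". That constancy is only available for $w'\in{_F\bsfrY_w^M}$, whereas the hypothesis merely puts $w_1$ in $\FU^M\cap\ES{O}_{\mathrm{g\acute{e}o}}^M(w)$: such a $w_1$ need not lie in the $F$-strate of $w$, nor even in ${_F\scrX_w^M}$, and in the bad case $\check{X}_F(M)_{\mbb{Q}}\cap\bs{\Lambda}^M_w=\emptyset$ the geometric orbit $\ES{O}_{\mathrm{g\acute{e}o}}^M(w)$ can meet ${_F\scrX_w^M}$ only inside the boundary ${_F\scrX_w^M}\smallsetminus{_F\scrY_w^M}$, so it lies in a different $F$-strate. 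Hence the claim that the replacement is harmless is precisely of the nature of what the lemma is meant to prove, and for ${_F\bsfrY_w^M}$ it is false in general. Two subsidiary appeals are also unavailable: Remark \ref{FiP=iP} (giving ${_F\scrX_w^M}=\scrX_w^M$) requires $w\in\UF^M$ \emph{and} hypothesis \ref{hyp bonnes F-strates} for $M$, whereas here $w\in\FU^M$ need not be $F$-rational and the standing assumption of \ref{induction parabolique des F-strates} is hypothesis \ref{hyp bonnes F-strates} for $G$ only (the footnote states explicitly that assuming it for the Levi factors is optional and unnecessary). Finally, after the replacement you silently switch saturates, since $\ES{O}^M_{w_1}$ is dense in the saturate ${_F\scrX_w^M}$ attached to the \emph{original} $w$, not in its own saturate ${_F\scrX_{w_1}^M}$.

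The fix is not to move $w$ at all, which is how the paper argues. Since $w_1\in\ES{O}_{\mathrm{g\acute{e}o}}^M(w)$ and orbits are locally closed, $\ES{O}^M_{w_1}\cap{_F\scrX_w^M}$ is open dense in ${_F\scrX_w^M}$, hence $\ES{O}^M_{w_1}U_P\cap{_Fi^P}(w)$ is open dense in ${_Fi^P}(w)$; intersecting with the open dense subset $I_P^{G,{\bf LS}}(w_1)\cap\ES{O}^M_{w_1}U_P$ of $\ES{O}^M_{w_1}U_P$ shows that $I_P^{G,{\bf LS}}(w_1)\cap{_Fi^P}(w)$ is dense in ${_Fi^P}(w)$, which is the defining property of $\ES{O}_{\mathrm{g\acute{e}o}}(w,P)$ and gives (i), with no identification of ${_F\scrX_w^M}$ with $\scrX_w^M$ and no change of base point. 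Your treatment of (ii) then goes through as written, since it only uses (i), the density of $i_F^P(w)$ in the affine space ${_Fi^P}(w)$, the inclusion $\ES{O}_{\mathrm{g\acute{e}o}}(w,P)\subset\bsfrY_{\mathrm{g\acute{e}o}}(w,P)$, and the defining formula ${_FI_P^G}(w)={_F\bsfrY_u}$ for $u\in\UF\cap\bsfrY_{\mathrm{g\acute{e}o}}(w,P)$, all recorded before the lemma.
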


\begin{proof}
Posons $\ES{O}=I_P^{G,{\bf LS}}(w_1)$ et $\bsfrY=\bsfrY_{\mathrm{g\acute{e}o}}(w,P)$. 

Puisque  $\ES{O}_{w_1}^M \cap {_F\scrX_w^M}$ est ouvert (dense) dans ${_F\scrX_w^M}$, l'ensemble 
$$(\ES{O}_{w_1}^M\cap {_F\scrX_w^M})U_P= \ES{O}_{w_1}^MU_P \cap {_Fi^P}(w)$$ est ouvert (dense) dans ${_Fi^P}(w)={_F\scrX_w^M}U_P$. Comme d'autre part 
$\ES{O}\cap \ES{O}_{w_1}^MU_P$ est ouvert (dense) dans $\ES{O}_{w_1}^MU_P$, 
on obtient que $\ES{O}\cap \ES{O}_{w_1}^MU_P\cap {_Fi^P}(w)$ est ouvert (dense) dans ${_Fi^P}(w)$. Par suite $\ES{O}\cap {_Fi^P}(w)$ est dense dans 
${_Fi^P}(w)$. Cela prouve (i).

Quant au point (ii), puisque (d'aprs (i)) $\ES{O}\cap {_Fi^P}(w)$ est ouvert (dense) dans ${_Fi^P}(w)$ et que $i_F^P(w)$ est dense dans ${_Fi^P}(w)$, l'intersection $\ES{O} \cap i_F^P(w)$ est dense 
dans $i_F^P(w)$. D'o le point (ii) puisque ${_FI_P^G}(w)= {_F\bsfrY_u}$ pour tout $u\in \UF \cap \bsfrY_{\mathrm{g\acute{e}o}}(w,P)$. 
\end{proof}

\begin{remark}\label{comparaison IPLS et IP}
\textup{\begin{enumerate}
\item[(i)] Pour $F=\overline{F}$, si $p$ est bon pour $G$, les $\overline{F}$-strates de $\UU$ co\"{\i}ncident avec les orbites gŽomŽtriques unipotentes 
(\ref{p bon, strate-orbite}); si de plus $p$ est bon pour $M$, alors il en est de mme pour les $\overline{F}$-strates de $\UU^M$ et d'aprs \ref{inclusion IPLS dans IP}, l'induction parabolique 
des $\overline{F}$-strates de $\UU^M$ donnŽe par l'application $I_P^G= {_{\smash{\overline{F}}}I_P^G}$ co\"{\i}ncide avec l'induction parabolique des orbites gŽomŽtriques unipotentes de Lusztig-Spaltenstein. 
\item[(ii)] Si $p$ est trs bon pour $G$, alors (\ref{comparaison F-strate-orbite rat}\,(i)) les $F$-strates de $\UF$ sont les points 
$F$-rationnels des orbites gŽomŽtriques unipotentes de $G$ qui rencontrent $\UF$. Dans ce cas pour $w\in \FU^M$, 
l'orbite gŽomŽtrique $I_P^{G,{\bf LS}}(w)$ rencontre $\UF$  et on a l'ŽgalitŽ $I_P^{G,{\bf LS}}(w)(F) = I_{F,P}^G(w)$.
\item[(iii)] Pour $G=\mathrm{GL}_n$, quel que soit $p$, les $\overline{F}$-strates de $\UU=\UU^{\mathrm{GL}_n}$ 
co\"{\i}ncident avec les orbites gŽomŽtriques unipotentes, et pour une telle 
orbite $\ESO$, l'intersection $\ESO(F)=\ESO \cap G(F)$ est non vide et c'est une $F$-strate de $\UF^{\mathrm{GL}_n}$. D'ailleurs il en est de mme pour $M$ puisque 
$M$ est $F$-isomorphe ˆ un produit ${\rm GL}_{n_1}\times \cdots \times {\rm GL}_{n_r}$ avec $n_1+\cdots + n_r =n$. 
Pour $w\in \FU^M$, quel que soit $F$, on a toujours les ŽgalitŽs $I_P^{G,{\bf LS}}(w) = I_{F,P}^G(w)$ et $I_P^{G,{\bf LS}}(w)(F) = I_{F,P}^G(w)$.
\end{enumerate}
}
\end{remark}

\P\hskip1mm\textit{IndŽpendance par rapport ˆ $P$. ---} 
L'un des principaux rŽsultats de loc.~cit. est que la $G$-orbite $I_P^{G,{\bf LS}}(w)$ ne dŽpend pas du sous-groupe parabolique $P$ de $G$ de composante de Levi 
$M$ \cite[2.2]{LS}: pour tout sous-groupe parabolique $P'=M\ltimes U_{P'}$ de $G$, on a l'ŽgalitŽ 
$I_{P'}^{G,{\bf LS}}(w)= I_P^{G,{\bf LS}}(w)$. On peut donc la noter $$I_M^{G,{\bf LS}}(w)=I_M^{G,{\bf LS}}(\ES{O}_w^M)\ptf$$ 

La proposition suivante est la version $F$-strates de \cite[2.2]{LS}. D'ailleurs c'est une consŽquence de loc.~cit.

\begin{proposition}\label{indep P} 
Pour $w\in \FU^M$, la $F$-strate ${_FI_P^G}(w)$ de $\FU$ ne dŽpend pas du $F$-sous-groupe parabolique 
$P$ de $G$ de composante de Levi $M$. On peut donc poser $${_FI_M^G}(w)= {_FI_P^G}(w) \quad \hbox{et}\quad I_{F,M}^G(w)=I_{F,P}^G(w)\ptf$$
\end{proposition}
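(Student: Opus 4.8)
The goal is to show that for $w\in \FU^M$, the $F$-strate ${_FI_P^G}(w)$ of $\FU$ does not depend on the choice of $F$-parabolic $P$ with Levi component $M$. The plan is to reduce this to the corresponding independence statement for Lusztig--Spaltenstein induction of geometric unipotent orbits \cite[2.2]{LS}, using the comparison lemma \ref{inclusion IPLS dans IP}. First I would observe that, by definition of ${_FI_P^G}$, it suffices to identify ${_FI_P^G}(w)$ in a way that manifestly does not see $P$. Since the $F$-strate ${_FI_P^G}(w)$ is, by construction, the unique $F$-strate ${_F\bsfrY_u}$ with $u\in\UF$ such that $\bsfrY_{\mathrm{g\acute{e}o}}(w,P)= \bsfrY_{\mathrm{g\acute{e}o}}(u)$, the independence will follow as soon as I can pin down the associated geometric object $\bsfrY_{\mathrm{g\acute{e}o}}(w,P)$ (or rather a $G$-orbit inside it determining it) independently of $P$.

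\textbf{Key steps.} The main step is to invoke \ref{inclusion IPLS dans IP}: pick $w_1\in \FU^M\cap \ES{O}_{\mathrm{g\acute{e}o}}^M(w)$, which exists since $w\in\FU^M$ and (by the running hypothesis \ref{hyp bonnes F-strates} for $M$, together with \ref{sur l'hyp reg pour M} applied to $M$) the intersection $\ES{O}_{\mathrm{g\acute{e}o}}^M(w)\cap \scrY_{F,w}^M$ is nonempty. Then \ref{inclusion IPLS dans IP}\,(i) gives $I_P^{G,{\bf LS}}(w_1)= \ES{O}_{\mathrm{g\acute{e}o}}(w,P)\subset \bsfrY_{\mathrm{g\acute{e}o}}(w,P)$, and \ref{inclusion IPLS dans IP}\,(ii) says that $\UF\cap I_P^{G,{\bf LS}}(w_1)\neq\emptyset$ and ${_FI_P^G}(w)= {_F\bsfrY_u}$ for any $u$ in that intersection. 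Now I apply the theorem of Lusztig--Spaltenstein \cite[2.2]{LS}: the geometric orbit $I_P^{G,{\bf LS}}(w_1)$ depends only on $M$ (and on the orbit $\ES{O}_{w_1}^M=\ES{O}_{\mathrm{g\acute{e}o}}^M(w)$), not on $P$. Hence for two $F$-paraboliques $P,P'$ with the same Levi $M$ we get $I_P^{G,{\bf LS}}(w_1)= I_{P'}^{G,{\bf LS}}(w_1)=:\ES{O}$, and picking a single $u\in\UF\cap\ES{O}$ (nonempty by the above), \ref{inclusion IPLS dans IP}\,(ii) applied to both $P$ and $P'$ yields ${_FI_P^G}(w)= {_F\bsfrY_u}= {_FI_{P'}^G}(w)$. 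The final clause, that $I_{F,M}^G(w)=G(F)\cap {_FI_M^G}(w)$ is well defined, is then immediate from $I_{F,P}^G(w)= G(F)\cap {_FI_P^G}(w)$.

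\textbf{Main obstacle.} The only genuinely delicate point is checking that \ref{inclusion IPLS dans IP} applies with a \emph{fixed} representative $u\in\UF\cap\ES{O}$ simultaneously for $P$ and $P'$: \ref{inclusion IPLS dans IP}\,(ii) is stated for a given $P$, so one must make sure that the element $w_1\in\FU^M\cap\ES{O}_{\mathrm{g\acute{e}o}}^M(w)$ chosen to feed into the Lusztig--Spaltenstein machinery can be taken independent of $P$ --- which it can, since $\ES{O}_{\mathrm{g\acute{e}o}}^M(w)$ is intrinsic to $w$ and $M$ --- and that the conclusion ${_FI_P^G}(w)={_F\bsfrY_u}$ holds for \emph{every} $u\in\UF\cap I_P^{G,{\bf LS}}(w_1)$, which is exactly what \ref{inclusion IPLS dans IP}\,(ii) asserts. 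Everything else is bookkeeping, and no new geometric input beyond \cite{LS} and the already-established lemma \ref{inclusion IPLS dans IP} is needed.
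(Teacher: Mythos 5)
Votre preuve est correcte et suit essentiellement la m\^eme d\'emarche que celle du texte: on choisit $w_1\in \FU^M\cap \ES{O}_{\mathrm{g\acute{e}o}}^M(w)$, on applique le lemme \ref{inclusion IPLS dans IP}\,(ii) \`a $P$ et \`a $P'$ avec la m\^eme orbite $\ES{O}=I_P^{G,{\bf LS}}(w_1)=I_{P'}^{G,{\bf LS}}(w_1)$ (ind\'ependance de Lusztig--Spaltenstein \cite[2.2]{LS}) et le m\^eme repr\'esentant $u\in \UF\cap\ES{O}$, d'o\`u ${_FI_P^G}(w)={_F\bsfrY_u}={_FI_{P'}^G}(w)$. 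Votre discussion de la non-vacuit\'e de $\FU^M\cap\ES{O}_{\mathrm{g\acute{e}o}}^M(w)$ est un peu plus d\'etaill\'ee que n\'ecessaire (elle r\'esulte d\'ej\`a de la densit\'e de $\ES{O}_{\mathrm{g\acute{e}o}}^M(w)\cap{_F\scrX_w^M}$ dans ${_F\scrX_w^M}$), mais l'argument co\"{\i}ncide avec celui du papier.
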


\begin{proof} 
Soient $w_1\in \ES{O}_{\mathrm{g\acute{e}o}}^M(w)$ 
et $\ES{O}= I_P^{G,{\bf LS}}(w_1)$. D'aprs \ref{inclusion IPLS dans IP}\,(ii)), pour $u\in \UF \cap \ES{O}$, on a ${_FI_P^G}(w) = {_F\bsfrY_u}$. 
Si $P'$ est un autre $F$-sous-groupe parabolique de $G$ de composante de Levi $M$, puisque $\ES{O}= I_{P'}^{G,{\bf LS}}(w_1)$, d'aprs loc.~cit. on a aussi 
${_FI_{P'}^G}(w) = {_F\bsfrY_u}$. Donc ${_FI_P^G}(w)={_FI_{P'}^G}(w)$.
\end{proof}

\begin{remark}\label{indep P bis}
\textup{
Soient $P$ et $P'$ deux sous-groupes paraboliques standard de $G$ dŽfinis sur $F$, et soient $M=M_P$ et $M'=M_{P'}$ 
leurs composantes de Levi contenant $M_0$. 
La proposition \ref{indep P} est Žquivalente ˆ l'ŽnoncŽ suivant, pour $w\in {_F\UU^M}$ et $w'\in {_F\UU^{M'}}$: \textit{s'il existe un ŽlŽment $s\in W=N^G(A_0)/M_0$ tel que 
$$n_s\bullet M= M'\quad \hbox{et} \quad n_s\bullet {_F\bsfrY^M_w}={_F\bsfrY^{M'}_{w'}}$$ o 
$n_s$ est un reprŽsentant de $w$ dans $N^G(A_0)(F)$, alors on a l'ŽgalitŽ 
$${_FI_P^G}(w) = {_FI_{P'}^G}(w')\ptf$$ } 
}\end{remark}

Dans \cite{LS}, Lusztig et Spaltenstein donnent deux dŽmonstrations de leur thŽorme 2.2 sur l'indŽpendance de l'induite 
d'une orbite gŽomŽtrique unipotente par rapport au parabolique induisant. 
La premire, basŽe sur la thŽorie des reprŽsentations d'un groupe rŽductif connexe sur un corps fini, 
n'est valable que si $p>1$. La seconde (cf. \cite[2.8]{LS}), valable en toute caractŽristique, utilise l'automorphisme d'opposition. 
Il est possible d'adapter ici cette seconde dŽmonstration pour donner 
une preuve \guill{directe} de \ref{indep P} --- \cad sans utiliser \cite[2.2]{LS} ---, au moins pour les $F$-strates de $\FU^M$ qui possdent un point $F$-rationnel (qui sont celles qui nous intŽressent vraiment), 
en supposant que l'hypothse \ref{hyp bonnes F-strates} est vŽrifiŽe pour tous les $F$-facteurs de Levi de $G$. 
C'est cette preuve que nous esquissons jusqu'ˆ la fin de \ref{induction parabolique des F-strates}. 

Pour $w\in \UF^M$, puisque (d'aprs \ref{induction et descente}\,(ii)) 
$I_{F,P}^G(w)= G(F)\cap I_{F^{\mathrm{s\acute{e}p}},P}^G(w)$, si $P'$ est un autre $F$-sous-groupe parabolique de $G$ de composante 
de Levi $M$, on a $${_FI_P^G}(w) = {_FI_{P'}^G}(w)\quad\hbox{si et seulement si}\quad{_{\smash{F^{\mathrm{s\acute{e}p}}}}I_P^G}(w) = {_{\smash{F^{\mathrm{s\acute{e}p}}}}I_{P'}^G}(w)\ptf$$ 
Par consŽquent quitte ˆ remplacer $F$ par $F^\mathrm{s\acute{e}p}$, 
on peut supposer que $G$ est dŽployŽ sur $F$. Alors $A_0$ est un tore maximal de $G$ dŽfini et dŽployŽ sur $F$. 
Soit $\phi$ un $F$-automorphisme de $G$ tel que $\phi(A_0)=A_0$ et $\phi$ opre par $-1$ sur le systme de racines $\ES{R}= \ES{R}_{A_0}$. 
Un tel $\phi$ existe et il est appelŽ \textit{$F$-automorphisme d'opposition (par rapport ˆ $A_0$)}. Si $H$ est un $F$-sous-groupe rŽductif connexe fermŽ de $G$ contenant 
$A_0$, alors $H$ est dŽployŽ sur $F$ et la restriction de $\phi$ ˆ $H$ est un $F$-automorphisme d'opposition de $H$. 
On choisit $\phi$ de telle manire que $\phi^2=\mathrm{Id}_G$ et la restriction de $\phi$ ˆ $A_0$ soit le passage ˆ l'inverse. 
L'automorphisme de $W=N^G(A_0)/A_0$ induit par $\phi$ est l'identitŽ et si $w_0\in W$ est l'ŽlŽment de plus grande longueur, 
alors $w_0 \circ \phi$ est l'unique automorphisme involutif de $\ES{R}$ 
qui laisse invariants $\ES{R}^+$ et $\Delta$.

\begin{lemma}\label{strates et involution d'opp}
(On suppose que $G$ est $F$-dŽployŽ.) Soit $\phi$ un $F$-automorphisme d'involution de $G$. Pour $u\in \UF$, on a $\phi({_F\bsfrY_u})= {_F\bsfrY_u}$.
\end{lemma}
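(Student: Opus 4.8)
The claim is that for a split $G$ over $F$, an $F$-automorphism of opposition $\phi$ (hence $\phi^2=\mathrm{Id}_G$, $\phi$ acts as $-1$ on $\ES{R}=\ES{R}_{A_0}$, and $\phi\vert_{A_0}$ is inversion), and any $u\in\UF$, one has $\phi({_F\bsfrY_u})={_F\bsfrY_u}$. The natural strategy is to track how $\phi$ interacts with the normalized optimal virtual co-characters, since an $F$-strate is by definition characterized by the $G(F)$-orbit of $\bs{\Lambda}_{F,u}$. So first I would reduce to understanding $\phi(\bs{\Lambda}_{F,u})$ and check that it equals $g\bullet\bs{\Lambda}_{F,u'}$ for a suitable $u'\in{_F\bsfrY_u}$ and $g\in G(F)$. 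The key mechanism is that $\phi$ is an $F$-automorphism of the $G$-variety $G$ (with the conjugation action twisted by $\phi$ on the acting group), so it transports the whole Kempf-Rousseau-Hesselink machinery: for $\lambda\in\check{X}_F(G)$ and $x\in\UF$ one gets $m_{\phi(x)}(\phi\circ\lambda)=m_x(\lambda)$ and $\|\phi\circ\lambda\|=\|\lambda\|$ (the latter because the $F$-norm on $\check{X}(G)$ is built from a $W^G(A_0)$-invariant positive form on $\check{X}(A_0)$, and $\phi\vert_{A_0}=-\mathrm{Id}$ preserves any such form), whence $\phi$ maps $(F,x)$-optimal co-characters to $(F,\phi(x))$-optimal ones and $\bs{\Lambda}_{F,\phi(x)}=\phi_{\mbb{Q}}(\bs{\Lambda}_{F,x})$.

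Second, I would use this to reduce the problem to the combinatorial heart: since every element of $\UF$ is $G(F)$-conjugate to one in standard position (cf. \ref{r�duction standard}), and $\phi$ maps standard parabolics to anti-standard ones, I would compose $\phi$ with the automorphism $\mathrm{Int}_{n_{w_0}}$ given by a representative $n_{w_0}\in N^G(A_0)(F)$ of the longest element $w_0\in W$; then $\psi=\mathrm{Int}_{n_{w_0}}\circ\phi$ is an $F$-automorphism of $G$ stabilizing the pinning $(P_0,A_0,(e_\alpha))$ up to the involution $w_0\circ\phi$ of $\Delta_0$, in particular $\psi(P_0)=P_0$ and $\psi$ preserves standard position. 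It suffices to prove $\psi({_F\bsfrY_u})={_F\bsfrY_u}$, which amounts to showing that the finite set $\bs{\Lambda}_{F,\mathrm{st}}^G(G,1)\subset\check{X}(A_0)_{\mbb{Q}}$ of $F$-optimal virtual co-characters in standard position is stable under the induced map $\psi_{\mbb{Q}}$, and moreover that $\psi$ fixes each one (so that it sends the corresponding standard $F$-lame to itself, hence the $F$-strate to itself). The fixity should follow because $\psi_{\mbb{Q}}$ acts on $\check{X}(A_0)_{\mbb{Q}}=\ag_{0,\mbb{Q}}$ through the automorphism $w_0\circ\phi$ of $\ES{R}$, which fixes $\Delta_0$ and hence fixes every dominant (in particular every standard-position) rational co-character; and the $F$-norm is compatible with the decomposition indexed by the irreducible components of $\ES{R}$, so it is $\psi_{\mbb{Q}}$-invariant. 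Thus $\psi_{\mbb{Q}}$ fixes $\bs{q}_F$ and sends $V_{\mu,1}$ to $V_{\psi_{\mbb{Q}}(\mu),1}=V_{\mu,1}$ for $\mu$ in standard position.

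Third, I would assemble the pieces: given $u\in\UF$, pick $g\in G(F)$ with $g\bullet u$ in standard position, write $\mu\in\bs{\Lambda}_{F,g\bullet u}\cap\check{X}(A_0)_{\mbb{Q}}$, and observe $\psi_{\mbb{Q}}(\mu)=\mu$. Then $\psi(g\bullet u)$ lies in $U_\mu(F)\subset U_0(F)$, is again in standard position with $\mu\in\bs{\Lambda}_{F,\psi(g\bullet u)}$ (using the transport of the KRH theory by $\psi$ from the first step, together with $\|\psi\circ\lambda\|=\|\lambda\|$ and the equality $\bs{\Lambda}_{F,\psi(x)}=\psi_{\mbb{Q}}(\bs{\Lambda}_{F,x})$), hence $\psi(g\bullet u)$ and $g\bullet u$ lie in the same $F$-strate of $\FU$ — indeed in the same standard $F$-lame, since a $G(F)$-conjugacy between standard $F$-lames is an equality (cf. \ref{r�duction standard}). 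Therefore $\psi({_F\bsfrY_u})={_F\bsfrY_{g\bullet u}}={_F\bsfrY_u}$, and composing back with $\mathrm{Int}_{n_{w_0}^{-1}}$ (which preserves $F$-strates, being an inner $G(F)$-automorphism) gives $\phi({_F\bsfrY_u})={_F\bsfrY_u}$.

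\textbf{Main obstacle.} The delicate point is the first step: verifying carefully that $\phi$, being an automorphism of the \emph{pair} (group, variety) rather than merely of the variety, does transport the $(F,\cdot)$-optimality formalism verbatim — i.e.\ that $\phi$ intertwines the $G$-action by conjugation with itself via $x\mapsto\phi\circ x$ on co-characters and $g\mapsto\phi(g)$ on group elements, so that $P_{\phi\circ\lambda}=\phi(P_\lambda)$, $m_{\phi(x)}(\phi\circ\lambda)=m_x(\lambda)$, and $\bs{\Lambda}_{F,\phi(x)}=\phi_{\mbb{Q}}(\bs{\Lambda}_{F,x})$ — and, crucially, that the chosen $F$-norm $\|\,\|$ on $\check{X}(G)$ can be taken $\phi$-invariant (equivalently $\psi$-invariant). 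This last is where the hypothesis that $G$ is $F$-split enters essentially: $\check{X}_F(G)=\check{X}(G)$, $A_0$ is a maximal torus, and the norm comes from a $W^G(A_0)$-invariant form on $\check{X}(A_0)$ decomposing along the irreducible components of $\ES{R}$; since $w_0\circ\phi$ permutes these components and preserves each root-length, the form (rescaled on each component if needed) is $\psi_{\mbb{Q}}$-invariant. Everything else is formal once these compatibilities are in place.
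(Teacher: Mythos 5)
Your first step (transport of structure) is sound: $\phi$ does intertwine the rational Kempf--Rousseau--Hesselink formalism, and since $\phi\vert_{A_0}$ is inversion, $\phi_{\mbb{Q}}=-\mathrm{Id}$ on $\check{X}(A_0)_{\mbb{Q}}$ preserves any norm, so $\bs{\Lambda}_{F,\phi(x)}=\phi_{\mbb{Q}}(\bs{\Lambda}_{F,x})$ and, as you say, the lemma reduces to showing that the standard optimal virtual co-character $\mu$ of a unipotent $F$-strate satisfies $\psi_{\mbb{Q}}(\mu)=\mu$, where $\psi=\mathrm{Int}_{n_{w_0}}\circ\phi$. The gap is in your justification of this fixity. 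The involution through which $\psi_{\mbb{Q}}$ acts on $\check{X}(A_0)_{\mbb{Q}}$ is $w_0\circ\phi=-w_0$, which leaves the \emph{set} $\Delta_0$ invariant but does not fix its elements pointwise: it is the opposition (diagram) involution, nontrivial exactly in types ${\bf A}_n$ ($n\geq 2$), ${\bf D}_{2n+1}$ and ${\bf E}_6$. So your claim that it \guill{fixes $\Delta_0$ and hence fixes every dominant rational co-character} is false in those types; all you get for free is that $\psi_{\mbb{Q}}$ permutes the finite set $\bs{\Lambda}_{F,\mathrm{st}}$, and the whole substance of the lemma is that it fixes each element of that set attached to a unipotent $F$-strate --- equivalently, that for $\lambda\in\Lambda_{F,u}^{\mathrm{opt}}$ there exists $g\in G(F)$ with $g\bullet\lambda=-\lambda$. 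The remark immediately following the lemma in the paper makes exactly this point: the statement is immediate when $-1\in W$ or when $P_\lambda=P_0$, but the types ${\bf A}_n$ ($n\geq 2$), ${\bf D}_{2n+1}$, ${\bf E}_6$ are left untreated by such elementary means; your argument in effect assumes $-w_0=\mathrm{Id}$, i.e. precisely the easy case.

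The paper proves the lemma by a different route, which is where the missing input comes from: by transport of structure $\phi({_F\bsfrY_u})={_F\bsfrY_{\phi(u)}}$ and $\phi(\ES{O}_{\mathrm{g\acute{e}o}}(u))=\ES{O}_{\mathrm{g\acute{e}o}}(\phi(u))$; the Lusztig--Spaltenstein theorem that the opposition automorphism fixes every geometric unipotent orbit ([LS, 2.10]) then forces $\bsfrY_{\mathrm{g\acute{e}o}}(\phi(u))=\bsfrY_{\mathrm{g\acute{e}o}}(u)$, since both geometric strata contain the $G$-orbit $\ES{O}_{\mathrm{g\acute{e}o}}(u)$; finally, the standing hypothesis \ref{hyp bonnes F-strates} together with \ref{descente 3 sous HBFS} identifies $\bsfrY_{F,u}$ with $\UF\cap\bsfrY_{\mathrm{g\acute{e}o}}(u)$, whence $\bsfrY_{F,\phi(u)}=\bsfrY_{F,u}$ and ${_F\bsfrY_{\phi(u)}}={_F\bsfrY_u}$. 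To repair your proof without invoking [LS, 2.10] you would have to show directly that the optimal co-characters of unipotent $F$-strates are fixed by the diagram involution in the three problematic types, which is exactly the verification the paper states it has not carried out.
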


\begin{proof}
Soit $u\in \UF$. Alors $\phi(u)\in \UF$ et (par transport de structures) $\phi({_F\bsfrY_u})= {_F\bsfrY_{\phi(u)}}$. On a aussi $\phi(\bsfrY_{\mathrm{g\acute{e}o}}(u))= 
\bsfrY_{\mathrm{g\acute{e}o}}(\phi(u))$ et $\phi(\ES{O}_{\mathrm{g\acute{e}o}}(u))= 
\ES{O}_{\mathrm{g\acute{e}o}}(\phi(u))$. D'aprs  \cite[2.10]{LS}, l'automorphisme d'opposition $\phi$ opre trivialement sur les orbites gŽomŽtriques unipotentes; en particulier 
$\phi(\ES{O}_{\mathrm{g\acute{e}o}}(u))=\ES{O}_{\mathrm{g\acute{e}o}}(u)$. Par consŽquent la $G$-orbite $\ES{O}_{\mathrm{g\acute{e}o}}(u)$ est contenue  
dans l'intersection $\bsfrY_{\mathrm{g\acute{e}o}}(\phi(u))\cap \bsfrY_{\mathrm{g\acute{e}o}}(u)$, ce qui n'est possible que si les strates gŽomŽtriques unipotentes $\bsfrY_{\mathrm{g\acute{e}o}}(\phi(u))$ 
et $\bsfrY_{\mathrm{g\acute{e}o}}(u)$ sont Žgales. Puisque (d'aprs \ref{hyp bonnes F-strates} et \ref{descente 3 sous HBFS}) 
$\bsfrY_{F,u}= \UF \cap \bsfrY_{\mathrm{g\acute{e}o}}(u)$ et $\bsfrY_{F,\phi(u)}= \UF \cap \bsfrY_{\mathrm{g\acute{e}o}}(\phi(u))$, on a l'ŽgalitŽ $\bsfrY_{F,\phi(u)}= \bsfrY_{F,u}$, laquelle entra"ne l'ŽgalitŽ ${_F\bsfrY_{\phi(u)}}=
{_F\bsfrY_u}$. 
\end{proof}

\begin{remark}\label{ˆ propos du lemme 2.10 de [LS]}
\textup{(On suppose toujours que $G$ est $F$-dŽployŽ.) Pour prouver que $u$ et $\phi(u)$ appartiennent ˆ la mme $F$-strate de $\UF$, 
on a utilisŽ \cite[2.10]{LS} mais on doit pouvoir s'en passer. 
En effet, on peut supposer $u$ en position standard. On peut aussi supposer $G$ (absolument) quasi-simple. 
Soit $\lambda$ l'unique ŽlŽment de $\check{X}(A_0)\cap \Lambda_{F,u}^\mathrm{opt}$ et soit $k= m_u(\lambda)$. Alors 
$\phi \circ \lambda = -\lambda $ est l'unique ŽlŽment de $\check{X}(A_0)\cap \Lambda_{F,\phi(u)}^\mathrm{opt}$ et $m_{\phi(u)}(-\lambda)=m_u(\lambda)=k$. 
Le lemme \ref{strates et involution d'opp} est donc Žquivalent 
ˆ l'ŽnoncŽ suivant: \textit{il existe un $g\in G(F)$ tel que $g\bullet \lambda = -\lambda$}. Cela est immŽdiat si $-1\in W$ ou si $P_\lambda =P_0$ 
(car $w_0\circ \phi(P_0)=P_0$). 
Il reste ˆ traiter les cas o $G$ est de type ${\bf A}_n$ avec $n\geq 2$, ${\bf D}_{2n+1}$ et ${\bf E}_6$, ce que nous n'avons pas eu le courage de faire! }
\end{remark}

Le lemme \ref{strates et involution d'opp} a son intŽrt propre. De plus, compte-tenu de la propriŽtŽ de transitivitŽ \ref{transitivitŽ de l'induction}, il permet de prouver \ref{indep P} (prŽcisŽment, 
l'ŽnoncŽ de la remarque \ref{indep P bis}) pour les ŽlŽments de $\UF^M$ comme en \cite[2.11]{LS}, en supposant que l'hypothse \ref{hyp bonnes F-strates} est vŽrifiŽe pour tous les $F$-facteurs de 
Levi de $G$ (condition nŽcessaire pour pouvoir leur appliquer \ref{strates et involution d'opp}). 
En conclusion, si l'on sait prouver 
\ref{strates et involution d'opp} sans utiliser \cite[2.10]{LS} (cf. la remarque \ref{ˆ propos du lemme 2.10 de [LS]}), alors en supposant que l'hypothse \ref{hyp bonnes F-strates} est vŽrifiŽe pour tous les $F$-facteurs 
de Levi de $G$, on a une preuve de \ref{indep P} pour les ŽlŽments de $\UF^M$ (\cad pour les $F$-strates de $\FU^M$ qui possdent un point $F$-rationnel) n'utilisant que la thŽorie des $F$-strates. 

\subsection{Trois exemples en petit $F$-rang}\label{exemples en basse dimension} 
On dŽcrit dans cette sous-section les $F$-strates unipotentes pour les groupes suivants: 
$\mathrm{SL}_2$, $\mathrm{SU}_{2,1}$ et $\mathrm{Sp}_4$. Dans les trois cas, $p=2$ est le \textit{mauvais} nombre premier. 

\vskip2mm
\P\hskip1mm\textit{Les $F$-strates de $\UF=\UF^G$ pour $G=\mathrm{SL}_2$. ---} Pour $x\in \overline{F}$, 
notons $u(x)$ l'ŽlŽment unipotent $\left(\begin{array}{cc} 1& x \\0 & 1\end{array} \right)$ de $\mathrm{SL}_2$. 
Notons $A_0$ le tore diagonal de $\mathrm{SL}_2$ et $\lambda \in \check{X}(A_0)$ le co-caractre dŽfini par $t^\lambda= \mathrm{diag}(t,t^{-1})$. 
Tout ŽlŽment de $\UU_F$ est conjuguŽ dans $\mathrm{SL}_2(F)$ ˆ un $u(x)$ avec $x\in F$. 
Pour $x\neq 0$, on a $$\Lambda_{E, u(x)}^\mathrm{opt} \cap \check{X}(A_0)= \{\lambda\}\quad \hbox{et} \quad m_{u(x)}(\lambda)=2 \ptf $$ 
Ainsi $\UU_F$ est constituŽ de deux $F$-strates: $\bsfrY_{F,e_G}=\{e_G\}$ et $\bsfrY_{F,u(x)}= \UU_F\smallsetminus \{e_G\}$ 
pour tout $x\in F^\times$. D'autre part, pour tous $x,\, y \in F^\times $, les ŽlŽments $u(x)$ et $u(y)$ sont conjuguŽs 
dans $\mathrm{SL}_2(F)$ si et seulement si $x^{-1}y \in (F^\times)^2$. La $F$-strate non triviale $\UU_F\smallsetminus \{e_G\}$ est constituŽe 
des $\mathrm{SL}_2(F)$-orbites unipotentes non triviales de $\mathrm{SL}_2(F)$, qui sont paramŽtrŽes par $F^\times/(F^\times)^2$. 
Si $p\neq 2$ ou $F$ est parfait, l'ensemble $F^\times / (F^\times)^2$ est fini. Si $p=2$ et $F$ n'est pas parfait, 
l'ensemble $F^\times/(F^\times)^2$ est infini.

\begin{remark}
\textup{
Soit $u=u(x)$ pour un $x\in \overline{F}\smallsetminus \{0\}$. Le centralisateur $G^{u}$ de $u$ dans $G$ 
est l'ensemble des $u(y)$ avec $y\in \overline{F}$. Si $p=2$, le centralisateur $\mathfrak{g}^u= \ker(\mathrm{Ad}_u - \mathrm{Id})$ 
de $u$ dans $\mathfrak{g}$ 
est la sous-algbre de Borel $\mathfrak{p}_\lambda= \mathfrak{g}_{\lambda,0}$ de $\mathfrak{g}$ formŽe des 
$\left(\begin{array}{cc} t &y\\0 & t \end{array}\right)$ avec $t,\,y\in \overline{F}$ et 
l'inclusion $\mathrm{Lie}(G^u)\subset  \mathfrak{g}^u$ est stricte, i.e. le morphisme $G \rightarrow \ES{O}_u ,\, g \mapsto gug^{-1}$ 
n'est pas sŽparable (cf. \ref{notations et rappels}). 
}\end{remark}

\vskip2mm
\P\hskip1mm\textit{Les $F$-strates de $\UF=\UF^G$ pour $G=\mathrm{SU}_{2,1}$. ---} 
On considre maintenant le groupe spŽcial unitaire $G=\mathrm{SU}_{2,1}$ (dŽfini et quasi-dŽployŽ sur $F$) relativement ˆ une extension quadratique sŽparable 
$E/F$. Concrtement, $G$ est le groupe spŽcial unitaire de la forme hermitienne 
$(x_{-1},x_0,x_1) \mapsto x_{-1}^\sigma x_1 + x_0^\sigma x_0 + x_1^\sigma x_{-1}$ sur $E^3$, o 
$\sigma$ est le gŽnŽrateur de $\mathrm{Gal}(E/F)$. Soit $H(E/F)$ l'ensemble des $(u,v)\in E\times E$ tels que $v+ v^\sigma = u^\sigma u$ et soit 
$U_0(F)$ est le sous-groupe de $\mathrm{SL}_3(E)$ formŽ des matrices triangulaires supŽrieures de la forme 
$$\eta(u,v)=\left(\begin{array}{ccc}
1 & -u^\sigma & -v \\
0 & 1 & u\\0 & 0 & 1\end{array}\right) \quad \hbox{avec}\quad (u,v)\in H(E/F)\ptf
$$
La loi de composition sur $U_0(F)$, qui munit $H(E/F)$ d'une structure de groupe, est donnŽe par $$\eta(u,v)\eta(u',v')= \eta(u+u', v+ v' + u^\sigma u')\ptf$$ 
En particulier $\eta(u,v)^{-1}=\eta(-u,v^\sigma)$ et $$\eta(u,v)\eta(u',v')\eta(u,v)^{-1}= \eta(u', v' + u^\sigma u' - u'^\sigma u)\ptf$$ 
Il peut tre commode d'utiliser une autre description 
du groupe $H(E/F)$. Soit $E^0$ le sous-$F$-espace vectoriel de $E$ formŽ des ŽlŽments de trace nulle et soit $e\in E$ un ŽlŽment tel que $e+ e^\sigma =1$. 
L'application $\phi_e: (u,v) \mapsto (u, v - e u^\sigma u)$ est un isomorphisme de $H(E/F)$ sur $E\times E^0$ muni de la loi de groupe 
$$(x,y)\cdot (x',y')= (x+ x', y + y' -e x x'^\sigma - e^\sigma x^\sigma x')\ptf$$ Pour $(u,v^0)\in E\times E^0$, 
en notant $\eta_e(u,v^0)$ l'ŽlŽment $\eta(u, v^0 + eu^\sigma u)$ de $U(F)$. 
on a $$\eta_e(u,v^0)= \eta_e(u,0)\eta_e(0,v^0)\ptf$$ 

\begin{remark}{\rm\'Ecrivons $E= F[z]$ avec $z^2- \alpha z + \beta=0$ et $\alpha, \, \beta\in F$. Si $p\neq 2$, on peut remplacer 
$z$ par $z-\frac{1}{2}$ ce qui a pour effet d'annuler le terme linŽaire de l'Žquation $z^2-\alpha z + \beta =0$; autrement dit on peut supposer $\alpha=0$. Dans ce cas on a 
$E^0= zF$ et $E^1 = \frac{1}{2} + z F$; o l'on a notŽ $E^1$ l'ensemble des ŽlŽments de $E$ de trace $1$. 
En revanche si $p=2$, ce qui entra"ne $\alpha \neq 0$ car $E/F$ est sŽparable, alors $E^0= F$ et $E^1 = \alpha^{-1} z F$. }\end{remark}

\vskip2mm
Le sous-groupe de $\mathrm{SL}_3(E)$ formŽ des matrices diagonales de la forme 
$$\delta(t)=\left(\begin{array}{ccc}
t & 0 & 0 \\
0 & t^{-1}t^\sigma & 0\\0 & 0 & (t^\sigma)^{-1}\end{array}\right) \quad \hbox{avec} \quad t\in E^\times
$$
est le groupe des points $F$-rationnels d'un $F$-tore maximal $M_0$ de $G$. Le groupe $P_0= M_0 \ltimes U_0$ est 
un sous-groupe de Borel de $G$ dŽfini sur $F$. Pour $t\in E^\times$ et $(u,v)\in H(E/F)$, on a $$\delta(t) \bullet \eta(u,v)= 
\eta(t^{-1}(t^\sigma)^2u,t t^\sigma v)= \eta(\psi(t)u,N_{E/F}(t)v)$$ 
avec $$\psi(t) = (t^{-1}t^\sigma) t^\sigma =(t^{-1} t^\sigma)^2 t  \quad \hbox{et} \quad N_{E/F}(t)= N_{E/F}(\psi(t))\ptf$$ 
Observons que le morphisme de groupes $\psi:E^\times \rightarrow E^\times $ est l'identitŽ sur $F^\times$. 
De manire Žquivalente, on a $$\delta(t)\bullet \eta_e(u,v^0) = \eta_e(\psi(t)u , N_{E/F}(t) v^0)\ptf$$

Soit $\lambda \in \check{X}_F(M_0)$ le co-caractre dŽfini par 
$t^\lambda=  \mathrm{diag}(t,1,t^{-1})$. Son image ${\rm Im}(\lambda)$ est le tore $F$-dŽployŽ maximal $A_0=A_{M_0}$ de $M_0$ et l'on a $M_\lambda=M_0$. Pour $t\in F^\times$ et 
$(u,v)\in H(E/F)$, on a $t^\lambda\bullet \eta(u,v)= \eta(tu,t^2v)$. 
Par consŽquent $$G_{\lambda, 1}(F)= U_0(F)\quad \hbox{et} \quad G_{\lambda,2}(F)= \{\eta(0,v)\,\vert \, v\in E,\, v + v^\sigma = 0\}\ptf$$ 
On note $\overline{\eta}(u,v)$ l'image de $\eta(u,v)$ dans $G_\lambda(1\!\!;F)= G_{\lambda,1}(F)/G_{\lambda,2}(F)$. 
Soit $\xi\in \check{X}(M_0)$ le co-caractre de $M_0$ dŽfini par $t^\xi \bullet \mathrm{diag}(a,b,c)= \mathrm{diag}(ta,t^{-2}b,tc)$. Il est dŽfini sur 
$E$ et son image $\mathrm{Im}(\xi)$ est le sous-tore $M_\lambda^\perp=(M_0)^\lambda$ de $M_0$ orthogonal ˆ $\lambda$: on a $\langle \xi \rangle = \check{X}(M_\lambda^\perp)$. 
Pour $(u,v)\in H(E/F)$, si $u\neq 0$ alors $\langle \xi \rangle \cap \Lambda_{E,\overline{\eta}(u,v)} = \{0\}$ (i.e. 
pour $\xi'\in \langle \xi \rangle \smallsetminus \{0\}$, la limite $\lim_{t\rightarrow 0} t^\xi \bullet \overline{\eta}(u,v)$ n'existe pas); et si $v\neq 0$, alors $t^{\xi'} \bullet \eta(0,v) = \eta(0,v)$ pour tout $\xi'\in \langle \xi\rangle$. Par consŽquent si $u\neq 0$, $\overline{\eta}(u,v)$ 
est $(E,M_\lambda^\perp)$-semi-stable (dans $G_\lambda(1)$); et si $v\neq 0$, $\eta(0,v)$ est $(E,M_\lambda^\perp)$-semi-stable (dans $G_\lambda(2)$). On peut donc dans les deux cas 
appliquer \ref{thŽorme de KN rationnel sur G}: on a 
$$\Lambda_{F,\eta(u,v)}^\mathrm{opt}\cap \check{X}(A_0) = \{\lambda\} \quad \hbox{avec}\quad m_{\eta(u,v)}(\lambda)= \left\{\begin{array}{ll}
1 & \hbox{si $u\neq 0$}\\
2 & \hbox{si $u=0$ et $v\neq 0$} 
\end{array}\right..$$
En d'autres termes, $\lambda \in \bs{\Lambda}_{F,\eta(u,v)}$ si $u\neq 0$ et $\frac{1}{2}\lambda \in \bs{\Lambda}_{F,\eta(0,v)}$ si $v\neq 0$. Les deux $F$-strates unipotentes non triviales de $\UF$ sont donc: 
$$\bsfrY_1 = \{g \eta(u,v) g^{-1}\,\vert \, u\neq 0,\, g\in G(F)\}\vg$$
$$\bsfrY_2 = \{g \eta(0,v)g^{-1} \,\vert \, v\neq 0 ,\,g \in G(F)\}\ptf$$ 
Observons que $\bsfrY_1$ est la $F$-strate de $\UF$ induite de l'unique $F$-strate de $\UF^{M_0}=\{1\}$, ˆ savoir la $F$-strate triviale; \cad que 
$\bsfrY_1= I_{F,P_0}^G(1)$ et $\lambda=\mu_{P_0}$ o $\mu_{P_0}$ est le co-caractre (virtuel) de $A_0$ dŽfini en \ref{IP des ensembles FS} . 
D'autre part $\bsfrY_2$ est une $F$-strate non induite.

\vskip2mm
\P\hskip1mm{\it Les $F$-strates de $\UF=\UF^G$ pour $G={\rm Sp}_4$. ---} Soit $V$ un $F$-espace vectoriel de dimension $4$ muni d'une base 
$(e_1,\ldots , e_4)$. Soit $G$ le groupe symplectique (dŽfini sur $F$) qui laisse invariante la forme bilinŽaire alternŽe non dŽgŽnŽrŽe $(\cdot , \cdot )$ sur $F^{(4)}$ 
dŽfinie par la matrice $\left(\begin{array}{cc} 0& J\\ -J&0\end{array}\right)$ avec $J= \left(\begin{array}{cc}0&1\\1&0\end{array}\right)$. Soit $P_0=A_0\ltimes U_0$ le $F$-sous-groupe 
de Borel de $G$ dŽfini comme suit: $U_0$ est formŽ des matrices 
triangulaires supŽrieures de la forme 
$$\eta(u,v,w,x)=\left(\begin{array}{cccc}
1 & u& w+uv& x+uw\\
0 & 1 & v &w  \\
0 & 0 & 1 & -u\\
0 & 0 &0& 1\end{array}\right) \quad \hbox{avec}\quad (u,v,w,x)\in \smash{\overline{F}}^{(4)}
$$ et $A_0$ est formŽ des matrices diagonales de la forme $$\mathrm{diag}(z,t,t^{-1},z^{-1})
\quad \hbox{avec} \quad (z,t)\in \smash{\overline{F}}^\times \times \smash{\overline{F}}^\times \ptf$$
On a 
$$\delta(z,t)\bullet \eta(u,v,w,x)= \eta(zt^{-1} u, t^2v, zt w, z^2 x)\ptf$$ Soient $\alpha,\,\beta \in X(A_0)$ les caractres dŽfinis par 
$$\delta(z,t)^\alpha = zt^{-1}\quad \hbox{et} \quad \delta(z,t)^\beta = t^2\ptf$$ Alors $\Delta_0=\{\alpha, \beta\}$ est la base de l'ensemble $\ES{R}^+ =  \{\alpha, \beta, \alpha + \beta , 2\alpha + \beta\}$ 
des racines de $A_0$ dans $U_0$. Pour $(u,v,w,x)\in F^{(4)}$, on pose $e_\alpha(u)= \mu(u,0,0,0)$, $e_\beta(v)= \mu(0,v,0,0)$, $e_{\alpha + \beta}(w)= \mu(0,0,w,0)$ et $e_{2\alpha + \beta}(x)= \mu(0,0,0,x)$. 
On a $$\eta(u,v,w,x) = e_\alpha(u)e_\beta(v) e_{\alpha + \beta}(w)e_{2\alpha + \beta}(x)\ptf$$ Puisque pour 
$\gamma\in \ES{R}^+$ et $X\in \overline{F}$, on a $$\delta(z,t)\bullet e_\gamma(X)= e_X(\delta(z,t)^\gamma X)\vg$$ 
$\eta$ dŽfini un $F$-isomorphisme $A_0$-Žquivariant $j_0: \mathfrak{u}_0 \rightarrow U_0$. 

Soit $\lambda,\, \mu \in \check{X}(A_0)$ les co-caractres dŽfini par $t^{\lambda} = \delta(t,1)$ et $t^{\mu}= \delta(t,t)$. On a donc 
$$\delta(z,t)= z^\lambda t^{\mu-\lambda}= (zt^{-1})^\lambda t^\mu\ptf$$ Observons que la base $\{\mu_\alpha, \mu_\beta\}$ de $\check{X}(A_0)_{\mbb{Q}}$ duale de $\Delta_0$ est donnŽe par 
$$\mu_\alpha= \lambda \quad \hbox{et} \quad \mu_\beta = \frac{1}{2}\mu\ptf$$ 
Les groupes $P_\lambda$ et $P_\mu$ sont les deux $F$-sous-groupes paraboliques standard maximaux (propres) de $G$. Le premier est donnŽ par $P_\lambda= M_\lambda \ltimes U_\lambda$ o 
$M_\lambda \simeq \mbb{G}_{\rm m} \times \mathrm{SL}_2$ est le groupe des matrices de la forme $\left(\begin{array}{ccc} a& 0&0_2\\ 0&A &0 \\ 0& 0 & a^{-1}\end{array}\right)$ avec $a\in \smash{\overline{F}}^\times$ et 
$A\in \mathrm{SL}_2$, et $U_\lambda= \eta(\overline{F},0,\overline{F},\overline{F})$. 
Le second (dit de \textit{Siegel}) est donnŽ par $P_\mu= M_\mu\ltimes U_\mu$ o $M_\mu $ o $M_\mu\simeq \textrm{GL}_2$ est le groupe des matrices de la forme $\left(\begin{array}{cc} A& 0\\ 0&\wt{A}\end{array}\right)$ 
avec $A\in \textrm{GL}_2$ et $\wt{A}= J ({^{\mathrm{t}}A^{-1}})J$, et $U_\mu = \eta(0,\overline{F},\overline{F},\overline{F})$.  

Posons \begin{eqnarray*}
\bsfrY_1&=& \{g\eta(u,v,w,x)g^{-1}\,\vert\, uv\neq 0,\, g\in G(F)\}\vgq \mu_1 = \lambda + \frac{1}{2}\mu \vg\\
\bsfrY_2 &=& \{g\eta(u,0,w,x)g^{-1}\,\vert \, uw \neq 0,\, g\in G(F)\}\vgq \mu_2=\lambda\vg\\
\bsfrY_3 &= & \{g\eta(0,v,w,x)g^{-1}\,\vert \, \hbox{$w\neq 0$ ou $vx\neq 0$},\, g\in G(F)\}\vgq \mu_3 = \frac{1}{2}\mu\vg\\
\bsfrY_4 &= & \{g\eta(0,0,0,x)g^{-1}\,\vert\, x\neq 0,\, g\in G(F)\} \vgq \mu_4= \frac{1}{2}\lambda \ptf
\end{eqnarray*}
Pour $i=1,\ldots , 4$, $\bsfrY_i$ est une $F$-strate (non triviale) de $\UF$ et $\mu_i$ est l'unique ŽlŽment de $\check{X}(A_0)_{\mbb{Q}}\cap \bs{\Lambda}_{F,u_i}$ pour n'importe quel ŽlŽment 
$u_i$ dans l'unique $F$-lame standard $\scrY_i$ contenue dans $\bsfrY_i$. La vŽrification est laissŽe au lecteur (on peut dans les quatre cas passer ˆ l'algbre de Lie et utiliser le critre de Kirwan-Ness rationnel). 
Observons que pour $u\in F^\times$ et $x\in F$, l'ŽlŽment $\eta(u,0,0,x)$ appartient ˆ $\bsfrY_3$ (en notant $r_\beta$ la reflexion 
associŽe ˆ la racine $\beta$, on a $r_\beta(\alpha)= \alpha+\beta$ et $r_\beta(2\alpha+\beta)= 2\alpha+\beta$); et pour $v\neq 0$, l'ŽlŽment $\eta(0,v,0,0)$ appartient ˆ $\bsfrY_4$ (on a $r_\alpha(\beta)=2\alpha + \beta$). 
Les co-caractres virtuels (en position standard) $\mu_1,\ldots ,\mu_4\in \check{X}(A_0)_{\mbb{Q}}$ Žtant deux-ˆ-deux distincts, les $F$-strates $\bsfrY_1,\ldots ,\bsfrY_4$ sont deux-ˆ-deux disjointes: 
on a $$\UF= \{1\} \coprod \bsfrY_1\coprod \bsfrY_2\coprod \bsfrY_3\coprod \bsfrY_4\ptf$$ Les $F$-strates $\bsfrY_1$, $\bsfrY_2$ et $\bsfrY_3$ sont toutes les trois induites ˆ partir de la 
$F$-strate triviale $\{1\}$: $\bsfrY_1=I_{F,P_0}^G(1)$, $\bsfrY_2= I_{F,P_\lambda}^G(1)$ et $\bsfrY_3= I_{F,P_\mu}^G(1)$. La $F$-strate $\bsfrY_4$ est non induite.

\part*{Partie II: dŽveloppement fin de la contribution unipotente ˆ la formule des traces}
\section{La distribution $\Jres^T_\u$}\label{la distribution J}
Dans cette section, on reprend sans les rŽintroduire les notations de \cite{LL} dans 
le cas non tordu ($\wt{G}=G$ et $\omega=1$). En particulier, $F$ est un corps global de caractŽristique $p>1$.

\subsection{Troncature(s)}\label{troncature(s)}
Rappelons que l'on a notŽ $\UF=\UF^G$ l'ensemble des (vrais) ŽlŽments unipotents de $G(F)$: 
$$
\UF= \{gug^{-1}\,\vert\, g\in G(F),\, u\in U_0(F)\}\ptf
$$

Pour une fonction $f\in C^\infty_\mathrm{c}(G(\mbb{A}))$ et un paramtre $T\in \ag_0$, on rappelle la dŽfinition 
du \textit{noyau unipotent modifiŽ} $k_\u^T(x)= k_\u^T(f\!\!;x)$:\index{kunipTx@$k_\u^T(x)$}
$$
k_\u^T(x)= \sum_{P\in \ESP_\mathrm{st}}(-1)^{a_P-a_G} \sum_{\xi \in P(F)\backslash G(F)} 
\wh{\tau}_P({\bf H}_0(\xi x)-T)K_{P,\u}(\xi x,\xi x)
$$
avec
$$
K_{P,\u}(x,y)=\sum_{\delta \in \UF^{M_P}}\int_{U_P(\mbb{A})}f(x^{-1} \delta u y) \dd u\ptf
$$
On Žcrira aussi $K_{P,\u}(x)= K_{P,\u}(x,x)$. 
D'aprs \cite[7.3.1]{LW}, la somme sur $\xi$ porte sur un ensemble fini. 
Observons que la fonction $K_\u = K_{G,\u}$ sur $G(\mbb{A})\times G(\mbb{A})$ est donnŽe par
$$
K_\u (x,y)= \sum_{\delta \in \UF}f(x^{-1}\delta y)\ptf
$$

Pour $Q\in \ESP_\mathrm{st}$, on rappelle la dŽfinition de l'opŽrateur de troncature $\bs{\Lambda}^{T,Q}$ appliquŽ ˆ une fonction 
$\varphi \in L^1_\mathrm{loc}(Q(F)\backslash G(\mbb{A}))$:
$$
\bs{\Lambda}^{T,Q}\varphi (x) = \sum_{P\in \ESP_\mathrm{st}^Q} (-1)^{a_P-a_Q}
\sum_{\xi \in P(F)\backslash Q(F)} \wh{\tau}_P^{\hspace{0.7pt}Q}({\bf H}_0(\xi x)-T)\varphi_P(\xi x)
$$
avec $$\varphi_P(x)= \int_{U_P(F)\backslash U_P(\mbb{A})}\varphi(xu) \dd u;$$ o 
$\dd u$ est la mesure de Tamagawa sur $U_P(\mbb{A})$. On dŽfinit le \textit{noyau unipotent tronquŽ}\index{ktildeunipTx@$\wt{k}_\u^T(x)$}
$$\wt{k}_\u^T(x)=\sum_{P\in \ESP_\mathrm{st}}(-1)^{a_P-a_G}
\!\!\sum_{\substack{Q,R\in \ESP_\mathrm{st}\\ Q\subset P \subset R}}\sum_{\xi \in Q(F)\backslash G(F)}
\sigma_Q^R({\bf H}_0(\xi x)-T) \bs{\Lambda}^{T,Q}_\mathrm{d} K_{P,\u}(\xi x)
$$
o l'indice \guill{d} du symbole  $ \bs{\Lambda}^{T,Q}_\mathrm{d}$ 
signifie que l'on tronque par rapport ˆ la diagonale (et non par rapport ˆ la premire variable comme on l'a fait du c™tŽ spectral). 
D'aprs \cite[2.11.5, 1.7.1, 3.7.1]{LW}, la somme sur $\xi$ porte sur un ensemble fini. 

Pour $P\in \ESP_\mathrm{st}$ et $\phi\in L^1_\mathrm{loc}(P(F)\backslash G(\mbb{A}))$, on a l'identitŽ \cite[8.2.1]{LW} 
$$
\sum_{\substack{Q,R\in \ESP_\mathrm{st}\\ Q\subset P \subset R}}\sum_{\xi \in Q(F)\backslash P(F)}
\sigma_Q^R({\bf H}_0(\xi x)-T)\bs{\Lambda}^{T,Q}\phi(\xi x) = \wh{\tau}_P({\bf H}_0(x)-T) \phi_P(x)\ptf
\leqno{(1)}$$
Appliquons l'ŽgalitŽ (1) ˆ la fonction $x\mapsto \phi(x) = K_{P,\u}(x,x)$. En observant que $\phi_P=\phi$, 
on obtient l'ŽgalitŽ de
$$k_{P,\u}^T(x)\bydef \wh{\tau}_P({\bf H}_0(x)-T)\phi(x)$$
et de
$$\wt{k}_{P,\u}^T(x) \bydef \sum_{\substack{Q,R\in \ESP_\mathrm{st}\\ Q\subset P \subset R}}
\sum_{\xi \in Q(F)\backslash P(F)}\sigma_Q^R({\bf H}_0(\xi x)-T)\bs{\Lambda}^{T,Q}\phi(\xi x) \ptf$$
Puisque
$$k_\u^T(x) = \sum_{P\in \ESP_\mathrm{st}}(-1)^{a_P-a_G} \sum_{\xi \in P(F)\backslash G(F)} k_{P,\u}^T(\xi x)$$
et
$$\wt{k}_\u^T(x) = \sum_{P\in \ESP_\mathrm{st}}(-1)^{a_P-a_G} \sum_{\xi \in P(F)\backslash G(F)} \wt{k}_{P,\u}^T(\xi x)\vg$$
on a l'identitŽ  
$$k_\u^T(x)= \wt{k}_\u^T(x)\ptf\leqno{(2)}$$

\subsection{Convergence et rŽcriture de l'intŽgrale tronquŽe}\label{CV et rŽcriture de l'intŽgrale tronquŽe} 
Soit $\bs{K}'$ un sous-groupe ouvert compact de $G(\mbb{A})$ et soit $Q\in \ESP_\mathrm{st}$. Observons que pour 
toute fonction $\phi$ sur $Q(F)\backslash G(\mbb{A})$ invariante ˆ droite par $\bs{K}'$, la fonction $\phi_Q$ sur $M_Q(F)\backslash M_Q(\mbb{A})$ est invariante 
ˆ droite par $\bs{K}'\cap M_Q(\mbb{A})$. On sait d'aprs \cite[4.2.2]{LW} que si $T\in \ag_0$ est assez rŽgulier, prŽcisŽment si $\bs{d}_0(T)\geq c$ 
pour une constante $c >0$ ne dŽpendant que de $\bs{K}'\cap M_Q(\mbb{A})$, alors pour toute fonction 
$\phi$ sur $Q(F)\backslash G(\mbb{A})$ invariante ˆ droite par $\bs{K}'$, on a l'ŽgalitŽ
$$\bs{\Lambda}^{T,Q}\phi(x) = F_{P_0}^Q(x,T)\phi(x);$$ 
o $F_{P_0}^Q(\cdot ,T)$ est la fonction caractŽristique d'un ensemble $Q(F)\bs{\mathfrak{S}}_{P_0}^Q(T_G,T)$ 
dŽfini en \cite[3.4]{LL}. L'ensemble $\bs{\mathfrak{S}}_{P_0}^Q(T_G,T)$ dŽpend d'un sous-ensemble compact $C_Q$ de $G(\mbb{A})$ 
qui n'appara"t pas dans la notation; en pratique, on prend 
$C_Q$ assez gros et $-T_G$ et $T$ suffisamment rŽgulier --- prŽcisement, tels que $\bs{d}_0(-T_G) > c'$ et $\bs{d}_0(T)>c $ 
pour des constantes $c'>0$ et $c>0$ ne dŽpendant que de $G$ --- 
de telle manire que la proposition \cite[3.6.3]{LW} soit vŽrifiŽe. 
La fonction $x\mapsto F_{P_0}^Q(x,T)$ est supposŽe invariante ˆ gauche par l'image $\mathfrak{B}_Q$ d'une section du morphisme 
${\bf H}_Q: A_Q(\mbb{A})\rightarrow \ES{B}_Q$ et invariante ˆ droite par $\bs{K}$. 
Quitte ˆ grossir $C_Q$, ce qui est loisible puisque l'ensemble $A_Q(F)\mathfrak{B}_Q\backslash A_Q(\mbb{A})= A_Q(F)\backslash A_Q(\mbb{A})^1$ 
est compact, on peut mme supposer qu'elle est invariante ˆ gauche par $A_Q(\mbb{A})$. 

Soit $$C_\mathrm{c}(G(\mbb{A})/\!\!/ \bs{K}') \subset C^\infty_\mathrm{c}(G(\mbb{A}))$$ le sous-espace vectoriel formŽ des fonctions 
invariantes ˆ droite et ˆ gauche par $\bs{K}'$\footnote{Pour $\bs{K}''\subset \bs{K}'$, on a l'inclusion 
$ C_\mathrm{c}(G(\mbb{A})/\!\!/ \bs{K}')\subset C_\mathrm{c}(G(\mbb{A})/\!\!/ \bs{K}'')$. Par consŽquent, si l'on a fixŽ un sous-groupe ouvert compact maximal 
$\bs{K}$ de $G$ (e.g. tel que $G(\mbb{A})=P_0(\mbb{A})\bs{K}$), 
quitte ˆ remplacer $\bs{K}'$ par un groupe plus petit, on peut toujours 
supposer que c'est un sous-groupe distinguŽ de $\bs{K}$.}. D'aprs la discussion prŽcŽdente, il existe une constante $c_{\bs{K}'}>0$ telle que 
pour tout $T\in \ag_0$ tel que $\bs{d}_0(T)>c_{\bs{K}'}$ et toute fonction $f\in C_\mathrm{c}(G(\mbb{A})/\!\!/ \bs{K}')$, l'expression 
$\wt{k}_\u^T(x)$ soit Žgale ˆ $$\sum_{\substack{Q,R\in \ESP_\mathrm{st}\\ Q\subset R}}\sum_{\xi \in Q(F)\backslash G(F)}F_{P_0}^Q(\xi x,T)
\sigma_Q^R({\bf H}_0(\xi x)-T)\sum_{\substack{P\in \ESP_\mathrm{st}\\Q \subset P \subset R}}(-1)^{a_P-a_G} 
K_{P,\u}(\xi x)\ptf$$
Pour prouver la convergence absolue de l'intŽgrale\index{JunipTf@$\Jres^T_\u(f)$}
$$\Jres^T_\u(f)\bydef \int_{\overline{\bs{X}}_G} k^T_\u (x)\dd x = \int_{\overline{\bs{X}}_G}\wt{k}^T_\u(x)\dd x \vg$$
il suffit de prouver, pour chaque paire $Q\subset R$ de sous-groupes paraboliques standard, celle de l'intŽgrale
$$\int_{\bs{Y}_Q}F^Q_{P_0}(x,T)\sigma_Q^R({\bf H}_0(x)-T) 
\left|\sum_{\substack{P\in \ESP_\mathrm{st}\\Q \subset P \subset R}}(-1)^{a_P-a_G} K_{P,\u}(x) \right|\dd x
\leqno{(1)}$$
o l'on a posŽ
$$\bs{Y}_Q = A_G(\mbb{A})Q(F)\backslash G(\mbb{A})\ptf$$
Cette convergence est prouvŽe dans \cite[9.1.1]{LL}\footnote{Cette preuve est une adaptation 
de celle de \cite[9.1.1]{LW}, laquelle 
reprend dans le cas tordu celle du thŽorme 7.1 de \cite{A1}.}. De plus la fonction $T\mapsto \mathfrak \Jres^T_\u(f)$ dŽfinit un ŽlŽment de  \textrm{PolExp}. 
PrŽcisŽment, pour $T$ et $X$ dans $\ag_{0,\mbb{Q} }$ assez rŽguliers, on a
$$\Jres^{T+X}_\u(f)=\sum_{Q\in\ESP_\mathrm{st}}\sum_{Z\in \bsbbc_Q}
\eta_{Q,F}^{G,T}(Z\mathpvg X)\;\Jres^{M_Q,X}_\u(Z\mathpvg f_Q)$$
avec $$\eta_{Q,F}^{G,T}(Z\mathpvg X)=\sum_{H\in\ESB_Q^G(Z)}\Gamma_Q^G(H-X,T)$$ et 
$$\Jres^{M_Q,X}_\u(Z\mathpvg f_Q)=\int_{\overline{\bs{X}}_{M_Q}(Z)}k^{M_Q,X}_\u(f_Q\mathpvg m)\dd m$$ o 
$$f_Q(m)=\int_{U_Q(\mbb{A})\times\bs{K}}f(k^{-1}m uk)\dd u\dd k\ptf$$
Rappelons que $\ESB_Q^G(Z)\subset \ESC_Q^G = \ESB_G\backslash \ESA_Q$ 
est la fibre au-dessus de $Z$ pour la suite exacte courte
$$ 0 \rightarrow \ESB_Q^G \rightarrow \ESC_Q^G \rightarrow \bsbbc_Q= \ES{B}_Q\backslash \ES{A}_Q  \rightarrow 0$$ et que $\overline{\bs{X}}_{\!M_Q}(Z)$ 
est l'image dans $\overline{\bs{X}}_{\!M_Q} = A_Q(\mbb{A})M_Q(F)\backslash M_Q(\mbb{A})$ de l'ensemble des $m\in M_Q(\mbb{A})$ 
tels que ${\bf H}_Q(m) + \ESB_Q= Z$.

\begin{remark}\label{propriŽtŽ cruciale}
\textup{La preuve de \cite[9.1.1]{LL} utilise la propriŽtŽ cruciale suivante: 
pour tout $Q\in \ESP$, on a la dŽcomposition \cite[3.3.2\,(ii)]{LL}
$$\UF^G \cap Q(F) = (\UF^G\cap M_Q(F))U_Q(F)= \UF^{M_Q}U_Q(F)\ptf$$ 
Observons que cette dŽcomposition est aussi une consŽquence de \ref{intersection FUG et P}.}
\end{remark}

Revenons ˆ l'expression pour $\Jres^{T}_\u(f)$. C'est la somme sur les paires de sous-groupes paraboliques standard $Q\subset R$ 
de l'intŽgrale sur $\bs{Y}_Q$ obtenue en otant les valeurs absolues dans l'expression (1). Le terme principal correspondand ˆ $Q=R=G$ est donnŽ par 
l'intŽgrale 
$$\int_{\overline{\bs{X}}_G} F^G_{P_0}(x,T) K_\u(x,x) \dd x = \int_{\overline{\bs{X}}_G} \bs\Lambda^T_\mathrm{d}K_\u(x,x) \dd x \ptf$$ 
La proposition suivante est la version corps de fonctions du thŽorme 3.1 de \cite{A2}. 

\begin{proposition}\label{raffinement}Il existe une constante $c >0$ (qui dŽpend de $f$) telle que 
 pour $\bs{d}_0(T)\ge c$, on ait 
$$  \mathfrak{J}^T_\mathrm{unip}(f) = 
 \int_{\overline{\bsX}_G} \bs\Lambda^T_\mathrm{d}K_\u(x,x) \dd x \ptf$$
\end{proposition}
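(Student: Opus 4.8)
`\textbf{Plan de la démonstration.}`

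L'idée est d'exploiter l'écriture, établie en \ref{CV et r�criture de l'int�grale tronqu�e}, de $\Jres^T_\u(f)$ comme somme sur les paires $Q\subset R$ de sous-groupes paraboliques standard des intégrales
$$\int_{\bs{Y}_Q}F^Q_{P_0}(x,T)\sigma_Q^R({\bf H}_0(x)-T)\sum_{\substack{P\in \ESP_\mathrm{st}\\Q \subset P \subset R}}(-1)^{a_P-a_G} K_{P,\u}(x) \dd x\ptf$$
Le terme $Q=R=G$ vaut exactement $\int_{\overline{\bsX}_G} F^G_{P_0}(x,T)K_\u(x,x) \dd x = \int_{\overline{\bsX}_G} \bs\Lambda^T_\mathrm{d}K_\u(x,x) \dd x$ (grâce à l'égalité $\bs\Lambda^{T,G}\phi = F^G_{P_0}(\cdot,T)\phi$ valable pour $\bs{d}_0(T)$ assez grand, $f$ étant fixée donc bi-invariante sous un sous-groupe ouvert compact $\bs{K}'$). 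Il suffit donc de montrer que chaque terme avec $(Q,R)\neq (G,G)$ s'annule identiquement dès que $\bs{d}_0(T)$ dépasse une constante ne dépendant que du support de $f$. C'est exactement la stratégie d'Arthur dans la preuve de \cite[theorem~3.1]{A2}, que je recopierais en l'adaptant à la caractéristique positive.

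`\textbf{Les étapes.}` D'abord, pour une paire $Q\subsetneq R$ fixée avec $Q\neq G$, je fixerais un tore $F$-déployé maximal $A_0$ de $P_0$ et je décomposerais l'intégrale sur $\bs{Y}_Q = A_G(\mbb{A})Q(F)\backslash G(\mbb{A})$ suivant la décomposition d'Iwasawa, en isolant la variable $a\in A_Q(\mbb{A})$ (ou plutôt la partie de ${\bf H}_0(x)$ dans $\ag_Q^R$, contrôlée par $\sigma_Q^R$). La fonction $\sigma_Q^R({\bf H}_0(x)-T)$ force ${\bf H}_0(x)-T$ à rester dans un cône où certaines coordonnées $\langle\alpha,\cdot\rangle$ ($\alpha\in\Delta_Q^R$) tendent vers $+\infty$. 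Ensuite, sur le terme intérieur $\sum_{Q\subset P\subset R}(-1)^{a_P-a_G}K_{P,\u}(x)$, j'utiliserais la propriété cruciale rappelée en \ref{propri�t� cruciale} — c'est-à-dire $\UF^G\cap Q(F) = \UF^{M_Q}U_Q(F)$, qui découle de \ref{intersection FUG et P} — pour réorganiser la somme sur les éléments unipotents $\delta\in\UF^{M_P}$ et les intégrales $\int_{U_P(\mbb{A})}$. Le point est que, le support de $f$ étant compact, la condition $f(x^{-1}\delta u y)\neq 0$ force ${\bf H}_0$ de certaines composantes à rester \emph{bornées}, ce qui entre en contradiction avec la croissance imposée par $\sigma_Q^R$ dès que $T$ est assez régulier ; on obtient alors une annulation terme à terme de la somme alternée en $P$, exactement comme dans \cite[\S3]{A2} ou \cite[\S7]{A1}.

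`\textbf{Le point délicat.}` La difficulté principale est la même que celle qui se présente déjà dans la preuve de convergence de \cite[9.1.1]{LL} : il faut un contrôle quantitatif précis du support de $K_{P,\u}(x)$ en fonction du support de $f$, de manière à localiser ${\bf H}_0(x)$ dans une région bornée dans les directions $\ag_Q^R$, puis exploiter la combinatoire des fonctions $\sigma_Q^R$, $\Gamma_Q^R$, $\wh\tau_P$ pour faire apparaître l'annulation. Comme cette estimation de support a déjà été établie en \cite[9.1.1]{LL} (c'est elle qui sert à la convergence), l'essentiel du travail ici consiste à vérifier qu'elle entraîne bien l'annulation exacte des termes non principaux, ce qui est précisément l'argument de \cite[theorem~3.1]{A2} transposé mot pour mot. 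Comme le souligne la note de bas de page du texte, contrairement au cas des corps de nombres où l'on obtient seulement une formule intégrale asymptotique, ici l'annulation est \emph{exacte} : la formule vaut dès que $\bs{d}_0(T)\geq c(f)$, sans terme d'erreur, ce qui constitue en fait une simplification par rapport au cas classique. Je conclurais donc que, pour $\bs{d}_0(T)$ assez grand, seul le terme $Q=R=G$ subsiste, d'où l'égalité annoncée.
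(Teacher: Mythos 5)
Votre mise en place est correcte et suit le m\^eme sch\'ema g\'en\'eral que le texte (identification du terme principal $Q=R=G$, annulation triviale des termes $Q=R\subsetneq G$ car $\sigma_Q^Q=0$, et, pour $\bs{d}_0(T)$ assez grand relativement au support de $f$, remplacement de la somme sur $\UF^{M_P}$ par une somme sur $\UF^{M_P}\cap Q(F)$). Mais le m\'ecanisme que vous proposez pour l'annulation des termes $Q\subsetneq R$ comporte une lacune r\'eelle. Il n'y a aucune contradiction entre la compacit\'e du support de $f$ et la croissance impos\'ee par $\sigma_Q^R$, et l'annulation \guill{terme \`a terme} de la somme altern\'ee en $P$ que vous annoncez est fausse. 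Prenez $f\geq 0$ avec $f(1)>0$ et $\eta=1$ dans $\UF^{M_P}$. La contribution correspondante \`a $K_{P,\u}(x)$, \`a savoir $\int_{U_P(\mbb{A})}f(x^{-1}ux)\dd u$, ne s'annule jamais, et elle cro\^it m\^eme comme $\bs{\delta}_P(a)$ lorsque $a$ s'enfonce dans le c\^one d\'efini par $\sigma_Q^R$ (la conjugaison par $a^{-1}$ contracte le radical unipotent, elle ne fait pas sortir du support de $f$). Seule la combinaison altern\'ee sur $P$ s'annule, et ce n'est pas la cons\'equence d'un simple argument de support.

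Le point que votre plan omet est pr\'ecis\'ement celui qui porte toute la d\'emonstration. Apr\`es la r\'eduction \`a $\UF^{M_Q}U_Q(F)$, on applique la formule de Poisson \`a la somme sur $\mathfrak{u}_Q(F)$ (via l'isomorphisme $j_Q$ compatible \`a l'action de $A_Q$). L'inclusion-exclusion de \cite[1.2.3]{LW} montre alors que la somme altern\'ee sur $P$ ne retient que les caract\`eres $\Lambda\in\nn^\vee(Q,R)$, \cad ceux qui sont triviaux sur $\mathfrak{u}_R(\mbb{A})$ mais sur aucun $\mathfrak{u}_P(\mbb{A})$ interm\'ediaire. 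C'est seulement pour ces $\Lambda$ que l'on conclut, et la conclusion est sp\'ecifique aux corps de fonctions. La transform\'ee de Fourier $\Lambda\mapsto g(y,\Lambda,\eta)$ d'une fonction localement constante \`a support compact est elle-m\^eme \`a support compact, et la condition $\sigma_Q^R({\bf H}_0(ax)-T)\neq 0$ entra\^ine via \cite[2.11.6]{LW} que $\mathrm{Ad}_{a^{-1}}^*$ dilate $\Lambda$, donc le fait sortir de ce support d\`es que $\bs{d}_0(T)$ d\'epasse une constante ne d\'ependant que du support et du niveau de $f$. C'est cela qui donne l'annulation exacte. Recopier \guill{mot pour mot} \cite[theorem~3.1]{A2} ne peut pas suffire, puisque sur un corps de nombres la transform\'ee de Fourier n'est que de Schwartz et Arthur n'obtient qu'une estimation asymptotique en $T$, pas une \'egalit\'e. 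L'exactitude que vous revendiquez (\`a juste titre) exige donc explicitement cet argument de Poisson et de dilatation, qui est la pr\'ecision apport\'ee \`a la preuve de \cite[9.1.1]{LL} et qui manque dans votre proposition.
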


\begin{remark}\label{formule exacte, support et niveau}
\textup{
\begin{enumerate}
\item[(i)] Dans le cas des corps de nombres, Arthur (loc.~cit.) donne une borne pour l'expression
$$\left| \mathfrak{J}^T_\mathrm{unip}(f) - \int_{\overline{\bsX}_G} \bs\Lambda^T_\mathrm{d}K_\u(x,x) \dd x \right| \ptf$$ 
Ici la formule est exacte: si $T$ est assez rŽgulier, seul le terme correspondant ˆ $Q=R=G$ dans l'expression $\wt{k}^T_\u(x)$ peut donner une 
contribution non triviale ˆ l'intŽgrale $ \mathfrak{J}^T_\mathrm{unip}(f)$.
\item[(ii)] La constante $c>0$ dŽpend du support de $f$ mais aussi d'un sous-groupe ouvert compact $\bs{K}'$ de $G(\mbb{A})$ tel que 
$f$ soit $\bs{K}'$-biinvariante. Si l'on a fixŽ $\bs{K}'$ et un sous-ensemble ouvert compact $\Omega$ de $G(\mbb{A})$ tel que $\bs{K}'\Omega \bs{K}'=\Omega$, 
alors en notant $$C(\Omega/\!\!/\bs{K}')\subset C_\mathrm{c}(G(\mbb{A})/\!\!/\bs{K}')$$ le sous-espace vectoriel de dimension finie 
(sur $\mbb{C}$) formŽ des fonctions ˆ support dans $\Omega$, 
on peut choisir la constante $c>0$ telle que pour $\bs{d}_0(T)\geq c$, l'egalitŽ de la proposition \ref{raffinement} 
soit vraie pour toute fonction $f\in C(\Omega/\!\!/\bs{K}')$. 
\end{enumerate}
}\end{remark}

\begin{proof}
Pour $Q=R \subsetneq G$, on a $\sigma_Q^R =0$. On veut prouver que les termes (1) 
associŽs aux paires de sous-groupes paraboliques standard 
$Q\subsetneq R \subset G$ sont tous nuls. Il suffit pour cela de reprendre en la prŽcisant la preuve de \cite[9.1.1]{LL}\footnote{Le rŽsultat 
est en fait dŽjˆ contenu dans loc.~cit.: la fonction 
$g(x,\Lambda, \eta)$ est ˆ support compact en $\Lambda$ comme transformŽe de Fourier d'une fonction lisse et ˆ support compact sur 
$\mathfrak{u}_Q^*(\mbb{A})$. 
Comme $x$ varie dans un compact et que $\mathrm{Ad}_a$ dilate $\Lambda$, il en rŽsulte que si $T$ est suffisamment rŽgulier, 
$\mathrm{Ad}_a(\Lambda)$ sort du support de $g(x,\cdot,\eta)$ pour tout $(x,\eta)$. 
Cette dŽmonstration est le prototype de celles qui vont suivre dans la section \ref{dŽcomposition suivant les strates}, 
c'est pourquoi nous la reprenons en dŽtail ici.}.
Pour une telle paire $(Q,R)$, si $T$ est assez rŽgulier, prŽcisŽment si 
$\bs{d}_0(T)\geq c_f$ pour une constante $c_f >0$ ne dŽpendant que du support de $f$\footnote{ Dans l'intŽgrale (1), on a une somme alternŽe 
sur $P$ d'expressions $K_{P,\u}(x)$ faisant intervenir une somme sur $\UF^{M_P}$. La condition sur le support de $f$ permet 
d'appliquer \cite[3.6.7]{LW} et de remplacer cette somme sur $\UF^{M_P}$ par une somme sur $\UF^{M_P}\cap Q(F)= \UF^{M_Q}U_Q^P(F)$ avec $U_Q^P=M_P\cap U_Q$.}, l'intŽgrale (1) se rŽcrit
$$\int_{\bsY_Q} F_{P_0}^Q(x,T) \Xi_Q^R(x) \dd x\leqno{(2)}$$
avec $$\Xi_Q^R(x)=\sigma_Q^R({\bf H}_0(x)-T)\sum_{\eta \in \UF^{M_Q}}
\left| \sum_{\Lambda\in\nn^\vee(Q,R)}g(x,\Lambda,\eta) \right|$$
et $$g(x,\Lambda,\eta)=\int_{\mathfrak{u}_Q(\mbb{A})}\psi(\langle \Lambda,X\rangle) f(x^{-1}\eta j(X)x)\dd X; $$ 
o $j=j_Q: \mathfrak{u}_Q \rightarrow U_Q$ (cf. \ref{j_Q}) est un $F$-isomorphisme de variŽtŽs   
tel que $$j\circ \mathrm{Ad}_a = \mathrm{Int}_a\circ j\quad\hbox{pour tout}\quad a\in A_Q$$ et $\nn^\vee(Q,R)$ 
est un sous-ensemble de $\mathfrak{u}_Q^*(\mbb{A})$ dŽfini comme suit. Fixons 
un caractre non trivial $\psi$ de $F\backslash \mbb{A}$ et notons $\nn^\vee$ l'orthogonal de $\nn=\mathfrak{u}_Q(F)$ pour ce caractre: 
$$\nn^\vee = \{\Lambda \in \mathfrak{u}_Q^*(\mbb{A})\,\vert\, \psi(\langle \Lambda, X\rangle)=1,\, \forall X\in \mathfrak{u}_Q(F)\}\ptf$$
Pour $P\in \ES{P}_\mathrm{st}$ tel que $Q\subset P \subset R$, notons $\mathfrak{n}^\vee(P)$ le sous-ensemble de $\nn^\vee$ formŽ des $\Lambda\in \nn^\vee$ 
tels que $\Lambda\vert_{\mathfrak{u}_P(\mbb{A})}= 0$. Alors\footnote{Pour $P\in \ES{P}_\mathrm{st}$ tel que $Q\subset P \subset R$, la formule 
de Poisson donne l'ŽgalitŽ 
$$\int_{U_P(F)\backslash U_P(\mbb{A})}\sum_{X\in \mathfrak{u}_Q(F)}f(x^{-1}\eta j(X)ux)\dd u= \sum_{\Lambda \in \nn^\vee(P)}g(x,\Lambda,\eta)\ptf$$ 
On fait ensuite la somme alternŽe sur les $P$ de l'expression ci-dessus. On a $\nn^\vee(P) \subset \nn^\vee(R)$ et comme $\nn^\vee(P)\cap \nn^\vee(P') = \nn^\vee(P\cap P')$, 
seuls les $\Lambda$ qui sont dans $\nn^\vee(Q,R)$ peuvent donner 
une contribution non triviale ˆ cette somme altern\'ee (d'aprs \cite[1.2.3]{LW}).}
 $$\nn^\vee(Q,R)\bydef \nn^\vee(R) \smallsetminus \bigcup_{Q \subset P \subsetneq R} \nn^\vee(P)\ptf$$

Posons $$\bs{Z}_Q = A_G(\mbb{A})A_Q(F)\backslash A_Q(\mbb{A})\subset \bsY_Q$$
et
$$\Theta_Q^R(u,x)= \int_{\bs{Z}_Q}\Xi_Q^R(uax)\bs{\delta}_Q(a)^{-1}\dd a\ptf$$
En utilisant la dŽcomposition 
d'Iwasawa on obtient que l'intŽgrale (2) est Žgale ˆ
$$\int_{\bs{K}}\int_{\overline{\bs{X}}_{\!M_Q}} \int_{U_Q(F)\backslash U_Q(\mbb{A})} F_{P_0}^Q(m,T)
\Theta_Q^R(u,mk) \bs{\delta}_Q(m)^{-1}\dd u \dd m \dd k\ptf\leqno{(3)}$$
D'aprs \cite[1.8.3]{LW} et \cite[3.4]{LL}, l'intŽgrale en $m$ porte sur un ensemble compact 
de $\overline{\bs{X}}_{\!M_Q}=A_Q(\mbb{A})M_Q(F)\backslash M_Q(\mbb{A})$. On peut donc la remplacer par une intŽgrale 
sur un sous-ensemble ouvert compact $\Gamma_{M_Q}$ d'un ensemble de Siegel (dans $M_Q(\mbb{A})$) 
pour le quotient $\overline{\bs{X}}_{\!M_Q}$. On peut aussi remplacer l'intŽgrale en $u$ par une intŽgrale sur un 
ouvert compact $\bs{\mathfrak{S}}_{U_Q}$ de $U_Q(\mbb{A})$ qui soit un \textit{domaine de Siegel} 
pour le quotient $U_Q(F)\backslash U_Q(\mbb{A})$, \cad tel que l'application naturelle $\Siegel_{U_Q}\rightarrow U_Q(F)\backslash U_Q(\mbb{A})$ soit bijective. 
Posons $U_Q^R = M_R \cap U_Q$. On a $U_Q= U_R \rtimes U_Q^R$. 
On peut supposer\footnote{
Puisque $U_Q^R$ normalise $U_R$, on a $$U_Q(\mbb{A})= U_R(\mbb{A})U_Q^R(F)\Siegel_{U_Q^R}= U_Q^R(F)U_R(\mbb{A})\Siegel_{U_Q^R}= 
U_Q^R(F)U_R(F)\Siegel_{U_R}\Siegel_{U_Q^R}$$ avec $$U_Q^R(F)U_R(F)=U_R(F)U_Q^R(F)=U_Q(F)\ptf$$ On prouve de la mme manire que l'application naturelle
$\Siegel_{U_R}\Siegel_{U_Q^R} \rightarrow U_Q(F)\backslash U_Q(\mbb{A})$ est bijective.} que $\bs{\mathfrak{S}}_{U_Q}= \bs{\mathfrak{S}}_{U_R}\bs{\mathfrak{S}}_{U_Q^R}$ 
avec $\Siegel_R\subset U_R(\mbb{A})$ et $\bs{\mathfrak{S}}_{U_Q^R}\subset U_Q^R(\mbb{A})$. On remplace l'intŽgrale en 
$u\in \bs{\mathfrak{S}}_{U_Q}$ par une intŽgrale en $u'\in \bs{\mathfrak{S}}_{U_R}$ suivie d'une intŽgrale en $u''\in \bs{\mathfrak{S}}_{U_Q^R}$. 
Puisque $\Lambda \vert_{\mathfrak{u}_R(\mbb{A})}=0$, l'intŽgrale en $u'$ est absorbŽe par l'intŽgrale en $X$ dans la dŽfinition de $g(u'u''amk,\Lambda,\eta)$. 
D'autre part on a
$$\bs{\delta}_Q(a)^{-1}g(u''amk,\Lambda,\eta)= g(a^{-1}u''a mk,\mathrm{Ad}_{a^{-1}}^*(\Lambda),\eta)\leqno{(4)}$$ 
avec $\mathrm{Ad}_{a^{-1}}^*(\Lambda)= \Lambda \circ \mathrm{Ad}_{a}$. 
On considre les ŽlŽments $x=mk\in \Gamma_{M_Q}\bs{K}$ et $a\in \bs{Z}_Q$ qui vŽrifient la condition
$$\sigma_Q^R({\bf H}_0(ax)-T)\neq 0\ptf\leqno{(5)}$$ 
D'aprs \cite[2.11.6]{LW}, il existe une constante $C>0$ telle que pour tout $x\in \Gamma_{M_Q}\bs{K}$ 
et tout $a\in \bs{Z}_Q$ vŽrifiant (5), en posant 
$H={\bf H}_Q(a)^R \in \ag_Q^R$, on ait
$$\alpha(H) > \alpha(T)-C \quad\hbox{pour tout $\alpha \in \Delta_Q^R$}\ptf$$ 
On en dŽduit que pour de tels ŽlŽments $x$ et $a$, les ŽlŽments $a^{-1}u'' a$ pour $u''\in \bs{\mathfrak{S}}_{U_Q^R}$ restent dans un compact fixŽ de 
$U_Q^R(\mbb{A})$. Par consŽquent $y= a^{-1}u'' a mk$ varie dans un compact fixŽ de $G(\mbb{A})$, disons 
$\Gamma_{G}$\footnote{Observons que $\Gamma_G$ ne dŽpend pas de la fonction $f$; il dŽpend 
bien sžr de $G$, $Q$ et $R$ mais aussi de $\bs{K}$, $\Gamma_{M_Q}$ et $\Siegel_{U_Q^R}$.}. 
Cela a pour consŽquence que la somme 
sur $\eta$ dans la dŽfinition de $\Xi_Q^R(ay)$ porte sur un sous-ensemble fini de $\UF^{M_Q}$ 
qui peut tre choisi indŽpendemment de $y\in \Gamma_{G}$, 
disons $\mathfrak{E}$. 
Puisque $F$ est un corps de fonctions, pour tout $(y,\eta)\in \Gamma_{G}\times \mathfrak{E}$, 
la fontion $\Lambda \mapsto g(y,\Lambda, \eta)$ sur $\mathfrak{u}_Q^*(\mbb{A})$ 
est ˆ support compact comme transformŽe de Fourier d'une fonction lisse et ˆ support compact 
sur $\mathfrak{u}_Q(\mbb{A})$.
D'aprs la fin de la preuve ce \cite[9.1.1]{LW}, si $\bs{d}_0(T)$ est assez grand, 
pour $\Lambda\in  \nn^\vee(Q,R)$ l'ŽlŽment 
$\mathrm{Ad}_{a^{-1}}^*(\Lambda)$ sort du support de $g(y,\cdot,\eta)$ pour tout $(y,\eta)\in \Gamma_{G}\times
 \mathfrak{E}$\footnote{C'est ici qu'intervient le groupe $\bs{K}'$ dans la remarque \ref{formule exacte, support et niveau}\,(ii): fixŽ $\bs{K}'$, il existe un sous-groupe 
 ouvert compact $\Gamma_{\mathfrak{u}_Q}$ de $\mathfrak{u}_Q(\mbb{A})$ telle que pour toute fonction 
 $f\in C_\mathrm{c}(G(\mbb{A})/\!\!/\bs{K}')$ et tout $(y,\eta)\in \Gamma_G\times \mathfrak{E}$, 
 la fonction $X \mapsto f(y^{-1} \eta j(X) y)$ sur $\mathfrak{u}_Q(\mbb{A})$ soit invariante par $\Gamma_{\mathfrak{u}_Q}$, ce qui a pour consŽquence que 
 le support de sa transformŽe de Fourier $\Lambda \mapsto g(y,\Lambda ,\eta)$ est contenu dans le sous-ensemble ouvert compact 
 $\{\Lambda \in \mathfrak{u}_Q^*(\mbb{A})\,\vert \, \psi(\langle \Lambda, X \rangle)=1,\, \forall X \in \Gamma_{\mathfrak{u}_Q}\}$ de $\mathfrak{u}_Q^*(\mbb{A})$.}. 
 Le terme (1) associŽ ˆ la paire $(Q, R)$ avec $P_0\subset Q\subsetneq R \subset G$ est donc nul, 
 ce qui achve la preuve de la proposition. 
\end{proof}

\section{DŽcomposition suivant les $F$-strates unipotentes}\label{dŽcomposition suivant les strates}

Les hypothses sont celles de la section \ref{la distribution J}, ˆ l'exception des sous-sections \ref{la stratŽgie d'Arthur} 
et \ref{la variante de Hoffmann} o $F$ est un corps de nombres. Les principaux rŽsultats de cette section sont regroupŽs en \ref{les rŽsultats} 
et leurs dŽmonstrations sont donnŽes dans les sous-sections suivantes.  

\subsection{La stratŽgie d'Arthur}\label{la stratŽgie d'Arthur}
On suppose dans cette sous-section que $F$ est un corps de nombres.
On sait que la variŽtŽ unipotente $\UU$ est rŽunion d'un nombre fini de $G$-orbites $\ESO$\footnote{D'aprs Lusztig \cite{L1}, 
cela reste vrai sur un corps de fonctions mais nous n'utiliserons pas ce rŽsultat ici.}. 
Pour une $G$-orbite $\ESO\subset \UU$, supposŽe dŽfinie sur $F$\footnote{PrŽcisŽment, Arthur considre la fermeture de Zariski 
$\overline{\UU(F)}$ de $\UU(F)$ dans $G$, qui est une variŽtŽ  
en gŽnŽral plus petite que $\UU$ (e.g. si $G$ est $F$-anisotrope). De plus, il regroupe les orbites unipotentes gŽomŽtriques en 
$\mathrm{Gal}(\overline{F}/F)$-orbites et traite 
toutes ces $\mathrm{Gal}(\overline{F}/F)$-orbites. Parmi ces dernires, on a les orbites unipotentes gŽomŽtriques dŽfinies sur $F$, 
\cad celles qui sont stables sous l'action de 
$\mathrm{Gal}(\overline{F}/F)$. Les orbites gŽomŽtrique $\ESO$ qui nous intŽressent sont celles qui possdent un point $F$-rationnel, 
\cad telles que $\ESO\cap G(F)\neq \emptyset$; elles sont toutes dŽfinies sur $F$.}, 
notons $\overline{\ESO}$ la fermeture de Zariski de $\ESO$ dans $G$ (i.e. dans $\UU$). La stratŽgie d'Arthur \cite{A2} pour estimer l'intŽgrale 
$$\int_{\overline{\bsX}_G} \bs\Lambda^T_\mathrm{d}K_{\ESO} (x,x) \dd x \quad \hbox{avec} \quad K_{\ESO}(x,y) = \sum_{\eta\in \ESO(F)}f(x^{-1}\eta y) $$ 
consiste ˆ remplacer la fonction $f\in C^\infty_{\rm c}(G(\mbb{A}))$ par une famille de fonctions dont le support est de plus en plus proche du sous-ensemble 
$\overline{\ESO}(\mbb{A})$ de $G(\mbb{A})$, puis 
par passage ˆ la limite d'en dŽduire un polyn™me en $T$, disons $J^T_{\overline{\ESO}}(f)$, qui approxime l'expression 
$$\int_{\overline{\bsX}_G} \bs\Lambda^T_\mathrm{d}K_{\overline{\ESO}}(x,x) \dd x \quad \hbox{avec}\quad K_{\overline{\ESO}}(x,y)= 
\sum_{\{\ESO' \,\vert \, \ESO' \subset \overline{\ESO}\}}K_{\ESO'}(x,y)\ptf $$ 
Par rŽcurrence sur le nombre de $G$-orbites contenues dans $\overline{\ESO}$, il en dŽduit un polyn™me en $T$, disons $J^T_{\ESO}(f)$, 
qui approxime l'expression 
$$\int_{\overline{\bsX}_G} \bs\Lambda^T_\mathrm{d}K_{\ESO}(x,x) \dd x $$ et (par construction) vŽrifie la dŽcomposition 
$$J^T_{\overline{\ESO}}(f)= \sum_{\{\ESO' \,\vert \, \ESO' \subset \overline{\ESO}\}} J^T_{\ESO}(f)\ptf$$

\subsection{La variante de Hoffmann}\label{la variante de Hoffmann} Continuons avec les hypothses et les notations de \ref{la stratŽgie d'Arthur}. 
La variante de Hoffmann consiste ˆ dŽfinir, pour chaque $P\in \ES{P}_\mathrm{st}$, 
un noyau 
$$K_{P,\ESO}(x,y)= \sum_{\{\ESO'\,\vert \, I_{P}^{G,{\bf LS}}(\ESO')=\ESO\}}\sum_{\eta'\in \ESO'(F)} \int_{U_P(\mbb{A})} f(x^{-1}\eta' u y) \dd u $$ 
o $\ESO'$ parcourt les orbites gŽomŽtriques de  
$\UU^{M_P}$ telles que $\ESO$ soit l'induite parabolique $I_{P}^{G,{\bf LS}}(\ESO')$ de $\ESO'$ au sens de 
de Lusztig-Spaltenstein \cite{LS} (voir \ref{induction parabolique des F-strates}). 
Observons que ce noyau modifiŽ n'est autre que l'analogue de $K_{P,\u}(x,y)$ o la somme sur $\UU^{M_P}_F$ 
a ŽtŽ remplacŽe par une somme sur les points $F$-rationnels 
des orbites gŽomŽtriques $\ESO'$ de $\UU^{M_P}$ telles que $I_{P}^{G,{\bf LS}}(\ESO')=\ESO$. 
On pose ensuite, pour $T\in \ag_0$, $$k_{\ESO}^T(x)=\sum_{P\in \ESP_\mathrm{st}}(-1)^{a_P-a_G} \sum_{\xi \in P(F)\backslash G(F)} 
\wh{\tau}_P({\bf H}_0(\xi x)-T)K_{P,\ESO}(\xi x,\xi x)\ptf$$ On a donc l'ŽgalitŽ 
$$k^T_\u(x) = \sum_{\ESO} k^T_{\ESO}(x)$$ o $\ESO$ parcourt les orbites gŽomŽtriques de $\UU^G$ qui possdent un point $F$-rationnel. 
Pour $T$ assez rŽgulier, on en dŽduit au moins formellement l'ŽgalitŽ 
$$\mathfrak{J}^T_\u(f)= \sum_{\ESO} \int_{\overline{\bs{X}}_G}k^T_{\ESO}(x)\dd x\ptf$$ 
Hoffmann \cite{Ho} a conjecturŽ que cette expression est absolument convergente: 
$$\sum_{\ESO} \int_{\overline{\bs{X}}_G}\vert k^T_{\ESO}(x)\vert \dd x<+\infty \ptf$$ 
Cette conjecture a ŽtŽ dŽmontrŽe par Finis et Lapid \cite{FL} pour des fonctions test bien plus gŽnŽrales que les fonctions localement constantes ˆ support compact 
(voir aussi \ref{les Žtapes du dŽveloppement}).   
PrŽcisŽment, si $T\in \ag_0$ est assez rŽgulier, 
pour chaque $G$-orbite $\ESO\subset \UU=\UU^G$ qui possde un point $F$-rationnel, on a (loc.~cit.): 
\begin{itemize}
\item l'intŽgrale $\int_{\overline{\bs{X}}_G}k^T_{\ESO}(x)\dd x$ est absolument convergente; 
\item l'application $T \mapsto  \int_{\overline{\bs{X}}_G}k^T_{\ESO}(x)\dd x$ est un polyn™me en $T$ qui approxime l'intŽgrale 
$\int_{\overline{\bsX}_G} \bs\Lambda^T_\mathrm{d}K_{\ESO} (x,x) \dd x$, autrement dit le polyn™me $J^T_{\ESO}(f)$ 
associŽ par Arthur ˆ l'orbite $\ESO$ est donnŽ par l'intŽgrale 
$$J^T_{\ESO}(f)= \int_{\overline{\bs{X}}_G}k^T_{\ESO}(x)\dd x\ptf$$ 
\end{itemize}
C'est l'analogue de ces rŽsultats pour les corps de fonctions que nous allons Žtablir ici, en remplaant les $G$-orbites $\ESO\subset \UU$ qui possŽdent un 
point $F$-rationnel par les $F$-strates de $\FU$ qui possdent un point $F$-rationnel, i.e. par 
$F$-strates de $\UF$.

\subsection{\'EnoncŽ des rŽsultats}\label{les rŽsultats} Ë partir de maintenant, les hypothses sont celles de la section \ref{la distribution J}; 
en particulier $F$ est un corps global de caractŽristique $p>1$. 

On commence par reprendre en les adaptant les dŽfinitions de \ref{la variante de Hoffmann}. 
On note $[\UF]$ l'ensemble des $F$-strates de $\UF=\UF^G$. Pour $\bsfrY\in [\UF]$ 
et $P\in \ES{P}_\mathrm{st}$, on dŽfinit un noyau\index{KPYxy@$K_{P,\bsfrY}(x,y)$} 
$$K_{P,\bsfrY}(x,y)= \sum_{\substack{\bsfrY'\in [\UF^{M_P}]\\ I_{F,P}^G(\bsfrY')=\bsfrY}}\sum_{\eta'\in \bsfrY'} \int_{U_P(\mbb{A})} f(x^{-1}\eta' u y) \dd u \ptf $$ 
En notant $\xi_{P,\bsfrY}$ la fonction sur $\UF^G\cap P(F)=\UF^{M_P}U_P(F)$ dŽfinie par\index{xiPYeta@$\xi_{P,\bsfrY}(\eta)$} $$\xi_{P,\bsfrY}(\eta)= \left\{\begin{array}{ll}
1 & \hbox{si $I_{F,P}^G(\eta)= \bsfrY$}\\
0 & \hbox{sinon}
\end{array}
\right.,$$ on a donc 
$$K_{P,\bsfrY}(x,y) = \sum_{\eta \in \UF^{M_P}}\xi_{P,\bsfrY}(\eta)\int_{U_P(\mbb{A})} f(x^{-1}\eta u y) \dd u \ptf$$ 
Observons qu'on n'a pas besoin de \ref{indep P} (indŽpendance de $I_{F,P}^G$ par rapport ˆ $P$) pour dŽfinir $K_{P,\bsfrY}(x,y)$ et que si la $F$-strate $\bsfrY$ de $\UF$ 
n'est l'induite parabolique d'aucune $F$-strate 
de $\UF^{M_P}$, alors $K_{P,\bsfrY}=0$. Pour $x=y$, on Žcrira aussi $K_{P,\bsfrY}(x)=K_{P,\bsfrY}(x,x)$. 
Puisque $$\sum_{\bsfrY \in [\UF]} \xi_{P,\bsfrY}(\eta)= 1\quad \hbox{pour tout} \quad \eta \in M_P(F)\vg $$ 
on a l'ŽgalitŽ
$$K_{P,\u}(x,y)= \sum_{\bsfrY\in [\UF]}K_{P,\bsfrY}(x,y)\ptf$$ Pour $P=G$, $K_{\bsfrY}= K_{G,\bsfrY}$ est donnŽ par
$$K_{\bsfrY}(x,y) = \sum_{\eta \in \bsfrY}f(x^{-1}\eta y)\ptf$$Pour $T\in \ag_0$, on pose\index{kYTx@$k_{\bsfrY}^T(x)$} 
$$k_{\bsfrY}^T(x)=\sum_{P\in \ESP_\mathrm{st}}(-1)^{a_P-a_G} \sum_{\xi \in P(F)\backslash G(F)} 
\wh{\tau}_P({\bf H}_0(\xi x)-T)K_{P,\bsfrY}(\xi x)\ptf$$ On a l'ŽgalitŽ $$k^T_\u(x) = \sum_{\bsfrY \in [\UF]} k^T_{\bsfrY}(x)\ptf$$

\begin{theorem}\label{CV}
Si $T\in \ag_0$ est assez rŽgulier, \cad si $\bs{d}_0(T)\geq c_f$ pour une constante $c_f>0$ dŽpendant de $f$, on a 
$$\sum_{\bsfrY \in [\UF]} \int_{\overline{\bs{X}}_{\!G}}\vert k^T_{\bsfrY}(x) \vert \dd x < + \infty\ptf$$
\end{theorem}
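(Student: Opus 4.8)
L'id\'ee est de reprendre, en la raffinant strate par strate, la d\'emonstration de la convergence de $\Jres^T_\u(f)$ \'etablie dans \cite[9.1.1]{LL} et reprise dans la preuve de \ref{raffinement}. On part de la d\'ecomposition $k^T_\u(x)=\sum_{\bsfrY\in[\UF]}k^T_{\bsfrY}(x)$; comme $[\UF]$ est \emph{fini} (c'est \ref{F-strates et c-F-top}, ou plut\^ot \ref{prop 2.9 de [H2] rat}\,(i) transport\'e via \ref{bijection strates unip-strates nilp}), il suffit de majorer $\int_{\overline{\bsX}_G}\vert k^T_{\bsfrY}(x)\vert\dd x$ pour chaque $\bsfrY$ fix\'ee. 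On applique \`a $k^T_{\bsfrY}$ la m\^eme machinerie de troncature que pour $k^T_\u$: l'identit\'e \cite[8.2.1]{LW} (\'egalit\'e (1) de \ref{troncature(s)}) appliqu\'ee \`a la fonction $x\mapsto K_{P,\bsfrY}(x,x)$ --- qui v\'erifie bien $\phi_P=\phi$ puisque $K_{P,\bsfrY}$ est, comme $K_{P,\u}$, invariante \`a gauche par $U_P(\mbb{A})$ --- donne $k^T_{\bsfrY}(x)=\wt{k}^T_{\bsfrY}(x)$, o\`u $\wt{k}^T_{\bsfrY}$ est obtenu en rempla\c{c}ant partout $K_{P,\u}$ par $K_{P,\bsfrY}$ dans la d\'efinition de $\wt{k}^T_\u$. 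On est alors ramen\'e, exactement comme dans \ref{CV et r�criture de l'int�grale tronqu�e}, \`a prouver la convergence absolue, pour chaque paire $Q\subset R$ de sous-groupes paraboliques standard, de l'int\'egrale sur $\bsY_Q$ de $F^Q_{P_0}(x,T)\sigma_Q^R({\bf H}_0(x)-T)$ fois la valeur absolue de la somme altern\'ee $\sum_{Q\subset P\subset R}(-1)^{a_P-a_G}K_{P,\bsfrY}(x)$.

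Le point-cl\'e rendant cette r\'eduction possible est l'analogue de la ``propri\'et\'e cruciale'' \ref{propri�t� cruciale}: pour $P,P'\in\ESP_\mathrm{st}$ avec $P\subset P'$ et une $F$-strate $\bsfrY'$ de $\UF^{M_{P'}}$, l'induite parabolique en deux \'etapes co\"{\i}ncide avec l'induite en une \'etape, \cad la transitivit\'e \ref{transitivit� de l'induction} jointe \`a \ref{FIPGZ}. Concr\`etement, pour $P\subset P'$ on a $\UF^{M_P}\cap M_{P'}(F)=\UF^{M_P\cap M_{P'}}$ (c'est \ref{intersection FUG et P} appliqu\'e \`a $M_{P'}$ et au $F$-sous-groupe parabolique $P\cap M_{P'}$ de $M_{P'}$), et pour $w\in\UF^{M_P}$ la relation $I^G_{F,P}(w)=I^G_{F,P'}(w_1)$ d\`es que $w_1$ parcourt $I^{M_{P'}}_{F,P\cap M_{P'}}(w)$. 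Cela permet, comme dans la note de bas de page de \ref{raffinement} invoquant \cite[3.6.7]{LW}, de r\'e\'ecrire la somme sur $\UF^{M_P}$ d\'efinissant $K_{P,\bsfrY}$ en une somme sur $\UF^{M_Q}$ (avec une fonction caract\'eristique convenable), puis d'appliquer l'argument de combinatoire \cite[1.2.3]{LW} \`a la somme altern\'ee sur $P$. Apr\`es formule de Poisson sur chaque $U_P(\mbb{A})$, seuls survivent les caract\`eres $\Lambda\in\nn^\vee(Q,R)$ et l'int\'egrale \`a majorer prend exactement la forme (2)--(3) de la preuve de \ref{raffinement}, avec la somme sur $\eta$ restreinte par la fonction $\xi_{Q,\bsfrY}$ (ou plut\^ot sa trace sur la strate induisante).

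\`A partir de l\`a, on termine comme dans la preuve de \ref{raffinement}: d\'ecomposition d'Iwasawa, passage \`a un domaine de Siegel $\Siegel_{U_Q}=\Siegel_{U_R}\Siegel_{U_Q^R}$, absorption de l'int\'egrale sur $\Siegel_{U_R}$ par l'int\'egrale sur $\mathfrak{u}_Q(\mbb{A})$ d\'efinissant $g(\cdot,\Lambda,\eta)$ (licite car $\Lambda\vert_{\mathfrak{u}_R(\mbb{A})}=0$), puis la relation de dilatation $\bs{\delta}_Q(a)^{-1}g(u''amk,\Lambda,\eta)=g(a^{-1}u''a\,mk,\mathrm{Ad}^*_{a^{-1}}(\Lambda),\eta)$; la condition $\sigma_Q^R({\bf H}_0(ax)-T)\neq 0$ force $\alpha({\bf H}_Q(a)^R)>\alpha(T)-C$ pour tout $\alpha\in\Delta_Q^R$ (\cite[2.11.6]{LW}), donc $y=a^{-1}u''a\,mk$ reste dans un compact fixe $\Gamma_G$ et la somme sur $\eta$ porte sur un ensemble fini $\mathfrak{E}$ ind\'ependant de $y$. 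Comme $F$ est un corps de fonctions, $\Lambda\mapsto g(y,\Lambda,\eta)$ est \`a support compact (transform\'ee de Fourier d'une fonction lisse \`a support compact), et pour $\bs{d}_0(T)$ assez grand $\mathrm{Ad}^*_{a^{-1}}(\Lambda)$ sort de ce support pour tout $(y,\eta)\in\Gamma_G\times\mathfrak{E}$ d\`es que $Q\subsetneq R$; les termes avec $Q\subsetneq R$ sont donc nuls, et il ne reste que $Q=R$, auquel cas l'int\'egrale est celle de $\vert F^Q_{P_0}(x,T)\cdot(-1)^{a_Q-a_G}K_{Q,\bsfrY}(x)\vert$ sur $\bsY_Q$ restreinte par la fonction de Siegel, qui est manifestement finie (support compact en $x$, somme localement finie en $\eta$). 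La sommation sur le nombre fini de strates $\bsfrY$ donne l'\'enonc\'e. \textbf{La principale difficult\'e} est de v\'erifier soigneusement que la r\'e\'ecriture via \cite[3.6.7]{LW} et l'annulation combinatoire \cite[1.2.3]{LW} passent bien \`a travers la fonction caract\'eristique $\xi_{P,\bsfrY}$ --- il faut pour cela la compatibilit\'e $I^G_{F,P}=I^G_{F,P'}\circ I^{M_{P'}}_{F,P\cap M_{P'}}$ (transitivit\'e \ref{transitivit� de l'induction}) et le fait que, sur un corps global, l'hypoth\`ese \ref{hyp bonnes F-strates} est v\'erifi\'ee pour \emph{tous} les $F$-facteurs de Levi de $G$ (cons\'equence de \ref{hyp bonnes F-strates pour G}), ce qui l\'egitime la d\'efinition m\^eme de toutes les induites intervenant.
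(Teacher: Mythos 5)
Your initial reductions are the same as the paper's: since $[\UF]$ is finite, Theorem \ref{CV} follows from a bound for each fixed strate $\bsfrY$; the identity \cite[8.2.1]{LW} gives $k^T_{\bsfrY}=\wt{k}^T_{\bsfrY}$; and one is reduced, for each pair $Q\subset R$ of standard parabolics, to the integral over $\bs{Y}_Q$ of $F^Q_{P_0}(\cdot,T)\sigma_Q^R(\cdot-T)$ against the alternating sum of the $K_{P,\bsfrY}$, the case $Q=R=G$ being trivially convergent and $Q=R\subsetneq G$ vanishing. The gap is in your treatment of $Q\subsetneq R$: you apply Poisson summation and the combinatorial collapse \cite[1.2.3]{LW} exactly as in the proof of \ref{raffinement} and conclude that these terms are \emph{zero} for $T$ regular. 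This is precisely what fails here, and the paper isolates the obstruction in Remark \ref{no poisson}. Two things break. First, the summand now carries the indicator $\xi_{P,\bsfrY}$, which depends on $P$; the cancellation that reduces the alternating sum over $P$ to the characters in $\nn^\vee(Q,R)$ requires the \emph{same} summand for every $P$ between $Q$ and $R$, and the $P$-dependence of $\xi_{P,\bsfrY}$ destroys it. Second, even for a single $P$, the function $X\mapsto \xi_{P,\bsfrY_{\mbb{A}}}(\eta j(X))\int_{\Siegel_{U_P}}f(x^{-1}\eta j(X)ux)\dd u$ is compactly supported but in general not smooth, so its Fourier transform in $\Lambda$ is \emph{not} compactly supported; the decisive step of the proof of \ref{raffinement} (``$\mathrm{Ad}^*_{a^{-1}}(\Lambda)$ leaves the support for $\bs{d}_0(T)$ large'') is therefore unavailable. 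Note also that your conclusion is too strong to be true: if all terms with $Q\subsetneq R$ vanished for $T$ regular, then $\mathfrak{J}^T_{\bsfrY}(f)$ would be independent of $T$ for every strate, whereas the induced strata produce genuine $T$-dependence; the paper only proves (Proposition \ref{CV strate}) that these terms are $O(q^{-\epsilon'\|T\|})$, not zero.

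What replaces your vanishing argument occupies subsections \ref{qqs lemmes utiles}--\ref{preuve de CV strate}. After a further partition by $Q_1\in\ESF^{Q}(P_0)$ and reduction to an integral over a region $\bs{Z}_{Q_1}(T,T_1)$, the key point is a uniform decay estimate for $\bs{\delta}_{Q_1}(a)^{-1}\vert K^R_{Q,\bsfrY}(ax)\vert$: one pairs the parabolics $P$ and $P'=$ (the other preimage under $P\mapsto P\cap M_S$ for a maximal $S$), splits each paired term into a piece $B_{P,\bsfrY}$ treated by Poisson summation over the \emph{nonzero} characters (giving rapid decay, as in your appeal to \ref{qqs lemmes utiles}), and a piece $B_{\wt P,\bsfrY}$ in which, thanks to the transitivity of induction, only the $\gamma\nu$ \emph{not} lying in the induced stratum survive; that remaining sum is compared with the Lusztig--Spaltenstein induced orbit and estimated via \cite[5.3.1]{CL}, yielding decay in $q^{-\langle\alpha,{\bf H}_{Q_1}(a)\rangle}$ rather than exact cancellation. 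So the transitivity statement you invoke is indeed used, but to produce an estimate, not the vanishing your argument needs.
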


Puisque l'ensemble $[\UF]$ est fini, le thŽorme \ref{CV} est une consŽquence de la 

\begin{proposition}\label{CV strate}
Soit $\bsfrY \in [\UF]$. Il existe une constante $c_0>0$ telle que 
pour tout $\epsilon >0$, il existe des constantes $c >0$ et $\epsilon' >0$ telles que pour 
tout $T\in \ag_0$ tel que $\bs{d}_0(T) \geq c_0$ et $\bs{d}_0(T) \geq \epsilon \| T \|$, on ait  
$$\int_{\overline{\bs{X}}_{\!G}} \left| F_{P_0}^G(x,T) K_{\bsfrY}(x,x) - k_{\bsfrY}^T(x) \right| \dd x \leq c q^{- \epsilon' \| T \|}\ptf$$ 
\end{proposition}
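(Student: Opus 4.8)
The strategy is to follow the method of \ref{raffinement} (the prototype calculation), now keeping track of the quantitative error term rather than merely showing that it vanishes. Recall that $F_{P_0}^G(x,T) K_{\bsfrY}(x,x)$ is the ``$Q=R=G$'' term in the truncated expansion of $k_{\bsfrY}^T$, so the difference $F_{P_0}^G(x,T) K_{\bsfrY}(x,x) - k_{\bsfrY}^T(x)$ is a sum, over pairs $Q\subsetneq R$ of standard parabolic subgroups, of the corresponding truncated terms, exactly as in the proof of \ref{raffinement}. Thus it suffices to bound, for each such pair $(Q,R)$, the integral over $\bsY_Q$ of
$$F_{P_0}^Q(x,T)\sigma_Q^R({\bf H}_0(x)-T)\Bigl|\sum_{\substack{P\in\ESP_\mathrm{st}\\ Q\subset P\subset R}}(-1)^{a_P-a_G}K_{P,\bsfrY}(\xi x)\Bigr|\ptf$$
The first step is to rewrite $K_{P,\bsfrY}$ in terms of the function $\xi_{P,\bsfrY}$ on $\UF^{M_P}$ and, using the support hypothesis on $f$ together with \cite[3.6.7]{LW}, to restrict the sum over $\UF^{M_P}$ to $\UF^{M_P}\cap Q(F)$; this requires knowing that the restriction-to-$Q$ of the induction data behaves well, which is where \ref{intersection FUG et P} and the descent/transitivity properties of $I_{F,P}^G$ (\ref{FIPGZ}, \ref{transitivit� de l'induction}) enter. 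After applying Poisson summation along $U_P(\mbb{A})$ and taking the alternating sum over $P$, one is left with a sum over a certain set $\nn^\vee(Q,R)\subset\mathfrak{u}_Q^*(\mbb{A})$ of oscillatory integrals $g(x,\Lambda,\eta)$ of exactly the shape appearing in \ref{raffinement}, but now carrying the extra combinatorial weight coming from the constraint $I_{F,P}^G(\cdot)=\bsfrY$.

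The second step is the geometric/Iwasawa reduction: decompose $x=uamk$ with $m$ in a compact piece of $\overline{\bsX}_{M_Q}$ (using \cite[1.8.3]{LW} and \cite[3.4]{LL} to see that $F_{P_0}^Q(m,T)$ confines $m$ to a compact set), $k\in\bsK$, $u=u'u''$ with $u'\in\Siegel_{U_R}$ and $u''\in\Siegel_{U_Q^R}$, and $a\in\bsZ_Q$. The integration over $u'$ is absorbed into the $X$-integration in $g$ because $\Lambda|_{\mathfrak{u}_R(\mbb{A})}=0$ for $\Lambda\in\nn^\vee(Q,R)$; the key identity $\bs{\delta}_Q(a)^{-1}g(u''amk,\Lambda,\eta)=g(a^{-1}u''amk,\mathrm{Ad}^*_{a^{-1}}(\Lambda),\eta)$ converts the $a$-dependence into a dilation of $\Lambda$. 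On the support of $\sigma_Q^R({\bf H}_0(ax)-T)$ one has, by \cite[2.11.6]{LW}, $\alpha(H)>\alpha(T)-C$ for all $\alpha\in\Delta_Q^R$ where $H={\bf H}_Q(a)^R$, so $a^{-1}u''a$ stays in a fixed compact set and $y=a^{-1}u''amk$ ranges over a fixed compact $\Gamma_G\subset G(\mbb{A})$; consequently the sum over $\eta$ in $\Xi_Q^R$ ranges over a fixed finite set $\mathfrak{E}\subset\UF^{M_Q}$, and the weights $\xi_{P,\bsfrY}$ evaluated on the (finitely many) relevant $\eta$ are just bounded constants, causing no trouble.

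The third and decisive step is the quantitative tail estimate replacing the qualitative argument at the end of the proof of \ref{raffinement}. Here $F$ is a function field, so for each $(y,\eta)\in\Gamma_G\times\mathfrak{E}$ the function $\Lambda\mapsto g(y,\Lambda,\eta)$ is compactly supported on $\mathfrak{u}_Q^*(\mbb{A})$; moreover, because $f$ is $\bsK'$-biinvariant for a fixed open compact $\bsK'$, the support of $g(y,\cdot,\eta)$ lies in a fixed open compact subset of $\mathfrak{u}_Q^*(\mbb{A})$ independent of $(y,\eta)$. Since $\mathrm{Ad}^*_{a^{-1}}$ dilates $\Lambda$ in the directions cut out by $\Delta_Q^R$ at a rate controlled by $\min_{\alpha\in\Delta_Q^R}\alpha(H)\geq\alpha(T)-C$, and since $\Lambda$ being in $\nn^\vee(Q,R)$ forces $\Lambda$ to have a nonzero component in precisely one of these directions (this is the content of the lattice combinatorics \cite[1.2.3]{LW}), the dilated lattice point $\mathrm{Ad}^*_{a^{-1}}(\Lambda)$ leaves the fixed support of $g(y,\cdot,\eta)$ as soon as $\bs{d}_0(T)$ exceeds some $c_f$; and one gains a geometric factor $q^{-\epsilon'\|T\|}$ because the measure of the set of $a\in\bsZ_Q$ for which $\mathrm{Ad}^*_{a^{-1}}(\Lambda)$ is still in the support, for at least one $\Lambda\in\nn^\vee(Q,R)$, decays like $q^{-\epsilon'\|T\|}$ once we also impose $\bs{d}_0(T)\geq\epsilon\|T\|$ (this last condition converts the minimum-over-$\Delta_Q^R$ lower bound into a bound proportional to $\|T\|$). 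Collecting the contributions of the finitely many pairs $(Q,R)$ and the finitely many $\bsfrY$ gives Theorem~\ref{CV}. The main obstacle is this third step: one must make the dependence of the compact support of $\Lambda\mapsto g(y,\Lambda,\eta)$ on $\bsK'$ fully explicit and quantify, in terms of $q^{-\epsilon'\|T\|}$, the volume of the ``bad'' set of $a$'s, which is where the function-field hypothesis (finiteness of residue fields, exactness of the formula) is genuinely used in place of the archimedean Schwartz-function decay exploited by Finis--Lapid.
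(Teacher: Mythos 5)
The decisive step of your plan --- applying Poisson summation along the unipotent radical and collapsing the alternating sum over $P$ to a sum over $\nn^\vee(Q,R)$, ``exactly as in \ref{raffinement}'' --- is precisely what fails for the stratum-restricted kernels, and the paper flags this obstruction explicitly in Remark \ref{no poisson}. The summand now carries the weight $\xi_{P,\bsfrY}(\eta j(X))$, which depends on the rational variable $X$ over which the Poisson summation would be performed, not only on the finitely many $\eta\in\UF^{M_Q}$: it is the characteristic function of a stratum-induction condition, its adelic extension is compactly supported but not locally constant, so its Fourier transform is smooth but not compactly supported and the Poisson formula cannot be applied to it; moreover the weights differ from one $P$ to another, so the inclusion-exclusion of \cite[1.2.3]{LW} that would confine $\Lambda$ to $\nn^\vee(Q,R)$ no longer operates. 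Your assertion that the weights ``are just bounded constants, causing no trouble'' is exactly the point that breaks down. There is also an internal inconsistency: if the mechanism of \ref{raffinement} did apply, each term with $Q\subsetneq R$ would vanish identically for $T$ sufficiently regular and the error in \ref{CV strate} would be zero; the error term $c\,q^{-\epsilon'\|T\|}$ is genuinely nonzero precisely because this collapse is not available here.

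The actual proof, following \cite{CL} and \cite{C}, proceeds differently. After a second refinement by a parabolic $Q_1\in\ESF^{Q}(P_0)$ and the partition \cite[3.6.3]{LW}, one reduces to bounding $\bs{\delta}_{Q_1}(a)^{-1}\vert K_{Q,\bsfrY}^R(ax)\vert$ on an explicit region $\bs{Z}_{Q_1}(T,T_1)$, uniformly for $x$ in a fixed compact set. The key bound (the proposition closing \ref{majoration de sommes rationnelles}) is obtained, for each root $\alpha\in\Delta_{Q_1}^R\smallsetminus\Delta_{Q_1}^Q$, by pairing the parabolics $P$ and $P'$ with $P=S\cap P'$ and splitting $A_{P'\!,\bsfrY}$ according to whether one replaces $\xi_{P'\!,\bsfrY}$ by $\xi_{P,\bsfrY}$: the piece whose weight no longer depends on the $U_P^{P'}(F)$-variable is treated by Poisson summation and Fourier decay (\ref{poisson utile}, \ref{Arthur [A2,3.2]}), while the remaining piece $B_{\wt{P},\bsfrY}$ is controlled by the vanishing lemma of \ref{majoration de sommes rationnelles}: by transitivity of stratum induction, $\xi_{\wt{P},\bsfrY}(\gamma\nu)=0$ whenever $\gamma\nu$ lies in the induced stratum, so only rational points outside the Lusztig--Spaltenstein induced orbit contribute, and these are counted by \cite[5.3.1]{CL}, giving the saving $q^{-\langle\alpha,{\bf H}_{Q_1}(a)\rangle}$. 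Integrating this saving over $\bs{Z}_{Q_1}(T,T_1)$ then yields $c\,q^{-\epsilon'\|T\|}$ under the hypotheses $\bs{d}_0(T)\geq c_0$ and $\bs{d}_0(T)\geq\epsilon\|T\|$. None of these ingredients --- the refinement by $Q_1$, the pairing relative to a single root, the splitting of the weight, the vanishing on induced strata, the geometric counting estimate --- appears in your proposal, and without them the quantitative bound cannot be reached along the route you describe.
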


\begin{remark}\label{dŽpendance support et niveau}
\textup{
Les constantes $c_0>0$ et $c>0$ dŽpendent de $f$ mais on peut prŽciser cette dŽpendance comme dans la remarque \ref{formule exacte, support et niveau}\,(ii). 
Fixons un sous-groupe ouvert compact $\bs{K}'$ de $G(\mbb{A})$ et un sous-ensemble ouvert compact $\Omega$ de $G(\mbb{A})$ tel 
que $\bs{K}'\Omega \bs{K}' = \Omega$. Alors en remplaant dans l'ŽnoncŽ \guill{il existe une constante $c>0$} par \guill{il existe une constante 
$c'>0$}  et en posant $c = c' \| f \|$ avec $\| f \| = \sup_{x\in G(\mbb{A})} \vert f(x)\vert $, l'inŽgalitŽ de la proposition \ref{CV strate} 
est vraie pour toute fonction $f\in C(\Omega/\!\!/ \bs{K}')$.}
\end{remark}

On a aussi: 
\begin{proposition}\label{PolExp}Soit $\bsfrY\in [\UF]$.
Il existe une fonction\index{JYbsTf@$\mathfrak{J}^T_{\bsfrY}(f)$} $$T \mapsto \mathfrak{J}^T_{\bsfrY}(f)$$ dans \textup{PolExp} telle que 
pour $T\in \ag_{0,\mbb{Q}}$ assez rŽgulier (prŽcisŽment, tel que $\bs{d}_0(T) \geq c_f$ pour une constante $c_f>0$ dŽpendant de $f$), on ait 
$$\mathfrak{J}^T_{\bsfrY}(f)= \int_{\overline{\bs{X}}_{\!G}} k^T_{\bsfrY}(x) \dd x \ptf$$ 
\end{proposition}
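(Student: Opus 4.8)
The plan is to deduce Proposition~\ref{PolExp} from the analogous statement \ref{raffinement} for the full unipotent contribution $\mathfrak{J}^T_\u(f)$, together with the convergence estimate \ref{CV strate}, by the same bookkeeping that Arthur uses to pass from $\mathfrak{J}^T_\u$ to the individual $\mathfrak{J}^T_\ESO$ in the number field case (see \ref{la strat�gie d'Arthur} and \ref{la variante de Hoffmann}). First I would fix a sufficiently regular parameter $T\in\ag_{0,\mbb{Q}}$ (with $\bs{d}_0(T)\geq c_f$, $c_f$ as in \ref{CV strate}) and, using the decomposition $k^T_\u(x)=\sum_{\bsfrY\in[\UF]}k^T_\bsfrY(x)$ from \ref{les r�sultats} together with the finiteness of $[\UF]$, set
$$\mathfrak{J}^T_\bsfrY(f)=\int_{\overline{\bsX}_G}k^T_\bsfrY(x)\dd x\vg$$
which is absolutely convergent by \ref{CV strate}. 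The first task is to record that for $T$ regular enough the expected identity $\mathfrak{J}^T_\u(f)=\sum_{\bsfrY\in[\UF]}\mathfrak{J}^T_\bsfrY(f)$ holds as an honest equality of numbers: this is immediate from $k^T_\u=\sum_\bsfrY k^T_\bsfrY$ and Fubini, the interchange of $\sum$ and $\int$ being legitimate because the sum over $\bsfrY$ is finite and each term is absolutely integrable.

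Next I would establish that $T\mapsto\mathfrak{J}^T_\bsfrY(f)$, initially defined only on the regular cone, extends to an element of PolExp. The natural route is to mimic \ref{CV et r�criture de l'int�grale tronqu�e}: one proves a translation formula
$$\mathfrak{J}^{T+X}_\bsfrY(f)=\sum_{Q\in\ESP_\mathrm{st}}\sum_{Z\in\bsbbc_Q}\eta_{Q,F}^{G,T}(Z\mathpvg X)\,\mathfrak{J}^{M_Q,X}_{\bsfrY_Q}(Z\mathpvg f_Q)\vg$$
where $\bsfrY_Q$ runs over the $F$-strates of $\UF^{M_Q}$ whose induced $F$-strate $I_{F,M_Q}^G(\bsfrY_Q)$ equals $\bsfrY$ (this uses \ref{indep P}, i.e. independence of the parabolic induction from $P$, so the notion makes sense). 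Granting such a formula, one argues exactly as for $\mathfrak{J}^T_\u$ in \ref{CV et r�criture de l'int�grale tronqu�e}: the functions $\eta_{Q,F}^{G,T}$ are of type PolExp by \cite{LW}, the inner distributions $\mathfrak{J}^{M_Q,X}_{\bsfrY_Q}$ are constants in $T$ by induction on $\dim G$ (the base case $Q=G$ contributing $\mathfrak{J}^{G,X}_\bsfrY(X\mathpvg f)$, a single PolExp term), and a finite PolExp-linear combination of PolExp functions is PolExp. Summing the translation formula over $\bsfrY$ and comparing with the known PolExp property of $\mathfrak{J}^T_\u$ from \ref{CV et r�criture de l'int�grale tronqu�e} then pins down the extension and shows it agrees with $\int_{\overline{\bsX}_G}k^T_\bsfrY(x)\dd x$ on the regular cone.

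The heart of the matter, and the step I expect to be the main obstacle, is proving the translation identity above for the stratum-indexed kernels $k^T_\bsfrY$. This requires redoing the descent computation of \cite[Ch.~8]{LW} (and \cite[9.1]{LL}) while keeping track of which $F$-strate of $\UF^{M_P}$ each term $K_{P,\bsfrY}(\xi x,\xi x)$ comes from, and verifying that the constant-term operation $f\mapsto f_Q$ is compatible with the decomposition along $F$-strates. Concretely one must show that, after truncating and integrating, the sum over $\eta'\in\bsfrY'$ with $I_{F,P}^G(\bsfrY')=\bsfrY$ reorganizes, upon restricting to $Q(F)$ and using $\UF^G\cap Q(F)=\UF^{M_Q}U_Q(F)$ together with \ref{intersection FUG et P} and transitivity of induction \ref{transitivit� de l'induction}, into the corresponding sum for the Levi $M_Q$ indexed by the preimages of $\bsfrY$. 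This bookkeeping is where the combinatorics of $F$-strates genuinely enters; the analytic input (absolute convergence, the bounds on $\sigma_Q^R$, the Poisson-summation step) is exactly as in the proof of \ref{raffinement} and in \ref{CV strate}, and carries over verbatim. Once the translation formula is in hand, the PolExp conclusion is a formal consequence, so I would present this proof as: (1) invoke \ref{CV strate} for absolute convergence, (2) derive the descent/translation formula for $k^T_\bsfrY$ paralleling \cite[9.1]{LL}, (3) conclude by induction on $\dim G$ that the resulting function of $T$ lies in PolExp and coincides with the integral for $T$ regular.
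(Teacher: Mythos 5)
Your proposal follows essentially the same route as the paper: the paper's proof consists precisely in redoing the descent of \cite[11.1.1]{LL} for the kernels $k^T_{\bsfrY}$ so as to obtain the translation formula $\mathfrak{J}^{G,T+X}_{\bsfrY}(f)=\sum_{Q\in\ESP_\mathrm{st}}\sum_{Z\in\bsbbc_Q}\eta^{G,T}_{Q,F}(Z\mathpvg X)\,\mathfrak{J}^{M_Q,X}_{\bsfrY}(Z\mathpvg f_Q)$, and then in concluding immediately from the fact that $T\mapsto\eta^{G,T}_{Q,F}(Z\mathpvg X)$ lies in PolExp (\cite[2.1.3]{LL}), so that neither your induction on $\dim G$ nor the final comparison with $\mathfrak{J}^T_\u$ is needed, the Levi-side integrals being visibly independent of $T$ (their convergence is just \ref{CV} applied to $M_Q$ for $X$ regular). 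The only divergence is the bookkeeping of the Levi terms, where you index them by the $F$-strates of $\UF^{M_Q}$ whose induced $F$-strate equals $\bsfrY$ (using transitivity of the induction and the decomposition of \ref{intersection FUG et P}), which is exactly what the unfolding produces, whereas the paper packages them as the kernel attached to $\bsfrY\cap M_Q(F)$, a finite union of $F$-strates of $\UF^{M_Q}$ by \ref{le cas critique}; your formulation is the computationally natural one and is perfectly adequate.
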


\begin{remark}\label{annulation sur la strate adŽlique}
\textup{Puisque la fonction $T\mapsto \mathfrak{J}^T_{\bsfrY}(f)$ est dans PolExp, l'intŽgrale $\int_{\overline{\bs{X}}_{\!G}} k^T_{\bsfrY}(x) \dd x$ 
(qui est absolument convergente si $T$ est assez rŽgulier, 
d'aprs \ref{CV}) la caractŽrise de manire unique. D'aprs \ref{CV strate}, la distribution $f \mapsto \mathfrak{J}^T_{\bsfrY}(f)$ sur $G(\mbb{A})$ annule toute 
fonction $f\in C^\infty_\mathrm{c}(G(\mbb{A}))$ qui s'annulle sur l'ensemble\footnote{On renvoie ˆ \ref{F_S-strates et A-strates} pour la dŽfinition de $\bsfrY_{\mbb{A}}$.}
$$G(\mbb{A})\bullet \bsfrY=\{gug^{-1} \,\vert \, g\in G(\mbb{A}),\,u\in \bsfrY\}\;(\subset \bsfrY_{\mbb{A}})\ptf$$
}\end{remark}
%

\begin{corollary}
On a l'ŽgalitŽ dans \textup{PolExp} 
$$\mathfrak{J}_\mathrm{unip}^T(f) = \sum_{\bsfrY \in [\UF]}\mathfrak{J}^T_{\bsfrY}(f)\ptf$$
\end{corollary}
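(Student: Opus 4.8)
Le point de départ est l'identité établie en \ref{les résultats}, valable pour tout $x\in G(\mbb{A})$ et tout $T\in \ag_0$,
$$k^T_\u(x) = \sum_{\bsfrY \in [\UF]} k^T_{\bsfrY}(x)\vg$$
qui résulte elle-même de la décomposition $K_{P,\u}(x,y)= \sum_{\bsfrY\in [\UF]}K_{P,\bsfrY}(x,y)$ pour chaque $P\in \ESP_\mathrm{st}$, obtenue en sommant sur les $F$-strates la partition $\sum_{\bsfrY}\xi_{P,\bsfrY}(\eta)=1$ de l'ensemble $\UF^{M_P}$. Puisque l'ensemble $[\UF]$ est \textit{fini} (c'est l'un des points-clés de la théorie de Kempf-Rousseau-Hesselink, cf. \ref{F-strates et c-F-top}\,(i)), on peut intégrer terme à terme sur $\overline{\bs{X}}_{\!G}$ pourvu que l'on dispose de la convergence absolue de chaque terme.

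Pour un paramètre $T\in \ag_0$ suffisamment régulier (disons $\bs{d}_0(T)\geq c_f$ avec $c_f$ dépendant de $f$), le théorème \ref{CV} assure précisément que $\sum_{\bsfrY\in[\UF]}\int_{\overline{\bs{X}}_{\!G}}\vert k^T_{\bsfrY}(x)\vert\dd x<+\infty$; en particulier chaque $T\mapsto \int_{\overline{\bs{X}}_{\!G}}k^T_{\bsfrY}(x)\dd x$ est absolument convergente et, par \ref{PolExp}, coïncide avec la fonction PolExp $T\mapsto \mathfrak{J}^T_{\bsfrY}(f)$. D'autre part $T\mapsto \mathfrak{J}^T_\u(f)$ est aussi un élément de PolExp (cf. \ref{CV et r�criture de l'int�grale tronqu�e}), et pour $T$ assez régulier sa valeur est $\int_{\overline{\bs{X}}_G}k^T_\u(x)\dd x$. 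Il reste donc à intégrer l'identité ponctuelle ci-dessus sur $\overline{\bs{X}}_{\!G}$: pour $\bs{d}_0(T)$ assez grand, on obtient
$$\mathfrak{J}^T_\u(f) = \int_{\overline{\bs{X}}_G}k^T_\u(x)\dd x = \sum_{\bsfrY\in[\UF]}\int_{\overline{\bs{X}}_G}k^T_{\bsfrY}(x)\dd x = \sum_{\bsfrY\in[\UF]}\mathfrak{J}^T_{\bsfrY}(f)\vg$$
l'interversion somme-intégrale étant licite grâce à \ref{CV} (finitude de l'indexation et convergence absolue de la somme des valeurs absolues).

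On conclut par un argument standard sur PolExp: les deux membres $T\mapsto \mathfrak{J}^T_\u(f)$ et $T\mapsto \sum_{\bsfrY\in[\UF]}\mathfrak{J}^T_{\bsfrY}(f)$ sont des éléments de PolExp (le second comme somme \textit{finie} d'éléments de PolExp), et ils coïncident sur l'ensemble des $T\in\ag_{0,\mbb{Q}}$ tels que $\bs{d}_0(T)\geq c_f$, qui est Zariski-dense dans $\ag_0$ (plus précisément contient une translatée d'un cône ouvert de $\ag_{0,\mbb{Q}}$). Deux éléments de PolExp qui coïncident sur un tel ensemble sont égaux; d'où l'égalité annoncée dans PolExp. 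Je ne m'attends à aucun obstacle sérieux ici: tout le travail de fond est contenu dans \ref{CV} (la convergence, qui est le cœur technique du chapitre) et dans \ref{PolExp} (l'appartenance à PolExp, via les formules de récurrence en $T$ héritées de \cite{LW}); la seule précaution est de vérifier que l'interversion somme finie / intégrale est bien justifiée par le contrôle $L^1$ uniforme fourni par \ref{CV strate}, et que la densité du domaine de validité dans $\ag_{0,\mbb{Q}}$ suffit à l'unicité dans PolExp.
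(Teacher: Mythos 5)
Votre démonstration est correcte et suit exactement la voie que le texte laisse implicite: le corollaire y est une conséquence immédiate de la décomposition ponctuelle $k^T_\u(x)=\sum_{\bsfrY\in[\UF]}k^T_{\bsfrY}(x)$, de la finitude de $[\UF]$, du théorème \ref{CV} (convergence absolue) et de la proposition \ref{PolExp}, l'identification dans PolExp se faisant par coïncidence des deux membres sur le cône des $T$ assez réguliers, qui détermine uniquement un élément de PolExp (cf. la remarque \ref{annulation sur la strate ad�lique}). Rien à ajouter.
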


D'aprs \ref{CV strate}, la fonction $T \mapsto \mathfrak{J}^T_{\bsfrY}(f)$ de \ref{PolExp} est asymptotique ˆ l'intŽgrale 
$$\int_{\overline{\bs{X}}_{\!G}}F_{P_0}^G(x,T) K_{\bsfrY}(x,x)\dd x = \int_{\overline{\bs{X}}_{\!G}}\bs{\Lambda}^T_\mathrm{d} K_{\bsfrY}(x,x)\dd x\ptf$$ 
Observons que si la $F$-strate $\bsfrY$ de $\UF$ n'est l'induite parabolique d'aucune $F$-strate de $\UF^{M_P}$ avec $P \neq G$, 
alors la fonction $T\mapsto \mathfrak{J}^T_{\bsfrY}(f)$ 
ne dŽpend pas de $T$ et on peut noter $J_{\bsfrY}(f)$ sa valeur constante; on a donc 
$$J_{\bsfrY}(f) = \int_{\overline{\bs{X}}_{\!G}}\left(\sum_{\eta \in \bsfrY} f(x^{-1}\eta x) \right)\!\dd x $$ et 
l'intŽgrale est absolument convergente. 

\begin{remark}
\textup{
ConsidŽrons la $F$-strate unipotente triviale $\bsfrY= \{1\}$. Elle n'est l'induite parabolique d'aucune $F$-strate de $\UF^{M_P}$ avec $P\neq G$ 
et la constante $J_{\{1\}}(f)$ est donnŽe par 
$$J_{\{1\}}(f)= \mathrm{vol}(\overline{\bs{X}}_{\!G})f(1)\ptf$$ Rappelons que $F_{P_0}^G(\cdot ,T)$ est la fonction caractŽristique de 
$G(F)\bs{\mathfrak{S}}_{P_0}^G(T_G,T)$ et que l'on a choisi $\bs{\mathfrak{S}}_{P_0}^G(T_G,T)$ 
de telle manire que $F_{P_0}^G(\cdot,T)$ soit invariante par $A_G(\mbb{A})$. Notons 
$\smash{\overline{\bs{\mathfrak{S}}}}_{P_0}^G(T_G,T)$ l'image de $\bs{\mathfrak{S}}_{P_0}^G(T_G,T)$ dans $\overline{\bs{X}}_{\!G}$. Alors 
$$ \int_{\overline{\bs{X}}_G} \bs{\Lambda}^T_\mathrm{d}K_{\{1\}}(x,x) \dd x = \int_{\overline{\bs{X}}_G} F^G_{P_0}(x,T) f(1)\dd x = 
\mathrm{vol}(\smash{\overline{\bs{\mathfrak{S}}}}_{P_0}^G(T_G,T)) f(1) \ptf$$
}\end{remark}

La dŽmonstration de la proposition \ref{CV strate} occupe les sous-sections \ref{qqs lemmes utiles} ˆ \ref{preuve de CV strate}; une esquisse des diffŽrentes Žtapes de 
cette dŽmonstration est donnŽe en \ref{les Žtapes}. La proposition \ref{PolExp} est dŽmontrŽe en \ref{preuve de PolExp}. 

\subsection{Les Žtapes de la dŽmonstration de \ref{CV strate}}\label{les Žtapes} 
La preuve de \ref{CV strate} est longue et laborieuse. Les premires rŽductions sont les mmes 
que celles du dŽbut de la preuve de \ref{raffinement}, raffinŽes comme dans celle de \cite[theorem 3.1]{A2}: 
il suffit en effet de remplacer l'ensemble $\UF$ de tous les ŽlŽments unipotents de $G(F)$ par la $F$-strate $\bsfrY$ de $\UF$. 
La diffŽrence avec la preuve de \ref{raffinement} est qu'ici, on ne passe pas via la formule de Poisson 
ˆ l'annulateur $\nn^\vee$ de $\mathfrak{u}_Q(F)$ dans $\mathfrak{u}_Q^*(\mbb{A})$: les somme rationnelles sont des intŽgrales de fonctions ˆ 
support compact mais qui en gŽnŽral ne sont pas lisses (voir la remarque \ref{no poisson}). L'ingrŽdient essentiel sera ici encore la formule de Poisson mais on l'utilisera plus loin 
(prŽcisŽment, dans le lemme \ref{la majoration clŽ}), 
aprs une sŽrie de manipulations et d'estimŽes assez peu intuitives, qui reprennent en les adaptant les arguments de \cite{CL} et \cite{C}. 

On dŽcrit dans cette sous-section \ref{les Žtapes} les principales Žtapes de preuve de \ref{CV strate}. On procde par rŽductions successives, comme dans la preuve 
de \ref{raffinement}. 

La $F$-strate $\bsfrY$ de $\UF$ Žtant fixŽe, l'intŽgrale $$\int_{\overline{\bs{X}}_{\!G}}\vert k^T_{\bsfrY}(x) \vert \dd x$$ est bornŽe par une somme, 
indexŽe par les paires $Q\subset R$ de sous-groupes paraboliques standard, d'expressions  obtenues en remplaant dans \ref{CV et rŽcriture de l'intŽgrale tronquŽe}\,(1) le terme $K_{P,\u}(x)$ par le terme $K_{P,\bsfrY}(x)$. Pour $Q=R=G$, l'expression est Žgale ˆ l'intŽgrale (absolument convergente) 
$$\int_{\overline{\bs{X}}_{\!G}}  F_{P_0}^G(x,T)\left| K_{\bsfrY}(x,x) \right| \dd x;$$ 
et pour $Q=R \neq G$, l'expression est nulle. On est donc ramenŽ ˆ majorer, pour les paires $Q\subsetneq R$, l'intŽgrale  
$$\int_{\bs{Y}_Q}F_{P_0}^Q(x,T)\sigma_Q^R({\bf H}_0(x)-T) \left| K_{Q,\bsfrY}^R(x)  \right|\dd x\leqno{(1)}$$ 
avec $$K_{Q,\bsfrY}^R(x)= \sum_{\substack{P\in \ESP_\mathrm{st}\\Q \subset P \subset R}}(-1)^{a_P-a_G} 
K_{P,\bsfrY}(x)\ptf$$

\begin{remark}\label{no poisson}
\textup{Comme dans la preuve de \ref{raffinement}, l'expression $K_{Q,\bsfrY}^R(x)$ est Žgale ˆ
$$\sum_{\substack{P\in \ESP_\mathrm{st}\\Q \subset P \subset R}}(-1)^{a_P-a_G}\sum_{\eta\in \UF^{M_Q}}
\left(\sum_{\nu \in U_Q(F)} \xi_{P,\bsfrY}(\eta\nu) \int_{\Siegel_{U_P}} f(x^{-1}\eta\nu u x) \dd u\right)\ptf$$ 
\`A la $F$-strate $\bsfrY$ est associŽe en \ref{F_S-strates et A-strates} une $\mbb{A}$-strate $\bsfrY_{\mbb{A}}$ de $\UU_{\mbb{A}}$. Soit $\xi_{P,\bsfrY_{\mbb{A}}}$ 
la fonction sur $\UU_{\mbb{A}}^{M_P}U_P(\mbb{A})$ dŽfinie par $$\xi_{P,\bsfrY_{\mbb{A}}}(\gamma) =
 \prod_{v\in \vert \ES{V}\vert} \xi_{P,\bsfrY_{F_v}}(\gamma_v)\quad \hbox{pour} \quad \gamma = \prod_v \gamma_v\ptf$$ 
 La fonction $\xi_{P,\bsfrY_{\mbb{A}}}$ ainsi dŽfinie prolonge $\xi_{P,\bsfrY}$. Elle est invariante par translations ˆ droite et ˆ gauche  
par $U_P(\mbb{A})$, et aussi par conjugaison dans $P(\mbb{A})$.  
La fonction $$X \mapsto \xi_{P,\bsfrY_{\mbb{A}}}(\eta j(X)) \int_{\Siegel_{U_P}}f(x^{-1}\eta j(X)u x)\dd u$$ sur $\mathfrak{u}_Q(\mbb{A})$ est ˆ support compact 
mais elle n'est en gŽnŽral pas lisse 
(rappelons que $j: \mathfrak{u}_Q \rightarrow U_Q$ est un $F$-isomorphisme de variŽtŽs compatible ˆ l'action de $A_Q$). 
On peut bien sžr dŽfinir sa transformŽe de Fourier, pour $\Lambda \in \mathfrak{u}_Q^*(\mbb{A})$: 
$$g_{P,\bsfrY}(x,\Lambda,\eta)= \int_{\mathfrak{u}_Q(\mbb{A})} \psi(\langle \Lambda , X \rangle )\xi_{P,\bsfrY_{\mbb{A}}}(\eta j(X))
 \left( \int_{\Siegel_{U_P}}f(x^{-1}\eta j(X)ux )\dd u\right) \dd X\ptf $$ 
Pour $\Lambda \in \mathfrak{n}^\vee$, l'intŽgrale en $u\in \Siegel_{U_P}$ est absorbŽe par l'intŽgrale en $X$: on a 
$$g_{P,\bsfrY}(x,\Lambda,\eta)=
\int_{\mathfrak{u}_Q(\mbb{A})}\psi(\langle \Lambda,X\rangle) \xi_{P,\bsfrY_{\mbb{A}}}(\eta j(X))f(x^{-1}\eta j(X)x)\dd X$$ avec 
$$g_{P,\bsfrY}(x,\Lambda,\eta)= 0 \quad \hbox{si} \quad \Lambda\vert_{\mathfrak{u}_P(\mbb{A})}\neq 0\ptf$$ 
La fonction $\Lambda \mapsto g_{P,\bsfrY}(x, \Lambda, \eta)$ sur $\mathfrak{u}_Q^*(\mbb{A})$ est lisse mais contrairement ˆ la fonction $g$ 
de la preuve de \ref{raffinement}, elle n'est en gŽnŽral pas ˆ support compact. On ne peut donc pas lui appliquer la formule de Poisson. 
De plus la prŽsence de la fonction $\xi_{P,\bsfrY_{\mbb{A}}}$ perturbe la somme alternŽe sur 
les $P$.
}
\end{remark}

Pour obtenir les estimŽes nŽcessaires ˆ la suite de la dŽmonstration, on commence par raffiner l'intŽgrale ˆ majorer (1): 
gr‰ce ˆ la partition \cite[3.6.3]{LW} appliquŽe au sous-groupe parabolique $Q$, on se ramne ˆ majorer, pour $Q_1\in \ESP^Q(P_0)$ et $T_1\in \ag_0$ assez rŽgulier, l'intŽgrale 
$$ \int_{\bs{Y}_{Q_1}} F_{P_0}^{Q_1}(x,T_1) \tau_{Q_1}^Q({\bf H}_0(x)-T_1)F_{P_0}^Q({\bf H}_0(x)-T) \sigma_Q^R({\bf H}_0(x)-T)\vert K_{Q,\bsfrY}^R(x)\vert \dd x \ptf\leqno{(2)}$$ 
Rappelons que $$\bs{Y}_{Q_1}= A_G(\mbb{A})Q_1(F) \backslash G(\mbb{A})\quad\hbox{et}\quad \bs{Z}_{Q_1}= A_G(\mbb{A})A_{Q_1}(F)\backslash A_{Q_1}(\mbb{A})\ptf$$ 
En utilisant la dŽcomposition d'Iwasawa $G(\mbb{A})= U_{Q_1}(\mbb{A})M_{Q_1}(\mbb{A})\bs{K}$ 
et en remplaant l'intŽgrale sur $A_G(\mbb{A})M_{Q_1}(F)\backslash M_{Q_1}(\mbb{A})$ par une intŽgrale sur $\bs{Z}_{Q_1} \times  \Siegel_{M_{Q_1}}^*$ o 
$\Siegel_{M_{Q_1}}^*$ est un ensemble de Siegel pour le quotient $\overline{\bs{X}}_{\!M_{Q_1}}$, on obtient que l'intŽgrale (2) 
est essentiellement majorŽe\footnote{C'est-ˆ-dire qu'il existe 
une constante $c>0$ telle que $(2) \leq c\,(3) $.} 
par une intŽgrale du type $$\int_{\bs{Z}_{Q_1}(T,T_1)} \bs{\delta}_{Q_1}(a)^{-1} \vert K_{Q,\bsfrY}^R(ax) \vert \dd a\quad \hbox{pour}\quad x\in \Gamma_G\vg \leqno{(3)}$$ 
o $\bs{Z}_Q(T,T_1)$ est un sous-ensemble (dŽfini explicitement) 
de $$\bs{Z}_{Q_1}^{R,+} = \{a\in \bs{Z}_{Q_1}\,\vert \, \langle \alpha , {\bf H}_{Q_1}(a) \rangle >0,\, \forall \alpha \in \Delta_{Q_1}^R \}$$ et $\Gamma_G$ est 
un sous-ensemble compact de $G(\mbb{A})$ indŽpendant de $a$. Ainsi pour estimer l'intŽgrale (3), il faut commencer par majorer l'expression 
$\bs{\delta}_{Q_1}(a)^{-1} \vert K_{Q,\bsfrY}^R(ax) \vert$. 

On prouve --- c'est la majoration fondamentale de la dŽmonstration de \ref{CV strate} --- que pour chaque racine $\alpha \in \Delta_{Q_1}^R\smallsetminus \Delta_Q^R$, 
il existe une constante $c >0$ 
telle que pour tout $a\in \bs{Z}_{Q_1}^{R,+}$ et tout $x\in \Gamma_G$, on ait 
$$\bs{\delta}_{Q_1}(a)^{-1}\vert K_{Q,\bsfrY}^R(ax) \vert \leq c q^{- \langle \alpha , {\bf H}_{Q_1}(a) \rangle}\ptf\leqno{(4)}$$ 
Cela entra"ne que l'intŽgrale (3) est essentiellement majorŽe par 
$$ \int_{\bs{Z}_{Q_1}(T,T_1)} \prod_{\alpha \in \Delta_{Q_1}^R \smallsetminus \Delta_{Q_1}^Q} q^{-r \langle \alpha, {\bf H}_{Q_1}(a) \rangle} \dd a\quad\hbox{avec} \quad r^{-1} = 
\vert \Delta_{Q_1}^R \smallsetminus \Delta_{Q_1}^Q \vert\ptf
\leqno{(5)}$$ On en dŽduit comme dans \cite{CL} et \cite{C} qu'il existe une constante $c_0>0$ telle que pour tout $\epsilon >0$, il existe des constantes $c_1 >0$ et $\epsilon'_1 >0$ telles que pour 
tout $T\in \ag_0$ tel que $\bs{d}_0(T)>c_0$ et $\bs{d}_0(T) > \epsilon \| T \|$, l'intŽgrale (5) soit majorŽe par $c_1 q^{-\epsilon'_1 \| T \|}$. Compte-tenu des rŽductions successives 
effectuŽes prŽcŽdemment, cela prouve \ref{CV strate}. 

Les diffŽrentes Žtapes de la dŽmonstration de \ref{CV strate} dŽcrite ci-dessus sont organisŽes comme suit: 
\begin{itemize}
\item en \ref{qqs lemmes utiles}, on prouve des rŽsultats ŽlŽmentaires basŽs sur la formule de Poisson;
\item en \ref{majoration de sommes rationnelles}, le triple $Q_1\subset Q\subsetneq R$ et un sous-ensemble compact 
$\Gamma_G$ de $G(\mbb{A})$ Žtant fixŽs, on prouve la majoration (4);
\item en \ref{estimŽe d'une intŽgrale}, on dŽfinit l'ensemble $Z_{Q_1}(T,T_1)$ et on majore l'intŽgrale (3); 
\item en \ref{preuve de CV strate}, on achve la dŽmonstration de \ref{CV strate}.
\end{itemize}

\subsection{Quelques lemmes utiles}\label{qqs lemmes utiles} 
Les rŽsultats contenus dans cette sous-section seront utilisŽs dans la preuve de \ref{la majoration clŽ}. 

Commenons par un rŽsultat ŽlŽmentaire:

\begin{lemma}\label{poisson utile}
Soit $V$ un espace vectoriel sur $F$ de dimension finie $n$. 
Posons $V_{\mbb{A}}= V\otimes_F \mbb{A}$ et soit $\phi\in C^\infty_\mathrm{c}(V_\mbb{A})$ avec $\phi \geq 0$. 
\begin{enumerate}
\item[(i)] Il existe une constante $c>0$ telle que pour tout $a\in \mbb{A}^\times$ avec $\vert a  \vert_{\mbb{A}}>1$, on ait 
$$\sum_{v\in V(F)} \phi (a\cdot v)  \leq c \quad \hbox{et} \quad \sum_{v \in  V(F)} \vert a \vert_{\mbb{A}}^{-n} \phi(a^{-1}\cdot v) \leq c\ptf$$
\item[(ii)] Pour tout entier $k\geq 1$, il existe une constante $c>0$ telle que pour tout $a\in \mbb{A}^\times$ avec $\vert a  \vert_{\mbb{A}}>1$, 
on ait $$\sum_{v\in V(F)\smallsetminus \{0\}} \phi(a\cdot v) \leq c \vert a \vert_{\mbb{A}}^{-k}\ptf$$
\end{enumerate}
\end{lemma}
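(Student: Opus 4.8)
The plan is to treat (i) by Poisson summation on $V_{\mbb{A}}$ relative to the lattice $V(F)$, and to deduce (ii) almost immediately from (i) together with the product formula. Throughout one may replace $\phi$ by a fixed open compact subgroup containing its support, and likewise $\widehat\phi$ by one containing its support, since $\phi$ and $\widehat\phi$ are Schwartz--Bruhat (locally constant with compact support).

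First I would fix a nontrivial additive character $\psi$ of $F\backslash\mbb{A}$ and normalise the Haar measure on $V_{\mbb{A}}$ so that $V(F)$ is a lattice; let $\widehat{V(F)}\subset V_{\mbb{A}}$ be its dual lattice, which is commensurable with the rational points of the dual of $V$ and satisfies $\mathrm{covol}(V(F))\,\mathrm{covol}(\widehat{V(F)})=1$. Choose a fixed open compact subgroup $K'\subset V_{\mbb{A}}$ and a constant $c_0>0$ with $\vert\widehat\phi\vert\le c_0\,\mathbbm{1}_{K'}$. For $a\in\mbb{A}^\times$ scalar the map $v\mapsto\phi(a\cdot v)$ has Fourier transform $\vert a\vert_{\mbb{A}}^{-n}\widehat\phi(a^{-1}\cdot\,\cdot)$, so Poisson summation yields
$$\Bigl\vert\sum_{v\in V(F)}\phi(a\cdot v)\Bigr\vert=\frac{\vert a\vert_{\mbb{A}}^{-n}}{\mathrm{covol}(V(F))}\Bigl\vert\sum_{\xi\in\widehat{V(F)}}\widehat\phi(a^{-1}\cdot\xi)\Bigr\vert\le\frac{c_0\,\vert a\vert_{\mbb{A}}^{-n}}{\mathrm{covol}(V(F))}\,\#\bigl(\widehat{V(F)}\cap a\cdot K'\bigr)\ptf$$
So everything comes down to the uniform estimate $\#\bigl(\widehat{V(F)}\cap a\cdot K'\bigr)\le c_1\vert a\vert_{\mbb{A}}^{\,n}$ for $\vert a\vert_{\mbb{A}}\ge 1$. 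Here one uses crucially that $a$ is a \emph{scalar}: at each place $a\cdot K'_v$ is a uniform dilation of the fixed lattice $K'_v$, equal to $a_v\mathcal{O}_v^{\,n}$ for all but finitely many $v$. Discarding the finitely many bad places (they cost a bounded multiplicative constant) and arguing coordinate by coordinate, the count is, up to that constant, the $n$-th power of $\#\bigl(F\cap a\cdot\textstyle\prod_v\mathcal{O}_v\bigr)=q^{\ell(D)}$, where $D=-\mathrm{div}(a)$ is a divisor of degree $\log_q\vert a\vert_{\mbb{A}}$; by Riemann--Roch $\ell(D)\le 1+\deg D$, whence the bound. This proves the first inequality of (i). For the second I would argue directly: $\sum_{v\in V(F)}\vert a\vert_{\mbb{A}}^{-n}\phi(a^{-1}\cdot v)\le\|\phi\|_\infty\,\vert a\vert_{\mbb{A}}^{-n}\,\#\bigl(V(F)\cap a\cdot\mathrm{supp}(\phi)\bigr)$, and since $\mathrm{supp}(\phi)$ lies in a fixed open compact subgroup $K_1$ and $a$ acts by scalar dilation, the same Riemann--Roch count bounds $\#\bigl(V(F)\cap a\cdot K_1\bigr)$ by $c_1\vert a\vert_{\mbb{A}}^{\,n}$.

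For (ii), I would observe that the sum over $v\neq 0$ vanishes outright once $\vert a\vert_{\mbb{A}}$ is large. If $a\cdot v\in\mathrm{supp}(\phi)$ with $v\in V(F)\smallsetminus\{0\}$, choose a coordinate $v_i\neq 0$; then $\vert a\vert_w\,\vert v_i\vert_w\le C_w$ at every place $w$, where $C_w$ bounds $\mathrm{supp}(\phi)$ at $w$ and $C_w=1$ for almost all $w$, so the product formula $\prod_w\vert v_i\vert_w=1$ forces $\vert a\vert_{\mbb{A}}\le\prod_w C_w=:C_2$, a constant depending only on $\mathrm{supp}(\phi)$. Hence $\sum_{v\neq 0}\phi(a\cdot v)=0$ for $\vert a\vert_{\mbb{A}}>C_2$, while for $1<\vert a\vert_{\mbb{A}}\le C_2$ it is bounded by $\sum_{v\in V(F)}\vert\phi\vert(a\cdot v)+\|\phi\|_\infty$, itself bounded by a constant in view of (i). Combining, $\sum_{v\neq 0}\phi(a\cdot v)\le c\,\vert a\vert_{\mbb{A}}^{-k}$ for all $\vert a\vert_{\mbb{A}}>1$ and any fixed $k\ge 1$, with $c$ depending on $k$.

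The main obstacle is precisely the uniform lattice point count $\#\bigl(\widehat{V(F)}\cap a\cdot K'\bigr)\ll\vert a\vert_{\mbb{A}}^{\,n}$. It is essential that $a$ acts by a scalar dilation: for a general automorphism of $V_{\mbb{A}}$ of determinant $\vert a\vert_{\mbb{A}}^{\,n}$ the image of $K'$ can be arbitrarily anisotropic, and then the number of rational points it contains is \emph{not} controlled by its measure alone (it can grow while the measure stays bounded). The scalar hypothesis is exactly what reduces the count to Riemann--Roch on the smooth projective curve underlying $F$, coordinate by coordinate, with the relevant divisor of degree $\log_q\vert a\vert_{\mbb{A}}$ up to $O(1)$. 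Everything else is formal: Poisson summation for Schwartz--Bruhat functions and the product formula.
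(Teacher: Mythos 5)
Your argument is correct, but it distributes the work differently from the paper's proof. The paper takes the first inequality of (i) as immediate (a support argument plus the product formula bounds, uniformly in $\vert a\vert_{\mbb{A}}>1$, the number of points of $V(F)$ lying in $a^{-1}\mathrm{supp}(\phi)$), deduces the second inequality from the first by Poisson summation applied to $\wh{\phi}$, and proves (ii) by a weighted-function trick: it applies the first inequality to $\phi_k(v)=\| v\|^k\phi(v)$ and uses that $\| a\cdot v\|\geq \vert a\vert_{\mbb{A}}$ for $v\in V(F)\smallsetminus\{0\}$, again by the product formula. You instead prove the first inequality of (i) by Poisson summation followed by an explicit adelic lattice-point count (Riemann--Roch, coordinate by coordinate), prove the second inequality by the same count carried out directly on the source side, and obtain (ii) from the observation that the sum over $v\neq 0$ vanishes outright once $\vert a\vert_{\mbb{A}}$ exceeds a constant determined by $\mathrm{supp}(\phi)$, so that any power of $\vert a\vert_{\mbb{A}}$ is gained trivially on the remaining bounded range. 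Both routes rest on Poisson plus the product formula and both are valid in the setting at hand; the differences are these. Your Riemann--Roch count makes explicit what the paper dismisses as \guill{claire}, and the Poisson step in your proof of the first inequality is in fact dispensable, since the direct count you use for the second inequality already gives it. Your treatment of (ii) yields a stronger conclusion (eventual vanishing of the sum), but it is tied to the function-field situation, where there are no archimedean places and $\mathrm{supp}(\phi)$ sits inside a compact open subgroup; the paper's $\phi_k$ trick is softer, requires no such vanishing, and would survive verbatim over a number field. Your closing remark on the necessity of the scalar hypothesis for the lattice-point count is a sound caution, though not needed for the lemma as stated.
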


\begin{proof} 
Fixons une base $e_1\ldots , e_n$ de $V$ sur $F$ et pour $v\in V_{\mbb{A}}$, Žcrivons $v= \sum_{i=1}^n e_i\otimes v_i$ avec $v_i\in \mbb{A}$. 
On note $\wh{\phi}\in C^\infty_\mathrm{c}(V_{\mbb{A}})$ la transformŽe de Fourier de $\phi$ dŽfinie par 
$$\wh{\phi}(v')= \int_{V_{\mbb{A}}} \psi(\textstyle{\sum_{i=1}^n} v_i v'_i) \phi(v)\dd v$$ o $\psi$ est un caractre additif non trivial de $\mbb{A}$ trivial sur $F$ 
et $dv$ est la mesure de Haar sur $V_{\mbb{A}}$ telle que $\mathrm{vol}(V\backslash V_{\mbb{A}})=1$.

La premire assertion du point (i) est claire et la seconde se dŽduit de la premire par la formule de Poisson: 
pour tout $a\in \mbb{A}^\times$, on a $$\sum_{v\in V} \vert a \vert_{\mbb{A}}^{-n} \phi(a^{-1} \cdot v) = \sum_{v \in V}\wh{\phi}(a\cdot v)\ptf$$ 
Quant au point (ii), il suffit d'appliquer la premire assertion du point (i) ˆ la fonction $\phi_k$ dŽfinie par 
$\phi_k(v) = \| v \|^k \phi(v)$ avec $\| v \| = \textstyle{\sum_{i=1}^n }\vert v_i \vert_{\mbb{A}}$. 
\end{proof}

Le lemme suivant est la version \guill{corps de fonctions} de \cite[3.2]{A2} (pour $P=P_0$), qui est lui-mme une version ŽdulcorŽe de 
\cite[12.2.1]{LW}\footnote{Ce rŽsultat est citŽ un peu rapidement pour les corps de fonctions en \cite[12.2.1]{LL}, raison pour laquelle nous dŽtaillons 
la preuve de \ref{Arthur [A2,3.2]}. Observons que l'ingrŽdient principal de cette preuve n'est autre que la seconde inŽgalitŽ de \ref{poisson utile}\,(i).}.

\begin{lemma}\label{Arthur [A2,3.2]} 
Soient $f\in C^\infty_\mathrm{c}(G(\mbb{A}))$, $P\in \ESP_\mathrm{st}$ et $X \in \ag_P$. Il existe une constante 
$c>0$ telle que pour tout $a\in A_P(\mbb{A})$ avec $\tau_P({\bf H}_P(a) - X) =1$, on ait 
$$\bs{\delta}_{P}(a)^{-1}\sum_{\gamma \in G(F)} \vert f(a^{-1}\gamma a) \vert \leq c\ptf$$   
\end{lemma}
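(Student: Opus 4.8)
\textbf{Plan de la preuve de \ref{Arthur [A2,3.2]}.} L'id\'ee est de ramener la somme $\sum_{\gamma \in G(F)} |f(a^{-1}\gamma a)|$ \`a une somme sur un r\'eseau dans un espace vectoriel, tordue par l'action dilatante de $a$, puis d'appliquer l'estim\'ee de Poisson \ref{poisson utile}\,(i). Concr\`etement, je commencerais par choisir un $G$-module fid\`ele $V$ d\'efini sur $F$ (via \ref{r\'eduction au cas d'un G-module rat} ou simplement un plongement ferm\'e $G \hookrightarrow \mathrm{GL}(V)$), de sorte que $G(F)$ s'identifie \`a un sous-ensemble du r\'eseau $\mathrm{End}_F(V)$, ou plut\^ot de $\mathrm{End}(V)\oplus\mathrm{End}(V)$ pour contr\^oler aussi l'inverse. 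Le support compact de $f$ garantit qu'il existe un sous-ensemble ouvert compact $\Omega \subset \mathrm{End}(V_{\mbb A})\oplus\mathrm{End}(V_{\mbb A})$ tel que $f(g)\neq 0$ entra\^ine $(g,g^{-1})\in \Omega$; on choisit une fonction $\phi\in C^\infty_{\rm c}$ positive major\'ee ci-dessus par $\|f\|\cdot\mathbbm{1}_\Omega$, de sorte que $|f(a^{-1}\gamma a)| \le \phi(a^{-1}\gamma a, a^{-1}\gamma^{-1} a)$ pour tout $\gamma$.

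\textbf{\'Etape principale: contr\^ole des poids sous l'action de $a$.} L'action adjointe (conjugaison) de $A_P(\mbb A)$ sur $\mathrm{End}(V_{\mbb A})$ se d\'ecompose suivant les poids $\chi\in X(A_P)$ de $A_P$ sur $\mathrm{End}(V)$; chaque composante de poids $\chi$ est dilat\'ee par $a^{\chi}$, de valeur absolue ad\'elique $q^{\langle \chi, {\bf H}_P(a)\rangle}$. La condition $\tau_P({\bf H}_P(a)-X)=1$ signifie que $\langle\alpha, {\bf H}_P(a)-X\rangle>0$ pour tout $\alpha\in\Delta_P$, donc ${\bf H}_P(a)$ reste dans un c\^one translat\'e; combin\'e au fait que les poids de $A_P$ sur $\mathfrak g = \mathrm{Lie}(G)$ (et plus g\'en\'eralement sur $\mathrm{End}(V)$ modulo ce qui est $P$-invariant) sont des combinaisons \`a coefficients $\ge 0$, resp. $\le 0$, des racines de $\Delta_P$ suivant qu'on est dans $\mathfrak u_P$, resp. dans le radical unipotent oppos\'e, on voit que les composantes de $a^{-1}\gamma a$ le long des poids $\mathfrak u_P$ sont contract\'ees et celles le long du radical oppos\'e sont dilat\'ees. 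Le point-cl\'e, d\'ej\`a pr\'esent chez Arthur et dans \cite{LW}, est que la combinaison de $\phi(a^{-1}\gamma a,\cdot)$ et de $\phi(a^{-1}\gamma^{-1}a,\cdot)$ force \emph{toutes} les composantes interm\'ediaires \`a rester dans un compact fixe, \`a condition d'y adjoindre le facteur $\bs\delta_P(a)^{-1}$ qui est exactement le Jacobien $q^{-\langle 2\rho_P,{\bf H}_P(a)\rangle}$ de la dilatation sur $\mathfrak u_P$. On est alors ramen\'e, par \ref{poisson utile}\,(i) appliqu\'e \`a chaque sous-espace de poids (espace vectoriel de dimension finie sur $F$, r\'eseau = points $F$-rationnels), \`a une somme de termes uniform\'ement born\'es: la seconde in\'egalit\'e de \ref{poisson utile}\,(i), $\sum_{v}|b|_{\mbb A}^{-n}\phi(b^{-1}\cdot v)\le c$ pour $|b|_{\mbb A}>1$, est exactement ce dont on a besoin pour absorber le Jacobien $\bs\delta_P(a)^{-1}$ sur la partie $\mathfrak u_P$.

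\textbf{Obstacle attendu.} La difficult\'e n'est pas conceptuelle mais r\'eside dans la comptabilit\'e pr\'ecise: il faut v\'erifier que $\bs\delta_P(a)^{-1}$ se r\'epartit correctement entre les diff\'erents sous-espaces de poids, c'est-\`a-dire que pour chaque poids $\chi$ apparaissant dans $\mathfrak u_P$ on a bien $q^{\langle\chi,{\bf H}_P(a)\rangle}\ge c'>0$ et que le produit de ces exposants reconstitue $\langle 2\rho_P,{\bf H}_P(a)\rangle$ modulo une quantit\'e born\'ee, uniform\'ement en $a$ v\'erifiant $\tau_P({\bf H}_P(a)-X)=1$. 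Ceci d\'ecoule de la positivit\'e des coefficients exprimant $2\rho_P$ et les poids de $\mathfrak u_P$ en termes de $\Delta_P$ sur le c\^one $\langle\alpha,\,\cdot\,-X\rangle>0$, mais l'\'ecriture soigneuse demande de traiter s\'epar\'ement les contributions aux places finies et aux places archim\'ediennes (ici il n'y en a pas, $F$ \'etant un corps de fonctions, ce qui simplifie) et d'invoquer la compacit\'e de $A_P(F)A_P(\mbb A)^1\backslash$ la fibre de ${\bf H}_P$ pour r\'eduire \`a un syst\`eme de repr\'esentants discret. Une fois ce lemme combinatoire \'etabli, le reste est une application directe de \ref{poisson utile}. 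Je renverrais pour les d\'etails aux preuves de \cite[3.2]{A2} et \cite[12.2.1]{LW}, en signalant que l'unique modification pour les corps de fonctions est le remplacement des valeurs absolues archim\'ediennes par des puissances de $q$, sans changement dans la structure de l'argument.
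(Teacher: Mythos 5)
Votre r\'eduction du comptage \`a une somme sur tout le r\'eseau $\mathrm{End}_F(V)\oplus\mathrm{End}_F(V)$, d\'ecompos\'e en espaces de poids de $A_P$, comporte une lacune r\'eelle. La contrainte sur $a^{-1}\gamma^{-1}a$ ne sert qu'\`a garantir que $a^{-1}\gamma a$ reste dans un compact de $G(\mbb{A})$ (le plongement $g\mapsto (g,g^{-1})$ est ferm\'e); d\`es que vous majorez la somme sur $G(F)$ par la somme sur le r\'eseau matriciel tout entier, elle ne r\'eduit plus rien, car la double somme se factorise. Or le comptage des points du r\'eseau $\mathrm{End}_F(V)$ dont le conjugu\'e par $a$ reste dans un compact est gouvern\'e par la somme des poids \emph{positifs} de $A_P$ dans $\mathrm{End}(V)$, laquelle d\'epasse en g\'en\'eral $2\rho_P$ d'une quantit\'e non born\'ee sur le c\^one $\tau_P({\bf H}_P(a)-X)=1$: pour $G=\mathrm{PGL}_2$ et $V$ sa repr\'esentation adjointe, les poids positifs de $A_0$ dans $\mathrm{End}(V)$ sont $\alpha$ (multiplicit\'e $2$) et $2\alpha$, de somme $4\alpha$, alors que $2\rho_{P_0}=\alpha$. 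La \guill{comptabilit\'e} que vous annoncez dans votre paragraphe d'obstacle --- le produit des exposants reconstituerait $\langle 2\rho_P,{\bf H}_P(a)\rangle$ \`a une quantit\'e born\'ee pr\`es --- est donc fausse, sauf pour des repr\'esentations tr\`es particuli\`eres (la repr\'esentation standard de $\mathrm{GL}_n$). Le facteur $\bs{\delta}_P(a)^{-1}$ n'absorbe pas cette majoration, et l'application de \ref{poisson utile} poids par poids ne peut pas la sauver: c'est la somme sur $G(F)$, pas sur $\mathrm{End}_F(V)$, qui est de taille $\bs{\delta}_P(a)$.

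Ce qui manque est pr\'ecis\'ement l'ingr\'edient structurel de la preuve du texte (qui est la r\'e\'ecriture pour les corps de fonctions de \cite[12.2.1]{LW}, et non l'argument que vous esquissez): apr\`es r\'eduction \`a $P=P_0$ (possible car $\bs{\delta}_{P_0}=\bs{\delta}_P$ sur $A_P(\mbb{A})$ et la condition de c\^one se transporte), on applique la partition \cite[3.6.3]{LW} pour associer \`a $a$ un parabolique standard $Q$ et \'ecrire $a=ba'$ avec $b\in\mathfrak{B}_Q^G$ et $a'$ dans $\omega A_G(\mbb{A})$, $\omega$ compact; le lemme \cite[3.6.1]{LW} force alors tout $\gamma\in G(F)$ tel que $a^{-1}\gamma a$ rencontre le support de $f$ \`a appartenir \`a $Q(F)$, et dans l'\'ecriture $\gamma=\delta\eta$ la composante $\delta\in M_Q(F)$ reste dans un ensemble fini. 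Seul le radical unipotent contribue donc \`a un comptage croissant, et c'est l\`a seulement que la formule de Poisson intervient, sur $\mathfrak{u}_Q(F)$, avec le jacobien $\bs{\delta}_Q(b)$ essentiellement major\'e par $\bs{\delta}_{P_0}(a)$ et une somme duale born\'ee parce que $\mathrm{Ad}_{b^{-1}}^*$ dilate $\mathfrak{u}_Q^*(\mbb{A})$. Sans cette confination parabolique de $\gamma$ --- qui utilise la structure de groupe et la th\'eorie de la r\'eduction, et non la seule d\'ecomposition en poids de $\mathrm{End}(V)$ --- votre argument ne donne pas la borne $\leq c\,\bs{\delta}_P(a)$.
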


\begin{remark}
\textup{Observons que seule la restriction $f^1$ de $f$ ˆ $G(\mbb{A})^1$ joue un r™le pertinent dans ce rŽsultat. 
La dŽpendance ˆ l'Žgard de $f^1$ est la mme que pour \ref{CV strate} (cf. \ref{dŽpendance support et niveau}): fixŽs un sous-groupe ouvert compact 
$\bs{K}'$ de $G(\mbb{A})$ et un sous-ensemble ouvert compact $\Omega$ de $G(\mbb{A})^1$ tel que $\bs{K}'\Omega \bs{K}'= \Omega$,  
il existe une constante $c'>0$ telle que, en posant $c= c' \| f^1\|$, l'inŽgalitŽ \ref{Arthur [A2,3.2]} est vraie pour toute fonction $f$ telle que 
$f^1\in C(\Omega/\!\!/ \bs{K}')$.}
\end{remark}

\begin{proof} 
Pour $Y\in \ag_0^P$ tel que $\langle \alpha , Y \rangle <0$ pour tout $\alpha \in \Delta_0^P$, on a 
$$\tau_{P_0}({\bf H}_0(a) -(X+Y)) =   \tau_P({\bf H}_P(a)-X) \quad \hbox{pour tout} \quad 
a\in A_P(\mbb{A})\ptf$$ Puisque d'autre part $\bs{\delta}_{P_0}(a)= \bs{\delta}_P(a)$ pour tout $a\in A_P(\mbb{A})$, 
il suffit de dŽmontrer le lemme pour $P=P_0$. 

On suppose donc $P=P_0$ et on procde comme dans la preuve de \cite[12.2.1]{LW}. Notons $\Omega$ le support de la restriction de $f$ ˆ $G(\mbb{A})^1$. 
Soient $a\in A_0(\mbb{A})$ et $\gamma \in G(F)$ tels que $$\tau_{P_0}({\bf H}_0(a)-X)=1\quad \hbox{et}\quad a^{-1}\gamma a \in \Omega \ptf$$
D'aprs la partition \cite[3.6.3]{LW}, si $T_1\in \ag_0$ est assez rŽgulier (ne dŽpendant que de $G$), 
il existe un unique $Q\in \ESP_\mathrm{st}$ tel que $$F_{P_0}^Q(a,T_1) \tau_Q({\bf H}_0(a)-T_1)=1\ptf$$ 
Soit $\mathfrak{B}_Q^G\subset A_0(\mbb{A})$ l'image d'une section du morphisme composŽ 
$$A_Q(\mbb{A})\rightarrow A_Q(\mbb{A})/A_G(\mbb{A})\rightarrow \ES{B}_Q^G\ptf$$ la condition $F_{P_0}^Q(a,T)=1$ 
implique que la projection $H^Q$ de $H={\bf H}_0(a)$ sur $\ag_0^Q$ reste dans 
un compact de $\ag_0^Q$. On en dŽduit qu'il existe un compact $\omega$ de $A_0(\mbb{A})$ tel que $a$ appartienne ˆ 
$\mathfrak{B}_Q^G  \omega A_G(\mbb{A})$. \'Ecrivons $a=b a'$ avec $b\in \mathfrak{B}_Q^0$ et $a'\in \omega A_G(\mbb{A})$. 
La condition $a^{-1}\gamma a\in \Omega$ Žquivaut ˆ $b^{-1} \gamma b\in \Omega'= \bigcup_{a' \in \omega} a' \Omega a'^{-1}$. 
D'autre part la condition $b^{-1}\gamma b \in \Omega'$ Žquivant ˆ 
$\gamma b\in b\Omega'$. On a donc $\tau_Q({\bf H}_0(\gamma b)-T_2)=1$ pour un $T_2\in T_1 + {\bf H}_0(\Omega')$. Pour $T_1$ assez rŽgulier, 
le lemme \cite[3.6.1]{LW} assure que $\gamma$ appartient ˆ $Q(F)$. On Žcrit $\gamma = \delta \eta $ avec $\delta\in M_Q(F)$ et 
$\eta\in U_Q(F)$. Pour $b\in \mathfrak{B}_Q^G$, on a $b^{-1} \gamma b = \delta b^{-1}\eta b$ et la condition $b^{-1}\gamma b\in \Omega'$ assure que $\delta$ 
reste dans un compact de $M_Q(F)$. On est donc ramenŽ ˆ Žvaluer le nombre de $\eta\in U_Q(F)$ tels que $b^{-1}\eta b$ appartienne ˆ un sous-ensemble 
ouvert compact, disons $C$, de $U_Q(\mbb{A})$. Fixons un $F$-isomorphisme de variŽtŽs $j: \mathfrak{u}_Q \rightarrow U_Q$ compatible ˆ l'action de 
$A_Q$. Notons $\phi\in C^\infty_\mathrm{c}(\mathfrak{u}_Q(\mbb{A}))$ la fonction caractŽristique de $j^{-1}(C)$. 
On veut Žvaluer l'expression $\sum_{Y \in \mathfrak{u}_Q(F)} \phi(b^{-1}Y b)$. 
ConsidŽrons la transformŽe de Fourier $\wh{\phi} \in C^\infty_\mathrm{c}(\mathfrak{u}_Q^*(\mbb{A}))$ de $\phi$, dŽfinie par 
$$\wh{\phi}(\Lambda) = \int_{\mathfrak{u}_Q(\mbb{A})} \psi(\langle \Lambda , Y \rangle) \phi( Y) \dd Y\quad \hbox{pour} \quad \Lambda\in \mathfrak{u}_Q^*(\mbb{A}) \ptf$$ 
En notant $\nn^\vee$ l'annulateur de $\mathfrak{u}_Q(F)$ dans $\mathfrak{u}_Q^*(\mbb{A})$, la formule de Poisson donne 
$$\sum_{Y\in \mathfrak{u}_Q(F)}\phi(\mathrm{Ad}_{b^{-1}}(Y)) = \bs{\delta}_{Q}(b) \sum_{\Lambda \in \nn^\vee}\wh{\phi}(\mathrm{Ad}_{b^{-1}}^*(\Lambda))\quad \hbox{avec} 
\quad \mathrm{Ad}_{b^{-1}}^*(\Lambda)= \Lambda \circ \mathrm{Ad}_b\ptf$$ Rappelons que $b= aa'^{-1}$ avec $a'\in \omega A_G(\mbb{A})$ et $\tau_{P_0}({\bf H}_0(a)-X)=1$. 
Ces conditions impliquent que: 
\begin{itemize}
\item en dehors d'un nombre fini de $b$ (dŽpendant de $X$), l'automorphisme $\mathrm{Ad}_{b^{-1}}^*$ de 
$\mathfrak{u}_Q^*(\mbb{A})$ dilate $\mathfrak{u}_Q^*(\mbb{A})$, par consŽquent 
la somme sur $\Lambda \in \nn^\vee$ est bornŽe par une constante indŽpendante de $b$; 
\item $\bs{\delta}_Q(b)$ est essentiellement majorŽ par $\bs{\delta}_{P_0}(a)$, \cad qu'il existe une constante $c_1$ (dŽpendant de $\omega$) telle que 
$\bs{\delta}_P(b) \leq c_1 \bs{\delta}_{P_0}(a)$.
\end{itemize}
Cela achve la dŽmonstration du lemme. 
\end{proof}

\subsection{Majoration de sommes rationnelles}\label{majoration de sommes rationnelles} 
Dans cette sous-section, on fixe une $F$-strate $\bsfrY$ de $\UF$. On fixe aussi deux ŽlŽments $Q\subset R$ de $\ESP_\mathrm{st}$ tels que $Q\neq  R$ et un ŽlŽment 
$Q_1\in \ESF^{Q}(P_0)$. On a donc $$P_0\subset Q_1\subset Q \subsetneq R \subset G\ptf$$ 
On pose $\bs{Z}_{Q_1}= A_G(\mbb{A})A_{Q_1}(F)\backslash A_{Q_1}(\mbb{A})$ et 
$$\bs{Z}_{Q_1}^{R,+}= \{a \in \bs{Z}_{Q_1},\, \vert \, \langle \alpha , {\bf H}_{Q_1}(a) \rangle > 0 \quad 
\hbox{pour toute racine} \quad \alpha \in \Delta_{Q_1}^R\}\ptf$$ 

Pour $P\in \ESP_\mathrm{st}$ avec $Q\subset P \subset R$, on pose $$A_{P,\bsfrY}(g)=
\sum_{\gamma \in \UF^{M_P}\cap Q(F)}\xi_{P,\bsfrY}(\gamma)
\int_{U_P(\mbb{A})} f(g^{-1}\gamma ug) \dd u\ptf$$ 
Soit $$K_{Q,\bsfrY}^R(g) = \sum_{\substack{P\in \ESP_\mathrm{st}\\ Q\subset P \subset R}}\epsilon_P A_{P,\bsfrY}(g)\quad \hbox{avec}\quad \epsilon_P= (-1)^{a_P-a_G}\ptf\leqno{(1)}$$  

L'objectif est ici d'Žtablir, pour chaque racine $\alpha \in \Delta_{Q_1}^R \smallsetminus \Delta_{Q_1}^Q$, une majoration uniforme 
de l'expression $\bs{\delta}_{Q_1}(a)^{-1}\vert K_{Q,\bsfrY}^R(ax) \vert$, valable pour tout $a\in \bs{Z}_{Q_1}^{R,+}$ et tout $x$ 
dans un sous-ensemble compact $\Gamma_G$ de $G(\mbb{A})$ fixŽ 
(cf. \ref{la majoration clŽ bis} pour un rŽsultat prŽcis). On adapte pour cela le travail de Chaudouard \cite[3.7-3.12]{C}. 

Soit $\alpha \in \Delta_{Q_1}^R \smallsetminus \Delta_{Q_1}^Q$ et soit 
$S\in \ESP_\mathrm{st}$ le $F$-sous-groupe parabolique propre maximal de $G$ contenant $Q_1$ dŽfini par la condition 
$$\Delta_{Q_1}^S = \Delta_{Q_1}\smallsetminus \{\alpha\}\ptf$$ 
Puisque $\alpha$ n'appartient pas ˆ l'ensemble des racines de $A_{Q_1}$ dans $M_Q$, on a aussi l'inclusion $Q\subset S$. L'application 
$$P \mapsto P \cap M_S\leqno{(2)}$$ induit une application surjective de $\ESF^R(Q)$ sur l'ensemble $\ESF^{R\cap M_S}(Q\cap M_S)$ des $F$-sous-groupes paraboliques 
de $M_S$ contenus entre $Q\cap M_S$ et $R\cap M_S$. Soit $\wt{P}$ un $F$-sous-groupe parabolique de $M_S$. Il a exactement deux antŽcŽdents par l'application (2): 
l'un, notŽ $P$, est contenu dans $S$ tandis que l'autre, notŽ $P'$, ne l'est pas. De plus on a 
$$P= S \cap P'\ptf$$ Pour un tel $\wt{P}$ correspondant au couple $(P,P')$, on pose 
$$A_{\wt{P},\bsfrY}(g)= \epsilon_P A_{P,\bsfrY}(g)+ \epsilon_{P'} A_{P'\!,\bsfrY}(g)\ptf $$ 
D'aprs (1), on a $$K_{Q,\bsfrY}^R(g) = \sum_{\wt{P}}A_{\wt{P},\bsfrY}(g)\leqno{(3)}$$ o $\wt{P}$ parcourt les ŽlŽments de $ \ESF^{R\cap M_S}(Q\cap M_S)$. 

Fixons un ŽlŽment $\wt{P} \in  \ESF^{R\cap M_S}(Q\cap M_S)$. On se propose dans un premier temps de rŽcrire l'expression 
$A_{\wt{P},\bsfrY}(g)$. Rappelons que $j_0: \mathfrak{u}_0 \rightarrow U_0$ est un $F$-isomorphisme de variŽtŽs compatible ˆ l'action de $A_0$. Il induit par restriction un 
$F$-isomorphisme de variŽtŽs $$j=j_P^{P'}: \mathfrak{u}_P^{P'}=\mathrm{Lie}(U_P^{P'})\rightarrow U_P^{P'}$$ lui aussi compatible ˆ l'action de 
$A_0$; avec (rappel) $U_P^{P'}= U_P \cap M_{P'}$. 
L'expression $A_{P'\!,\bsfrY}(g)$ est Žgale ˆ $$\sum_{\gamma \in \UF^{M_P}\cap Q(F)}\sum_{X\in \mathfrak{u}_P^{P'}(F)} \xi_{P'\!,\bsfrY}(\gamma j(X))
\int_{U_{P'}(\mbb{A})} f(g^{-1}\gamma j(X)u'g) \dd u'\ptf \leqno{(4)}$$ 
Notons $\xi_{\wt{P},\bsfrY}$ la fonction sur $P(F)$ dŽfinie par 
$$\xi_{\wt{P},\bsfrY}= \epsilon_P \xi_{P,\bsfrY}+ \epsilon_{P'}\xi_{P'\!,\bsfrY}\vert_{P(F)} $$ 
et $A_{P'\!,\bsfrY}^1(g)$, resp $B_{\wt{P},\bsfrY}(g)$ l'expression obtenue en remplaant la fonction 
$\xi_{P'\!,\bsfrY}$ par $\xi_{P,\bsfrY}$, resp. $\xi_{\wt{P},\bsfrY}$, dans 
l'expression (4). Puisque $\epsilon_P+\epsilon_{P'}=0$ et $\epsilon_{P'}\epsilon_{P'}=1$, on a $$A_{P'\!,\bsfrY}(g)= 
A_{P'\!,\bsfrY}^1(g) + \epsilon_{P'} B_{\wt{P},\bsfrY}(g)\ptf\leqno{(5)}$$ 
On note  $\nn^\vee$ l'annulateur de $\mathfrak{u}_P^{P'}(F)$ dans $\mathfrak{u}_P^{P'\!,*}(\mbb{A})$. 
La formule de Poisson relative ˆ la somme sur $\mathfrak{u}_P^{P'}(F)$ donne 
$$\sum_{X\in \mathfrak{u}_P^{P'}(F)} \int_{U_{P'}(\mbb{A})} f(g^{-1}\gamma j(X)u'g) \dd u' = 
\sum_{\Lambda \in \nn^\vee}\int_{U_P(\mbb{A})} \psi(\Lambda ,u ) f(g^{-1}\gamma u g)\dd u$$ 
o l'on a posŽ $$\Psi(\Lambda , u) = \Psi (\langle \Lambda , X_P^{P'}\rangle) \quad \hbox{si} \quad u=j(X)\ptf$$ 
Puisque la fonction $\xi_{P,\bsfrY}$ sur $\UF^G\cap P(F)=\UF^{M_P}U_P(F)$ est invariante ˆ droite par $U_P(F)$, on obtient 
$$A_{P'\!,\bsfrY}^1(g)= A_{P,\bsfrY}(g) + B_{P,\bsfrY}(g)\leqno{(6)}$$ o l'on a posŽ  
$$B_{P,\bsfrY}(g)=\sum_{\gamma \in \UF^{M_P}\cap Q(F)}\xi_{P,\bsfrY}(\gamma)
\sum_{\Lambda \in \nn^\vee \smallsetminus \{0\}}\int_{U_P(\mbb{A})} \psi(\Lambda , u) f(g^{-1}\gamma ug) \dd u\ptf$$ 
Les dŽcompositions (5) et (6) donnent 
$$A_{\wt{P},\bsfrY}(g)=  \epsilon_{P'}B_{P,\bsfrY}(g) + B_{\wt{P},\bsfrY}(g)= -\epsilon_PB_{P,\bsfrY}(g) + B_{\wt{P},\bsfrY}(g)\ptf\leqno{(7)}$$ 
Traitons maintenant l'expression $B_{\wt{B},\bsfrY}(g)$. 
Pour $\gamma \in \UF^{M_P}$, posons $$I_{F,P}^{P'}(\gamma)= I_{F,P\cap M_{P'}}^{M_{P'}}(\gamma)\ptf$$

\begin{lemma}\label{lemme nŽgation strate induite}
Soient $\gamma\in \UF^{M_P}$ et $\nu \in U_P^{P'}(F)$. Alors 
$$\gamma\nu \in I_{F,P}^{P'}(\gamma) \Rightarrow \xi_{\wt{P},\bsfrY}(\gamma \nu)=0\ptf$$ 
\end{lemma}

\begin{proof}
Si $\gamma \nu \in I_{F,P}^{P'}(\gamma)$, alors $I_{F,P'}^G(\gamma \nu )= I_{F,P}^G(\gamma)$ d'aprs \ref{transitivitŽ de l'induction} et 
donc $\xi_{P,\bsfrY}(\gamma) = \xi_{P'\!,\bsfrY}(\gamma\nu)$. D'o le lemme puisque $\epsilon_P + \epsilon_{P'}=0$
\end{proof}

Le lemme \ref{lemme nŽgation strate induite} implique que l'expression $B_{\wt{P},\bsfrY}(a)$ est Žgale ˆ 
$$\sum_{\gamma \in \UF^{M_{P}}\cap Q(F)}\sum_{\substack{X\in \mathfrak{u}_P^{P'}(F)\\ \gamma j(X)\notin I_{F,P}^{P'}(\gamma)}} \xi_{\wt{P},\bsfrY}(\gamma j(X))
\int_{U_{P'}(\mbb{A})} f(a^{-1}\gamma j(X)u'a) \dd u'\ptf \leqno{(8)}$$ 

\begin{lemma}\label{la majoration clŽ} 
Soit $\wt{P} \in  \ESF^{R\cap M_S}(Q\cap M_S)$. 
\begin{enumerate}
\item[(i)] Pour tout entier $k\geq 1$, il existe une constante $c=c_k>0$ telle que pour tout $a\in \bs{Z}_{Q_1}^{R,+}$, on ait 
$$\bs{\delta}_{Q_1}(a)^{-1}\vert B_{P,\bsfrY}(a) \vert \leq c q^{- k\langle \alpha , {\bf H}_{Q_1}(a) \rangle}\ptf$$
\item[(ii)] Il existe une constante $\tilde{c} >0$ telle que pour tout $a\in \bs{Z}_{Q_1}^{R,+}$, on ait 
$$\bs{\delta}_{Q_1}(a)^{-1}\vert B_{\wt{P},\bsfrY}(a) \vert \leq \tilde{c}  q^{- \langle \alpha , {\bf H}_{Q_1}(a) \rangle}\ptf$$ 
\end{enumerate}
\end{lemma}

\begin{proof} Prouvons (i). Posons $$f_{P'}(g) = \int_{U_{P'}(\mbb{A})}f(gu')\dd u'\ptf$$ 
L'expression $B_{P,\bsfrY}(a)$ est Žgale ˆ 
$$ \sum_{\gamma \in \UF^{M_P} \cap Q(F)}\xi_{P,\bsfrY}(\gamma) 
\sum_{\Lambda \in \nn^\vee\smallsetminus \{0\}} \int_{\mathfrak{u}_P^{P'}(\mbb{A})}\psi(\langle \Lambda , X \rangle ) \bs{\delta}_{P'}(a) f_{P'}(a^{-1}\gamma j(X) a ) \dd X$$ 
soit encore ˆ 
$$ \sum_{\gamma \in \UF^{M_P} \cap Q(F)}\!\!\!\xi_{P,\bsfrY}(\gamma)\!\!\! 
\sum_{\Lambda \in \nn^\vee\smallsetminus \{0\}} \int_{\mathfrak{u}_P^{P'}(\mbb{A})}\psi(\langle \Lambda, \mathrm{Ad}_{a}(X) \rangle ) 
\bs{\delta}_{P}(a) f_{P'}(a^{-1}\gamma a j(X) ) \dd X\ptf$$ 
Pour $\Lambda\in \mathfrak{u}_{P}^{P'\!,*}(\mbb{A})$ et $m\in M_P(\mbb{A})$, posons 
$$g(\Lambda, m)= \int_{\mathfrak{u}_P^{P'}(\mbb{A})}\psi(\langle \Lambda , X \rangle ) f_{P'}(m j(X) ) \dd X$$ 
La fonction $\Lambda \mapsto g(\cdot, m)$, comme transformŽe de Fourier d'une fonction lisse ˆ support compact, est elle aussi lisse ˆ support compact. 
On en dŽduit qu'il existe des fonctions positives $\phi\in C^\infty_\mathrm{c}(\mathfrak{u}_P^{P'\!,*}(\mbb{A}))$ et $h\in C^\infty_\mathrm{c}(M_P(\mbb{A}))$ telles que 
$$\vert g(\Lambda , m) \vert \leq \phi(\Lambda) h(m) \quad \hbox{pour tout} \quad (\Lambda,m)\in \mathfrak{u}_P^{P'\!,*}(\mbb{A})\times M_P(\mbb{A})\ptf$$ 
L'expression $\bs{\delta}_{Q_1}(a)^{-1}\vert B_{P,\bsfrY}(a)\vert $ est donc majorŽe par le produit des expressions 
$$\bs{\delta}_{Q_1}(a)^{-1}\bs{\delta}_P(a) \sum_{\gamma \in \UF^{M_P}} h(a^{-1}\gamma a)\leqno{(9)}$$ 
et $$\sum_{\Lambda \in \nn^\vee \smallsetminus \{0\}} \phi(\mathrm{Ad}_{a^{-1}}^*(\Lambda))\ptf \leqno{(10)}$$ 
Pour $a\in \bs{Z}_{Q_1}$, on a $$\bs{\delta}_{Q_1}(a)^{-1}\bs{\delta}_P(a) = \bs{\delta}_{P_0}(a)^{-1} \bs{\delta}_P(a)= \bs{\delta}_{P_0 \cap M_P}(a)^{-1}\vg$$ et si de plus $a$ appartient 
ˆ $\bs{Z}_{Q_1}^{R,+}$, alors $\langle \beta ,{\bf H}_{Q_1}(a)) >0$ pour toute racine $\beta\in \Delta_{Q_1}^P$; en d'autres termes 
$\tau_{Q_1}^P({\bf H}_{Q_1}(a))>0$. On en dŽduit (\ref{Arthur [A2,3.2]}) que l'expression (9) est majorŽe par une constante 
$c >0$ indŽpendante de $a\in \bs{Z}_{Q_1}^{R,+}$. 

Quant ˆ l'expression (10), notons $\Phi=\ESR_{Q_1}^{U_P^{P'}}\;(\subset \ESR_{Q_1}^+)$ l'ensemble des racines de $A_{Q_1}$ dans $U_{P}^{P'}= U_P \cap M_{P'}$. 
Pour $\theta \in \Phi$, notons $\mathfrak{u}_\theta$ 
le sous-espace de $\mathfrak{u}_{P}^{P'}= \mathrm{Lie}(U_P^{P'})$ associŽ ˆ $\theta$ et $\mathfrak{n}_\theta^\vee$ l'annulateur de $\mathfrak{u}_\theta(F)$ 
dans $\mathfrak{u}_\theta^*(\mbb{A})$. Pour $\Lambda \in \mathfrak{u}_P^{P'\!,*}(\mbb{A})$, Žcrivons 
$\Lambda = \sum_{\theta \in \Phi}\Lambda_\theta$ avec $\Lambda_\theta = \Lambda\vert_{\mathfrak{u}_\theta(\mbb{A})}\in \mathfrak{u}_\theta^*(\mbb{A})$. On a donc 
$$\mathrm{Ad}_{a^{-1}}^*(\Lambda)= \sum_{\theta \in \Phi} \theta(a)  \Lambda_\theta \quad \hbox{pour tout} \quad a\in \bs{Z}_{Q_1}\ptf$$ De plus il existe 
des fonctions $\phi_\theta \in C^\infty_\mathrm{c}(\mathfrak{u}_\theta^*(\mbb{A}))$ telles que $$\phi(\Lambda) \leq \prod_{\theta \in \Phi}\phi_\theta(\Lambda_\theta) \quad 
\hbox{pour tout} \quad \Lambda\in \mathfrak{u}_P^{P'\!,*}(\mbb{A})\ptf$$
On en dŽduit que l'expression (10) est majorŽe par le produit sur les $\theta\in \Phi$ des expressions 
$$\left( \sum_{\Lambda \in \nn_\theta^\vee\smallsetminus \{0\}}\phi_\theta\left(\theta(a)\Lambda_\theta\right)\right)\cdot \left( \prod_{\mu \in \Phi\smallsetminus \{\theta\}} 
\sum_{\Lambda_\mu \in \nn_\mu^\vee} \phi_\mu\left(\mu(a)\Lambda_\mu\right)\right)\!\ptf\leqno{(11)}$$ 
Puisque pour $\mu \in \Phi$ et $a \in \bs{Z}_{Q_1}^{R,+}$, on a $\vert \mu(a) \vert_{\mbb{A}} = q^{\langle \theta, {\bf H}_{Q_1}(a) \rangle} >1$, d'aprs 
\ref{poisson utile}\,(ii), le terme de droite dans l'expression (11) est majorŽ par une constante indŽpendante de $a\in \bs{Z}_{Q_1}^{R,+}$; et d'aprs \ref{poisson utile}\,(ii), 
pour tout entier $k\geq 1$, le terme de gauche dans l'expression (11) est majorŽ par $c q^{-k\langle \theta,{\bf H}_{Q_1}(a) \rangle}$ pour une constante $c>0$ elle aussi 
indŽpendante de $a\in \bs{Z}_{Q_1}^{R,+}$. Comme $\theta$ s'Žcrit $\theta = \sum_{\beta \in \Delta_{Q_1}^{P'}} n_\beta \beta$ avec $n_\beta \geq 0$ et 
$n_\alpha >0$, on a $$q^{- k \langle \theta, {\bf H}_{Q_1}(a)\rangle} \leq q^{-k\langle \alpha , {\bf H}_{Q_1}(a) \rangle }\quad \hbox{pour tout}\quad a \in \bs{Z}_{Q_1}^{R,+}\ptf$$ 
D'o le point (i) puisque $$q^{- \vert  \Phi \vert k \langle \alpha, {\bf H}_{Q_1}(a) \rangle} \leq q^{-k\langle \alpha,{\bf H}_{Q_1}(a) \rangle}\ptf $$

\vskip1mm 
Prouvons (ii). L'expression $B_{\wt{P},\bsfrY}(a)$ est Žgale ˆ 
$$\sum_{\gamma \in \UF^{M_P}\cap Q(F)}\sum_{\substack{X\in \mathfrak{u}_P^{P'}(F)\\ \gamma j(X)\notin I_{F,P}^{P'}(\gamma)}} \xi_{\wt{P},\bsfrY}(\gamma j(X))
\bs{\delta}_{P'}(a) f_{P'}(a^{-1}\gamma j(X)a)\ptf \leqno{(12)}$$ 
Il existe des fonctions positives $h\in C^\infty_\mathrm{c}(M_P(\mbb{A}))$ et  $\phi\in C^\infty_\mathrm{c}(\mathfrak{u}_P^{P'\!}(\mbb{A}))$ telles que 
$$\vert f_{P'}(mj(X)) \vert \leq h(m) \phi(X) \quad \hbox{pour tout} \quad (m,X)\in M_P(\mbb{A})\times \mathfrak{u}_P^{P'}(\mbb{A})\ptf$$ 
L'expression $\bs{\delta}_{Q_1}(a)^{-1}\vert B_{\wt{P},\bsfrY}(a)\vert$ est donc majorŽe par l'expression 
$$\bs{\delta}_{Q_1}(a)^{-1}\bs{\delta}_P(a) \sum_{\gamma \in \UF^{M_P}} h(a^{-1}\gamma a)\Phi(a,\gamma)\leqno{(13)}$$ avec  
$$\Phi(a,\gamma) = \bs{\delta}_P(a)^{-1}\bs{\delta}_{P'}(a)\sum_{\substack{X\in \mathfrak{u}_P^{P'}(F)\\ \gamma j(X)\notin I_{F,P}^{P'}(\gamma)}} \phi(\mathrm{Ad}_{a^{-1}}(X)) \ptf \leqno{(14)}$$ 
On a vu plus haut que l'expression (9) --- obtenue en prenant $\Phi \equiv 1$ dans l'expression (13) ---  
est majorŽe par une constante indŽpendante de $a\in \bs{Z}_{Q_1}^{R,+}$. Il suffit donc pour prouver (ii) de montrer que l'expression (14) est majorŽe par 
$c q^{-\langle \alpha ,{\bf H}_{Q_1}(a) \rangle} $ pour une constante $c>0$ indŽpendante de 
$a\in \bs{Z}_{Q_1}^{R,+}$. Quant ˆ l'expression (14), la condition $\gamma\nu \notin I_{F,P}^{P'}(\gamma)$ est difficile ˆ manipuler. En effet rappelons que la $F$-strate $I_{F,P}^{P'}(\gamma)$ de $\UF^{M'}$ est dŽfinie par 
la condition: l'intersection $\scrY_{F,\gamma}^MU_P^{P'}(F)\cap I_{F,P}^{P'}(\gamma)$ est dense dans $I_{F,P}^{P'}(\gamma)$. En particulier si $u$ appartient ˆ l'intersection $ \scrY_{F,\gamma}^MU_P^{P'}(F)\cap I_{F,P}^{P'}(\gamma)$, 
alors $u= \gamma_1\nu$ avec $\gamma_1\in \scrY_{F,P}^M(\gamma)$ et $\nu\in U_P^{P'}(F)$, donc  $\nu$ appartient ˆ $\gamma_1^{-1}I_{F,P}^{P'}(\gamma_1)$; mais cela n'implique \textit{a priori} pas que $u$ appartienne 
ˆ $\gamma^{-1}I_{F,P}^{P'}(\gamma)$. On va donc remplacer la condition $\gamma j(X) \notin I_{F,P}^{P'}(\gamma)$ dans (14) par la condition $\gamma j(X) \notin  I_{P}^{P'\!,{\bf LS}}(\gamma)(F)$ o 
$I_{P}^{P'\!,{\bf LS}}(\gamma)= I_{P\cap M'}^{M'\!,{\bf LS}}(\gamma)$ est l'induite parabolique de Lusztig-Spaltenstein de la $M$-orbite unipotente $\ES{O}_\gamma^M$ ˆ $M'$. D'aprs \ref{inclusion IPLS dans IP}, 
$I_{P}^{P'\!,{\bf LS}}(\gamma)$ est l'unique orbite gŽomŽtrique unipotente de $M'$ qui intersecte ${_Fi^{P\cap M'}}(\gamma)= {_F\scrX_\gamma^M}U_P^{P'}$ 
de manire dense; ou, ce qui revient au mme, qui intersecte $\scrY_{F,\gamma}^MU_P^{P'}(F)$ de manire dense. En particulier on a l'inclusion (en utilisant \ref{descente 3 sous HBFS})
$$M'(F)\cap I_P^{P'\!,{\bf LS}}(\gamma) \subset I_{F,P}^{P'}(\gamma)\ptf$$ L'expression (14) est donc majorŽe 
par l'expression 
$$ \bs{\delta}_P(a)^{-1}\bs{\delta}_{P'}(a)\sum_{\substack{X\in \mathfrak{u}_P^{P'}(F)\\ \gamma j(X)\notin I_{P}^{P'\!,{\bf LS}}(\gamma)}} \phi(\mathrm{Ad}_{a^{-1}}(X)) \vg\leqno{(15)}$$ 
qui se traite comme l'expression (6.7.7) dans \cite{CL}. Gr‰ce ˆ la proposition \cite[5.3.1]{CL}, on obtient que l'expression (15) est 
majorŽe par $c q^{-\langle \alpha ,{\bf H}_{Q_1}(a) \rangle} $ pour une constante $c>0$ indŽpendante de 
$a\in \bs{Z}_{Q_1}^{R,+}$. Cela achve la preuve du point (ii). 
\end{proof}

Compte-tenu de (7), le lemme suivant est une simple consŽquence de \ref{la majoration clŽ}:

\begin{lemma} Soit $\wt{P} \in  \ESF^{R\cap M_S}(Q\cap M_S)$. 
Il existe une constante $c>0$ telle que pour tout $a\in \bs{Z}_{Q_1}^{R,+}$, on ait 
$$\bs{\delta}_{Q_1}(a)^{-1}\vert A_{\wt{P},\bsfrY}(a) \vert \leq c q^{- \langle \alpha , {\bf H}_{Q_1}(a) \rangle}\ptf$$
\end{lemma}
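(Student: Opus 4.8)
Il s'agit de prouver que pour $\wt{P}\in\ESF^{R\cap M_S}(Q\cap M_S)$ correspondant au couple $(P,P')$, l'expression $\bs{\delta}_{Q_1}(a)^{-1}\vert A_{\wt{P},\bsfrY}(a)\vert$ admet une majoration uniforme du type $c\,q^{-\langle\alpha,{\bf H}_{Q_1}(a)\rangle}$ sur le cône $\bs{Z}_{Q_1}^{R,+}$. La clé est la décomposition (7) déjà établie dans l'excerpt:
\begin{equation*}
A_{\wt{P},\bsfrY}(g)= -\epsilon_P B_{P,\bsfrY}(g) + B_{\wt{P},\bsfrY}(g)\ptf
\end{equation*}
Le plan est donc simplement d'appliquer l'inégalité triangulaire à cette décomposition, puis de borner séparément chacun des deux termes à l'aide du lemme \ref{la majoration cl�}.

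\textbf{Étapes.} D'abord j'écrirais
\begin{equation*}
\bs{\delta}_{Q_1}(a)^{-1}\vert A_{\wt{P},\bsfrY}(a)\vert \leq \bs{\delta}_{Q_1}(a)^{-1}\vert B_{P,\bsfrY}(a)\vert + \bs{\delta}_{Q_1}(a)^{-1}\vert B_{\wt{P},\bsfrY}(a)\vert\vg
\end{equation*}
en utilisant $\vert\epsilon_P\vert=1$. Ensuite j'invoquerais \ref{la majoration cl�}\,(i) avec $k=1$ : il existe $c_1>0$ tel que $\bs{\delta}_{Q_1}(a)^{-1}\vert B_{P,\bsfrY}(a)\vert \leq c_1\,q^{-\langle\alpha,{\bf H}_{Q_1}(a)\rangle}$ pour tout $a\in\bs{Z}_{Q_1}^{R,+}$. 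Puis j'invoquerais \ref{la majoration cl�}\,(ii) : il existe $\tilde{c}>0$ tel que $\bs{\delta}_{Q_1}(a)^{-1}\vert B_{\wt{P},\bsfrY}(a)\vert \leq \tilde{c}\,q^{-\langle\alpha,{\bf H}_{Q_1}(a)\rangle}$ pour tout $a\in\bs{Z}_{Q_1}^{R,+}$. En posant $c=c_1+\tilde{c}$, on obtient la majoration voulue. Il faudra simplement prendre garde au fait que la constante de (i) dépend du choix de $k$, et qu'ici la valeur $k=1$ suffit largement puisque le lemme (ii) fournit déjà l'exposant $1$ en $\alpha$ ; prendre un $k$ plus grand dans (i) ne ferait qu'améliorer un terme, mais c'est inutile.

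\textbf{Principal obstacle.} Il n'y en a essentiellement pas : tout le travail substantiel a déjà été fait dans la preuve du lemme \ref{la majoration cl�}, qui lui-même repose sur la formule de Poisson (termes $B_{P,\bsfrY}$, via le lemme \ref{Arthur [A2,3.2]} et \ref{poisson utile}) et sur la proposition \cite[5.3.1]{CL} (terme $B_{\wt{P},\bsfrY}$, via l'élimination de la strate induite de Lusztig-Spaltenstein à l'aide de \ref{inclusion IPLS dans IP} et \ref{descente 3 sous HBFS}). Le seul point à vérifier est que la décomposition (7) est bien valable pour \emph{tout} $\wt{P}\in\ESF^{R\cap M_S}(Q\cap M_S)$, ce qui est le cas par construction. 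Le présent énoncé n'est donc qu'un corollaire immédiat combinant (7) et les deux parties du lemme précédent ; sa démonstration tient en une ligne.
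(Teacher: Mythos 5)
Votre démonstration est correcte et coïncide avec celle du texte : le lemme y est présenté précisément comme une conséquence immédiate de la décomposition (7) et des deux points du lemme \ref{la majoration cl�}, exactement comme vous le faites via l'inégalité triangulaire (avec $k=1$ dans le point (i)). Rien à ajouter.
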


On en dŽduit la majoration cherchŽe: 

\begin{proposition}\label{la majoration clŽ bis}
Soit $\Gamma_G$ un sous-ensemble compact de $G(\mbb{A})$. Il existe une constante $c>0$ 
telle que pour tout $a\in \bs{Z}_{Q_1}^{R,+}$ et tout $x\in \Gamma_G$, on ait 
$$\bs{\delta}_{Q_1}(a)^{-1}\vert K_{Q,\bsfrY}^R(ax) \vert \leq c \prod_{\alpha \in \Delta_{Q_1}^R \smallsetminus \Delta_{Q_1}^Q} q^{- r \langle \alpha , {\bf H}_{Q_1}(a) \rangle}$$ 
avec $r^{-1}  = \vert \Delta_{Q_1}^R \smallsetminus \Delta_{Q_1}^Q \vert = \vert \Delta_Q^R \vert$. 
\end{proposition}

\begin{proof}
La fonction $f$ est invariante ˆ gauche et ˆ droite par un sous-groupe ouvert 
$\bs{K}'$ de $G(\mbb{A})$; en particulier son support $\Omega(f)$ vŽrifie $\bs{K}'\Omega(f)\bs{K}'= \Omega (f)$. 
Pour $x\in G(\mbb{A})$, posons ${^x\!f}=f \circ \mathrm{Int}_{x^{-1}}$. Puisque l'ensemble $\Gamma_{G}\bs{K}'/\bs{K}'$ est fini, l'ensemble $\{{^x\!f}\,\vert \, x \in \Gamma_G\}$ 
est lui aussi fini. Il suffit donc de prouver l'inŽgalitŽ de l'ŽnoncŽ pour $x=1$. Puisque (d'aprs (3)) 
$$\vert K_{Q,\bsfrY}^R(g)\vert  \leq \sum_{\wt{P}} \vert A_{\wt{P},\bsfrY}(g)\vert$$ o $\wt{P}$ parcourt les ŽlŽments de $ \ESF^{R\cap M_S}(Q\cap M_S)$, 
le lemme \ref{la majoration clŽ bis} assure que pour chaque racine $\alpha \in \Delta_{Q_1}^R \smallsetminus \Delta_{Q_1}^Q$, il existe 
une constante $c_\alpha>0$ telle que pour tout $a\in \bs{Z}_{Q_1}^{R,+}$, on ait la majoration 
$$\bs{\delta}_{Q_1}(a)^{-1} \vert K_{Q,\bsfrY}^R(a)\vert \leq c_\alpha q^{-\langle \alpha , {\bf H}_{Q_1}(a) \rangle }\ptf$$ 
Posons $c = \max \{c_\alpha\,\vert \, \alpha \in \Delta_{Q_1}^R \smallsetminus \Delta_{Q_1}^Q\}$. 
Pour $a\in \bs{Z}_{Q_1}^{R,+}$ et $\beta \in \Delta_{Q_1}^R \smallsetminus \Delta_{Q_1}^R$ tels que 
$\langle \beta , {\bf H}_{Q_1}(a) \rangle = \max \{\langle \alpha , {\bf H}_{Q_1}(a) \rangle\,\vert \, \alpha \in \Delta_{Q_1}^R\smallsetminus \Delta_{Q_1}^Q \}$, on a 
$$\bs{\delta}_{Q_1}(a)^{-1} \vert K_{Q,\bsfrY}^R(a)\vert \leq c q^{-\langle \beta , {\bf H}_{Q_1}(a) \rangle }\ptf$$
Puisque pour $H\in \ag_{Q_1}$, 
$$ \max_{\alpha \in \Delta_{Q_1}^R\smallsetminus \Delta_{Q_1}^Q} \langle \alpha , H \rangle \geq 
\sum_{\alpha \in \Delta_{Q_1}^R \smallsetminus \Delta_{Q_1}^Q}r\langle \alpha, H \rangle \quad \hbox{avec} \quad 
r^{-1}=\vert \Delta_{Q_1}^R \smallsetminus \Delta_{Q_1}^Q \vert = \vert \Delta_Q^R \vert \vg$$ 
la proposition est dŽmontrŽe.
\end{proof}

\subsection{EstimŽe d'une intŽgrale}\label{estimŽe d'une intŽgrale} Continuons avec les hypothses de \ref{majoration de sommes rationnelles}, ˆ savoir qu'on a fixŽ la 
$F$-strate $\bsfrY$ de $\UF$ et le triple $Q_1\subset Q \subsetneq R$ d'ŽlŽments de $\ESP_\mathrm{st}$. 

Commenons par quelques dŽfinitions standard. Pour la dŽfinition des bases $\check{\Delta}_{Q_1}^R$ de 
$\ag_{Q_1}^R$ et $\Delta_{Q_1}^R$ de $\ag_{Q_1}^{R,*}$, on renvoie ˆ \cite[1.4]{LL}. On note $\hat{\Delta}_{Q_1}^R= \{\varpi_\alpha^R\,\vert \, \alpha \in \Delta_{Q_1}^R\}$ la base 
de $\ag_{Q_1}^{R,*}$ duale de $\check{\Delta}_{Q_1}^R$ dŽfinie par $\langle \varpi_\alpha^R , \check{\beta} \rangle = \delta_{\alpha,\beta}$ et $\hat{\Delta}_{Q_1}^{R,\vee}= 
\{\varpi_\alpha^\vee\,\vert \, \alpha\in  \Delta_{Q_1}^R\}$ la base 
de $\ag_{Q_1}^R$ duale de $\Delta_{Q_1}^R$ dŽfinie par $\langle \varpi_\alpha^\vee , \beta \rangle = \delta_{\alpha,\beta }$. Observons que pour $R_1\in \ESF^G(R)$, 
on a les inclusions  $$\hat{\Delta}_Q^R \subset \hat{\Delta}_{Q_1}^R\quad \hbox{et} \quad \hat{\Delta}_Q^{R,\vee} \subset \hat{\Delta}_Q^{R_1,\vee}\ptf$$ 
Observons aussi que pour $\alpha \in \Delta_Q^R \subset \Delta_Q^{R_1}$, $\varpi_\alpha^R$ est la projection de $\varpi_\alpha^{R_1}$ sur $\ag_Q^{R,*}$ (ce qui rend les 
notations cohŽrentes). Pour $R=G$, on supprimera comme il est d'usage l'exposant 
$R$ dans les notations. 

Fixons un ensemble de Siegel $\Siegel_{M_{Q_1}}^*$ pour le quotient $$\overline{\bs{X}}_{\!M_{Q_1}}= A_{Q_1}(\mbb{A})M_{Q_1}(F) \backslash M_{Q_1}(\mbb{A})\ptf$$
Rappelons  (cf. \cite[3.4]{LL}) que $\Siegel_{M_{Q_1}}^*$ est de la forme 
$$\Siegel_{M_{Q_1}}^* = \mathfrak{C}_{M_{Q_1}}\! \Siegel_{M_{Q_1}}^1$$ o $\mathfrak{C}_{M_{Q_1}}$ est un sous-ensemble fini de $M_{Q_1}(\mbb{A})$ 
tel que $$M_{Q_1}(\mbb{A})= 
A_{Q_1}(\mbb{A})M_{Q_1}(\mbb{A})^1\mathfrak{C}_{M_{Q_1}}=A_{Q_1}(\mbb{A})\mathfrak{C}_{M_{Q_1}}M_{Q_1}(\mbb{A})^1$$ et $\Siegel_{M_{Q_1}}^1$ est 
un ensemble de Siegel pour le quotient $M_{Q_1}(F)\backslash M_{Q_1}(\mbb{A})^1$.  

\begin{lemma}\label{une condition commode}
On peut choisir $\mathfrak{C}_{M_{Q_1}}$ de telle manire que $\langle \alpha , X \rangle > 0$ pour tout $\alpha \in \Delta_{Q_1}$ et tout 
$X \in {\bf H}_{Q_1}(\Siegel_{M_{Q_1}}^*)= {\bf H}_{Q_1}(\mathfrak{C}_{M_{Q_1}})$.
\end{lemma}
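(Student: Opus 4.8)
The statement asks us to choose the finite set $\mathfrak{C}_{M_{Q_1}}$ of representatives (used to build the Siegel domain $\Siegel_{M_{Q_1}}^*$) so that $\langle \alpha, X\rangle > 0$ for every $\alpha \in \Delta_{Q_1}$ and every $X \in {\bf H}_{Q_1}(\mathfrak{C}_{M_{Q_1}})$. The point is that $\mathfrak{C}_{M_{Q_1}}$ is only required to be a finite set of representatives for the finite quotient $A_{Q_1}(\mbb{A})M_{Q_1}(\mbb{A})^1 \backslash M_{Q_1}(\mbb{A})$, equivalently (after projecting by ${\bf H}_{Q_1}$) a finite set of representatives for the image of the lattice $\bs{\mathfrak{c}}_{Q_1}$ in $\ag_{Q_1}$. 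There is a lot of freedom in this choice, and the plan is simply to translate any given choice deep enough into the positive chamber.

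\textbf{First step: reduce to a lattice statement.} Recall that ${\bf H}_{Q_1}: M_{Q_1}(\mbb{A}) \to \ag_{Q_1}$ has image a lattice, and that $\ag_{Q_1} = \ag_{Q_1}^G \oplus \ag_G$ where the decomposition is the standard one. Since $\mathfrak{C}_{M_{Q_1}}$ need only represent the cokernel of ${\bf H}_{Q_1}$ restricted to $M_{Q_1}(\mbb{A})^1 A_G(\mbb{A})$ (a finite group), and since ${\bf H}_{Q_1}(\Siegel_{M_{Q_1}}^*) = {\bf H}_{Q_1}(\mathfrak{C}_{M_{Q_1}})$, I would first pick \emph{any} admissible $\mathfrak{C}_{M_{Q_1}}^{\circ}$ as in the reference (e.g.\ as in \cite[3.4]{LL}), with finite image $\{X_1, \dots, X_N\} = {\bf H}_{Q_1}(\mathfrak{C}_{M_{Q_1}}^{\circ}) \subset \ag_{Q_1}$. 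Then I would choose a cocharacter-type element, i.e.\ an element $a_0 \in A_{Q_1}(\mbb{A})$ with ${\bf H}_{Q_1}(a_0) = Y_0 \in \ag_{Q_1}$ chosen so that $\langle \alpha, Y_0 + X_i \rangle > 0$ for all $\alpha \in \Delta_{Q_1}$ and all $i = 1, \dots, N$; such $Y_0$ exists because $N$ is finite, the $X_i$ are fixed, and $A_{Q_1}(\mbb{A})$ surjects onto a finite-index sublattice of $\ag_{Q_1}$ (one may have to enlarge along $\ag_G$ too, but that causes no trouble since the resulting translate still represents the same finite quotient). Replacing $\mathfrak{C}_{M_{Q_1}}^{\circ}$ by $a_0 \mathfrak{C}_{M_{Q_1}}^{\circ}$ then gives the desired $\mathfrak{C}_{M_{Q_1}}$: it is still a finite set of representatives of the same finite quotient (left translation by $A_{Q_1}(\mbb{A})$ is a bijection on $A_{Q_1}(\mbb{A})M_{Q_1}(\mbb{A})^1 \backslash M_{Q_1}(\mbb{A})$), and now ${\bf H}_{Q_1}(a_0 \mathfrak{C}_{M_{Q_1}}^{\circ}) = Y_0 + \{X_1, \dots, X_N\}$ lies in the open positive cone cut out by the $\Delta_{Q_1}$.

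\textbf{Second step: check compatibility with the rest of the Siegel datum.} One must verify that translating $\mathfrak{C}_{M_{Q_1}}$ by $a_0$ does not spoil the defining properties of $\Siegel_{M_{Q_1}}^*$ required elsewhere (surjectivity onto $\overline{\bs{X}}_{\!M_{Q_1}}$, compactness of $\Siegel_{M_{Q_1}}^1$, etc.). Since $\Siegel_{M_{Q_1}}^* = \mathfrak{C}_{M_{Q_1}}\Siegel_{M_{Q_1}}^1$ and the Siegel property is insensitive to left multiplication of the representative set by elements of $A_{Q_1}(\mbb{A})$ (these are absorbed in the $A_{Q_1}(\mbb{A})$-quotient defining $\overline{\bs{X}}_{\!M_{Q_1}}$), this is automatic; I would just remark it. This is where the only genuine care is needed, but it is routine once one remembers that $\overline{\bs{X}}_{\!M_{Q_1}}$ is a quotient by all of $A_{Q_1}(\mbb{A})$, so the choice of lattice representatives is free.

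\textbf{Main obstacle.} There is no real obstacle here; this is a bookkeeping lemma. The only thing to be slightly careful about is the distinction between $\ag_{Q_1}^G$ and $\ag_G$: one wants $\langle \alpha, X \rangle > 0$ for $\alpha \in \Delta_{Q_1}$, which only constrains the $\ag_{Q_1}^G$-component of $X$, and one must make sure the translating element $a_0$ can be taken inside $A_{Q_1}(\mbb{A})$ (not merely $A_{Q_1}(\mbb{A})^1$) so that it acts on the relevant component; this is exactly why the lemma is stated for $\mathfrak{C}_{M_{Q_1}}$ and not for a fixed normalization. I would phrase the proof in two or three sentences along the above lines, invoking the finiteness of the quotient and the surjectivity of ${\bf H}_{Q_1}$ onto a finite-index sublattice of $\ag_{Q_1}$.
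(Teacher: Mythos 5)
Your proof is correct and follows essentially the same route as the paper: choose $\mathfrak{C}_{M_{Q_1}}$ as a set of lifts of the finite quotient $\bsbbc_{Q_1}$ and then translate it by an element $a_0\in A_{Q_1}(\mbb{A})$ whose image ${\bf H}_{Q_1}(a_0)$ lies sufficiently deep in the positive chamber, noting that such a translation does not affect the Siegel property. The only cosmetic quibble is your phrase "finite-index sublattice of $\ag_{Q_1}$" for the image of $A_{Q_1}(\mbb{A})$ under ${\bf H}_{Q_1}$ (it is a full-rank lattice, i.e.\ of finite index in $\ESA_{Q_1}$), but this does not affect the argument.
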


\begin{proof}
L'application ${\bf H}_{Q_1}\!: M_{Q_1}(\mbb{A})\rightarrow \ag_{Q_1}$ se factorise en une application surjective 
$\overline{\bs{X}}_{\!M_{Q_1}} \rightarrow \bsbbc_{Q_1}= \ES{B}_{Q_1}\backslash \ESA_{Q_1}$. 
On peut prendre pour $\mathfrak{C}_{M_{Q_1}}$ un ensemble de relvements des ŽlŽments de $\bsbbc_{Q_1}$ dans $M_{Q_1}(\mbb{A})$. Quitte ˆ 
remplacer $\mathfrak{C}_{M_{Q_1}}$ par un translatŽ $a\mathfrak{C}_{M_{Q_1}}$ pour un ŽlŽment $a\in A_{Q_1}(\mbb{A})$ tel que 
$\inf \{\langle \alpha , {\bf H}_{Q_1}(a) \rangle \,\vert \, \alpha \in \Delta_{Q_1} \}\gg 1$, la condition du 
lemme est vŽrifiŽe.
\end{proof}

Une suppose dŽsormais que l'ensemble $\mathfrak{C}_{M_{Q_1}}$ est choisi comme en \ref{une condition commode}. 
Posons $$C= \max \{\langle \alpha , X \rangle \,\vert X\in {\bf H}_{Q_1}(\mathfrak{C}_{M_{Q_1}}),\, 
\alpha \in \Delta_{Q_1}  \}\vg$$
$$D= \max \{\langle \varpi , X \rangle \,\vert X\in {\bf H}_{Q_1}(\mathfrak{C}_{M_{Q_1}}),\,
\varpi \in \hat{\Delta}_{Q_1}  \}\ptf$$
On a donc $C>0$; et aussi $D>0$ (d'aprs \cite[1.2.8]{LW}). Rappelons qu'on a posŽ 
$$U_{Q_1}^R = U_{Q_1}\cap M_R\ptf$$

Le lemme suivant est une simple adaptation de \cite[3.6.2]{C}.

\begin{lemma}\label{des bornes explicites}
Soient $u\in U_{Q_1}^R(\mbb{A})$, $a\in A_{Q_1}(\mbb{A})$ et $m\in \Siegel_{M_{Q_1}}^*$ tels que 
$$\tau_{Q_1}^Q({\bf H}_0(am)-T_1)F_{P_0}^Q(uam,T)\sigma_Q^R({\bf H}_0(am)-T) =1 \ptf$$ 
Posons $H= {\bf H}_0(a)\in \ESB_{Q_1}$. On a:
\begin{enumerate}
\item[(i)] $\langle \alpha , H \rangle \geq \langle \alpha , T\rangle - C$ pour tout $\alpha \in \Delta_{Q_1}^R\smallsetminus \Delta_{Q_1}^Q$.
\item[(ii)] $\langle \alpha , H \rangle \geq \langle \alpha , T_1 \rangle - C$ et $\langle \varpi_\alpha^Q , H\rangle < \langle \varpi_\alpha^Q, T \rangle $ pour tout $\alpha \in \Delta_{Q_1}^Q$. 
\item[(iii)] Si $T$ et $T_1$ sont assez rŽguliers, 
prŽcisŽment si $\bs{d}_0(T)>C$ et $\bs{d}_0(T_1)>C$, 
alors $0 < \langle \alpha , H\rangle $ pour tout $\alpha \in \Delta_{Q_1}^R$ et 
$$ 0 < \langle \alpha, H \rangle <  \frac{\langle \varpi^Q_\alpha, T\rangle }{ \langle \varpi^Q_\alpha, \varpi_\alpha^\vee \rangle} 
\quad \hbox{pour tout} \quad \alpha \in \Delta_{Q_1}^Q\ptf$$ 
\item[(iv)] $\langle \alpha , H_R \rangle \leq \langle \alpha, T \rangle - \langle \alpha , H^R\rangle $ 
pour tout $\alpha \in \Delta_Q\smallsetminus \Delta_Q^R$ et $\langle \alpha , H \rangle > \langle \alpha ,T \rangle - D$ pour tout 
$\alpha \in \Delta_R$. 
\end{enumerate}
\end{lemma}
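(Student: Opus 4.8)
\textbf{Plan de la d\'emonstration de \ref{des bornes explicites}.} La preuve suit de pr\`es celle de \cite[3.6.2]{C}; il s'agit essentiellement de traduire chacune des conditions de positivit\'e qui d\'efinissent les fonctions $\tau_{Q_1}^Q(\cdot - T_1)$, $F_{P_0}^Q(\cdot, T)$ et $\sigma_Q^R(\cdot - T)$ en une in\'egalit\'e portant sur l'\'el\'ement $H= {\bf H}_0(a)\in \ESB_{Q_1}$, en tenant compte du fait que $m$ appartient \`a l'ensemble de Siegel $\Siegel_{M_{Q_1}}^*= \mathfrak{C}_{M_{Q_1}}\Siegel_{M_{Q_1}}^1$ et que l'on contr\^ole ${\bf H}_{Q_1}(\mathfrak{C}_{M_{Q_1}})$ par les constantes $C$ et $D$. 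Pour commencer, j'\'ecrirais $m=c m^1$ avec $c\in \mathfrak{C}_{M_{Q_1}}$ et $m^1\in \Siegel_{M_{Q_1}}^1$; puisque ${\bf H}_0(am)= H + {\bf H}_0(cm^1)$ et que ${\bf H}_0(cm^1)$ diff\`ere de ${\bf H}_{Q_1}(c)$ par un \'el\'ement contr\^ol\'e (la composante dans $\ag_{Q_1}^0$ reste born\'ee puisque $m^1$ parcourt un ensemble de Siegel pour $M_{Q_1}(F)\backslash M_{Q_1}(\mbb{A})^1$, et la composante dans $\ag_{Q_1}$ provient de $c$), les quantit\'es $\langle \alpha, {\bf H}_0(am)-T\rangle$ pour $\alpha$ racine de $A_{Q_1}$ se comparent \`a $\langle \alpha, H-T\rangle$ \`a la constante $C$ pr\`es, et de m\^eme les $\langle \varpi, \cdot \rangle$ \`a la constante $D$ pr\`es.

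Ensuite je traiterais les quatre points dans l'ordre. Pour (i): la condition $\sigma_Q^R({\bf H}_0(am)-T)=1$ force en particulier $\langle \alpha, {\bf H}_0(am)-T\rangle > 0$ pour les $\alpha \in \Delta_{Q_1}^R$ qui ne sont pas dans $\Delta_{Q_1}^Q$ (c'est une des in\'egalit\'es d\'efinissant $\sigma_Q^R$, cf. \cite[1.4]{LL} et les rappels sur $\sigma_Q^R$), d'o\`u $\langle \alpha, H\rangle \geq \langle \alpha, T\rangle - C$. Pour (ii): la condition $\tau_{Q_1}^Q({\bf H}_0(am)-T_1)=1$ donne $\langle \alpha, {\bf H}_0(am)-T_1\rangle > 0$ pour $\alpha \in \Delta_{Q_1}^Q$, d'o\`u la premi\`ere in\'egalit\'e; la condition $F_{P_0}^Q(uam,T)=1$, via la d\'efinition de $F_{P_0}^Q(\cdot, T)$ comme fonction caract\'eristique d'un ensemble de Siegel tronqu\'e (cf. \cite[3.4]{LL} et \cite[4.2.2]{LW}), donne la majoration $\langle \varpi_\alpha^Q, {\bf H}_0(am)-T\rangle < 0$ pour $\alpha \in \Delta_{Q_1}^Q$, en observant que l'insertion de $u\in U_{Q_1}^R(\mbb{A})$ ne change pas la composante pertinente de ${\bf H}_0$ modulo un terme born\'e absorb\'e dans la constante (ou, plus proprement, en utilisant que $F_{P_0}^Q$ ne d\'epend que de la projection $M_Q$). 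Le point (iii) est alors une cons\'equence formelle de (i) et (ii): si $\bs{d}_0(T)>C$ et $\bs{d}_0(T_1)>C$, les minorations de (i) et (ii) deviennent strictement positives, et la majoration de (ii) combin\'ee \`a la positivit\'e des coefficients $\langle \varpi_\alpha^Q, \varpi_\alpha^\vee\rangle$ (base aig\"ue, cf. \cite[1.2.5]{LW}) donne l'encadrement annonc\'e. Enfin (iv) r\'esulte de l'analyse de $\sigma_Q^R$ \`a nouveau: la condition $\sigma_Q^R=1$ impose $\langle \alpha, {\bf H}_0(am)-T\rangle \leq 0$ pour $\alpha \in \Delta_Q \smallsetminus \Delta_Q^R$ (la partie ``$\tau_Q$ en dehors de $R$'' renvers\'ee), d'o\`u en d\'ecomposant $H=H^R + H_R$ avec $H_R\in \ag_R$ l'in\'egalit\'e sur $\langle \alpha, H_R\rangle$; et la condition $F_{P_0}^Q$ ou $\sigma_Q^R$ fournit $\langle \alpha, {\bf H}_0(am)-T\rangle > -D$ pour $\alpha \in \Delta_R$ (via un $\varpi$ et le contr\^ole par $D$), d'o\`u la seconde in\'egalit\'e.

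Le seul point qui demande un peu de soin est la gestion de l'\'el\'ement $u\in U_{Q_1}^R(\mbb{A})$ dans l'argument de $F_{P_0}^Q$: il faut v\'erifier que, modulo le remplacement \'eventuel de $\mathfrak{C}_{M_{Q_1}}$ et un agrandissement des constantes, la valeur $F_{P_0}^Q(uam,T)$ ne d\'epend de $u$ que \`a travers un translat\'e born\'e, ce qui est exactement le type de v\'erification faite dans \cite[3.6.2]{C}. C'est l\`a, et dans la comptabilit\'e pr\'ecise des projections sur les divers sous-espaces $\ag_{Q_1}^Q$, $\ag_Q^R$, $\ag_R$, que r\'eside l'essentiel du travail; le reste est une traduction directe des d\'efinitions combinatoires des fonctions caract\'eristiques $\tau$, $\hat\tau$, $\sigma$ et $F$.
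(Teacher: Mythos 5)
Votre strat\'egie est bien celle du texte (traduire les trois conditions en in\'egalit\'es sur $H$, en contr\^olant $X={\bf H}_{Q_1}(m)$ par $C$ et $D$ gr\^ace au choix \ref{une condition commode} de $\mathfrak{C}_{M_{Q_1}}$, et suivre \cite[3.6.2]{C}), et vos points (ii), (iii) et (iv) sont pour l'essentiel conformes \`a la preuve du texte. En revanche votre justification du (i) ne tient pas telle quelle: l'in\'egalit\'e $\langle \alpha , {\bf H}_0(am)-T\rangle \geq 0$ pour $\alpha \in \Delta_{Q_1}^R\smallsetminus \Delta_{Q_1}^Q$ n'est \emph{pas} une des in\'egalit\'es d\'efinissant $\sigma_Q^R$. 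Cette fonction ne contraint que la projection sur $\ag_Q$ (ses conditions portent sur $\Delta_Q^R$, $\Delta_Q\smallsetminus\Delta_Q^R$ et $\hat{\Delta}_R$), alors que $\alpha$ est une racine de $A_{Q_1}$: d\`es que $Q_1\subsetneq Q$, la composante de $H+X-T$ dans $\ag_{Q_1}^Q$ n'est pas vue par $\sigma_Q^R$. Pour conclure, il faut d'abord extraire de la condition $F_{P_0}^Q(uam,T)=1$ les in\'egalit\'es $\langle \varpi , H+X-T\rangle \leq 0$ pour $\varpi\in \hat{\Delta}_{Q_1}^Q$, ce qui dit que cette composante est une combinaison \`a coefficients $\leq 0$ des $\beta^\vee$, $\beta\in \Delta_{Q_1}^Q$, puis utiliser $\langle \alpha,\beta^\vee\rangle \leq 0$ pour la combiner avec la positivit\'e des $\langle \alpha_Q, \cdot\rangle$, $\alpha_Q\in \Delta_Q^R$, fournie par $\sigma_Q^R$: c'est exactement l'argument de \cite[3.6.2\,(1)]{C} que le texte invoque. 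Dans votre r\'edaction, l'apport de $F_{P_0}^Q$ n'intervient qu'au (ii), si bien que le (i) reste sans preuve.

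Deux remarques secondaires. Le point que vous pr\'esentez comme le plus d\'elicat --- la pr\'esence de $u\in U_{Q_1}^R(\mbb{A})$ dans l'argument de $F_{P_0}^Q$ --- est en fait imm\'ediat: $U_{Q_1}^R\subset U_{Q_1}\subset U_0$ et les poids utilis\'es sont triviaux sur $\ag_0^{Q_1}$, donc $\langle \varpi, {\bf H}_0(uam)\rangle = \langle \varpi, H+X \rangle$ sans aucun agrandissement des constantes. Enfin, la stricte in\'egalit\'e $\langle \varpi_\alpha^Q,H\rangle < \langle \varpi_\alpha^Q,T\rangle$ du (ii) (et le signe utile au (iv)) ne provient pas d'une majoration stricte donn\'ee par $F_{P_0}^Q$ (qui ne fournit que $\leq$), mais de $\langle \varpi_\alpha^Q, X\rangle >0$, cons\'equence de $\langle \beta, X \rangle >0$ pour $\beta\in\Delta_{Q_1}$ (\ref{une condition commode}) et de \cite[1.2.8]{LW}; votre esquisse absorbe ce point dans les constantes alors qu'il est n\'ecessaire pour obtenir l'\'enonc\'e tel quel.
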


\begin{proof}

Posons $X= {\bf H}_{Q_1}(m)$. 

Prouvons (i). La condition $F_{P_0}^Q(uam,T)$ implique que $\langle \varpi , {\bf H}_0(uam) -T \rangle \leq 0$ pour tout $\varpi \in \hat{\Delta}_0^Q$. 
On obtient que $\langle \varpi ,H + X-T\rangle \leq 0$ pour tout $\varpi \in \hat{\Delta}^Q_{Q_1} \subset \hat{\Delta}^Q_0$. 
La condition $\sigma_Q^R(H+X-T)=1$ entra"ne alors comme dans la preuve de \cite[3.6.2\,(1)]{C} que $\langle \alpha, H+X-T \rangle \geq 0$ pour tout 
$\alpha \in \Delta_{Q_1}^R \smallsetminus \Delta_{Q_1}^Q$. Puisque $-\langle \alpha , X\rangle \geq -C$ pour tout 
$\alpha \in \Delta_{Q_1}$, cela prouve (i). 

Prouvons (ii). La premire inŽgalitŽ est claire, par dŽfinition de $\tau_{Q_1}^Q$ et puisque $-\langle\alpha , X \rangle \geq -C$ pour tout $\alpha \in \Delta_{Q_1}^Q$. 
La seconde inŽgalitŽ rŽsulte des deux inŽgalitŽs $\langle \varpi_\alpha^Q, H + X-T \rangle \leq 0$ (Žtablie au point (i)) et 
$- \langle \varpi_\alpha^Q , X \rangle < 0$ pour tout $\alpha \in \Delta_Q^{Q_1}$ 
(d'aprs \cite[1.2.8]{LW}, puisque $\langle \alpha , X\rangle >0$ pour tout $\alpha \in \Delta_{Q_1}^Q$). 

Prouvons (iii). Pour $\alpha \in \Delta_{Q_1}$, on a (d'aprs \cite[1.2.9]{LW})
$$\langle \alpha , T \rangle \geq \bs{d}_0(T) >0 \quad \hbox{et} \quad \langle \alpha , T_1 \rangle \geq \bs{d}_0(T_1) >0\ptf$$ 
Si $\bs{d}_0(T)>C$ et $\bs{d}_0(T_1)>C$, ce que l'on suppose, alors (d'aprs (i) et (ii)) 
$$\langle \alpha , H \rangle  > 0 \quad \hbox{pour tout} \quad \alpha \in \Delta_{Q_1}^R\ptf$$ 
\'Ecrivons $H^G = \sum_{\beta \Delta_{Q_1} }\langle \beta, H \rangle \varpi_\beta^\vee$. 
Pour $\alpha \in \Delta_{Q_1}^Q$, on a 
$$\langle \varpi_\alpha^Q,H \rangle = \langle \varpi_\alpha , H^Q \rangle = \sum_{\beta \in \Delta_{Q_1}^Q} \langle \beta, H \rangle \langle \varpi_\alpha, \varpi_\beta^\vee \rangle  \ptf$$ 
Comme $\langle \varpi , \varpi_\beta^\vee \rangle \geq 0$ pour tout $\varpi\in \hat{\Delta}_Q^{Q_1}$ et tout $\beta \in \Delta_Q^{Q_1}$ (d'aprs \cite[1.2.8]{LW}), on a 
$$\langle \varpi_\alpha , \varpi_\beta^\vee \rangle = \langle \varpi_\alpha^Q,\varpi_\beta^\vee \rangle \geq 0 \quad \hbox{pour tout}\quad \beta \in \Delta_Q^{Q_1}\ptf$$ 
Comme pour $\beta=\alpha$, on a $\langle \varpi_\alpha^Q , \varpi_\alpha^\vee \rangle >0$, on en dŽduit que 
$$\langle \alpha,H \rangle  \leq \frac{\langle \varpi_\alpha^Q, H \rangle}{\langle \varpi_\alpha^Q, \varpi_\alpha^\vee \rangle}
\quad \hbox{pour tout} \quad \alpha \in \Delta_{Q_1}^{Q}\ptf$$ D'o le point (iii) d'aprs 
la seconde inŽgalitŽ du point (ii). 

Prouvons (iv). \'Ecrivons $H = H^R + H_R$ avec $H^R\in \ag_{Q_1}^R$ et $H_R \in \ag_R$. 
Pour $\alpha \in \Delta_{Q}\smallsetminus \Delta_{Q}^R$, on a 
$$\langle \alpha, H_R \rangle = \langle \alpha , H+ X -T \rangle + \langle \alpha , T\rangle - \langle \alpha , X\rangle  - \langle \alpha , H^R \rangle$$ 
avec $\langle \alpha, H+X-T \rangle$ car $\sigma_Q^R(H+X-T)=1$ et $- \langle \alpha , X \rangle <0$ car si $\alpha'$ est l'unique ŽlŽment de 
$\Delta_{Q_1}$ tel que $\alpha'_Q = \alpha$, on a 
$\langle \alpha, X \rangle \geq \langle \alpha' , X \rangle >0$ (d'aprs \cite[1.2.9]{LW}). D'o la premire inŽgalitŽ du point (iv). Pour $\varpi\in \hat{\Delta}_R$, on a 
$\langle \varpi, H+X-T \rangle >0 $ car $\sigma_Q^R(H+X-T)=1$, par consŽquent $$\langle \varpi , H \rangle > - \langle \varpi, X  \rangle + \langle \varpi,T \rangle$$ avec 
$-\langle \varpi , X \rangle \geq - D$ (rappelons que $\hat{\Delta}_R\subset \hat{\Delta}_{Q_1}$).
\end{proof}

L'ensemble de Siegel $\Siegel_{M_{Q_1}}^*= \mathfrak{C}_{M_{Q_1}}\!\Siegel_{M_{Q_1}}^1$ pour le quotient $\overline{\bs{X}}_{\!M_{Q_1}}$ 
Žtant fixŽ, les constantes $C >0$ et $D> 0$ le sont aussi. 
On suppose dŽsormais que $$\bs{d}_0(T)>C \quad \hbox{et}\quad  \bs{d}_0(T_1)>C\ptf\leqno{(1)}$$ Soit $\bs{Z}_{Q_1}(T,T_1)$ le sous-ensemble de $\bs{Z}_{Q_1}$ formŽ 
des $a$ tel que l'ŽlŽment $H= {\bf H}_{Q_1}(a)$ de $\ag_{Q_1}$ vŽrifie les conditions (i), (ii), (iii) et (iv) de \ref{des bornes explicites}. D'aprs (1) et \ref{des bornes explicites}\,(iii), on a 
l'inclusion $$\bs{Z}_{Q_1}(T,T_1)\subset \bs{Z}_{Q_1}^{R,+}\ptf\leqno{(2)}$$ Notons $\ESB_{Q_1}^G(T,T_1)$ le sous-ensemble de $\ESB_{Q_1}^G= \ESB_G \backslash \ESB_{Q_1}$ dŽfini par 
$$\ESB_{Q_1}^G(T,T_1)= \{{\bf H}_{Q_1}(a)\,\vert \, a\in \bs{Z}_{Q_1}(T,T_1)\}\ptf $$ 

\begin {lemma}\label{estimŽe d'une somme}
Pour tout $\epsilon >0$, il existe des constantes $c_1 >0$ et $\epsilon'_1 >0$ telles que pour 
tout $T\in \ag_0$ tel que $\bs{d}_0(T)>C$ et $\bs{d}_0(T) > \epsilon \| T \|$, on ait 
$$\sum_{H \in \ESB_{Q_1}^G(T,T_1)} \prod_{\alpha \in \Delta_{Q_1}^Q} q^{-r \langle \alpha, {\bf H}_{Q_1}(a) \rangle} \leq c_1 q^{-\epsilon'_1 \| T \|}\ptf$$ 
\end{lemma}

\begin{proof} 
Il suffit de reprendre celle de \cite[3.6.3]{C} (voir aussi \cite[6.8, page 152]{CL}).
\end{proof}

On en dŽduit la 

\begin{proposition}\label{l'estimŽe fondamentale}
Soit $\Gamma_G$ un sous-ensemble compact de $G(\mbb{A})$. Pour tout $\epsilon >0$, il existe des constantes $c >0$ et $\epsilon' >0$ telles que pour 
tout $T\in \ag_0$ tel que $\bs{d}_0(T)>C$ et $\bs{d}_0(T) > \epsilon \| T \|$, on ait 
$$\int_{\bs{Z}_{Q_1}(T,T_1)} \bs{\delta}_{Q_1}(a)^{-1} \sup_{x\in \Gamma_G}\vert K_{Q,\bsfrY}^R(ax)\vert \dd a\leq c q^{-\epsilon' \| T \|}\ptf$$ 
\end{proposition}

\begin{proof} 
D'aprs \ref{la majoration clŽ bis}, l'intŽgrale de l'ŽnoncŽ est essentiellement majorŽe par 
$$\int_{\bs{Z}_{Q_1}(T,T_1)} \left(\prod_{\alpha \in \Delta_{Q_1}^Q} q^{-r \langle \alpha, {\bf H}_{Q_1}(a) \rangle}\right)\dd a \ptf \leqno{(3)}$$ 
Posons $\bs{Z}_{Q_1}^1 = \ker ({\bf H}_{Q_1}: \bs{Z}_{Q_1}\rightarrow \ESB_{Q_1}^G)$. Puisque $\bs{Z}_{Q_1}^1\bs{Z}_{Q_1}(T,T_1)= \bs{Z}_{Q_1}(T,T_1) $ et que 
$\bs{Z}_{Q_1}^1$ est compact, ˆ un volume fini prs, l'intŽgrale (3) est Žgale ˆ 
$$\sum_{H \in \ESB_{Q_1}^G(T,T_1)} \prod_{\alpha \in \Delta_{Q_1}^Q} q^{-r \langle \alpha, {\bf H}_{Q_1}(a) \rangle}\ptf$$
On conclut gr‰ce ˆ \ref{estimŽe d'une somme}. 
\end{proof}

\subsection{DŽmonstration de la proposition \ref{CV strate}}\label{preuve de CV strate} La $F$-strate $\bsfrY$ de $\UF$ 
est fixŽe. Pour $T\in \ag_0$, notons $\wt{k}^T_{\bsfrY}(x)$ l'expression obtenue en remplaant $\UF$ par $\bsfrY$ dans la dŽfinition de $\wt{k}^T_\u(x)$: 
$$\wt{k}_{\bsfrY}^T(x) = \sum_{P\in \ESP_\mathrm{st}}(-1)^{a_P-a_G} \sum_{\substack{Q,R\in \ESP_\mathrm{st}\\ Q\subset P \subset R}}
\sum_{\xi \in Q(F)\backslash G(F)}\sigma_Q^R({\bf H}_0(\xi x)-T)\bs{\Lambda}^{T,Q}_\mathrm{d}K_{P,\bsfrY}(\xi x) \ptf$$ 
Comme en \ref{troncature(s)}, on a l'ŽgalitŽ $$k^T_{\bsfrY}(x)= \wt{k}^T_{\bsfrY}(x)\ptf$$ 
Pour $T\in \ag_0$ assez rŽgulier (ne dŽpendant que du support de $f$), 
on obtient comme en \ref{CV et rŽcriture de l'intŽgrale tronquŽe} que l'expression 
$\wt{k}^T_{\bsfrY}(x)$ est Žgale ˆ 
$$\sum_{\substack{Q,R\in \ESP_\mathrm{st}\\ Q\subset R}}\sum_{\xi \in Q(F)\backslash G(F)}F_{P_0}^Q(\xi x,T)
\sigma_Q^R({\bf H}_0(\xi x)-T)\sum_{\substack{P\in \ESP_\mathrm{st}\\Q \subset P \subset R}}(-1)^{a_P-a_G} 
K_{P,\bsfrY}(\xi x)\ptf$$ L'intŽgrale $$\int_{\overline{\bs{X}}_{\!G}}\vert k^T_{\bsfrY}(x) \vert \dd x$$ est alors 
bornŽe par $$\sum_{\substack{Q,R\in \ESP_\mathrm{st}\\ Q\subset R}}\int_{\bs{Y}_Q}F_{P_0}^Q(x,T)
\sigma_Q^R({\bf H}_0(x)-T) \left| \sum_{\substack{P\in \ESP_\mathrm{st}\\Q \subset P \subset R}}(-1)^{a_P-a_G} 
K_{P,\bsfrY}(x)  \right|\dd x\ptf $$ Pour $Q,\,R\in \ES{P}_\mathrm{st}$ tels que $Q\subset R$, rappelons que l'on a posŽ
$$K_{Q,\bsfrY}^R(x)= \sum_{\substack{P\in \ESP_\mathrm{st}\\Q \subset P \subset R}}(-1)^{a_P-a_G} 
K_{P,\bsfrY}(x)\ptf$$ La fonction $x\mapsto K_{Q,\bsfrY}^R(x)$ sur $G(\mbb{A})$ se factorise par $\bs{Y}_Q = A_G(\mbb{A}) Q(F)\backslash G(\mbb{A})$. 
Il s'agit en particulier de prouver, pour chaque paire $Q\subset R$ de sous-groupes paraboliques standard, la convergence de l'intŽgrale
$$\int_{\bs{Y}_Q}F_{P_0}^Q(x,T)
\sigma_Q^R({\bf H}_0(x)-T) \left| K_{Q,\bsfrY}^R(x)\right|\dd x\ptf\leqno{(1)}$$
Pour $Q=R=G$, la contribution est l'intŽgrale sur $\bs{Y}_G= \overline{\bs{X}}_{\!G}$ obtenue en otant les valeurs absolues dans l'expression (1), \cad 
$$\int_{\overline{\bs{X}}_{\!G}} F_{P_0}^G(x,T) K_{\bsfrY}(x,x)\dd x =   \int_{\overline{\bsX}_{\!G}} \bs\Lambda^T_\mathrm{d}K_{\bsfrY}(x,x) \dd x \ptf$$ 
Puisque la fonction $F_{P_0}^G(\cdot ,T)$ est ˆ support compact sur $\overline{\bs{X}}_{\!G}$, on a 
$$\int_{\overline{\bs{X}}_{\!G}} F_{P_0}^G(x,T) \left| K_{\bsfrY}(x,x) \right|\dd x\leq \int_{\overline{\bs{X}}_{\!G}} F_{P_0}^G(x,T) 
\left(\sum_{\eta \in \bsfrY}\vert f(x^{-1}\eta x)\vert \right)\dd x < +\infty\ptf$$ 
Pour $Q=R \subsetneq G$, la contribution est nulle car $\sigma_Q^R =0$. Reste ˆ Žvaluer la contribution pour $Q\subsetneq R \subset G$. 

On fixe jusqu'ˆ la fin de \ref{preuve de CV strate} (\cad la fin de la dŽmonstration de \ref{CV strate}) une paire $Q\subsetneq R$ d'ŽlŽments 
de $\ESP_\mathrm{st}$. On a $$K_{Q,\bsfrY}^R(x)= \sum_{\substack{P\in \ESP_\mathrm{st}\\Q \subset P \subset R}}(-1)^{a_P-a_G} 
\sum_{\eta \in \UF^{M_P}} \xi_{P,\bsfrY}(\eta) \int_{U_P(\mbb{A})} f(x^{-1}\eta u x) \dd u\ptf$$ 
Comme au dŽbut de la preuve de \cite[9.1.1]{LL}, on peut invoquer \cite[3.6.7]{LW} et, pour $T$ assez rŽgulier (ne dŽpendant que du support de $f$), 
remplacer la somme sur $\UF^{M_P}$ dans l'expression ci-dessus par une somme 
sur $ \UF^{M_Q}\,(= \UF^{M_P}\cap M_Q(F))$ suivie d'une somme sur $U_Q^P(F) = M_P(F)\cap  U_Q(F)$: 
$$K_{Q,\bsfrY}^R(x)=\sum_{\substack{P\in \ESP_\mathrm{st}\\Q \subset P \subset R}}(-1)^{a_P-a_G}\sum_{\eta\in \UF^{M_Q}} \Phi_{P,\eta,\bsfrY}(x)\leqno{(2)}$$ avec 
$$\Phi_{P,\eta,\bsfrY}(x)= \sum_{\nu \in U_Q^{P}(F)} \xi_{P,\bsfrY}(\eta\nu) \int_{U_P(\mbb{A})} f(x^{-1}\eta\nu u x) \dd u\ptf$$ 
Rappelons que la fonction $\xi_{P,\bsfrY}$ sur $\UF^{M_P}U_P(F)$ est invariante par translations ˆ droite (et ˆ gauche) 
par $U_P(F)$. On obtient 
$$ \Phi_{P,\eta,\bsfrY}(x)=\int_{U_P(F)\backslash U_P(\mbb{A})} \left(\sum_{\nu \in U_Q(F)} \xi_{P,\bsfrY}(\eta\nu)f(x^{-1}\eta\nu u x)\right)\! \dd u\ptf$$ 
On peut alors remplacer l'intŽgrale en $u\in U_P(F)\backslash U_P(\mbb{A})$ par une intŽgrale sur un domaine de Siegel $\Siegel_{U_P}$ pour le quotient 
$U_P(F)\backslash U_P(\mbb{A})$: 
$$\Phi_{P,\eta,\bsfrY}(x)= \sum_{\nu \in U_Q(F)} \xi_{P,\bsfrY}(\eta\nu) \int_{\Siegel_{U_P}} f(x^{-1}\eta\nu u x) \dd u\ptf\leqno{(3)}$$ 

Pour $\eta \in \UF^{M_Q}$, on pose 
$$\Phi_{Q,\eta,\bsfrY}^R(x)= \sum_{\substack{P\in \ESP_\mathrm{st}\\Q \subset P \subset R}}(-1)^{a_P-a_G} \Phi_{P,\eta,\bsfrY}(x)\ptf$$ 
Puisque pour $P\in \ES{F}^R(Q)$, la fonction $\xi_{P,\bsfrY}$ sur $P(F)$ est invariante par translations ˆ droite par $U_R(F)$, l'expression 
$\Phi_{Q,\eta,\bsfrY}^R(x)$ est Žgale ˆ 
$$\sum_{\substack{P\in \ESP_\mathrm{st}\\Q \subset P \subset R}}(-1)^{a_P-a_G}  \!\!
\sum_{\nu\in U_Q^R(F)}\xi_{P,\bsfrY}(\eta\nu)
\int_{\Siegel_{U_P^R}\times U_R(\mbb{A})} f(x^{-1}\eta\nu v u' x) \dd v \dd u'\leqno{(4)}$$ o $\Siegel_{U_P^R}$ est un sous-ensemble ouvert compact de $U_P^R(\mbb{A})$ 
qui soit un domaine de Siegel pour le quotient $U_P^R(F)\backslash U_P^R(\mbb{A})$. Comme (par dŽfinition)  
$$K_{Q,\bsfrY}^R(x)= \sum_{\eta \in \UU^{M_Q}(F)} \Phi_{Q,\eta,\bsfrY}^R(x)$$ et que $\UF^{M_Q}U_Q^R(F)= \UF^{M_R} \cap Q(F)$, l'expression $K_{Q,\bsfrY}^R(x)$ est Žgale ˆ
$$ \sum_{\substack{P\in \ESP_\mathrm{st}\\Q \subset P \subset R}}(-1)^{a_P-a_G}  \!\!\!\!\!
\sum_{\gamma \in \UF^{M_R}\cap Q(F)}\xi_{P,\bsfrY}(\gamma)
\int_{\Siegel_{U_P^R}\times U_R(\mbb{A})} f(x^{-1}\gamma v u' x) \dd v \dd u'\ptf\leqno{(5)}$$

D'aprs la proposition 3.6.3 de \cite{LW} (appliquŽe au parabolique $P=Q$), il existe une constante $c_1>0$ telle que pour tout $T\in \ag_0$ avec $\bs{d}_0(T)\geq c_1$, 
on ait $$ \sum_{\substack{Q_1\in \ESP_\mathrm{st}\\P_0 \subset Q_1 \subset Q}}\sum_{\xi \in Q_1(F)\backslash Q(F)} F^{Q_1}_{P_0}(\xi x , T) \tau_{Q_1}^Q({\bf H}_0(\xi x)-T)=1\ptf$$ 
Fixons un ŽlŽment $T_1 \in \ag_0$ avec $d_0(T_1)\geq c_1$. L'intŽgrale (1) est Žgale ˆ 
$$\sum_{Q_1\in \ES{F}^{Q}(P_0)} \int_{\bs{Y}_{Q_1}}\bs{\chi}_{Q,Q_1}^R(x,T,T_1) \vert K_{Q,\bsfrY}^R(x)\vert \dd x  \leqno{(6)}$$ 
avec $$\bs{\chi}_{Q,Q_1}^R(x,T,T_1)\bydef F_{P_0}^{Q_1}(x,T_1) \tau_{Q_1}^Q({\bf H}_0(x)-T_1)F_{P_0}^Q({\bf H}_0(x)-T) \sigma_Q^R({\bf H}_0(x)-T)\ptf$$ 
Il s'agit donc, pour chaque $Q_1\in \ESF^{Q}(P_0)$, d'estimer l'intŽgrale 
$$\int_{\bs{Y}_{Q_1}}\bs{\chi}_{Q,Q_1}^R(x,T,T_1) \vert K_{Q,\bsfrY}^R(x)\vert \dd x  \leqno{(7)}$$ 

Fixons $Q_1\in \ESF^Q(P_0)$ et rŽcrivons l'intŽgrale (7) en utilisant de la dŽcomposition d'Iwasawa $G(\mbb{A})= U_{Q_1}(\mbb{A})M_{Q_1}(\mbb{A})\bs{K}$. 
On obtient que l'intŽgrale (7) est Žgale ˆ 
\begin{eqnarray*}
\lefteqn{\int_{\bs{K}}\int_{\overline{\bs{X}}_{\!M_{Q_1}}}\int_{\bs{Z}_{Q_1}} 
\int_{U_{Q_1}(F)\backslash U_{Q_1}(\mbb{A})}\bs{\delta}_{Q_1}(am)^{-1} F_{P_0}^{Q_1}(m,T_1)\tau_{Q_1}^Q({\bf H}_0(am)-T_1)}\\ 
&&\hspace{1.5cm}\times F_{P_0}^Q(uam,T)\sigma_Q^R({\bf H}_0(am)-T) 
K_{Q,\bsfrY}^R(uamk)\dd u \dd a \dd m \dd k \ptf
\end{eqnarray*}
L'expression ci-dessus a un sens car les fonctions $x \mapsto F_{P_0}^{Q}(x,T)$ et $x\mapsto K_{Q,\bsfrY}^R(x)$ sont invariantes ˆ gauche par $Q(F)\;(\supset U_{Q_1}(F))$. 
On remplace l'intŽgrale en $u$ par une intŽgrale sur un domaine de Siegel $\Siegel_{U_{Q_1}}$ pour le quotient $U_{Q_1}(F)\backslash U_{Q_1}(\mbb{A})$. Comme dans la preuve de \ref{raffinement}, 
on prend $\Siegel_{U_{Q_1}}$ de la forme $\Siegel_{U_R}\Siegel_{U_{Q_1}^R}$. On remplace l'intŽgrale en 
$u\in \bs{\mathfrak{S}}_{U_{Q_1}}$ par une intŽgrale en $u'\in \bs{\mathfrak{S}}_{U_R}$ suivie d'une intŽgrale en $u''\in \bs{\mathfrak{S}}_{U_{Q_1}^R}$. 
Pour $x= u'u''amk$, $P\in \ES{F}^R(Q)$ et $\gamma \in \UU^{M_R}(F)$, on a
\begin{eqnarray*}
\lefteqn{
\int_{\Siegel_{U_P^R}\times U_R(\mbb{A})} f(x^{-1}\gamma v u'_1 x) \dd v\dd u'_1 }\\
&&=\int_{\Siegel_{U_P^R}\times U_R(\mbb{A})} f(k^{-1}m^{-1}a^{-1}u''^{-1} \gamma v \mathrm{Int}_{(\gamma v)^{-1}}(u'^{-1}) u'_1 u'u''amk) \dd u_1\dd u_2\\
&& = \int_{\Siegel_{U_P^R}\times U_R(\mbb{A})} f(k^{-1}m^{-1}a^{-1}u''^{-1}\gamma v u'_1 u''amk) \dd v\dd u'_1\ptf
\end{eqnarray*}
En d'autres termes, l'intŽgrale en $u'\in \Siegel_{U_R}$ est absorbŽe (pour chaque $P\in \ESF^R(Q)$) par l'intŽgrale en $u'\in U_R(\mbb{A})$ dans $(5)$. Comme d'autre part 
la fonction $x\mapsto F_{P_0}^Q(x,T)$ est invariante ˆ gauche par $U_Q(\mbb{A})\;(\supset U_R(\mbb{A}))$, on obtient que l'intŽgrale (7) est Žgale 
ˆ 
\begin{eqnarray*}
\lefteqn{\int_{\bs{K}}\int_{\overline{\bs{X}}_{\!M_{Q_1}}}\int_{\bs{Z}_{Q_1}} \int_{\Siegel_{U_{Q_1}^R}}\bs{\delta}_{Q_1}(am)^{-1} F_{P_0}^{Q_1}(m,T_1)\tau_{Q_1}^Q({\bf H}_0(am)-T_1)}\\ 
&&\hspace{0.5cm}\times F_{P_0}^Q(uam,T)\sigma_Q^R({\bf H}_0(am)-T) 
\vert K_{Q,\bsfrY}^R(uamk)\vert \dd u \dd a \dd m \dd k \ptf
\end{eqnarray*}
L'expression ci-dessus est majorŽe par celle obtenue en remplaant l'intŽgrale en $m$ par une intŽgrale sur 
un ensemble de Siegel $\Siegel_{M_{Q_1}}^*$ pour le quotient $\overline{\bs{X}}_{\!M_{Q_1}}$; et d'aprs \cite[1.8.3]{LW} et \cite[3.4]{LL}, 
la condition $F^{Q_1}_{P_0}(m,T_1)\neq 0$ implique 
que cette intŽgrale en $m\in \Siegel_{M_{Q_1}}^*$ porte sur un sous-ensemble ouvert compact $\Gamma_{M_{Q_1}}$ de $\Siegel_{M_{Q_1}}^*$. 
L'intŽgrale (7) est donc majorŽe par l'expression 
\begin{eqnarray*}
\lefteqn{\int_{\bs{K}}\int_{\Gamma_{M_{Q_1}}}\int_{\bs{Z}_{Q_1}} \int_{\Siegel_{U_{Q_1}^R}}\bs{\delta}_{Q_1}(am)^{-1} \tau_{Q_1}^Q({\bf H}_0(am)-T_1)}\\ 
&&\hspace{0.5cm}\times F_{P_0}^Q(uam,T)\sigma_Q^R({\bf H}_0(am)-T) 
\vert K_{Q,\bsfrY}^R(uamk)\vert \dd u \dd a \dd m \dd k 
\end{eqnarray*}
Le terme crucial dans l'expression ci-dessus est l'intŽgrale en $a$, les trois autres intŽgrales portant sur des ensembles compacts. On suppose 
que $\Siegel_{M_{Q_1}}^*$ est choisi comme en \ref{estimŽe d'une intŽgrale}: $\Siegel_{M_{Q_1}}^* = \mathfrak{C}_{M_{Q_1}}\! \Siegel_{M_{Q_1}}^1$ 
avec $\mathfrak{C}_{M_{Q_1}}$ comme en 
\ref{une condition commode}. Compte-tenu de \ref{des bornes explicites} et de la dŽfinition de $\bs{Z}_{Q_1}(T,T_1)$, 
ˆ des volumes finis prs qui ne dŽpendent ni de $f$, $T$ ou $T_1$, il suffit pour majorer l'intŽgrale (7) de majorer l'intŽgrale
$$\int_{\bs{Z}_{Q_1}(T,T_1)} \bs{\delta}_{Q_1}(am)^{-1} \vert K_{Q,\bsfrY}^R(uamx)\vert \dd a\leqno{(8)}$$ 
pour $u\in \Siegel_{U_Q^R}$, $m\in \Gamma_{M_{Q_1}}$ et $k\in \bs{K}$. Pour $m\in M_{Q_1}(\mbb{A})$, on a 
$$\bs{\delta}_{Q_1}(m)^{-1}= e^{- \langle 2\rho_{Q_1},{\bf H}_{Q_1}(m)\rangle}$$ o 
$\rho_{Q_1}$ dŽsigne la demi-somme des racines de $A_{Q_1}$ dans $U_{Q_1}$. En particulier pour $m\in \Siegel_{M_{Q_1}}^*$, 
et donc \textit{a fortiori} pour $m\in \Gamma_{M_{Q_1}}$, 
le terme $\bs{\delta}_{Q_1}(m)^{-1}$ est majorŽ par une constante ne dŽpendant que de $\mathfrak{C}_{M_{Q_1}}$.  
D'autre part pour $a\in \bs{Z}_{Q_1}^{R,+}$, et donc \textit{a fortiori} pour $a\in \bs{Z}_{Q_1}(T,T_1)$, l'automorphisme $\mathrm{Int}_{a^{-1}}$ de $G(\mbb{A})$ 
contracte $U_{Q_1}^R(\mbb{A})$.  
On peut donc fixer un sous-ensemble ouvert compact $\Gamma_{U_{Q_1}^R}$ de $U_{Q_1}^R(\mbb{A})$ 
tel que $a^{-1}u a \subset \Gamma_{U_{Q_1}^R}$ pour tout $a\in \bs{Z}_{Q_1}^{R,+}$ et tout 
$u\in \Siegel_{U_{Q_1}^R}$. Posons 
$$\Gamma_G = \Gamma_{U_{Q_1}^R} \Gamma_{M_{Q_1}} \bs{K}\ptf$$ Pour 
$u\in \Siegel_{U_{Q_1}^R}$, $a\in \bs{Z}_{Q_1}^{R,+}$, $m\in \Gamma_{M_{Q_1}}$ et $k\in \bs{K}$, on a 
$$x= a^{-1}ua mk \in \Gamma_G\ptf$$ L'intŽgrale (8) est essentiellement majorŽe par 
$$\int_{\bs{Z}_{Q_1}(T,T_1)} \bs{\delta}_{Q_1}(a)^{-1} \sup_{x\in \Gamma_G}\vert K_{Q,\bsfrY}^R(ax)\vert \dd a\ptf \leqno{(9)}$$ 
L'intŽgrale (7) est elle aussi essentiellement majorŽe par l'intŽgrale (9). On peut donc lui applique \ref{l'estimŽe fondamentale}: 
il existe une constante $c_0>0$ telle que 
pour tout $\epsilon >0$, il existe des constantes $c >0$ et $\epsilon' >0$ telles que pour 
tout $T\in \ag_0$ tel que $\bs{d}_0(T) \geq c_0$ et $\bs{d}_0(T)\geq \epsilon \| T \|$, on ait 
$$ \int_{\bs{Y}_{Q_1}}\bs{\chi}_{Q,Q_1}^R(x,T,T_1) \vert K_{Q,\bsfrY}^R(x)\vert \dd x  \leq c q^{- \epsilon' \| T \|}\ptf \leqno{(10)} $$ 
Puisque l'intŽgrale (1) est Žgale ˆ la somme sur $Q_1\in \ESF^Q(P_0)$ des intŽgrales (7), cela achve la dŽmonstration de 
\ref{CV strate}. 

\subsection{DŽmonstration de \ref{PolExp}}\label{preuve de PolExp} Il suffit de reprendre celle du thŽorme 11.1.1 de \cite{LL}. Avant cela, rappelons quelques notations 
de \cite{LL}. Pour $Q\in \ESP_\mathrm{st}$, le $\mbb{Z}$-module de type fini $\ESC_Q^G = \ESB_G \backslash \ESA_Q$ s'insre 
dans la suite exacte courte $$0 \rightarrow \ESB_Q^G= \ESB_G \backslash \ESB_Q  \rightarrow \ESC_Q^G \rightarrow \bsbbc_Q = \ESB_Q\backslash \ESA_Q\rightarrow 0\ptf$$ 
Pour $Z\in \bsbbc_Q$, on note $\ESB_Q^G(Z)\subset \ESC_Q^G$ la fibre au-dessus de $Z$ (c'est un espace principal homogne sous le groupe $\ESB_Q^G$)
et pour $T,\, X\in \ag_0$, on pose $$\eta_{Q,F}^{G,T}(Z\mathpvg X) = \sum_{H\in \ESB_Q^G(Z)} \Gamma_Q(H-X,T)\ptf\leqno{(1)}$$ 
Si $\bsfrY$ est une $F$-strate de $\UF$, puisque $M_Q$ est un $F$-sous-groupe fermŽ \textit{critique} de $G$, d'aprs \ref{le cas critique}\,(iii) l'intersection 
$$\bsfrY \cap M_Q(F)$$ est une rŽunion finie, Žventuellement vide, de $F$-strates de $\UF^{M_Q}$. On note 
$$k^{M_Q,T}_{\bsfrY}(f_Q\mathpvg \cdot)\leqno{(2)}$$ la fonction sur $M_Q(\mbb{A})$ obtenue en remplaant $G$, $f$ et $\bsfrY$ par $M_Q$, $f_Q$ et 
$\bsfrY \cap M_Q(F)$ dans la dŽfinition de $k_{\bsfrY}^T= k_{\bsfrY}^{G,T}(f\mathpvg \cdot)$; avec la convention que (2) est la fonction identiquement nulle si l'intersection $\bsfrY\cap M_Q(F)$ est vide. 
D'aprs \ref{CV}, pour $T\in \ag_0$ assez rŽgulier, l'intŽgrale 
$$\int_{\overline{\bs{X}}_{\!M_Q}} k^{M_Q,T}_{\bsfrY}(f_Q\mathpvg m) \dd m\leqno{(3)}$$ est absolument convergente.

 La $F$-strate $\bsfrY$ de $\UF$ Žtant fixŽe, on peut commencer la dŽmonstration de \ref{PolExp}.
Comme dans celle de \cite[11.1.1]{LW}, pour $T,\, X \in \ag_{0,\mbb{Q}}$ assez rŽguliers, on obtient 
$$\int_{\overline{\bs{X}}_{\!G}}k_{\bsfrY}^{T+X} (x) \dd x = \sum_{Q\in \ESP_\mathrm{st}} \int_{\bs{Y}_Q}\Gamma_Q^G({\bf H}_0(x)-X,T) k_{Q,\bsfrY}^X(x)\dd x$$ avec 
$$k_{Q,\bsfrY}^X(x)= \sum_{P\in \ESF^Q(P_0)}\sum_{\xi\in P(F)\backslash Q(F)} (-1)^{a_P-a_G}\hat{\tau}_P^Q({\bf H}_0(\xi x) -X) K_{P,\bsfrY}(\xi x, \xi x)\ptf$$ 
Soit $\mathfrak{B}_Q$ l'image d'une section du morphisme $A_Q(\mbb{A})\rightarrow \ESB_Q$. 
Comme dans la preuve de loc.~cit., on suppose que $\mathfrak{B}_Q = \mathfrak{B}_G \mathfrak{B}_Q^G$. 
Posons $$\bs{Y}'_Q=\mathfrak{B}_G Q(F)\backslash G(\mbb{A})\quad \hbox{et}\quad \bs{Y}''_Q =\mathfrak{B}_Q Q(F)\backslash G(\mbb{A})\ptf$$ 
Puisque $\mathrm{vol}(A_G(F)\backslash A_G(\mbb{A})^1)=1$, on peut remplacer l'intŽgrale sur $\bs{Y}_Q $ par une intŽgrale sur $\bs{Y}'_Q$. 
Pour $Z\in \bsbbc_Q = \ESB_Q\backslash \ESA_Q$, notons 
$\bs{Y}''_Q(Z)$ l'image dans $\bs{Y}''_Q$ de l'ensemble des $g\in G(\mbb{A})$ tels que ${\bf H}_Q(g) =Z'$ o $Z'$ est un relvement de $Z$ dans $\ESA_Q$. 
On dŽfinit de la mme manire 
le sous-ensemble $\smash{\overline{\bs{X}}}'_{\!M_Q}(Z)$ de $$\smash{\overline{\bs{X}}}'_{\!M_Q}= \mathfrak{B}_Q M_Q(F) \backslash M_Q(\mbb{A})\ptf$$
On obtient $$\int_{\bs{Y}_Q}\Gamma_Q^G({\bf H}_0(x)-X,T) k_{Q,\bsfrY}^X(x)\dd x= \sum_{Z\in \bsbbc_Q} \eta_{Q,F}^{G,T}(Z\mathpvg X) \int_{\bs{Y}''_Q(Z)} k^X_{Q,\bsfrY}(x)\dd x$$ 
avec $$\int_{\bs{Y}''_Q(Z)} k^X_{Q,\bsfrY}(x)\dd x = \int_{\smash{\overline{\bs{X}}}'_{\!M_Q}(Z)}k^{M_Q,X}_{\bsfrY}(f_Q\mathpvg m)\dd m\ptf $$ 
Comme plus haut, on peut remplacer l'intŽgrale sur $\smash{\overline{\bs{X}}}'_{\!M_Q}(Z)$ par une intŽgrale sur $\overline{\bs{X}}_{\!M_Q}(Z)\subset \overline{\bs{X}}_{\!M_Q}$. 
En posant $$\mathfrak{J}^{G,T}_{\bsfrY}(f)= \int_{\overline{\bs{X}}_G}k_{\bsfrY}^{G,T}(x)\dd x\vg$$ on a donc 
$$\mathfrak{J}^{G,T+X}_{\bsfrY}(f)= \sum_{Q\in \ESP_\mathrm{st}} \sum_{Z\in \bsbbc_Q} \eta_{Q,F}^{G,T}(Z\mathpvg X)  \mathfrak{J}^{M_Q,X}_{\bsfrY}(Z\mathpvg f_Q)\leqno{(4)}$$ avec 
$$ \mathfrak{J}^{M_Q,X}_{\bsfrY}(Z\mathpvg  h)= \int_{\overline{\bs{X}}_{\!M_Q}(Z)} k^{M_Q,X}_{\bsfrY}(h\mathpvg m)\dd m\ptf$$ 
Puisque d'aprs \cite[2.1.3]{LL}, pour $X\in \ag_{0,\mbb{Q}}$, la fonction 
$T \mapsto  \eta_{Q,F}^{G,T}(Z\mathpvg X) $ sur $\ag_{0,\mbb{Q}}$ est dans PolExp, la proposition \ref{PolExp} est dŽmontrŽe. 

\subsection{Action de la conjugaison}\label{action de la conjugaison} 
Pour $y\in G(\mbb{A})$, on note $f^y$ la fonction $f\circ \mathrm{Int}_y$. Soient $y\in G(\mbb{A})$ et 
$T\in \ag_0$. Pour $Q\in \ESP_\mathrm{st}$ et $Z\in \bsbbc_Q= \ESB_Q\backslash \ESA_Q$, notons $f_{Q,y}^T(Z\mathpvg  \cdot)$ la fonction dans 
$C^\infty_\mathrm{c}(M_Q(\mbb{A}))$ dŽfinie par 
$$f_{Q,y}^T(Z\mathpvg m)= \int_{\bs{K}}\int_{U_Q(\mbb{A})} f(k^{-1}mu k) \eta_{Q,F}^{G,-{\bf H}_0(ky)}(Z\mathpvg T) \dd u \dd k$$ 
o, pour $X\in \ag_0$, l'expression $\eta_{Q,\bsfrY}^{G,X}(Z\mathpvg T)$ a ŽtŽ dŽfinie en \ref{preuve de PolExp}\,(1). 
Si $\bsfrY$ est une $F$-strate de $\UF$, pour $h\in C^\infty_\mathrm{c}(M_Q(\mbb{A}))$ et $T\in \ag_0$, on note $\mathfrak{J}^{M_Q,T}_{\bsfrY}(h)$ l'expression dŽfinie par
$$\mathfrak{J}^{M_Q,T}_{\bsfrY}(h) = \int_{\overline{\bs{X}}_{\!M_Q}} k^{M_Q,T}_{\bsfrY}(h\mathpvg m)\dd m $$ o la fonction 
$k^{M_Q,T}_{\bsfrY}(h\mathpvg \cdot)= k^{M_Q,T}_{\bsfrY \cap M_Q(F)}(h\mathpvg \cdot)$ sur $M_Q(\mbb{A})$ 
est dŽfinie comme en \ref{preuve de PolExp}\,(3). Puisque $\bsfrY \cap M_Q(F)$ est une rŽunion finie (Žventuellement vide) de $F$-strates de $\UU^{M_Q}(F)$, d'aprs 
\ref{PolExp}, la fonction $T\mapsto \mathfrak{J}^{M_Q,T}_{\bsfrY}(h)$ sur $\ag_{0,\mbb{Q}}$ est dans PolExp.  

\begin{proposition}
Soit $\bsfrY$ une $F$-strate de $\UF$ et soit $y\in G(\mbb{A})$. Pour $T\in \ag_{0,\mbb{Q}}$ assez rŽgulier, on a 
$$\mathfrak{J}^{G,T}_{\bsfrY}({f^y})= \sum_{Q\in \ESP_\mathrm{st}}\sum_{Z\in \bsbbc_Q} \mathfrak{J}^{M_Q,T}_{\bsfrY}(f^T_{Q,y}(Z))\ptf$$
\end{proposition}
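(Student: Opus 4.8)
La stratégie est d'imiter la preuve de la proposition \ref{PolExp} (donnée en \ref{preuve de PolExp}), mais en remplaçant d'emblée $f$ par $f^y$ et en décalant le paramètre de troncature par $-{\bf H}_0(ky)$ lorsque l'on intègre sur $\bsK$. Premièrement, je partirais de l'identité clé obtenue dans la preuve de \ref{PolExp} : pour $T$ assez régulier (dépendant du support de $f^y$, donc de $y$ et du support de $f$), on a la décomposition
$$\mathfrak{J}^{G,T}_{\bsfrY}(f^y)= \sum_{Q\in \ESP_\mathrm{st}}\sum_{Z\in \bsbbc_Q} \eta_{Q,F}^{G,T}(Z\mathpvg 0)\;\mathfrak{J}^{M_Q,0}_{\bsfrY}(Z\mathpvg (f^y)_Q)\vg$$
mais plutôt que de prendre le point $X=0$, il est plus commode ici d'écrire directement le développement de $k^{G,T}_{\bsfrY}(f^y\mathpvg x)$ en décomposant la somme sur $P(F)\backslash G(F)$ via la partition $\sum_Q \Gamma_Q^G$ de \cite[2.1]{LL}, puis d'intégrer sur $\overline{\bs{X}}_G$. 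Comme dans \ref{preuve de PolExp}, cela donne une somme sur $Q\in\ESP_\mathrm{st}$ d'intégrales sur $\bs{Y}_Q$, que l'on sépare via la décomposition d'Iwasawa $G(\mbb{A})=U_Q(\mbb{A})M_Q(\mbb{A})\bsK$.

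Deuxièmement, le point nouveau est le traitement de la variable $y$ dans la conjugaison $f^y=f\circ\mathrm{Int}_y$. Dans l'intégrale sur $\bsK$ issue de la décomposition d'Iwasawa, on écrit $x=umk$ avec $m\in M_Q(\mbb{A})$ et $k\in\bsK$ ; alors $f^y(x^{-1}\eta u'x)=f((ky)^{-1}k\,m^{-1}u^{-1}\eta u'u\,m\,k\,(ky))$ et la fonction caractéristique tronquée $\wh\tau_P({\bf H}_0(\xi x)-T)$, resp. $\Gamma_Q^G({\bf H}_0(x)-X,T)$, se transforme en une fonction du type $\Gamma_Q^G({\bf H}_0(\cdot)-(-{\bf H}_0(ky)),\cdot)$ grâce à l'additivité de ${\bf H}_0$ et à l'invariance à droite de ${\bf H}_0$ par $\bsK$ ne vaut plus : c'est précisément pourquoi le décalage $-{\bf H}_0(ky)$ apparaît. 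Il faut suivre soigneusement comment le changement de variable $u\mapsto m^{-1}um$ (absorbé dans l'intégrale sur $U_Q(\mbb{A})$, modulo le jacobien $\bs{\delta}_Q(m)$ déjà présent dans la mesure) puis la conjugaison par $ky$ produit l'expression $\eta_{Q,F}^{G,-{\bf H}_0(ky)}(Z\mathpvg T)$ dans la définition de $f^T_{Q,y}(Z\mathpvg m)$. Une fois ces manipulations faites, l'intégrale intérieure se réorganise exactement en $k^{M_Q,T}_{\bsfrY}(f^T_{Q,y}(Z)\mathpvg m)$ puis, après intégration sur $\overline{\bs{X}}_{M_Q}(Z)$ et sommation sur $Z\in\bsbbc_Q$, en $\mathfrak{J}^{M_Q,T}_{\bsfrY}(f^T_{Q,y}(Z))$.

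Troisièmement, il faut vérifier que la fonction $f^T_{Q,y}(Z\mathpvg\cdot)$ est bien dans $C^\infty_{\rm c}(M_Q(\mbb{A}))$ : le support compact vient de celui de $f$ et de la compacité de $\bsK$, et la lissité vient de l'invariance bilatérale de $f$ par un sous-groupe ouvert compact. Il faut aussi vérifier que seul un nombre fini de couples $(Q,Z)$ donne une contribution non nulle ; cela résulte de ce que $\eta_{Q,F}^{G,-{\bf H}_0(ky)}(Z\mathpvg T)$ est nul pour $Z$ hors d'un ensemble fini (car, pour $k$ variant dans le compact $\bsK$, ${\bf H}_0(ky)$ reste borné et la fonction $\Gamma_Q^G$ est à support compact), argument identique à celui de \cite[2.1.3]{LL}. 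La compatibilité entre « $F$-strate $\bsfrY$ de $\UF$ » et « restriction au facteur de Levi critique $M_Q$ » — à savoir que $\bsfrY\cap M_Q(F)$ est une réunion finie (éventuellement vide) de $F$-strates de $\UF^{M_Q}$ — a déjà été établie en \ref{le cas critique}\,(iii) et rend licites toutes les invocations de \ref{CV} et \ref{PolExp} pour $M_Q$.

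\textbf{Principal obstacle.} Le cœur technique sera le suivi rigoureux du décalage du paramètre de troncature sous conjugaison : montrer que la combinaison de la décomposition d'Iwasawa, du changement de variable dans $U_Q(\mbb{A})$, et de la conjugaison par $ky$ transforme exactement le facteur combinatoire $\wh\tau_P$/$\Gamma_Q^G$ en $\eta_{Q,F}^{G,-{\bf H}_0(ky)}(Z\mathpvg T)$, sans terme parasite. C'est un calcul du même type que ceux de \cite[11.1]{LW} et de la preuve de \ref{PolExp}, mais il demande de la vigilance car la conjugaison $\mathrm{Int}_y$ ne commute pas avec la troncature ; l'astuce standard (cf. Arthur) est que la troncature « se conjugue » en une troncature décalée, le décalage étant précisément $-{\bf H}_0(ky)$. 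Le reste — convergence absolue pour $T$ assez régulier, appartenance à PolExp de chaque $T\mapsto\mathfrak{J}^{M_Q,T}_{\bsfrY}(f^T_{Q,y}(Z))$ — est une conséquence directe de \ref{CV}, \ref{PolExp} et \cite[2.1.3]{LL}.
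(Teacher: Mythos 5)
Votre plan est essentiellement la démonstration du papier lui-même : celui-ci se contente de renvoyer à \cite[11.2.1]{LL}, c'est-à-dire au calcul standard à la Arthur que vous esquissez — changement de variable $x\mapsto xy$ dans l'intégrale sur $\overline{\bs{X}}_{\!G}$, décomposition d'Iwasawa, absorption du décalage du paramètre de troncature par $-{\bf H}_0(ky)$ dans $\eta_{Q,F}^{G,-{\bf H}_0(ky)}(Z\mathpvg T)$, finitude de la somme en $Z$, et \ref{le cas critique}\,(iii) pour donner un sens à $\bsfrY\cap M_Q(F)$. Seule la comptabilité de votre identité affichée est à corriger (avec $f^y=f\circ\mathrm{Int}_y$ et $x=umk$, le conjugateur issu de la substitution $x\mapsto xy$ est $ky$ sans le facteur $k$ parasite que vous avez écrit), ce qui est précisément le « principal obstacle » que vous signalez vous-même.
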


\begin{proof}
Elle est identique ˆ celle de \cite[11.2.1]{LL}.
\end{proof}


\setcounter{section}{0}
\renewcommand{\thesection}{\Alph{section}}

\part*{Partie III: annexes}
\section{Sur la sŽparabilitŽ non algŽbrique}\label{annexe A}

Dans cette annexe, $F$ est un corps commutatif. 

\subsection{Algbres sŽparables sur un corps}\label{algbres sŽparables} 
Le corps $F^\mathrm{s\acute{e}p}$ est par dŽfinition une cl™ture \textit{algŽbrique} sŽparable de $F$. 
On a une notion plus gŽnŽrale de sŽparabilitŽ, valable pour les extensions non algŽbriques de $F$ (cf. \cite[{\bf 4.6}]{EGA}):

\begin{definition}\label{sŽp(gŽnŽral)}
\textup{Une $F$-algbre (commutative) $A$ est dite \textit{sŽparable} si pour toute extension 
$F'/F$, l'anneau $A\otimes_F F'$ est rŽduit (\cad sans ŽlŽment nilpotent autre que $0$). En d'autres termes, $A$ est sŽparable si et seulement si le $F$-schŽma affine 
$\mathrm{Spec}(A)$ est gŽomŽtriquement rŽduit\footnote{Observons que si de plus $A$ est intgre, le fait que la $F$-algbre $A$ soit sŽparable ne signifie pas que 
le $F$-schŽma $\mathrm{Spec}(A)$ soit gŽomŽtriquement intgre; il l'est s'il est ˆ la fois gŽomŽtriquement rŽduit et gŽomŽtriquement irrŽductible.}.}
\end{definition}

On rappelle quelques faits connus:
\begin{itemize}
\item Une extension algŽbrique de $F$ est sŽparable (au sens usuel) si et seulement si c'est une $F$-algbre 
sŽparable. On peut donc sans ambigu•tŽ parler d'extension sŽparable (algŽbrique ou non) du corps $F$.
\item Les extensions purement transcendantes de $F$ sont des $F$-algbres sŽparables.    
\item Si $A$ et $B$ sont des $F$-algbres sŽparables, alors $A\otimes_FB$ l'est aussi.
\item Si $F$ est de caractŽristique nulle, une $F$-algbre est sŽparable si et seulement si elle est rŽduite. 
\end{itemize}

\vskip1mm
Pour $F$ de caractŽristique quelconque, on a la caractŽrisation suivante:

\begin{proposition}\label{critre sŽparabilitŽ 1}
Une $F$-algbre $A$ est sŽparable si et seulement elle vŽrifie les conditions 
Žquivalentes suivantes:
\begin{enumerate}
\item[(i)] l'anneau $A\otimes_FF^\mathrm{rad}$ est rŽduit;
\item[(ii)] pour toute extension radicielle finie $F'/F$, l'anneau $A\otimes_F F'$ est rŽduit;
\item[(iii)] l'anneau $A$ est rŽduit et pour tout idŽal premier minimal $\mathfrak{p}$ de $A$, le corps rŽsiduel 
$\kappa(\mathfrak{p})$ de l'anneau local $A_\mathfrak{p}$ est une extension sŽparable de $F$. 
\end{enumerate}
\end{proposition}

Pour les extensions de corps, on a aussi:

\begin{proposition}\label{critre sŽparabilitŽ 2}
Soit une extension $\Omega/F$ tel que le corps $\Omega$ soit parfait. Une sous-extension $F'/F$ de 
$\Omega/F$ est sŽparable si et seulement elle vŽrifie les conditions 
Žquivalentes suivantes:
\begin{enumerate}
\item[(i)] $F'$ et $F^\mathrm{rad}$ sont linŽairement disjointes sur $F$;
\item[(ii)] $F'$ est linŽairement disjointe de toute sous-extension radicielle de $\Omega/F$. 
\end{enumerate}
\end{proposition}

\begin{corollary}\label{cor au critre de sŽparabilitŽ 2}
Soit $F'/F$ une extension sŽparable (algŽbrique ou non). La $F'$-algbre $F'\otimes_F F^\mathrm{rad}$ est un corps. Si 
de plus $F'/F$ est une sous-extension de $\overline{F}/F$, alors $F'\otimes_F F^\mathrm{rad}$ est la cl™ture radicielle de $F'$ dans $\overline{F}$.
\end{corollary}

\begin{remark}\label{sŽparabilitŽ du complŽtŽ}
\textup{Soit $F$ un corps global. Il est bien connu que pour toute place $v$ de $F$, le complŽtŽ $F_v$ de $F$ en $v$ est une $F$-algbre sŽparable. 
Si $F$ est un corps de nombres, c'est immŽdiat (\ref{critre sŽparabilitŽ 2}). En gŽnŽral, il suffit de voir que 
pour toute sous-extension finie $F'/F$ de $\overline{F}/F$, 
l'anneau $F'_v= F_v \otimes_F F'$ est isomorphe ˆ un produit de corps $\prod_{w\vert v}F'_{w}$; 
c'est mme un corps si $F'\subset F^\mathrm{rad}$ car dans ce cas il y a une unique 
place $w$ de $F'$ au-dessus de $v$. D'o le rŽsultat d'aprs le critre \ref{critre sŽparabilitŽ 1}\,(ii). 
On en dŽduit aussi que la $F$-algbre 
$F_v \otimes_F F^\mathrm{rad}$ est un corps (\ref{cor au critre de sŽparabilitŽ 2}). Observons que le degrŽ de transcendance de $F_v/F$ 
est infini, et mme non dŽnombrable (sinon $F_v$ lui-mme serait dŽnombrable). 
}\end{remark}

\subsection{Descente sŽparable non algŽbrique}\label{descente sŽparable non algŽbrique}
On s'intŽresse ici ˆ la descente des variŽtŽs relativement ˆ une extension sŽparable non algŽbrique 
$F'/F$. Soit $$ \mathrm{Aut}_F(F')$$ le groupe des automorphismes du corps $F'$ qui fixent $F$. On note $F'^{\mathrm{Aut}_F(F')}$ 
le sous-corps de $F'$ formŽ des ŽlŽments fixŽs par $\mathrm{Aut}_F(F')$. 
On a bien sžr toujours l'inclusion $$F \subset F'^{\mathrm{Aut}_F(F')}\ptf$$ Si $F'=F'^{\mathrm{s\acute{e}p}}$ et $F'^{\mathrm{Aut}_F(F')}= F$, 
le \guill{critre galoisien} \ref{critre galoisien} est encore vrai ici\footnote{Observons 
que dans la terminologie d'Artin, une extension de corps $F'/F$ est 
\textit{galoisienne} si le sous-corps des points fixes $F'^{\mathrm{Aut}_F(F')}\subset F'$ est Žgal ˆ $F$.}: 

\begin{proposition}\label{critre galoisien bis}
Soit $F'/F$ une extension sŽparable. On suppose que $F'=F'^{\mathrm{s\acute{e}p}}$ et $F'^{\mathrm{Aut}_F(F')}= F$.  
Soit $V$ une $F$-variŽtŽ et soit $X$ une sous-variŽtŽ fermŽe de $V$. Les conditions suivantes sont Žquivalentes:
\begin{enumerate}
\item[(i)] $X$ est dŽfinie sur $F$;
\item[(ii)] $X$ est dŽfinie sur $F'$ et $X(F')$ est $\mathrm{Aut}_F(F')$-stable;
\item[(iii)] il existe un sous-ensemble 
$\mathrm{Aut}_F(F')$-stable de $X(F')= X\cap V(F')$ qui soit Zariski-dense dans $X(\smash{\overline{F}}')$. 
\end{enumerate}
\end{proposition}

\begin{proof}
Elle est identique ˆ celle de \ref{critre galoisien} (cf. \cite[ch.~AG, 14.1-14.4]{B}), compte-tenu de la densitŽ de $V(F')$ dans $V(\smash{\overline{F}}')$. 
\end{proof}

Sous les hypothses de \ref{critre galoisien bis}, on obtient comme dans le cas galoisien un foncteur pleinement fidle 
$$V \mapsto (\hbox{$V_{F'}$, $\mathrm{Aut}_F(F')$-ensemble $V(F')$})\ptf\leqno{(1)}$$ 

\begin{remark}
\textup{
\begin{enumerate}
\item[(i)] Soit $\mathfrak{B}$ une base de transcendance de $F'/F$. L'extension $F'/F(\mathfrak{B})$ est algŽbrique sŽparable et mme galoisienne 
puisque $F'=F'^{\mathrm{s\acute{e}p}}$; de plus $F'^{\mathrm{Gal}(F'/F(\mathfrak{B}))}= F(\mathfrak{B})$ et 
$$F'^{\mathrm{Aut}_F(F')}=F\quad\hbox{si et seulement si}\quad F(\mathfrak{B})^{\mathrm{Aut}_F(F(\mathfrak{B}))}= F\ptf$$ Cette condition est toujours rŽalisŽe si 
$F'/F$ est de degrŽ de transcendance infini, \cad si la base $\mathfrak{B}$ est infinie. Dans le cas contraire, elle peut ne pas l'tre (voir (ii)). 
\item[(ii)]
Prenons pour $F'$ une cl™ture sŽparable algŽbrique $F(t)^\mathrm{s\acute{e}p}$ de l'extension purement transcendante $F(t)$ de $F$. 
On a $F'^{\mathrm{Aut}_F(F')}= F(t)^{\Omega}$ avec $\Omega= \mathrm{Aut}_F(F(t))$.  
Pour $g = \left(\begin{array}{cc}a & b \\ c & d \end{array}\right)\in GL(2,F)$, notons $\sigma_g$ le 
$F$-automorphisme de $F(t)$ dŽfini par l'homographie $\sigma_g(t) = (a t + b)(ct + d)^{-1}$. 
L'application $g\mapsto \sigma_g$ se quotiente est un isomorphisme de $\mathrm{PGL}_2(F)$ sur $\Omega$. On a $F(t)^\Omega = F$ si et seulement si 
le corps $F$ est infini. Si $F= \mbb{F}_q$ et $\alpha \in F(t)^\Omega \smallsetminus F$, l'automorphisme $x\mapsto \alpha x$ de la droite affine est $\Omega$-Žquivariant mais il 
n'est certainement pas dŽfini sur $F$!
\item[(iii)] Supposons $F=F^\mathrm{s\acute{e}p}$. On prouve facilement qu'un $F$-schŽma intgre de type fini a au plus un $F$-modle. 
Soit $F''= F(\mathfrak{B})$ pour une base de transcendance $\mathfrak{B}$ de $F'/F$. Pour dŽcrire l'image du foncteur \ref{descente sŽparable non algŽbrique}\,(1), il suffit de dŽcrire celle du foncteur 
\ref{notations et rappels}\,(1) pour $F=F''$ (\cad le foncteur qui ˆ une $F''$-variŽtŽ $W$ associŽ la $F'\;(=F''^{\mathrm{s\acute{e}p}})$-variŽtŽ $W_{F'}$) et celle du foncteur $$V \mapsto V_{F''}\ptf$$ 
En reprenant les arguments de \cite{Der}, on prouve que si $\mathfrak{B}$ est infini, pour qu'un $F$-schŽma intgre de type $V''$ admette un $F$-modle, il faut et il suffit que 
pour tout $\sigma\in \mathrm{Aut}_{F}(F'')$, le $F''$-schŽma conjuguŽ ${^\sigma{V}}$ soit isomorphe ˆ $V$. 
\end{enumerate}}
\end{remark}

\section{Sur la c-$F$-topologie}\label{annexe B}

Dans cette annexe, les hypothses sont celles de \ref{le critre d'instabilitŽ de HM rationnel}: $F$ est un corps commutatif d'exposant caractŽristique $p\geq 1$, 
$G$ est un groupe rŽductif connexe dŽfini sur $F$ et $V$ est une $G$-variŽtŽ affine elle aussi dŽfinie sur $F$. 

Pour une sous-variŽtŽ fermŽe $\ES{Q}$ de $V$ de $G$ dŽfinie sur $F$, on a dŽfini en \ref{le critre d'instabilitŽ de HM rationnel} le sous-ensemble ${_F\ES{N}^G(V,\ES{Q})}\subset V$ des ŽlŽments qui sont 
$(F,G,\ES{Q})$-instables (\ref{ŽlŽments (F,G,Q)-instables}). Pour $v\in {_F\ES{N}^G(V,\ES{Q})}$, on a une thŽorie de l'optimalitŽ pour les co-caractres $\lambda\in \check{X}_F(G)$ 
tels que la limite $\lim_{t\rightarrow 0}t^\lambda\cdot v$ existe et appartienne ˆ $\ES{Q}$ (\ref{HMrat1}). Cela s'applique en particulier au cas o $\ES{Q}$ est une $G$-orbite fermŽe dans 
$V$ (e.g. $\ES{Q}=\{e_V\}$ si $(V,e_V)$ est une $G$-variŽtŽ pointŽe dŽfinie sur $F$ avec $e_V\in V(F)$). Pour $v\in V$ tel que l'unique $G$-orbite fermŽe $\ES{F}_v$ dans $\overline{\ES{O}_v}$ 
ne soit pas dŽfinie sur $F$, on n'a \textit{a priori} pas de thŽorie de l'optimalitŽ. La $\cF$-topologie introduite dans \cite{BHMR} permet cependant de prouver des rŽsultats (cf. \ref{topo co-car}). 
On applique en \ref{application au cas tordu} ces rŽsultats au cas o $V$ est un $G$ espace tordu $\wt{G}$ tel que $\wt{G}(F)\neq \emptyset$, muni de l'action de $G$ par conjugaison; \cad dans le cadre de la formule des traces tordue.

\subsection{La topologie donnŽe par les co-caractres rationnels}\label{topo co-car}
Dans cette sous-section, on reprend les notions et les rŽsultats de \cite{BHMR}. 

\begin{definition}\label{dŽfinition c-F-top}
\textup{Soit $X$ un sous-ensemble de $V$ (on ne demande pas que $X$ soit contenu dans $V(F)$, ni que $G(F)\cdot X= X$).
\begin{enumerate}
\item[(i)]On dit que $X$ est \textit{$\cFG$-fermŽ} ou simplement \textit{$\cF$-fermŽ} (dans $V$) si pour tout $v\in X$ et tout $\lambda\in \Lambda_{F,v}$, la limite 
$\phi_{v,\lambda}^+(0)$ est dans $X$.
\item[(ii)]On appelle \textit{$\cFG$-fermeture} ou simplement \textit{$\cF$-fermeture} de $X$ (dans $V$) le plus petit sous-ensemble $X'$ de $V$ tel que 
$X\subset X'$ et $X'$ soit c-$F$-fermŽ; on le note $\smash{\overline{X}}^{(\cF)}= \smash{\overline{X}}^{(\cFG)}$. 
\end{enumerate}}
\end{definition}

Les sous-ensembles c-$F$-fermŽs forment les parties fermŽes d'une topologie sur $V$: 
\begin{itemize}
\item une intersection quelconque de sous-ensembles c-$F$-fermŽs est c-$F$-fermŽe; 
\item une rŽunion quelconque de sous-ensembles c-$F$-fermŽs est c-$F$-fermŽe; 
\item l'ensemble vide et l'ensemble $V$ tout entier sont c-$F$-fermŽs. 
\end{itemize}
Observons que si $G$ opre trivialement sur $V$, alors la c-$F$-topologie sur $V$ est la topologie discrte. 
Un sous-ensemble $G$-invariant (Zariski-)fermŽ de $V$ est c-$F$-fermŽ.

\begin{remark}\label{Zariski/c}
\textup{Soit $v\in V$. 
\begin{enumerate}
\item[(i)] La $G(F)$-orbite $\ES{O}_{F,v}=\{g\cdot v\,\vert g\in G(F)\}$ est c-$F$-fermŽe si et seulement si pour tout 
$\lambda\in \check{X}_F(G)$ tel que la limite $\lim_{t\rightarrow 0} t^\lambda\cdot v$ existe, cette limite est dans $\ES{O}_{F,v}$.
\item[(ii)]Si $F=\overline{F}$, d'aprs le critre de Hilbert-Mumford, la $G$-orbite $\ES{O}_v=\{g\cdot v\,\vert g\in G\}$ 
est fermŽe (dans $V$) si et seulement si elle est c-$\overline{F}$-fermŽe. 
\item[(iii)]Si $F$ est parfait et $v\in V(F)$, on a aussi (d'aprs \ref{Kempf3}): 
la $G(F)$-orbite $\ES{O}_{F,v}$ est c-$F$-fermŽe si et seulement si la $G$-orbite 
$\ES{O}_v$ est fermŽe. Cela n'est plus vrai en gŽnŽral si $F$ n'est pas parfait: la $\mathrm{GL}_p(F)$-orbite de l'ŽlŽment $\gamma$ de l'exemple 
\ref{exemple ŽlŽment primitif sur un corps non parfait} est c-$F$-fermŽe mais sa $\mathrm{GL}_p$-orbite n'est pas fermŽe; ou, ce qui revient au mme, sa $\mathrm{GL}_p(F^\mathrm{rad})$-orbite n'est pas 
c-$F^\mathrm{rad}$-fermŽe.
\end{enumerate}
}
\end{remark}

On peut munir $V(F)$ de la topologie induite par n'importe quelle topologie sur $V$; en particulier la topologie de Zariski ou la c-$F$-topologie. 
Commenons par deux lemmes trs simple: 

\begin{lemma}\label{fermeture et invariance}
Soient $H$ un sous-groupe fermŽ de $G$ dŽfini sur $F$ et $\Omega$ un sous-ensemble $H(F)$-invariant et (Zariski-)fermŽ de $V(F)$. 
La fermeture $\overline{\Omega}$ de $\Omega$ dans $V$ 
est une sous-variŽtŽ fermŽe de $V$, dŽfinie sur $F$ et telle que $\overline{\Omega}(F)= \Omega$. 
Si de plus $H(F)$ est dense dans $H$ (e.g. si $H$ est rŽductif connexe et $F$ est infin \cite[ch.~V, 18.3]{B}), 
alors $H\cdot \overline{\Omega}= \overline{\Omega}$.  
\end{lemma}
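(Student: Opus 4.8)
The statement to be proven is Lemma~\ref{fermeture et invariance}: for a closed $F$-subgroup $H$ of $G$ and a set $\Omega \subset V(F)$ that is $H(F)$-invariant and Zariski-closed in $V(F)$, the Zariski closure $\overline{\Omega}$ of $\Omega$ in $V$ is defined over $F$, satisfies $\overline{\Omega}(F) = \Omega$, and moreover $H \cdot \overline{\Omega} = \overline{\Omega}$ when $H(F)$ is dense in $H$. The plan is to work entirely with the ``galois criterion'' machinery recalled in \ref{notations et rappels} and \ref{crit�re galoisien}.

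First I would handle the claim that $\overline{\Omega}$ is defined over $F$ and $\overline{\Omega}(F) = \Omega$. By hypothesis $\Omega$ is a Zariski-closed subset of $V(F)$, i.e. $\Omega = Y \cap V(F)$ for some Zariski-closed subset $Y \subset V$; replacing $Y$ by $\overline{\Omega}$ we may assume $Y = \overline{\Omega}$. The key point is that $\Omega \subset V(F)$ is automatically $\Gamma_F$-stable (indeed each point of $V(F)$ is individually $\Gamma_F$-fixed, $V(F) = V(\overline{F})^{\Gamma_F}$), hence $\Omega$ is a $\Gamma_F$-stable subset of $V(F) \subset V(F^{\mathrm{s\acute{e}p}})$ which is Zariski-dense in $\overline{\Omega} = \overline{\Omega}(\overline{F})$. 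By the galois criterion \ref{crit�re galoisien}, condition (iii), the closed subvariety $\overline{\Omega}$ is defined over $F$. Once $\overline{\Omega}$ is defined over $F$, we have $\overline{\Omega}(F) = \overline{\Omega} \cap V(F)$ by the convention of \ref{notations et rappels}; this contains $\Omega$, and conversely $\overline{\Omega}(F) = \overline{\Omega} \cap V(F) \subset Y \cap V(F) = \Omega$ since $Y \cap V(F) = \Omega$ was the relation defining $\Omega$ as a closed subset of $V(F)$. Hence $\overline{\Omega}(F) = \Omega$.

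Next I would prove the last assertion: if $H(F)$ is dense in $H$, then $H \cdot \overline{\Omega} = \overline{\Omega}$. The inclusion $\overline{\Omega} \subset H \cdot \overline{\Omega}$ is clear (take the neutral element of $H$), so the content is $H \cdot \overline{\Omega} \subset \overline{\Omega}$. Consider the morphism $m : H \times V \to V$, $(h,v) \mapsto h \cdot v$, which is defined over $F$, and its restriction to the closed subvariety $H \times \overline{\Omega}$. The image $m(H \times \overline{\Omega}) = H \cdot \overline{\Omega}$ is a constructible subset of $V$ whose Zariski closure is $\overline{H \cdot \overline{\Omega}}$; by standard algebraic group theory $H \cdot \overline{\Omega}$ contains a dense open subset of its closure, and since $\overline{\Omega}$ is irreducible-component-wise invariant one checks $\overline{H \cdot \overline{\Omega}}$ equals $H \cdot \overline{\Omega}$ when, say, $H$ is connected (more precisely, one uses that for fixed $h \in H$ the map $v \mapsto h\cdot v$ is an automorphism of $V$, so $h \cdot \overline{\Omega}$ is closed and equals the closure of $h \cdot \Omega$). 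Concretely: for each $h \in H(F)$, since $\Omega$ is $H(F)$-invariant we have $h \cdot \Omega \subset \Omega$, hence applying the homeomorphism $v \mapsto h \cdot v$ of $V$ we get $h \cdot \overline{\Omega} = \overline{h \cdot \Omega} \subset \overline{\Omega}$. Thus $H(F) \cdot \overline{\Omega} \subset \overline{\Omega}$. Now fix $v \in \overline{\Omega}$; the set $\{h \in H \mid h \cdot v \in \overline{\Omega}\}$ is Zariski-closed in $H$ (it is the preimage of the closed set $\overline{\Omega}$ under the morphism $h \mapsto h \cdot v$) and contains $H(F)$, which is dense in $H$ by hypothesis; therefore it equals $H$, i.e. $H \cdot v \subset \overline{\Omega}$. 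Since $v \in \overline{\Omega}$ was arbitrary, $H \cdot \overline{\Omega} \subset \overline{\Omega}$, completing the proof.

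\textbf{Main obstacle.} The only genuinely substantive point is the very first one: arranging matters so that the galois criterion \ref{crit�re galoisien}(iii) applies, i.e. recognizing that an $H(F)$-invariant Zariski-closed subset of $V(F)$ is a $\Gamma_F$-stable Zariski-dense subset of its own closure. This is essentially bookkeeping with the conventions ``$X(F) = X \cap V(F)$'' and ``$V(F) = V(\overline F)^{\Gamma_F}$'', but it is where all the descent content sits; everything afterwards is formal manipulation with the continuity of the $G$-action and the density of $H(F)$ in $H$. One should also be slightly careful that $\overline{\Omega}$ need not be irreducible, but the galois criterion and the closure-of-image arguments are insensitive to that.
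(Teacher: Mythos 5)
Your proof is correct. The first half (definability of $\overline{\Omega}$ over $F$ via the Galois criterion, condition (iii), applied to the $\Gamma_F$-stable subset $\Omega\subset \overline{\Omega}\cap V(F^{\mathrm{s\acute{e}p}})$ which is dense in $\overline{\Omega}$, together with $\overline{\Omega}(F)=\Omega$ deduced from $\Omega=Y\cap V(F)$ and $\overline{\Omega}\subset Y$) is exactly the paper's argument. For the second half you take a slightly different, equally valid route: the paper sets $X=\overline{H\cdot\overline{\Omega}}$, observes that the action morphism $\alpha:H\times\overline{\Omega}\to X$ is dominant and defined over $F$, and uses the density of $H(F)\times\Omega$ in $H\times\overline{\Omega}$ together with $\alpha(H(F)\times\Omega)=\Omega$ to conclude $X=\overline{\Omega}$; you instead first obtain $H(F)\cdot\overline{\Omega}\subset\overline{\Omega}$ because each $h\in H(F)$ acts as a Zariski homeomorphism and preserves $\Omega$, and then, for fixed $v\in\overline{\Omega}$, note that $\{h\in H\mid h\cdot v\in\overline{\Omega}\}$ is closed and contains the dense subset $H(F)$, hence equals $H$. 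Your pointwise argument has the small advantage of not invoking (even implicitly) that a product of Zariski-dense subsets is dense in the product, which the paper's density step uses. Two cosmetic remarks: the fixed-point set of $\Gamma_F$ on $V(\overline{F})$ is $V(F^{\mathrm{rad}})$, not $V(F)$ when $F$ is imperfect — but only the inclusion $V(F)\subset V(\overline{F})^{\Gamma_F}$ is needed, and that is what your argument actually uses; and the digression about constructibility of $H\cdot\overline{\Omega}$ and its containing a dense open subset of its closure is unnecessary and not justified as stated — your subsequent concrete argument does not rely on it, so it should simply be deleted.
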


\begin{proof}La variŽtŽ $\overline{\Omega}$ est fermŽe (par dŽfinition) et dŽfinie sur $F$ d'aprs le critre galoisien \ref{critre galoisien}\,(iii); 
et puisque 
$\Omega$ est fermŽ dans $V(F)$ et contenu dans $V(F)\cap \overline{\Omega}= \overline{\Omega}(F)$, on a l'ŽgalitŽ 
$\Omega = \overline{\Omega}(F)$. 
Notons $X$ la fermeture de $H\cdot \overline{\Omega}$ dans $V$. C'est une sous-variŽtŽ fermŽe de $V$, 
dŽfinie sur $F^{\rm rad}$ (ˆ nouveau d'aprs le critre galoisien) et 
$H$-invariante. Le morphisme produit $\alpha : H \times \overline{\Omega} \rightarrow X$ est dominant et dŽfini sur $F$. Si $H(F)$ 
est dense dans $H$, alors $\alpha (H(F)\times \Omega)$ est dense dans 
$\overline{\mathrm{Im}(\alpha)}=X$. Or $\alpha(H(F)\times \Omega)= \Omega$, par consŽquent $\overline{\Omega}=X$, 
ce qui prouve que $H\cdot \overline{\Omega}= \overline{\Omega}$. 
\end{proof}

\begin{lemma}
Supposons le corps $F$ infini. Si $X\subset V(F)$ est un sous-ensemble $G(F)$-invariant et (Zariski-)fermŽ dans $V(F)$, alors il est c-$F$-fermŽ dans $V$. 
\end{lemma}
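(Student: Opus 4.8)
Le but est de montrer que si $X\subset V(F)$ est $G(F)$-invariant et Zariski-ferm\'e dans $V(F)$, alors $X$ est $\cF$-ferm\'e dans $V$. D'apr\`es la d\'efinition \ref{d\'efinition c-F-top}, il faut v\'erifier que pour tout $v\in X$ et tout $\lambda\in\Lambda_{F,v}$ (c'est-\`a-dire $\lambda\in\check{X}_F(G)$ tel que la limite $\phi_{v,\lambda}^+(0)$ existe), la limite $v'=\phi_{v,\lambda}^+(0)$ appartient encore \`a $X$. Notons d'embl\'ee que $v'\in V(F)$ puisque $\lambda$ et $v$ sont d\'efinis sur $F$ (cf.\ la remarque du d\'ebut de \ref{le crit\`ere d'instabilit\'e de HM rationnel}).

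\textbf{\'Etape 1 : r\'eduction \`a un \'enonc\'e sur la fermeture de Zariski.} Comme $X$ est ferm\'e dans $V(F)$, par le lemme \ref{fermeture et invariance} appliqu\'e \`a $H=G$ (en utilisant que $F$ est infini, donc $G(F)$ est dense dans $G$ d'apr\`es \cite[ch.~V, 18.3]{B}), la fermeture de Zariski $\overline{X}$ de $X$ dans $V$ est une sous-vari\'et\'e ferm\'ee d\'efinie sur $F$, $G$-invariante, et v\'erifiant $\overline{X}(F)=X$. Il suffit donc de prouver que $v'\in\overline{X}$ : on aura alors $v'\in V(F)\cap\overline{X}=\overline{X}(F)=X$.

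\textbf{\'Etape 2 : la limite reste dans $\overline{X}$.} On consid\`ere le morphisme de vari\'et\'es $\phi_{v,\lambda}^+:\mbb{G}_\mathrm{a}\rightarrow V$, qui est d\'efini sur $F$ puisque $v\in V(F)$ et $\lambda\in\check{X}_F(G)$. Pour $t\in\smash{\overline{F}}^\times$, $\phi_{v,\lambda}^+(t)=t^\lambda\cdot v$ appartient \`a la $G$-orbite de $v$, donc \`a $\overline{X}$ puisque $\overline{X}$ est $G$-invariant ($G$ agit sur $\overline{X}$ et $v\in X\subset\overline{X}$). Ainsi $\phi_{v,\lambda}^+$ envoie l'ouvert dense $\mbb{G}_\mathrm{m}$ de $\mbb{G}_\mathrm{a}$ dans la sous-vari\'et\'e ferm\'ee $\overline{X}$ de $V$. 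Par continuit\'e (au sens de Zariski) et puisque $\overline{X}$ est ferm\'e, l'image de $\mbb{G}_\mathrm{a}$ tout entier par $\phi_{v,\lambda}^+$ est contenue dans $\overline{X}$ : en effet $\overline{\phi_{v,\lambda}^+(\mbb{G}_\mathrm{m})}\subset\overline{X}$ et $\phi_{v,\lambda}^+(\mbb{G}_\mathrm{a})\subset\overline{\phi_{v,\lambda}^+(\mbb{G}_\mathrm{m})}$ car $\mbb{G}_\mathrm{m}$ est dense dans $\mbb{G}_\mathrm{a}$ et $\phi_{v,\lambda}^+$ est continu. En particulier $v'=\phi_{v,\lambda}^+(0)\in\overline{X}$, ce qui ach\`eve la preuve.

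\textbf{Point d\'elicat.} Il n'y a en r\'ealit\'e aucune difficult\'e s\'erieuse ici : l'argument repose enti\`erement sur le fait, d\'ej\`a acquis via le lemme \ref{fermeture et invariance}, que la fermeture de Zariski d'un ensemble $G(F)$-invariant ferm\'e dans $V(F)$ est une vari\'et\'e $G$-invariante de points $F$-rationnels exactement $X$ ; une fois cela \'etabli, la stabilit\'e par limites de co-caract\`eres est imm\'ediate puisque ces limites vivent par construction dans la fermeture de Zariski de l'orbite. Le seul v\'eritable ingr\'edient est donc l'hypoth\`ese \guill{$F$ infini}, indispensable pour la densit\'e $\overline{G(F)}=G$ et donc pour l'invariance de $\overline{X}$.
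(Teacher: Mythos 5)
Votre preuve est correcte et suit essentiellement la m\^eme d\'emarche que celle du texte : on forme la fermeture de Zariski $\overline{X}$ gr\^ace au lemme \ref{fermeture et invariance} (d'o\`u $\overline{X}$ d\'efinie sur $F$, $G$-invariante, avec $\overline{X}(F)=X$), puis on observe que le prolongement $\phi_{v,\lambda}^+:\mbb{G}_\mathrm{a}\rightarrow V$, d\'efini sur $F$, se factorise par la sous-vari\'et\'e ferm\'ee $\overline{X}$, de sorte que $\phi_{v,\lambda}^+(0)\in \overline{X}(F)=X$. Rien \`a redire.
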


\begin{proof}
Soit $\overline{X}$ la fermeture de Zariski de $X$ dans $V$. D'aprs \ref{fermeture et invariance}, c'est une sous-variŽtŽ 
fermŽe de $V$, dŽfinie sur $F$ et $G$-invariante, telle $\overline{X}(F)= X$. 
Si $v\in X$ et $\lambda \in \Lambda_{F,v}$, l'image du $F$-morphisme $\phi_{\lambda,v}: \mbb{G}_{\rm m} \rightarrow V$ est contenue dans $ \overline{X}$, par 
consŽquent il se prolonge (de manire unique) en un $F$-morphisme $\phi_{\lambda,v}^+: \mbb{G}_{\rm a} \rightarrow \overline{X}$. Donc 
$\phi_{\lambda,v}^+(0) \in \overline{X}(F)= X$. 
\end{proof}

Pour $v\in V$, la c-$F$-fermeture $\smash{\overline{X}}^{(\cF)}$ de la $G(F)$-orbite $X=\ES{O}_{F,v}$ est rŽunion de 
$G(F)$-orbites dont les points s'obtiennent tous ˆ partir de $\ES{O}_{F,v}$ par limites successibles ˆ l'aide de co-caractres $F$-rationnels de $G$ 
\cite[lemma~3.3\,(i)]{BHMR}. En particulier si $v\in V(F)$, la $\cF$-fermeture $\smash{\overline{X}}^{(\cF)}$ de $X=\ES{O}_{F,v}$ est contenue dans $V(F)$. 
On a donnŽ en \ref{HMrat1} une version rationnelle du critre de Hilbert-Mumford. 
Le rŽsultat suivant \cite[theo.~4.3]{BHMR} est une autre version rationnelle du mme critre.

\begin{theorem}\label{HMrat2}
Soit $v\in V$. Soit $\smash{\overline{X}}^{(\cF)}$ la c-$F$-fermeture de $X=\ES{O}_{F,v}$.
\begin{enumerate}
\item[(i)]Il existe une unique $G(F)$-orbite c-$F$-fermŽe dans $V$ qui soit contenue dans $\smash{\overline{X}}^{(\cF)}$; on la note $\ES{F}_{F,v}$.
\item[(ii)] Si $v'\in \smash{\overline{X}}^{\cF}$, alors $\ES{F}_{F,v'}= \ES{F}_{F,v}$.
\item[(iii)]Il existe un $\lambda \in \Lambda_{F,v}$ tel que la limite $\phi_{v,\lambda}^+(0)$ soit dans $\ES{F}_{F,v}$.
\end{enumerate}
\end{theorem}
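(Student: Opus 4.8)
The plan is to follow the architecture of the geometric Kempf--Rousseau theory, bootstrapping it to the rational setting by means of the uniform rational Hilbert--Mumford criterion \ref{HMrat1} and separable descent \ref{BMRT(4.7)}, and isolating the purely inseparable phenomenon as the real difficulty. Two preliminary observations organize the proof. First, $\overline{\ES{O}_v}$ (the Zariski closure of the geometric orbit $\ES{O}_v = G\cdot v$) is $G$-invariant and Zariski-closed, hence $\cF$-closed, and it contains $X = \ES{O}_{F,v}$; therefore $\smash{\overline{X}}^{(\cF)} \subset \overline{\ES{O}_v}$ and we work inside this affine variety. Let $\ES{C} := \ES{F}_v$ denote the unique closed $G$-orbit in $\overline{\ES{O}_v}$ provided by the geometric theorem. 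Second, once (i) and (iii) are established, part (ii) is formal: $\smash{\overline{X}}^{(\cF)}$ is $G(F)$-invariant (because $\Lambda_{F,g\cdot w}=g\bullet\Lambda_{F,w}$ and the limit map is $G(F)$-equivariant) and $\cF$-closed, so for $v'\in\smash{\overline{X}}^{(\cF)}$ one has $\smash{\overline{\ES{O}_{F,v'}}}^{(\cF)}\subset\smash{\overline{X}}^{(\cF)}$, and the $\cF$-closed orbit $\ES{F}_{F,v'}$ produced by (i) applied to $v'$ is then a $\cF$-closed orbit sitting inside $\smash{\overline{X}}^{(\cF)}$, which forces $\ES{F}_{F,v'}=\ES{F}_{F,v}$ by the uniqueness half of (i). Thus all the content is in (i) and (iii).

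For existence (the existence half of (i) together with (iii)), I would run a descent on $F$-rational ``limit steps''. Starting from $v$: if $\ES{O}_{F,v}$ is not $\cF$-closed there is $\lambda\in\Lambda_{F,v}$ with $v_1:=\phi_{v,\lambda}^+(0)\notin\ES{O}_{F,v}$, and then $\ES{O}_{F,v_1}\subset\smash{\overline{X}}^{(\cF)}$; one iterates. The crux is that this process must terminate. The invariant I would use combines $\dim\overline{\ES{O}_{F,v_i}}$ with a secondary quantity measuring the position of the $G(F)$-orbit inside a fixed geometric orbit, precisely in order to control those steps that stay inside the same $G$-orbit $\ES{O}_v$ while changing the $G(F)$-orbit; over an imperfect field such steps genuinely occur (cf. \ref{Zariski/c}\,(iii)), whereas over a perfect field this secondary difficulty disappears via \ref{Kempf2}, \ref{Kempf3}. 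Once a $\cF$-closed orbit $\ES{F}_{F,v}\subset\smash{\overline{X}}^{(\cF)}$ is reached, a point of it is obtained from $v$ by finitely many $F$-rational limit steps, and these collapse into a single $\lambda\in\Lambda_{F,v}$ by the standard composition-of-limit-steps computation (of exactly the type carried out in \ref{IP des ensembles FS}: replace the optimal cocharacter of one step by an $M_\lambda(F)$-conjugate so that it commutes with the previous cocharacter, then add a large multiple), which yields (iii) and simultaneously shows that every $G(F)$-orbit of $\smash{\overline{X}}^{(\cF)}$ has $\ES{F}_{F,v}$ in its own $\cF$-closure.

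For uniqueness (the remaining half of (i)), let $\ES{O}_{F,w_1}$ and $\ES{O}_{F,w_2}$ be two $\cF$-closed $G(F)$-orbits in $\smash{\overline{X}}^{(\cF)}$. Both Zariski closures $\overline{\ES{O}_{w_i}}$ are closed $G$-invariant subsets of $\overline{\ES{O}_v}$, hence each contains $\ES{C}$. When $F$ is perfect this already finishes it: $\cF$-closedness forces $\ES{O}_{w_i}$ to be geometrically closed (\ref{Zariski/c}\,(iii)), hence equal to $\ES{C}$, so $w_1,w_2\in\ES{C}(F)$, and a standard argument (every $F$-rational limit on $\ES{C}$ being trivial, one conjugates $w_1,w_2$ into a common $F$-Levi) shows $w_1,w_2$ are $G(F)$-conjugate. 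In general $\ES{C}$ need not have an $F$-point and the orbits $\ES{O}_{F,w_i}$ may be disjoint from $\ES{C}$, so I would instead use the description of $\smash{\overline{X}}^{(\cF)}$ as a finite union of $G(F)$-orbits joined by $F$-rational limit steps (\cite[lemma~3.3]{BHMR}) together with the common-cocharacter lemma, reducing to the case of two orbits linked by a single limit step, where $\cF$-closedness of both endpoints forces them to coincide. This combinatorial reduction over an imperfect field is what I expect to be the genuine obstacle; it is precisely the content of \cite[\S4]{BHMR}, and in the write-up I would either reproduce that short argument or cite it.
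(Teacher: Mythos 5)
You should first be aware that the paper does not prove Theorem \ref{HMrat2} at all: it is quoted directly from \cite[theo.~4.3]{BHMR}, so there is no internal argument to compare with, and the only question is whether your sketch amounts to a proof. It does not. Your reduction of (ii) to the uniqueness statement in (i) is correct, as is the remark that everything takes place inside $\overline{\ES{O}_v}$; but both substantive halves are left open. For existence and (iii), the descent by successive $F$-rational limit steps has no termination argument: the $\cF$-closure of $X$ is not an algebraic set, it is a union of $G(F)$-orbits which can be infinite in number inside a single geometric orbit (cf.\ \ref{le cas de PGL(2)}), so Noetherian induction only controls the steps that drop the dimension of the geometric orbit, and the promised ``secondary quantity'' is never exhibited. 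More seriously, the claim that finitely many limit steps ``collapse into a single $\lambda\in\Lambda_{F,v}$'' by the device of \ref{IP des ensembles FS} is not available in this generality: that computation combines a cocharacter of a Levi $M$ with a cocharacter central in $M$, whereas here, given $\lambda\in\Lambda_{F,v}$ with limit $v_1$ and $\mu\in\Lambda_{F,v_1}$, nothing lets you conjugate $\mu$ into $M_\lambda$ (or even into $P_\lambda$) by an element of $G(F)$ preserving $\ES{O}_{F,v_1}$. One-step reachability of the $\cF$-closed orbit is precisely the ``rational Hilbert--Mumford theorem'' of \cite{BHMR}, and their proof does not compose limits; it runs the Kempf--Rousseau optimality machinery of \ref{HMrat1} and \cite{BMRT} relative to a suitable $G(F)$-stable target, together with ascent and descent of $\cF$-closedness along Levi subgroups.

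For uniqueness, your perfect-field argument is incorrect as stated: the $F$-points of the closed geometric orbit $\ES{F}_v$ split in general into several $G(F)$-orbits, each of which is $\cF$-closed (semisimple conjugacy classes over $\mbb{R}$ or $\mbb{Q}_p$ already exhibit this), so knowing $w_1,w_2\in\ES{F}_v(F)$ cannot yield $G(F)$-conjugacy by conjugating into a common $F$-Levi; what has to be used is that both orbits lie in the $\cF$-closure of the \emph{same} orbit $X$, and even for perfect $F$ this is Kempf's optimality theorem (\ref{Kempf2}, \cite{K1}), not a Levi argument. For imperfect $F$ you defer the entire reduction to \cite[\S 4]{BHMR}; at that point the proposal is no longer a proof but a citation, which is exactly what the paper itself does.
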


D'aprs \cite[cor.~4.5]{BHMR}, on a le 

\begin{corollary}
Pour $v' \in \smash{\overline{X}}^{\cF}$, il existe un $\lambda' \in \Lambda_{F,v}$ tel que la limite $\phi_{F,v}^+(0)$ soit dans  
la c-$F$-fermeture $\smash{\overline{X}}'^{\cF}$ de la $G(F)$-orbite $X'= \ES{O}_{F,v'}$. 
\end{corollary}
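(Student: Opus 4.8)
The statement is a direct corollary of the rational Hilbert--Mumford theorem \ref{HMrat2}, in particular of its part (ii) asserting that the distinguished c-$F$-closed $G(F)$-orbit is unchanged when one passes to any point of the c-$F$-closure. So the plan is simply to fix $v\in V$, set $X=\ES{O}_{F,v}$ with c-$F$-closure $\smash{\overline{X}}^{(\cF)}$, take $v'\in\smash{\overline{X}}^{(\cF)}$, write $X'=\ES{O}_{F,v'}$ and let $\smash{\overline{X'}}^{(\cF)}$ be its c-$F$-closure, and then chain three consequences of \ref{HMrat2}. (Here the limit appearing in the statement is $\phi^+_{v,\lambda'}(0)=\lim_{t\rightarrow 0}t^{\lambda'}\cdot v$, for a cocharacter $\lambda'\in\Lambda_{F,v}$, i.e.\ one testing $v$.)

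\textbf{Steps, in order.} First, apply \ref{HMrat2}\,(ii) to the pair $(v,v')$: since $v'\in\smash{\overline{X}}^{(\cF)}$, one obtains the equality of distinguished c-$F$-closed $G(F)$-orbits $\ES{F}_{F,v'}=\ES{F}_{F,v}$. Second, apply \ref{HMrat2}\,(iii) to $v$ itself: there exists $\lambda'\in\Lambda_{F,v}$ such that the limit $\phi^+_{v,\lambda'}(0)$ lies in $\ES{F}_{F,v}$. Third, apply \ref{HMrat2}\,(i) to $v'$: by construction $\ES{F}_{F,v'}$ is the (unique) c-$F$-closed $G(F)$-orbit contained in $\smash{\overline{X'}}^{(\cF)}$, so in particular $\ES{F}_{F,v'}\subseteq\smash{\overline{X'}}^{(\cF)}$. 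Combining these three facts gives $\phi^+_{v,\lambda'}(0)\in\ES{F}_{F,v}=\ES{F}_{F,v'}\subseteq\smash{\overline{X'}}^{(\cF)}$, which is exactly the assertion of the corollary.

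\textbf{Main difficulty.} There is essentially none: the whole content is packaged in \ref{HMrat2}. The only points requiring a moment's attention are (a) to identify $\ES{F}_{F,v'}$ correctly as the unique c-$F$-closed orbit \emph{inside} $\smash{\overline{X'}}^{(\cF)}$, so that membership of $\phi^+_{v,\lambda'}(0)$ in $\ES{F}_{F,v'}$ indeed forces membership in $\smash{\overline{X'}}^{(\cF)}$; and (b) to observe that the cocharacter $\lambda'$ furnished by \ref{HMrat2}\,(iii) is one with $\lambda'\in\Lambda_{F,v}$ (it tests $v$, not $v'$), which is precisely the form demanded by the statement. No further estimates or constructions are needed.
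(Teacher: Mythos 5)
Your proof is correct and uses exactly the expected route: chain parts (ii), (iii) and (i) of the rational Hilbert--Mumford theorem \ref{HMrat2} to place $\phi^+_{v,\lambda'}(0)\in\ES{F}_{F,v}=\ES{F}_{F,v'}\subseteq\smash{\overline{X'}}^{(\cF)}$. The paper gives no proof (it cites \cite[cor.~4.5]{BHMR}), and your argument is the obvious one it is leaving to the reader.
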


\begin{remark}\label{HMrat3}
\textup{
Soit $v\in V(F)$.
\begin{enumerate}
\item[(i)] La $G(F)$-orbite c-$F$-fermŽe $\f_{F,v}$ est contenue dans $V(F)$. 
Supposons de plus qu'elle rencontre la $G$-orbite fermŽe $\ES{F}_v$, \cad que $\ES{F}_{F,v} \cap \ES{F}_v \neq \emptyset$. 
Alors on a l'inclusion $\ES{F}_{F,v} \subset \ES{F}_v(F)$ et d'aprs \ref{HMrat2}\,(iii), $v$ est $(F,G,\ES{F}_v)$-instable. D'aprs 
\ref{HMrat1}, l'ensemble $\Lambda_{F,\ES{F}_v,v}^\mathrm{opt}$ est non vide et pour tout $\lambda \in \Lambda_{F,v}^\mathrm{opt}$, 
la limite $\phi_{v,\lambda}^+(0)$ est dans $ \ES{F}_{F,v}$.
\item[(ii)] Le thŽorme \ref{HMrat1} est une gŽnŽralisation partielle du thŽorme de Kempf (\ref{Kempf1}).
 Pour avoir une gŽnŽralisation complte, 
il faudrait savoir traiter le cas o la $G(F)$-orbite c-$F$-fermŽe $\ES{F}_{F,v}$ ne rencontre pas la 
$G$-orbite fermŽe $\ES{F}_v$. Dans ce cas, le problme suivant reste ouvert: existe-t-il un co-caractre 
$\lambda \in \Lambda_{F,v}$ tel que $\phi_{v,\lambda}^+(0)$ appartienne ˆ $ \ES{F}_{F,v}$ et $\phi_{v,\lambda}(t)$ tende vers $\ES{F}_{F,v}$ 
(quand $t$ tend vers $0$) \guill{plus vite} que pour tout autre co-caractre $\lambda'\in \Lambda_{F,v}$? 
\item[(iii)]Supposons que $F$ soit un corps commutatif \textit{topologique} (sŽparŽ, non discret) et munissons $V(F)$ et $G(F)$ de la topologie dŽfinie par $F$ (notŽe \TopF, cf. \ref{le cas d'un corps top}). Si 
la $G(F)$-orbite $\ES{O}_{F,v}$ est \TopF-fermŽe dans $G(F)$, alors elle est c-$F$-fermŽe dans $V$. 
La rŽciproque est-elle vraie en gŽnŽral? Dans le cas o $G=\mathrm{GL}_n$ opre sur $V=G$ par conjugaison, la rŽponse est oui. 
En effet dans ce cas, les $G(F)$-orbites (de $G(F)$) qui sont \TopF-fermŽes dans $G(F)$ sont celles dont le polyn™me minimal est produit de polyn™mes irrŽductibles sur $F$ 
deux-ˆ-deux distincts (ce sont les classes de conjugaison \textit{semi-simples} au sens de Bourbaki, les classes de conjugaison 
\textit{absolument semi-simples} Žtant celles 
dont le polyn™me minimal est produit de polyn™mes irrŽductibles \textit{sŽparables} deux-ˆ-deux distincts). D'aprs \cite[prop.~10.1]{BHMR}, 
les $G(F)$-orbites qui sont \TopF-fermŽes dans $G(F)$ sont exactement celles qui sont c-$F$-fermŽes dans $G$ (voir aussi \ref{application au cas tordu}).
\end{enumerate}
}
\end{remark}

\subsection{Application aux $G(F)$-orbites dans $\tG(F)$}\label{application au cas tordu} On considre ici le cas o $V$ est 
un $G$-espace tordu $\tG$ dŽfini sur $F$ avec $\wt{G}(F)\neq \emptyset$, muni de l'action de $G$ par conjugaison: 
$$g\cdot \gamma = \mathrm{Int}_g(\gamma)\quad \hbox{pour tout} \quad (g,\gamma)\in G\times \tG \ptf$$
Rappelons que pour $\gamma\in \tG$, on a notŽ $\Lambda_\gamma$, resp. $\Lambda_{F,\gamma}$, l'ensemble des $\lambda \in \check{X}(G)$, resp. $\lambda\in \check{X}_F(G)$ tels que la limite 
$\lim_{t\rightarrow 0} \mathrm{Int}_{t^\lambda}(\gamma)$ 
existe; on note alors $\gamma_\lambda $ cette limite\footnote{PrŽcedemment notŽe $\phi_{\gamma,\lambda}^+(0)$.}. 

Pour $\lambda \in \check{X}(G)$, on pose\index{Ptildelambda@$\tP_\lambda = \tM_\lambda\ltimes U_\lambda$} 
$$ \tP_\lambda = \{\gamma \in \tG \,\vert\, \lambda \in \Lambda_\gamma \}\vg$$
$$ \tM_\lambda = \{\gamma \in \tG \,\vert \, \mathrm{Int}_\gamma \circ \lambda = \lambda \}\ptf$$

Rappelons la dŽfinition de \textit{sous-espace parabolique} de $\tG$ \cite[2.7]{LW}. 
Pour un sous-groupe parabolique $P$ de $G$, on note $\tP$ le normalisateur de $P$ dans $\tG$. Si $\tP$ est non vide, on dit que c'est un 
\textit{sous-espace parabolique} de $\tG$ et on appelle \textit{composante de Levi} de $\tP$ le normalisateur $\tM$ dans $\tP$ d'une composante 
de Levi $M$ de $P$. On a alors la dŽcomposition de Levi $$\tP = \tM \ltimes U_P\ptf$$
Si $P$ est dŽfini sur $F$, alors $\tP$ l'est aussi et puisque par hypothse $\tG(F)$ est non vide, 
la condition $\tP\neq \emptyset$ entra"ne $\tP(F)\neq \emptyset$. 

\begin{lemma}\label{description sous-espace-para-lambda}
Soit $\lambda \in \check{X}(G)$. On suppose que l'ensemble $\tP_\lambda$ n'est pas vide. 
\begin{enumerate}
\item[(i)] $\tP_\lambda$ est un sous-espace parabolique de $\tG$; c'est le normalisateur de $P_\lambda$ dans $\tG$. 
\item[(ii)] $\tM_\lambda$ est une composante de Levi de $\tP_\lambda$; c'est le normalisateur de $M_\lambda$ 
dans $\tP_\lambda$.
\item[(iii)] Le co-caractre $\lambda$ est ˆ valeurs dans le centre $Z_{\smash{\tM_\lambda}}=Z_{M_\lambda}^\theta$ de $\tM_\lambda$, 
\cad le fixateur dans  $Z_{M_\lambda}$ de l'automorphisme $\theta= \theta_{\smash{\tM_\lambda}}$ de $Z_{M_\lambda}$ dŽfini par $\tM_\lambda$. 
\item[(iv)] Pour $\gamma \in \tP_\lambda$, en Žcrivant $\gamma= \delta u$ avec $\delta\in \tM_\lambda$ et $u\in U_\lambda$, on a $\gamma_\lambda = \delta$.
\end{enumerate}
\end{lemma}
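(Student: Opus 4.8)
L'�nonc� \ref{description sous-espace-para-lambda} est l'analogue tordu de la description \guill{dynamique} des sous-groupes paraboliques rappel�e en \ref{le crit�re d'instabilit� de HM}; je proc�derai donc en me ramenant au cas du groupe $G$ lui-m�me agissant par conjugaison, puis en \guill{tordant} par un �l�ment fix�. Premi�rement, je fixerais un �l�ment $\gamma_0 \in \tP_\lambda$ (possible par hypoth�se), ce qui permet d'identifier $\tG$ � $G$ via $g \mapsto g\gamma_0$ et de transporter l'action de $G$ par conjugaison sur $\tG$ en l'action tordue $g \cdot x = gx\theta_0(g)^{-1}$ sur $G$, o� $\theta_0 = \mathrm{Int}_{\gamma_0}$. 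Sous cette identification, la limite $\lim_{t\to 0}\mathrm{Int}_{t^\lambda}(g\gamma_0)$ existe si et seulement si $\lim_{t\to 0} t^\lambda g \theta_0(t^{-\lambda})$ existe dans $G$.

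Deuxi�mement, pour prouver (i), je montrerais que $\tP_\lambda$ est exactement le normalisateur $\wt{P_\lambda}$ de $P_\lambda$ dans $\tG$. L'inclusion $\tP_\lambda \subset \wt{P_\lambda}$ vient de ce que $\gamma \in \tP_\lambda$ entra�ne $\mathrm{Int}_\gamma(P_\lambda) \subset P_{\mathrm{Int}_\gamma\circ\lambda}$, et comme la limite $\lim_{t\to 0}\mathrm{Int}_{t^\lambda}(\gamma)$ existe, on a $\mathrm{Int}_\gamma\circ\lambda \in \Lambda_{\gamma^{-1}\cdot(\text{limite})}$... plus simplement, $\gamma_0^{-1}\gamma \in P_\lambda$ car $\gamma = (\gamma\gamma_0^{-1})\gamma_0$ et $\gamma\gamma_0^{-1}$ a une limite sous $\mathrm{Int}_{t^\lambda}$ (en utilisant que $\gamma_0$ en a une), donc $\gamma\gamma_0^{-1}\in P_\lambda$, d'o� $\gamma \in P_\lambda\gamma_0 \subset \wt{P_\lambda}$. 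R�ciproquement, si $\gamma$ normalise $P_\lambda$, alors $\gamma\gamma_0^{-1} \in P_\lambda = P_\lambda \theta_0$-stable, et on v�rifie que la limite existe. Comme $\wt{P_\lambda}\neq\emptyset$ (il contient $\tP_\lambda$), c'est bien un sous-espace parabolique au sens de \cite[2.7]{LW}. Le point (ii) se traite de mani�re identique : $\tM_\lambda$ est le fixateur de $\lambda$ sous $\mathrm{Int}$, donc contient $M_\lambda\gamma_0$ si $\gamma_0 \in \tM_\lambda$ (quitte � remplacer $\gamma_0$ par un �l�ment de $\tM_\lambda$, ce qui est loisible car $\tM_\lambda \subset \tP_\lambda$ est non vide : en effet le morphisme surjectif $P_\lambda \to M_\lambda$ se rel�ve en $\tP_\lambda \to \tM_\lambda$, $\gamma \mapsto \lim_{t\to 0}\mathrm{Int}_{t^\lambda}(\gamma)$, et l'image de $\gamma_0$ est dans $\tM_\lambda$).

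Troisi�mement, pour (iii), j'observerais que $\lambda$ est central dans $M_\lambda$ par construction (c'est vrai d�j� dans le cas non tordu), donc $\mathrm{Im}(\lambda)\subset Z_{M_\lambda}$ ; et $\mathrm{Int}_\gamma\circ\lambda = \lambda$ pour $\gamma\in\tM_\lambda$ signifie exactement que $\mathrm{Im}(\lambda)$ est fix�e par l'automorphisme $\theta = \theta_{\tM_\lambda}$ induit sur $Z_{M_\lambda}$, donc $\mathrm{Im}(\lambda)\subset Z_{M_\lambda}^\theta = Z_{\tM_\lambda}$. Le point (iv) est la d�composition de Levi $\tP_\lambda = \tM_\lambda \ltimes U_\lambda$ (cons�quence de (i), (ii) et de $P_\lambda = M_\lambda\ltimes U_\lambda$) appliqu�e � $\gamma = \delta u$ : puisque $u\in U_\lambda$, on a $\lim_{t\to 0}\mathrm{Int}_{t^\lambda}(u)=1$, et puisque $\delta\in\tM_\lambda$, $\mathrm{Int}_{t^\lambda}(\delta)=\delta$ pour tout $t$ (car $\mathrm{Int}_\delta$ commute � $\lambda$, donc $\mathrm{Int}_{t^\lambda}(\delta) = t^\lambda\delta t^{-\lambda}$ et $\delta^{-1}t^\lambda\delta = t^{\mathrm{Int}_{\delta^{-1}}\circ\lambda} = t^\lambda$) ; la continuit� de la multiplication donne $\gamma_\lambda = \lim_{t\to 0}\mathrm{Int}_{t^\lambda}(\delta)\mathrm{Int}_{t^\lambda}(u) = \delta\cdot 1 = \delta$. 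L'obstacle principal que j'anticipe est purement bureaucratique : v�rifier soigneusement que le rel�vement $\tP_\lambda \to \tM_\lambda$ est bien d�fini et surjectif, et que l'identification $\tG \simeq G$ transporte correctement toutes les structures (limites, sous-espaces paraboliques, centres tordus) ; aucune difficult� conceptuelle n'est � pr�voir, tout �tant formellement parall�le au cas non tordu une fois $\gamma_0$ choisi judicieusement dans $\tM_\lambda$.
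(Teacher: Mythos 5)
Your strategy is essentially the paper's: identify $\tP_\lambda$ with a single coset of $P_\lambda$ by means of a base point, then compute $\gamma_\lambda$ through the Levi decomposition; (iv) as you state it is correct once (i) and (ii) are in place. But the two assertions that carry all the content are stated rather than proved. First, in (i) you conclude with the inclusion $P_\lambda\gamma_0\subset N_{\tG}(P_\lambda)$, which is exactly the statement that $\gamma_0$ normalizes $P_\lambda$ --- part of what is to be proved --- and your converse direction presupposes it again (to place $\gamma\gamma_0^{-1}$ in $N_G(P_\lambda)=P_\lambda$ you need $\gamma_0\in N_{\tG}(P_\lambda)$). The fix is to run your division argument on both sides: for $\gamma\in\tP_\lambda$ both $\gamma\gamma_0^{-1}$ and $\gamma_0^{-1}\gamma$ lie in $P_\lambda$, while $P_\lambda\gamma_0$ and $\gamma_0P_\lambda$ are contained in $\tP_\lambda$ (products of converging families), so $\tP_\lambda=P_\lambda\gamma_0=\gamma_0P_\lambda$; hence $\gamma_0$ normalizes $P_\lambda$ and, $P_\lambda$ being its own normalizer in $G$, $\tP_\lambda=N_{\tG}(P_\lambda)$. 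This two-sided argument is precisely how the paper proves (i).

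Second, the crux of (ii)--(iii) is that $\tM_\lambda\neq\emptyset$, equivalently that $\theta$ fixes $\lambda$; for $\gamma\in\tM_\lambda$ the identity $\mathrm{Int}_\gamma\circ\lambda=\lambda$ is a tautology, so your argument for (iii) proves nothing until nonemptiness is known. You reduce it to the claim that $\gamma\mapsto\gamma_\lambda$ maps $\tP_\lambda$ into $\tM_\lambda$, but you only say that the untwisted map \guill{se rel\`eve}; this is the one point that genuinely requires an argument, since it is equivalent to (iii). It does hold, by homogeneity of the limit: for $s\in\smash{\overline{F}}^\times$ one has $\mathrm{Int}_{s^\lambda}(\gamma_\lambda)=\lim_{t\rightarrow 0}\mathrm{Int}_{(st)^\lambda}(\gamma)=\gamma_\lambda$ (uniqueness of the extension of $t\mapsto\mathrm{Int}_{t^\lambda}(\gamma)$ to $\mbb{G}_{\mathrm{a}}$), whence $\mathrm{Int}_{\gamma_\lambda}\circ\lambda=\lambda$. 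With that line added your route closes and differs slightly from the paper's, which instead writes $\gamma=\delta u$ with $\delta$ in the normalizer $\tM$ of $M_\lambda$ in $\tP_\lambda$ and computes $\mathrm{Int}_{t^\lambda}(\delta)=t^{\lambda'}\delta$ with $\lambda'=(1-\theta)\circ\lambda$, so that existence of the limit forces $\lambda'=0$; the two arguments buy the same thing, yours being a bit more intrinsic, the paper's yielding at once the formula $\gamma_\lambda=\delta$ used in (iv). Finally, to identify $\tM_\lambda$ with the full normalizer of $M_\lambda$ in $\tP_\lambda$, and not merely with the coset $M_\lambda\gamma_0$, you still need the standard fact $N_{P_\lambda}(M_\lambda)=M_\lambda$; say so explicitly.
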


\begin{proof}
Soit un ŽlŽment $\gamma\in \tP_\lambda$. Pour $g\in G$, 
on a $$\lim_{t\rightarrow 0} \mathrm{Int}_{t^\lambda}(g\gamma)= \left(\lim_{t\rightarrow 0}\mathrm{Int}_{t^\lambda}(g) \right) \gamma_\lambda\vg$$
par consŽquent $g\gamma \in \tP_\lambda$ si et seulement si $g\in P_\lambda$. De la mme manire on a  $\gamma g\in \tP_\lambda$ si et seulement 
si $g\in P_\lambda$. Puisque $\gamma g = \mathrm{Int}_\gamma(g) \gamma$, on en dŽduit que $\tP_\lambda$ est le norma\-lisateur de $P_\lambda$ dans $\tG$. 
Cela prouve (i).

Notons $\tM$ le normalisateur de $M_\lambda$ dans $\tP$ (c'est une composante de Levi de $\tP_\lambda$). 
Puisque l'ensemble $\tP_\lambda$ est Zariski-fermŽ, la limite 
$\gamma_\lambda = \lim_{t\rightarrow} \mathrm{Int}_{t^\lambda}(\gamma)$ est dans  $ \tP_\lambda$. Cette limite se calcule comme suit: Žcrivons 
$\gamma = \delta u $ avec $\delta \in \tM$ et $u\in U_P$. Puisque $\lim_{t\rightarrow 0}\mathrm{Int}_{t^\lambda}(u)=1$, on a 
$$\gamma_\lambda = \left( \lim_{t\rightarrow 0}\mathrm{Int}_{t^\lambda}(\delta)\right)
= \left(\lim_{t\rightarrow 0} t^{\lambda'} \right) \delta \quad \hbox{avec} \quad \lambda' = (1-\theta)\circ \lambda$$ o $\theta= \theta_{\smash{\tM}}$ 
est l'automorphisme de $Z_{M_\lambda}$ dŽfini par $\tM$. Or la limite $\lim_{t\rightarrow 0} t^{\lambda'}$ existe si et seulement si $\lambda'=1$. On a donc 
$\theta\circ \lambda=\lambda$. Par consŽquent $\tM = M_\lambda\delta = \delta M_\lambda$ est contenu dans le stabilisateur $\tM_\lambda$ de $\lambda$ 
dans $\tG$. En particulier 
l'ensemble $\tM_\lambda$ est non vide et comme par dŽfinition c'est un $M_\lambda$-espace tordu, l'inclusion $\tM\subset \tM_\lambda$ entra"ne l'ŽgalitŽ 
$\tM= \tM_\lambda$. Cela prouve (ii), (iii) et (iv).
\end{proof}

Si $M$ est un groupe rŽductif connexe dŽfini sur $F$, on a notŽ $A_M$ le sous-tore $F$-dŽployŽ maximal de $Z_M$. Si $\tM$ est 
un $M$-espace tordu dŽfini sur $F$ tel que $\tM(F)\neq \emptyset$, on note $A_{\smash{\tM}}$ le sous-tore $\theta$-invariant maximal 
de $A_M$, \cad la composante neutre du sous-groupe des 
points fixes $A_M^\theta\subset A_M$; o $\theta=\theta_{\smash{\tM}}$ est le $F$-automorphisme de $Z_M$ dŽfini par $\tM$. 
Pour $\lambda \in \check{X}_F(G)$, 
d'aprs \ref{description sous-espace-para-lambda}\,(iii) on a $$\tP_\lambda \neq \emptyset \Rightarrow \lambda\in \check{X}(A_{\smash{\wt{M}_\lambda}})\ptf$$

\begin{lemma}\label{existence Plambda}
Soit $\tP$ est un sous-espace parabolique de $\tG$ et soit $\tM$ une composante de Levi de $\tP$, tous deux dŽfinis sur $F$. Il existe un 
co-caractre $\lambda \in \check{X}(A_{\smash{\tM}})$ tel que $\tP_\lambda=\tP$ et $\tM_\lambda = \tM$.
\end{lemma}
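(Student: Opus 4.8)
Le point de départ est le cas non tordu : d'après la définition dynamique des sous-groupes paraboliques (voir le paragraphe \guill{Définition \guill{dynamique} des sous-groupes paraboliques} en \ref{le crit�re d'instabilit� de HM}), pour tout sous-groupe parabolique $P$ de $G$ défini sur $F$ et toute composante de Levi $M$ de $P$ définie sur $F$, il existe $\lambda_0\in\check{X}(G)$ tel que $P=P_{\lambda_0}$ et $M=M_{\lambda_0}$ ; on peut même exiger $\lambda_0\in\check{X}_F(G)$ et $\operatorname{Im}(\lambda_0)\subset A_M$ en prenant un tore $F$-déployé maximal $A_0$ de $M$ et en choisissant $\lambda_0$ dans la chambre correspondante (l'argument est standard : on prend $\lambda_0$ dominant régulier pour $(P\cap M,A_0)$ relativement à $G$, ce qui force $M_{\lambda_0}=M$ et $P_{\lambda_0}=P$). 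L'idée est de \guill{corriger} un tel $\lambda_0$ pour le rendre $\theta$-invariant, où $\theta=\theta_{\smash{\tM}}$ est le $F$-automorphisme de $Z_M$ (et plus généralement de $M$ à conjugaison près) défini par $\tM$.

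Voici les étapes que je suivrais. D'abord, je choisirais $\lambda_0\in\check{X}_F(G)$ avec $P_{\lambda_0}=P$, $M_{\lambda_0}=M$ et $\operatorname{Im}(\lambda_0)\subset A_M$ ; comme $\tM$ normalise $M$, il normalise $A_M$, donc $\theta$ agit sur $\check{X}(A_M)$ de façon $F$-rationnelle et d'ordre fini $d$ (l'automorphisme $\theta$ est d'ordre fini sur $A_M$ car $\tG$ est un $G$-espace tordu : une puissance de tout élément de $\tG(F)$ normalisant $M$ agit trivialement, à multiplication près par un élément de $M$, mais ici on travaille sur le tore central, donc l'action est bien d'ordre fini). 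Ensuite je poserais
$$\lambda=\sum_{i=0}^{d-1}\theta^i\circ\lambda_0\ \in\ \check{X}(A_M)^\theta=\check{X}(A_{\smash{\tM}})\,.$$
Il reste à vérifier que $P_\lambda=P$ et $M_\lambda=M$. Pour cela, on observe que $\theta$ préserve la décomposition en espaces radiciels relativement à $A_0$ et envoie l'ensemble des racines $\ES{R}^+$ de $A_0$ dans $U_P$ sur lui-même (puisque $\theta(P)=P$ via un relèvement dans $\tM$), donc pour toute racine $\alpha$ de $A_0$ dans $G$ on a $\langle\alpha,\theta^i\circ\lambda_0\rangle=\langle\theta^{-i}\alpha,\lambda_0\rangle$, et en sommant : $\langle\alpha,\lambda\rangle>0$ si $\alpha\in\ES{R}^+$ hors de $M$, $\langle\alpha,\lambda\rangle=0$ si $\alpha$ est une racine dans $M$ (car chaque $\theta^{-i}\alpha$ est encore dans $M$ et $\langle\cdot,\lambda_0\rangle$ y est nulle), $\langle\alpha,\lambda\rangle<0$ si $\alpha\in-\ES{R}^+$ hors de $M$. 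Cela donne exactement $P_\lambda=P$ (caractérisation des paraboliques par les signes des accouplements, comme en \ref{r�duction standard}) et $M_\lambda=M$. Enfin, comme $\lambda$ est $\theta$-invariant, le Lemme \ref{description sous-espace-para-lambda}\,(iii)--(i)--(ii) (appliqué en sens inverse : un $\lambda$ dont un relèvement de $\tM$ commute à $\lambda$) montre que $\tP_\lambda\neq\emptyset$, que $\tP_\lambda$ est le normalisateur de $P_\lambda=P$ dans $\tG$, c'est-à-dire $\tP_\lambda=\tP$, et que $\tM_\lambda$ est le normalisateur de $M_\lambda=M$ dans $\tP$, c'est-à-dire $\tM_\lambda=\tM$.

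\textbf{Point délicat.} Le principal obstacle est de bien contrôler l'action de $\theta$ sur les racines et sur $\check{X}(A_M)$ : il faut s'assurer que $\theta$ (relèvement dans $\tM(F)$) stabilise le couple $(P\cap M, A_0)$, ou au moins qu'on peut choisir le relèvement et $\lambda_0$ de façon cohérente. Concrètement : $\tM$ est une composante de Levi de $\tP$, donc il existe $\delta\in\tM(F)$ et, quitte à conjuguer $A_0$ par un élément de $M(F)$, on peut supposer que $\delta$ normalise $A_0$ ; mais $\delta$ n'agit pas nécessairement trivialement sur $A_0$. L'argument ci-dessus n'utilise en fait que l'action sur $A_M\subset A_0$ (où elle est d'ordre fini par rationalité et par le fait que $\tM/M$ est fini), et sur les racines de $A_0$ dans $U_P$ (permutées par $\delta$ car $\delta$ normalise $P$ donc $U_P$) ; il faut juste prendre garde que $\lambda_0$ soit choisi dans $\check{X}(A_M)$ et non seulement dans $\check{X}(A_0)$ — c'est possible puisque $P_\lambda$, $M_\lambda$ ne dépendent que de la classe de $\lambda$ modulo $\check{X}(Z_G)_{\mbb{Q}}$ et que la chambre de $M$ rencontre $\check{X}(A_M)_{\mbb{Q}}$ en un ouvert non vide (le cône des $\mu$ avec $\langle\alpha,\mu\rangle>0$ pour $\alpha\in\Delta_P$). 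Une fois ce point réglé, tout le reste est routine.
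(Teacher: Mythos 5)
Your argument is correct in substance, but it takes a genuinely different route from the paper's. You start from the untwisted statement (existence of $\lambda_0$ with $P_{\lambda_0}=P$, $M_{\lambda_0}=M$ and image in $A_M$) and force $\theta$-invariance by averaging over the finite group generated by $\theta$, then conclude via Lemma \ref{description sous-espace-para-lambda}. The paper instead constructs the cocharacter directly inside the derived group: it fixes a $\theta_0$-invariant minimal $F$-pair $(P_0,A_0)$ with $\theta_0=\mathrm{Int}_{\delta_0}$, $\delta_0\in\tG(F)$, reduces to the case $\tP=P\delta_0$, $\tM=M\delta_0$ standard, chooses $\mu$ in the cocharacter lattice of the auxiliary torus $A''_0$ with character group $\mbb{Z}[\Delta_0]$ subject to $\langle\alpha,\mu\rangle=0$ for $\alpha\in\Delta_0^M$ and $\langle\alpha,\mu\rangle=\langle\theta_0(\alpha),\mu\rangle>0$ otherwise (i.e. constant on $\theta_0$-orbits of simple roots), and then takes an integer multiple $m\mu$ landing in $\check{X}(A'_0)$, where $A'_0=(A_0\cap G_{\mathrm{der}})^0$. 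The paper's construction buys two things at once: it does not presuppose the untwisted existence statement (which the paragraph on the dynamic definition of parabolic subgroups in Section 2.2 actually defers to this very lemma, so do not cite that paragraph as your input --- use your chamber argument or a standard reference instead), and by working in $\check{X}(A'_0)$ it sidesteps any question about the order of $\theta$ on the central torus, since $\theta_0$ permutes a finite spanning set there. Your averaging is shorter once the untwisted input is granted and makes the equivariance transparent.

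The step you should tighten is the finiteness of the order of $\theta$ on $\check{X}(A_M)$: your parenthetical justification is not a proof, and on the central part $A_G$ finiteness is not formal (it holds in the setting of [LW]/[LL], but it is cleaner not to invoke it). The simplest fix is to choose $\lambda_0$ with image in $(A_M\cap G_{\mathrm{der}})^0$, which is possible because the defining positivity conditions only involve roots, and these are trivial on $A_G$; on that torus $\theta$ permutes the finite set of restricted roots, whose span has finite index in the character lattice, hence acts with finite order, and the average lies in $\check{X}(A_{\smash{\tM}})$ as required. Finally, your \guill{point d\'elicat} dissolves if you test positivity against the roots of $A_M$ in $\mathrm{Lie}(U_P)$ rather than the roots of $A_0$: the action of $\theta$ on $X(A_M)$ is canonical (independent of the representative $\delta$), it preserves that finite set of roots because every element of $\tM$ normalizes $P$ and $M$, and for $\mu\in\check{X}(A_M)$ the equalities $P_\mu=P$ and $M_\mu=M$ are equivalent to $\langle\beta,\mu\rangle>0$ for all such roots $\beta$; no representative normalizing $A_0$ is needed anywhere.
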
  

\begin{proof}
On fixe une paire parabolique dŽfinie sur $F$ minimale $(P_0,A_0)$ de $G$ qui soit $\theta_0$-invariante, o $\theta_0=\mathrm{Int}_{\delta_0}$ avec 
$\delta_0\in \tG(F)$. Quitte ˆ remplacer $\tP$ et $\tM$ par $\mathrm{Int}_g(\tP)$ et $\mathrm{Int}_g(\tM)$ 
pour un $g\in G(F)$, on peut supposer que $\tP=P\delta_0$ avec $P\supset P_0$ et $\tM= M\delta_0$ avec $A_M \subset A_0$. 
Notons $A'_0\subset A_0$ la composante neutre de $A_0\cap G_\mathrm{der}$ et $A''_0$ le tore $F$-dŽployŽ de groupe des caractres 
$X(A''_0)= \mbb{Z} [\Delta_0]$. 
L'inclusion $X(A''_0) \subset X(A'_0)$ correspond ˆ un morphisme surjectif de tores $\phi:A'_0 \rightarrow A''_0$. 
L'action de $\theta_0$ sur $\Delta_0$ induit une action sur $A''_0$ qui rend ce morphisme $\theta_0$-Žquivariant. 
Choisissons un co-caractre $\mu\in \check{X}(A''_0)$ tel que 
$$\left\{
\begin{array}{ll}
\langle \alpha , \mu\rangle  = 0 & \hbox{si $\alpha \in \Delta_0^M $}\\
\langle \alpha , \mu \rangle = \langle \theta_0(\alpha),\mu \rangle >0 & \hbox{si $\alpha \in \Delta_0\smallsetminus \Delta_0^M$}
\end{array}
\right..
$$
Le groupe $X(A''_0)$ est d'indice fini dans $X(A'_0)$. Il induit donc dualement un morphisme injectif $\check{X}(A'_0)\rightarrow \check{X}(A''_0)$ qui 
fait de $\check{X}(A'_0)$ un sous-groupe d'indice fini de $\check{X}(A''_0)$. En consŽquence il existe un entier $m>0$ tel que le co-caractre 
$\lambda = m\mu$ appartienne ˆ $\check{X}(A'_0)$. Par construction on a $P_\lambda = P$, $M_\lambda = M$ et $\theta_0 \circ \lambda = \lambda$. 
Cela dŽmontre le lemme. 
\end{proof}

Pour $\gamma\in \tG(F)$, on pose 
$$\Lambda'_{F,\gamma} \bydef  \Lambda_\gamma \cap (\check{X}_F(G)\smallsetminus \check{X}(A_G)) \subset \Lambda_{F,\gamma}\smallsetminus\{0\}\ptf$$

\begin{lemma}
Un ŽlŽment $\gamma \in \tG(F)$ est primitif (au sens o il n'appartient ˆ aucun sous-espace parabolique propre de $\tG(F)$) 
si et seulement si 
$\Lambda'_{F,\gamma}=\emptyset$.
\end{lemma}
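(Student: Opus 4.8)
La caractérisation cherchée est une équivalence entre « $\gamma$ primitif dans $\tG(F)$ » et « $\Lambda'_{F,\gamma}=\emptyset$ ». Je vais établir les deux implications en passant par les sous-espaces paraboliques $\tP_\lambda$ associés aux co-caractères $F$-rationnels, à l'aide des lemmes \ref{description sous-espace-para-lambda} et \ref{existence Plambda} déjà disponibles.

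\textit{Première implication : non primitif $\Rightarrow$ $\Lambda'_{F,\gamma}\neq\emptyset$.} Supposons que $\gamma$ appartienne à un sous-espace parabolique propre de $\tG(F)$, disons $\tP(F)$ avec $\tP=\tM\ltimes U_P$ et $P\subsetneq G$ ; quitte à remplacer $\tP$ et $\tM$ par un conjugué sous $G(F)$ (ce qui ne change ni la primitivité ni $\Lambda'_{F,\gamma}$, puisque $\Lambda'_{F,g\cdot\gamma}=g\bullet\Lambda'_{F,\gamma}$), on peut supposer $\tM$ et $\tP$ définis sur $F$. D'après le lemme \ref{existence Plambda} il existe un co-caractère $\lambda\in\check{X}(A_{\smash{\tM}})\subset\check{X}_F(G)$ tel que $\tP_\lambda=\tP$ et $\tM_\lambda=\tM$. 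Comme $\gamma\in\tP=\tP_\lambda$, on a $\lambda\in\Lambda_\gamma$. Reste à vérifier que $\lambda\notin\check{X}(A_G)$ : c'est le point qui exploite que $P$ est un parabolique \emph{propre}. Précisément, la construction de $\lambda$ dans la preuve de \ref{existence Plambda} donne $\langle\alpha,\lambda\rangle>0$ pour tout $\alpha\in\Delta_0\smallsetminus\Delta_0^M$, ensemble non vide puisque $P\neq G$ ; or tout élément de $\check{X}(A_G)$ est orthogonal à toutes les racines, donc $\lambda\notin\check{X}(A_G)$. Donc $\lambda\in\Lambda'_{F,\gamma}$.

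\textit{Seconde implication : $\Lambda'_{F,\gamma}\neq\emptyset$ $\Rightarrow$ non primitif.} Soit $\lambda\in\Lambda'_{F,\gamma}$, c'est-à-dire $\lambda\in\check{X}_F(G)$, $\lambda\in\Lambda_\gamma$, $\lambda\notin\check{X}(A_G)$. Comme $\gamma\in\tP_\lambda$, l'ensemble $\tP_\lambda$ est non vide, donc par le lemme \ref{description sous-espace-para-lambda}\,(i) c'est le sous-espace parabolique de $\tG$ normalisant $P_\lambda$, et il est défini sur $F$ (car $\lambda$ l'est), donc $\tP_\lambda(F)\neq\emptyset$ et $\gamma\in\tP_\lambda(F)$. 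Il reste à voir que $\tP_\lambda$ est \emph{propre}, i.e. $P_\lambda\neq G$. Si l'on avait $P_\lambda=G$, alors $\lambda$ serait central dans $G$, donc à valeurs dans $Z_G$ ; étant de plus $F$-rationnel, il serait à valeurs dans le sous-tore $F$-déployé maximal $A_G$ de $Z_G$ — contradiction avec $\lambda\notin\check{X}(A_G)$. Donc $P_\lambda\subsetneq G$ et $\gamma$ appartient au sous-espace parabolique propre $\tP_\lambda(F)$, donc n'est pas primitif.

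\textit{Principal obstacle.} Les deux implications reposent sur l'équivalence « $\lambda$ à valeurs dans $A_G$ $\iff$ $P_\lambda=G$ » pour $\lambda\in\check{X}_F(G)$, qu'il faut énoncer soigneusement : l'implication facile est $\lambda\in\check{X}(A_G)\Rightarrow P_\lambda=G$ ; la réciproque utilise que $P_\lambda=G$ force $\mathrm{Int}_g\circ\lambda=\lambda$ pour tout $g$, donc $\mathrm{Im}(\lambda)\subset Z_G$, et qu'un co-caractère $F$-rationnel à valeurs dans $Z_G$ est à valeurs dans $A_G$ (le sous-tore $F$-déployé maximal du centre absorbe tous les co-caractères $F$-rationnels du centre, puisque $\check{X}_F(Z_G)=\check{X}(A_G)$). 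C'est élémentaire mais c'est le c\oe ur de l'argument : il faut vérifier que la condition « $\lambda\notin\check{X}(A_G)$ » traduit exactement la non-trivialité du parabolique $\tP_\lambda$, et s'assurer de la cohérence avec le choix de $\lambda$ fourni par \ref{existence Plambda} dans la première implication. Le reste n'est que l'application directe des lemmes \ref{description sous-espace-para-lambda} et \ref{existence Plambda}, plus la $G(F)$-invariance déjà notée de $\Lambda_\gamma$ et $\check{X}_F(G)$.
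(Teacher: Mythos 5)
Votre preuve est correcte et suit essentiellement la même démarche que celle du texte : le sens facile provient de ce que $\gamma\in\tP_\lambda(F)$ avec $\tP_\lambda$ propre, et la réciproque applique le lemme \ref{existence Plambda} à une composante de Levi définie sur $F$ d'un sous-espace parabolique propre contenant $\gamma$. Votre vérification explicite que, pour $\lambda\in\check{X}_F(G)$, l'égalité $P_\lambda=G$ force $\lambda\in\check{X}(A_G)$ ne fait qu'expliciter ce que le texte laisse implicite (\guill{puisque $P_\lambda=P$ est propre}), et le détour par la construction interne de \ref{existence Plambda} dans la première implication est superflu mais sans conséquence.
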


\begin{proof}
Si $\lambda \in \Lambda'_{F,\gamma}$, puisque $\gamma$ appartient ˆ $ \tP_\lambda(F)$, $\gamma$ n'est pas primitif. 
RŽciproquement, si $\gamma$ n'est pas primitif alors il existe un sous-espace parabolique propre $\tP$ de $\tG$ dŽfini sur $F$ tel que 
$\gamma\in \tP(F)$. Choisissons une composante de Levi $\tM$ de $\tP$ dŽfinie sur $F$. D'aprs \ref{existence Plambda}, il existe un co-caractre 
$\lambda\in \check{X}(A_{\smash{\tM}})$ tel que $\tP_\lambda = \tP$ et $\tM_\lambda = \tM$. En Žcrivant $\gamma = \delta u$ avec $\delta\in \tM(F)$ 
et $u\in U_P(F)$, on a 
$\gamma_\lambda = \delta$. Par consŽquent $\lambda$ appartient ˆ $\Lambda_{F,\gamma}$ et mme ˆ $\Lambda'_{F,\gamma}$ 
puisque $P_\lambda = P$ est propre.  
\end{proof}

Pour $\gamma\in \tG(F)$, on note $\ES{O}_{F,\gamma}$ la $G(F)$-orbite de $\gamma$: 
$$\ES{O}_{F,\gamma}= \{\mathrm{Int}_g(\gamma)\,\vert \, g\in G(F)\}\ptf$$ D'aprs \ref{HMrat2}, la c-$F$-fermeture de $\ES{O}_{F,\gamma}$ contient 
une unique $G(F)$-orbite c-$F$-fermŽe, notŽe $\ES{F}_{F,\gamma}$, et il existe un co-caractre $\lambda\in \Lambda_{F,\gamma}$ tel que la limite 
$\gamma_\lambda$ soit dans $\ES{F}_{F,\gamma}$. Si de plus $\ES{F}_{F,\gamma}$ rencontre l'unique orbite Zariski-fermŽe 
$\ES{F}_\gamma$ contenue dans la fermeture de Zariski de la $G$-orbite $\ES{O}_\gamma =\{\mathrm{Int}_g(\gamma)\,\vert \, g\in G\}$,  
alors d'aprs \ref{HMrat3}\,(i), on peut choisir $\lambda$ dans $\Lambda_{F,\ES{F}_\gamma,\gamma}^\mathrm{opt}$.

Soit $\mathfrak{o}=[\tM,\delta]$ une classe de $G(F)$-conjugaison de paires primitives dans $\tG(F)$. \`A cette classe $\mathfrak{o}$ est associŽ 
un sous-ensemble $G(F)$-invariant $\ES{O}_{\mathfrak{o}}$ de $\tG(F)$ : 
$$\ES{O}_{\mathfrak{o}}= \bigcup_{\tQ\in \ES{P}(\tM)}\bigcup_{u\in U_Q(F)}\ES{O}_{F,\delta u} \ptf$$
Rappelons que $\wt{G}(F)$ est l'union disjointe des $\ES{O}_{\mathfrak{o}}$ pour $\mathfrak{o}$ parcourant l'ensemble  
$\mathfrak{O}$ des classes de $G(F)$-conjugaison de paires primitives dans $\tG$. Pour $\gamma \in \tG(F)$, notons 
$\mathfrak{o}(\gamma)$ l'unique ŽlŽment de $\mathfrak{O}$ tel que $\gamma \in \ES{O}_{\mathfrak{o}(\gamma)}$. Il est dŽfini comme suit \cite[3.3.1]{LL}:
\begin{itemize}
\item on choisit un $\wt{P}\in \wt{\ES{P}}$ tel que $\ES{O}_{F,\gamma} \cap \wt{P}(F)\neq \emptyset $ et $\wt{P}$ soit minimal pour cette propriŽtŽ; 
\item on choisit une composante de Levi $\wt{M}$ de $\wt{P}$ dŽfinie sur $F$ et un ŽlŽment $g\in \wt{G}(F)$ tel que $g\gamma g^{-1}\in \wt{P}(F)$; 
\item on Žcrit $g\gamma g^{-1} = \delta u $ avec $\delta \in \wt{M}(F)$ et $u\in U_P(F)$; 
\end{itemize}
alors $\mathfrak{o}(\gamma) = [\wt{M},\delta]$.

\begin{remark}\label{dŽfinition plus simple}
\textup{D'aprs la preuve de \cite[3.3.1]{LL}, on a la caractŽrisation Žquivalente suivante de $\mathfrak{o}(\gamma)$: 
\begin{itemize}
\item on choisit un $\tP_1\in \wt{\ES{P}}$ tel que $\gamma \in \tP(F)$ et $\tP_1$ soit minimal pour cette propriŽtŽ; 
\item on choisit une composante de Levi $\wt{M}_1$ de $\tP$ dŽfinie sur $F$;  
\item on Žcrit $\gamma = \delta_1 u_1$ avec $\delta \in \tM_1(F)$ et $u\in U_{P_1}(F)$; 
\end{itemize}
alors $\mathfrak{o}(\gamma)=[\tM_1,\delta_1]$. 
}\end{remark}

\begin{lemma}\label{caractŽrisation Oo}
Soit $\mathfrak{o}=[\tM, \delta]\in \mathfrak{O}$. 
\begin{enumerate}
\item[(i)]$ \ES{O}_{\mathfrak{o}}= \{\gamma\in \tG(F)\,\vert \, \hbox{il existe un $\lambda \in \Lambda_{F,\gamma}$ 
tel que $\gamma_\lambda \in  \ES{O}_{F,\delta} $}\}$.
\item[(ii)] L'ensemble $\ES{O}_{F,\delta}$ est 
l'unique $G(F)$-orbite c-$F$-fermŽe contenue dans $\ES{O}_{\mathfrak{o}}$.
\item[(iii)]L'ensemble $\ES{O}_{\mathfrak{o}}$ est c-$F$-fermŽ.
\end{enumerate}
\end{lemma}

\begin{proof}
Prouvons (i).  Notons $\ES{O}'_\mathfrak{o}$ l'ensemble des $\gamma \in \tG(F)$ tels qu'il existe un $\lambda \in \Lambda_{F,\gamma}$ 
avec $\gamma_\lambda\in \ES{O}_{F,\delta}$. 

Prouvons l'inclusion $\ES{O}_{\mathfrak{o}}\subset \ES{O}'_{\mathfrak{o}}$. 
Soient $\tQ\in \ESP(\tM)$ et $\gamma = \mathrm{Int}_g(\delta u) $ avec $u\in U_Q(F)$ et $g\in G(F)$. 
D'aprs \ref{existence Plambda}, il existe un $\mu\in \check{X}(A_{\smash{\tM}})$ tel que 
$P_\mu = Q$ et $M_\mu = M$. Alors $\mu\in \Lambda_{F,\delta u}$ et $(\delta u)_\mu = \delta$. On en dŽduit que le co-caractre 
$\lambda = \mathrm{Int}_g\circ \mu$ appartient ˆ $\Lambda_{F,\gamma}$ et vŽrifie $\gamma_\lambda = \mathrm{Int}_g(\delta)$. Par consŽquent $\gamma$ appartient 
ˆ $\ES{O}'_{\mathfrak{o}}$. 

Prouvons l'inclusion $\ES{O}'_\mathfrak{o}\subset \ES{O}_\mathfrak{o}$. Soient $\gamma\in \wt{G}(F)$, $\lambda\in \Lambda_{F,\gamma}$ et $g\in G(F)$ tels que 
$\gamma_\lambda = \mathrm{Int}_g(\delta)$. Quitte ˆ remplacer $\gamma$ par $\mathrm{Int}_{g^{-1}}(\gamma)$ et $\lambda$ par $\mathrm{Int}_{g^{-1}}\circ \lambda$, on peut 
supposer que $\gamma_\lambda= \delta$. On a donc $\gamma = \delta u$ avec $\delta\in \wt{M}_\lambda(F)$ et $u\in U_\lambda(F)$. 
Soit $\wt{Q}_1$ un sous-espace parabolique de $\wt{M}_\lambda$ 
dŽfini sur $F$ tel que $\delta\in \tQ_1(F)$ et $\tQ_1$ soit minimal pour cette propriŽtŽ. Alors $\wt{P}_1= \wt{Q}_1U_\lambda$ est un sous-espace parabolique de $\tG$ dŽfini sur $F$ tel que 
$\delta\in \wt{P}_1(F)$ et $\wt{P}_1$ est minimal pour cette propriŽtŽ. Soit $\tM_1$ une composante de Levi de $\tQ_1$ dŽfinie sur $F$; c'est aussi une composante de Levi 
de $\tP_1$. \'Ecrivons $\delta= \delta_1u_1$ avec $\delta \in \tM_1(F)$ et $u_1\in U_{Q_1}(F)\;(\subset U_{P_1}(F))$. 
D'aprs \ref{dŽfinition plus simple}, on a $$[\wt{M}_1,\delta_1]= [\tM,\delta]\ptf$$ 
Or $\gamma = \delta_1 u_1 u$ avec $u_1u\in U_{P_1}(F)$, par consŽquent $\gamma$ appartient ˆ $\ES{O}_{[\wt{M}_1,\delta_1]}= \ES{O}_{\mathfrak{o}}$. 

Prouvons (ii). Puisque d'aprs (i), les $G(F)$-orbites contenues dans $\ES{O}_{\mathfrak{o}}\smallsetminus \ES{O}_{F,\delta}$ ne sont pas c-$F$-fermŽes, 
il suffit de prouver que pour $\lambda \in \Lambda_{F,\delta}$, on a $\delta_\lambda \in \ES{O}_{F,\delta}$. Fixons un tel $\lambda$. Comme l'ŽlŽment $\delta$ appartient ˆ $\wt{P}_\lambda(F)$, 
il s'Žcrit $\delta = \delta' u'$ avec $\delta'\in \wt{M}_\lambda(F)$ et $u'\in U_\lambda(F)$; de plus on a $\delta'= \delta_\lambda$. Soit $\wt{Q}'_1$ 
un sous-espace parabolique de $\wt{M}_\lambda$ 
dŽfini sur $F$ tel que $\delta'\in \tQ'_1(F)$ et $\tQ'_1$ soit minimal pour cette propriŽtŽ. Soit $\wt{M}'_1$ une composante de Levi de $\tQ'_1$ 
dŽfinie sur $F$. \'Ecrivons $\delta'= \delta'_1u'_1$ avec $\delta'_1\in \tM'_1(F)$ et $u'_1\in U_{Q'_1}(F)$. D'aprs la preuve du point (i), on a 
$$[\wt{M}'_1,\delta'_1]= [\wt{M},\delta]\ptf$$ En posant $X= \ES{O}_{F,\delta}$ et $X' = \ES{O}_{F,\delta'}$, on a donc 
$$X' \subset \smash{\overline{X}}^{(\cF)} \quad \hbox{et}\quad X\subset \smash{\overline{X'}}^{(\cF)};$$ 
ce qui n'est possible que si $X=X'$.   

Prouvons (iii). 
Soit $\gamma\in \ES{O}_{\mathfrak{o}}$. D'aprs (i) et (ii), on a $\ES{F}_{F,\gamma}= \ES{O}_{F,\delta}$. Soit $\lambda \in \Lambda_{F,\gamma}$ 
et posons $\gamma'=\gamma_\lambda$. Il 
s'agit de vŽrifier que $\gamma'$ appartient ˆ $\ES{O}_{\mathfrak{o}}$. Soit $\mathfrak{o}(\gamma')= [\tM'\!,\delta']$.  D'aprs  (i), 
il existe un $\lambda'\in \Lambda_{F,\gamma'}$ 
tel que $\gamma_{\lambda'}\in \ES{O}_{F,\delta'}$. En particulier, la $G(F)$-orbite $\ES{O}_{F,\delta'}$ 
est $2$-accessible ˆ partir de la 
$G(F)$-orbite $\ES{O}_{F,\gamma}$ au sens de \cite[3.2]{BHMR}, ce qui entra"ne \cite[3.3\,(i)]{BHMR} que la $G(F)$-orbite $\ES{O}_{F,\delta'}$ 
est contenue dans la c-$F$-fermeture de $\ES{O}_{F,\gamma}$. On a donc $$\ES{O}_{F,\delta'}= \ES{F}_{F,\gamma'}= \ES{F}_{F,\gamma}= \ES{O}_{F,\delta}\quad\hbox{et}\quad 
\mathfrak{o}(\gamma')=\mathfrak{o}=\mathfrak{o}(\gamma)\ptf $$ Cela achve la dŽmonstration du lemme. 
\end{proof}

\section{Des $F$-strates aux $F_S$-strates}\label{annexe C}

Dans cette annexe, $F$ est un corps global d'exposant caractŽristique $p\geq 1$, \cad un corps de nombres ($p=1$) ou un corps de fonctions ($p>1$). 
On reprend dans cette section les hypothses de \ref{la variante de Hesselink} et \ref{la stratification de Hesselink}: $(V,e_V)$ est une $G$-variŽtŽ affine pointŽe dŽfinie sur $F$ et 
$e_V\in V(F)$. Dans un premier temps, on ne suppose pas que $e_V$ soit rŽgulier dans $V$ (hypothse \ref{hyp reg}); on le supposera 
ˆ partir de \ref{AF}.

\subsection{Les $F_S$-strates, $\mbb{A}$-strates, etc.}\label{F_S-strates et A-strates} 
On note $\mbb{A}= \mbb{A}_F$ l'anneau des adles de $F$. Pour une place $v$ de $F$, on note $F_v$ le complŽtŽ de $F$ en $v$. 
D'aprs le lemme de Krasner, les opŽrations de complŽtion et de cl™ture sŽparable commutent: 
le complŽtŽ $(F^\mathrm{s\acute{e}p})_w$ de $F^\mathrm{s\acute{e}p}$ en une place $w$ au-dessus de $v$ 
est une cl™ture sŽparable $(F_v)^\mathrm{s\acute{e}p}$ de $F_v$. \`A isomorphisme prs, cette cl™ture sŽparable 
ne dŽpend pas du choix de la place $w$. On peut donc noter $F_v^\mathrm{s\acute{e}p}$ ce corps 
$(F_v)^\mathrm{s\acute{e}p}=(F^\mathrm{s\acute{e}p})_w$. Le complŽtŽ $F_v$ de $F$ en $v$ est une $F$-algbre sŽparable (\ref{sŽparabilitŽ du complŽtŽ}), 
par consŽquent $F_v^\mathrm{s\acute{e}p}$ est encore une $F$-algbre sŽparable. 

D'aprs \ref{descente sŽparable c™ne F-nilpotent}, on a $$\ESN_F= V(F) \cap \ES{N}_{F_v}= V(F) \cap \ES{N}_{\smash{F_v^{\mathrm{s\acute{e}p}}}}\ptf$$ 

Pour Žviter les conflits de notations, dans cette sous-section on notera $x$ (au lieu de $v$) les ŽlŽments de $V$. Tout ŽlŽment $x\in V(\mbb{A})$ s'Žcrit 
$x= \prod_{v}x_v$ avec $v\in V(F_v)$, o $v$ parcourt les places de $F$. On pose\index{NA@$\ESN_{\mbb{A}}$} 
$$\ESN_{\mbb{A}}\bydef \left(\prod_v \ESN_{F_v} \right) \cap V(\mbb{A})\ptf$$

\vskip2mm
\P\hskip1mm\textit{Les $F_S$-lames et les $F_S$-strates de $\ES{N}_{F_S}$. ---} Soit $S$ un ensemble fini non vide de places de $F$. On pose 
$$F_S= \prod_{v\in S}F_v \quad \hbox{et} \quad F_S^\mathrm{s\acute{e}p}= \prod_{v\in S}F_v^\mathrm{s\acute{e}p}$$ 
Tout ŽlŽment $x\in \ESN_{\mbb{A}}$ dŽfinit un ŽlŽment $x_S= \prod_{v\in S} x_v$ de\index{NFS@$\ESN_{F_S}$} 
$$\ESN_{F_S}\bydef \prod_{v\in S}\ESN_{F_v} \ptf$$ On dŽfinit 
la $F_S$-lame $\scrY_{F_S,x_S}$ et le sous-ensemble $\scrX_{F_S,x_S}$ de $\ESN_{F_S}$ par 
$$ \scrY_{F_S,x_S}= \prod_{v\in S} \scrY_{F_v,x_v}\quad \hbox{et}\quad \scrX_{F_S,x_S}= \prod_{v\in S} \scrX_{F_v,x_v}  \ptf$$ 
On pose aussi $${_{F_S}P_{x_S}}= \prod_{v\in S} {_{F_v}P_{x_v}} \quad \hbox{et}\quad P_{F_S,x_S}= \prod_{v\in S} P_{F_v,x_v} \ptf$$
On dŽfinit de la mme manire la $F_S$-strate $\bsfrY_{F_S,x_S}$ et le sous-ensemble $\bsfrX_{F_S,x_S}$ de $\ESN_{F_S}$. On a donc 
$$\bsfrY_{F_S,x_S}= G(F_S)\cdot \scrY_{F_S,x_S} \quad \hbox{et}\quad \bsfrX_{F_S,x_S}= G(F_S)\cdot \scrX_{F_S,x_S}\ptf$$ 

On note {\TopFS} la topologie (forte) sur $V(F_S)$ dŽfinie par $F_S$ (cf. \ref{le cas d'un corps top}). Puisque $F_S$ est un produit (fini) de corps commutatifs localement compact, 
d'aprs \ref{le cas d'un corps top} on a le 

\begin{lemma}\label{rŽsultats pour topFS}
Soit $x_S \in \ES{N}_{F_S}$. 
\begin{enumerate}
\item[(i)] $\scrX_{F_S,x_S}$ et $\bsfrX_{F_S,x_S}$ sont fermŽs dans $G(F_S)$ pour \TopFS.
\item[(ii)] La $F_S$-lame $\scrY_{F_S,x_S}$ est ouverte dans $\scrX_{F_S,x_S}$ pour \TopFS.
\item[(iii)] La $F_S$-strate $\bsfrY_{F_Sx_S}$ est ouverte dans $\bsfrX_{F_Sx_S}$ pour \TopFS.
\end{enumerate} 
\end{lemma}

Puisqu'il n'y a qu'un nombre fini d'ensembles $\bsfrX_{F_S,x_S}$ avec $x_S\in \ES{N}_{F_S}$, on obtient aussi 
(d'aprs \ref{rŽsultats pour topFS}\,(i)) que $\ES{N}_{F_S}$ 
est fermŽ dans $G(F_S)$ pour \TopFS.

\vskip1mm
L'ensemble $\ESN_F$ est plongŽ diagonalement dans $\ESN_{F_S}$. Pour $x\in \ESN_F$, on peut donc dŽfinir comme ci-dessus la 
$F_S$-lame $\scrY_{F_S,x}$ et la $F_S$-strate $ \bsfrY_{F_S,x}$. Pour allŽger l'Žcriture, 
on reprend ici les notations de \ref{notation allŽgŽe}: pour une $F$-lame $\scrY$ de $\ESN_F$, on note $\scrY_{F_S}$ la $F_S$-lame de $\ESN_{F_S}$ dŽfinie par 
$\scrY_{F_S}=\prod_{v\in S} \scrY_{F_v}$; et pour une $F$-strate de $\bsfrY$, on note $\bsfrY_{F_S}$ la $F_S$-strate de $\ESN_{F_S}$ dŽfinie 
par $\bsfrY_{F_S}= \prod_{v\in S}\bsfrY_{F_v}$. Pour $\scrX= \scrX(\scrY)$ et $\bsfrX= \bsfrX(\bsfrY)$, on dŽfinit de la mme manire les sous-ensembles $\scrX_{F_S}$ et $\bsfrX_{F_S}$ 
de $\ESN_{F_S}$. D'aprs \ref{descente sŽparable lames} et \ref{variante de BMRT(4.7)}, on a les ŽgalitŽs 
$$\scrY = G(F)\cap \scrY_{F_S} \quad \hbox{et}\quad \bsfrY= G(F) \cap \bsfrY_{F_S};$$ 
et on a aussi 
$$\scrX = G(F)\cap \scrX_{F_S} \quad \hbox{et}\quad \bsfrX= G(F) \subset  \bsfrX_{F_S}$$ 

\vskip2mm
\P\hskip1mm\textit{Les $\mbb{A}$-lames $\scrY_{\mbb{A}}$ et les $\mbb{A}$-strates $\bsfrY_{\mbb{A}}$ de $\ES{N}_{\mbb{A}}$\footnote{On peut dŽfinir des $\mbb{A}$-lames et $\mbb{A}$-strates 
plus gŽnŽrales mais nous 
n'en aurons pas besoin ici.}. ---} L'ensemble $\ESN_F$ est plongŽ digonalement dans $\ESN_{\mbb{A}}$. 
\`A toute $F$-lame $\scrY$ de $\ESN_F$, on associe la $\mbb{A}$-lame de $\ES{N}_{\mbb{A}}$ dŽfinie par 
$$\scrY_{\mbb{A}}\bydef  \left(\prod_{v} \scrY_{F_v}\right)\cap G(\mbb{A})$$ o 
$v$ parcourt les places de $F$. \`A toute $F$-strate $\bsfrY$ de $\ESN_F$, on associe de la mme manire une $\mbb{A}$-strate $\bsfrY_{\mbb{A}}$ de $\ES{N}_{\mbb{A}}$. 
Pour $\scrX= \scrX(\scrY)$ et $\bsfrX=\bsfrX(\bsfrY)$, on dŽfinit toujours de la mme manire les sous-ensembles $\scrX_{\mbb{A}}$ et $\bsfrX_{\mbb{A}}$ de $G(\mbb{A})$. 
Comme pour les $F_S$-lames et les $F_S$-strates, on a les ŽgalitŽs 
$$\scrY = V(F) \cap \scrY_{\mbb{A}} \quad \hbox{et} \quad \bsfrY= V(F) \cap \bsfrY_{\mbb{A}};$$ et on a aussi 
$$\scrX= V(F) \cap \scrX_{\mbb{A}} \quad \hbox{et} \quad \bsfrX\subset V(F) \cap \bsfrX_{\mbb{A}}\ptf$$ 
Observons que si $\scrY \subset \bsfrY$, alors on a
$$\bsfrY_{\mbb{A}}=G(\mbb{A})\cdot \scrY_{\mbb{A}} \quad \hbox{et}\quad \bsfrX_{\mbb{A}} = G(\mbb{A})\cdot \scrX_{\mbb{A}}\ptf$$

La dŽcomposition d'Iwasawa $G(\mbb{A})=\bs{K}P_0(\mbb{A})$ assure qu'on a l'analogue de \ref{rŽsultats pour topFS} pour la topologie adŽlique sur $G(\mbb{A})$: 

\begin{lemma}\label{rŽsultats pour topFA}
Soit $\scrY$ une $F$-lame de $\ES{N}_F$. Posons $\bsfrY=G(F)\cdot \scrY$, $\scrX=\scrX(\scrY)$ et $\bsfrX= \bsfrX(\bsfrY)$. 
\begin{enumerate}
\item[(i)] Les ensembles $\scrX_{\mbb{A}}$ et $\bsfrX_{\mbb{A}}$ sont fermŽs dans $G(\mbb{A})$ pour la topologie adŽlique.
\item[(ii)] La $\mbb{A}$-lame $\scrY_{\mbb{A}}$ est ouverte dans $\scrX_{\mbb{A}}$ pour la topologie adŽlique.
\item[(iii)] La $\mbb{A}$-strate $\bsfrY_{\mbb{A}}$ est ouverte dans 
$\bsfrX_{\mbb{A}}$ pour la topologie adŽlique.
\end{enumerate} 
\end{lemma}
 
Pour un ensemble fini $S$ de places de $F$, on note $\mbb{A}^S = \prod'_{v\notin S}F_S$ l'anneau des adles de $F$ en dehors de $S$. 
On dŽfinit $\scrY_{\mbb{A}^S}$, $\bsfrY_{\mbb{A}^S}$, $\scrX_{\mbb{A}^S}$ et $\bsfrX_{\mbb{A}^S}$ comme ci-dessus, en remplaant $\mbb{A}$ par 
$\mbb{A}^S$. On a les dŽcompositions 
$$\scrY_{\mbb{A}} = \scrY_{F_S} \times \scrY_{\mbb{A}^S}\quad \hbox{et} \quad \bsfrY_{\mbb{A}} = \bsfrY_{F_S} \times \bsfrY_{\mbb{A}^S}\vg$$ 
$$\scrX_{\mbb{A}} = \scrX_{F_S} \times \scrX_{\mbb{A}^S}\quad \hbox{et} \quad \bsfrX_{\mbb{A}} = \bsfrX_{F_S} \times \bsfrX_{\mbb{A}^S}\ptf$$ 
Le lemme \ref{rŽsultats pour topFA} reste vrai si l'on remplace $\mbb{A}$ par $\mbb{A}^S$. 

\subsection{Approximation faible}\label{AF} 
On suppose dans cette sous-section que l'hypothse \ref{hyp reg} est vŽrifiŽe: $e_V$ est rŽgulier dans $V$.

Fixons un ensemble fini $S$ de places de $F$. On peut se demander si une $F$-strate $\scrY$ de $\ESN_F$ 
plongŽe diagonalement dans $V(F_S)$ est dense dans $\scrY_{F_S}$ pour {\TopFS}; on peut aussi se poser la mme question 
pour une $F$-strate $\bsfrY$ de $\ESN_F$. C'est le principe de l'\textit{approximation faible}.

\begin{lemma}\label{approximation faible}
Soit $\scrY$ une $F$-lame de $\ES{N}_F$. Posons $\bsfrY= G(F)\cdot \scrY$, $\scrX=\scrX(\scrY)$ et $\bsfrX= \bsfrX(\bsfrY)\;(= G(F) \cdot \scrX)$. 
\begin{enumerate}
\item[(i)] La $F$-lame $\scrY$ est dense dans $\scrY_{F_S}$ pour \TopFS. 
\item[(ii)] L'ensemble $G(F_S)\cdot \bsfrY= G(F_S)\cdot \scrY$ 
est dense dans $\bsfrY_{F_S}$ pour \TopFS.
\item[(iii)] Si pour tout $y_S\in \bsfrY_{F_S}$, la $G(F_S)$-orbite $\ES{O}_{F_S,y_S}$ est ouverte dans $\bsfrY_{F_S}$ pour \TopFS, 
alors on a l'ŽgalitŽ $\bsfrY_{F_S} = G(F_S)\cdot \bsfrY\;(= G(F_S)\cdot \scrY)$.
\end{enumerate}
\end{lemma}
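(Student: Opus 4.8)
\textbf{Plan de preuve de \ref{approximation faible}.} L'idée est d'exploiter que, sous l'hypothèse \ref{hyp reg}, chaque $F$-saturé ${_F\scrX_v}$ (donc en particulier $\scrX(\scrY)$) est $F$-isomorphe à un espace affine $\mbb{A}_F^n$ (proposition \ref{satur� espace affine}), combiné à l'approximation faible pour l'espace affine relativement à un ensemble fini de places. D'abord, pour le point (i) : fixons $v_0\in \scrY$ ; alors $\scrY = \scrY_{F,v_0}$ et ${_F\scrX_{v_0}}\simeq_F \mbb{A}_F^n$. Dans $\scrX_{F_S,v_0}= \prod_{v\in S}\scrX_{F_v,v_0}\simeq_{F_S}\prod_{v\in S}\mbb{A}_{F_v}^n$, l'ensemble $\scrX_{F,v_0}= \mbb{A}^n_F$ plongé diagonalement est {\TopFS}-dense (approximation faible classique pour l'espace affine, valable pour tout corps global). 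D'autre part, pour chaque $v\in S$, la $F_v$-lame $\scrY_{F_v,v_0}$ est {\TopFv}-ouverte dans $\scrX_{F_v,v_0}$ (cf. \ref{le cas d'un corps top}, ou \ref{r�sultats pour topFS}\,(ii)), donc $\scrY_{F_S,v_0}= \prod_{v\in S}\scrY_{F_v,v_0}$ est {\TopFS}-ouverte dans $\scrX_{F_S,v_0}$ et non vide (elle contient $v_0$). Un ouvert non vide rencontrant un sous-ensemble dense, l'intersection $\scrX_{F,v_0}\cap \scrY_{F_S,v_0}$ est {\TopFS}-dense dans $\scrY_{F_S,v_0}$. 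Il reste à identifier cette intersection : un élément $v'\in V(F)$ appartient à $\scrY_{F_S,v_0}$ si et seulement si $\bs{\Lambda}_{F_v,v'}= \bs{\Lambda}_{F_v,v_0}$ pour tout $v\in S$ ; par descente séparable (\ref{descente s�parable lames}\,(i), applicable car $F_v/F$ est séparable) cela équivaut à $\bs{\Lambda}_{F,v'}=\bs{\Lambda}_{F,v_0}$, i.e. $v'\in \scrY_{F,v_0}=\scrY$. Donc $\scrX_{F,v_0}\cap \scrY_{F_S,v_0}=\scrY$, ce qui prouve (i).

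Pour le point (ii) : écrivons $\bsfrY_{F_S}= G(F_S)\cdot \scrY_{F_S}$ (décomposition rappelée en \ref{F_S-strates et A-strates}). La multiplication $G(F_S)\times \scrY_{F_S}\to \bsfrY_{F_S}$ est {\TopFS}-continue et surjective. Par approximation faible pour le groupe réductif connexe $G$, l'ensemble $G(F)$ est {\TopFS}-dense dans $G(F_S)$. Combinant avec la densité de $\scrY$ dans $\scrY_{F_S}$ établie en (i), on obtient que $G(F)\cdot \scrY$ est {\TopFS}-dense dans $G(F_S)\cdot \scrY_{F_S}= \bsfrY_{F_S}$. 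Comme $G(F)\cdot \scrY = \bsfrY$ et $G(F_S)\cdot \bsfrY = G(F_S)\cdot(G(F)\cdot\scrY) = G(F_S)\cdot\scrY$ (puisque $G(F)\subset G(F_S)$), cela donne (ii).

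Pour le point (iii) : sous l'hypothèse faite, $\bsfrY_{F_S}$ est réunion de ses $G(F_S)$-orbites, chacune {\TopFS}-ouverte, donc $\bsfrY_{F_S}$ est réunion disjointe d'ouverts, et chaque $G(F_S)$-orbite est aussi {\TopFS}-fermée dans $\bsfrY_{F_S}$ (complémentaire d'une réunion d'ouverts). Par (ii), $G(F_S)\cdot\bsfrY$ est {\TopFS}-dense dans $\bsfrY_{F_S}$. Mais $G(F_S)\cdot\bsfrY = G(F_S)\cdot\scrY$ est une réunion de $G(F_S)$-orbites (au moins une, car $\scrY\neq\emptyset$), donc contient au moins une orbite entière ; étant dense et fermé (réunion de fermés dans un espace où les orbites sont ouvertes-fermées, cette réunion est fermée), il est égal à $\bsfrY_{F_S}$. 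Plus proprement : toute $G(F_S)$-orbite $\ES{O}\subset\bsfrY_{F_S}$ est {\TopFS}-ouverte, donc rencontre le sous-ensemble dense $G(F_S)\cdot\bsfrY$ ; comme $G(F_S)\cdot\bsfrY$ est $G(F_S)$-invariant, il contient $\ES{O}$ tout entière ; donc $G(F_S)\cdot\bsfrY = \bsfrY_{F_S}$.

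\textbf{Principal obstacle.} La difficulté de fond n'est pas dans l'argument topologique mais dans la vérification, au point (i), que le plongement diagonal de $\scrX_{F,v_0}$ dans $\prod_{v\in S}\scrX_{F_v,v_0}$ est bien {\TopFS}-dense : ceci repose de façon essentielle sur l'isomorphisme ${_F\scrX_{v_0}}\simeq_F\mbb{A}_F^n$ (\ref{satur� espace affine}), lequel exige l'hypothèse \ref{hyp reg}, et sur le fait que cet isomorphisme est \emph{défini sur $F$} (sans quoi l'identification des points $F_v$-rationnels des deux côtés ne serait pas compatible aux plongements diagonaux). Il faut aussi prendre garde que l'ouverture de $\scrY_{F_v,v_0}$ dans $\scrX_{F_v,v_0}$ pour {\TopFv} est bien connue sans hypothèse supplémentaire sur $F_v$ — cela découle de \ref{le cas d'un corps top}, le point délicat étant que $F_v$ est localement compact, donc $\NF[F_v]$ est {\TopFv}-fermé dans $V(F_v)$, d'où $\scrX_{F_v,v_0}\smallsetminus\scrY_{F_v,v_0}=\scrX_{F_v,v_0}\cap\ES{N}_{F_v,<\bs{q}_{F_v}(v_0)}$ est {\TopFv}-fermé. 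Aucun argument global subtil n'est requis au-delà de l'approximation faible standard pour $\mbb{A}_F^n$ et pour $G$.
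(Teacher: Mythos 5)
Your proof of (ii) rests on a step that is not available: you invoke weak approximation for the connected reductive group $G$, i.e. the density of $G(F)$ in $G(F_S)$ for {\TopFS}. This is false in general: weak approximation for an arbitrary connected reductive group at an arbitrarily prescribed finite set of places can fail (it already fails for suitable tori, e.g. norm-one tori of biquadratic extensions), and it is unconditional only for semisimple simply connected groups. You use it to prove something stronger than (ii), namely that $\bsfrY = G(F)\cdot \scrY$ itself is dense in $\bsfrY_{F_S}$; but (ii) only asserts the density of $G(F_S)\cdot \scrY$, and that follows from (i) with no rationality input on $G$: for each $g\in G(F_S)$ the action is continuous for {\TopFS}, so $g\cdot \scrY$ is dense in $g\cdot \scrY_{F_S}$, hence $G(F_S)\cdot \scrY$ is dense in $G(F_S)\cdot \scrY_{F_S}=\bsfrY_{F_S}$. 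This is exactly how the paper deduces (ii) from (i), and since your (iii) only uses (ii) in this weaker, correct form, it is unaffected once the appeal to weak approximation for $G$ is deleted.

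Your (i) is essentially the paper's argument: the paper fixes $\mu\in\bs{\Lambda}_{F,x}$ with $x\in\scrY$, notes that $\mu\in\bs{\Lambda}_{F_v,x}$ for every $v\in S$ by separable descent (cf. \ref{variante de BMRT(4.7)}), so that $\scrX=V_{\mu,1}(F)$ and $\scrX_{F_S}=V_{\mu,1}(F_S)$, and then combines weak approximation for the $F$-affine space $V_{\mu,1}$ with the {\TopFS}-openness of $\scrY_{F_S}$ in $\scrX_{F_S}$ and the equality $\scrY=V(F)\cap\scrY_{F_S}$ (recorded when the $F_S$-lames are introduced); your identification of $V(F)\cap\scrY_{F_S}$ via the sets $\bs{\Lambda}_{F_v,\cdot}$ comes to the same thing, except that the converse implication is not literally part (i) of the descent lemma you quote: one either cites its statement on lames directly, or uses that $\bs{\Lambda}_{F_v,\cdot}$ is a principal homogeneous space under $U_\mu(F_v)$. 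For (iii) your route is genuinely simpler than the paper's: the paper reduces to $y_S\in\scrY_{F_S}$ and works with $P_\mu(F_S)$-orbits inside the lame via the bijection $G(F_S)\times^{P_\mu(F_S)}\scrY_{F_S}\rightarrow\bsfrY_{F_S}$, whereas you observe that every open $G(F_S)$-orbit meets the dense $G(F_S)$-invariant set $G(F_S)\cdot\scrY$ and is therefore contained in it; both arguments are valid, and yours avoids the homogeneous-space bookkeeping at no cost.
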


\begin{proof}
On peut supposer $\scrY\neq \{e_V\}$ (sinon il n'y a rien ˆ dŽmontrer). Fixons un co-caractre virtuel $\mu \in \bs{\Lambda}_{F,x}$ avec $x\in \scrY$. 
D'aprs \ref{variante de BMRT(4.7)}, pour toute place $v\in S$, $\mu$ appartient ˆ $\bs{\Lambda}_{F_v,x}$. 
On a donc $$\scrX_{F_S}= V_{\mu,1}(F_S)\quad \hbox{et}\quad  P_{F_S,x}=P_\mu(F_S)\ptf$$

Prouvons (i). Puisque la variŽtŽ $V_{\mu,1}$ est un $F$-espace affine \cite[3.3]{H2}, 
elle satisfait au principe d'approximation faible: $\scrX=V_{\mu,1}(F)$ est dense dans $\scrX_{F_S}=V_{\mu,1}(F_S)$ pour \TopFS. 
Comme la $F_S$-lame $\scrY_{F_S}$ est ouverte dans $\scrX_{F_S}$ pour \TopFS (d'aprs \ref{rŽsultats pour topFS}\,(ii)), 
on en dŽduit que $\scrY= \scrY_{F_S} \cap \scrX$ est dense dans $\scrY_{F_S}$. 

Puisque $ G(F_S)\cdot \scrY_{F_S}= \bsfrY_{F_S}$, le point (ii) est une consŽquence de (i). 

Prouvons (iii). On suppose que les $G(F_S)$-orbites dans $\bsfrY_{F_S}$ sont ouvertes dans $\bsfrY_{F_S}$ pour la topologie dŽfinie par $F_S$. 
Soit $y_S = \prod_{v\in S}y_v$ un ŽlŽment dans $\bsfrY_{F_S}$. 
Quitte ˆ remplacer $y_S$ par $g\cdot y_S$ pour un $g\in G(F_S)$, on peut supposer que $y_S$ appartient ˆ la $F_S$-lame $\scrY_{F_S}$. 
Pour toute place $v\in S$, le co-caractre virtuel $\mu$ appartient ˆ $\bs{\Lambda}_{F_v,y_v}$. D'aprs \ref{l'application pi}, l'application naturelle 
$$G(F_S)\times^{P_\mu(F_S)} \scrX_{F_S} \rightarrow \bsfrX_{F_S}$$ 
induit une application bijective $G(F_S)\times^{P_\mu(F_S)} \scrY_{F_S} \buildrel\simeq \over{\longrightarrow} \bsfrY_{F_S}$. Cette bijection est un homŽomorphisme 
pour \TopFS. On en dŽduit que la $P_\mu(F_S)$-orbite de $y_S$ 
est ouverte dans $\scrY_{F_S}$ pour \TopFS. Puisque $\scrX$ est dense dans $\scrX_{F_S}$ pour \TopFS, cela entra"ne que la $P_\mu(F_S)$-orbite de 
$y_S$ intersecte non trivialement $\scrY_{F_S}\cap \scrX= \scrY$. 
\end{proof}

\begin{remark}
\textup{Puisque la variŽtŽ $V_{\mu,1}$ satisfait aussi au principe d'approxi\-mation forte, on a des rŽsultats analogues pour le plongement diagonal 
$$\iota^S : V(F) \rightarrow V(\mbb{A}^S)$$ et la topologie adŽlique, ˆ savoir: l'ensemble $\iota^S(\scrY)$ est dense dans $\scrY_{\mbb{A}^S}$ pour la topologie adŽlique et 
l'ensemble $G(\mbb{A}^S)\cdot \iota^S(\bsfrY)= G(\mbb{A}^S)\cdot \iota^S(\scrY)$ est dense dans $\bsfrY_{\mbb{A}^S}$ pour la topologie adŽlique. De manire Žquivalente, on a: 
l'ensemble $\scrY_{F_S}\times \iota^S(\scrY_F)$ est dense dans $\scrY_{\mbb{A}}$ et l'ensemble 
$$G(\mbb{A})\cdot \left(\scrY_{F_S}\times \iota^S(\scrY)\right)= \bsfrY_{F_S} \times \left(G(\mbb{A}^S)\bullet \iota^S(\scrY)\right)$$ est dense dans 
$\bsfrY_{\mbb{A}}= G(\mbb{A})\cdot \scrY_{\mbb{A}}$.
}
\end{remark}

\subsection{Le cas de la variŽtŽ unipotente}
On suppose dans cette sous-section que la variŽtŽ $V$ est le groupe $G$ lui-mme muni de l'action par conjugaison. Rappelons qu'on a notŽ 
$\UF=\UF^G$ l'ensemble des (vrais) ŽlŽments unipotents de $G(F)$. 

\begin{lemma}\label{orbite ouverte pour TopFS}
Si $F$ est un corps de nombres ou un corps global de caractŽristique $p> 1$ avec $p$ trs bon pour $G$, 
l'hypothse de \ref{approximation faible}\,(iii) est toujours vŽrifiŽe: pour tout $u_S\in \UU_{F_S}$, 
la $G(F_S)$-orbite de $u_S$ est ouverte dans $\bsfrY_{F_S,u_S}$ pour \TopFS.
\end{lemma}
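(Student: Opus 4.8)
L'énoncé demande de montrer que, sous l'hypothèse que $F$ est un corps de nombres ou un corps de fonctions de caractéristique $p>1$ avec $p$ \emph{très bon} pour $G$, toute $G(F_S)$-orbite dans $\UU_{F_S}$ est ouverte dans sa $F_S$-strate pour la topologie \TopFS{}. Comme $F_S=\prod_{v\in S}F_v$ est un produit fini de corps locaux et que $\bsfrY_{F_S,u_S}=\prod_{v\in S}\bsfrY_{F_v,u_v}$ tandis que $\ES{O}_{F_S,u_S}=\prod_{v\in S}\ES{O}_{F_v,u_v}$, il suffit de traiter le cas $S=\{v\}$, \cad de prouver que pour un corps local (non archimédien si $p>1$, archimédien ou $p$-adique si $p=1$) $E$, toute $G(E)$-orbite dans $\UU_E$ est ouverte dans sa $E$-strate $\bsfrY_{E,u}$ pour {\TopE}. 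Or $p$ très bon pour $G$ entraîne (lemme \ref{s�parabilit� des orbites}) que tous les éléments de $G$ sont séparables; dans le cas $p=1$ c'est automatique.

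\textbf{Étape principale.} Fixons $u\in \UU_E$ et, quitte à conjuguer par un élément de $G(E)$, supposons $u$ en position standard relativement à une $E$-paire parabolique minimale; soient $\lambda\in\Lambda_{E,u}^{\mathrm{opt}}$ et $k=m_u(\lambda)$. D'après \ref{comparaison F-strate-orbite rat}\,(i) appliqué sur $E$ (qui est loisible puisque $p$ est très bon pour $G$ et donc pour tout $E$-facteur de Levi), on a l'égalité $\bsfrY_{E,u}=\ES{O}_u(E)=G(E)\cap\ES{O}_u$ où $\ES{O}_u$ est l'orbite géométrique unipotente de $u$. Il suffit donc de montrer que $\ES{O}_{E,u}=\{gug^{-1}\,\vert\,g\in G(E)\}$ est ouverte dans $\ES{O}_u(E)$ pour {\TopE}. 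Puisque $u$ est séparable, le centralisateur schématique $G^u$ est lisse (lemme \ref{orbites s�parables}), donc le $E$-morphisme orbite $\pi_u\colon G\to\ES{O}_u$ est séparable, et le morphisme induit $G/G^u\to\ES{O}_u$ est un $E$-isomorphisme; en particulier $\pi_u$ est lisse. Par les propriétés de {\TopE} rappelées en \ref{le cas d'un corps top}, un morphisme lisse (donc plat et à fibres lisses) induit une application \emph{ouverte} sur les points $E$-rationnels: ceci est le résultat classique d'ouverture des submersions pour les corps locaux (cf. \cite[2.1.1]{MB} pour la forme dont on a besoin, ou l'argument du théorème des fonctions implicites analytique $E$-adique). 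Donc $\pi_{u,E}\colon G(E)\to\ES{O}_u(E)$ est ouverte, et son image $\ES{O}_{E,u}$ est un ouvert de $\ES{O}_u(E)=\bsfrY_{E,u}$.

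\textbf{Recollement sur $S$.} Pour $S$ quelconque, chaque facteur $\ES{O}_{F_v,u_v}\subset\bsfrY_{F_v,u_v}$ est {\TopFv}-ouvert d'après ce qui précède, donc le produit $\ES{O}_{F_S,u_S}=\prod_{v\in S}\ES{O}_{F_v,u_v}$ est ouvert dans $\bsfrY_{F_S,u_S}=\prod_{v\in S}\bsfrY_{F_v,u_v}$ pour la topologie produit {\TopFS}, ce qui conclut.

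\textbf{Difficulté principale.} L'obstacle n'est pas dans le recollement ni dans l'identification strate--orbite (qui est \ref{comparaison F-strate-orbite rat}), mais dans le fait que l'ouverture de $\pi_{u,E}$ sur les points rationnels repose crucialement sur la \emph{séparabilité} de $u$: sans elle, $\pi_u$ n'est pas lisse et l'image de $G(E)$ dans $\ES{O}_u(E)$ peut être un fermé strict (c'est exactement le phénomène illustré par $\mathrm{SL}_2$ en caractéristique $2$, où $\ES{O}_u(E)/G(E)$ est paramétré par $E^\times/(E^\times)^2$, infini). C'est pourquoi l'hypothèse \guill{$p$ très bon} (ou $p=1$) est indispensable et ne peut être affaiblie ici; il faudra donc veiller à n'invoquer \ref{comparaison F-strate-orbite rat} et \ref{s�parabilit� des orbites} qu'après avoir extrait cette hypothèse, et à bien rappeler que $p$ très bon pour $G$ l'est aussi pour tous les $F_v$, d'où la séparabilité sur chaque complété.
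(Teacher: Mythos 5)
Votre preuve est correcte et suit essentiellement la même démarche que celle du texte : réduction à $S=\{v\}$, identification $\bsfrY_{F_v,u_v}=\ES{O}_{u_v}(F_v)$ via \ref{comparaison F-strate-orbite rat}\,(i), puis ouverture de l'application orbite $G(F_v)\to\ES{O}_{u_v}(F_v)$ pour \TopFv{} grâce à la séparabilité de $u_v$ (\ref{s�parabilit� des orbites}). Vous explicitez simplement davantage le passage séparabilité $\Rightarrow$ lissité du morphisme orbite $\Rightarrow$ ouverture sur les points rationnels, que le texte invoque de façon plus condensée.
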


\begin{proof}
On peut supposer $S=\{v\}$. Soit $p\geq 1$ l'exposant caractŽristique de $F_v$. Supposons que 
$p$ soit trs bon pour $G$. Pour tout $u_v\in \UU_{F_v}$, d'aprs \ref{comparaison F-strate-orbite rat}\,(i), on a $\bsfrY_{F_v,u_v}= \ES{O}_{u_v}(F_v)$. Comme d'autre part le morphisme 
$G\rightarrow \ES{O}_{u_v},\, g \mapsto g\bullet u_v$ est sŽparable (\ref{sŽparabilitŽ des orbites}),   il induit une application ouverte 
$G(F_v) \rightarrow \ES{O}_{u_v}(F_v)$ pour \TopFv. Par suite l'application $G(F_v)\rightarrow \bsfrY_{F_v},\, g_v \mapsto g_v\bullet u_v$ 
est ouverte pour \TopFv. 
\end{proof}

\begin{remark}
\textup{
\begin{enumerate}
\item[(i)] Si $F$ vŽrifie les hypothses de \ref{orbite ouverte pour TopFS} et $u\in \bsfrY$, alors d'aprs \ref{approximation faible}\,(iii), toute $G(F_S)$-orbite dans 
$\ESO_u(F_S)= \bsfrY_{F_S}$ rencontre $\ESO_u(F)= \bsfrY$; pour les corps de nombres, c'est le lemme~7.1 de \cite{A2}.
\item[(ii)] Si $p>1$ est suffisamment grand, toutes les orbites unipotentes sont sŽparables et l'on s'attend ˆ ce que 
l'hypothse de \ref{approximation faible}\,(iii) soit vŽrifiŽe. 
En revanche si $p>1$ est petit, elle ne l'est en gŽnŽral pas. Prenons le cas du groupe $G=\mathrm{SL}_2$ sur le corp global $F = \mbb{F}_2(t)$ 
et considŽrons le complŽtŽ $F_v= \mbb{F}_2((t))$. D'aprs \ref{exemples en basse dimension}, 
l'ensemble $\UU_{F_v}$ contient une unique $F_v$-strate non triviale et les $G(F_v)$-orbites dans cette $F_v$-strate sont paramŽtrŽes 
par le groupe $F_v^\times/(F_v^\times)^2$ qui est compact mais de cardinal infini. 
Si l'hypothse de \ref{approximation faible}\,(iii) Žtait vŽrifiŽe, on aurait que l'image de $F^\times $dans $F_v^\times /(F_v^\times)^2$ est surjective; 
ce qui est faux car $F^\times$ est dŽnombrable alors que $F_v^\times/(F_v^\times)^2$ ne l'est pas. 
En effet, considŽrons une unitŽ $x$ de $F_v^\times$, \cad un ŽlŽment inversible dans $\mbb{F}_2[[t]]$: 
$$x= 1 + \epsilon_1 t + \cdots + \epsilon_n t^n + \cdots = 1 + \sum_{i=1}^\infty \epsilon_i t^i\quad \hbox{avec}\quad \epsilon_i \in \{0,1\}\ptf$$ 
Les carrŽs dans $\mbb{F}_2[[t]]$ sont de la forme $y=\sum_{i=0}^\infty \eta_i t^{2i}$ avec $\eta_i \in \{0,1\}$. Par consŽquent $x$ s'Žcrit $x= y_1 + t y_2$ o 
$y_1$ est un carrŽ dans $\mbb{F}_2[[t]]^\times$ et $y_2$ est un carrŽ dans $\mbb{F}_2[[t]]$. Ainsi modulo les carrŽs dans $F_v^\times$, $x$ 
a un unique reprŽsentant de la forme $1+tz$ avec 
$z\;(=y_1^{-1}y_2)$ dans $\mbb{F}_2[[t]]^2\simeq \{0,1\}^{\mbb{N}}$.
\end{enumerate}
}
\end{remark}


\printindex

\end{document}